\newif\ifLTEX
\newtheorem{thm}{Theorem}[section]
\newtheorem{lem}[thm]{Lemma}
\newtheorem{cor}[thm]{Corollary}
\newtheorem{prop}[thm]{Proposition}
\theoremstyle{definition}
\newcommand{\R}{\mathbb{R}}
\newcommand{\Z}{\mathbb{Z}}
\newcommand{\M}{\mathrm{Mod}_{0,2n+2}}
\newcommand{\Mb}{\mathrm{Mod}_{0,2n+1}^1}
\newcommand{\Mp}{\mathrm{Mod}_{0,2n+2;\ast }}
\newcommand{\LM}{\mathrm{LMod}_{2n+2}}
\newcommand{\LMb}{\mathrm{LMod}_{2n+1}^1}
\newcommand{\LMp}{\mathrm{LMod}_{2n+2,\ast }}
\newcommand{\PM}{\mathrm{PMod}_{0,2n+2}}
\newcommand{\PMn}{\mathrm{PMod}_{0,n}}
\newcommand{\Mg}{\mathrm{Mod}_{g}}
\newcommand{\Mgb}{\mathrm{Mod}_{g}^1}
\newcommand{\Mgp}{\mathrm{Mod}_{g,\ast }}
\newcommand{\SM}{\mathrm{SMod}_{g;k}}
\newcommand{\SMb}{\mathrm{SMod}_{g;k}^1}
\newcommand{\SMp}{\mathrm{SMod}_{g,\ast ;k}}
\newcommand{\B}{\mathcal{B}}
\numberwithin{equation}{section}
\title[Finite presentation for balanced superelliptic mapping class group]{Finite presentations for the balanced superelliptic mapping class groups}
\author[S. Hirose]{Susumu Hirose}
\address{
(Susumu Hirose)
Department of Mathematics, Faculty of Science and Technology, Tokyo University of Science, 2641 Yamazaki, Noda-shi, Chiba, 278-8510 Japan
}
\email{hirose\_susumu@ma.noda.tus.ac.jp}
\author[G.~Omori]{Genki Omori}
\address{
(Genki Omori)
Department of Mathematics, Faculty of Science and Technology, Tokyo University of Science, 2641 Yamazaki, Noda-shi, Chiba, 278-8510 Japan
}
\email{omori\_genki@ma.noda.tus.ac.jp}
\subjclass[2010]{57S05, 57M07, 57M05, 20F05}
\date{\today}
\begin{document}
\maketitle
\begin{abstract}
The balanced superelliptic mapping class group is the normalizer of the transformation group of the balanced superelliptic covering space in the mapping class group of the total surface.  
We give finite presentations for the balanced superelliptic mapping class groups of closed surfaces, surfaces with one marked point, and surfaces with one boundary component. 
To give these presentations, we construct finite presentations for corresponding liftable mapping class groups in a different generating set from Ghaswala-Winarski's presentation in~\cite{Ghaswala-Winarski1}. 
\end{abstract}

\section{Introduction}

%\subsection{Background}

Let $\Sigma _{g}$ be a connected closed oriented surface of genus $g\geq 0$.
For integers $n\geq 1$ and $k\geq 2$ with $g=n(k-1)$, the \textit{balanced superelliptic covering map} $p=p_{g,k}\colon \Sigma _g\to \Sigma _0$ is a branched covering map with $2n+2$ branch points $p_1,\ p_2,\ \dots ,\ p_{2n+2}\in \Sigma _{0}$, their unique preimages $\widetilde{p}_1,\ \widetilde{p}_2,\ \dots ,\ \widetilde{p}_{2n+2}\in \Sigma _{g}$, and the covering transformation group generated by the \textit{balanced superelliptic rotation} $\zeta =\zeta _{g,k}$ of order $k$ (precisely defined in Section~\ref{section_bscov} and see Figure~\ref{fig_bs_periodic_map}). 
When $k=2$, $\zeta =\zeta _{g,2}$ coincides with the hyperelliptic involution, and for $k\geq 3$, the balanced superelliptic covering space was introduced by Ghaswala and Winarski~\cite{Ghaswala-Winarski2}.

For subsets $A$ and $B$ of $\Sigma _g$ $(g\geq 0)$, the {\it mapping class group} $\mathrm{Mod}(\Sigma _{g}; A, B)$ of the tuple $(\Sigma _{g}, A, B)$ is the group of isotopy classes of orientation-preserving self-homeomorphisms on $\Sigma _{g}$ which preserve $A$ setwise and $B$ pointwise. 
Put $\B =\{ p_1,\ p_2,\ \dots ,\ p_{2n+2}\} \subset \Sigma _{0}$ and we denote by $D$ a disk neighborhood of $p_{2n+2}$ in $\Sigma _0-\B $, by $\widetilde{D}$ the preimage of $D$ with respect to $p$, and by $\Sigma _g^1$ (resp. $\Sigma _0^1$) the complement of the interior of $\widetilde{D}$ (resp. $D$) in $\Sigma _g$ (resp. $\Sigma _0$). 
Under the situation above, we define 
\begin{align*}
\M &=\mathrm{Mod}(\Sigma _{0}; \B , \emptyset ),\quad \Mp =\mathrm{Mod}(\Sigma _{0}; \B -\{ p_{2n+2}\} , \{ p_{2n+2}\} ),\\
\Mb &=\mathrm{Mod}(\Sigma _{0}^1; \B -\{ p_{2n+2}\}, \partial \Sigma _{0}^1),
\end{align*}
and for $g\geq 1$, 
\begin{align*}
\Mg &=\mathrm{Mod}(\Sigma _g; \emptyset , \emptyset ),\quad \Mgp =\mathrm{Mod}(\Sigma _g; \emptyset , \{ \widetilde{p}_{2n+2}\}),\text{ and}\\
\Mgb &=\mathrm{Mod}(\Sigma _g^1; \emptyset , \partial \Sigma _{g}^1).
\end{align*}
We regard $\Mp $ as the subgroup of $\M $ which consists of elements preserving the point $p_{2n+2}$. 
%$\M =\mathrm{Mod}(\Sigma _{0}; \B , \emptyset )$, $\Mp =\mathrm{Mod}(\Sigma _{0}; \B -\{ p_{2n+2}\} , \{ p_{2n+2}\} )$, $\Mb =\mathrm{Mod}(\Sigma _{0}^1; \B -\{ p_{2n+2}\}, \partial \Sigma _{0}^1)$, and for $g=n(k-1)\geq 1$, $\Mg =\mathrm{Mod}(\Sigma _g; \emptyset , \emptyset )$, $\Mgp =\mathrm{Mod}(\Sigma _g; \emptyset , p^{-1}(p_{2n+1}))$, and $\Mgb =\mathrm{Mod}(\Sigma _g^1; \emptyset , \partial \Sigma _{g}^1)$. 

For $g=n(k-1)\geq 1$, an orientation-preserving self-homeomorphism $\varphi $ on $\Sigma _{g}$ (resp. on $\Sigma _{g}^1$ whose restriction to $\partial \Sigma _{g}^1$ is the identity map) is \textit{symmetric} for $\zeta =\zeta _{g,k}$ if $\varphi \left< \zeta \right> \varphi ^{-1}=\left< \zeta \right> $ (resp. $\varphi \left< \zeta |_{\Sigma _g^1} \right> \varphi ^{-1}=\left< \zeta |_{\Sigma _g^1} \right> $). 
We abuse notation and denote simply $\zeta |_{\Sigma _g^1}=\zeta $. 
The \textit{balanced superelliptic mapping class group} (or the \textit{symmetric mapping class group}) $\SM $ (resp. $\SMp$ and $\SMb$) is the subgroup of $\Mg $ (resp. $\Mgp$ and $\Mgb$) which consists of elements represented by symmetric homeomorphisms. 
Ghaswala and McLeay~\cite{Ghaswala-McLeay} showed that any element in $\SMb $ are represented by a homeomorphism which commutes with $\zeta $. 
Birman and Hilden~\cite{Birman-Hilden3} showed that $\SM $ (resp. $\SMp$ and $\SMb$) coincides with the group of symmetric isotopy classes of symmetric homeomorphisms on $\Sigma _g$ (resp. $(\Sigma _g, \widetilde{p}_{2n+2})$ and $\Sigma _g^1$). 

An orientation-preserving self-homeomorphism $\varphi $ on $\Sigma _{0}$ (resp. on $\Sigma _{0}^1$ whose restriction to $\partial \Sigma _{0}^1$ is the identity map) is \textit{liftable} with respect to %the balanced superelliptic covering map 
$p=p_{g,k}$ if there exists an orientation-preserving self-homeomorphism $\widetilde{\varphi }$ on $\Sigma _{g}$ (resp. on $\Sigma _{g}^1$ whose restriction to $\partial \Sigma _{g}^1$ is the identity map) such that $p\circ \widetilde{\varphi }=\varphi \circ p$ (resp. $p|_{\Sigma _g^1}\circ \widetilde{\varphi }=\varphi \circ p|_{\Sigma _g^1}$), namely, the following diagrams commute: 
\[
\xymatrix{
\Sigma _g \ar[r]^{\widetilde{\varphi }} \ar[d]_p &  \Sigma _{g} \ar[d]^p & \Sigma _g^1 \ar[r]^{\widetilde{\varphi }}\ar[d]_{p|_{\Sigma _g^1}}  &  \Sigma _{g}^1\ar[d]^{p|_{\Sigma _g^1}} \\
\Sigma _{0}  \ar[r]_{\varphi } &\Sigma _{0}, \ar@{}[lu]|{\circlearrowright} & \Sigma _{0}^1  \ar[r]_{\varphi } &\Sigma _{0}^1. \ar@{}[lu]|{\circlearrowright}
}
\] 
We also abuse notation and denote simply $p|_{\Sigma _g^1}=p$. 
The \textit{liftable mapping class group} $\mathrm{LMod}_{2n+2;k}$ (resp. $\mathrm{LMod}_{2n+2,\ast ;k}$ and $\mathrm{LMod}_{2n+1;k}^1$) is the subgroup of $\M $ (resp. $\Mp $ and $\Mb $) which consists of elements represented by liftable homeomorphisms for $p_{g,k}$. 
Birman and Hilden~\cite{Birman-Hilden2} proved that $\mathrm{LMod}_{2n+1;k}^1$ is isomorphic to $\SMb $ and $\mathrm{LMod}_{2n+2;k}$ is isomorphic to the quotient of $\SM $ by $\left< \zeta _{g,k}\right> $.  
By Proposition~\ref{prop_smod_char}, we also show that $\mathrm{LMod}_{2n+2,\ast ;k}$ is isomorphic to the quotient of $\SMp$ by $\left< \zeta _{g,k}\right> $. 

When $k=2$, $\zeta =\zeta _{g,2}$ is the hyperelliptic involution and corresponding symmetric mapping class groups are called the hyperelliptic mapping class groups.  
In this case, $\mathrm{LMod}_{2n+2;2}$ (resp. $\mathrm{LMod}_{2n+2,\ast ;2}$ and $\mathrm{LMod}_{2n+1;2}^1$) is equal to $\M $ (resp. $\Mp $ and $\Mb $) and a finite presentation for $\mathrm{SMod}_{g;2}$ was given by Birman and Hilden~\cite{Birman-Hilden1}. 

When $k\geq 3$, in Lemma~3.6 of \cite{Ghaswala-Winarski1}, Ghaswala and Winarski gave a necessary and sufficient condition for lifting a homeomorphism on $\Sigma _0$ with respect to $p_{g,k}$ (see also Lemma~\ref{lem_GW}). 
By their necessary and sufficient condition, we see that the liftability of a self-homeomorphism on $(\Sigma _0, \B )$ or $(\Sigma _0^1, \B -\{ p_{2n+2}\} )$ does not depend on $k\geq 3$ (the liftability depends on only the action on $\B $). 
%$\mathrm{LMod}_{2n+2;k}$ is a proper subgroup of $\M $ and for any integers $l>k\geq 3$, $\mathrm{LMod}_{2n+2;k}$ coincides with $\mathrm{LMod}_{2n+2;l}$. 
%By Lemma~?, we also show that $\mathrm{LMod}_{2n+2,\ast ;k}=\mathrm{LMod}_{2n+2,\ast ;l}$ and $\mathrm{LMod}_{2n+1;k}^1=\mathrm{LMod}_{2n+1;l}^1$ for $l>k\geq 3$, h
Hence we omit ``$k$'' in the notation of the liftable mapping class groups for $k\geq 3$ (i.e. we express $\mathrm{LMod}_{2n+2;k}=\LM $, $\mathrm{LMod}_{2n+2,\ast ;k}=\LMp $, and $\mathrm{LMod}_{2n+1;k}^1=\LMb $ for $k\geq 3$).  
Ghaswala and Winarski constructed a finite presentation for $\LM $ and calculated the integral first homology group of $\LM $\footnote{The calculation of the first homology group in \cite{Ghaswala-Winarski1} includes a mistake. The error was corrected in \cite{Ghaswala-Winarski1-1}}.%The correct formula of Lemma~6.3 in \cite{Ghaswala-Winarski1} is ``$B^{n^2}=A^{-n^2-n}$''. If we correct Lemma~6.3 in \cite{Ghaswala-Winarski1} in that way, then the abelianization of $\LM $ which is obtained from Lemma~6.5 in \cite{Ghaswala-Winarski1} coincides with our abelianization in Theorem~\ref{thm_abel_lmod}.}. 

In this paper, we give another finite presentation for $\LM $ in a different generating set from Ghaswala-Winarski's presentation in~\cite{Ghaswala-Winarski1} and finite presentations for the groups $\LMp$, $\LMb$, $\SM $, $\SMp $, and $\SMb $ (Theorems~\ref{thm_pres_lmodb}, \ref{thm_pres_lmodp}, \ref{thm_pres_lmod}, \ref{thm_pres_smodb}, \ref{thm_pres_smodp}, and \ref{thm_pres_smod}), and calculate their integral first homology groups. 
The integral first homology group $H_1(G)$ of a group $G$ is isomorphic to the abelianization of $G$. 
Put $\Z _l=\Z /l\Z $ for an integer $l\geq 2$. 
The results about the integral first homology groups of the liftable mapping class groups and the balanced superelliptic mapping class groups are as follows:

\begin{thm}\label{thm_abel_lmod}
For $n\geq 1$,
\begin{enumerate}
\item $H_1(\LM )\cong \left\{ \begin{array}{ll}
 \Z \oplus \Z _2\oplus \Z _{2}&\text{if }  n \text{ is odd},   \\
 \Z \oplus \Z _{2}&\text{if }  n \text{ is even},
 \end{array} \right.$\\
\item $H_1(\LMp )\cong \left\{ \begin{array}{ll}
 \Z \oplus \Z _2&\text{if }  n=1,   \\
 \Z ^2\oplus \Z _{n}&\text{if }  n\geq 2 \text{ is even},\\
 \Z ^2\oplus \Z _{2n}&\text{if }  n\geq 3 \text{ is odd},
 \end{array} \right.$\\
\item $H_1(\LMb )\cong \left\{ \begin{array}{ll}
 \Z ^2&\text{if }  n=1,   \\
 \Z ^3&\text{if }  n\geq 2.
 \end{array} \right.$\\
\end{enumerate}
\end{thm}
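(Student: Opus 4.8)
The plan is to obtain each group as an explicit abelian group by abelianizing the finite presentations furnished by Theorems~\ref{thm_pres_lmodb}, \ref{thm_pres_lmodp}, and \ref{thm_pres_lmod}. For a finitely presented group $G=\langle x_1,\dots ,x_m\mid r_1,\dots ,r_\ell\rangle$ one has $H_1(G)\cong \Z^m/\operatorname{im}(A^{\mathsf T})$, where $A=(a_{ij})$ is the integer matrix whose entry $a_{ij}$ is the exponent sum of $x_j$ in $r_i$. So for each of $\LMb$, $\LMp$, and $\LM$ I would write down this relation matrix, discard the rows coming from commutator- and conjugation-type relators (these abelianize to the zero row, or to a trivial identity between two generators), reduce the surviving matrix to Smith normal form, and read off the free rank and the invariant factors.

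I would carry out the three computations in the order $\LMb\to\LMp\to\LM$, exploiting the capping homomorphism between the first two. The boundary Dehn twist about $\partial\Sigma_0^1$ is liftable (it acts trivially on $\B$, so Lemma~\ref{lem_GW} applies) and central, and capping the boundary with a marked point realizes $\LMp$ as a central quotient $1\to\Z\to\LMb\to\LMp\to1$. For $\LMb$ I expect every defining relation to abelianize either trivially or to an identification of two generators, so that the relation matrix has only unit (or zero) invariant factors and $H_1(\LMb)$ is free; exhibiting an explicit basis (the boundary twist together with one or two liftable twist classes) and checking that the case $n=1$ drops one basis element then yields $\Z^2$ for $n=1$ and $\Z^3$ for $n\ge 2$. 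The associated five-term homology sequence
\[
H_2(\LMp)\longrightarrow \Z\longrightarrow H_1(\LMb)\longrightarrow H_1(\LMp)\longrightarrow 0
\]
serves as a consistency check: the central $\Z$ accounts for the drop by one in free rank from $\LMb$ to $\LMp$, and the cokernel of its image must be exactly the claimed $H_1(\LMp)$.

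The torsion is the delicate point, and it is where the parity of $n$ enters. For $\LMp$ I would isolate the defining relation(s) whose abelianization carries a coefficient depending on $n$, expecting these to contribute, after Smith reduction, the invariant factor $2n$ when $n$ is odd and $n$ when $n$ is even (matching $\Z_{2n}$ versus $\Z_{n}$, and collapsing to $\Z_2$ in the degenerate rank-one case $n=1$). For the closed group $\LM$ I would work directly from its presentation in Theorem~\ref{thm_pres_lmod}: here I expect one invariant factor equal to $2$ to appear unconditionally, and a further relation whose abelianized coefficient is even exactly when $n$ is even, so that a second factor of $2$ survives only for odd $n$. Note that $\LMp\subset\LM$ is a finite-index point-stabilizer subgroup rather than a quotient, so these two homology groups are \emph{not} related by a short exact sequence; I would use the inclusion and the action on $\B$ only as a sanity check on the orders of the resulting torsion.

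The main obstacle is the bookkeeping. The presentations have many generators and a mixture of braid-, commutator-, and conjugation-type relators, and the crux is to verify honestly that, after abelianization, no unexpected coincidences among generators or further reductions of the invariant factors occur, so that the Smith normal form is exactly as stated with the correct $n\bmod 2$ dependence. The low-rank cases ($n=1$, and to a lesser extent the first even value $n=2$) must be treated separately, since there some generators coincide or some relators degenerate, which is precisely where the free rank and the torsion of the general formulas change.
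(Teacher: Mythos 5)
Your proposal is correct and follows essentially the same route as the paper: the paper computes each $H_1$ by abelianizing the presentations of Theorems~\ref{thm_pres_lmodb}, \ref{thm_pres_lmodp}, and \ref{thm_pres_lmod} (the commutator/conjugation relators collapse to identifications $t_{i,i+1}=t_{i+1,i+2}$, $h_i=h_{i+2}$; the $n$-dependent torsion comes exactly from abelianizing $t_{1,2n+1}=1$ via relation~$(T_{1,2n+1})$ for $\LMp$, and from the relations~(4)(b), (4)(c), (5)(d) reducing to powers of $t_{1,2}^{-n+1}h_1^{2n}$ together with $r^2=1$ for $\LM$), then performs the same invariant-factor reduction you describe. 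Your use of the central extension $1\to\Z\to\LMb\to\LMp\to 1$ matches Proposition~\ref{prop_exact_lmodb}, on which the paper's presentation of $\LMp$ is based.
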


We remark that Theorem~\ref{thm_abel_lmod} for $\LM $ was given by Theorem~1.1 in \cite{Ghaswala-Winarski1-1}. 

\begin{thm}\label{thm_abel_smod}
For $n\geq 1$ and $k\geq 3$ with $g=n(k-1)$,
\begin{enumerate}
\item $H_1(\SM )\cong \left\{ \begin{array}{ll}
 \Z \oplus \Z _2\oplus \Z _2&\text{if }  n\geq 1\text{ is odd and }k\text{ is odd},   \\
 \Z \oplus \Z _2\oplus \Z _4&\text{if }  n\geq 1\text{ is odd and }k\text{ is even},   \\ 
 \Z \oplus \Z _2 &\text{if }  n\geq 2\text{ is even},
 \end{array} \right.$\\
\item $H_1(\SMp )\cong \left\{ \begin{array}{ll}
 \Z \oplus \Z _{2k}&\text{if }  n=1,   \\
 \Z ^2\oplus \Z _{kn}&\text{if }  n\geq 2 \text{ is even},\\
 \Z ^2\oplus \Z _{2kn}&\text{if }  n\geq 3 \text{ is odd},
 \end{array} \right.$\\
\item $H_1(\SMb )\cong \left\{ \begin{array}{ll}
 \Z ^2&\text{if }  n=1,   \\
 \Z ^3&\text{if }  n\geq 2.
 \end{array} \right.$\\
\end{enumerate}
\end{thm}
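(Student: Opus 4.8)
The plan is to combine the Birman--Hilden descriptions of the symmetric groups with the finite presentations established earlier (Theorems~\ref{thm_pres_smodb}, \ref{thm_pres_smodp}, and~\ref{thm_pres_smod}) and the already-known homology of the liftable groups (Theorem~\ref{thm_abel_lmod}). Since $H_1$ of a finitely presented group is the abelianization of any presentation, every clause reduces to a Smith normal form computation on an explicit integer relation matrix; the real content is organizing that matrix so that the dependence on the parities of $n$ and $k$ becomes visible. Statement~(3) is immediate: the isomorphism $\SMb \cong \LMb$ from the introduction gives $H_1(\SMb) \cong H_1(\LMb)$, which is Theorem~\ref{thm_abel_lmod}(3), yielding $\Z^2$ for $n=1$ and $\Z^3$ for $n\geq 2$.

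For statements~(1) and~(2) I would use the central extensions
\[
1 \longrightarrow \langle \zeta \rangle \longrightarrow \SM \longrightarrow \LM \longrightarrow 1, \qquad 1 \longrightarrow \langle \zeta \rangle \longrightarrow \SMp \longrightarrow \LMp \longrightarrow 1
\]
supplied by the Birman--Hilden theory and by Proposition~\ref{prop_smod_char}, in which $\langle \zeta \rangle \cong \Z_k$ is central because symmetric homeomorphisms may be taken to commute with $\zeta$ (Ghaswala--McLeay). The presentation of $\SM$ is then obtained from that of $\LM$ by lifting each generator, adjoining a central generator $\zeta$ with $\zeta^k = 1$, and recording, for each defining relator $r$ of $\LM$, the power $\zeta^{c_r}$ by which the chosen lift of $r$ fails to be trivial in $\SM$; the same scheme relates the presentation of $\SMp$ to that of $\LMp$. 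After abelianizing, the lifted generators satisfy exactly the relations defining $H_1(\LM)$, but now set equal to the multiples $c_r\,\bar\zeta$ of the central class, together with $k\bar\zeta = 0$. Thus the entire computation is governed by the subgroup $\langle c_r\rangle \leq \Z_k$ of twisting exponents.

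The main obstacle is to determine these exponents, and this is where the parities enter and where the closed and marked cases diverge. I would compute them geometrically, taking the preferred Birman--Hilden lifts of the generating Dehn twists and half-twists and tracking, relator by relator, the power of $\zeta$ produced by the lift. For the closed group $\SM$ I expect the exponents to generate all of $\Z_k$ when $n$ is even and the index-$\gcd(2,k)$ subgroup when $n$ is odd, so that the central class $\bar\zeta$ is absorbed except for a residual $\Z_2$ precisely when $n$ is odd and $k$ is even; that $\Z_2$ then fuses with one of the order-two summands of $H_1(\LM)$ to produce the $\Z_4$ in~(1). For the marked group $\SMp$ I expect all the exponents to vanish, so that $\bar\zeta$ survives with order $k$ and combines non-split with the torsion summand of $H_1(\LMp)$, multiplying its order by $k$: this turns $\Z_2$ into $\Z_{2k}$ for $n=1$, $\Z_n$ into $\Z_{kn}$ for $n$ even, and $\Z_{2n}$ into $\Z_{2kn}$ for $n$ odd. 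A point requiring care is that the five-term exact sequence of the central extension determines $\bar\zeta$ only up to an abelian extension, so these non-split fusions must be read off from the full abelianized relation matrix rather than from the transgression alone; the Smith normal form, once the exponents are known, then delivers the stated groups.
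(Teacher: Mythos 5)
Your overall strategy is the paper's own: statement (3) via the Birman--Hilden isomorphism $\SMb\cong\LMb$ together with Theorem~\ref{thm_abel_lmod}(3), and statements (1), (2) by abelianizing presentations of $\SM$ and $\SMp$ obtained by adjoining $\zeta$ to presentations of $\LM$, $\LMp$ -- which is exactly how Theorems~\ref{thm_pres_smod} and~\ref{thm_pres_smodp} are produced and then exploited in Section~\ref{section_abel-smod}. Your predicted answers are the correct ones, and your description of the closed case (a residual class of order $2$ from $\zeta$ precisely when $n$ is odd and $k$ is even, fusing with an order-two class of $H_1(\LM)$ into $\Z_4$) matches the paper's computation. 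However, two of your intermediate claims are false, and one of them would derail the computation if followed literally.

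First, the extension $1\to\langle\zeta\rangle\to\SM\to\LM\to1$ is \emph{not} central: by Lemma~\ref{lift-r} (equivalently relation (5)(f) of Theorem~\ref{thm_pres_smod}), $\widetilde{r}\zeta\widetilde{r}^{-1}=\zeta^{-1}$. The Ghaswala--McLeay commuting statement applies to $\SMb$ (and descends to $\SMp$), not to the closed group, where parity-reversing classes invert $\zeta$. Second, and more seriously, for $\SMp$ you assert that all twisting exponents $c_r$ vanish \emph{and} that $\bar\zeta$ fuses non-split with the torsion of $H_1(\LMp)$; these are incompatible. Since the extension for $\SMp$ is central, if every relator of $\LMp$ lifted to the identity then the lifts would define a splitting, giving $H_1(\SMp)\cong H_1(\LMp)\oplus\Z_k$, e.g.\ $\Z^2\oplus\Z_{2n}\oplus\Z_k$ for odd $n\geq 3$, which differs from the correct $\Z^2\oplus\Z_{2kn}$ whenever $\gcd(2n,k)>1$. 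In fact the relator $t_{1,2n+1}$ (relation (4)(b) of Theorem~\ref{thm_pres_lmodp}) lifts to $\zeta$ itself, so $c_r=1$; eliminating $\zeta$ by this relation and imposing $\zeta^k=1$ is precisely what produces the paper's relation $(\widetilde{t}_{1,2}^{-n+1}\widetilde{h}_1^{n+1}\widetilde{h}_2^{n-1})^{kn}=1$ and hence the $\Z_{kn}$ and $\Z_{2kn}$ torsion. The same conflation undermines your guiding principle that the computation is governed by $\langle c_r\rangle$: nonzero exponents $c_r=\pm1$ occur for both groups (for $\SM$ the relators $rt_{1,j}r^{-1}t_{1,2n-j+2}^{-1}$ with $j$ odd lift to $\zeta^{-1}$, relation (5)(e) of Theorem~\ref{thm_pres_smod}), so $\langle c_r\rangle=\Z_k$ in every case and cannot detect the parities of $n$ and $k$. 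What distinguishes the cases is the kernel of $\Z_k\to H_1$, i.e.\ the values $\sum_r a_r c_r$ over integer combinations with $\sum_r a_r\bar{r}=0$ in the free abelian group on the generators; this must be extracted, as the paper does, from the full abelianized relation matrix. Your closing caveat gestures at this, but the exponent-subgroup heuristic has to be discarded, not merely supplemented, for the argument to go through.
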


The explicit generators of the first homology groups in Theorem~\ref{thm_abel_lmod}~and~\ref{thm_abel_smod} are given in their proof in Section~\ref{section_abel-lmod} and~\ref{section_abel-smod}. 
To give presentations for the liftable mapping class groups in Theorem~\ref{thm_pres_lmodb}, \ref{thm_pres_lmodp}, and \ref{thm_pres_lmod}, we construct a new finite presentation for the pure mapping class group of a 2-sphere in Proposition~\ref{prop_pres_pmod}. 
A finite presentation for the pure mapping class group of a 2-sphere was already given by Ghaswala and Winarski~\cite{Ghaswala-Winarski1}. 
They used a finite presentation for the pure braid group by Margalit and McCammond~\cite{Margalit-McCammond} to construct the presentation for the pure mapping class group of a 2-sphere.

Contents of this paper are as follows. 
In Section~\ref{Preliminaries}, we review the definition of the balanced superelliptic covering map $p_{g,k}$ and Ghaswala-Winarski's necessary and sufficient condition for lifting a homeomorphism on $\Sigma _0$ with respect to $p_{g,k}$. 
We introduce some liftable homeomorphisms on $\Sigma _0$ for $p_{g,k}$ in Section~\ref{section_liftable-element} and review some relations among their liftable elements in Section~\ref{section_relations_liftable-elements}. 
In Section~\ref{section_pmod}, we give a new finite presentation for the pure mapping class group of a 2-sphere (Proposition~\ref{prop_pres_pmod}).
In Section~\ref{section_birman-exact-seq}, we observe the group structures of the groups $\LMp $, $\LMb $, and $\SMb $ via group extensions (Propositions~\ref{prop_exact_lmodp}, \ref{prop_exact_lmodb}, and~\ref{prop_exact_smodb}) and characterize $\SMp $ as the image of the forgetful map $\mathcal{F}\colon \SMp \to \SM $ (Proposition~\ref{prop_smod_char}). 
In Section~\ref{section_lmod}, we give finite presentations for the liftable mapping class groups (Theorem~\ref{thm_pres_lmodb}, \ref{thm_pres_lmodp}, and \ref{thm_pres_lmod}) and calculate their integral first homology groups in Section~\ref{section_abel-lmod}. 
Finally, in Section~\ref{section_smod}, we give finite presentations for the balanced superelliptic mapping class groups (Theorem~\ref{thm_pres_smodb}, \ref{thm_pres_smodp}, and \ref{thm_pres_smod}) and calculate their integral first homology groups in Section~\ref{section_abel-smod}. 
In Section~\ref{section_lifts}, we construct explicit lifts of liftable homeomorphisms introduced in Section~\ref{section_liftable-element} with respect to $p_{g,k}$. 
%\subsection{Main result}\label{section_main-result}

\section{Preliminaries}\label{Preliminaries}

Throughout this paper, we assume that $n$ is a positive integer. 

\subsection{The balanced superelliptic covering space}\label{section_bscov}

%Let $\Sigma _{g}$ be a connected closed oriented surface of genus $g\geq 0$. 
For an integer $k\geq 2$ with $g=n(k-1)$, we describe the surface $\Sigma _{g}$ as follows. 
We take the unit 2-sphere $S^2=S(1)$ in $\R ^3$ and $n$ mutually disjoint parallel copies $S(2),\ S(3),\ \dots ,\ S(n+1)$ of $S(1)$ by translations along the x-axis such that 
\[
\max \bigl( S(i)\cap (\R \times \{ 0\}\times \{ 0\} )\bigr) <\min \bigl( S(i+1)\cap (\R \times \{ 0\}\times \{ 0\} )\bigr)
\]
for $1\leq i\leq n$ (see Figure~\ref{fig_bs_periodic_map}). 
Let $\zeta$ be the $(-\frac{2\pi }{k})$-rotation of $\R ^3$ on the $x$-axis. 
Then we remove $2k$ disjoint open disks in $S(i)$ for $2\leq i\leq n$ and $k$ disjoint open disks in $S(i)$ for $i\in \{ 1,\ n+1\}$ which are setwisely preserved by the action of $\zeta $, and connect $k$ boundary components of the punctured $S(i)$ and ones of the punctured $S(i+1)$ by $k$ annuli such that the union of the $k$ annuli is preserved by the action of $\zeta $ for each $1\leq i\leq n$ as in Figure~\ref{fig_bs_periodic_map}. 
Since the union of the punctured $S(1)\cup S(2)\cup \cdots \cup S(n+1)$ and the attached $n\times k$ annuli is homeomorphic to $\Sigma _{g=n(k-1)}$, 
%where $g=n(k-1)$, 
we regard this union as $\Sigma _g$. 

By the construction above, the action of $\zeta $ on $\R ^3$ induces the action on $\Sigma _g$, the quotient space $\Sigma _g/\left< \zeta \right>$ is homeomorphic to $\Sigma _0$, and the quotient map $p=p_{g,k}\colon \Sigma _g\to \Sigma _0$ is a branched covering map with $2n+2$ branch points in $\Sigma _0$. 
We call the branched covering map $p\colon \Sigma _g\to \Sigma _0$ the \textit{balanced superelliptic covering map}. 
Denote by $\widetilde{p}_1,\ \widetilde{p}_2,\ \dots ,\ \widetilde{p}_{2n+2}\in \Sigma _g$ the fixed points of $\zeta $ such that $\widetilde{p}_i<\widetilde{p}_{i+1}$ in $\R \times \{ 0\} \times \{ 0\} =\R$ for $1\leq i\leq 2n+1$, by $p_i\in \Sigma _0$ for $1\leq i\leq 2n+2$ the image of $\widetilde{p}_i$ by $p$ (i.e. $p_1,\ p_2,\ \dots ,\ p_{2n+2}\in \Sigma _g$ are branch points of $p$), and by $\B $ the set of the branch points $p_1,\ p_2,\ \dots ,\ p_{2n+2}$. 
Let $D$ be a 2-disk in $(\Sigma _0-\B )\cup \{ p_{2n+2}\}$ whose interior includes the point $p_{2n+2}$ and $\widetilde{D}$ the preimage of $D$ with respect to $p$. 
Denote by $\Sigma _g^1$ the complement of interior of $\widetilde{D}$ in $\Sigma _g$ and by $\Sigma _0^1$ the complement of interior of $D$ in $\Sigma _0$. 
Then we also call the restriction $p|_{\Sigma _g^1}\colon \Sigma _g^1\to \Sigma _0^1$ the balanced superelliptic covering map and we denote simply $p|_{\Sigma _g^1}=p$. 

\begin{figure}[h]
\includegraphics[scale=1.5]{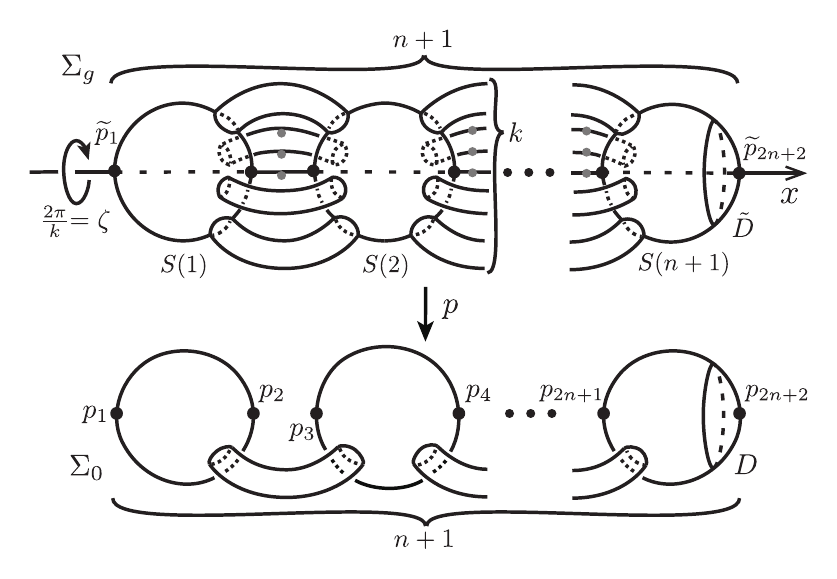}
\caption{The balanced superelliptic covering map $p=p_{g,k}\colon \Sigma _g\to \Sigma _0$.}\label{fig_bs_periodic_map}
\end{figure}

\subsection{Liftable elements for the balanced superelliptic covering map}\label{section_liftable-element}

In this section, we introduce some liftable homeomorphisms on $\Sigma _0$ for the balanced superelliptic covering map $p=p_{g,k}$ for $k\geq 3$. 
For maps or mapping classes $f$ and $g$, the product $gf$ means that $f$ apply first, and we often abuse notation and denote a homeomorphism and its isotopy class by the same symbol.  

Let $l_i$ $(1\leq i\leq 2n+1)$ be an oriented simple arc on $\Sigma _0$ whose endpoints are $p_i$ and $p_{i+1}$ as in Figure~\ref{fig_path_l}. 
Put $L=l_1\cup l_2\cup \cdots \cup l_{2n+1}$, $L^1=L\cap \Sigma _0^1$, and $\B ^1=\B -\{ p_{2n+2}\}$. 
The isotopy class of a homeomorphism $\varphi $ on $\Sigma _0$ (resp. $\Sigma _0^1$) relative to $\B $ (resp. $\B ^1\cup \partial \Sigma _0^1$) is determined by the isotopy class of the image of $L$ (resp. $L^1$) by $\varphi $ relative to  $\B $ (resp. $\B ^1\cup \partial \Sigma _0^1$). 
We identify $\Sigma _0$ with the surface on the lower side in Figure~\ref{fig_path_l} by some homeomorphism of $\Sigma _0$. 

\begin{figure}[h]
\includegraphics[scale=1.5]{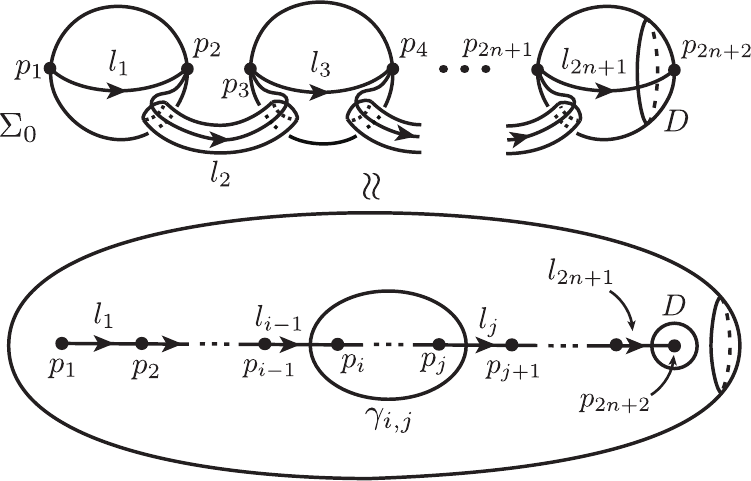}
\caption{A natural homeomorphism of $\Sigma _0$, arcs $l_1,\ l_2,\ \dots ,\ l_{2n+1}$, and a simple closed curve $\gamma _{i,j}$ on $\Sigma _0$ for $1\leq i<j\leq 2n+2$.}\label{fig_path_l}
\end{figure}

Let $l$ be a simple arc on $\Sigma _0$ whose endpoints lie in $\B $. 
A regular neighborhood $\mathcal{N}$ of $l$ in $\Sigma _0$ is homeomorphic to a 2-disk. 
Then the half-twist $\sigma [l]$ is a self-homeomorphism on $\Sigma _0$ which is described as the result of anticlockwise half-rotation of $l$ in $\mathcal{N}$ as in Figure~\ref{fig_sigma_l}. 
We define $\sigma _i=\sigma [l_i]$ for $1\leq i\leq 2n+1$. 
As a well-known result, $\M $ is generated by $\sigma _1$, $\sigma _2,\ \dots $, $\sigma _{2n+1}$ %and $\Mp $ and $\Mb$ are generated by $\sigma _1$, $\sigma _2,\ \dots $, $\sigma _{2n}$ 
(see for instance Section~9.1.4 in \cite{Farb-Margalit}). 
Let $\mathrm{Map}(\B )$ be the group of self-bijections on $\B $. 
Since $\mathrm{Map}(\B )$ is naturally identified with the symmetric group $S_{2n+2}$ of degree $2n+2$, the action of $\M $ on $\B$ induces the surjective homomorphism
\[
\Psi \colon \M \to S_{2n+2}
\]
given by $\Psi (\sigma _i)=(i\ i+1)$.  

\begin{figure}[h]
\includegraphics[scale=1.1]{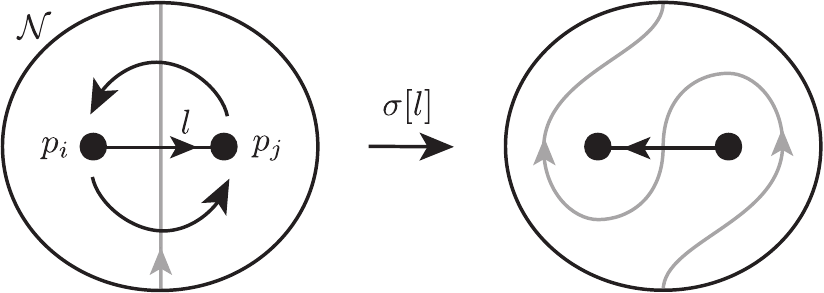}
\caption{The half-twist $\sigma [l]$ along an arc $l$.}\label{fig_sigma_l}
\end{figure}

Put $\B _o=\{ p_1,\ p_3,\ \dots ,\ p_{2n+1}\}$ and $\B _e=\{ p_2,\ p_4,\ \dots ,\ p_{2n+2}\}$. 
An element $\sigma $ in $S_{2n+2}$ is \textit{parity-preserving} if $\sigma (\B _o)=\B _o$, and is \textit{parity-reversing} if $\sigma (\B _o)=\B _e$. 
An element $f$ in $\M $ is \textit{parity-preserving} (resp. \textit{parity-reversing}) if $\Psi (f)$ is \textit{parity-preserving} (resp. \textit{parity-reversing}). 
Let $W_{2n+2}$ be the subgroup of $S_{2n+2}$ which consists of parity-preserving or parity-reversing elements, $S_{n+1}^o$ (resp. $S_{n+1}^e$) the subgroup of $S_{2n+2}$ which consists of elements whose restriction to $\B _e$ (resp. $\B _o$) is the identity map.  
Note that $S_{n+1}^o$ (resp. $S_{n+1}^e$) is a subgroup of $W_{2n+2}$, isomorphic to $S_{n+1}$, and generated by transpositions $(1\ 3)$, $(3\ 5),\ \dots $, $(2n-1\ 2n+1)$ (resp. $(2\ 4)$, $(4\ 6),\ \dots $, $(2n\ 2n+2)$). 
Then we have the following exact sequence:
\begin{eqnarray}\label{exact1}
1\longrightarrow S_{n+1}^o\times S_{n+1}^e\longrightarrow W_{2n+2}\stackrel{\pi }{\longrightarrow }\Z _2\longrightarrow 1, 
\end{eqnarray}
where the homomorphism $\pi \colon W_{2n+2}\to \Z _2$ is defined by $\pi (\sigma )=0$ if $\sigma $ is parity-preserving and $\pi (\sigma )=1$ if $\sigma $ is parity-reversing. 
Ghaswala and Winarski~\cite{Ghaswala-Winarski1} proved the following lemma. 
\begin{lem}[Lemma~3.6 in \cite{Ghaswala-Winarski1}]\label{lem_GW}
Let $\LM $ be the liftable mapping class group for the balanced superelliptic covering map $p_{g,k}$ for $n\geq 1$ and $k\geq3$ with $g=n(k-1)$. 
Then we have
\[
\LM =\Psi ^{-1}(W_{2n+2}).
\]
\end{lem}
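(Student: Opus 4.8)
The plan is to reduce liftability to a purely algebraic condition on the monodromy of the underlying cyclic covering, and then to read off the parity constraint on the induced permutation. The restriction of $p=p_{g,k}$ to $\Sigma _0-\B $ is a regular cyclic covering with deck group $\left< \zeta \right> \cong \Z _k$, classified by a surjection $\omega \colon \pi _1(\Sigma _0-\B ,\ast )\to \Z _k$. Choosing standard meridian generators $x_1,\dots ,x_{2n+2}$ (with $x_i$ a small loop encircling $p_i$ and $x_1x_2\cdots x_{2n+2}=1$), the defining ``balanced'' data of the covering is the alternating local monodromy
\[
\omega (x_i)=(-1)^{i-1}\in \Z _k,\qquad 1\le i\le 2n+2,
\]
so that each branch point is totally ramified (the value $\pm 1$ being a unit of $\Z _k$) and $\sum _i\omega (x_i)=0$, consistent with $\prod _i x_i=1$.

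First I would invoke the standard lifting criterion for covering spaces: a homeomorphism $\varphi $ of $\Sigma _0$ fixing $\B $ setwise lifts along $p$ (i.e. admits $\widetilde{\varphi }$ with $p\circ \widetilde{\varphi }=\varphi \circ p$) if and only if its restriction to $\Sigma _0-\B $ lifts, which in turn holds if and only if the automorphism $\varphi _\ast $ of $\pi _1(\Sigma _0-\B )$ preserves the normal subgroup $\ker \omega $. Since $\Z _k$ is abelian, $\ker \omega $ is normal and the criterion is insensitive to basepoints; moreover $\varphi _\ast $ preserving $\ker \omega $ is equivalent, for the cyclic target, to the existence of a unit $u\in \Z _k^\times $ with $\omega \circ \varphi _\ast =u\cdot \omega $. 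I would also record that such a lift over the punctured surface extends to a homeomorphism of the closed surface $\Sigma _g$, since near each totally ramified branch point the covering is the local model $z\mapsto z^k$ and $\varphi $ merely permutes the $p_i$.

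Next I would translate this into the language of $\Psi $. Writing $\sigma =\Psi (\varphi )\in S_{2n+2}$, an orientation-preserving homeomorphism sends each meridian $x_i$ to a conjugate of $x_{\sigma (i)}$, whence $\omega (\varphi _\ast (x_i))=\omega (x_{\sigma (i)})=(-1)^{\sigma (i)-1}$. The equation $\omega \circ \varphi _\ast =u\cdot \omega $ then reads $(-1)^{\sigma (i)-1}=u\,(-1)^{i-1}$ for all $i$; evaluating at $i=1$ forces $u=(-1)^{\sigma (1)-1}\in \{ +1,-1\}$. This is the only place the hypothesis $k\ge 3$ enters: since $+1\ne -1$ in $\Z _k$, no unit other than $\pm 1$ can solve the system, so the dichotomy is genuine. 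If $u=1$ the condition becomes $\sigma (i)\equiv i\pmod 2$ for all $i$, i.e. $\sigma (\B _o)=\B _o$ ($\sigma $ parity-preserving); if $u=-1$ it becomes $\sigma (i)\equiv i+1\pmod 2$, i.e. $\sigma (\B _o)=\B _e$ ($\sigma $ parity-reversing). In both cases $\sigma \in W_{2n+2}$, and conversely for any $\sigma \in W_{2n+2}$ the choice $u=(-1)^{\pi (\sigma )}$ yields $\omega \circ \varphi _\ast =u\cdot \omega $, so $\varphi _\ast $ preserves $\ker \omega $. This settles both inclusions at once and gives $\LM =\Psi ^{-1}(W_{2n+2})$.

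The main obstacle I anticipate is not the short monodromy computation but the bookkeeping that makes the lifting criterion rigorous in the branched setting: one must verify that liftability over $\Sigma _0-\B $ really is equivalent to liftability over the closed surfaces (extension across the ramification points, and compatibility of the unbased subgroup condition with the freedom to move basepoints), and that orientation-preservation genuinely forces $\varphi _\ast (x_i)$ to be conjugate to $x_{\sigma (i)}$ with exponent $+1$ rather than $-1$. Once these standard facts are in place, the parity dichotomy and the two inclusions follow immediately from the identity $\omega (\varphi _\ast (x_i))=(-1)^{\sigma (i)-1}$.
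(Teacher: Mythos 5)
The paper contains no proof of this lemma to compare against: it is imported verbatim as Lemma~3.6 of \cite{Ghaswala-Winarski1}, and the authors only use its statement. Judged on its own merits, your argument is correct, and it essentially reconstructs the proof from that cited source: you identify $p$ restricted to $\Sigma _0-\B $ with the regular $\Z _k$-cover classified by $\omega (x_i)=(-1)^{i-1}$, invoke the lifting criterion for regular covers (a homeomorphism lifts iff $\varphi _\ast $ preserves $\ker \omega $, equivalently $\omega \circ \varphi _\ast =u\cdot \omega $ for a unit $u$), and compute $\omega (\varphi _\ast (x_i))=(-1)^{\sigma (i)-1}$ from the fact that an orientation-preserving homeomorphism carries each meridian to a conjugate of a meridian with exponent $+1$. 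Evaluating at $i=1$ to force $u=\pm 1$, and then using that $1\neq -1$ in $\Z _k$ precisely because $k\geq 3$, correctly isolates the role of the hypothesis $k\geq 3$; it also explains the paper's remark after the lemma that liftability does not depend on which $k\geq 3$ one takes. The two points you flag as needing care are genuine but standard and are handled adequately: extension of a lift across the branch points works because each $p_i$ is totally ramified (so the end compactification of $\Sigma _g-p^{-1}(\B )$ recovers $\Sigma _g$ and the lifted homeomorphism extends), and basepoint/isotopy insensitivity follows from normality of $\ker \omega $ together with commutativity of $\Z _k$, which makes the condition $\omega \circ \varphi _\ast =u\cdot \omega $ depend only on the mapping class $[\varphi ]$, so that $\LM =\Psi ^{-1}(W_{2n+2})$ indeed holds at the level of $\M $.
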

Lemma~\ref{lem_GW} implies that a mapping class $f\in \M $ lifts with respect to $p_{g,k}$ if and only if $f$ is parity-preserving or parity-reversing (in particular, when $k\geq 3$, the liftability of a homeomorphism on $\Sigma _0$ or $\Sigma _0^1$ does not depend on $k$). 
The \textit{pure mapping class group} $\PM $ is the kernel of the homomorphism $\Psi \colon \M \to S_{2n+2}$. 
Since all elements in $\PM $ is parity-preserving, $\PM $ is a subgroup of $\LM $ and we have the following exact sequence:  
\begin{eqnarray}\label{exact2}
1\longrightarrow \PM \longrightarrow \LM \stackrel{\Psi }{\longrightarrow }W_{2n+2}\longrightarrow 1. 
\end{eqnarray}

We will introduce some explicit liftable elements as follows (their explicit lifts to $\Sigma _g$ are given in Section~\ref{section_lifts}). 
\\

\paragraph{\emph{The Dehn twist $t_{i,j}$}}

For a simple closed curve $\gamma $ on $\Sigma _g$ $(g\geq 0)$, we denote by $t_\gamma $ the right-handed Dehn twist along $\gamma $. 
Let $\gamma _{i,j}$ for $1\leq i<j\leq 2n+2$ be a simple closed curve on $\Sigma _0-(\B \cup D)$ such that $\gamma _{i,j}$ surrounds the $j-i+1$ points $p_i$, $p_{i+1},\ \dots $,~$p_j$ as in Figure~\ref{fig_path_l}. 
We have $\gamma _{i,2n+2}=\gamma _{1,i-1}$ for $i\geq 3$ as an isotopy class of a curve. 
Then we define $t_{i,j}=t_{\gamma _{i,j}}$ for $1\leq i<j\leq 2n+2$. 
Note that $t_{1,2n+1}=t_{1,2n+2}=t_{2,2n+2}=1$ and $t_{i,2n+2}=t_{1,i-1}$ for $i\geq 3$ in $\M $. 
Since the Dehn twist $t_{i,j}$ preserves $\B$ pointwise, i.e. $t_{i,j}$ lies in $\PM $, $t_{i,j}$ lifts with respect to $p_{g,k}$ by Lemma~\ref{lem_GW}. \\

\paragraph{\emph{The half-twists $a_i$ and $b_i$}}

Let $\alpha _i$ (resp. $\beta _i$) for $1\leq i\leq n$ be a simple arc whose endpoints are $p_{2i-1}$ and $p_{2i+1}$ (resp. $p_{2i}$ and $p_{2i+2}$) as in Figure~\ref{fig_arcs_a_ib_i}. 
Then we define $a_i=\sigma [\alpha _i]$ and $b_i=\sigma [\beta _i]$ for $1\leq i\leq n$. 
Note that $a_i=\sigma _{2i}\sigma _{2i-1}\sigma _{2i}^{-1}$ and $b_i=\sigma _{2i+1}\sigma _{2i}\sigma _{2i+1}^{-1}$ for $1\leq i\leq n$. 
Since $\Psi (a_i)=(2i-1\ 2i+1)$ and $\Psi (b_i)=(2i\ 2i+2)$ for $1\leq i\leq n$, the mapping classes $a_i$ and $b_i$ are parity-preserving. 
Thus, by Lemma~\ref{lem_GW}, $a_i$ and $b_i$ lift with respect to $p_{g,k}$. 
The generating set of Ghaswala-Winarski's finite presentation for $\LM $ in~\cite{Ghaswala-Winarski1} consists of $a_i^{-1}$, $b_i^{-1}$ for $1\leq i\leq n$, some Dehn twists, and one parity-reversing element. \\

\begin{figure}[h]
\includegraphics[scale=1.3]{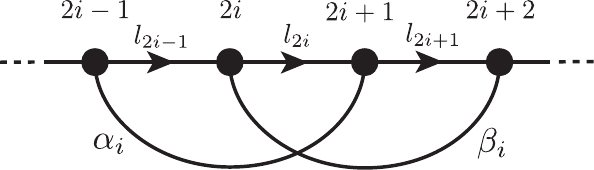}
\caption{Arcs $\alpha _i$ and $\beta _i$ $(1\leq i\leq n)$ on $\Sigma _0$.}\label{fig_arcs_a_ib_i}
\end{figure}

\paragraph{\emph{The half-rotation $h_i$}}

Let $\mathcal{N}$ be a regular neighborhood of $l_i\cup l_{i+1}$ in $(\Sigma _0-\B )\cup \{ p_{i},\ p_{i+1},\ p_{i+2}\}$ for $1\leq i\leq 2n$. 
$\mathcal{N}$ is homeomorphic to a 2-disk. 
Then we denote by $h_i$ the self-homeomorphism on $\Sigma _0$ which is described as the result of anticlockwise half-rotation of $l_i\cup l_{i+1}$ in $\mathcal{N}$ as in Figure~\ref{fig_h_i}. 
Note that $h_i=\sigma _i\sigma _{i+1}\sigma _i$ for $1\leq i\leq 2n$. 
Since $\Psi (h_i)=(i\ i+2)$ for $1\leq i\leq 2n$, the mapping class $h_i$ is parity-preserving. 
Thus, by Lemma~\ref{lem_GW}, $h_i$ lifts with respect to $p_{g,k}$. 

\begin{figure}[h]
\includegraphics[scale=0.95]{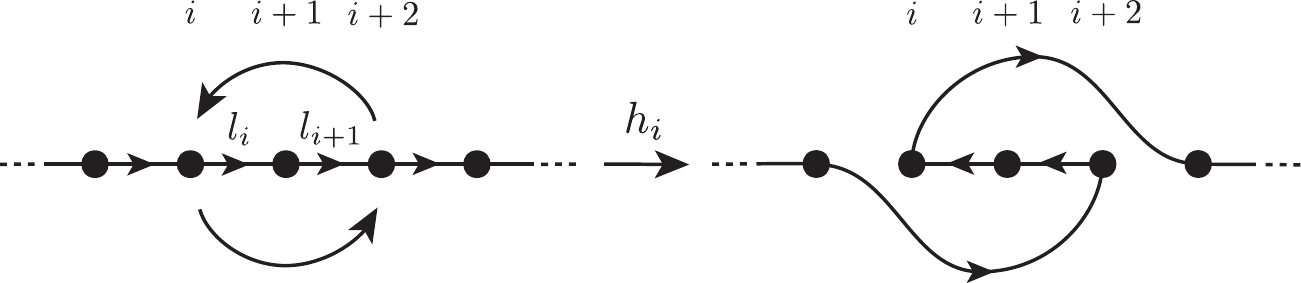}
\caption{The mapping class $h_i$ $(1\leq i\leq 2n)$ on $\Sigma _0$.}\label{fig_h_i}
\end{figure}

\paragraph{\emph{The parity-reversing element $r$}}

Let $r$ be the self-homeomorphism on $\Sigma _0$ which is described as the result of the $\pi $-rotation of $\Sigma _0$ on the axis as in Figure~\ref{fig_r}. 
We see that $r(l_{i})=l_{2n+2-i}^{-1}$ for $1\leq i\leq 2n+1$ and $\Psi (r)=(1\ 2n+2)(2\ 2n+1)\cdots (n+1\ n+2)$. 
Thus $r$ is parity-reversing and lifts with respect to $p_{g,k}$ by Lemma~\ref{lem_GW}. \\

\begin{figure}[h]
\includegraphics[scale=1.3]{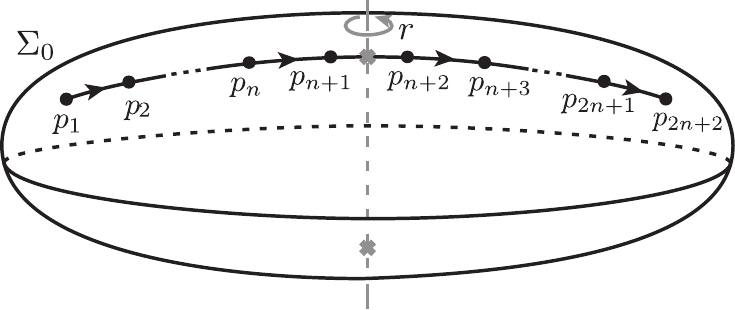}
\caption{The mapping class $r$ on $\Sigma _0$.}\label{fig_r}
\end{figure}

We can regard a mapping class in $\M $ which has a representative fixing $D$ pointwise as an element in $\Mb $. 
We regard the mapping classes $t_{i,j}$ for $1\leq i<j\leq 2n+2$, $a_i$ for $1\leq i\leq n$, $b_i$ for $1\leq i\leq n-1$, and $h_i$ for $1\leq i\leq 2n-1$ as elements in $\Mb $.

\subsection{Basic relations among liftable elements for the balanced superelliptic covering map}\label{section_relations_liftable-elements}

In this section, we review relations among the elements $h_i$, $t_{i,j}$, and $r$ introduced in Section~\ref{section_liftable-element}. 
Recall that we regard $\LMp $ as a subgroup of $\LM $ and for mapping classes $[\varphi ]$ and $[\psi ]$, $[\psi ][\varphi ]=[\psi \circ \varphi ]$, i.e. $[\varphi ]$ apply first. 
%We can see that relations below holds in $\LM $ (resp. $\Mb $), as other proofs, by observing the actions of the mapping classes on $L$ (resp. $L\cup D$). 

\paragraph{\emph{Commutative relations}}

For elements $f$ and $h$ in a group $G$, $f\rightleftarrows h$ if $f$ commutes with $h$, namely, the relation $fh=hf$ holds in $G$. 
We call such a relation a \textit{commutative relation}. 
For mapping classes $f$ and $h$, if their representatives have mutually disjoint supports, then we have $f\rightleftarrows h$ in the mapping class group. 
We have the following lemma. 

\begin{lem}\label{lem_comm_rel}
We have the following relations:
\begin{enumerate}
\item $t_{i,j} \rightleftarrows t_{k,l}$ \quad for $j<k$, $k\leq i<j\leq l$, or $l<i$ in $\LM $ and $\Mb$, 
\item $h_i \rightleftarrows h_j$ \quad for $j-i\geq 3$ in $\LM $ and for $j-i\geq 3$ and $j\not=2n$ in $\Mb $, 
\item $h_k \rightleftarrows t_{i,j}$ \quad for $k+2<i$, $i\leq k<k+2\leq j$, or $j<k$ in $\LM $ and for ``$k+2<i$, $i\leq k<k+2\leq j$, or $j<k$'' and $k\not=2n$ in $\Mb $. 
\end{enumerate}
\end{lem}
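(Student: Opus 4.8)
The plan is to prove all three families by the single geometric principle recalled just before the statement: if two mapping classes admit representatives with mutually disjoint supports, then they commute. So for each relation I would exhibit representatives supported in disjoint subsurfaces of $\Sigma _0$ (or of $\Sigma _0^1$ in the case of $\Mb$). Recall that $t_{i,j}$ is supported in an annular neighborhood of the curve $\gamma _{i,j}$, which encircles exactly $p_i,p_{i+1},\dots ,p_j$, whereas $h_k$ is supported in a disk neighborhood $\mathcal{N}$ of $l_k\cup l_{k+1}$ containing exactly the three points $p_k,p_{k+1},p_{k+2}$. Thus each relation reduces to a combinatorial check that, under the stated index conditions, the relevant annuli and disks can be isotoped off one another.

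For relation (1) I would treat the three index conditions separately. When $j<k$ or $l<i$, the point sets $\{p_i,\dots ,p_j\}$ and $\{p_k,\dots ,p_l\}$ occupy disjoint intervals along $L$, so $\gamma _{i,j}$ and $\gamma _{k,l}$ can be drawn side by side with disjoint annular neighborhoods. When $k\leq i<j\leq l$, the set $\{p_i,\dots ,p_j\}$ is contained in $\{p_k,\dots ,p_l\}$, so $\gamma _{i,j}$ can be isotoped to lie in the disk bounded by $\gamma _{k,l}$ that contains these points, giving two disjoint (concentric) annuli. In all cases the disjointness of supports yields $t_{i,j}\rightleftarrows t_{k,l}$. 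Some bookkeeping is needed because on the sphere $\gamma _{k,l}$ bounds a disk on each side and $\gamma _{i,2n+2}=\gamma _{1,i-1}$, but the index hypotheses are exactly what pin down the correct complementary region.

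Relations (2) and (3) are analogous. For (2), assuming $i<j$ with $j-i\geq 3$, the index sets $\{i,i+1,i+2\}$ and $\{j,j+1,j+2\}$ are disjoint, so the disks realizing $h_i$ and $h_j$ can be chosen disjoint and $h_i\rightleftarrows h_j$ follows. For (3), under ``$k+2<i$'' or ``$j<k$'' the three points carried by $\mathcal{N}$ lie strictly to one side of the interval $\{p_i,\dots ,p_j\}$, so $\mathcal{N}$ is disjoint from a suitably drawn annulus around $\gamma _{i,j}$; under ``$i\leq k<k+2\leq j$'' all of $p_k,p_{k+1},p_{k+2}$ lie among $p_i,\dots ,p_j$, so $\mathcal{N}$ can be pushed into the disk bounded by $\gamma _{i,j}$ containing those points, again disjoint from the twisting annulus. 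In each instance the disjoint-support criterion gives $h_k\rightleftarrows t_{i,j}$.

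Finally, for the statements in $\Mb$ I would repeat the same constructions inside $\Sigma _0^1$, the complement of the interior of the disk $D$ around $p_{2n+2}$. The one new point is that $h_k$ is an element of $\Mb$ only when its supporting disk avoids $D$, that is, only for $k\leq 2n-1$; this is precisely the role of the extra hypotheses $j\not=2n$ in (2) and $k\not=2n$ in (3), which guarantee that the half-rotations involved are genuine elements of $\Mb$ with supports lying in $\Sigma _0^1$. Once this is ensured, the disjoint subsurfaces produced above already lie in $\Sigma _0^1$, so the relations descend to $\Mb$ verbatim. I expect the main (and only modest) obstacle to be the bookkeeping in the nested cases on the sphere, namely verifying that the inner curve or disk can be isotoped into the correct complementary region of the outer curve, rather than any deep geometric input.
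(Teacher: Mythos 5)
Your proposal is correct and is essentially the paper's own justification: the paper states this lemma without a written proof, immediately after recalling that mapping classes with representatives of mutually disjoint supports commute, which is exactly the disjoint-support principle you apply case by case. Your additional observations (nested versus side-by-side curves on the sphere, and that the hypotheses $j\not=2n$, $k\not=2n$ serve only to ensure the half-rotations are genuine elements of $\Mb$ supported in $\Sigma_0^1$) correctly fill in the bookkeeping the paper leaves implicit.
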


\paragraph{\emph{Conjugation relations}}

For elements $f_1$, $f_2$, and $h$ in a group $G$, we call the relation $hf_1h^{-1}=f_2$, which is equivalent to $hf_1=f_2h$, in $G$ a \textit{conjugation relation}. 
In particular, in the case of $f_1=t_\gamma $ or $f_1=\sigma [l]$, and $h$ is a mapping class, the relations $ht_\gamma =t_{h(\gamma )}h$ and $h\sigma [l]=\sigma [h(l)]h$ hold in the mapping class group. 
%Recall that $t_{1,2n+1}=t_{1,2n+2}=t_{2,2n+2}=1$ and $t_{i,2n+2}=t_{1,i-1}$ for $i\geq 3$ in $\M $. 
By the relations above, we have the following lemma. 

\begin{lem}\label{lem_conj_rel}
The following relations hold in $\LMp $ and $\Mb $:
\begin{enumerate}
\item $h_i ^{\pm 1}t_{i,i+1}=t_{i+1,i+2}h_i^{\pm 1}$ \quad for $1\leq i\leq 2n-1$, 
\item $h_i ^{\varepsilon }h_{i+1}^{\varepsilon }t_{i,i+1}=t_{i+2,i+3}h_i^{\varepsilon }h_{i+1}^{\varepsilon }$ \quad for $1\leq i\leq 2n-2$ and $\varepsilon \in \{ 1, -1\}$,
\item $h_{i}^{\varepsilon }h_{i+1}^{\varepsilon }h_{i+2}^{\varepsilon }h_{i}^{\varepsilon }=h_{i+2}^{\varepsilon }h_{i}^{\varepsilon }h_{i+1}^{\varepsilon }h_{i+2}^{\varepsilon }$ \quad for $1\leq i\leq 2n-3$ and $\varepsilon \in \{ 1, -1\}$,
\end{enumerate}
and the following relations hold in $\LM $:
\begin{enumerate}
\item $h_{2n}^{\pm 1}t_{2n,2n+1}=t_{2n+1,2n+2}h_{2n}^{\pm 1}$, 
\item $h_{2n-1}^{\varepsilon }h_{2n}^{\varepsilon }t_{2n-1,2n}=t_{2n+1,2n}h_{2n-1}^{\varepsilon }h_{2n}^{\varepsilon }$ \quad for $\varepsilon \in \{ 1, -1\}$,
\item $h_{2n-2}^{\varepsilon }h_{2n-1}^{\varepsilon }h_{2n}^{\varepsilon }h_{2n-1}^{\varepsilon }=h_{2n}^{\varepsilon }h_{2n-2}^{\varepsilon }h_{2n-1}^{\varepsilon }h_{2n}^{\varepsilon }$\quad for $\varepsilon \in \{ 1, -1\}$,
\item $rh_i=h_{2n-i+1}r$ \quad for $1\leq i\leq 2n$, 
\item $rt_{i,j}=t_{2n-j+3,2n-i+3}r$ \quad for $1\leq i<j\leq 2n+2$. 
\end{enumerate}
\end{lem}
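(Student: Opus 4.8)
The plan is to treat every listed relation as a \emph{conjugation relation} and to reduce it to a computation of the image of a single arc or simple closed curve under an orientation-preserving homeomorphism, using the two principles recorded just before the statement, namely $ht_\gamma h^{-1}=t_{h(\gamma)}$ and $h\sigma[l]h^{-1}=\sigma[h(l)]$, together with the fact that an orientation-preserving homeomorphism sends a right-handed Dehn twist (resp. a half-twist) to the right-handed Dehn twist (resp. half-twist) along the image curve (resp. arc). Throughout I record the action of each generator on the marked points: $h_i$ is supported on a disk neighborhood of $l_i\cup l_{i+1}$ and realizes the transposition $p_i\leftrightarrow p_{i+2}$ fixing $p_{i+1}$, while $r$ satisfies $r(p_m)=p_{2n+3-m}$ and $r(l_i)=l_{2n+2-i}^{-1}$.

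For the base relation, item (1) of the first list: since $h_i$ swaps $p_i\leftrightarrow p_{i+2}$ and fixes $p_{i+1}$, the curve $\gamma_{i,i+1}$ enclosing $\{p_i,p_{i+1}\}$ is carried to the curve enclosing $\{p_{i+1},p_{i+2}\}$, that is $h_i^{\pm1}(\gamma_{i,i+1})=\gamma_{i+1,i+2}$ (the enclosed pair, hence the isotopy class of this short curve, is insensitive to the sense of the half-rotation), which yields $h_i^{\pm1}t_{i,i+1}=t_{i+1,i+2}h_i^{\pm1}$. As a cross-check one may argue purely algebraically from $h_i=\sigma_i\sigma_{i+1}\sigma_i$ and $t_{i,i+1}=\sigma_i^2$: the braid relation gives $h_i^{\pm1}\sigma_i h_i^{\mp1}=\sigma_{i+1}$ and hence the claim.

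For the two-step relations (item (2) and its $\LM$-analogue) I would iterate item (1): the product $h_i^\varepsilon h_{i+1}^\varepsilon$ carries $\gamma_{i,i+1}$ successively through $\gamma_{i+1,i+2}$ to $\gamma_{i+2,i+3}$, so it conjugates $t_{i,i+1}$ to $t_{i+2,i+3}$; here I must check the order of composition against the convention ``$[\psi][\varphi]=[\psi\circ\varphi]$'' and confirm that the intermediate image curve picks up no extraneous winding as the strand through $p_{i+1}$ is dragged past $p_{i+2}$. For the purely half-rotation relation (item (3) and its analogue), which is an identity among the $h_i$ only, I would instead use that a mapping class is determined by the image of $L=l_1\cup\cdots\cup l_{2n+1}$ relative to $\B$: compute the image of each $l_m$ under both sides, equivalently verify that $\mathrm{LHS}\cdot\mathrm{RHS}^{-1}$ fixes $L$ up to isotopy, keeping in mind that $h_i$ and $h_{i+2}$ have overlapping supports and so do not commute. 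Finally, the relations involving $r$ follow directly: $r(\gamma_{i,j})$ encloses $\{p_{2n+3-j},\dots,p_{2n+3-i}\}$, whence $rt_{i,j}r^{-1}=t_{2n-j+3,2n-i+3}$, and $r(l_i\cup l_{i+1})=l_{2n+1-i}\cup l_{2n+2-i}$ with orientation preserved, whence $rh_ir^{-1}=h_{2n-i+1}$.

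The main obstacle I anticipate is not the combinatorics of which points are enclosed---that part is straightforward---but ruling out leftover Dehn twisting in the multi-$h_i$ computations of items (2) and (3): because the supports of consecutive half-rotations overlap and the $h_i$ do not pairwise commute, I must verify that the image curves are exactly the standard $\gamma_{a,b}$ (and the images of the $l_m$ exactly the standard arcs) rather than twisted representatives, which I would do either by a careful isotopy inside each small disk or, more safely, by translating the relation into the half-twists $\sigma_i$ and deducing it from the braid relations. A secondary subtlety is the $\LM$-specific boundary list near the indices $2n,2n+1,2n+2$: there $p_{2n+2}$ plays a distinguished role and the naive index shift must be corrected using the sphere identities $\gamma_{i,2n+2}=\gamma_{1,i-1}$ and $t_{1,2n+1}=t_{1,2n+2}=t_{2,2n+2}=1$, which accounts for the reindexed twists such as $t_{2n+1,2n}$ appearing on the right-hand sides.
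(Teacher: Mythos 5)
Your proposal is correct and is essentially the paper's own argument: the paper gives no separate proof of this lemma, but derives every item from the conjugation-relation principle $ht_\gamma h^{-1}=t_{h(\gamma)}$ and $h\sigma[l]h^{-1}=\sigma[h(l)]$ applied to pictures of image curves and arcs, exactly as you propose.

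Three cautions, each of which the safety checks you already planned would catch. (i) The parenthetical justification in your item (1) is false as a principle: on a sphere with at least four marked points the enclosed pair does \emph{not} determine the isotopy class of a curve (the images of $\gamma_{i+1,i+2}$ under powers of $t_{i,i+1}$ all enclose $\{p_{i+1},p_{i+2}\}$ and are pairwise non-isotopic); what closes item (1) is the arc computation $h_i^{\pm 1}(l_i)=l_{i+1}$, so that $h_i^{\pm 1}(\gamma_{i,i+1})=\gamma_{i+1,i+2}$, or the braid identity $h_i\sigma_ih_i^{-1}=\sigma_{i+1}$ that you record. (ii) With the paper's convention that the rightmost factor acts first, $h_i^{\varepsilon}h_{i+1}^{\varepsilon}$ applies $h_{i+1}^{\varepsilon}$ first, so the intermediate curve encloses $\{p_i,p_{i+3}\}$, not $\{p_{i+1},p_{i+2}\}$; your ``successive'' argument as written proves the (equally true) relation for the reversed product $h_{i+1}^{\varepsilon}h_i^{\varepsilon}$, and for the stated product you genuinely need the direct check or the braid computation that you list as the safer route. (iii) Had you run your test that $\mathrm{LHS}\cdot \mathrm{RHS}^{-1}$ fixes $L$ on the third $\LM$-relation, you would have found that it fails as printed: the two sides have different images under $\Psi$ (a $3$-cycle versus a product of two disjoint transpositions), so the last left-hand factor must be $h_{2n-2}^{\varepsilon}$, i.e. the relation is the $i=2n-2$ instance of item (3) of the first list (this is the form in which it reappears in Lemma~\ref{tech_rel1}); likewise $t_{2n+1,2n}$ in the second $\LM$-relation should be read as $t_{2n+1,2n+2}=t_{1,2n}$, as you surmised.
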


\paragraph{\emph{Lifts of relations in the symmetric group}}

Recall that we have the surjective homomorphism $\Psi \colon \LM \to W_{2n+2}$ by Lemma~\ref{lem_GW} and the relations $\Psi (h_i)=(i\ i+2)$ for $1\leq i\leq 2n$ and $\Psi (r)=(1\ 2n+2)(2\ 2n+1)\cdots (n+1\ n+2)$. 
By the exact sequence~(\ref{exact1}), $W_{2n+2}$ is a group extension of $S_{n+1}^o\times S_{n+1}^e$ by $\Z _2$. 
For an element $\sigma $ in $\M $, we define $\bar{\sigma }=\Psi (\sigma )$. 
Then the following relations hold in $W_{2n+2}$: 
\begin{itemize}
\item $\bar{h}_i^2=1$ \quad for $1\leq i\leq 2n$, 
\item $\bar{h}_i\bar{h}_{i+1}\bar{h}_i^{-1}\bar{h}_{i+1}^{-1}=1$ \quad for $1\leq i\leq 2n-1$, 
\item $\bar{h}_i\bar{h}_{i+2}\bar{h}_i\bar{h}_{i+2}^{-1}\bar{h}_i^{-1}\bar{h}_{i+2}^{-1}=1$ \quad for $1\leq i\leq 2n-2$, 
\item $\bar{r}^2=1$, 
\end{itemize}
By the exact sequence~(\ref{exact1}), products $h_i^2$ $(1\leq i\leq 2n)$, $h_ih_{i+1}h_i^{-1}h_{i+1}^{-1}$ $(1\leq i\leq 2n-1)$, $h_ih_{i+2}h_ih_{i+2}^{-1}h_i^{-1}h_{i+2}^{-1}$ $(1\leq i\leq 2n-2)$, and $r^2$ lie in $\PM $.
We have the following lemma. 

\begin{lem}\label{lem_lift_W}
The following relations hold in $\LMp $ and $\Mb $:
\begin{enumerate}
\item $h_i^2=t_{i,i+2}$ \quad for $1\leq i\leq 2n-1$, 
\item $h_ih_{i+1}h_i^{-1}h_{i+1}^{-1}=t_{i,i+1}t_{i+2,i+3}^{-1}$ \quad for $1\leq i\leq 2n-2$,
\item $h_ih_{i+2}h_ih_{i+2}^{-1}h_i^{-1}h_{i+2}^{-1}=t_{i+1,i+2}t_{i+2,i+3}^{-1}$ \quad for $1\leq i\leq 2n-3$,
\end{enumerate}
and the following relations hold in $\LM $:
\begin{enumerate}
\item $h_{2n}^2=t_{2n,2n+2}=t_{1,2n}$, 
\item $h_{2n-1}h_{2n}h_{2n-1}^{-1}h_{2n}^{-1}=t_{2n-1,2n}t_{2n+1,2n+2}^{-1}=t_{2n-1,2n}t_{1,2n}^{-1}$,
\item $h_{2n-2}h_{2n}h_{2n-2}h_{2n}^{-1}h_{2n-2}^{-1}h_{2n}^{-1}=t_{2n-1,2n}t_{2n,2n+1}^{-1}$,
\item $r^2=1$.
\end{enumerate}
\end{lem}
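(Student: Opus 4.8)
The plan is to reduce every relation to an identity that lives in a braid group, where it follows from the Artin relations, and then to transport it to the groups in the statement. The key structural observation I would use is that $\Mb =\mathrm{Mod}(\Sigma _0^1;\B ^1,\partial \Sigma _0^1)$ is the mapping class group of a disk with the $2n+1$ marked points $\B ^1$ and boundary fixed pointwise, i.e. the braid group $B_{2n+1}$, with $\sigma _1,\dots ,\sigma _{2n}$ as the standard generators. I would first record the two bookkeeping identities $h_i=\sigma _i\sigma _{i+1}\sigma _i$ and $t_{i,i+1}=\sigma _i^2$ (a half-twist squared is the Dehn twist about the two enclosed points). Since the supports of all elements appearing in relations (1)--(3) avoid the disk $D$, it suffices to verify them in $\Mb $; they then descend to $\LMp $ along the capping homomorphism $\Mb \to \Mp $, which fixes each $\sigma _i$, $h_i$, and $t_{i,j}$, while the top-index $\LM $ cases are the same braid computations carried out among $\sigma _1,\dots ,\sigma _{2n+1}$ inside $\M $.

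For relation~(1) I would compute $h_i^2=(\sigma _i\sigma _{i+1}\sigma _i)(\sigma _{i+1}\sigma _i\sigma _{i+1})=(\sigma _i\sigma _{i+1})^3$, the full twist of the three-strand subbraid on $p_i,p_{i+1},p_{i+2}$, which is exactly the Dehn twist $t_{\gamma _{i,i+2}}=t_{i,i+2}$ about the boundary of a disk enclosing those three points. Geometrically this is the statement that the half-rotation $h_i$ squares to the full rotation of its supporting disk $\mathcal{N}$, namely $t_{\partial \mathcal{N}}$. The $\LM $ case $h_{2n}^2=t_{2n,2n+2}$ is the identical computation among $\sigma _{2n},\sigma _{2n+1}$, and the remaining equality is read off from the sphere relation $t_{i,2n+2}=t_{1,i-1}$.

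Relations~(2) and~(3) are the lifts of the facts that $\bar h_i,\bar h_{i+1}$ are disjoint transpositions (hence commute) and that $\bar h_i,\bar h_{i+2}$ are adjacent transpositions (hence braid). I would prove them by substituting $h_i=\sigma _i\sigma _{i+1}\sigma _i$ and reducing the left-hand words using only the Artin relations $\sigma _j\sigma _{j+1}\sigma _j=\sigma _{j+1}\sigma _j\sigma _{j+1}$ and $\sigma _j\sigma _l=\sigma _l\sigma _j$ for $|j-l|\geq 2$, collecting the outcome as $\sigma _i^2\sigma _{i+2}^{-2}=t_{i,i+1}t_{i+2,i+3}^{-1}$ in case~(2) and as $\sigma _{i+1}^2\sigma _{i+2}^{-2}=t_{i+1,i+2}t_{i+2,i+3}^{-1}$ in case~(3). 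The $\LM $ analogs are the same reductions at the top indices, after which the sphere relations $t_{2n+1,2n+2}=t_{1,2n}$ and $t_{2n,2n+2}=t_{1,2n-1}$ rewrite the right-hand sides into the stated form. Finally $r^2=1$ is immediate: $r$ is the rotation of the sphere $\Sigma _0$ by $\pi $ about the axis of Figure~\ref{fig_r}, so $r^2$ is the rotation by $2\pi $, i.e. the identity map.

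The main obstacle is the explicit word reduction for the star relation~(3), whose left-hand side has six length-three syllables with overlapping supports, so the braid cancellations must be carried out without error; the commutator~(2) is a milder instance of the same difficulty. I would guard against slips with two independent checks that both sides must pass: equality of images under $\Psi $ (the identity of $W_{2n+2}$, which is what places the left-hand sides in $\PM $ in the first place) and equality of total exponent sums in the abelianization of the ambient braid group (both sides have exponent sum $0$). As a fallback to the algebraic computation, relations~(2) and~(3) can instead be verified pictorially by tracking the images of the arcs $l_i,\dots ,l_{i+3}$. A secondary point is to confirm that transporting the identities from $\Mb $ to $\LMp $ and $\M $ introduces no boundary-twist ambiguity, which is automatic since every element involved is supported away from $D$.
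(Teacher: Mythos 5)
Your proposal is correct and takes essentially the same route as the paper: the paper likewise treats relation~(1) and $r^2=1$ as immediate and reduces (2) and (3) to braid identities via $h_i=\sigma_i\sigma_{i+1}\sigma_i$, the only difference being that it verifies them pictorially in the spherical braid group $SB_{2n+2}$ through the surjection $\Gamma\colon SB_{2n+2}\to \M$, whereas you verify them algebraically in $\Mb\cong B_{2n+1}$ and then cap off. One useful byproduct of your computation: the sphere relation $t_{i,2n+2}=t_{1,i-1}$ gives $h_{2n}^2=t_{2n,2n+2}=t_{1,2n-1}$, which agrees with Lemma~\ref{tech_rel1} and Proposition~\ref{prop_pres_lmod} and shows that the ``$t_{1,2n}$'' printed in the first $\LM$ relation of the statement is a typo.
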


\begin{proof}
The relations~(1) and (4) clearly hold. 
We will show the relations~(2) and (3) from a point of view of the spherical braid group (we can also prove by observing the image of $L$ and $L^1$ by these mapping classes). 
Let $SB_{2n+2}$ be the \textit{spherical braid group} of $2n+2$ strands. 
For $b_1$, $b_2\in SB_{2n+2}$, the product $b_1b_2$ is a braid as on the right-hand side in Figure~\ref{fig_braid_product_def}. 
Then we have the surjective homomorphism $\Gamma \colon SB_{2n+2}\to \M $ which maps the half-twist about $i$-th and $(i+1)$-st strands as on the left-hand side in Figure~\ref{fig_braid_product_def} to the half-twist $\sigma _i\in \M $. 
We abuse notation and denote simply the half-twist in $SB_{2n+2}$ about $i$-th and $(i+1)$-st strands by $\sigma _i$, the full-twist in $SB_{2n+2}$ about strands between $i$-th strand and $j$-th strand by $t_{i,j}$, and $h_i=\sigma _i\sigma _{i+1}\sigma _i$ for $1\leq i\leq 2n$. 
The braid $h_ih_{i+1}h_i^{-1}h_{i+1}^{-1}$ lies on the left-hand side in Figure~\ref{fig_braid_rel_calc} and we can see that $h_ih_{i+1}h_i^{-1}h_{i+1}^{-1}$ is isotopic to $t_{i,i+1}t_{i+2,i+3}^{-1}$. 
We can also show that $h_ih_{i+2}h_ih_{i+2}^{-1}h_i^{-1}h_{i+2}^{-1}=t_{i+1,i+2}t_{i+2,i+3}^{-1}$ in $SB_{2n+2}$ as on the right-hand side in Figure~\ref{fig_braid_rel_calc}. 
We have completed the proof of Lemma~\ref{lem_lift_W}. 
\end{proof}

\begin{figure}[h]
\includegraphics[scale=0.75]{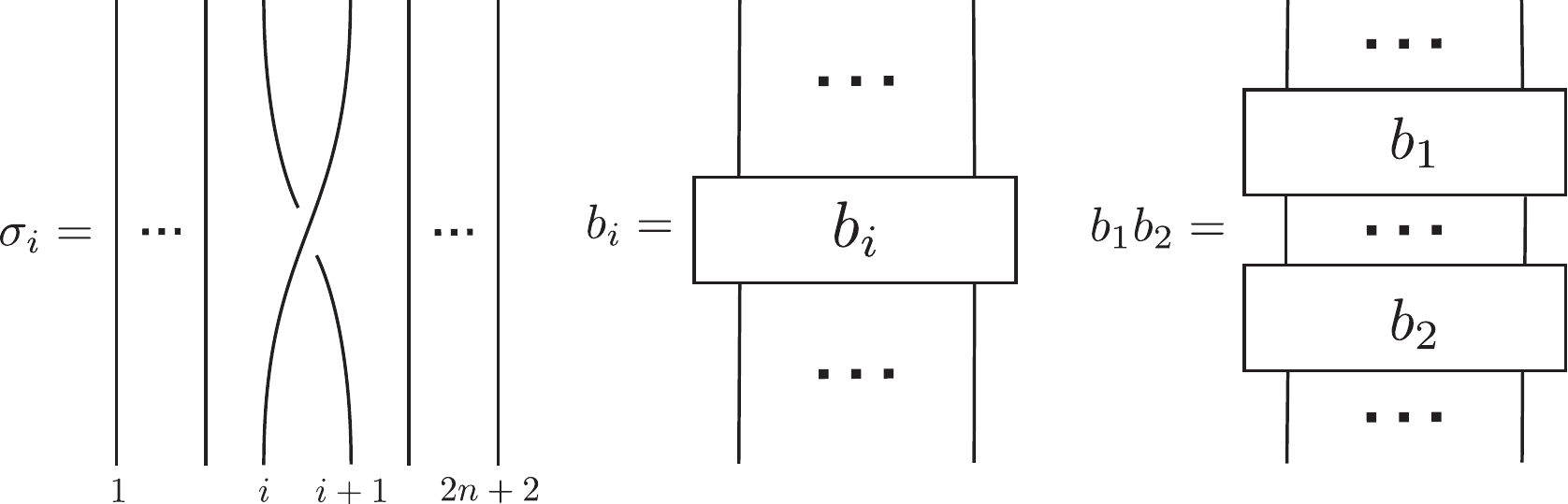}
\caption{The half-twist $\sigma_i\in SB_{2n+2}$ $(1\leq i\leq 2n+1)$ and the product $b_1b_2$ in $SB_{2n+2}$ for $b_i$ $(i=1,\ 2)$.}\label{fig_braid_product_def}
\end{figure}

\begin{figure}[h]
\includegraphics[scale=0.72]{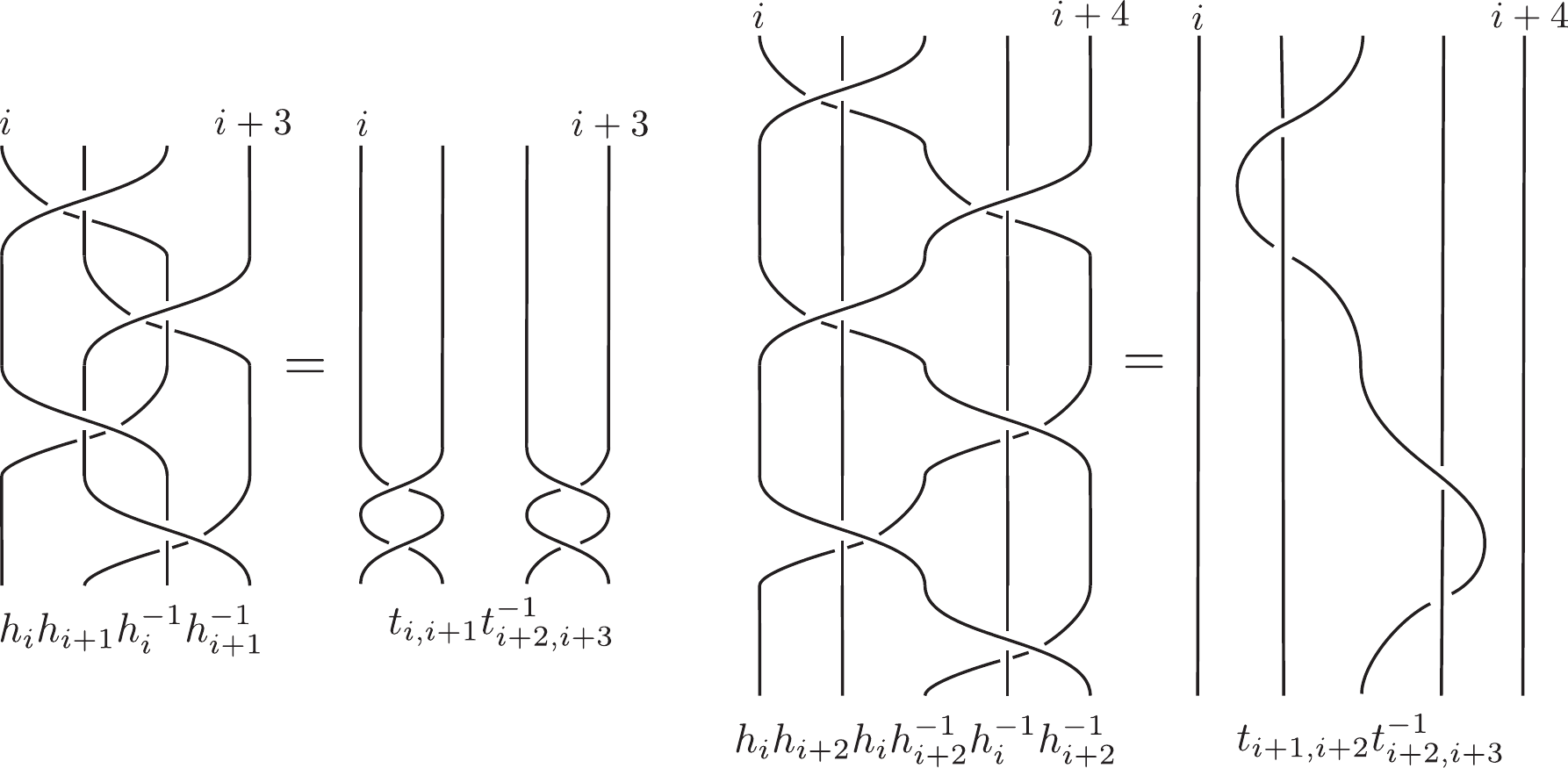}
\caption{Relations $h_ih_{i+1}h_i^{-1}h_{i+1}^{-1}=t_{i,i+1}t_{i+2,i+3}^{-1}$ and $h_ih_{i+2}h_ih_{i+2}^{-1}h_i^{-1}h_{i+2}^{-1}=t_{i+1,i+2}t_{i+2,i+3}^{-1}$ in $SB_{2n+2}$.}\label{fig_braid_rel_calc}
\end{figure}

Remark that, up to commutative relations in Lemma~\ref{lem_comm_rel} and conjugation relations in Lemma~\ref{lem_comm_rel}, the relation~(2) in Lemma~\ref{lem_lift_W}, i.e. $h_ih_{i+1}h_i^{-1}h_{i+1}^{-1}=t_{i,i+1}t_{i+2,i+3}^{-1}$, is equivalent to the relation $h_ih_{i+1}t_{i,i+1}=t_{i,i+1}h_{i+1}h_i$ for $1\leq i\leq 2n-1$, and the relation~(3) in Lemma~\ref{lem_lift_W}, i.e. $h_ih_{i+2}h_ih_{i+2}^{-1}h_i^{-1}h_{i+2}^{-1}=t_{i+1,i+2}t_{i+2,i+3}^{-1}$, is equivalent to the relation $h_ih_{i+2}h_it_{i+1,i+2}^{-1}=t_{i+2,i+3}^{-1}h_{i+2}h_ih_{i+2}$ for $1\leq i\leq 2n-2$. \\

\paragraph{\emph{An expression of $t_{i,j}$ for $j-i\geq 3$ by product of $h_k$ and $t_{l,l+1}$}}

By Lemma~\ref{lem_lift_W}, we have $t_{i,i+2}=h_i^2$ for $1\leq i\leq 2n$. 
In the case of $j-i\geq 3$, an expression of $t_{i,j}$ by product of $h_k$ $(1\leq k\leq 2n)$ and $t_{l,l+1}$ $(1\leq l\leq 2n+1)$ is given by the following lemma. 

\begin{lem}\label{lem_t_ij-braid}
The following relations hold in $\LM $:
\[
t_{i,j}=\left\{
		\begin{array}{ll}
		h_i^2 \quad \text{for }j-i=2,\\
		t_{j-1,j}^{-\frac{j-i-3}{2}}\cdots t_{i+2,i+3}^{-\frac{j-i-3}{2}}t_{i,i+1}^{-\frac{j-i-3}{2}}(h_{j-2}\cdots h_{i+1}h_{i})^{\frac{j-i+1}{2}}\\ \text{for }2n+1\geq j-i \geq 3\text{ is odd},\\
		t_{j-2,j-1}^{-\frac{j-i-2}{2}}\cdots t_{i+2,i+3}^{-\frac{j-i-2}{2}}t_{i,i+1}^{-\frac{j-i-2}{2}}h_{j-2}\cdots h_{i+2}h_{i}h_{i}h_{i+2}\cdots h_{j-2}\\ \cdot (h_{j-3}\cdots h_{i+1}h_{i})^{\frac{j-i}{2}}\quad \text{for }2n\geq j-i \geq 4\text{ is even}.		
		\end{array}
		\right.\\
\]
For $j\leq 2n+1$, the relations above hold in $\LMp $ and $\Mb $. 
\end{lem}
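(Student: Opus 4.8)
The plan is to prove the identity first in $\LM $---where the curve $\gamma _{i,j}$ and all supports involved lie away from $D$---and to descend it to $\LMp $ and $\Mb $ afterwards, by induction on the span $d=j-i$ carried out separately for the two parities. The base case $d=2$ is exactly Lemma~\ref{lem_lift_W}(1), $t_{i,i+2}=h_i^2$, which also anchors the even branch; for the odd branch the smallest case is $d=3$, where both correction exponents $\tfrac{d-3}{2}$ vanish and the formula reduces to $t_{i,i+3}=(h_{i+1}h_i)^2$, and for the even branch the smallest case is $d=4$. I would verify these two small cases by hand.

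For the inductive engine I would pass to the spherical braid group $SB_{2n+2}$ and use $\Gamma \colon SB_{2n+2}\to \M $, exactly as in the proof of Lemma~\ref{lem_lift_W}. There the Dehn twist about the strands $i,\dots ,j$ is the central full twist
\[
t_{i,j}=(\sigma _i\sigma _{i+1}\cdots \sigma _{j-1})^{j-i+1},
\]
while $h_k=\sigma _k\sigma _{k+1}\sigma _k$ and $t_{l,l+1}=\sigma _l^2$. Thus the assertion becomes the purely braid-theoretic statement that this central element reorganizes into $h$-blocks, and I would set up a recursion lowering the span from $d$ to $d-2$ by peeling off the outermost pair of strands. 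The two ingredients driving the recursion are $h_k^2=t_{k,k+2}$ (Lemma~\ref{lem_lift_W}(1)) and the conjugation relations of Lemma~\ref{lem_conj_rel}, which state precisely that $h_k$ conjugates $t_{k,k+1}$ to $t_{k+1,k+2}$ and that $h$-blocks shift the adjacent twists $t_{\cdot ,\cdot +1}$ by two. When consecutive $\sigma $'s are regrouped into factors $h_k$, the leftover squares $\sigma _k^2=t_{k,k+1}$ accumulate, and collecting these ``defects'' is what produces the correction factors $t_{l,l+1}^{-(d-3)/2}$ in the odd case and $t_{l,l+1}^{-(d-2)/2}$ in the even case.

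I expect the main obstacle to be the bookkeeping rather than any single hard relation, because the two parities yield genuinely different normal forms. The odd case collapses to a clean power $(h_{j-2}\cdots h_i)^{(d+1)/2}$, whereas the even case carries the asymmetric palindrome $h_{j-2}\cdots h_{i+2}\,h_ih_i\,h_{i+2}\cdots h_{j-2}$ (with $h_{i+1}$ deliberately absent from the two wings) followed by $(h_{j-3}\cdots h_i)^{d/2}$. Matching the inductive output to these prescribed shapes will require repeatedly commuting the twists $t_{l,l+1}$ to the front via Lemma~\ref{lem_comm_rel} and using the far-commutation $h_kh_m=h_mh_k$ for $|k-m|\geq 3$ (Lemma~\ref{lem_comm_rel}(2)), and then checking that the accumulated exponent at each subscript telescopes to the stated value. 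I would organize the whole argument as one computation inside $SB_{2n+2}$, optionally supported by braid diagrams as in Lemma~\ref{lem_lift_W}, invoking $\Gamma $ only at the very end; the sphere cases $j=2n+2$ are then read off from the identifications $t_{i,2n+2}=t_{1,i-1}$ and $t_{1,2n+1}=t_{1,2n+2}=t_{2,2n+2}=1$ recorded in Section~\ref{section_liftable-element}.

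For the final sentence of the statement, once $j\leq 2n+1$ every curve $\gamma _{l,l+1}$ and every support of an $h_k$ appearing on either side meets only $p_1,\dots ,p_j$, hence is disjoint from the disk $D$ about $p_{2n+2}$. The identity established in $\LM $ is therefore represented by homeomorphisms fixing $D$ pointwise and descends verbatim to $\LMp $ and $\Mb $.
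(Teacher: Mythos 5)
Your braid-group route is the one the paper itself sketches for this lemma: the paper notes that the relations ``can be checked from a point of view of the spherical braid group'' and verifies the cases $t_{i,i+5}$ and $t_{i,i+6}$ by braid isotopies (Figures~\ref{fig_t_ij-braid-even} and~\ref{fig_t_ij-braid-odd}). Its \emph{precise} proof, however, is different: Lemma~\ref{lem_t_ij-braid} is deferred to the proof of Lemma~\ref{lem_t_ij_pres1}, where each relation $(T_{i,j})$ is shown, by induction on $j-i$, to be equivalent to a lantern relation $(L_{i,j}^{k=j})$ modulo the relations of Lemmas~\ref{lem_comm_rel}, \ref{lem_conj_rel}, and \ref{lem_lift_W}; that longer route is forced on the paper because the presentation theorems later need $(T_{i,j})$ to be a consequence of those specific relations, not merely to hold. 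For the statement at hand your plan is admissible, but be aware that what you call ``bookkeeping'' is exactly where the content lies: the rigorous counterpart of your peeling recursion occupies the technical lemmas of Sections~\ref{section_tech_rel}--\ref{section_proof_t_ij_pres}, so as written your inductive step is a plan rather than a completed argument.

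The genuine gap is the final descent to $\Mb $. Equality in $\LM $ (hence in $\Mp $, since $\Mp \subset \M $ is a subgroup) of two words whose representatives fix $D$ pointwise does \emph{not} imply equality in $\Mb $: the capping homomorphism $\iota _\ast \colon \Mb \to \Mp $ has kernel $\left< t_{\partial D}\right> $ (Proposition~\ref{prop_exact_lmodb}), so the two words may differ in $\Mb $ by a power of $t_{\partial D}$ --- an isotopy realizing the equality on the sphere may sweep across $D$. The paper's own conventions give a counterexample to your descent principle: $t_{1,2n+1}$ is a twist about a curve disjoint from $D$ and equals $1$ in $\M $ and $\Mp $, yet $t_{1,2n+1}=t_{\partial D}\neq 1$ in $\Mb $. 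The fix is to reverse the order, which is how the paper's own argument is structured: for $j\leq 2n+1$ run the full-twist recursion in the planar braid group on the strands $i,\dots ,j$ (equivalently in $\Mb $, the mapping class group of the disk with $2n+1$ marked points), where all your ingredients --- the formula $t_{i,j}=(\sigma _i\cdots \sigma _{j-1})^{j-i+1}$, the relation $h_k^2=t_{k,k+2}$, and the conjugation and far-commutation relations --- already hold, and then push the identity forward through $\iota _\ast $ and the inclusion $\Mp \subset \M $ to obtain the $\LMp $ and $\LM $ cases. (Alternatively, keep your order but kill the ambiguity explicitly: both sides of each stated relation have the same exponent sum as words in the $\sigma _k$, namely $(j-i)(j-i+1)$, so the discrepancy $t_{\partial D}^m$ forces $m=0$.) The descent to $\LMp $ itself is fine.
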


We call the relation in Lemma~\ref{lem_t_ij-braid} for $1\leq i<j\leq 2n+2$ the \textit{relation}~$(T_{i,j})$.  
Recall that we have $t_{2n+1,2n+2}=t_{1,2n}$ and $t_{1,2n+1}=t_{2,2n+2}=t_{1,2n+2}=1$ in $\LM $. 
Thus the relation
\[
t_{2n-1,2n}^{-n+1}\cdots t_{3,4}^{-n+1}t_{1,2}^{-n+1}h_{2n-1}\cdots h_3h_1h_1h_3\cdots h_{2n-1}(h_{2n-2}\cdots h_{2}h_{1})^{n}=1
\]
holds in $\LMp $ and the relations
\[
t_{2n,2n+1}^{-n+1}\cdots t_{4,5}^{-n+1}t_{2,3}^{-n+1}h_{2n}\cdots h_4h_2h_2h_4\cdots h_{2n}(h_{2n-1}\cdots h_{3}h_{2})^{n}=1 
\]
and
\[
t_{1,2n}^{-n+1}t_{2n-1,2n}^{-n+1}\cdots t_{3,4}^{-n+1}t_{1,2}^{-n+1}(h_{2n}\cdots h_{2}h_{1})^{n+1}=1
\]
hold in $\LM $. 
We will give a precise proof of Lemma~\ref{lem_t_ij-braid} in the proof of Lemma~\ref{lem_t_ij_pres1} by proving that the relations~$(T_{i,j})$ are equivalent to lantern relations up to the relations in Lemmas~\ref{lem_comm_rel}, \ref{lem_conj_rel}, and \ref{lem_lift_W}. %Theorem~\ref{thm_pres_lmod}. for $j\leq 2n+1$ and are conjugate to the relations~$(T_{1,i-1})$ up to the relations in Theorem~\ref{thm_pres_lmod} for $j=2n+2$. 
However, we can check that the relations in Lemma~\ref{lem_t_ij-braid} holds in $\LM $ and $\Mb $ from a point of view of the spherical braid group as a similar argument in the proof of Lemma~\ref{lem_lift_W}. 
Recall the surjective homomorphism $\Gamma \colon SB_{2n+2}\to \M $ defined in the proof of Lemma~\ref{lem_lift_W}. 
For instance, $t_{i,i+5}$ is expressed by the braid as on the right-hand side in Figure~\ref{fig_t_ij-braid-even} and the product $t_{i,i+1}^{-1}t_{i+2,i+3}^{-1}t_{i+4,i+5}^{-1}(h_{i+3}h_{i+2}h_{i+1}h_i)^3$ is the braid as on the left-hand side in Figure~\ref{fig_t_ij-braid-even}. 
By the deformation by an isotopy as in Figure~\ref{fig_t_ij-braid-even}, we can show that the braid $t_{i,i+5}$ coincides with the braid $t_{i,i+1}^{-1}t_{i+2,i+3}^{-1}t_{i+4,i+5}^{-1}(h_{i+3}h_{i+2}h_{i+1}h_i)^3$. 
For the case of $t_{i,i+6}$, we also have a deformation by an isotopy which gives $t_{i,i+6}=t_{i,i+1}^{-2}t_{i+2,i+3}^{-2}t_{i+4,i+5}^{-2}h_{i+4}h_{i+2}h_ih_ih_{i+2}h_{i+4}(h_{i+3}h_{i+2}h_{i+1}h_i)^3$ as in Figure~\ref{fig_t_ij-braid-odd}. \\

\begin{figure}[h]
\includegraphics[scale=0.72]{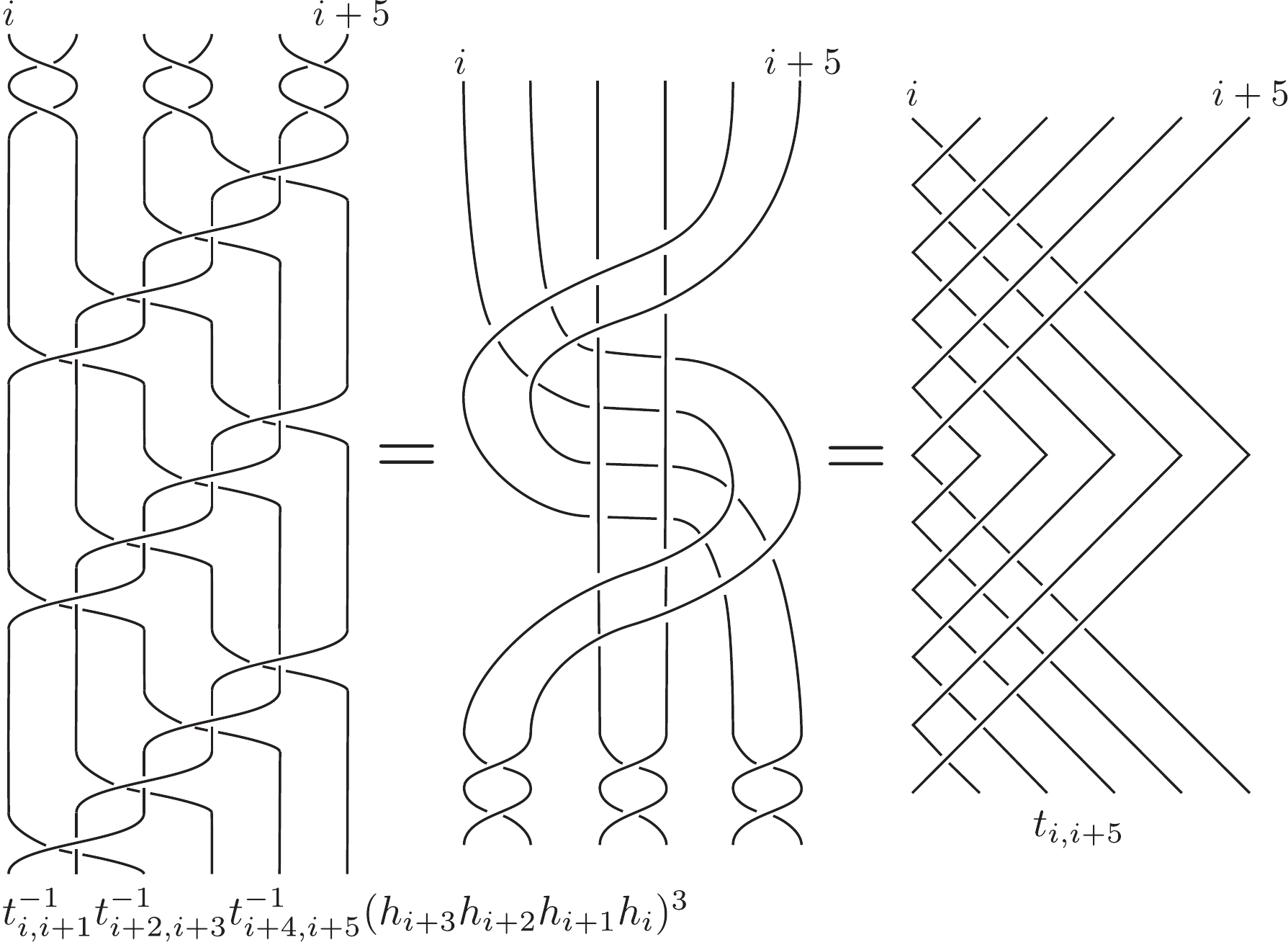}
\caption{The relation $t_{i,i+1}^{-1}t_{i+2,i+3}^{-1}t_{i+4,i+5}^{-1}(h_{i+3}h_{i+2}h_{i+1}h_i)^3=t_{i,i+5}$ in $SB_{2n+2}$.}\label{fig_t_ij-braid-even}
\end{figure}

\begin{figure}[h]
\includegraphics[scale=0.65]{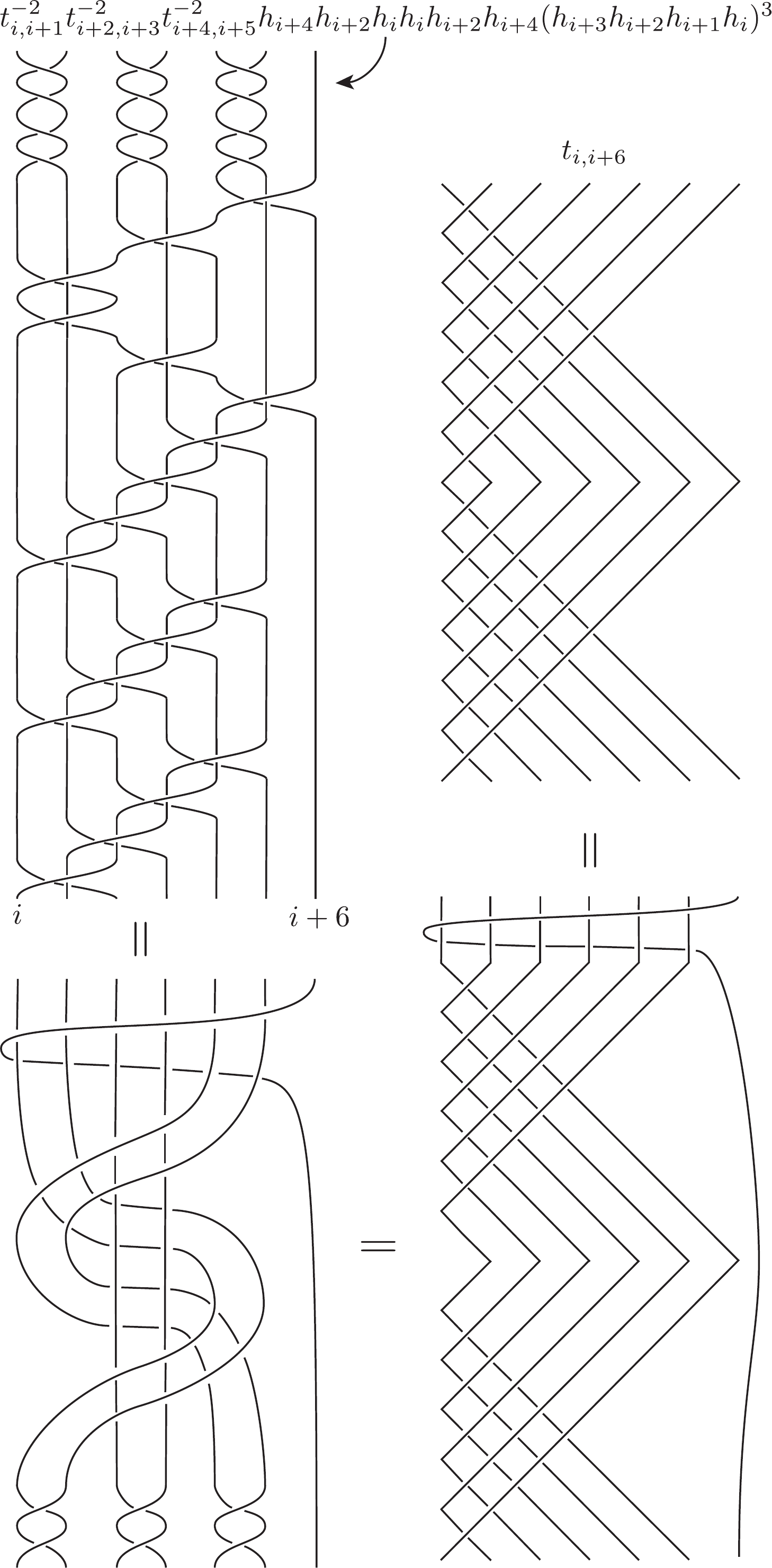}
\caption{The relation $t_{i,i+1}^{-2}t_{i+2,i+3}^{-2}t_{i+4,i+5}^{-2}h_{i+4}h_{i+2}h_ih_ih_{i+2}h_{i+4}\\ \cdot (h_{i+3}h_{i+2}h_{i+1}h_i)^3=t_{i,i+6}$ in $SB_{2n+2}$.}\label{fig_t_ij-braid-odd}
\end{figure}

\paragraph{\emph{Pentagonal relations}}

Pentagonal relations are given in the following lemma.

\begin{lem}[Pentagonal relation]\label{lem_pentagon_rel}
Let $\Sigma $ be a 2-sphere with five boundary components and $\alpha $, $\beta $, $\gamma $, $\delta $, and $\varepsilon $ simple closed curves on $\Sigma $, each one of which separates $\Sigma $ into two 2-spheres with three and four boundary components as on the left-hand side in Figure~\ref{fig_pentagon-rel_def}. 
Then we have the following relation in $\mathrm{Mod}(\Sigma ,\emptyset ,\partial \Sigma )$:
\[
t_\varepsilon ^{-1}t_\gamma t_\beta t_\alpha t_\delta ^{-1}=t_\delta ^{-1}t_\alpha t_\beta t_\gamma t_\varepsilon ^{-1}.
\]
\end{lem}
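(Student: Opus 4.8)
The plan is to prove the relation by the Alexander method, after first recording the geometry of the five curves and isolating the genuinely non-formal content. I would fix a concrete model in which the five boundary components sit at the vertices of a regular pentagon and each curve encloses a cyclically adjacent pair, say $\alpha\supset\{b_1,b_2\}$, $\beta\supset\{b_2,b_3\}$, $\gamma\supset\{b_3,b_4\}$, $\delta\supset\{b_4,b_5\}$, $\varepsilon\supset\{b_5,b_1\}$. In this model two curves meet in exactly two points when they share a boundary component (the cyclically consecutive pairs) and can be made disjoint otherwise; in particular the pairs $(t_\alpha,t_\gamma)$, $(t_\alpha,t_\delta)$, $(t_\beta,t_\delta)$, $(t_\beta,t_\varepsilon)$, $(t_\gamma,t_\varepsilon)$ each commute. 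I would also record the purely formal observation that the right-hand word is the letter-by-letter reversal of the left-hand one, so that the reflection of the pentagon through $b_5$ and the midpoint of the edge $b_2b_3$, an orientation-reversing involution $\iota$ fixing $\beta$ and interchanging $\alpha\leftrightarrow\gamma$ and $\delta\leftrightarrow\varepsilon$, satisfies $\iota t_c\iota^{-1}=t_{\iota(c)}^{-1}$ and hence conjugates each side to its own inverse. This is a compatibility check that also pins down the correct figure; but since the five commuting relations alone clearly cannot convert $t_\varepsilon^{-1}t_\gamma t_\beta t_\alpha t_\delta^{-1}$ into $t_\delta^{-1}t_\alpha t_\beta t_\gamma t_\varepsilon^{-1}$, it confirms that genuine twisting must enter.

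For the essential content I would apply the Alexander method \cite{Farb-Margalit} directly on the bounded surface. Concretely, I would choose a system of disjoint arcs with endpoints on $\partial\Sigma$ that cut $\Sigma$ into disks and that are in minimal position with respect to all five curves, then compute the images of this system under $W_L=t_\varepsilon^{-1}t_\gamma t_\beta t_\alpha t_\delta^{-1}$ and under $W_R=t_\delta^{-1}t_\alpha t_\beta t_\gamma t_\varepsilon^{-1}$ by applying the five Dehn twists one at a time, using at each stage the standard surgery rule describing how $t_\gamma$ alters an arc along its intersection points with $\gamma$. Since a mapping class rel $\partial\Sigma$ is determined by its action on such a filling arc system, verifying that $W_L$ and $W_R$ send every arc to an isotopic arc establishes the identity on the nose. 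As a confirmation in the spirit of the proof of Lemma~\ref{lem_lift_W}, one can alternatively cap the boundary components with punctured disks, write each $t_\bullet$ as the square of the corresponding half-twist, and check the resulting identity in the spherical braid group on five strands, where it reduces to the braid relations for consecutive generators, the commuting relations for non-consecutive ones, and the sphere relation.

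The main obstacle is the bookkeeping of the arc images through five successive twists: a single arc can acquire several parallel strands, and one must keep careful track of the two intersection points contributed by each consecutive pair, and of orientations, in order to see the cancellations that force the two sides to agree. If one instead argues in the capped surface, the difficulty shifts to the lift: capping is injective only modulo the central subgroup generated by the boundary Dehn twists, so \emph{a priori} $W_LW_R^{-1}$ could be a nontrivial product of boundary twists. This ambiguity is removed by comparing the total twisting about each boundary component; because $W_L$ and $W_R$ are built from exactly the same multiset of Dehn twists $\{t_\alpha,t_\beta,t_\gamma,t_\delta^{-1},t_\varepsilon^{-1}\}$, these framings coincide, the correction term vanishes, and the relation holds exactly in $\mathrm{Mod}(\Sigma,\emptyset,\partial\Sigma)$.
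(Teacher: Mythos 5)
Your write-up is a proof \emph{plan}, not a proof: the entire mathematical content of Lemma~\ref{lem_pentagon_rel} is concentrated in the step you defer. Your setup (the ``necklace'' configuration, the list of five commuting pairs, the reflection check) is correct and consistent with Figure~\ref{fig_pentagon-rel_def}, and the Alexander method with a filling system of arcs, taken rel $\partial \Sigma$, would indeed determine elements of $\mathrm{Mod}(\Sigma ,\emptyset ,\partial \Sigma )$ exactly, boundary twisting included. But you never compute the images of a single arc under $W_L=t_\varepsilon ^{-1}t_\gamma t_\beta t_\alpha t_\delta ^{-1}$ and $W_R=t_\delta ^{-1}t_\alpha t_\beta t_\gamma t_\varepsilon ^{-1}$; you only name the difficulty (``the bookkeeping of the arc images through five successive twists'') and assert that the cancellations work out. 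Since the five commutation relations alone do not imply the identity (as you yourself observe), this unperformed verification is exactly where the lemma lives, so the argument as written establishes nothing.

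The same problem affects your fallback route: the assertion that, after capping with punctured disks, the identity ``reduces to the braid relations for consecutive generators, the commuting relations for non-consecutive ones, and the sphere relation'' is an unproved claim, and not an innocuous one --- the pentagonal relations are essential defining relations of $\mathrm{PMod}_{0,n}$ in Proposition~\ref{prop_pres_pmod}, not formal consequences of commutations. (Your resolution of the capping ambiguity is repairable: since $W_L$ and $W_R$ are products of the same multiset of twists, $W_LW_R^{-1}$ lies in the commutator subgroup, and the central subgroup generated by the boundary twists injects into $H_1$ of $\mathrm{Mod}(\Sigma ,\emptyset ,\partial \Sigma )$; but ``the framings coincide'' needs to be said this way to be a proof.) For comparison, the paper avoids all of this bookkeeping by cutting $\Sigma $ along $\alpha $ and writing down two lantern relations (Lemma~\ref{lem_lantern_rel}) in the resulting four-holed sphere, $t_\delta t_\gamma t_{\gamma _1}=t_\alpha t_{\delta _1}t_{\delta _2}t_{\delta _3}$ and $t_\gamma t_\delta t_{\gamma _2}=t_\alpha t_{\delta _1}t_{\delta _2}t_{\delta _3}$; conjugating the second by $t_\beta t_\varepsilon ^{-1}$ (which carries $\gamma _2$ to $\gamma _1$ and commutes with the $t_{\delta _i}$) and comparing with the first yields the pentagonal relation after a short sequence of commutations. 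If you want to salvage your approach, either carry out the arc computation explicitly or substitute this lantern-based derivation; as submitted, there is a genuine gap.
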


We call the relation in Lemma~\ref{lem_pentagon_rel} a \textit{pentagonal relation}. 
Lemma~\ref{lem_pentagon_rel} is proved after the next lemma. 

We consider a pentagon whose vertices are labeled in turn by $\alpha $, $\beta $, $\gamma $, $\delta $, and $\varepsilon $ and take the path $(\delta ,\alpha ,\beta ,\gamma ,\varepsilon )$ which passes through $\delta $, $\alpha $, $\beta $, $\gamma $, and $\varepsilon $ sequentially as on the right-hand side in Figure~\ref{fig_pentagon-rel_def}. 
Note that two vertices in the pentagon are jointed by an edge if and only if these labeling two simple closed curves are not disjoint. 
The path $(\delta ,\alpha ,\beta ,\gamma ,\varepsilon )$ or its inverse is corresponding to the pentagonal relation $t_\varepsilon ^{-1}t_\gamma t_\beta t_\alpha t_\delta ^{-1}=t_\delta ^{-1}t_\alpha t_\beta t_\gamma t_\varepsilon ^{-1}$. 
By the $\frac{2\pi }{5}$-rotation of the path $(\delta ,\alpha ,\beta ,\gamma ,\varepsilon )$, we obtain the path $(\gamma ,\varepsilon ,\alpha ,\beta ,\delta )$ in the pentagon and the corresponding pentagonal relation 
\[
t_\delta ^{-1}t_\beta t_\alpha t_\varepsilon t_\gamma ^{-1}=t_\gamma ^{-1}t_\varepsilon t_\alpha t_\beta t_\delta ^{-1}.
\] 
We can check that this pentagonal relation, corresponding to $(\gamma ,\varepsilon ,\alpha ,\beta ,\delta )$, is equivalent to the pentagonal relation corresponding to $(\delta ,\alpha ,\beta ,\gamma ,\varepsilon )$ up to commutative relations. 

\begin{figure}[h]
\includegraphics[scale=0.65]{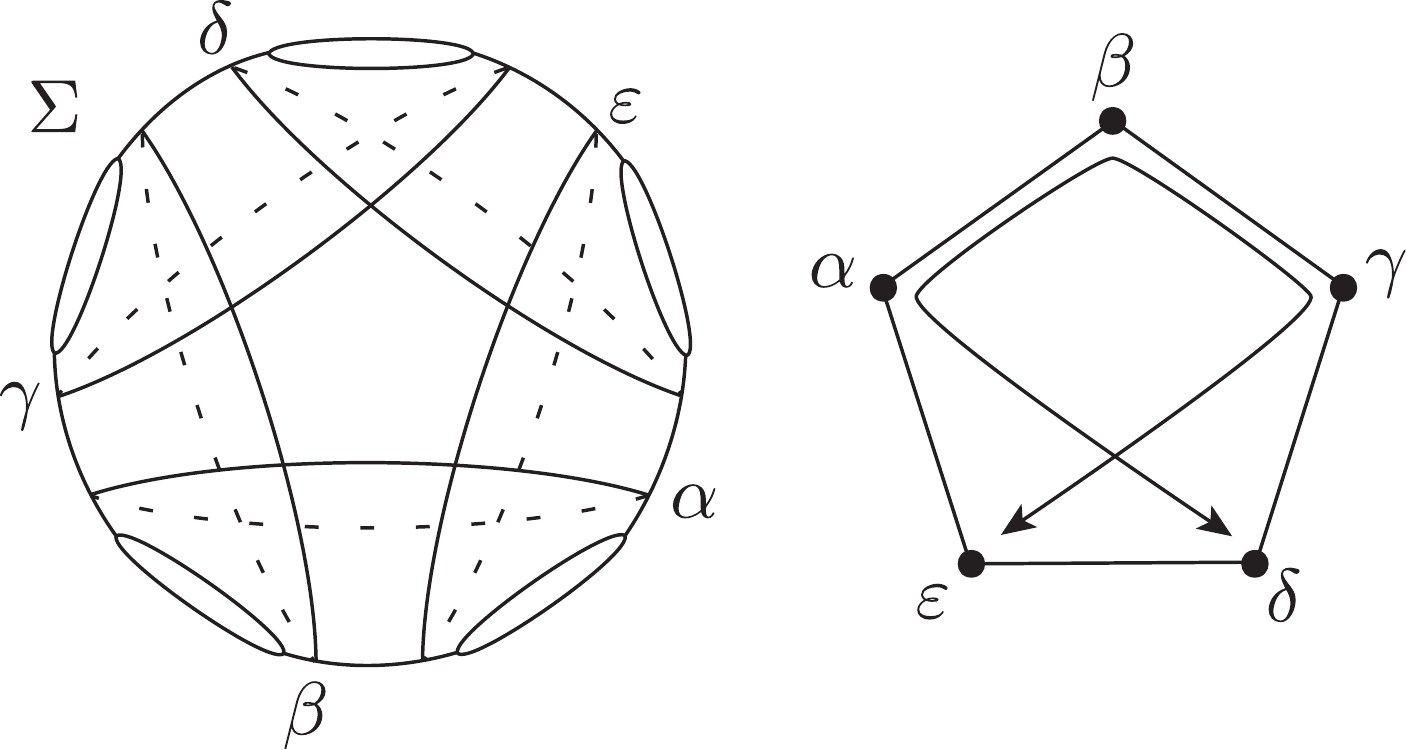}
\caption{Simple closed curves $\alpha $, $\beta $, $\gamma $, $\delta $, and $\varepsilon $ on $\Sigma $ and a pentagon whose vertices are labeled by these curves.}\label{fig_pentagon-rel_def}
\end{figure}

Throughout this paragraph and Section~\ref{section_pmod}, we consider the pure mapping class group $\PMn $ of 2-sphere with $n$ marked point not only for even $n$, but also for any positive integer $n$. 
In odd $n$ cases, we can similarly define the $n$ marked points set $\B =\{ p_1, p_2, \dots , p_{n}\}$ in $\Sigma _0$, the simple closed curve $\gamma _{i,j}$ on $\Sigma _0-\B $ for $1\leq i<j\leq n$ as in Figure~\ref{fig_path_l}, and the Dehn twist $t_{i,j}$ along $\gamma _{i,j}$. 
Recall that $t_{i,j}$ lies in $\PMn $ for $1\leq i<j\leq n$. 
For $1\leq i<j<k<l<m\leq n$, simple closed curves $\gamma _{i,k-1}$, $\gamma _{j,l-1}$, $\gamma _{k,m-1}$, $\gamma _{i,l-1}$, and $\gamma _{j,m-1}$ are lie on $\Sigma _0$ as in Figure~\ref{fig_pentagon-rel-p_ijklm}. 
We call the pentagonal relation corresponding to the path $(\gamma _{i,l-1}, \gamma _{i,k-1}, \gamma _{j,l-1}, \gamma _{k,m-1}, \gamma _{j,m-1})$, i.e. 
\[
t_{j,m-1}^{-1}t_{k,m-1}t_{j,l-1}t_{i,k-1}t_{i,l-1}^{-1}=t_{i,l-1}^{-1}t_{i,k-1}t_{j,l-1}t_{k,m-1}t_{j,m-1}^{-1},
\]
the \textit{pentagonal relation~$(P_{i,j,k,l,m})$} or the \textit{relation~$(P_{i,j,k,l,m})$}.

\begin{figure}[h]
\includegraphics[scale=0.87]{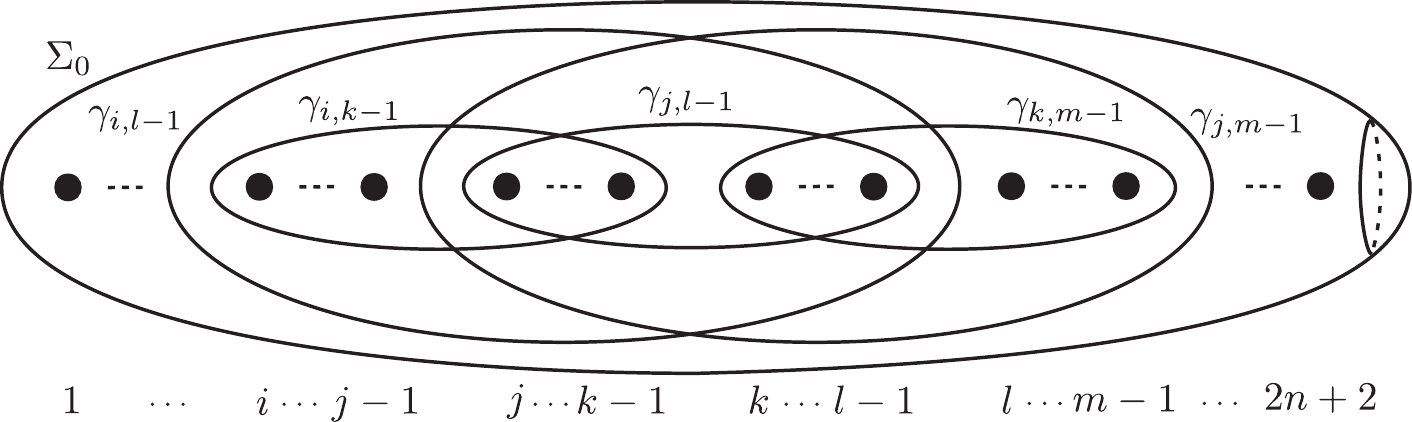}
\caption{Simple closed curves $\gamma _{i,k-1}$, $\gamma _{j,l-1}$, $\gamma _{k,m-1}$, $\gamma _{i,l-1}$, and $\gamma _{j,m-1}$ on $\Sigma _0$.}\label{fig_pentagon-rel-p_ijklm}
\end{figure}

We review the lantern relation in the next lemma to prove Lemma~\ref{lem_pentagon_rel}. 

\begin{lem}[Lantern relation]\label{lem_lantern_rel}
Let $\Sigma $ be a 2-sphere with four boundary components, $\alpha $, $\beta $, and $\gamma $ simple closed curves on $\Sigma $ each one of which separates $\Sigma $ into two 2-spheres with three boundary components as in Figure~\ref{fig_lantern-rel_def}, and $\delta _i$ $(i=1,2,3,4)$ the boundary curve of $\Sigma $. 
Then we have the following relation in $\mathrm{Mod}(\Sigma ,\emptyset ,\partial \Sigma )$:
\[
t_\alpha t_\beta t_\gamma =t_{\delta _1}t_{\delta _2}t_{\delta _3}t_{\delta _4}.
\]
\end{lem}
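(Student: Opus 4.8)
The plan is to prove the lantern relation by the Alexander method, since both sides of the claimed identity are elements of $\mathrm{Mod}(\Sigma ,\emptyset ,\partial \Sigma )$ that restrict to the identity on $\partial \Sigma$. It therefore suffices to fix an explicit model of the four-holed sphere together with a system of essential arcs cutting $\Sigma$ into disks, and to check that $t_\alpha t_\beta t_\gamma$ and $t_{\delta _1}t_{\delta _2}t_{\delta _3}t_{\delta _4}$ send each arc to the same isotopy class relative to $\partial \Sigma$; equality of the two mapping classes then follows. Concretely, I would realize $\Sigma$ as a disk with three open subdisks removed, with outer boundary $\delta _4$ and inner boundaries $\delta _1,\delta _2,\delta _3$ arranged left to right as in Figure~\ref{fig_lantern-rel_def}, so that $\alpha$, $\beta$, $\gamma$ each encircle exactly two of the three inner holes, choose a spanning family of arcs (for instance arcs joining $\delta _4$ to each inner boundary and arcs joining consecutive inner boundaries), and track their images. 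The right-hand side twists only in collar neighborhoods of the four boundary curves, so its effect on each arc is an explicit boundary-parallel twisting; the left-hand side is then computed by pushing each arc successively through the annular supports of $t_\gamma$, $t_\beta$, $t_\alpha$, keeping careful account of the algebraic intersections with $\alpha$, $\beta$, $\gamma$.

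Alternatively, I would exploit the interpretation via the three-strand braid group. Capping the inner holes $\delta _1,\delta _2,\delta _3$ with once-marked disks turns $\Sigma$ into a disk with three marked points, and the associated capping homomorphism sends $t_\alpha ,t_\beta ,t_\gamma$ to the three pairwise pure-braid twists $A_{12},A_{23},A_{13}$, sends $t_{\delta _4}$ to the central full twist $\Delta ^2=(\sigma _1\sigma _2)^3$, and kills $t_{\delta _1},t_{\delta _2},t_{\delta _3}$. The classical identity in $P_3$ expressing $\Delta ^2$ as the product of the three pairwise twists then shows that $t_\alpha t_\beta t_\gamma \, t_{\delta _4}^{-1}$ lies in the kernel of the capping map.

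By the capping exact sequence, that kernel is the free abelian group generated by the central inner boundary twists $t_{\delta _1},t_{\delta _2},t_{\delta _3}$, so $t_\alpha t_\beta t_\gamma =t_{\delta _1}^{a}t_{\delta _2}^{b}t_{\delta _3}^{c}t_{\delta _4}$ for some integers $a,b,c$. The cyclic symmetry of the configuration permuting the three inner holes (and correspondingly $\alpha ,\beta ,\gamma$) forces $a=b=c$, and a single local winding-number computation near one inner boundary (equivalently, evaluating the abelianized action on $H_1(\Sigma ,\partial \Sigma )$) yields the common value $1$, giving the asserted relation.

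The \emph{main obstacle} in either route is the careful bookkeeping. In the Alexander-method approach it is ensuring that the chosen arc system is genuinely filling and that the successive arc images are tracked without sign or intersection errors; in the braid-theoretic approach it is rigorously justifying that each inner boundary twist occurs to exactly the first power, rather than merely determining the relation modulo the center. I expect the arc-tracking to become routine once a symmetric model is fixed, so that the proof reduces to the single verified braid identity together with the first-power normalization pinned down by symmetry and one local computation.
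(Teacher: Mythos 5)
The paper itself contains no proof of this lemma: the lantern relation is stated as a review of a classical relation and is used as a black box in the proof of Lemma~\ref{lem_pentagon_rel}, with Farb--Margalit's book in the bibliography as the standard source. So there is nothing in the paper to compare against, and your proposal has to stand on its own. Your first route is exactly the standard proof: both sides lie in $\mathrm{Mod}(\Sigma ,\emptyset ,\partial \Sigma )$, the three arcs joining the outer boundary $\delta _4$ to the inner boundaries cut the planar surface into a disk, and checking that the two products send each arc to the same isotopy class rel $\partial \Sigma$ determines the mapping class by the Alexander method. That route is correct and complete once the pictures are drawn.

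The braid-theoretic route can also be completed, but it contains one genuine error and one danger point. The error is the parenthetical claim that the exponents $a,b,c$ can be evaluated via ``the abelianized action on $H_1(\Sigma ,\partial \Sigma )$''. For a planar surface the boundary classes generate $H_1(\Sigma )$, so by exactness of $H_1(\partial \Sigma )\to H_1(\Sigma )\to H_1(\Sigma ,\partial \Sigma )$ the second map is zero; since a twist acts on a relative class by $t_c[x]=[x]+\langle x,c\rangle [c]$ with $[c]$ taken in $H_1(\Sigma ,\partial \Sigma )$, \emph{every} Dehn twist, in particular $t_{\delta _1}$, acts trivially on $H_1(\Sigma ,\partial \Sigma )$, and homology cannot distinguish $t_{\delta _1}^a$ from the identity. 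What does work is your other suggestion: tracking the isotopy class rel $\partial \Sigma$ of an actual arc ending on an inner boundary (the winding count), which is a one-arc instance of the Alexander method, so this route does not really avoid arc-tracking. The danger point is that ``the classical identity in $P_3$'' is order-sensitive: with $A_{12}=\sigma _1^2$, $A_{23}=\sigma _2^2$, $A_{13}=\sigma _2\sigma _1^2\sigma _2^{-1}$ one has $\Delta ^2=A_{12}A_{13}A_{23}$, but $A_{12}A_{23}A_{13}=\Delta ^2[A_{12},A_{23}]\neq \Delta ^2$; so the identification of $\alpha ,\beta ,\gamma$ with pairwise twists must be matched to the product order $t_\alpha t_\beta t_\gamma$ (equivalently, to which side of the middle hole the third curve passes), which is precisely the bookkeeping you flagged. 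Your symmetry step, on the other hand, is sound as stated: since $t_\alpha t_\beta t_\gamma$ equals a central element, its cyclic rearrangements are conjugates of that central element and hence equal to it, and the four boundary twists generate a free abelian group of rank $4$, so comparing exponents forces $a=b=c$.
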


\begin{figure}[h]
\includegraphics[scale=0.8]{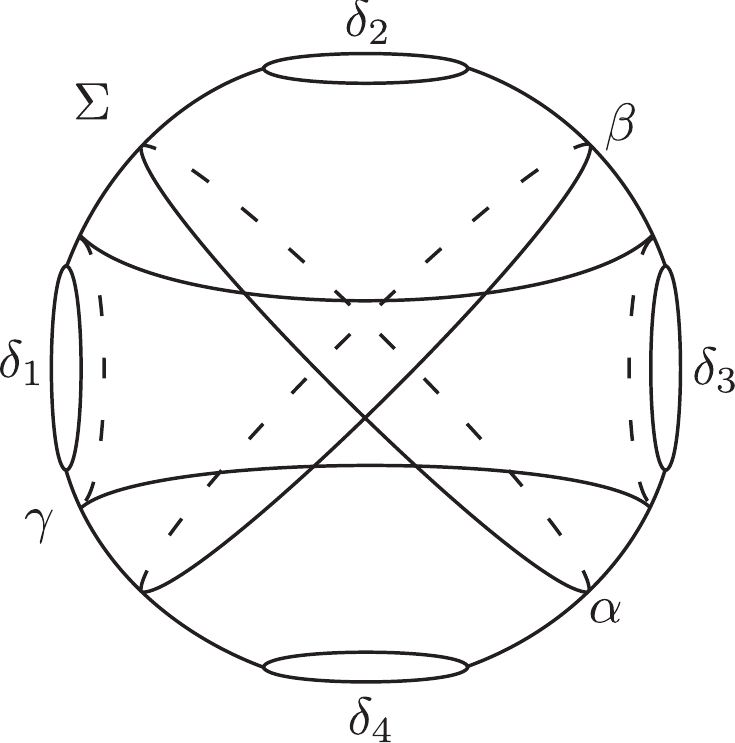}
\caption{Simple closed curves $\alpha $, $\beta $, $\gamma $, and $\delta _i$ $(i=1,2,3,4)$ on $\Sigma $.}\label{fig_lantern-rel_def}
\end{figure}

\begin{proof}[Proof of Lemma~\ref{lem_pentagon_rel}]
Let $\alpha $, $\beta $, $\gamma $, $\delta $, and $\varepsilon $ be simple closed curves on $\Sigma $ as in Figure~\ref{fig_pentagon-rel_def} and $\Sigma ^\prime $ the subsurface of $\Sigma $ which is cut off by $\alpha $ and homeomorphic to a 2-sphere with four boundary components. 
Denote by $\delta _{1}$, $\delta _{2}$, and $\delta _{3}$ the boundary curves of $\Sigma ^\prime $, each one of which is not $\alpha $, and by $\gamma _1$ and $\gamma _2$ the simple closed curves on $\Sigma ^\prime $ as in Figure~\ref{fig_pentagon-rel_proof}. 
By Lemma~\ref{lem_lantern_rel}, we have the two lantern relations
\[
t_{\delta }t_\gamma t_{\gamma _1}=t_\alpha t_{\delta _1}t_{\delta _2}t_{\delta _3} \quad \text{and}\quad t_\gamma t_{\delta }t_{\gamma _2}=t_\alpha t_{\delta _1}t_{\delta _2}t_{\delta _3}.
\] 
These relations are equivalent to the relations 
\begin{eqnarray}
t_\gamma ^{-1}t_{\delta }^{-1}t_\alpha &=&t_{\delta _1}^{-1}t_{\delta _2}^{-1}t_{\delta _3}^{-1}t_{\gamma _1} \quad \text{and}\label{rel1_pentagon_proof}\\
t_{\delta }^{-1}t_\gamma ^{-1}t_\alpha &=&t_{\delta _1}^{-1}t_{\delta _2}^{-1}t_{\delta _3}^{-1}t_{\gamma _2}. \label{rel2_pentagon_proof}
\end{eqnarray}
Since $t_\beta t_\varepsilon ^{-1}t_{\gamma _2}t_\varepsilon t_\beta ^{-1}=t_{t_\beta t_\varepsilon ^{-1}(\gamma _2)}=t_{\gamma _1}$ and $t_\beta t_\varepsilon ^{-1}$ commutes with $t_{\delta _i}$ $(i=1,2,3)$, by the conjugation of the relation~(\ref{rel2_pentagon_proof}) by $t_\beta t_\varepsilon ^{-1}$, we have the relation
\begin{eqnarray}
t_\beta t_\varepsilon ^{-1}t_{\delta }^{-1}t_\gamma ^{-1}t_\alpha t_\varepsilon t_\beta ^{-1}=t_{\delta _1}^{-1}t_{\delta _2}^{-1}t_{\delta _3}^{-1}t_{\gamma _1}.\label{rel3_pentagon_proof}
\end{eqnarray}
Thus, by the relation~(\ref{rel1_pentagon_proof}) and~(\ref{rel3_pentagon_proof}), we have 
\begin{eqnarray*}
&&t_\beta t_\varepsilon ^{-1}t_{\delta }^{-1}\underset{\rightarrow }{\underline{t_\gamma ^{-1}}}t_\alpha t_\varepsilon t_\beta ^{-1}=t_\gamma ^{-1}t_{\delta }^{-1}t_\alpha \\
&\stackrel{\text{COMM}}{\Longleftrightarrow}&t_\beta t_\varepsilon ^{-1}t_{\delta }^{-1}t_\alpha \underline{t_\varepsilon t_\gamma ^{-1}t_\beta ^{-1}}=\underline{t_\gamma ^{-1}}t_{\delta }^{-1}t_\alpha \\
&\stackrel{\text{CONJ}}{\Longleftrightarrow}&t_\gamma t_\beta \underset{\leftarrow }{\underline{t_\varepsilon ^{-1}}}\ \underset{\rightarrow }{\underline{t_{\delta }^{-1}}}t_\alpha =t_{\delta }^{-1}t_\alpha t_\beta t_\gamma t_\varepsilon ^{-1}\\
&\stackrel{\text{COMM}}{\Longleftrightarrow}&t_\varepsilon ^{-1}t_\gamma t_\beta t_\alpha t_{\delta }^{-1}=t_{\delta }^{-1}t_\alpha t_\beta t_\gamma t_\varepsilon ^{-1},
\end{eqnarray*}
where ``COMM'' and ``CONJ'' mean deformations of relations by commutative relations and conjugations of relations, respectively, ``$A_1\underset{\rightarrow }{\underline{A}}A_2$'' (resp. ``$A_1\underset{\leftarrow }{\underline{A}}A_2$'') means that deforming the word $A_1AA_2$ by moving $A$ right (resp. left), and denote $A^\prime \underline{A}=B\stackrel{\text{CONJ}}{\Longleftrightarrow }A^\prime =BA^{-1}$ and $\underline{A}A^\prime =B\stackrel{\text{CONJ}}{\Longleftrightarrow }A^\prime =A^{-1}B$. 
We have completed the proof of Lemma~\ref{lem_pentagon_rel}. 
\end{proof}

\begin{figure}[h]
\includegraphics[scale=0.8]{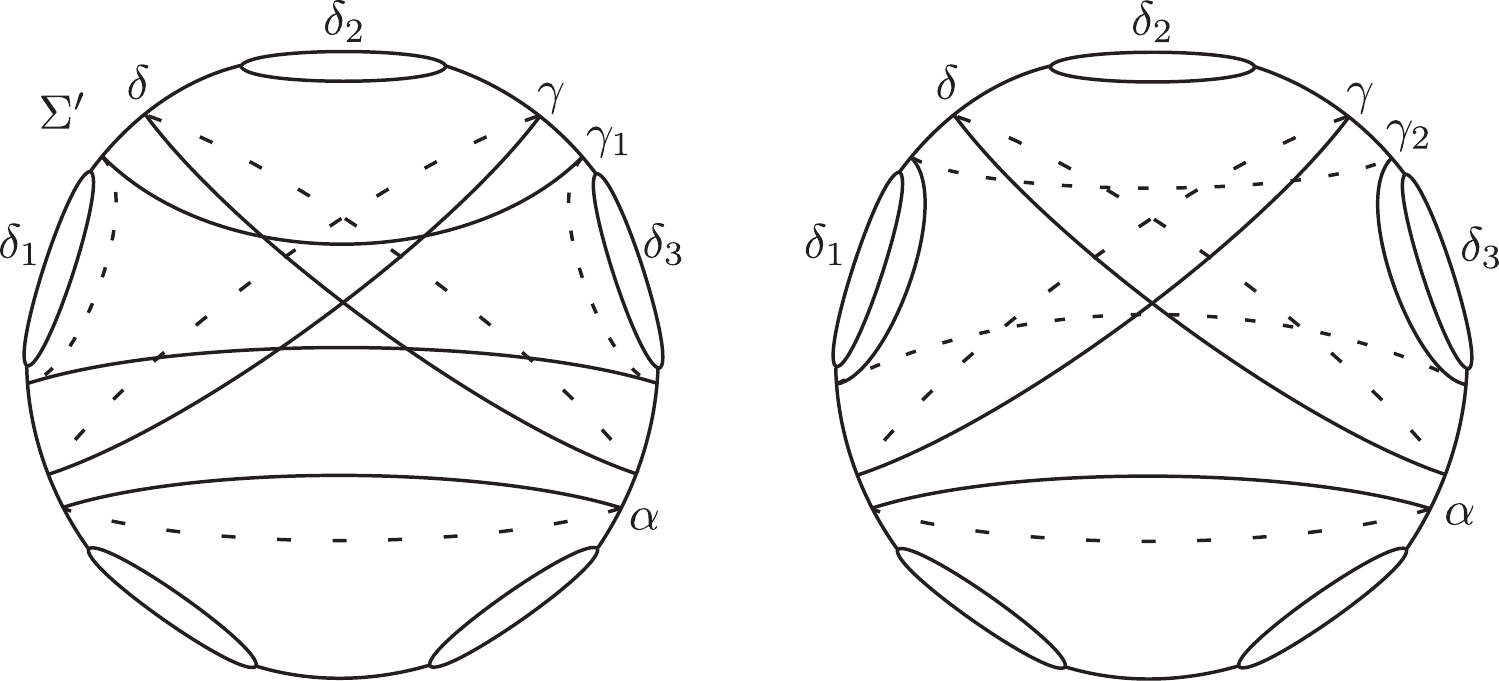}
\caption{Simple closed curves $\gamma _1$, $\gamma _2$, and $\delta _i$ $(i=1,2,3)$ on $\Sigma ^\prime $.}\label{fig_pentagon-rel_proof}
\end{figure}

\section{A finite presentation for the pure mapping class group of a 2-sphere}\label{section_pmod}

In this section, we consider the pure mapping class group $\PMn $ of a 2-sphere with $n$ marked point not only for even $n$, but also for any positive integer $n$. 
Recall that $\B =\{ p_1, p_2, \dots , p_{n}\}$ is the $n$ marked points set in $\Sigma _0$ and $t_{i,j}$ is the Dehn twist along the simple closed curve $\gamma _{i,j}$ on $\Sigma _0-\B $ for $1\leq i<j\leq n$ as in Figure~\ref{fig_path_l}. 
We will prove the following proposition in this section. 

\begin{prop}\label{prop_pres_pmod}
The group $\PMn $ admits the presentation with generators $t_{i,j}$ for $1\leq i<j\leq n-1$ with $(i,j)\not= (1,n-1)$, and the following defining relations: 
\begin{enumerate}
\item commutative relations $t_{i,j} \rightleftarrows t_{k,l}$ \quad for $j<k$, $k\leq i<j\leq l$, or $l<i$, 
\item pentagonal relations $(P_{i,j,k,l,m})$ \quad for $1\leq i<j<k<l<m\leq n$,\\ 
i.e. $t_{j,m-1}^{-1}t_{k,m-1}t_{j,l-1}t_{i,k-1}t_{i,l-1}^{-1}=t_{i,l-1}^{-1}t_{i,k-1}t_{j,l-1}t_{k,m-1}t_{j,m-1}^{-1}$.
\end{enumerate}
\end{prop}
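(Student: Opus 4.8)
The plan is to derive the presentation for $\PMn$ from an already-understood presentation via the standard Birman exact sequence that forgets the last marked point, proceeding by induction on $n$. Recall the forgetful map gives a short exact sequence
\[
1\longrightarrow \pi_1(\Sigma_0-\{p_1,\dots,p_{n-1}\})\longrightarrow \PMn\stackrel{\mathcal{F}}{\longrightarrow}\mathrm{PMod}_{0,n-1}\longrightarrow 1,
\]
where the kernel is the free group generated by loops based at $p_n$ encircling the punctures $p_1,\dots,p_{n-1}$ (the point-pushing subgroup), subject to the single relation that the product of all these loops is trivial on the sphere. The base cases $\PMn$ for small $n$ are trivial or free and are checked by hand. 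For the inductive step, I would take the presentation of $\mathrm{PMod}_{0,n-1}$ supplied by the inductive hypothesis on generators $t_{i,j}$ with $1\le i<j\le n-2$, $(i,j)\neq(1,n-2)$, lift each generator and relation to $\PMn$, and adjoin generators and relations for the free kernel together with the conjugation relations describing how the lifts of the base generators act on the kernel.

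The key steps, in order, are as follows. First I would identify the point-pushing generators concretely: a loop pushing $p_n$ around $p_i$ can be written as a product of two of our Dehn twists, so that the new generators coming from the kernel are exactly the $t_{i,j}$ involving the index $n-1$ (equivalently, after reindexing, those surrounding $p_n$). Second, I would record that the single sphere relation $\prod$(pushing loops)$=1$ in the kernel corresponds, after translation into Dehn twists, to the omitted generator $t_{1,n-1}$ being expressible in terms of the others — this is precisely why $(i,j)=(1,n-1)$ is excluded from the generating set. Third, I would show that the conjugation action of each lifted base generator $t_{k,l}$ on each point-pushing generator is encoded by the commutative relations of type (1) together with the pentagonal relations $(P_{i,j,k,l,m})$; here the disjointness criterion in (1) handles the cases where the curves are disjoint, and the pentagonal relations handle the genuinely linked cases. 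Finally, I would invoke the standard lemma that a group presentation is obtained from a short exact sequence by combining a presentation of the quotient (lifted), a presentation of the kernel, and the action relations, and verify that every such relation is a consequence of relations (1) and (2), and conversely that (1) and (2) all hold in $\PMn$ (relation (1) is Lemma~\ref{lem_comm_rel}(1) and relation (2) is the pentagonal relation established via the lantern relation in Lemma~\ref{lem_pentagon_rel}).

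The main obstacle I anticipate is the bookkeeping in the third step: showing that the conjugation action of the lifted base generators on the free kernel is \emph{generated} by the pentagonal and commutative relations, with no extra relations needed. Concretely, one must check that pushing $p_n$ around $p_i$ and then applying a twist $t_{k,l}$ agrees, after rewriting everything back into the $t_{i,j}$ generators, with the pentagonal identity $(P_{i,j,k,l,m})$ for the appropriate quintuple of indices — and that as $i,j,k,l$ range over all linked configurations these pentagonal relations suffice to present the semidirect-product structure. Carefully matching indices between the geometric push-action and the combinatorial relation $(P_{i,j,k,l,m})$, and confirming the count of generators and relations matches (so that no redundancy is hidden and no relation is missing), is where the real work lies; the remaining pieces are either routine applications of the Birman exact sequence or direct consequences of Lemmas~\ref{lem_comm_rel}~and~\ref{lem_pentagon_rel}.
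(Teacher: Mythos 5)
Your proposal follows essentially the same route as the paper's own proof: induction on $n$ via the Birman exact sequence for the forgetful map, the presentation-of-extension lemma, the identification of the point-pushing generators as products of two twists (the paper computes $\Delta(\omega_i)=t_{1,i}t_{i+1,n-1}^{-1}$ and $\Delta(\omega_{n-2})=t_{1,n-2}$), and the verification that the conjugation relations reduce, up to commutative relations, to the pentagonal relations (the paper shows relation (C) is exactly $(P_{1,i,k+1,j+1,n})$). The only cosmetic difference is that you present the kernel redundantly with $n-1$ loops and one product relation, whereas the paper uses the $n-2$ free generators $\omega_1,\dots,\omega_{n-2}$ directly; this does not change the argument.
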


The relation~(1) and (2) in Proposition~\ref{prop_pres_pmod} are reviewed in Lemma~\ref{lem_comm_rel} and after Lemma~\ref{lem_pentagon_rel}, respectively. 

To prove Proposition~\ref{prop_pres_pmod} and theorems in Sections~\ref{section_lmod} and \ref{section_smod}, we review a relationship between a group extension and group presentations as follows. 
Let $G$ be a group and let $H=\bigl< X\mid R\bigr>$ and $Q=\bigl< Y\mid S\bigr>$ be presented groups which have the short exact sequence 
\[
1\longrightarrow H\stackrel{\iota }{\longrightarrow }G\stackrel{\nu }{\longrightarrow }Q\longrightarrow 1.
\]
We take a preimage $\widetilde{y}\in G$ of $y\in Q$ with respect to $\nu $ for each $y\in Q$. 
Then we put $\widetilde{X}=\{ \iota (x) \mid x\in X\} \subset G$ and $\widetilde{Y}=\{ \widetilde{y} \mid y\in Y\} \subset G$. 
Denote by $\widetilde{r}$ the word in $\widetilde{X}$ which is obtained from $r\in R$ by replacing each $x\in X$ by $\iota (x)$ and also denote by $\widetilde{s}$ the word in $\widetilde{Y}$ which is obtained from $s\in S$ by replacing each $y\in Y$ by $\widetilde{y}$. 
We note that $\widetilde{r}=1$ in $G$. 
Since $\widetilde{s}\in G$ is an element in $\ker \nu =\iota (H)$ for each $s\in S$, there exists a word $v_{s}$ in $\widetilde{X}$ such that $\widetilde{s}=v_{s}$ in $G$. 
Since $\iota (H)$ is a normal subgroup of $G$, for each $x\in X$ and $y\in Y$, there exists a word $w_{x,y}$ in $\widetilde{X}$ such that $\widetilde{y}\iota (x)\widetilde{y}^{-1}=w_{x,y}$ in $G$. 
The next lemma follows from an argument of the combinatorial group theory (for instance, see \cite[Proposition~10.2.1, p139]{Johnson}).

\begin{lem}\label{presentation_exact}
Under the situation above, the group $G$ admits the presentation with the generating set $\widetilde{X}\cup \widetilde{Y}$ and following defining relations:
\begin{enumerate}
 \item[(A)] $\widetilde{r}=1$ \quad for $r\in R$,
 \item[(B)] $\widetilde{s}=v_{s}$ \quad for $s\in S$,
 \item[(C)] $\widetilde{y}\iota (x)\widetilde{y}^{-1}=w_{x,y}$ \quad for $x\in X$ and $y\in Y$.
\end{enumerate} 
\end{lem}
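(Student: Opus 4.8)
The statement is a classical fact of combinatorial group theory, and the plan is to carry out the standard comparison-of-presentations argument (as in \cite[Proposition~10.2.1]{Johnson}). Let $P$ denote the abstractly presented group $\langle \widetilde{X}\cup \widetilde{Y}\mid (A),(B),(C)\rangle$. Since the relations $(A)$, $(B)$, and $(C)$ all hold in $G$ by construction, there is a homomorphism $\phi \colon P\to G$ sending each generator to the element of $G$ it names. First I would check that $\phi $ is surjective: given $g\in G$, write $\nu (g)$ as a word in $Y$, lift that word to a word $\widetilde{w}$ in $\widetilde{Y}$, and observe that $g\widetilde{w}^{-1}\in \ker \nu =\iota (H)$, which is a product of elements of $\widetilde{X}$; hence $g$ lies in the image of $\phi $.

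Next I would analyze the subgroup $N\trianglelefteq P$ generated by $\widetilde{X}$. Relations $(C)$ guarantee that conjugation by each $\widetilde{y}$ carries $\widetilde{X}$ back into $N$, so $N$ is normal. In the quotient $P/N$ the relations $(A)$ and $(C)$ become trivial, while each relation $(B)$ reads $\widetilde{s}\equiv v_{s}\equiv 1$, so $P/N$ is generated by the images of $\widetilde{Y}$ subject to the relations $S$; thus there is a surjection $Q=\langle Y\mid S\rangle \twoheadrightarrow P/N$. Conversely, the composite $\nu \circ \phi \colon P\to Q$ kills $\widetilde{X}$ (as $\widetilde{X}\subset \iota (H)=\ker \nu $) and sends $\widetilde{Y}$ to the generators of $Q$, inducing $P/N\twoheadrightarrow Q$. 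These two maps are mutually inverse on generators, so $P/N\cong Q$; in particular $\ker (\nu \circ \phi )=N$, since the isomorphism $P/N\cong Q$ is precisely the map induced by $\nu \circ \phi $.

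The heart of the argument, and the step I expect to be the main obstacle, is showing that $\phi $ restricts to an isomorphism from $N$ onto $\iota (H)$, i.e. that the added relations do not collapse the ``$H$-part'' of $P$ beyond the relations $R$. I would exhibit a retraction: since the generators $\widetilde{X}$ of $N$ satisfy the relations $(A)$, there is a surjection $\psi \colon H=\langle X\mid R\rangle \twoheadrightarrow N$ sending each $x\in X$ to the corresponding generator in $\widetilde{X}\subset P$. The composite $(\phi |_{N})\circ \psi \colon H\to \iota (H)\cong H$ is the identity on the generating set $X$, hence is the identity homomorphism; this forces $\psi $ to be injective and $\phi |_{N}$ to be injective, and combined with the surjectivity of $\psi $ it yields $N\cong H$ with $\phi |_{N}$ an isomorphism onto $\iota (H)$.

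Finally I would assemble the pieces. If $k\in \ker \phi $, then $\nu (\phi (k))=1$, so $k\in \ker (\nu \circ \phi )=N$; but $\phi |_{N}$ is injective, so $k=1$. Thus $\phi $ is injective, and being also surjective it is an isomorphism, establishing that $(A)$, $(B)$, and $(C)$ are defining relations for $G$ on the generating set $\widetilde{X}\cup \widetilde{Y}$.
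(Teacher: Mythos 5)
Your overall strategy is sound, and most of the pieces (the surjectivity of $\phi$, the retraction $\psi\colon H\to N$ showing $\phi|_{N}$ is injective, and the final assembly) are correct; note the paper itself does not prove this lemma but defers to Johnson's book. However, there is a genuine gap at the step ``Relations $(C)$ guarantee that conjugation by each $\widetilde{y}$ carries $\widetilde{X}$ back into $N$, so $N$ is normal.'' The relations $(C)$ are one-sided: they control $\widetilde{y}\,\iota(x)\,\widetilde{y}^{-1}$ but say nothing directly about $\widetilde{y}^{-1}\iota(x)\,\widetilde{y}$. A subgroup satisfying $\widetilde{y}N\widetilde{y}^{-1}\subseteq N$ for each generator need not be normal: in $\langle a,t\mid tat^{-1}=a^{2}\rangle$ the subgroup $\langle a\rangle$ satisfies $t\langle a\rangle t^{-1}\subseteq \langle a\rangle$, yet $t^{-1}at\notin \langle a\rangle$. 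Since your identification $P/N\cong Q$ and the final step $\ker(\nu\circ\phi)=N$ both rest on this normality claim, the gap propagates through the remainder of the proof as written.

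The gap is repairable with material you already have, but the order matters. First run the retraction argument, which nowhere uses normality: $\psi\colon H\to P$ is well defined by relations $(A)$, its image is $N$, and $\phi\circ\psi=\iota$ is injective, so $\psi\colon H\to N$ is an isomorphism and $\phi|_{N}$ is injective. Now deduce normality from this injectivity: fix $x\in X$ and $y\in Y$. Since $\iota(H)=\ker\nu$ is normal in $G$, there is a word $u$ in $X$ with $\phi(\widetilde{y})\,\iota(u)\,\phi(\widetilde{y})^{-1}=\iota(x)$ in $G$. Applying relations $(C)$ letter by letter, the element $\widetilde{y}\,u(\widetilde{X})\,\widetilde{y}^{-1}$ of $P$ equals a word in $\widetilde{X}$, hence lies in $N$; it has the same image under $\phi$ as $\widetilde{x}$, so injectivity of $\phi|_{N}$ forces $\widetilde{y}\,u(\widetilde{X})\,\widetilde{y}^{-1}=\widetilde{x}$ in $P$, i.e. $\widetilde{y}^{-1}\widetilde{x}\,\widetilde{y}=u(\widetilde{X})\in N$. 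Together with $(C)$ itself, this shows $N$ is normalized by every generator and every inverse of a generator, so $N$ is normal, and the rest of your argument (the computation of $P/N$, the equality $\ker(\nu\circ\phi)=N$, and the conclusion $\ker\phi=1$) then goes through unchanged.
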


\begin{proof}[Proof of Proposition~\ref{prop_pres_pmod}]
We proceed by induction on $n=|\B |\geq 1$. 
Put $\B ^1=\{ p_1, p_2, \dots ,p_{n-1}\}$ and $\Sigma _{0,n-1}=\Sigma _0-\B ^1$. 
Then we regard $\mathrm{PMod}_{0,n-1}$ as the group $\mathrm{Mod}(\Sigma _0, \emptyset , \B ^1 )$. 
Let $\mathcal{F}\colon \PMn \to \mathrm{PMod}_{0,n-1}$ be the forgetful homomorphism which is defined by the correspondence $[\varphi ]\mapsto [\varphi ]$. 
Birman~\cite{Birman} constructed the following exact sequence (see also Section~4.2 in~\cite{Farb-Margalit}): 
\begin{eqnarray}\label{birman-exact1}
\pi _1(\Sigma _{0,n-1},p_n ) \stackrel{\Delta }{\longrightarrow }\PMn \stackrel{\mathcal{F}}{\longrightarrow }\mathrm{PMod}_{0,n-1}\longrightarrow 1, 
\end{eqnarray}
where for $\gamma \in \pi _1(\Sigma _{0,n-1},p_n )$, $\Delta (\gamma )$ is a self-homeomorphism on $\Sigma _0$ which is the result of pushing $p_{n}$ along $\gamma $ once and for $\gamma _1$, $\gamma _2\in \pi _1(\Sigma _{0,n-1},p_n )$, the product $\gamma _1\gamma _2$ means $\gamma _1\gamma _2(t)=\gamma _2(2t)$ for $0\leq t\leq \frac{1}{2}$ and $\gamma _1\gamma _2(t)=\gamma _1(2t-1)$ for $\frac{1}{2}\leq t\leq 1$. 
We remark that $\pi _1(\Sigma _{0,n-1},p_n )$ is trivial for $n\in \{ 1, 2\} $ and $\mathrm{PMod}_{0,0}=\mathrm{Mod}(\Sigma _0, \emptyset , \emptyset )$ is also trivial. 
Hence the groups $\mathrm{PMod}_{0,1}$ and $\mathrm{PMod}_{0,2}$ are trivial. 
For $n\geq 3$, $\pi _1(\Sigma _{0,n-1},p_n )$ is generated by loops $\omega _1$, $\omega _2,\ \dots $, $\omega _{n-2}$ on $\Sigma _0-\B ^1 $ based at $p_{n}$ as in Figure~\ref{fig_gen_pi_1}.
In the case of $n=3$, $\pi _1(\Sigma _{0,2},p_3 )$ is the infinite cyclic group and the image of the homomorphism $\Delta \colon \pi _1(\Sigma _{0,2},p_3 ) \to \mathrm{PMod}_{0,3}$ is trivial. 
By the exact sequence~(\ref{birman-exact1}), we show that $\mathrm{PMod}_{0,3}$ is trivial. 
In the cases $n\in \{ 1,2,3\}$, the Dehn twist along any simple closed curve on $\Sigma _0-\B $ is trivial in $\PMn $. 
Thus, Proposition~\ref{prop_pres_pmod} is true for the cases $n\in \{ 1,2,3\}$. 
In the case of $n\geq 4$, since $\pi _1(\Sigma _{0,n-1},p_n )$ is isomorphic to the free group of rank $n-2$, the center of $\pi _1(\Sigma _{0,n-1},p_n )$ is trivial for $n\geq 4$. 
Thus, the homomorphism $\Delta $ is injective by Corollary~1.2 in~\cite{Birman}, and we will apply Lemma~\ref{presentation_exact} to the exact sequence~(\ref{birman-exact1}). 

The base case of this induction is the case $n=4$. 
We remark that the presentation for $\mathrm{PMod}_{0,4}$ in Proposition~\ref{prop_pres_pmod} has generators $t_{1,2}$ and $t_{2,3}$, and no relations. 
Since $\mathrm{PMod}_{0,3}$ is trivial and $\pi _1(\Sigma _{0,n-1},p_n )$ is isomorphic to the free group of rank $2$ and generated by $\omega _1$ and $\omega _2$, $\mathrm{PMod}_{0,4}$ is isomorphic to the free group of rank $2$ and generated by $\Delta (\omega _1)$ and $\Delta (\omega _2)$ by the exact sequence~(\ref{birman-exact1}). 
We can show that $\Delta (\omega _1)=t_{2,3}^{-1}$ and $\Delta (\omega _2)=t_{1,2}$, and thus, Proposition~\ref{prop_pres_pmod} is true for the case $n=4$.

We assume $n\geq 5$. 
By the inductive hypothesis, $\mathrm{PMod}_{0,n-1}$ admits the presentation with generators $t_{i,j}$ for $1\leq i<j\leq n-2$ with $(i,j)\not= (1,n-2)$, and the following defining relations: 
\begin{enumerate}
\item Commutative relations $t_{i,j} \rightleftarrows t_{k,l}$ \quad for $j<k$, $k\leq i<j\leq l$, or $l<i$, 
\item Pentagonal relations $(P_{i,j,k,l,m})$ \quad for $1\leq i<j<k<l<m\leq n-1$,\\ 
i.e. $t_{j,m-1}^{-1}t_{k,m-1}t_{j,l-1}t_{i,k-1}t_{i,l-1}^{-1}=t_{i,l-1}^{-1}t_{i,k-1}t_{j,l-1}t_{k,m-1}t_{j,m-1}^{-1}$.
\end{enumerate}
We naturally regard the generator $t_{i,j}$ of this presentation for $\mathrm{PMod}_{0,n-1}$ as an element of $\PMn $ and denote by $X_{n-1}$ the generating set of the presentation above for $\mathrm{PMod}_{0,n-1}$. 
Recall that $\pi _1(\Sigma _{0,n-1},p_n )$ is isomorphic to the free group of rank $n-2$ and generated by $\omega _1$, $\omega _2,\ \dots $, $\omega _{n-2}$. 
By applying Lemma~\ref{presentation_exact} to the exact sequence~(\ref{birman-exact1}) and the finite presentations for $\mathrm{PMod}_{0,n-1}$ and $\pi _1(\Sigma _{0,n-1},p_n )$, we have the finite presentation for $\PMn $ whose generating set is $X_{n-1}\cup \{ \Delta (\omega _1), \Delta (\omega _2), \dots , \Delta (\omega _{n-2})\}$ and the defining relations are as follows: 
\begin{enumerate}
\item[(A)] no relations,
\item[(B)]
\begin{enumerate}
\item[(1)] commutative relations $t_{i,j} \rightleftarrows t_{k,l}$ \quad for $t_{i,j}, t_{k,l}\in X_{n-1}$ and $j<k$, $k\leq i<j\leq l$, or $l<i$, 
\item[(2)] pentagonal relations $(P_{i,j,k,l,m})$ \quad for $1\leq i<j<k<l<m\leq n-1$,
\end{enumerate}
\item[(C)] $t_{i,j}\Delta (\omega _k)t_{i,j}^{-1}=w_{k;i,j}$ \quad for $1\leq k\leq n-2$, $1\leq i<j\leq n-2$ and $(i,j)\not= (1,n-2)$,
\end{enumerate}
where $w_{k;i,j}$ is some product of $\Delta (\omega _1),\ \Delta (\omega _2),\ \dots ,\ \Delta (\omega _{n-2})$. 
We can show that 
\[
\Delta (\omega _i)=t_{1,i}t_{i+1,n-1}^{-1}\quad  \text{for } 1\leq i\leq n-3  \quad \text{and}\quad \Delta (\omega _{n-2})=t_{1,n-2}.
\] 
By substituting $\Delta (\omega _i)$ in generators and relations of the presentation for $\PMn $ above for the corresponding product $t_{1,i}t_{i+1,n-1}^{-1}$ or $t_{1,n-2}$ and using Tietze transformations, we obtain a presentation for $\PMn $ whose generators are $t_{i,j}$ for $1\leq i<j\leq n-1$ with $(i,j)\not= (1,n-1)$. 

It is enough for completing the proof of Proposition~\ref{prop_pres_pmod} to prove that the relation~(C) in the presentation for $\PMn $ above is obtained from commutative relations and the pentagonal relations $(P_{i,j,k,l,m})$ for $1\leq i<j<k<l<m\leq n$. 
Since $t_{i,j} \Delta (\omega _k)t_{i,j}^{-1}=\Delta (t_{i,j}(\omega _k))$, we will express $t_{i,j}(\omega _k)$ by a product of $\omega _1$, $\omega _2,\ \dots $, $\omega _{n-2}$ in $\pi _1(\Sigma _{0,n-1},p_n )$ for $1\leq k\leq n-2$, $1\leq i<j\leq n-2$ and $(i,j)\not= (1,n-2)$. 
In the cases $j\leq k$ or $k\leq i-1$, since the simple closed curve $\gamma _{i,j}$ is disjoint from $\omega _k$, we have $t_{i,j}(\omega _k)=\omega _k$. 
Thus, in this case, the relation~(C) coincides with the relation $t_{i,j}\Delta (\omega _k)t_{i,j}^{-1}=\Delta (\omega _k)$ and is obtained from commutative relations. 

In the case $i\leq k\leq j-1$,  $\gamma _{i,j}$ and $\omega _k$ transversely intersect at two points and $t_{i,j}(\omega _k)$ is a loop based at $p_n$ as in Figure~\ref{fig_action_pi_1-basis}. 
As a product of  $\omega _1$, $\omega _2,\ \dots $, $\omega _{n-2}$ in $\pi _1(\Sigma _{0,n-1},p_n )$, we have $t_{i,j}(\omega _k)=\omega _{j}\omega _{i-1}^{-1}\omega _{k}\omega _{j}^{-1}\omega _{i-1}$. 
Since the relations $t_{i,j} \Delta (\omega _k)t_{i,j}^{-1}=\Delta (\omega _{j})\Delta (\omega _{i-1})^{-1}\Delta (\omega _{k})\Delta (\omega _{j})^{-1}\Delta (\omega _{i-1})$, $\Delta (\omega _{i-1})=t_{1,i-1}t_{i,n-1}^{-1}$, $\Delta (\omega _k)=t_{1,k}t_{k+1,n-1}^{-1}$, and 
\[
\Delta (\omega _{j})=\left\{
		\begin{array}{ll}
		t_{1,j}t_{j+1,n-1}^{-1} &\text{for }2\leq j\leq n-3,\\
		t_{1,n-2} &\text{for }j=n-2,		\end{array}
		\right.\\
\]
hold in $\PMn $, we have the relation
\begin{eqnarray*}
&&t_{i,j}t_{1,k}t_{k+1,n-1}^{-1}t_{i,j}^{-1}\\
&=&\left\{
		\begin{array}{ll}
		t_{1,j}t_{j+1,n-1}^{-1}\cdot t_{i,n-1}\underset{\rightarrow }{\underline{t_{1,i-1}^{-1}}}\cdot t_{1,k}t_{k+1,n-1}^{-1}\cdot t_{j+1,n-1}t_{1,j}^{-1}\cdot t_{1,i-1}t_{i,n-1}^{-1} \\ \text{for }2\leq j\leq n-3,\\
		t_{1,n-2}\cdot t_{i,n-1}\underset{\rightarrow }{\underline{t_{1,i-1}^{-1}}}\cdot t_{1,k}t_{k+1,n-1}^{-1}\cdot t_{1,n-2}^{-1}\cdot t_{1,i-1}t_{i,n-1}^{-1} \quad \text{for }j=n-2\\
\end{array}
		\right.\\
&\stackrel{\text{COMM}}{=}&\left\{
		\begin{array}{ll}
		t_{1,j}\underset{\rightarrow }{\underline{t_{j+1,n-1}^{-1}}}t_{i,n-1}t_{1,k}t_{k+1,n-1}^{-1}t_{j+1,n-1}t_{1,j}^{-1}t_{i,n-1}^{-1} \quad \text{for }2\leq j\leq n-3,\vspace{0.1cm}\\
		t_{1,n-2}t_{i,n-1}t_{1,k}t_{k+1,n-1}^{-1}t_{1,n-2}^{-1}t_{i,n-1}^{-1} \quad \text{for }j=n-2\\
\end{array}
		\right.\\
&\stackrel{\text{COMM}}{=}&t_{1,j}t_{i,n-1}t_{1,k}t_{k+1,n-1}^{-1}t_{1,j}^{-1}t_{i,n-1}^{-1}.
\end{eqnarray*}
Thus, we have
\begin{eqnarray*}
&&t_{i,j}t_{1,k}t_{k+1,n-1}^{-1}t_{i,j}^{-1}=t_{1,j}t_{i,n-1}\underline{t_{1,k}t_{k+1,n-1}^{-1}}t_{1,j}^{-1}t_{i,n-1}^{-1}\\
&\stackrel{\text{COMM}}{\Longleftrightarrow }&t_{i,j}t_{1,k}\underline{t_{k+1,n-1}^{-1}t_{i,j}^{-1}}=\underline{t_{1,j}t_{i,n-1}t_{k+1,n-1}^{-1}}t_{1,k}t_{1,j}^{-1}t_{i,n-1}^{-1}\\
&\stackrel{\text{CONJ}}{\Longleftrightarrow }&t_{k+1,n-1}t_{i,n-1}^{-1}\underset{\rightarrow }{\underline{t_{1,j}^{-1}}}t_{i,j}t_{1,k}=\underline{t_{1,k}t_{1,j}^{-1}}t_{i,n-1}^{-1}t_{i,j}t_{k+1,n-1}\\
&\stackrel{\text{COMM}}{\Longleftrightarrow }&\underline{t_{k+1,n-1}t_{i,n-1}^{-1}}t_{i,j}t_{1,k}t_{1,j}^{-1}=t_{1,j}^{-1}t_{1,k}\underset{\rightarrow }{\underline{t_{i,n-1}^{-1}}}t_{i,j}t_{k+1,n-1}\\
&\stackrel{\text{COMM}}{\Longleftrightarrow }&t_{i,n-1}^{-1}t_{k+1,n-1}t_{i,j}t_{1,k}t_{1,j}^{-1}=t_{1,j}^{-1}t_{1,k}t_{i,j}t_{k+1,n-1}t_{i,n-1}^{-1}.
\end{eqnarray*}
Therefore the relation~(C) for $1\leq i\leq k\leq j-1\leq n-3$ is equivalent to the pentagonal relation~$(P_{1,i,k+1,j+1,n})$ up to commutative relations and we have completed the proof of Proposition~\ref{prop_pres_pmod}. 
\end{proof}

\begin{figure}[h]
\includegraphics[scale=1.3]{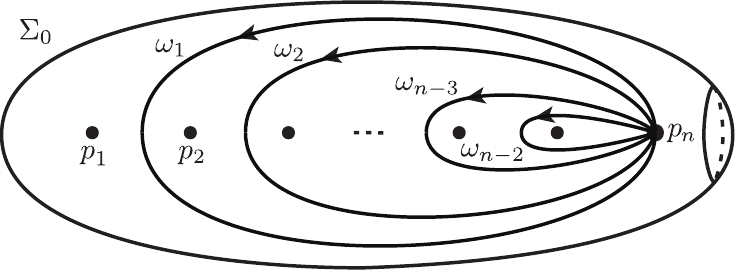}
\caption{Loops $\omega _1$, $\omega _2,\ \dots $, $\omega _{n-2}$ on $\Sigma _0-\B ^1$ based at $p_{n}$.}\label{fig_gen_pi_1}
\end{figure}

\begin{figure}[h]
\includegraphics[scale=0.9]{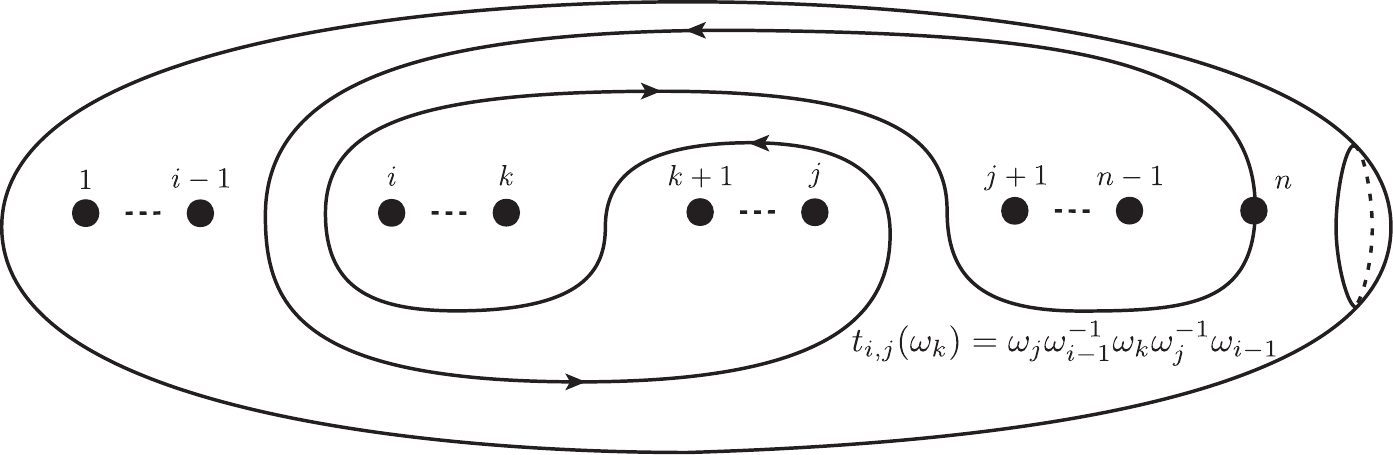}
\caption{A loop $t_{i,j}(\omega _k)=\omega _{j}\omega _{i-1}^{-1}\omega _{k}\omega _{j}^{-1}\omega _{i-1}$ for $i\leq k\leq j-1$.}\label{fig_action_pi_1-basis}
\end{figure}

\section{Group structures of liftable mapping class groups and balanced superelliptic mapping class groups via group extensions}\label{section_birman-exact-seq}

\subsection{Group structures of liftable mapping class groups with either one marked point or one boundary component}\label{section_exact-seq_lmod}

In this section, we observe the group structures of $\LMp $ and $\LMb $ via group extensions. 
Recall that, under the identification of the symmetric group $S_{2n+2}$ with the group of self-bijections on $\B =\{ p_1, p_2, \dots ,p_{2n+2}\}$, the image of the liftable mapping class group $\LM $ by the surjective homomorphism $\Psi \colon \M \to S_{2n+2}$ coincides with the group $W_{2n+2}\cong (S_{n+1}^o\times S_{n+1}^e)\rtimes \Z _2$ by Lemma~\ref{lem_GW} (see Section~\ref{section_liftable-element}). 
Let $W_{2n+2;\ast }$ be the subgroup of $W_{2n+2}$ which consists of elements preserving $p_{2n+2}$ and $S_{n}^e$ the subgroup of $W_{2n+2}$ which consists of elements whose restriction to $\B ^o\cup \{ p_{2n+2}\}$ is the identity map. 
By the definitions, all elements in $W_{2n+2;\ast }$ are parity-preserving and $S_{n}^e$ coincides with the intersection $S_{n+1}^e\cap W_{2n+2;\ast }$. 
Note that $S_{n}^e$ is isomorphic to $S_{n}$ and generated by transpositions $(2\ 4)$, $(4\ 6),\ \dots $, $(2n-2\ 2n)$. 
Then we have the following lemma. 

\begin{lem}\label{lem_W_ast}
We have $W_{2n+2;\ast }=S_{n+1}^o\times S_n^e$.
\end{lem}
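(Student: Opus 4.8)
The plan is to prove the asserted equality by establishing both inclusions, the reverse one resting on the parity observation already recorded in the text. First I would dispose of the inclusion $S_{n+1}^o\times S_n^e\subseteq W_{2n+2;\ast }$, which is essentially definitional: an element of $S_{n+1}^o$ fixes all of $\B _e$ (in particular $p_{2n+2}$) pointwise and is parity-preserving since it fixes $\B _o$ setwise, so it lies in $W_{2n+2;\ast }$; an element of $S_n^e$ fixes $\B _o\cup \{ p_{2n+2}\}$ pointwise, hence is likewise parity-preserving and fixes $p_{2n+2}$, so it too lies in $W_{2n+2;\ast }$. Because the supports of $S_{n+1}^o$ and $S_n^e$ are contained in the disjoint sets $\B _o$ and $\B _e-\{ p_{2n+2}\}$ respectively, these two subgroups commute and meet only in the identity, so the internal direct product $S_{n+1}^o\times S_n^e$ is a well-defined subgroup of $W_{2n+2;\ast }$.

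For the reverse inclusion I would take an arbitrary $\sigma \in W_{2n+2;\ast }$ and first argue that $\sigma $ is parity-preserving. Indeed $\sigma $ fixes $p_{2n+2}\in \B _e$, so $\sigma $ cannot be parity-reversing, since a parity-reversing element sends $\B _e$ into $\B _o$ and would move $p_{2n+2}$ out of $\B _e$. Hence $\sigma (\B _o)=\B _o$ and $\sigma (\B _e)=\B _e$, so $\sigma $ restricts to a permutation of $\B _o$ and a permutation of $\B _e$ on the two halves. This yields a unique factorization $\sigma =\rho \tau $ with $\rho \in S_{n+1}^o$ (the restriction of $\sigma $ to $\B _o$, extended by the identity on $\B _e$) and $\tau \in S_{n+1}^e$ (the restriction to $\B _e$); this is exactly the statement that the parity-preserving subgroup is the kernel $S_{n+1}^o\times S_{n+1}^e$ appearing in the exact sequence~(\ref{exact1}). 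Since $\rho $ fixes $p_{2n+2}$ automatically and $\sigma (p_{2n+2})=p_{2n+2}$, we conclude $\tau (p_{2n+2})=p_{2n+2}$, i.e. $\tau \in S_{n+1}^e\cap W_{2n+2;\ast }=S_n^e$. Therefore $\sigma \in S_{n+1}^o\times S_n^e$, completing the second inclusion.

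I do not expect a genuine obstacle here: the argument is purely a combinatorial bookkeeping of how a permutation decomposes across the two parity classes of marked points. The only point demanding care is the parity step, namely the clean observation that fixing a single point of $\B _e$ already forces parity-preservation, which is what rules out the parity-reversing coset and reduces the whole computation to identifying the stabilizer of $p_{2n+2}$ inside $S_{n+1}^e$ with $S_n^e$.
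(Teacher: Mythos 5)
Your proof is correct and follows essentially the same route as the paper: the easy inclusion $S_{n+1}^o\times S_n^e\subseteq W_{2n+2;\ast }$, then the decomposition of an arbitrary $\sigma \in W_{2n+2;\ast }$ into its odd and even parts (the paper's $\sigma _o$ and $\sigma _e$), using that fixing $p_{2n+2}$ forces parity-preservation and that the even part lies in $S_{n+1}^e\cap W_{2n+2;\ast }=S_n^e$. Your explicit justification of the parity step is a point the paper only asserts in the text preceding the lemma, but the argument is the same.
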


\begin{proof}
Since $S_{n+1}^o\times S_n^e$ is generated by transpositions $\{ (i\ i+2)\mid 1\leq i\leq 2n-1\}$ and these transpositions are parity-preserving and fix $p_{2n+2}$, the group $S_{n+1}^o\times S_n^e$ is a subgroup of $W_{2n+2;\ast }$. 

For any $\sigma \in W_{2n+2;\ast}$, we define $\sigma _o,\ \sigma _e\in W_{2n+2;\ast }$ as follows: 
\[
\sigma _o(i)=\left\{
		\begin{array}{ll}
		\sigma (i) &\text{for }i\in \B _o,\\
		i &\text{for }i\in \B _e, 
		\end{array}
		\right.\\
\sigma _e(i)=\left\{
		\begin{array}{ll}
		\sigma (i) &\text{for }i\in \B _e,\\
		i &\text{for }i\in \B _o. 
		\end{array}
		\right.\\
\]
Since $\sigma $ preserves $p_{2n+2}$, by the definitions, we have $\sigma _o\in S_{n+1}^o$, $\sigma _e\in S_n^e$ and $\sigma =\sigma _o\sigma _e\in S_{n+1}^o\times S_n^e$. 
We have completed the proof of Lemma~\ref{lem_W_ast}. 
\end{proof}

Recall that we regard $\LMp $ as the subgroup of $\LM $ which consists of elements fixing the point $p_{2n+2}$. 
The next lemma determines the image of $\LMp $ with respect to $\Psi $. 

\begin{lem}\label{image_psi_lmodp}
We have $\Psi (\LMp )=W_{2n+2;\ast }$.
\end{lem}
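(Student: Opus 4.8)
The plan is to show the two inclusions $\Psi(\LMp) \subseteq W_{2n+2;\ast}$ and $W_{2n+2;\ast} \subseteq \Psi(\LMp)$ separately, using Lemma~\ref{lem_W_ast} to identify the target group concretely as $S_{n+1}^o \times S_n^e$. The first inclusion should be essentially immediate: by definition $\LMp$ consists of those liftable mapping classes that fix the point $p_{2n+2}$, so for any $f \in \LMp$ the permutation $\Psi(f)$ fixes $p_{2n+2}$ and lies in $W_{2n+2}$ (since $\LMp \subseteq \LM$ and $\Psi(\LM) = W_{2n+2}$ by Lemma~\ref{lem_GW}); hence $\Psi(f) \in W_{2n+2;\ast}$ by the definition of $W_{2n+2;\ast}$.

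The reverse inclusion is the substantive direction, and the strategy is to exhibit, for a generating set of $W_{2n+2;\ast}$, explicit liftable mapping classes in $\LMp$ realizing these permutations. By Lemma~\ref{lem_W_ast}, $W_{2n+2;\ast} = S_{n+1}^o \times S_n^e$ is generated by the transpositions $(i\ i+2)$ for $1 \leq i \leq 2n-1$. First I would recall from Section~\ref{section_liftable-element} that the half-rotation $h_i$ satisfies $\Psi(h_i) = (i\ i+2)$ for $1 \leq i \leq 2n$. The key observation is that for $1 \leq i \leq 2n-1$ the mapping class $h_i$ can be taken to have a representative supported away from the disk $D$ (equivalently, away from $p_{2n+2}$), so that $h_i$ fixes $p_{2n+2}$ and thus descends to an element of $\LMp$. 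Indeed, the excerpt already states that $h_i$ for $1 \leq i \leq 2n-1$ is regarded as an element of $\Mb$ (hence fixes $D$ pointwise), so these $h_i$ lie in $\LMp$. Since $\{\Psi(h_i) \mid 1 \leq i \leq 2n-1\} = \{(i\ i+2) \mid 1 \leq i \leq 2n-1\}$ generates $W_{2n+2;\ast}$, every element of $W_{2n+2;\ast}$ is the image under $\Psi$ of a product of the $h_i$, which lies in $\LMp$. This gives $W_{2n+2;\ast} \subseteq \Psi(\LMp)$.

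Combining the two inclusions yields $\Psi(\LMp) = W_{2n+2;\ast}$. The main point requiring care — and the step I expect to be the only genuine obstacle — is verifying that the generating transpositions $(i\ i+2)$ for $1 \leq i \leq 2n-1$ are realized by liftable classes fixing $p_{2n+2}$; this is where the restriction to $i \leq 2n-1$ (as opposed to $i = 2n$, which would move $p_{2n+2}$) is essential. Here one invokes the fact, recorded at the end of Section~\ref{section_liftable-element}, that precisely the $h_i$ with $1 \leq i \leq 2n-1$ descend to $\Mb$ and hence fix $p_{2n+2}$. A clean way to write the proof is therefore: establish $\Psi(\LMp) \subseteq W_{2n+2;\ast}$ from the definitions, then note that $h_1, \dots, h_{2n-1} \in \LMp$ map under $\Psi$ onto a generating set of $W_{2n+2;\ast} = S_{n+1}^o \times S_n^e$ by Lemma~\ref{lem_W_ast}, giving the opposite inclusion.
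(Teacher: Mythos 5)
Your proposal is correct and follows essentially the same route as the paper's own proof: the inclusion $\Psi(\LMp)\subseteq W_{2n+2;\ast}$ from Lemma~\ref{lem_GW} together with the fact that elements of $\LMp$ fix $p_{2n+2}$, and surjectivity by observing that $h_1,\dots,h_{2n-1}$ lie in $\LMp$ and their images $(i\ i+2)$ generate $W_{2n+2;\ast}=S_{n+1}^o\times S_n^e$ by Lemma~\ref{lem_W_ast}. No gaps; the one point you flagged as needing care (why only $i\leq 2n-1$) is handled exactly as in the paper.
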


\begin{proof}
We take a mapping class $f\in \LMp $. 
By Lemma~\ref{lem_GW}, the image $\Psi (f)$ lies in $W_{2n+2}$. 
Since $f(p_{2n+2})=p_{2n+2}$, $\Psi (f)$ is parity-preserving and fixes $p_{2n+2}$. 
Thus we have $\Psi (f)\in W_{2n+2;\ast }$ and $\Psi (\LMp )$ is a subgroup of $W_{2n+2;\ast }$. 

By Lemma~\ref{lem_W_ast}, $W_{2n+2;\ast }$ is generated by $(i\ i+2)$ for $1\leq i\leq 2n-1$. 
Recall that $h_i\in \LM$ for $1\leq i\leq 2n$ is the half-rotation as in Figure~\ref{fig_h_i} and for $1\leq i\leq 2n-1$, $h_i$ lies in $\LMp $. 
Since $\Psi (h_i)=(i\ i+2)$ for $1\leq i\leq 2n$, the restriction of $\Psi $ to $\LMp $ is a surjection on to $W_{2n+2;\ast }$. 
We have completed the proof of Lemma~\ref{image_psi_lmodp}. 
\end{proof}

Recall that all elements in the pure mapping class group $\PM $ is liftable with respect to the balanced superelliptic covering map $p=p_{g,k}$ and fixes $p_{2n+2}$. 
Thus $\PM $ is a subgroup of $\LMp $. 
By Lemma~\ref{image_psi_lmodp} and restricting the exact sequence~(\ref{exact2}) in Section~\ref{section_liftable-element} to $\LMp $, we have the following proposition. 

\begin{prop}\label{prop_exact_lmodp}
We have the following exact sequence: 
\begin{eqnarray}\label{exact_lmodp}
1\longrightarrow \PM \longrightarrow \LMp \stackrel{\Psi }{\longrightarrow }W_{2n+2;\ast }\longrightarrow 1. 
\end{eqnarray}
\end{prop}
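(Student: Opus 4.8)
The plan is to obtain the exact sequence~(\ref{exact_lmodp}) by restricting the exact sequence~(\ref{exact2}), namely
\[
1\longrightarrow \PM \longrightarrow \LM \stackrel{\Psi }{\longrightarrow }W_{2n+2}\longrightarrow 1,
\]
to the subgroup $\LMp \subseteq \LM $. First I would observe that, as recalled just before the statement, every element of $\PM $ is liftable for $p_{g,k}$ and fixes $p_{2n+2}$, so $\PM $ is a subgroup of $\LMp $; moreover $\PM =\ker \Psi $ already sits inside $\LMp $ as the full kernel, since $\Psi (f)=1$ trivially fixes $p_{2n+2}$. Thus the restriction of $\Psi $ to $\LMp $ has kernel exactly $\PM \cap \LMp =\PM $, which immediately gives exactness at the left-hand end and at the middle term $\LMp $.

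The remaining point is surjectivity of the restricted map $\Psi |_{\LMp }\colon \LMp \to W_{2n+2;\ast }$, together with the identification of its image. This is precisely the content of Lemma~\ref{image_psi_lmodp}, which asserts $\Psi (\LMp )=W_{2n+2;\ast }$, so I would simply invoke that lemma: it shows both that the image lands in $W_{2n+2;\ast }$ (because an element of $\LMp $ fixes $p_{2n+2}$, hence its image is parity-preserving and fixes $p_{2n+2}$, i.e. lies in $W_{2n+2;\ast }$ by Lemma~\ref{lem_W_ast}) and that the image is all of $W_{2n+2;\ast }$ (because the half-rotations $h_i$ for $1\leq i\leq 2n-1$ lie in $\LMp $ and $\Psi (h_i)=(i\ i+2)$ generate $W_{2n+2;\ast }=S_{n+1}^o\times S_n^e$). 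Combining these facts assembles the short exact sequence~(\ref{exact_lmodp}).

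There is essentially no obstacle here, since all the work has been front-loaded into Lemmas~\ref{lem_W_ast} and~\ref{image_psi_lmodp}: the proposition is a formal consequence of restricting a short exact sequence to a subgroup whose image and kernel have already been pinned down. The only care needed is the routine check that the kernel of the restricted homomorphism is genuinely all of $\PM $ and not a proper subgroup, which is clear because $\PM \subseteq \LMp $ and $\PM =\ker \Psi $.
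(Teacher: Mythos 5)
Your proposal is correct and follows exactly the paper's argument: the paper likewise observes that $\PM \subseteq \LMp $ (every pure mapping class is liftable and fixes $p_{2n+2}$), and then obtains the sequence by restricting the exact sequence~(\ref{exact2}) to $\LMp $, invoking Lemma~\ref{image_psi_lmodp} for the identification $\Psi (\LMp )=W_{2n+2;\ast }$. There is no substantive difference between your route and the paper's.
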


The inclusion $\iota \colon \Sigma _0^1\hookrightarrow \Sigma _0=\Sigma _0^1\cup D$ induces the surjective homomorphism $\iota _\ast \colon \Mb \to \Mp $ whose kernel is the infinite cyclic group generated by the Dehn twist $t_{\partial D}$ along $\partial D$. 
The homomorphism $\iota _\ast$ is called the \textit{capping homomorphism}. 
We have the following lemma. 

\begin{lem}\label{surj_capping_lmod}
We have $\iota _\ast (\LMb )=\LMp $.
\end{lem}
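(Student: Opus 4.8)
The plan is to establish the two inclusions $\iota _\ast(\LMb)\subseteq \LMp $ and $\LMp\subseteq \iota _\ast(\LMb)$ separately, using throughout that liftability is detected by the permutation action on $\B $. Concretely, by Lemma~\ref{lem_GW} and the discussion following it, a mapping class is liftable with respect to $p_{g,k}$ if and only if its image under $\Psi $ lies in $W_{2n+2}$, and this criterion depends only on the action on $\B $; hence it applies to elements of $\Mb $ through the composite $\Psi \circ \iota _\ast $, an element $f\in \Mb $ lying in $\LMb $ exactly when $\Psi (\iota _\ast (f))\in W_{2n+2}$. I will also use the two facts recorded just above the statement: that $\iota _\ast \colon \Mb \to \Mp $ is surjective, and that its kernel is $\left< t_{\partial D}\right> $.

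For the inclusion $\iota _\ast(\LMb)\subseteq \LMp $, take $f\in \LMb $. Since $f$ fixes $\partial \Sigma _0^1$, its image $\iota _\ast (f)$ fixes the marked point $p_{2n+2}$, so $\Psi (\iota _\ast (f))$ fixes $p_{2n+2}$ as well. As $f$ is liftable we have $\Psi (\iota _\ast (f))\in W_{2n+2}$, and an element of $W_{2n+2}$ fixing $p_{2n+2}$ cannot be parity-reversing (a parity-reversing permutation sends $p_{2n+2}\in \B _e$ into $\B _o$), hence lies in $W_{2n+2;\ast }=\Psi (\LMp )$ by Lemma~\ref{image_psi_lmodp}. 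Since liftability of an element of $\Mp $ is again governed by its $\Psi $-image lying in $W_{2n+2}$, we conclude $\iota _\ast (f)\in \LMp $.

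For the reverse inclusion $\LMp \subseteq \iota _\ast(\LMb)$, take $g\in \LMp $. Surjectivity of $\iota _\ast $ provides some $f\in \Mb $ with $\iota _\ast (f)=g$. Then $\Psi (\iota _\ast (f))=\Psi (g)\in W_{2n+2;\ast }\subseteq W_{2n+2}$, so by the permutation criterion $f$ is liftable, that is $f\in \LMb $; therefore $g=\iota _\ast (f)\in \iota _\ast(\LMb)$, which closes the argument.

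I expect the main obstacle to be the justification that liftability of an element of $\Mb $ is governed by $\Psi \circ \iota _\ast $ in the same way as for $\M $, i.e. the boundary-component analogue of Lemma~\ref{lem_GW}. If one prefers a self-contained argument avoiding this, the two inclusions can instead be obtained topologically by transporting lifts across the capped disk: for $f\in \LMb $ a boundary-fixing lift $\widetilde{f}$ on $\Sigma _g^1$ extends over $\widetilde{D}$ to a lift of $\iota _\ast (f)$ fixing $\widetilde{p}_{2n+2}$, and conversely, after isotoping a representative of $g\in \LMp $ to fix $D$ pointwise, a compatible lift $\widetilde{g}$ on $\Sigma _g$ can be adjusted by a power of $\zeta $ so that it fixes $\widetilde{D}$ pointwise and hence restricts to a boundary-fixing lift of $g|_{\Sigma _0^1}$. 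The delicate point in this alternative is precisely the equivariant extension and adjustment of the lift over the branched disk $\widetilde{D}\to D$.
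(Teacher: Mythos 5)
Your primary argument has a genuine gap, and you have located it correctly yourself: the whole reverse inclusion rests on the claim that $f\in \Mb $ lies in $\LMb $ as soon as $\Psi (\iota _\ast (f))\in W_{2n+2}$, a boundary analogue of Lemma~\ref{lem_GW} that the paper does not provide and that is not a formality. Membership in $\LMb $ demands a lift that restricts to the identity on $\partial \Sigma _g^1$, whereas Lemma~\ref{lem_GW} only produces a lift of the capped-off homeomorphism of the closed surface $\Sigma _g$, and such a lift is determined only up to composition with the deck transformations $\zeta ^l$; one must still argue that some choice fixes $\partial \widetilde{D}$ pointwise. (Your forward inclusion also quietly uses the easy half of the same claim: to get $\Psi (\iota _\ast (f))\in W_{2n+2}$ from $f\in \LMb $ you must first know $\iota _\ast (f)\in \LM $, which requires extending the boundary-fixing lift by the identity over $\widetilde{D}$.)

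Your fallback sketch, on the other hand, is exactly the paper's proof, and filling it in closes the gap. Forward: extend a boundary-fixing lift of a representative of $f\in \LMb $ by the identity over $\widetilde{D}$; this is a lift of the capped map, which moreover fixes $p_{2n+2}$, so $\iota _\ast (f)\in \LMp $. Reverse: a liftable representative $\varphi $ of $g\in \LMp $ fixes $p_{2n+2}$, so it can be isotoped through homeomorphisms preserving $\B $ to $\varphi _1$ with $\varphi _1|_D=\mathrm{id}_D$; since liftability is preserved by isotopies fixing $\B $, the map $\varphi _1$ has a lift $\widetilde{\varphi }_1$, which preserves $\widetilde{D}=p^{-1}(D)$ and restricts there to a deck transformation $\zeta ^l|_{\widetilde{D}}$ of the branched cover $\widetilde{D}\to D$; then $\zeta ^{-l}\circ \widetilde{\varphi }_1$ fixes $\widetilde{D}$ pointwise and restricts to a boundary-fixing lift on $\Sigma _g^1$, so $[\varphi _1|_{\Sigma _0^1}]\in \LMb $ maps to $g$ under $\iota _\ast $. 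The two points you called delicate — preservation of liftability under the isotopy, and the identification of $\widetilde{\varphi }_1|_{\widetilde{D}}$ as a power of $\zeta $ — are precisely the two sentences the paper supplies; with them written out, your alternative is a complete proof and coincides with the paper's.
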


\begin{proof}
Assume that $g=n(k-1)$. %, $\LMp =\mathrm{LMod}_{2n+2,\ast ;k}$, and $\LMb =\mathrm{LMod}_{2n+1;k}^1$. 
For any $f\in \LMb $, there exists a representative $\varphi \in f$ and a self-homeomorphism $\widetilde{\varphi }$ on $\Sigma _g^1$ whose restriction to $\partial \Sigma _g^1$ is the identity map such that $p\circ \widetilde{\varphi }=\varphi \circ p$. 
Denote by $\varphi ^\prime $ (resp. $\widetilde{\varphi }^\prime $) the self-homeomorphism on $\Sigma _{0}$ (resp.  $\Sigma _{g}$) which is the extension of $\varphi ^\prime $ (resp. $\widetilde{\varphi }^\prime $) by the identity map on $D$ (resp. $\widetilde{D}$). 
By the definitions, we have $p\circ \widetilde{\varphi }^\prime =\varphi ^\prime \circ p$, i.e. $\varphi ^\prime $ is liftable with respect to $p\colon \Sigma _g\to \Sigma _0$, and $\varphi (p_{2n+2})=p_{2n+2}$. 
Thus, by the definition of $\iota _\ast $, we have $\iota _\ast (f)=[\varphi ^\prime ]\in \LMp $ and $\iota _\ast (\LMb )\subset \LMp $. 

For any $f\in \LMp $, we take a liftable representative $\varphi \in f$ with respect to $p\colon \Sigma _g\to \Sigma _0$. 
Since $\varphi $ fixes $p_{2n+2}\in D$, there exists an isotopy $\varphi _t\colon \Sigma _0\to \Sigma _0$ $(t\in [0,1])$ such that $\varphi _t(\B )=\B $ for $t\in [0,1]$, $\varphi _0=\varphi $, and $\varphi _1|_{D}=\mathrm{id}_{D}$. 
Since the liftability with respect to $p$ of a self-homeomorphism on $\Sigma _0$ is preserved by isotopies fixing $\B $, $\varphi _1$ is also liftable with respect to $p$. 
Hence there exists a self-homeomorphism $\widetilde{\varphi }_1$ on $\Sigma _g$ such that $p\circ \widetilde{\varphi }_1=\varphi _1\circ p$. 
Since $\varphi _1|_{D}=\mathrm{id}_{D}$, the restriction $\widetilde{\varphi }_1|_{\widetilde{D}}\colon \widetilde{D}\to \widetilde{D}$ is a covering transformation of the $k$-fold branched covering space $p|_{\widetilde{D}}\colon \widetilde{D}\to D$, namely $\widetilde{\varphi }_1|_{\widetilde{D}}=\zeta ^l|_{\widetilde{D}}$ for some $0\leq l\leq k-1$. 
We define $\varphi ^\prime =\varphi _1|_{\Sigma _0^1}\colon \Sigma _0^1\to \Sigma _0^1$ and $\widetilde{\varphi }^\prime =(\zeta ^{-l}\circ \widetilde{\varphi }_1)|_{\Sigma _g^1}\colon \Sigma _g^1\to \Sigma _g^1$. 
By the definitions, $\widetilde{\varphi }^\prime $ is the lift of $\varphi ^\prime $ with respect to $p$ whose restriction to $\partial \Sigma _g^1$ is the identity map. 
Thus, the mapping class $[\varphi ^\prime ]$ lies in $\LMb $ and we have $\iota _\ast ([\varphi ^\prime ])=[\varphi _1]=[\varphi _0=\varphi ]=f$ in $\LMp $. 
Therefore, $\iota _\ast (\LMb )\supset \LMp $ and we have completed the proof of Lemma~\ref{surj_capping_lmod}.  
\end{proof}

The next proposition gives the group extension of $\LMp $ by the capping homomorphism. 

\begin{prop}\label{prop_exact_lmodb}
We have the following exact sequence: 
\begin{eqnarray}\label{exact_lmodb}
1\longrightarrow \left< t_{\partial D}\right>  \longrightarrow \LMb \stackrel{\iota _\ast }{\longrightarrow }\LMp \longrightarrow 1. 
\end{eqnarray}
\end{prop}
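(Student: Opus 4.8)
The plan is to obtain the exact sequence~(\ref{exact_lmodb}) by restricting the capping exact sequence
\[
1\longrightarrow \left< t_{\partial D}\right> \longrightarrow \Mb \stackrel{\iota _\ast }{\longrightarrow }\Mp \longrightarrow 1
\]
recalled just before the statement to the liftable subgroups. First I would note that $\iota _\ast |_{\LMb }\colon \LMb \to \LMp $ is surjective, which is exactly the content of Lemma~\ref{surj_capping_lmod}. It then remains only to identify the kernel of this restriction, and since $\ker \iota _\ast =\left< t_{\partial D}\right> $ in $\Mb $, we have $\ker (\iota _\ast |_{\LMb })=\LMb \cap \left< t_{\partial D}\right> $. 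Because $\left< t_{\partial D}\right> $ is infinite cyclic, this intersection is all of $\left< t_{\partial D}\right> $ as soon as the single generator $t_{\partial D}$ lies in $\LMb $.

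The core of the argument is therefore to show that $t_{\partial D}\in \LMb $, that is, that $t_{\partial D}$ is liftable with respect to $p$ in the boundary-relative sense; granting this, $\left< t_{\partial D}\right> \subset \LMb $, the kernel is the whole group $\left< t_{\partial D}\right> $, and exactness at $\LMb $ follows, while exactness at $\left< t_{\partial D}\right> $ and at $\LMp $ is automatic from injectivity of the inclusion and Lemma~\ref{surj_capping_lmod}. To prove liftability I would realize $t_{\partial D}$ by a homeomorphism supported in an annular neighborhood $A\subset \Sigma _0^1$ of $\partial \Sigma _0^1=\partial D$. Since $A$ contains no branch point, the restricted covering $p^{-1}(A)\to A$ is an honest covering of annuli, connected and cyclic of degree $k$ because $p_{2n+2}$ is totally ramified; hence $t_{\partial D}$ lifts to a homeomorphism $\widetilde{f}$ of $\Sigma _g^1$ supported in $p^{-1}(A)\subset \Sigma _g^1$.

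The one delicate point, which I expect to be the main obstacle, is the boundary condition: the natural lift $\widetilde{f}$ restricts to $\partial \Sigma _g^1=\partial \widetilde{D}$ as the rotation $\zeta |_{\partial \widetilde{D}}$ rather than the identity, precisely because $\partial \widetilde{D}$ covers $\partial D$ with degree $k$. I would correct this by composing with a suitable power of the deck transformation: since $p\circ \zeta ^{-1}=p$, the map $\zeta ^{-1}\circ \widetilde{f}$ is again a lift of $t_{\partial D}$, and on $\partial \widetilde{D}$ it restricts to $\zeta ^{-1}\circ \zeta =\mathrm{id}$ (here $\zeta $ preserves $\widetilde{D}$, hence restricts to $\Sigma _g^1$). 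Thus $\zeta ^{-1}\widetilde{f}$ is a self-homeomorphism of $\Sigma _g^1$ that lifts $t_{\partial D}$ and is the identity on $\partial \Sigma _g^1$, witnessing $t_{\partial D}\in \LMb $; an explicit such lift is also recorded in Section~\ref{section_lifts}. The subtlety is to check that a single correction by $\zeta ^{-1}$ simultaneously kills the boundary rotation and keeps the map a genuine lift, rather than reintroducing a nontrivial boundary action; once this bookkeeping is settled the remaining assertions are purely formal.
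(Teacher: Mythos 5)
Your proposal is correct and follows essentially the same route as the paper: restrict the capping sequence for $\Mb \to \Mp$ to the liftable subgroups, invoke Lemma~\ref{surj_capping_lmod} for surjectivity, and reduce everything to showing $t_{\partial D}\in \LMb$. Your corrected lift $\zeta ^{-1}\circ \widetilde{f}$ — a $1/k$-fractional twist in the collar $p^{-1}(A)$ (which is the connected $k$-fold cyclic cover of an annulus by total ramification at $p_{2n+2}$) composed with a deck transformation — is, up to sign conventions, exactly the paper's witness $\zeta ^\prime$, the $(-\frac{2\pi }{k})$-rotation of $\Sigma _g^1$ fixing $\widetilde{D}$ pointwise; the paper merely verifies that this homeomorphism is a lift differently, by checking that $p(\zeta ^\prime (\widetilde{L}))$ agrees with $t_{\partial D}(L)$ on the arc system, rather than by your covering-space computation over the annulus.
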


\begin{proof}
Recall that we have the exact sequence
\[
1\longrightarrow \left< t_{\partial D}\right>  \longrightarrow \Mb \stackrel{\iota _\ast }{\longrightarrow }\Mp \longrightarrow 1. 
\]
By Lemma~\ref{surj_capping_lmod} and restricting the exact sequence above to $\LMb $, we have the following exact sequence: 
\[
1\longrightarrow \left< t_{\partial D}\right> \cap \LMb \longrightarrow \LMb \stackrel{\iota _\ast }{\longrightarrow }\LMp \longrightarrow 1. 
\]
Thus, it is enough for completing the proof of Proposition~\ref{prop_exact_lmodb} to prove that $t_{\partial D}$ is liftable with respect to $p$. 
Let $\zeta ^\prime $ be a self-homeomorphism on $\Sigma _g$ which is described as a result of a $(-\frac{2\pi }{k})$-rotation of $\Sigma _g^1\subset \Sigma _g$ fixing the disk $\widetilde{D}$ pointwise (for instance, in the case of $k=3$, $\zeta ^\prime $ is a $(-\frac{2\pi }{3})$-rotation as on the top in Figure~\ref{fig_lift_t_partial-d}). 
Since the restriction of $\zeta ^\prime $ to $\widetilde{D}$ is identity, we regard $\zeta ^\prime $ as an element in $\SMb $. 
Recall that $L=l_1\cup l_2\cup \cdots \cup l_{2n+1}$ as in Figure~\ref{fig_path_l} and denote by $\widetilde{L}$ the preimage of $L$ with respect to $p$. 
We remark that the isotopy class of a self-homeomorphism on $\Sigma _0$ (resp. $\Sigma _g$) relative to $D\cup \B $ (resp. $\widetilde{D}$) is determined by the isotopy class of the image of $L$ (resp. $\widetilde{L}$) relative to $D\cup \B $ (resp. $\widetilde{D}$). 
We can show that $p(\zeta ^\prime (\widetilde{L}))$ is isotopic to $t_{\partial D}(L)$ relative to $D\cup \B $ (in the case $k=3$, see Figure~\ref{fig_lift_t_partial-d}). 
Thus, $t_{\partial D}$ is liftable with respect to $p$ and we can take a lift of $t_{\partial D}$ by $\zeta ^\prime $. 
Therefore, $\left< t_{\partial D}\right> \cap \LMb =\left< t_{\partial D}\right> $ and we have completed the proof of Proposition~\ref{prop_exact_lmodb}. 
\end{proof}

\begin{figure}[h]
\includegraphics[scale=1.55]{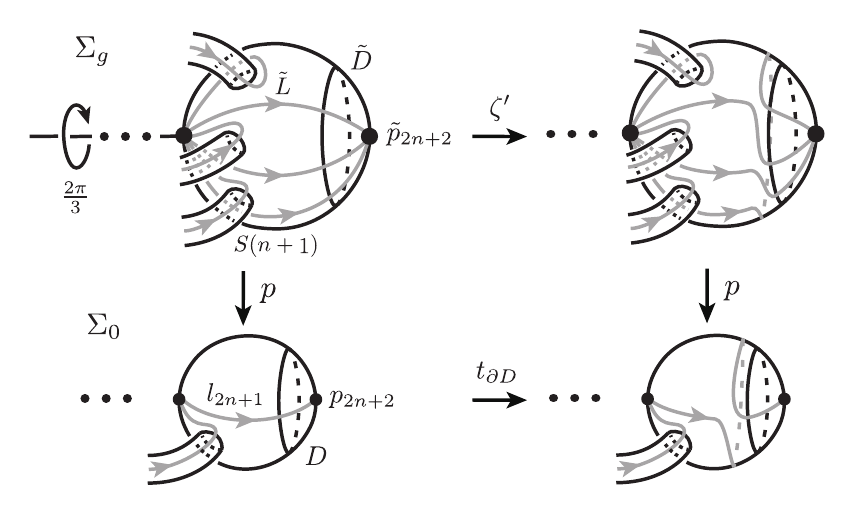}
\caption{The images of $L$ by $t_{\partial D}$ and of $\widetilde{L}$ by $\zeta ^\prime $ for $k=3$.}\label{fig_lift_t_partial-d}
\end{figure}

\subsection{The forgetful homomorphism and the capping homomorphism for the balanced superelliptic mapping class groups}\label{section_exact-seq_smod}

In this section, we will observe the forgetful homomorphism and the capping homomorphism for the balanced superelliptic mapping class groups. 
Suppose that $n\geq 1$, $k\geq 3$, and $g=n(k-1)$. 
Let $\mathcal{F}\colon \Mgp \to \Mg $ be the forgetful homomorphism which is defined by the correspondence $[\varphi ]\mapsto [\varphi ]$. 
Recall that $\widetilde{p}_{2n+2}\in \Sigma _g$ is the preimage of $p_{2n+2}\in \Sigma _0$ with respect to $p_{g,k}$ and the product $\gamma _1\gamma _2$ for $\gamma _1$, $\gamma _2\in \pi _1(\Sigma _{g}, \widetilde{p}_{2n+2})$ means $\gamma _1\gamma _2(t)=\gamma _2(2t)$ for $0\leq t\leq \frac{1}{2}$ and $\gamma _1\gamma _2(t)=\gamma _1(2t-1)$ for $\frac{1}{2}\leq t\leq 1$. 
Birman~\cite{Birman} constructed the following exact sequence (see also Section~4.2 in~\cite{Farb-Margalit}): 
\begin{eqnarray}\label{birman_exact_seq}
1\longrightarrow \pi _1(\Sigma _g, \widetilde{p}_{2n+2})  \stackrel{\Delta }{\longrightarrow }\Mgp \stackrel{\mathcal{F}}{\longrightarrow }\Mg \longrightarrow 1, 
\end{eqnarray}
where $\Delta (\gamma )$ for $\gamma \in \pi _1(\Sigma _{g}, \widetilde{p}_{2n+2})$ is a self-homeomorphism on $\Sigma _g$ which is described as the result of pushing $p_{2n+2}$ along $\gamma $ once. 
As a generalization of Theorem~3.1 in~\cite{Brendle-Margalit}, we have the following lemma. 

\begin{lem}\label{forget_smod_inj}
For $g\geq 2$, the restriction $\mathcal{F}|_{\SMp }\colon \SMp \to \Mg $ is injective. 
\end{lem}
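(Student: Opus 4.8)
The plan is to compute the kernel of $\mathcal{F}|_{\SMp }$ and show it is trivial. By the Birman exact sequence~(\ref{birman_exact_seq}), $\ker \mathcal{F}=\Delta\bigl(\pi _1(\Sigma _g,\widetilde{p}_{2n+2})\bigr)$, so that $\ker(\mathcal{F}|_{\SMp })=\SMp \cap \Delta\bigl(\pi _1(\Sigma _g,\widetilde{p}_{2n+2})\bigr)$ consists exactly of the \emph{symmetric point-pushes}. Thus it suffices to prove that $\Delta(\gamma )\in \SMp $ forces $\gamma =1$. Here the hypothesis $g\geq 2$ enters already: the center of $\pi _1(\Sigma _g,\widetilde{p}_{2n+2})$ is then trivial, so $\Delta$ is injective by Birman~\cite{Birman}. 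This mirrors the strategy of Theorem~3.1 in~\cite{Brendle-Margalit}.

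First I would show that a symmetric point-push commutes with $\zeta $. Since $\Delta(\gamma )\in \SMp $, it has a symmetric representative $\varphi $, so $\varphi \zeta \varphi ^{-1}=\zeta ^c$ for some $c$ coprime to $k$ (conjugation is an automorphism of the cyclic group $\left< \zeta \right> $); as mapping classes this reads $\Delta(\gamma )\,\zeta \,\Delta(\gamma )^{-1}=\zeta ^c$ in $\Mgp $. Applying $\mathcal{F}$ and using $\mathcal{F}(\Delta(\gamma ))=1$ gives $\zeta =\zeta ^c$ in $\Mg $; since $\zeta $ has order $k$ in $\Mg $ for $g\geq 1$, we get $c\equiv 1\pmod k$, i.e. $\Delta(\gamma )$ commutes with $\zeta $. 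Combining the standard conjugation formula $\zeta \,\Delta(\gamma )\,\zeta ^{-1}=\Delta(\zeta _\ast \gamma )$ (valid as $\zeta $ is orientation-preserving and fixes the basepoint) with the injectivity of $\Delta $ then yields $\zeta _\ast \gamma =\gamma $, where $\zeta _\ast $ is the automorphism of $\pi _1(\Sigma _g,\widetilde{p}_{2n+2})$ induced by $\zeta $.

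The remaining, and main, step is to show that the fixed subgroup of $\zeta _\ast $ is trivial. I would fix a $\zeta $-invariant hyperbolic metric on $\Sigma _g$ and pass to the universal cover $\mathbb{H}^2$, choosing a lift $\widetilde{x}_0$ of $\widetilde{p}_{2n+2}$ and the lift $\widetilde{\zeta }$ of $\zeta $ fixing $\widetilde{x}_0$; since $\zeta $ is a nontrivial finite-order rotation about $\widetilde{p}_{2n+2}$, the isometry $\widetilde{\zeta }$ is a nontrivial elliptic element. Writing $g_\gamma $ for the deck transformation corresponding to $\gamma $, the relation $\zeta _\ast \gamma =\gamma $ translates into $\widetilde{\zeta }\,g_\gamma \,\widetilde{\zeta }^{-1}=g_\gamma $, so $g_\gamma $ commutes with $\widetilde{\zeta }$. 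A nontrivial deck transformation of a closed hyperbolic surface is hyperbolic, and a hyperbolic isometry cannot commute with a nontrivial elliptic one, since their fixed-point sets on $\overline{\mathbb{H}^2}$ are disjoint. Hence $g_\gamma =1$ and $\gamma =1$.

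The hard part I expect is this last geometric step: in particular, verifying cleanly that $\zeta _\ast \gamma =\gamma $ (as based classes) corresponds to the commuting relation $\widetilde{\zeta }g_\gamma \widetilde{\zeta }^{-1}=g_\gamma $ in the deck group, and ensuring the local rotation at $\widetilde{p}_{2n+2}$ genuinely prevents $\widetilde{\zeta }$ from being trivial. I would emphasize that $g\geq 2$ is essential throughout — for $g=1$ both the injectivity of $\Delta $ and the hyperbolic dichotomy fail — which matches the hypothesis of the lemma.
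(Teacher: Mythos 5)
Your proposal is correct and follows essentially the same route as the paper's proof: reduce via the Birman exact sequence to symmetric point-pushes, rule out $\varphi \zeta \varphi ^{-1}=\zeta ^c$ with $c\not\equiv 1 \pmod k$ by applying $\mathcal{F}$, and then use a $\zeta $-invariant hyperbolic metric so that the lift $\widetilde{\zeta }$ fixing a lift of $\widetilde{p}_{2n+2}$ is a nontrivial elliptic isometry with a unique fixed point. The paper reaches the final contradiction by unique path lifting (a $\zeta $-fixed nontrivial loop would force $\widetilde{\zeta }$ to fix the endpoint of the lifted path, a second fixed point), which is the mirror image of your commutation argument that $g_\gamma $ would have to fix the fixed point of $\widetilde{\zeta }$; the only other difference is that the paper constructs the invariant metric explicitly from a $4nk$-gon fundamental domain rather than invoking its abstract existence.
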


\begin{proof}
We will prove this lemma by a similar argument of Brendle and Margalit~\cite{Brendle-Margalit}. 
By the exact sequence~(\ref{birman_exact_seq}), we have $\ker \mathcal{F}|_{\SMp }=\SMp \cap \Delta (\pi _1(\Sigma _g, \widetilde{p}_{2n+2}))$. 
Thus, it is enough for completing the proof of Lemma~\ref{forget_smod_inj} to show that $\zeta \Delta (\alpha )\not= \Delta (\alpha )\zeta ^l$ in $\SMp $ for any $1\leq l\leq k-1$ and a nontrivial element $\alpha \in \pi _1(\Sigma _g, \widetilde{p}_{2n+2})$. 
For $2\leq l\leq k-1$, since 
\[
\mathcal{F}(\zeta \Delta (\alpha ))=\zeta \not= \zeta ^l=\mathcal{F}(\Delta (\alpha )\zeta ^l),
\]
we have $\zeta \Delta (\alpha )\not= \Delta (\alpha )\zeta ^l$ in $\SMp $. 
Hence we will show that $\zeta \Delta (\alpha )\zeta ^{-1}\not= \Delta (\alpha )$ in $\SMp $ for any nontrivial element $\alpha \in \pi _1(\Sigma _g, \widetilde{p}_{2n+2})$. 
By the injectivity of $\Delta $ and $\zeta \Delta (\alpha )\zeta ^{-1}=\Delta (\zeta (\alpha ))$, this condition is equivalent to $\zeta (\alpha )\not= \alpha $ in $\pi _1(\Sigma _g, \widetilde{p}_{2n+2})$. 

Denote by $\Sigma _g^\prime $ a $4nk$-gon which is obtained from $\Sigma _g$ by cutting $\Sigma _g$ along $p^{-1}(l_1\cup l_2 \cup \cdots \cup l_{2n})$. 
We take a hyperbolic structure on $\Sigma _g$ such that $\zeta $ is an isometry on $\Sigma _g$ and $\Sigma _g^\prime $ is realized as a fundamental domain of the universal covering $\widetilde{p}\colon \mathbb{H}^2\to \Sigma _g$ which is a hyperbolic $4nk$-gon in the hyperbolic plane $\mathbb{H}^2$.  
Let $\widetilde{\widetilde{p}_{2n+2}}\in \mathbb{H}^2$ be the center point of $\Sigma _g^\prime \subset \mathbb{H}^2$ and $\widetilde{\zeta }$ the $(-\frac{2\pi }{k})$-rotation of $\mathbb{H}^2$ around $\widetilde{\widetilde{p}_{2n+2}}$. 
By the construction of $\Sigma _g^\prime $ and the definition of $\widetilde{\zeta }$, the isometry $\widetilde{\zeta }$ acts on $\Sigma _g^\prime \subset \mathbb{H}^2$ and $\widetilde{\zeta }$ is a lift of $\zeta $ with respect to $\widetilde{p}\colon \mathbb{H}^2\to \Sigma _g$. 
%Remark that
Since $\widetilde{\widetilde{p}_{2n+2}}$ is the unique fixed point of $\widetilde{\zeta }$ %in the interior of $\Sigma _g^\prime $ 
and any point in $\widetilde{p}^{-1}(\{ \widetilde{p}_1, \widetilde{p}_2, \dots , \widetilde{p}_{2n+1}\} )$ is not fixed by $\widetilde{\zeta }$, the point $\widetilde{\widetilde{p}_{2n+2}}$ is a lift of $\widetilde{p}_{2n+2}$ with respect to $\widetilde{p}\colon \mathbb{H}^2\to \Sigma _g$. 

Let $\alpha \colon [0,1]\to \Sigma _{g}$ be a loop based at $\widetilde{p}_{2n+2}$ which is non-trivial in $\pi _1(\Sigma _g, \widetilde{p}_{2n+2})$ and $\widetilde{\alpha }\colon [0,1]\to \mathbb{H}^2$ a lift of $\alpha $ with respect to $\widetilde{p}$ such that the origin of $\widetilde{\alpha }$ is $\widetilde{\widetilde{p}_{2n+2}}$. 
Since $\pi _1(\Sigma _g, \widetilde{p}_{2n+2})$ freely acts on $\widetilde{p}^{-1}(\widetilde{p}_{2n+2})$ as the covering transformation group of $\widetilde{p}$, $\alpha \cdot \widetilde{\widetilde{p}_{2n+2}}=\widetilde{\alpha }(1)$ does not coincide with $\widetilde{\widetilde{p}_{2n+2}}$.   
Assume that $\zeta \circ \alpha =\alpha $. 
Then the composition $\widetilde{\zeta }\circ \widetilde{\alpha }$ is also a lift of $\zeta \circ \alpha =\alpha $ with respect to $\widetilde{p}$ such that its origin is $\widetilde{\widetilde{p}_{2n+2}}$ since $\widetilde{\zeta }$ fixes $\widetilde{\widetilde{p}_{2n+2}}$. 
Since we have $\widetilde{\zeta }\circ \widetilde{\alpha }=\widetilde{\alpha }$ by the unique lifting property, we show that $\widetilde{\zeta }(\widetilde{\alpha }(1))=\widetilde{\alpha }(1)$, namely $\widetilde{\zeta }$ has distinct two fixed points $\widetilde{\widetilde{p}_{2n+2}}$ and $\widetilde{\alpha }(1)$. 
This contradicts the fact that $\widetilde{\zeta }$ has the unique fixed point.  
Therefore,  we have $\zeta \circ \alpha \not= \alpha $ and have completed the proof of Lemma~\ref{forget_smod_inj}. 
\end{proof}

We remark that the image of the restriction $\mathcal{F}|_{\SMp }$ is included in $\SM $ and $\mathcal{F}|_{\SMp }\colon \SMp \to \SM $ is not surjective. 
%In section~??, we will introduce an explicit element in $\SM $ which does not lie in $\mathcal{F}(\SMp )$. 

Let $\theta \colon \SM \to \LM $ be the surjective homomorphism with the kernel $\left< \zeta \right> $ which is obtained from the Birman-Hilden correspondence~\cite{Birman-Hilden2}, namely $\theta $ is defined as follows: for $f\in \SM $ and a symmetric representative $\varphi \in f$ for $\zeta $, we define $\hat{\varphi }\colon \Sigma _0\to \Sigma _0$ by $\hat{\varphi }(x)=p(\varphi (\widetilde{x}))$ for some $\widetilde{x}\in p^{-1}(x)$. 
Then $\theta (f)=[\hat{\varphi }]\in \LM $. 
We have the following proposition. 

\begin{prop}\label{prop_smod_char}
For  $g\geq 2$, we have $\mathcal{F}(\SMp )=\theta ^{-1}(\LMp )$. 
\end{prop}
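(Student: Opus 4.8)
The plan is to prove the two inclusions $\mathcal{F}(\SMp )\subseteq \theta ^{-1}(\LMp )$ and $\mathcal{F}(\SMp )\supseteq \theta ^{-1}(\LMp )$ separately, keeping careful track of how the forgetful map $\mathcal{F}\colon \SMp \to \SM$ and the Birman–Hilden map $\theta \colon \SM \to \LM$ interact on the distinguished fixed point $\widetilde{p}_{2n+2}$ and its image $p_{2n+2}$. The key observation to extract first is that a symmetric homeomorphism on $\Sigma _g$ fixing $\widetilde{p}_{2n+2}$ descends under $\theta$ to a liftable homeomorphism on $\Sigma _0$ that fixes $p_{2n+2}=p(\widetilde{p}_{2n+2})$, so that $\theta \circ \mathcal{F}|_{\SMp }$ indeed lands in $\LMp$. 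For the reverse inclusion I would start from an arbitrary $f\in \SM$ with $\theta (f)\in \LMp$ and show that $f$ can be realized by a symmetric homeomorphism fixing $\widetilde{p}_{2n+2}$, hence lies in $\mathcal{F}(\SMp )$.

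First I would establish the easy inclusion. Take $f\in \SMp$ with a symmetric representative $\varphi$ fixing $\widetilde{p}_{2n+2}$. Since $\varphi \langle \zeta \rangle \varphi ^{-1}=\langle \zeta \rangle$ and $\zeta (\widetilde{p}_{2n+2})=\widetilde{p}_{2n+2}$, the descended map $\hat{\varphi }$ of the definition of $\theta$ satisfies $\hat{\varphi }(p_{2n+2})=p(\varphi (\widetilde{p}_{2n+2}))=p(\widetilde{p}_{2n+2})=p_{2n+2}$, so $\theta (\mathcal{F}(f))=\theta (f)\in \LMp$ because $\LMp \subseteq \LM$ is exactly the subgroup fixing $p_{2n+2}$. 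This gives $\mathcal{F}(\SMp )\subseteq \theta ^{-1}(\LMp )$. Note that $\mathcal{F}$ does not change the underlying mapping class on $\Sigma _g$, so $\theta \circ \mathcal{F}|_{\SMp }$ and $\theta$ agree on $\SMp$ as maps into $\LM$; the content is only the fixed-point bookkeeping.

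For the reverse inclusion, take $f\in \SM$ with $\theta (f)\in \LMp$, and choose a symmetric representative $\varphi$. Since $\theta (f)$ fixes $p_{2n+2}$, the descended homeomorphism $\hat{\varphi }$ satisfies $\hat{\varphi }(p_{2n+2})=p_{2n+2}$, which by the defining relation $p\circ \varphi =\hat{\varphi }\circ p$ forces $\varphi$ to preserve the fiber $p^{-1}(p_{2n+2})=\{ \widetilde{p}_{2n+2}\}$ (this fiber is a single point, being the preimage of a branch point). Hence $\varphi (\widetilde{p}_{2n+2})=\widetilde{p}_{2n+2}$ already, so $\varphi$ defines an element of $\SMp$ that maps to $f$ under $\mathcal{F}$, giving $\theta ^{-1}(\LMp )\subseteq \mathcal{F}(\SMp )$. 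The crucial point making this direction clean is that $\widetilde{p}_{2n+2}$ is the \emph{unique} preimage of $p_{2n+2}$ under $p_{g,k}$, so fixing $p_{2n+2}$ downstairs automatically pins down the marked point upstairs.

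The main obstacle I anticipate is verifying that $\mathcal{F}|_{\SMp }$ is a genuine bijection onto its image in the way needed to conclude, and more precisely ensuring that an element of $\theta ^{-1}(\LMp )$ really has a representative fixing $\widetilde{p}_{2n+2}$ \emph{as a marked point in the sense of $\SMp$}, not merely one whose fiber is preserved up to isotopy. Here I would lean on the injectivity of $\mathcal{F}|_{\SMp }$ from Lemma~\ref{forget_smod_inj} (valid for $g\geq 2$) together with the hypothesis $g\geq 2$ to guarantee that the forgetful sequence behaves well; the injectivity lets me identify $\SMp$ with its image and rules out ambiguity in the push subgroup $\Delta (\pi _1(\Sigma _g,\widetilde{p}_{2n+2}))$. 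The subtle part of the argument is thus the interplay between the Birman exact sequence~(\ref{birman_exact_seq}) and the Birman–Hilden correspondence, and I expect the careful statement—rather than any hard computation—to be where the proof earns its keep.
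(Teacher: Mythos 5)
Your proof is correct, and your first inclusion is essentially the paper's. For the harder inclusion $\theta ^{-1}(\LMp )\subseteq \mathcal{F}(\SMp )$ you take a genuinely different route. The paper argues ``bottom-up'': it chooses a representative $\varphi $ of $\theta (f)$ with $\varphi (p_{2n+2})=p_{2n+2}$, lifts it to $\widetilde{\varphi }$ on $\Sigma _g$ (which fixes $\widetilde{p}_{2n+2}$ by uniqueness of the preimage), and then—since $[\widetilde{\varphi }]$ need only agree with $f$ up to $\ker \theta $—corrects by a deck transformation, writing $f=\zeta ^l[\widetilde{\varphi }]$ and noting that $\zeta ^l\circ \widetilde{\varphi }$ still fixes $\widetilde{p}_{2n+2}$. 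You instead argue ``top-down'': starting from an arbitrary symmetric representative $\varphi $ of $f$, you use that the descended map $\hat{\varphi }$ represents $\theta (f)\in \LMp $, that the permutation of $\B $ induced by a representative is an invariant of the mapping class (this is exactly what makes $\Psi $ well defined, so $\hat{\varphi }(p_{2n+2})=p_{2n+2}$ for \emph{every} representative), and the same unique-preimage fact to conclude that $\varphi $ itself already fixes $\widetilde{p}_{2n+2}$, so no $\zeta ^l$-correction is needed. Your version proves slightly more (every symmetric representative of an element of $\theta ^{-1}(\LMp )$ fixes the marked point), at the cost of leaning on the well-definedness of $\theta $ on all symmetric representatives, whereas the paper only manipulates one explicitly constructed lift. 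One remark: the worry in your final paragraph is unfounded—your argument produces an honest representative fixing $\widetilde{p}_{2n+2}$, so the set equality requires no appeal to the injectivity of $\mathcal{F}|_{\SMp }$ from Lemma~\ref{forget_smod_inj}; that injectivity is only used afterwards, to regard $\SMp $ as a subgroup of $\SM $.
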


\begin{proof}
For any $f\in \SMp $, we take a symmetric representative $\varphi \in f$ and the corresponding homeomorphism $\hat{\varphi }\colon \Sigma _0\to \Sigma _0$ defined above. 
Since $\varphi $ preserves the point $\widetilde{p}_{2n+2}$ by the definition of $\SMp $ and $\varphi \circ p=p\circ \hat{\varphi }$ holds, $\hat{\varphi }$ also preserves the point $p(\widetilde{p}_{2n+2})=p_{2n+2}$. 
Thus, we have $\theta (f)=[\hat{\varphi }]\in \LMp $ and show that $\mathcal{F}(\SMp )\subset \theta ^{-1}(\LMp )$. 

For $f\in \SM $, we assume that $\theta (f)\in \LMp $. 
Then there exist a representative $\varphi \in \theta (f)$ with $\varphi (p_{2n+2})=p_{2n+2}$ and a homeomorphism $\widetilde{\varphi }\colon \Sigma _g\to \Sigma _g$ such that $p\circ \widetilde{\varphi }=\varphi \circ p$. 
Then, since $\widetilde{p}_{2n+2}$ is the unique preimage of $p_{2n+2}$ with respect to $p$, we can see that $\widetilde{\varphi }(\widetilde{p}_{2n+2})=\widetilde{p}_{2n+2}$. 
Since $f[\widetilde{\varphi }]^{-1}$ lies in the kernel of $\theta $, there exists $0\leq l\leq k-1$ such that $f[\widetilde{\varphi }]^{-1}=\zeta ^l$. 
Hence we have $f=\zeta ^l[\widetilde{\varphi }]\in \SM $. 
Since $\zeta $ and $\widetilde{\varphi }$ preserve the point $\widetilde{p}_{2n+2}$, the composition $\zeta ^l\circ \widetilde{\varphi }$ also preserves $\widetilde{p}_{2n+2}$. 
Thus we have the mapping class $\zeta ^l[\widetilde{\varphi }]\in \SMp $ and $f=\mathcal{F}(\zeta ^l[\widetilde{\varphi }])\in \mathcal{F}(\SMp )$. 
Therefore, we have completed the proof of Proposition~\ref{prop_smod_char}. 
\end{proof}

By Lemma~\ref{forget_smod_inj} and Proposition~\ref{prop_smod_char}, we regard $\SMp $ as a subgroup of $\SM $ which consists of elements represented by symmetric elements preserving $\widetilde{p}_{2n+2}$.  
%Recall that symmetric mapping classes normalize the group $\left< \zeta \right> $. 
The next lemma is given by Proposition~2.1 in \cite{Ghaswala-McLeay}. 

\begin{lem}\label{lem_comm_smodb}
For $g\geq 2$, any element in $\SMb $ has representative which commutes with $\zeta $. 
\end{lem}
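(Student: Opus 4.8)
The plan is to exploit the boundary-fixing condition to upgrade the \emph{normalizing} relation $\varphi\langle\zeta\rangle\varphi^{-1}=\langle\zeta\rangle$ to the \emph{centralizing} relation $\varphi\zeta\varphi^{-1}=\zeta$, so that a symmetric representative already commutes with $\zeta$. First I would take $f\in\SMb$ and, by the very definition of $\SMb$, fix a symmetric representative $\varphi$ of $f$: an orientation-preserving self-homeomorphism of $\Sigma _g^1$ with $\varphi|_{\partial\Sigma _g^1}=\mathrm{id}$ and $\varphi\langle\zeta\rangle\varphi^{-1}=\langle\zeta\rangle$, where $\zeta=\zeta|_{\Sigma _g^1}$. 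Since $\langle\zeta\rangle$ is cyclic of order $k$, conjugation by $\varphi$ restricts to an automorphism of $\langle\zeta\rangle\cong\Z/k\Z$, so $\varphi\zeta\varphi^{-1}=\zeta^m$ for some integer $m$ with $\gcd(m,k)=1$. The goal is then to show $m\equiv 1\pmod{k}$.

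Next I would pin down $m$ by restricting everything to the boundary circle $\partial\Sigma _g^1=\partial\widetilde{D}$. The geometric input is that $p_{2n+2}$ is totally ramified with unique preimage $\widetilde{p}_{2n+2}$, so $\widetilde{D}=p^{-1}(D)$ is a single disk on which $\zeta$ acts as a rotation fixing only its center; hence $\zeta$ restricts to $\partial\Sigma _g^1$ as a free rotation of order exactly $k$. Because $\zeta$ preserves $\partial\Sigma _g^1$ and $\varphi|_{\partial\Sigma _g^1}=\mathrm{id}$, for every $x\in\partial\Sigma _g^1$ one computes $(\varphi\zeta\varphi^{-1})(x)=\varphi(\zeta(x))=\zeta(x)$, so $\zeta^m$ and $\zeta$ agree on $\partial\Sigma _g^1$. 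Since $\zeta|_{\partial\Sigma _g^1}$ has order $k$, this forces $m\equiv 1\pmod{k}$, and therefore $\varphi\zeta\varphi^{-1}=\zeta$. Thus the symmetric representative $\varphi$ itself commutes with $\zeta$, which is the required representative.

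The step I expect to be the main (and essentially the only) obstacle is justifying the geometric claim that $\zeta$ restricts to the boundary as a rotation of order exactly $k$: this rests on $p_{2n+2}$ having a single, totally ramified preimage, so that $\widetilde{D}$ is one disk rather than several, together with the observation that $\zeta^j$ for $0<j<k$ is a nontrivial rotation of that disk and hence acts nontrivially on its boundary. Granting this, the argument is elementary and, notably, independent of the Birman--Hilden correspondence; it also explains precisely why over a surface with one boundary component the a priori weaker condition $\varphi\langle\zeta\rangle\varphi^{-1}=\langle\zeta\rangle$ is in fact equivalent to $\varphi\zeta\varphi^{-1}=\zeta$.
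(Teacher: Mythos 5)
Your proposal is correct and is essentially the same argument the paper has in mind for this lemma (the paper cites Proposition~2.1 of Ghaswala--McLeay, whose proof is exactly this): take a symmetric representative $\varphi$ with $\varphi\zeta\varphi^{-1}=\zeta^{l}$, restrict to $\partial\Sigma_{g}^{1}$ where $\varphi$ is the identity to get $\zeta|_{\partial\Sigma_{g}^{1}}=\zeta^{l}|_{\partial\Sigma_{g}^{1}}$, and use that the powers $\zeta^{l}|_{\partial\Sigma_{g}^{1}}$, $0\leq l\leq k-1$, are pairwise distinct (a rotation of order $k$ on the single boundary circle coming from the totally ramified point $\widetilde{p}_{2n+2}$) to force $l=1$. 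No gaps; your justification of why $\widetilde{D}$ is a single disk and $\zeta$ acts on its boundary with order exactly $k$ is the right supporting observation.
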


\if0
\begin{proof}
It is clear that $\SMb $ includes the center of $\zeta $ in $\Mgb $. 
For any $f\in \SMb $, since $f$ is represented by a symmetric homeomorphism, there exists a representative $\varphi $ of $f$ and an integer $1\leq l\leq k-1$ such that $\varphi \zeta \varphi ^{-1}=\zeta ^l$. 
Since the restriction of $\varphi $ to $\partial \Sigma _g$ is the identity map, we have $\zeta |_{\partial \Sigma _g}=\zeta ^l|_{\partial \Sigma _g}$. 
We can see that $\zeta ^{l_1}|_{\partial \Sigma _g}\not= \zeta ^{l_2}|_{\partial \Sigma _g}$ for $0\leq l_1<l_2\leq k-1$. 
Thus, we have $l=1$ and $\varphi $ commutes with $\zeta $. 
Therefore $f=[\varphi ]$ lies in the center of $\zeta $ in $\Mgb $ and we have completed the proof of Lemma~\ref{lem_comm_smodb}.  
\end{proof}
\fi

The inclusion $\widetilde{\iota }\colon \Sigma _g^1\hookrightarrow \Sigma _g=\Sigma _g^1\cup \widetilde{D}$ induces the surjective homomorphism $\widetilde{\iota }_\ast \colon \Mgb \to \Mgp $ whose kernel is the infinite cyclic group generated by the Dehn twist $t_{\partial \widetilde{D}}$ along $\partial \widetilde{D}$. 
The homomorphism $\widetilde{\iota }_\ast$ is also called the \textit{capping homomorphism}. 
We have the following lemma. 

\begin{lem}\label{surj_capping_smod}
For $g\geq 2$, $\widetilde{\iota }_\ast (\SMb )=\SMp $.
\end{lem}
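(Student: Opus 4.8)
The plan is to prove the two inclusions $\widetilde{\iota }_\ast (\SMb )\subset \SMp $ and $\widetilde{\iota }_\ast (\SMb )\supset \SMp $ separately, mirroring the proof of Lemma~\ref{surj_capping_lmod} but working upstairs on $\Sigma _g$. For the inclusion $\widetilde{\iota }_\ast (\SMb )\subset \SMp $, I would take $f\in \SMb $ and, using Lemma~\ref{lem_comm_smodb}, choose a representative $\varphi $ on $\Sigma _g^1$ which commutes with $\zeta $ and restricts to the identity on $\partial \Sigma _g^1$. Extending $\varphi $ by the identity on $\widetilde{D}$ produces a self-homeomorphism $\varphi ^\prime $ on $\Sigma _g$ representing $\widetilde{\iota }_\ast (f)$. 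Since $\zeta $ preserves $\widetilde{D}$ setwise while $\varphi ^\prime |_{\widetilde{D}}=\mathrm{id}_{\widetilde{D}}$, a direct check on $\Sigma _g^1$ and on $\widetilde{D}$ shows that $\varphi ^\prime $ still commutes with $\zeta $; as $\varphi ^\prime $ also fixes $\widetilde{p}_{2n+2}\in \widetilde{D}$, we conclude $\widetilde{\iota }_\ast (f)=[\varphi ^\prime ]\in \SMp $.

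For the reverse inclusion I would pass through the liftable groups via the Birman--Hilden correspondence~\cite{Birman-Hilden2}. Given $f\in \SMp $, regarded as a subgroup of $\SM $, its image $\theta (f)$ lies in $\LMp $ by Proposition~\ref{prop_smod_char}. By Lemma~\ref{surj_capping_lmod} there is $\bar{g}\in \LMb $ with $\iota _\ast (\bar{g})=\theta (f)$, and since $\LMb $ is isomorphic to $\SMb $ under the Birman--Hilden correspondence for $g\geq 2$, I can take the corresponding element $g\in \SMb $. The forward inclusion already gives $\widetilde{\iota }_\ast (g)\in \SMp $, so that $\theta (\widetilde{\iota }_\ast (g))$ is defined, and the key point is the commutativity of the square relating the two capping homomorphisms $\widetilde{\iota }_\ast $ and $\iota _\ast $ with the Birman--Hilden maps; this commutativity yields $\theta (\widetilde{\iota }_\ast (g))=\iota _\ast (\bar{g})=\theta (f)$. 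Hence $\widetilde{\iota }_\ast (g)f^{-1}$ lies in $\ker \theta |_{\SMp }=\left< \zeta \right> $ (note $\zeta \in \SMp $), so $\widetilde{\iota }_\ast (g)=\zeta ^m f$ for some integer $m$.

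To remove this $\left< \zeta \right> $-ambiguity I would use the element $\zeta ^\prime \in \SMb $ from the proof of Proposition~\ref{prop_exact_lmodb}, namely the rotation of $\Sigma _g^1$ fixing $\widetilde{D}$ pointwise. I claim $\widetilde{\iota }_\ast (\zeta ^\prime )=[\zeta ]$ in $\Mgp $. Indeed, since $\zeta ^\prime $ is already the identity on $\widetilde{D}$, capping gives $\widetilde{\iota }_\ast ([\zeta ^\prime |_{\Sigma _g^1}])=[\zeta ^\prime ]$, and $\zeta ^\prime $ agrees with $\zeta $ away from a collar of $\partial \widetilde{D}$; the composite $\zeta ^{-1}\zeta ^\prime $ is therefore supported on a disk containing only the marked point $\widetilde{p}_{2n+2}$ and is the identity on its boundary, so its class is a power of the Dehn twist about a curve bounding a disk with a single marked point, which is trivial in $\Mgp $. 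Thus $[\zeta ^\prime ]=[\zeta ]$, and consequently $f=\zeta ^{-m}\widetilde{\iota }_\ast (g)=\widetilde{\iota }_\ast \bigl((\zeta ^\prime )^{-m}g\bigr)\in \widetilde{\iota }_\ast (\SMb )$, which completes the proof.

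I expect the main obstacle to be the commutativity of the capping/Birman--Hilden square together with the verification that $\widetilde{\iota }_\ast (\zeta ^\prime )=[\zeta ]$. The latter rests on the standard fact that a Dehn twist about a curve bounding a disk with a single marked point is trivial, while the former requires tracking carefully how symmetric lifts descend under $p$ once the $\zeta $-invariant disk $\widetilde{D}=p^{-1}(D)$ is capped, using that $D$ and $\widetilde{D}$ are compatible with the covering. The hypothesis $g\geq 2$ enters here both through the Birman--Hilden correspondence and through the identification of $\SMp $ as a subgroup of $\SM $ coming from Lemma~\ref{forget_smod_inj} and Proposition~\ref{prop_smod_char}.
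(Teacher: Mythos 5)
Your proposal is correct and follows essentially the same route as the paper's own proof: the forward inclusion via Lemma~\ref{lem_comm_smodb} and extension by the identity on $\widetilde{D}$, and the reverse inclusion via the commutative square relating $\widetilde{\iota }_\ast $ and $\iota _\ast $ through the Birman--Hilden maps, the surjectivity from Lemma~\ref{surj_capping_lmod}, and correction by powers of $\zeta ^\prime $ using $\widetilde{\iota }_\ast ([\zeta ^\prime ])=[\zeta ]$. The only difference is that you justify $\widetilde{\iota }_\ast ([\zeta ^\prime ])=[\zeta ]$ explicitly (via triviality of the twist about a curve bounding a once-marked disk), whereas the paper states it as a remark; this is a welcome addition, not a divergence.
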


\begin{proof}
For any $f\in \SMb $, by Lemma~\ref{lem_comm_smodb}, there exists a representative $\varphi $ of $f$ such that $\varphi $ commutes with $\zeta $. 
Denote by $\varphi ^\prime $ the self-homeomorphism on $\Sigma _g=\Sigma _g^1\cup \widetilde{D}$ which is the extension of $\varphi $ by the identity map on $\widetilde{D}$. 
By the definition, $\varphi ^\prime $ commutes with $\zeta $ and preserves $\widetilde{p}_{2n+2}$. 
Thus $\widetilde{\iota }_\ast (f)=[\varphi ^\prime ]\in \SMp $, namely we have $\widetilde{\iota }_\ast (\SMb )\subset \SMp $. 

Recall that $\theta \colon \SM \to \LM $ is the surjective homomorphism with the kernel $\left< \zeta \right> $ which is obtained from the Birman-Hilden correspondence~\cite{Birman-Hilden2} and we regard $\SMp $ as a subgroup of $\SM $ by Lemma~\ref{forget_smod_inj}. 
By Proposition~\ref{prop_smod_char}, the restriction $\theta \colon \SMp \to \LMp $ is surjective. 
By the result of Birman and Hilden \cite{Birman-Hilden2}, we have an isomorphism $\theta ^1\colon \SMb \to \LMb $ which is similarly defined to $\theta $ as before Proposition~\ref{prop_smod_char}. 
Then we have the following commutative diagram: 
\[
\xymatrix{
\SMb \ar[r]^{\widetilde{\iota }_\ast } \ar[d]_{\theta ^1} &  \SMp \ar[d]^\theta \\
\LMb  \ar[r]_{\iota _\ast } &\LMp . \ar@{}[lu]|{\circlearrowright} 
}
\] 
We also recall that $\zeta ^\prime $ is the self-homeomorphism on $\Sigma _g$ which is defined in the proof of Proposition~\ref{prop_exact_lmodb} (see also Figure~\ref{fig_lift_t_partial-d}). 
Remark that the restriction of $\zeta ^\prime $ to $\widetilde{D}$ is the identity map and $\widetilde{\iota }_\ast ([\zeta ^\prime ])=[\zeta ]\in \SMp $. 

We take any $f\in \SMp $. 
By Lemma~\ref{surj_capping_lmod}, $\iota _\ast $ is surjective, there exists $\hat{f}^\prime \in \LMb $ such that $\iota _\ast (\hat{f}^\prime )=\theta (f)$. 
Put $f_0^\prime =(\theta^1)^{-1}(\hat{f}^\prime )\in \SMb $. 
Since we have
\begin{eqnarray*}
\theta (\widetilde{\iota}_\ast (f_0^\prime )\cdot f^{-1})&=&\theta (\widetilde{\iota}_\ast (f_0^\prime ))\cdot \theta (f)^{-1}=\iota _\ast (\theta ^1(f_0^\prime ))\cdot \theta (f)^{-1}\\
&=&\iota _\ast (\hat{f}^\prime )\cdot \theta (f)^{-1}=\theta (f)\cdot \theta (f)^{-1}\\
&=&1, 
\end{eqnarray*}
there exists an integer $0\leq l\leq k-1$ such that $\widetilde{\iota}_\ast (f_0^\prime )\cdot f^{-1}=[\zeta ]^l\in \SMp $. 
Thus we have $[\zeta ^\prime]^{-l}f_0^\prime \in \SMb $ and 
\begin{eqnarray*}
\widetilde{\iota}_\ast ([\zeta ^\prime]^{-l}f_0^\prime )&=&\widetilde{\iota}_\ast ([\zeta ^\prime])^{-l}\widetilde{\iota}_\ast (f_0^\prime )=[\zeta ]^{-l}[\zeta ]^{l}f\\
&=&f.
\end{eqnarray*}
Therefore we have $\widetilde{\iota }_\ast (\SMb )\supset \SMp $ and have completed the proof of Lemma~\ref{surj_capping_smod}. 
\end{proof}

The capping homomorphism $\widetilde{\iota }_\ast \colon \Mgb \to \Mgp $ induces the exact sequence
\[
1\longrightarrow \left< t_{\partial \widetilde{D}}\right>  \longrightarrow \Mgb \stackrel{\widetilde{\iota }_\ast }{\longrightarrow }\Mgp \longrightarrow 1. 
\]
Since $t_{\partial \widetilde{D}}$ commutes with $\zeta $, we have $\left< t_{\partial \widetilde{D}}\right> \cap \SMb =\left< t_{\partial \widetilde{D}}\right> $. 
By Lemma~\ref{surj_capping_smod} and restricting the exact sequence above to $\SMb $, we have the following proposition. 

\begin{prop}\label{prop_exact_smodb}
For $g\geq 2$, we have the following exact sequence: 
\begin{eqnarray}\label{exact_smodb}
1\longrightarrow \left< t_{\partial \widetilde{D}}\right>  \longrightarrow \SMb \stackrel{\widetilde{\iota }_\ast }{\longrightarrow }\SMp \longrightarrow 1. 
\end{eqnarray}
\end{prop}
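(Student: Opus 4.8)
The plan is to derive the sequence by restricting the standard capping exact sequence to the symmetric subgroup, exactly mirroring the strategy used for $\LMb $ in Proposition~\ref{prop_exact_lmodb}. I would begin from the capping exact sequence recorded just before the statement,
\[
1\longrightarrow \left< t_{\partial \widetilde{D}}\right> \longrightarrow \Mgb \stackrel{\widetilde{\iota }_\ast }{\longrightarrow }\Mgp \longrightarrow 1,
\]
in which $\left< t_{\partial \widetilde{D}}\right> $ is the infinite cyclic kernel of the capping homomorphism $\widetilde{\iota }_\ast $. Restricting $\widetilde{\iota }_\ast $ to $\SMb $ immediately yields the exact sequence
\[
1\longrightarrow \left< t_{\partial \widetilde{D}}\right> \cap \SMb \longrightarrow \SMb \stackrel{\widetilde{\iota }_\ast }{\longrightarrow }\widetilde{\iota }_\ast (\SMb )\longrightarrow 1.
\]
By Lemma~\ref{surj_capping_smod} (valid for $g\geq 2$) the image $\widetilde{\iota }_\ast (\SMb )$ equals $\SMp $, so the right-hand term is already correct and only the kernel $\left< t_{\partial \widetilde{D}}\right> \cap \SMb $ needs to be identified.

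The core step is therefore to show $\left< t_{\partial \widetilde{D}}\right> \cap \SMb =\left< t_{\partial \widetilde{D}}\right> $, equivalently that $t_{\partial \widetilde{D}}$ is symmetric for $\zeta $. For this I would observe that $\widetilde{D}$ is the preimage of the disk $D$ with respect to $p$; since $D$ is a disk neighborhood of the branch point $p_{2n+2}$, whose unique preimage is the fixed point $\widetilde{p}_{2n+2}$ of $\zeta $, the disk $\widetilde{D}$ is setwise preserved by the rotation $\zeta $. Consequently $\zeta (\partial \widetilde{D})$ is isotopic to $\partial \widetilde{D}$, so that $\zeta t_{\partial \widetilde{D}}\zeta ^{-1}=t_{\zeta (\partial \widetilde{D})}=t_{\partial \widetilde{D}}$. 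Hence $t_{\partial \widetilde{D}}$ commutes with $\zeta $, its isotopy class is represented by a homeomorphism commuting with $\zeta $ (and so lies in $\SMb $), and the entire infinite cyclic group $\left< t_{\partial \widetilde{D}}\right> $ is contained in $\SMb $. This gives $\left< t_{\partial \widetilde{D}}\right> \cap \SMb =\left< t_{\partial \widetilde{D}}\right> $, which identifies the kernel and produces the desired sequence~(\ref{exact_smodb}).

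The only genuinely delicate point is the covering-theoretic claim that $\widetilde{D}$, and hence $\partial \widetilde{D}$, is $\zeta $-invariant; once this is granted, everything else is a formal restriction of an exact sequence together with an appeal to Lemma~\ref{surj_capping_smod}. I expect this invariance to be the main thing to justify carefully, but it is in fact built into the construction of $\Sigma _g^1$ in Section~\ref{section_bscov}, where $\widetilde{D}$ is introduced precisely as the preimage of $D$ and $\Sigma _g^1$ as the complement of its interior; the $\zeta $-equivariance of $p$ then forces $\zeta $ to preserve $\widetilde{D}$ setwise. I would therefore simply cite that construction rather than re-prove the invariance, and conclude the proof.
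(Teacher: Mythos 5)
Your proof is correct and follows essentially the same route as the paper: both restrict the capping exact sequence for $\Mgb \to \Mgp$ to $\SMb$, invoke Lemma~\ref{surj_capping_smod} for surjectivity onto $\SMp$, and identify the kernel as all of $\left< t_{\partial \widetilde{D}}\right>$ by noting that $t_{\partial \widetilde{D}}$ commutes with $\zeta$. Your extra justification of the $\zeta$-invariance of $\widetilde{D}$ (hence of $\partial \widetilde{D}$) simply spells out what the paper asserts in one line.
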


\section{Presentations for the liftable mapping class groups}\label{section_lmod}

Recall that we assume the $n$ is a positive integer throughout this paper.  

\subsection{Main theorems for the liftable mapping class groups}\label{section_main_thm_lmod}

Recall that for elements $f$ and $h$ in a group $G$, $f\rightleftarrows h$ means the commutative relation $fh=hf$ in $G$. 
Main theorems of this paper for the liftable mapping class groups are as follows. 

\begin{thm}\label{thm_pres_lmodb}
The group $\LMb $ admits the presentation with generators $h_i$ for $1\leq i\leq 2n-1$ and $t_{i,j}$ for $1\leq i<j\leq 2n+1$, %with $(i,j)\not= (1,2n+1)$, 
and the following defining relations: 
\begin{enumerate}
\item commutative relations
\begin{enumerate}
\item $h_i \rightleftarrows h_j$ \quad for $j-i\geq 3$,
\item $t_{i,j} \rightleftarrows t_{k,l}$ \quad for $j<k$, $k\leq i<j\leq l$, or $l<i$, 
\item $h_k \rightleftarrows t_{i,j}$ \quad for  $k+2<i$, $i\leq k<k+2\leq j$, or $j<k$,
\end{enumerate}
\item relations among $h_i$'s and $t_{j,j+1}$'s: 
\begin{enumerate}
\item $h_i ^{\pm 1}t_{i,i+1}=t_{i+1,i+2}h_i^{\pm 1}$ \quad for $1\leq i\leq 2n-1$, 
\item $h_i ^{\varepsilon }h_{i+1}^{\varepsilon }t_{i,i+1}=t_{i+2,i+3}h_i^{\varepsilon }h_{i+1}^{\varepsilon }$ \quad for $1\leq i\leq 2n-2$ and $\varepsilon \in \{ 1, -1\}$,
\item $h_{i}^{\varepsilon }h_{i+1}^{\varepsilon }h_{i+2}^{\varepsilon }h_{i}^{\varepsilon }=h_{i+2}^{\varepsilon }h_{i}^{\varepsilon }h_{i+1}^{\varepsilon }h_{i+2}^{\varepsilon }$ \quad for $1\leq i\leq 2n-3$ and $\varepsilon \in \{ 1, -1\}$,
\item $h_ih_{i+1}t_{i,i+1}=t_{i,i+1}h_{i+1}h_i$ \quad for $1\leq i\leq 2n-2$,
\item $h_ih_{i+2}h_it_{i+1,i+2}^{-1}=t_{i+2,i+3}^{-1}h_{i+2}h_ih_{i+2}$ \quad for $1\leq i\leq 2n-3$,
\end{enumerate}
\item the pentagonal relations $(P_{i,j,k,l,m})$ \quad for $1\leq i<j<k<l<m\leq 2n+2$,\\ 
i.e. $t_{j,m-1}^{-1}t_{k,m-1}t_{j,l-1}t_{i,k-1}t_{i,l-1}^{-1}=t_{i,l-1}^{-1}t_{i,k-1}t_{j,l-1}t_{k,m-1}t_{j,m-1}^{-1}$,
\item 
\begin{enumerate}
\item the relations~$(T_{i,j})$ for $1\leq i<j\leq 2n+1$, i.e. \\
$t_{i,j}=\left\{
		\begin{array}{ll}
		h_i^2 \quad \text{for }j-i=2,\\
		t_{j-1,j}^{-\frac{j-i-3}{2}}\cdots t_{i+2,i+3}^{-\frac{j-i-3}{2}}t_{i,i+1}^{-\frac{j-i-3}{2}}(h_{j-2}\cdots h_{i+1}h_{i})^{\frac{j-i+1}{2}}\\ \text{for }2n-1\geq j-i \geq 3\text{ is odd},\\
		t_{j-2,j-1}^{-\frac{j-i-2}{2}}\cdots t_{i+2,i+3}^{-\frac{j-i-2}{2}}t_{i,i+1}^{-\frac{j-i-2}{2}}h_{j-2}\cdots h_{i+2}h_{i}h_{i}h_{i+2}\cdots h_{j-2}\\ \cdot (h_{j-3}\cdots h_{i+1}h_{i})^{\frac{j-i}{2}}\quad \text{for }2n\geq j-i \geq 4\text{ is even}. 
		\end{array}
		\right.$
\end{enumerate}
\end{enumerate}
\end{thm}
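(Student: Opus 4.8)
The plan is to realize $\LMb $ as an iterated group extension and to apply the standard presentation-of-an-extension machinery of Lemma~\ref{presentation_exact}. Combining the extensions of Propositions~\ref{prop_exact_lmodb} and~\ref{prop_exact_lmodp}, I would first observe that the composite $\Psi \circ \iota _\ast $ exhibits $\LMb $ as an extension
\[
1\longrightarrow N\longrightarrow \LMb \stackrel{\Psi \circ \iota _\ast }{\longrightarrow }W_{2n+2;\ast }\longrightarrow 1,
\]
where $N=\iota _\ast ^{-1}(\PM )=\mathrm{PMod}_{0,2n+1}^1$ is the pure subgroup of classes fixing $\B ^1$ pointwise, and $W_{2n+2;\ast }=S_{n+1}^o\times S_n^e$ by Lemma~\ref{lem_W_ast}. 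The kernel $N$ is itself the central extension $1\to \left< t_{\partial D}\right> \to N\to \PM \to 1$ from the capping homomorphism, in which the boundary twist $t_{\partial D}$ is identified with $t_{1,2n+1}$, since $\gamma _{1,2n+1}$ is isotopic to $\partial \Sigma _0^1$.

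The first task is to present the two outer groups. Since $t_{\partial D}$ is central, Lemma~\ref{presentation_exact} applied to $1\to \left< t_{\partial D}\right> \to N\to \PM \to 1$ (using the presentation of $\PM =\mathrm{PMod}_{0,2n+2}$ from Proposition~\ref{prop_pres_pmod} with $2n+2$ marked points, and the free presentation of $\left< t_{\partial D}\right> \cong \Z $) presents $N$ with generators $t_{i,j}$ for $1\le i<j\le 2n+1$ — the new generator $t_{1,2n+1}=t_{\partial D}$ replacing the relation $t_{1,2n+1}=1$ valid in $\PM $ — and defining relations the commutative relations~(1)(b) and the pentagonal relations~(3); here one checks that, for suitable lifts, the lifted relators are exactly these. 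For the quotient I would use the Coxeter presentation of $W_{2n+2;\ast }=S_{n+1}^o\times S_n^e$ with generators $\bar{h}_i=\Psi (h_i)=(i\ i+2)$ for $1\le i\le 2n-1$ and relations $\bar{h}_i^2=1$, $\bar{h}_i\bar{h}_{i+1}=\bar{h}_{i+1}\bar{h}_i$, $\bar{h}_i\bar{h}_{i+2}\bar{h}_i=\bar{h}_{i+2}\bar{h}_i\bar{h}_{i+2}$, and $\bar{h}_i\rightleftarrows \bar{h}_j$ for $j-i\ge 3$.

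Next I would apply Lemma~\ref{presentation_exact} to the extension $1\to N\to \LMb \to W_{2n+2;\ast }\to 1$, lifting each $\bar{h}_i$ to the half-rotation $h_i$ and each $t_{i,j}$ to itself. The three families produced by the lemma match the theorem as follows. The type-(A) relations are precisely the relations of $N$, namely~(1)(b) and~(3). The type-(B) relations are the lifts of the four Coxeter relations; via Lemma~\ref{lem_lift_W} these yield $h_i^2=t_{i,i+2}$ (the case $j-i=2$ of the relations~$(T_{i,j})$ in~(4)(a)), the relations~(2)(c),~(2)(d),~(2)(e) (with~(2)(d),(2)(e) equivalent to Lemma~\ref{lem_lift_W}(2),(3) by the Remark following that lemma), and the long commuting relations~(1)(a). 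The type-(C) relations are the conjugations $h_m t_{i,j}h_m^{-1}=t_{h_m(\gamma _{i,j})}$; the disjoint cases give~(1)(c), and the local cases give~(2)(a) and~(2)(b).

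The hard part is the final reduction: to show that the full collection of conjugation relations of type~(C) — one for each of the roughly $O(n^3)$ pairs $(h_m,t_{i,j})$ — is a consequence of the finite clean list~(1)--(4). This splits into two interlocking problems. First, one must compute $h_m(\gamma _{i,j})$ for all $m$ and $i<j$ and express the resulting twist in the generators; the economical route is to reduce every long twist $t_{i,j}$ with $j-i\ge 3$ to short twists $t_{\ell ,\ell +1}$ and half-rotations using the relations~$(T_{i,j})$ of~(4)(a), after which only the local conjugations~(2)(a),(2)(b) remain. Second, and most delicately, one must prove that the relations~$(T_{i,j})$ of Lemma~\ref{lem_t_ij-braid} are themselves derivable from~(1)--(4): as indicated after that lemma, I would show they are equivalent, modulo the relations of Lemmas~\ref{lem_comm_rel}, \ref{lem_conj_rel}, and~\ref{lem_lift_W}, to lantern relations, which in turn reduce to the pentagonal relations~(3) by the computation in the proof of Lemma~\ref{lem_pentagon_rel}. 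Carrying out this chain of Tietze transformations — absorbing every conjugation relation into~(2)(a),(2)(b) and~$(T_{i,j})$, and trading $(T_{i,j})$ for lanterns and then pentagons — is where essentially all of the work lies; the extension machinery and the presentations of $N$ and $W_{2n+2;\ast }$ are comparatively routine.
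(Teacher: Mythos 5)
Your architecture is sound and, modulo one misstatement discussed below, it reproduces the paper's proof with the two extensions composed in the opposite order. The paper first presents $\LMp $ by applying Lemma~\ref{presentation_exact} to $1\to \PM \to \LMp \stackrel{\Psi }{\to }W_{2n+2;\ast }\to 1$ (Propositions~\ref{prop_exact_lmodp} and \ref{prop_pres_lmodp}, Corollary~\ref{cor_pres_lmodp}, Theorem~\ref{thm_pres_lmodp}) and only afterwards obtains $\LMb $ from the central extension $1\to \left< t_{\partial D}\right> \to \LMb \to \LMp \to 1$ of Proposition~\ref{prop_exact_lmodb}, absorbing $t_{\partial D}=t_{1,2n+1}$ into the generating set by Tietze moves. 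You instead present the pure bordered group $N=\iota _\ast ^{-1}(\PM )$ first, as the central extension of $\PM $ by $t_{\partial D}$, and then run the extension lemma on $1\to N\to \LMb \to W_{2n+2;\ast }\to 1$. Both routes are legitimate, produce the same three families of extension-lemma relations, and bottom out in exactly the same technical core: the equivalence, modulo relations~(1) and (2), of the lantern-type conjugation relations $(L_{i,j}^{k=j})$, $(L_{i,j}^{k=j-1})$, $(L_{i,j}^{k=i-2})$, $(L_{i,j}^{k=i-1})$ with the relations $(T_{i,j})$ of (4)(a), proved by induction on $j-i$ in Lemmas~\ref{lem_t_ij_pres1}, \ref{lem_L_ij_k=j-1}, \ref{lem_L_ij_k=i-2}, and \ref{lem_L_ij_k=i-1}. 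Your reordering buys nothing and costs nothing, except two housekeeping points you should make explicit: (i) $\iota _\ast ^{-1}(\PM )\subseteq \LMb $ needs justification (pure bordered classes are parity-preserving, hence liftable; alternatively combine Propositions~\ref{prop_exact_lmodb} and \ref{prop_exact_lmodp}); and (ii) relations~(2)(b) and (2)(c) are not literally type-(B)/(C) relations of the extension lemma (type-(C) conjugations are by single generators, and (2)(c) involves no twists at all); they are redundant true relations from Lemma~\ref{lem_conj_rel} whose inclusion in the defining list is harmless but whose provenance should not be misattributed.

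The one genuine error is the final claim that the lantern relations ``in turn reduce to the pentagonal relations~(3) by the computation in the proof of Lemma~\ref{lem_pentagon_rel}.'' That is backwards, and if followed literally it fails: Lemma~\ref{lem_pentagon_rel} derives a pentagon from two lanterns, not conversely, and in any case the type-(C) lantern relations involve the half-rotations $h_m$, so they cannot be consequences of relations among the $t_{i,j}$ alone; the pentagons enter only as defining relations of the pure part $N$ (type-(A)). What is actually needed is two separate facts, both supplied by the paper: for validity, the lantern relations hold in $\LMb $ outright because they are honest lantern relations among Dehn twists (Lemma~\ref{lem_lantern_rel}), and this, via the equivalence above, is also how Lemma~\ref{lem_t_ij-braid} (validity of the $(T_{i,j})$) is proved; for sufficiency, each lantern relation is a consequence of (1), (2), and the relations $(T_{i^\prime ,j^\prime })$ of (4)(a), by the inductive Lemmas~\ref{lem_t_ij_pres1}--\ref{lem_L_ij_k=i-1}. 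With that correction, your plan goes through and coincides with the paper's proof in all essentials.
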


Remark that by the relation~(1) (b) in Theorem~\ref{thm_pres_lmodb}, if $j-i$ is even, then $t_{i,i+1}$ commutes with $t_{j,j+1}$. 
The relations~(2) (a)--(c) in Theorem~\ref{thm_pres_lmodb} are the relations~(1)--(3) in Lemma~\ref{lem_conj_rel}, i.e. conjugation relations, and the relations~(2) (d) and (e) in Theorem~\ref{thm_pres_lmodb} are equivalent to the relations~(2) and (3) in Lemma~\ref{lem_lift_W} up to the relations~(2) (a) and (b) in Theorem~\ref{thm_pres_lmodb}. 

\begin{thm}\label{thm_pres_lmodp}
The group $\LMp $ admits the presentation which is obtained from the finite presentation for $\LMb $ in Theorem~\ref{thm_pres_lmodb} by adding the relation 
\begin{enumerate}
\item[(4)\ (b)] $t_{1,2n+1}=1$. %$t_{2n-1,2n}^{-n+1}\cdots t_{3,4}^{-n+1}t_{1,2}^{-n+1}h_{2n-1}\cdots h_3h_1h_1h_3\cdots h_{2n-1}(h_{2n-2}\cdots h_{2}h_{1})^{n}=1$.
\end{enumerate}
\end{thm}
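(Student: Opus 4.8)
The plan is to realize $\LMp$ as a quotient of $\LMb$ and then invoke the standard recipe for presenting a quotient group. By Proposition~\ref{prop_exact_lmodb}, the capping homomorphism $\iota _\ast \colon \LMb \to \LMp $ is surjective with kernel the infinite cyclic group $\left< t_{\partial D}\right> $, so $\LMp \cong \LMb /\left< t_{\partial D}\right> $. Since $t_{\partial D}$ is the Dehn twist along the boundary $\partial \Sigma _0^1=\partial D$ and every representative in $\Mb $ fixes $\partial \Sigma _0^1$ pointwise, $t_{\partial D}$ is central in $\Mb $ and hence in $\LMb $; consequently the normal closure of $t_{\partial D}$ in $\LMb $ coincides with $\left< t_{\partial D}\right> $ itself, which is exactly $\ker \iota _\ast $.

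First I would identify $t_{\partial D}$ with one of the generators appearing in the presentation of Theorem~\ref{thm_pres_lmodb}. In $\Sigma _0^1$ the simple closed curve $\gamma _{1,2n+1}$ bounds, on the side not containing the interior disk, an annular region cobounded with $\partial D$, because the marked point $p_{2n+2}$ has been removed together with the interior of $D$. Hence $\gamma _{1,2n+1}$ is isotopic to $\partial D$ in $\Sigma _0^1$ relative to $\B ^1$, and therefore $t_{1,2n+1}=t_{\partial D}$ in $\Mb $, in particular in $\LMb $. Note that $t_{1,2n+1}$ is already among the generators $t_{i,j}$ $(1\leq i<j\leq 2n+1)$ of the presentation for $\LMb $, so this boundary twist needs no further rewriting as a word in the generators.

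It then remains to apply the elementary fact that for a group $G=\left< X\mid R\right> $ and a word $w$ in $X$, the quotient of $G$ by the normal closure of $w$ admits the presentation $\left< X\mid R\cup \{ w=1\} \right> $. Taking $G=\LMb $ with the presentation of Theorem~\ref{thm_pres_lmodb} and $w=t_{1,2n+1}$, and using that the normal closure of $t_{\partial D}$ equals $\left< t_{\partial D}\right> =\ker \iota _\ast $ by centrality, we conclude that $\LMp $ admits the presentation obtained from that of $\LMb $ by adjoining the single relation $t_{1,2n+1}=1$, which is relation~(4)(b). I expect the only genuine verification to be the geometric identification $t_{1,2n+1}=t_{\partial D}$; the centrality of the boundary twist and the quotient-presentation step are formal. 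As a consistency check, substituting $t_{1,2n+1}=1$ into the relation~$(T_{1,2n+1})$ of Theorem~\ref{thm_pres_lmodb} recovers the relation
\[
t_{2n-1,2n}^{-n+1}\cdots t_{3,4}^{-n+1}t_{1,2}^{-n+1}h_{2n-1}\cdots h_3h_1h_1h_3\cdots h_{2n-1}(h_{2n-2}\cdots h_2h_1)^{n}=1
\]
already observed to hold in $\LMp $ after Lemma~\ref{lem_t_ij-braid}.
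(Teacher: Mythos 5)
Your argument is internally coherent step by step, but within this paper it is circular. You take Theorem~\ref{thm_pres_lmodb} as given and realize $\LMp$ as the quotient $\LMb /\left< t_{\partial D}\right> $; however, the paper's only proof of Theorem~\ref{thm_pres_lmodb} is itself deduced \emph{from} Theorem~\ref{thm_pres_lmodp}: it applies Lemma~\ref{presentation_exact} to the central extension $1\to \left< t_{\partial D}\right> \to \LMb \stackrel{\iota _\ast }{\to }\LMp \to 1$ of Proposition~\ref{prop_exact_lmodb}, feeding in the presentation of the quotient $\LMp$. So the logical order in the paper is exactly the reverse of yours, and your proof assumes a statement that is downstream of the one being proved. (Your quotient mechanics --- centrality of the boundary twist, the identification $t_{\partial D}=t_{1,2n+1}$, normal closure equal to $\left< t_{\partial D}\right> $ --- are precisely the ingredients the paper uses, but in the opposite direction.)

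The genuine content of the paper's proof of Theorem~\ref{thm_pres_lmodp} is independent of $\LMb$: it starts from Corollary~\ref{cor_pres_lmodp} (equivalently Proposition~\ref{prop_pres_lmodp}), which is obtained by applying Lemma~\ref{presentation_exact} to the extension $1\to \PM \to \LMp \stackrel{\Psi }{\to }W_{2n+2;\ast }\to 1$ of Proposition~\ref{prop_exact_lmodp}, and then performs Tietze transformations, using Lemmas~\ref{lem_t_ij_pres1}, \ref{lem_L_ij_k=j-1}, \ref{lem_L_ij_k=i-2}, and \ref{lem_L_ij_k=i-1} to show that the lantern-type relations $(L_{i,j}^{k=\cdot })$ appearing there are consequences of the relations~(1), (2), and $(T_{i,j})$ of the target presentation. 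That combinatorial reduction is the substance of the theorem, and your proposal does not engage with it. To rescue your approach you would need an independent proof of the presentation for $\LMb$, which the paper does not supply and which would require essentially the same work you have bypassed.
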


\begin{thm}\label{thm_pres_lmod}
The group $\LM $ admits the presentation which is obtained from the finite presentation for $\LMp $ in Theorem~\ref{thm_pres_lmodp} by adding generators $h_{2n}$ and $r$, and the following relations:
\begin{enumerate}
\item[(4)]
\begin{enumerate}
\item[(c)] $t_{1,2n}^{-n+1}t_{2n-1,2n}^{-n+1}\cdots t_{3,4}^{-n+1}t_{1,2}^{-n+1}(h_{2n}\cdots h_{2}h_{1})^{n+1}=1$, 
\end{enumerate}
\item[(5)] relations among $r$ and $h_i$'s or $t_{i,j}$'s: 
\begin{enumerate}
\item $r^2=1$, 
\item $rh_i=h_{2n-i+1}r$ \quad for $1\leq i\leq 2n$, 
\item $rt_{i,i+1}=t_{2n-i+2,2n-i+3}r$ \quad for $2\leq i\leq 2n$, 
\item $rt_{1,j}=t_{1,2n-j+2}r$ \quad for $2\leq j\leq 2n$. 
\end{enumerate}
\end{enumerate}
\end{thm}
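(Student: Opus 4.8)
The plan is to realize $\LM$ as the extension $1\to\PM\to\LM\xrightarrow{\Psi}W_{2n+2}\to1$ of~\eqref{exact2} and to apply the combinatorial group theory Lemma~\ref{presentation_exact}, in parallel with the presentation of $\LMp$ from Theorem~\ref{thm_pres_lmodp}, which comes from the restricted extension~\eqref{exact_lmodp} with quotient $W_{2n+2;\ast}$. First I would dispose of the two cheap ingredients. On the one hand, the listed generators generate $\LM$: by Lemmas~\ref{lem_GW} and~\ref{image_psi_lmodp}, $\Psi$ carries $h_1,\dots,h_{2n-1}$ onto a generating set of $W_{2n+2;\ast}$, adjoining $\bar h_{2n}=(2n\ 2n+2)$ and the parity-reversing $\bar r$ enlarges the image to all of $W_{2n+2}$, while the kernel $\PM$ is generated by the $t_{i,j}$; hence $\LM=\langle\LMp,h_{2n},r\rangle$. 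On the other hand, every listed relation holds in $\LM$: the relations (1)--(3) and (4)(a),(4)(b) are inherited from $\LMp\le\LM$ via Theorem~\ref{thm_pres_lmodp}, the relation (4)(c) is the instance $(T_{1,2n+2})$ of Lemma~\ref{lem_t_ij-braid} combined with $t_{1,2n+2}=1$ and $t_{2n+1,2n+2}=t_{1,2n}$, and the relations (5)(a)--(d) are Lemmas~\ref{lem_lift_W}(4) and~\ref{lem_conj_rel}. Thus the group $G$ presented by the statement surjects onto $\LM$.

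The substance is to prove injectivity of this surjection, for which I would run Lemma~\ref{presentation_exact} on~\eqref{exact2}. For the kernel I take the presentation of $\PM=\mathrm{PMod}_{0,2n+2}$ from Proposition~\ref{prop_pres_pmod}; for the quotient I take a presentation of $W_{2n+2}\cong(S_{n+1}^o\times S_{n+1}^e)\rtimes\Z_2$ on the generators $\bar h_1,\dots,\bar h_{2n},\bar r$, chosen so that deleting $\bar h_{2n}$ and $\bar r$ recovers exactly the presentation of $W_{2n+2;\ast}$ used for $\LMp$. This is possible because $\bar h_1,\bar h_3,\dots$ and $\bar h_2,\bar h_4,\dots$ are the Coxeter generators of the two symmetric-group factors, the odd and even families commute, $\bar r^2=1$, and $\bar r\bar h_i\bar r=\bar h_{2n-i+1}$ realizes the diagram automorphism $i\mapsto 2n+1-i$. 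Running Lemma~\ref{presentation_exact} then yields a presentation of $\LM$ whose relations of types (A), (B), (C) split as: those already occurring in the $\LMp$-presentation, plus the corrections (B) of the $W_{2n+2}$-relations that involve $\bar h_{2n}$ or $\bar r$, plus the conjugation relations (C) of the form $h_{2n}t_{i,j}h_{2n}^{-1}=w$ and $rt_{i,j}r^{-1}=w'$. Taking the lifts of $\bar h_{2n}$ and $\bar r$ to be $h_{2n}$ and $r$, these new relations hold in $\LM$ by Lemmas~\ref{lem_conj_rel} and~\ref{lem_lift_W}.

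It then remains to reduce this raw presentation to the stated one by Tietze transformations, and this is where I expect the main obstacle to lie. Concretely I must show, modulo the $\LMp$-relations, that the finitely many stated relations (4)(c) and (5)(a)--(d) are equivalent to all of the new (B)- and (C)-relations. The $\bar r$-conjugation relations $rt_{i,j}r^{-1}=t_{2n-j+3,2n-i+3}$ of Lemma~\ref{lem_conj_rel} range over all $1\le i<j\le 2n+2$, including the cases $j=2n+2$ where $t_{i,2n+2}=t_{1,i-1}$ and the trivial curves $t_{1,2n+1}=t_{1,2n+2}=t_{2,2n+2}=1$; using the expansions $(T_{i,j})$ to rewrite every $t_{i,j}$ as a word in the $h_k$ and the $t_{l,l+1}$, together with $rh_i=h_{2n-i+1}r$, I would collapse this whole family to the consecutive cases (5)(c) and the boundary cases (5)(d), exactly as the analogous reduction was carried out for $\PM$ in the proof of Proposition~\ref{prop_pres_pmod}. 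The single global relation (4)(c) is the delicate point: it is the lift-correction of the order relation $(\bar h_{2n}\cdots\bar h_1)^{n+1}=1$ for the rotation element of $W_{2n+2}$, which is a product of two $(n+1)$-cycles on the odd and on the even branch points, and I expect to obtain it, as in the proof of Lemma~\ref{lem_t_ij-braid}, from the relation $(T_{1,2n+2})$ together with the vanishing $t_{1,2n+2}=1$ of the curve enclosing all $2n+2$ branch points. Verifying that this single relation, and no more, is what pins down the $\PM$-part across the new cosets is where the bookkeeping is heaviest. Finally I would record that the case $i=1$ of (5)(b) gives $h_{2n}=rh_1r^{-1}$, so that the $h_{2n}$-conjugation relations become redundant once the $r$-relations are imposed, completing the identification $G\cong\LM$.
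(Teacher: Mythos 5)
Your strategy coincides with the paper's: both realize $\LM$ via the extension $1\to\PM\to\LM\xrightarrow{\Psi}W_{2n+2}\to1$, apply Lemma~\ref{presentation_exact} with Proposition~\ref{prop_pres_pmod} for the kernel and a presentation of $W_{2n+2}$ compatible with that of $W_{2n+2;\ast}$ (Lemmas~\ref{pres_w_2n+2} and~\ref{pres_w_2n+2ast}), and then reduce the resulting raw presentation by Tietze transformations; even your closing observation that $h_{2n}=rh_1r^{-1}$ makes the $h_{2n}$-conjugation data redundant matches the paper, which converts $(L_{i,2n}^{k=j})$ into $(L_{3,2n-i+3}^{k=i-2})$ by conjugating with $r$. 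However, the proposal has a genuine gap: essentially all of the mathematical content of the theorem lies in the reduction you explicitly defer (``I would collapse\dots'', ``where the bookkeeping is heaviest''), and your sketch does not identify the specific obstructions that make it nontrivial. The raw conjugation relations produced by Lemma~\ref{presentation_exact} are concretely lantern relations $h_kt_{i,j}h_k^{-1}=w_{i,j;k}$ (Propositions~\ref{prop_pres_lmodp} and~\ref{prop_pres_lmod}), and deriving them from the finitely many stated relations requires proving each lantern relation equivalent to a relation $(T_{i,j})$ modulo the relations (1) and (2); this is the content of Lemmas~\ref{lem_t_ij_pres1}--\ref{lem_L_ij_k=i-1}, whose proofs, together with their supporting word calculations, occupy Sections~\ref{section_tech_rel}--\ref{section_proof_t_ij_pres} of the paper. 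Your appeal to ``the analogous reduction\dots for $\PM$ in Proposition~\ref{prop_pres_pmod}'' is misleading: there the conjugation relations were absorbed into pentagonal relations by a short computation, whereas here the analogous step is of a different and far heavier nature.

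More concretely, your plan to collapse the family $rt_{i,j}r^{-1}=t_{2n-j+3,2n-i+3}$ to the cases (5)(c),(d) by expanding $t_{i,j}$ via $(T_{i,j})$ and using $rh_ir^{-1}=h_{2n-i+1}$ runs into a problem you never flag: conjugating the word $(h_{j-2}\cdots h_{i+1}h_i)^{\frac{j-i+1}{2}}$ by $r$ reverses the order of the letters, producing $(h_{2n-j+3}\cdots h_{2n-i}h_{2n-i+1})^{\frac{j-i+1}{2}}$ with ascending indices, while the target expansion $(T_{2n-j+3,2n-i+3})$ has them descending. Matching the two requires the reversal identities $(h_{j-2}\cdots h_{i+1}h_i)^{\frac{j-i+1}{2}}=(h_ih_{i+1}\cdots h_{j-2})^{\frac{j-i+1}{2}}$ and its even counterpart --- Lemmas~\ref{cor_tech_rel8} and~\ref{tech_rel19} --- which are the technical heart of the paper's proof and are nowhere addressed in your outline. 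A minor directional correction as well: in the presentation argument, relation (4)(c) is not something you derive from $(T_{1,2n+2})$ (that is only the soundness check); rather, by Lemma~\ref{lem_t_ij_pres1}(2) it is the stated substitute for the lantern relation $(L_{1,2n}^{k=j})$, i.e.\ the hypothesis from which that relation is recovered. Until these reductions are actually carried out, the proposal establishes only that the stated group surjects onto $\LM$, not that the map is an isomorphism.
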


The defining relations above are introduced in Lemma~\ref{lem_comm_rel}, \ref{lem_conj_rel}, \ref{lem_lift_W}, \ref{lem_t_ij-braid}, and \ref{lem_pentagon_rel}. 
By Theorems~\ref{thm_pres_lmodb}, \ref{thm_pres_lmodp}, and \ref{thm_pres_lmod}, we have the following corollary. 

\begin{cor}\label{cor_gen_lmod}
\begin{enumerate}
\item The groups $\LMb $ and $\LMp$ are generated by $h_1,\ h_2,\ \dots ,\ h_{2n-1}$, and $t_{1,2}$, 
\item The group $\LM $ is generated by $h_1,\ h_3,\ \dots ,\ h_{2n-1},\ t_{1,2}$, and $r$. 
\end{enumerate}
\end{cor}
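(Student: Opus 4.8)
The plan is to derive Corollary~\ref{cor_gen_lmod} directly from the finite presentations in Theorems~\ref{thm_pres_lmodb}, \ref{thm_pres_lmodp}, and~\ref{thm_pres_lmod}, since a generating set for each group is immediate once we show that every generator appearing in its presentation can be written as a word in the proposed smaller set. For part~(1), I would start from the generators $h_1,\dots,h_{2n-1}$ and $t_{i,j}$ for $1\le i<j\le 2n+1$ in Theorem~\ref{thm_pres_lmodb}. The relations~$(T_{i,j})$ in~(4)(a) express each $t_{i,j}$ with $j-i\ge 3$ as a product of the $h_k$'s and the $t_{l,l+1}$'s, and the relation~(2)(a) in Lemma~\ref{lem_lift_W} (appearing as relation~(4)(a), the case $j-i=2$) gives $t_{i,i+2}=h_i^2$. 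Hence it suffices to generate the remaining consecutive twists $t_{i,i+1}$ from $h_1,\dots,h_{2n-1}$ together with $t_{1,2}$.

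The key step is therefore to show $t_{i,i+1}\in\langle h_1,\dots,h_{2n-1},t_{1,2}\rangle$ for all $1\le i\le 2n+1$ (or $1\le i\le 2n$ in the boundary/marked-point cases), and here the conjugation relations~(2)(a) of Theorem~\ref{thm_pres_lmodb}, i.e. $h_i^{\pm 1}t_{i,i+1}=t_{i+1,i+2}h_i^{\pm 1}$, do exactly what is needed: they yield $t_{i+1,i+2}=h_i t_{i,i+1}h_i^{-1}$, so inductively each consecutive twist is a conjugate of $t_{1,2}$ by a product of the $h_i$'s. This immediately shows $\LMb$ is generated by $h_1,\dots,h_{2n-1}$ and $t_{1,2}$, and since $\LMp$ has the same generators (Theorem~\ref{thm_pres_lmodp} only adds a relation, not a generator), the same argument gives part~(1) for $\LMp$.

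For part~(2), I would invoke Theorem~\ref{thm_pres_lmod}: $\LM$ is generated by the $\LMp$-generators together with $h_{2n}$ and $r$. By part~(1) the $\LMp$-generators lie in $\langle h_1,\dots,h_{2n-1},t_{1,2}\rangle$, so it remains to remove $h_2,h_4,\dots,h_{2n-2}$ (the even-indexed rotations) and $t_{1,2}$ from the generating set, leaving only $h_1,h_3,\dots,h_{2n-1}$, $t_{1,2}$, and $r$. Wait—the stated generating set in~(2) does retain $t_{1,2}$, so the real content is eliminating the even-indexed $h_i$'s and eliminating $h_{2n}$. The conjugation relation~(5)(b), $rh_ir^{-1}=h_{2n-i+1}$, is the crucial tool: it lets us write $h_{2n}=rh_1r^{-1}$, and more generally expresses each even-indexed $h_{2j}$ as $r h_{2n-2j+1}r^{-1}$, a conjugate of an odd-indexed rotation by $r$. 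Thus all even-indexed $h_i$ and $h_{2n}$ are recovered from $h_1,h_3,\dots,h_{2n-1}$ and $r$, giving the claimed generating set.

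The main obstacle is purely bookkeeping with parities: one must check that as $i$ ranges over $1,\dots,2n$, the index $2n-i+1$ in relation~(5)(b) correctly pairs each even index with an odd one (indeed $i$ even forces $2n-i+1$ odd and vice versa), so that no even-indexed generator is needed and the set $\{h_1,h_3,\dots,h_{2n-1}\}$ together with $r$ suffices. I do not expect any genuine difficulty beyond verifying these index parities and confirming that the inductive conjugation in part~(1) reaches every consecutive twist; no new relations or geometric input are required, as everything follows formally from the presentations already established.
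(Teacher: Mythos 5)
Your proposal is correct and follows essentially the same route as the paper: reduce every $t_{i,j}$ to the $h_k$'s and the consecutive twists via the relations~$(T_{i,j})$ in~(4)(a), obtain each $t_{i,i+1}$ from $t_{1,2}$ by the conjugation relation~(2)(a), and then recover $h_{2n}$ and the even-indexed $h_i$'s as $r$-conjugates of odd-indexed ones. The only differences are cosmetic and in your favor: you handle $\LMp$ directly through Theorem~\ref{thm_pres_lmodp} (the paper instead invokes the capping surjection of Lemma~\ref{surj_capping_lmod}), and in part~(2) you cite the relation~(5)(b), $rh_ir^{-1}=h_{2n-i+1}$, which is indeed the relation needed --- the paper's proof cites relation~(2)(d) at this step, which appears to be a typo, since $h_ih_{i+1}t_{i,i+1}=t_{i,i+1}h_{i+1}h_i$ has $h_{i+1}$ on both sides and cannot be used to eliminate the even-indexed generators.
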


\begin{proof}
%Let $G_1$ be the subgroup of $\LMb$ which is generated by $h_1,$ $h_2, \dots ,$ $h_{2n-1}$, and $t_{1,2}$. 
By Theorem~\ref{thm_pres_lmodb}, $\LMb$ is generated by $h_i$ for $1\leq i\leq 2n-1$ and $t_{i,j}$ for $1\leq i<j\leq 2n+1$. 
By the relation~(4)~(a) in Theorem~\ref{thm_pres_lmodb}, $t_{i,j}$ for $1\leq i<j\leq 2n+1$ is a product of $h_i$ $(1\leq i\leq 2n-1)$ and $t_{i,i+1}$ $(1\leq i\leq 2n)$. 
By the relation~(2)~(a) in Theorem~\ref{thm_pres_lmodb}, we show that $t_{i,i+1}$ is a product of $h_1,$ $h_2, \dots ,$ $h_{2n-1}$, and $t_{1,2}$. %$\in G_1$ for $2\leq i\leq 2n$. 
%Thus, we have $G_1=\LMb$. 
By Lemma~\ref{surj_capping_lmod}, we have $\iota _\ast (\LMb )=\LMp $. 
Therefore, $\LMb $ and $\LMp $ are generated by $h_1,\ h_2,\ \dots ,\ h_{2n-1}$, and $t_{1,2}$. 

%Let $G_2$ be the subgroup of $\LM$ which is generated by $h_1,$ $h_3,\ \dots ,$ $h_{2n-1},\ t_{1,2}$, and $r$. 
By Theorem~\ref{thm_pres_lmod}, $\LM$ is generated by $h_i$ for $1\leq i\leq 2n$, $t_{i,j}$ for $1\leq i<j\leq 2n+1$, and $r$. 
By a similar argument in the case~(1), every $t_{i,j}$ is a product of $h_i$ $(1\leq i\leq 2n)$ and $t_{1,2}$. 
By the relation~(2)~(d) in Theorem~\ref{thm_pres_lmod}, we show that $h_2,\ h_4,\ \dots ,\ h_{2n}$ are products of $h_1,\ h_3,\ \dots ,\ h_{2n-1},\ t_{1,2}$, and $r$. 
Therefore, $\LM $ is generated by $h_1,\ h_3,\ \dots ,\ h_{2n-1},\ t_{1,2}$, and $r$, and we have completed Corollary~\ref{cor_gen_lmod}. 
\end{proof}

\subsection{Proofs of presentations for the liftable mapping class groups}\label{section_proof_lmod}

Throughout this section, a label in a deformation of a relation indicates one of relations in Theorems~\ref{thm_pres_lmodb}, \ref{thm_pres_lmodp}, and \ref{thm_pres_lmod}. 
For instance, $A\stackrel{\text{(1)(a)}}{=}B$ (resp. $A=B\stackrel{\text{(1)(a)}}{\Longleftrightarrow }A^\prime= B^\prime $) means that $B$ (resp. the relation $A^\prime =B^\prime $) is obtained from $A$ (resp. the relation $A=B$) by the relation~(1) (a) in Theorem~\ref{thm_pres_lmodb}. 
First, we prove the following lemma. 

\begin{lem}\label{tech_rel1}
In $\LM $, the relations
\begin{enumerate}
\item 
\begin{enumerate}
\item[(a$^\prime $)] $h_i \rightleftarrows h_{2n}$ \quad for $1\leq i\leq 2n-3$,
\item[(c$^\prime $)] $h_{2n} \rightleftarrows t_{i,j}$ \quad for $1\leq i<j\leq 2n-1$,
\end{enumerate}
\item 
\begin{enumerate}
\item[(a$^\prime $)] $h_{2n}^{\pm 1}t_{2n,2n+1}=t_{1,2n}h_{2n}^{\pm 1}$, 
\item[(b$^\prime $)] $h_{2n-1}^{\varepsilon }h_{2n}^{\varepsilon }t_{2n-1,2n}=t_{1,2n}h_{2n-1}^{\varepsilon }h_{2n}^{\varepsilon }$\quad for $\varepsilon \in \{ 1, -1\}$,
\item[(c$^\prime $)] $h_{2n-2}^{\varepsilon }h_{2n-1}^{\varepsilon }h_{2n}^{\varepsilon }h_{2n-2}^{\varepsilon }=h_{2n}^{\varepsilon }h_{2n-2}^{\varepsilon }h_{2n-1}^{\varepsilon }h_{2n}^{\varepsilon }$\quad for $\varepsilon \in \{ 1, -1\}$,
\item[(d$^\prime $)] $h_{2n-1}h_{2n}t_{2n-1,2n}=t_{2n-1,2n}h_{2n}h_{2n-1}$,
\item[(e$^\prime $)] $h_{2n-2}h_{2n}h_{2n-2}t_{2n-1,2n}^{-1}=t_{2n,2n+1}^{-1}h_{2n+2}h_{2n}h_{2n+2}$,
\end{enumerate}
\item[(4)] 
\begin{enumerate}
\item[(a$^\prime $)] $t_{1,2n-1}=h_{2n}^2$
\end{enumerate}
\end{enumerate}
is obtained from the relations~(1), (2), (4) (a), and (5) in Theorem~\ref{thm_pres_lmod}. 
\end{lem}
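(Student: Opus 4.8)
The plan is to exploit the reflection symmetry recorded in the relations~(5). Since $r^2=1$ by~(5)(a), conjugation by $r$ is an involution of $\LM$, and (5)(b)--(d) say that it reverses indices: $rh_ir^{-1}=h_{2n-i+1}$, $rt_{i,i+1}r^{-1}=t_{2n-i+2,2n-i+3}$, and $rt_{1,j}r^{-1}=t_{1,2n-j+2}$, together with the standing conventions $t_{j,2n+2}=t_{1,j-1}$ and $t_{2n+1,2n+2}=t_{1,2n}$. Every primed relation is the $r$-conjugate of a low-index unprimed relation already available in Theorem~\ref{thm_pres_lmod}, so the main step is to conjugate the appropriate relation by $r$ and to simplify the resulting words using~(5) and these conventions.

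Carrying this out, I would obtain~(4)(a$'$) by conjugating the $T$-relation $h_1^2=t_{1,3}$ (the $j-i=2$ case of~(4)(a)) by $r$ and applying~(5)(d) to $t_{1,3}$, giving $h_{2n}^2=t_{1,2n-1}$. Relations~(2)(a$'$), (2)(b$'$), and~(2)(c$'$) come from conjugating~(2)(a), (2)(b), (2)(c) at $i=1$ by $r$; the apparent swap in the order of the $h$'s and in the roles of the two Dehn twists is harmless precisely because those relations are asserted for both signs $\pm$ (resp.\ both $\varepsilon\in\{1,-1\}$), so the $r$-image reproduces the same pair of relations. For~(2)(d$'$) and~(2)(e$'$) I would first pass, as in the remark following Lemma~\ref{lem_lift_W}, to the commutator forms in Lemma~\ref{lem_lift_W}(2),(3) at $i=1$, conjugate those by $r$ to land exactly on the $\LM$-parts of Lemma~\ref{lem_lift_W}, and then convert back to the braid/conjugation form using the already-derived~(2)(a$'$),(b$'$) and commutative relations. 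Finally~(1)(c$'$) and the cases $2\le i\le 2n-3$ of~(1)(a$'$) follow by conjugating~(1)(c) and~(1)(a) at $k=1$ and $a=1$ by $r$; a short bookkeeping check, using the conventions for the wrap-around twists $t_{a,2n+2}$, confirms that the index ranges match and that every required $t_{i,j}$ with $j\le 2n-1$ is reached.

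The one case that resists this scheme, and which I expect to be the main obstacle, is $i=1$ of~(1)(a$'$), i.e.\ $h_1\rightleftarrows h_{2n}$. The pair $(1,2n)$ is fixed by the index reversal, so this relation is its own $r$-conjugate and cannot be produced from any listed unprimed relation by conjugation; indeed, writing $h_{2n}=rh_1r$ and using only~(5) leads back to the very same commutator. A genuine use of the braid relations is therefore needed. My plan is to set $H=h_1h_{2n}h_1^{-1}$ and to show that $H$ satisfies the same relations as $h_{2n}$ relative to the neighboring generators: for $n\ge 3$, $h_1$ commutes with $h_{2n-2}$, $h_{2n-1}$ and with the twists $t_{2n-1,2n}$, $t_{2n,2n+1}$, $t_{1,2n}$, so conjugating~(2)(c$'$) and~(2)(e$'$) by $h_1$ shows that $H$ obeys those relations too, while $H^2=h_1t_{1,2n-1}h_1^{-1}=t_{1,2n-1}=h_{2n}^2$ by~(4)(a$'$); a rigidity argument then forces $H=h_{2n}$. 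The case $n=2$, where these helper commutations are unavailable, has to be handled separately by a direct manipulation of the braid relations~(2)(c),(2)(e) (equivalently, by the spherical-braid-group isotopy used to prove Lemma~\ref{lem_lift_W}). Pinning down this last step rigorously from the allowed relations is the delicate heart of the lemma.
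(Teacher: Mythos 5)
Most of your proposal coincides with the paper's own proof. The paper derives (1)(c$'$), (2)(a$'$)--(c$'$), (4)(a$'$), and (1)(a$'$) exactly as you do: conjugate by $r$, rewrite using (5)(b),(c),(d) (and, for compound twists, the relations (4)(a)), and observe that the result is the corresponding unprimed relation at reflected indices, with the $\pm$/$\varepsilon$ symmetry absorbing the reversal of order; (2)(d$'$) and (2)(e$'$) are handled through the commutator forms of Lemma~\ref{lem_lift_W}, which is precisely the paper's displayed computation. You have also correctly spotted the one case this scheme cannot reach: the $r$-conjugate of $h_i\rightleftarrows h_{2n}$ is $h_1\rightleftarrows h_{2n-i+1}$, which is a relation (1)(a) of the presentation only for $2\le i\le 2n-3$, while for $i=1$ the relation $h_1\rightleftarrows h_{2n}$ is its own conjugate. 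The paper's proof asserts all of (1)(a$'$) by conjugation and silently skips this case, so your diagnosis identifies a real gap in the published argument.

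However, your repair of that case is not a proof, and this is a genuine gap in your proposal. Showing that $H=h_1h_{2n}h_1^{-1}$ satisfies the same relations as $h_{2n}$ (the relations (2)(c$'$), (2)(e$'$), and $H^2=t_{1,2n-1}$) does not force $H=h_{2n}$: there is no such rigidity principle in a presented group (in a free group, $a$ and $bab^{-1}$ satisfy exactly the same relations and are distinct), and your fallback for $n=2$ via the spherical-braid-group isotopy only shows that the relation holds in $\LM $, not that it is a consequence of the listed relations, which is what the lemma asserts and what the later Tietze-transformation arguments require. In fact no argument confined to the relations (1), (2), (4)(a), (5) can close this case. Let $K_0\le \LMp $ be the subgroup generated by $h_2,\dots ,h_{2n-1}$ and the twists $t_{i,i+1}$ $(2\le i\le 2n)$, $t_{1,j}$ $(2\le j\le 2n)$; conjugation by $r$ permutes these generators, hence restricts to an involution $\phi $ of $K_0$, and every element of $\Psi (K_0)$ fixes $p_1$, so $h_1\notin K_0$. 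In the amalgamated product $M=\LMp \ast _{K_0}\bigl( K_0\rtimes \langle r\mid r^2\rangle \bigr) $, with the assignment $h_{2n}\mapsto rh_1r^{-1}$, all of the relations (1), (2), (4)(a), (5) hold, yet the commutator $h_1rh_1rh_1^{-1}rh_1^{-1}r$ is a strictly alternating word with all syllables outside $K_0$ and is therefore nontrivial by the normal form theorem for amalgams. Consequently $h_1\rightleftarrows h_{2n}$ is not derivable from the hypotheses the lemma allows; any correct treatment must invoke a relation genuinely involving $h_{2n}$, such as (4)(c), so both your concluding step and the lemma's stated hypothesis list need to be revised.
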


\begin{proof}
By the conjugations by $r$, the relations~(1) (a$^\prime $), (c$^\prime $), (2) (a$^\prime $), (b$^\prime $), (c$^\prime $), and (4) (a$^\prime $) are obtained from the relations~(1) (a), (c), (2) (a), (b), (c), and (4) (a) up to the relations~(5) (b), (c), (d). 
For instance, for the relation~(2) (b$^\prime $), we have
\begin{eqnarray*}
&&h_{2n-1}^{\varepsilon }h_{2n}^{\varepsilon }t_{2n-1,2n}=t_{1,2n}h_{2n-1}^{\varepsilon }h_{2n}^{\varepsilon }\\
&\stackrel{\text{CONJ}}{\Longleftrightarrow }&rh_{2n-1}^{\varepsilon }h_{2n}^{\varepsilon }t_{2n-1,2n}r^{-1}=rt_{1,2n}h_{2n-1}^{\varepsilon }h_{2n}^{\varepsilon }r^{-1}\\
&\stackrel{\text{(5) (b),(c),(d)}}{\Longleftrightarrow }&h_{2}^{\varepsilon }h_{1}^{\varepsilon }\underline{t_{3,4}}=\underline{t_{1,2}}h_{2}^{\varepsilon }h_{1}^{\varepsilon }\\
&\stackrel{\text{CONJ}}{\Longleftrightarrow }&t_{1,2}^{-1}h_{2}^{\varepsilon }h_{1}^{\varepsilon }=h_{2}^{\varepsilon }h_{1}^{\varepsilon }t_{3,4}^{-1}\\
&\stackrel{\text{inverse}}{\Longleftrightarrow }&h_{1}^{-\varepsilon }h_{2}^{-\varepsilon }t_{1,2}=t_{3,4}h_{1}^{-\varepsilon }h_{2}^{-\varepsilon }\\
&\Longleftrightarrow &\text{the relation~(2) (b) for }i=1.
\end{eqnarray*}

For the relation~(2) (d$^\prime $),  we have
\begin{eqnarray*}
&&h_{2n-1}h_{2n}t_{2n-1,2n}=t_{2n-1,2n}h_{2n}h_{2n-1}\\
&\stackrel{\text{CONJ}}{\Longleftrightarrow }&rh_{2n-1}h_{2n}t_{2n-1,2n}r^{-1}=rt_{2n-1,2n}h_{2n}h_{2n-1}r^{-1}\\
&\stackrel{\text{(5) (b),(c),(d)}}{\Longleftrightarrow }&\underline{h_{2}h_{1}t_{3,4}}=\underline{t_{3,4}h_{1}h_{2}}\\
&\stackrel{\text{(2)(b)}}{\Longleftrightarrow }&t_{1,2}h_{2}h_{1}=h_{1}h_{2}t_{1,2}\\
&\Longleftrightarrow &\text{the relation~(2) (d) for }i=1.
\end{eqnarray*}
Hence the relation~(2) (d$^\prime $) is obtained from the relation~(2). 
By a similar argument, we can check that the relation~(2) (e$^\prime $) is obtained from the relations~(1) and (2). 
We have completed the proof of Lemma~\ref{tech_rel1}. 
\end{proof}

By Lemma~\ref{tech_rel1}, the relations~(1) (a$^\prime $), (c$^\prime $), (2) (a$^\prime $)--(e$^\prime $), and (4) (a$^\prime $) in Lemma~\ref{tech_rel1} are obtained from the relations in Theorems~\ref{thm_pres_lmod}. 
For conveniences, throughout this paper, we call the relations~(1) (a$^\prime $), (c$^\prime $), (2) (a$^\prime $)--(e$^\prime $), and (4) (a$^\prime $) in Lemma~\ref{tech_rel1} the relations~(1) (a), (c), (2) (a)--(e), and (4) (a), respectively.

Recall that $W_{2n+2}$ is the subgroup of the symmetric group $S_{2n+2}$ which consists of parity-preserving or parity-reversing elements, we define $\bar{h}_i=(i\ i+2)$ for $1\leq i\leq 2n$ and $\bar{r}=(1\ 2n+2)(2\ 2n+1)\cdots (n+1\ n+2)$ (see Sections~\ref{section_liftable-element} and \ref{section_relations_liftable-elements}). 
First we give a finite presentation for $W_{2n+2}$ which is slightly different from Ghaswala-Winarski's presentation in Lemma~4.2 of~\cite{Ghaswala-Winarski1}. 

\begin{lem}\label{pres_w_2n+2}
The group $W_{2n+2}$ admits the presentation with generators $\bar{h}_i$ for $1\leq i\leq 2n$ and $\bar{r}$, and the following defining relations: 
\begin{enumerate}
\item $\bar{h}_i^2=1$ \quad for $1\leq i\leq 2n$, 
\item $\bar{h}_i\bar{h}_{j}\bar{h}_i^{-1}\bar{h}_{j}^{-1}=1$ \quad for ``$j-i=1$ or $j-i\geq 3$'' and $1\leq i<j\leq 2n$, 
\item $\bar{h}_i\bar{h}_{i+2}\bar{h}_i\bar{h}_{i+2}^{-1}\bar{h}_i^{-1}\bar{h}_{i+2}^{-1}=1$ \quad for $1\leq i\leq 2n-2$, 
\item $\bar{r}^2=1$, 
\item $\bar{r}\bar{h}_i=\bar{h}_{2n-i+1}\bar{r}$ for $1\leq i\leq 2n$.
\end{enumerate}
\end{lem}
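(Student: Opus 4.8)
The plan is to apply Lemma~\ref{presentation_exact} to the extension~(\ref{exact1}),
$$1\longrightarrow S_{n+1}^o\times S_{n+1}^e\longrightarrow W_{2n+2}\stackrel{\pi }{\longrightarrow }\Z _2\longrightarrow 1,$$
taking $H=S_{n+1}^o\times S_{n+1}^e$ and $Q=\Z _2$. The input data are essentially forced. For $Q$ I use the presentation $\langle y\mid y^2\rangle$ and lift its generator to $\bar r$; for $H$ I use the presentation with generating set $\{\bar h_1,\dots ,\bar h_{2n}\}$ constructed below, with $\iota$ the inclusion (so $\iota (\bar h_i)=\bar h_i$). Once these are in place, relation~(A) of Lemma~\ref{presentation_exact} reproduces the defining relations of $H$, relation~(B) reads $\bar r^2=1$, and relation~(C) reads the conjugations $\bar r\bar h_i\bar r^{-1}=\bar h_{2n-i+1}$; these are exactly relations~(1)--(3), (4), and (5) of the statement.

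The key step is to exhibit the presentation of $H=S_{n+1}^o\times S_{n+1}^e$ with generators $\bar h_1,\dots ,\bar h_{2n}$ and defining relations~(1)--(3). Recall that $S_{n+1}^o$ is generated by $\bar h_1=(1\ 3),\ \bar h_3=(3\ 5),\dots ,\ \bar h_{2n-1}=(2n-1\ 2n+1)$ and $S_{n+1}^e$ by $\bar h_2=(2\ 4),\dots ,\ \bar h_{2n}=(2n\ 2n+2)$. Reading the odd-indexed generators as the standard Coxeter generators of $S_{n+1}$, the Coxeter relations become $\bar h_{2k-1}^2=1$, the braid relation $\bar h_{2k-1}\bar h_{2k+1}\bar h_{2k-1}=\bar h_{2k+1}\bar h_{2k-1}\bar h_{2k+1}$ for consecutive odd indices (index difference $2$), and commutativity $\bar h_{2k-1}\rightleftarrows \bar h_{2l-1}$ for nonconsecutive odd indices (index difference $\geq 4$); the same holds verbatim for the even-indexed generators. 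Finally, each odd-indexed generator permutes only odd-numbered points and each even-indexed one only even-numbered points, so any odd generator commutes with any even generator, the index difference of such a pair always being odd and hence equal to $1$ or at least $3$. Collecting these facts, braid relations occur exactly when the index difference is $2$ (relation~(3)), and commutations occur exactly when the index difference is $1$ or $\geq 3$ (relation~(2)), alongside $\bar h_i^2=1$ (relation~(1)). Since a direct product of finitely presented groups is presented by the union of the two generating sets, the two sets of relations, and the commutators between the generating sets, relations~(1)--(3) form a complete set of defining relations for $H$.

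It then remains to compute the two pieces of extension data. The chosen lift $\bar r=(1\ 2n+2)(2\ 2n+1)\cdots (n+1\ n+2)$ is a product of disjoint transpositions, hence an involution, so $\bar r^2=1$ in $W_{2n+2}$ and relation~(B) reads $\bar r^2=1$, i.e. relation~(4). For relation~(C), since $\bar r$ acts by $k\mapsto 2n+3-k$, conjugation gives
$$\bar r\bar h_i\bar r^{-1}=\bar r(i\ i+2)\bar r^{-1}=(2n+3-i\ \ 2n+1-i)=\bar h_{2n-i+1}\quad \text{for }1\leq i\leq 2n,$$
which is relation~(5). Feeding relations~(A), (B), (C) into Lemma~\ref{presentation_exact} yields precisely the presentation in the statement.

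The only genuinely delicate point is the bookkeeping in the second paragraph: one must check that the interleaved indexing of the two symmetric-group factors lines up so that ``index difference $2$'' captures exactly the braid pairs inside a single factor, while ``index difference $1$ or $\geq 3$'' captures exactly the remaining commuting pairs (both intra-factor, with difference $\geq 4$, and inter-factor, with odd difference), with no pair left uncovered and none doubly constrained. Everything else---the involutivity of $\bar r$ and the single conjugation computation---is routine, and the overall shape of the argument is dictated entirely by Lemma~\ref{presentation_exact}.
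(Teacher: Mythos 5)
Your proposal is correct and follows essentially the same route as the paper: both apply Lemma~\ref{presentation_exact} to the extension~(\ref{exact1}) with the standard presentation of $S_{n+1}^o\times S_{n+1}^e\cong S_{n+1}\times S_{n+1}$ on the interleaved generators $\bar h_i$ and the lift $\bar r$ of the generator of $\Z_2$. The only difference is that you spell out the index bookkeeping for the kernel presentation and the conjugation computation $\bar r\bar h_i\bar r^{-1}=\bar h_{2n-i+1}$, which the paper asserts without detail.
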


\begin{proof}
We will apply Lemma~\ref{presentation_exact} to the exact sequence~(\ref{exact1}). 
Since the subgroup $S_{n+1}^o\times S_{n+1}^e$ of $W_{2n+2}$ is isomorphic to $S_{n+1}\times S_{n+1}$, the group $S_{n+1}^o\times S_{n+1}^e$ has the presentation with generators $\bar{h}_i$ for $1\leq i\leq 2n$ and the following defining relations: 
\begin{itemize}
\item $\bar{h}_i^2=1$ \quad for $1\leq i\leq 2n$, 
\item $\bar{h}_i\bar{h}_{j}\bar{h}_i^{-1}\bar{h}_{j}^{-1}=1$ \quad for ``$j-i=1$ or $j-i\geq 3$'' and $1\leq i<j\leq 2n$, 
\item $\bar{h}_i\bar{h}_{i+2}\bar{h}_i\bar{h}_{i+2}^{-1}\bar{h}_i^{-1}\bar{h}_{i+2}^{-1}=1$ \quad for $1\leq i\leq 2n-2$. 
\end{itemize}
Since $\Z _2$ in the exact sequence~(\ref{exact1}) is generated by an element which is represented by a parity-reversing element, $\pi (\bar{r})$ generates the group $\Z _2$.  
By applying Lemma~\ref{presentation_exact} to the exact sequence~(\ref{exact1}) and presentations for $S_{n+1}^o\times S_{n+1}^e$ and $\Z _2$ above, we have the presentation for $W_{2n+2}$ whose generators are $\bar{h}_i$ for $1\leq i\leq 2n$ and $\bar{r}$ and the defining relations are as follows: 
\begin{enumerate}
\item[(A)] 
\begin{enumerate}
\item[(1)] $\bar{h}_i^2=1$ \quad for $1\leq i\leq 2n$, 
\item[(2)] $\bar{h}_i\bar{h}_{j}\bar{h}_i^{-1}\bar{h}_{j}^{-1}=1$ \quad for ``$j-i=1$ or $j-i\geq 3$'' and $1\leq i<j\leq 2n$, 
\item[(3)] $\bar{h}_i\bar{h}_{i+2}\bar{h}_i\bar{h}_{i+2}^{-1}\bar{h}_i^{-1}\bar{h}_{i+2}^{-1}=1$ \quad for $1\leq i\leq 2n-2$,
\end{enumerate}
\item[(B)] $\bar{r}^2=1$, 
\item[(C)] $\bar{r}\bar{h}_i\bar{r}^{-1}=\bar{h}_{2n-i+1}$ for $1\leq i\leq 2n$.
\end{enumerate}
This presentation is equivalent to the presentation in Lemma~\ref{pres_w_2n+2}. 
Therefore, we have completed the proof of Lemma~\ref{pres_w_2n+2}. 
\end{proof}

By Lemma~\ref{lem_W_ast}, we have $W_{2n+2;\ast }=S_{n+1}^o\times S_{n}^e$. 
Since $S_{n+1}^o\times S_{n}^e$ is isomorphic to $S_{n+1}\times S_{n}$, we have the following lemma. 

\begin{lem}\label{pres_w_2n+2ast}
For $n\geq 1$, $W_{2n+2;\ast }$ admits the presentation with generators $\bar{h}_i$ for $1\leq i\leq 2n-1$ and the following defining relations: 
\begin{enumerate}
\item $\bar{h}_i^2=1$ \quad for $1\leq i\leq 2n-1$, 
\item $\bar{h}_i\bar{h}_{j}\bar{h}_i^{-1}\bar{h}_{j}^{-1}=1$ \quad for ``$j-i=1$ or $j-i\geq 3$'' and $1\leq i<j\leq 2n-1$, 
\item $\bar{h}_i\bar{h}_{i+2}\bar{h}_i\bar{h}_{i+2}^{-1}\bar{h}_i^{-1}\bar{h}_{i+2}^{-1}=1$ \quad for $1\leq i\leq 2n-3$. 
\end{enumerate}
\end{lem}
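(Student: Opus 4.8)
The plan is to exploit the explicit identification $W_{2n+2;\ast }=S_{n+1}^o\times S_n^e\cong S_{n+1}\times S_n$ furnished by Lemma~\ref{lem_W_ast}, and to recognize the proposed relations as the standard presentation of this direct product. I would begin by recalling the Coxeter presentation of a symmetric group $S_m$ on adjacent-transposition generators $s_1,\dots ,s_{m-1}$, with relations $s_i^2=1$, the braid relation $s_is_{i+1}s_i=s_{i+1}s_is_{i+1}$, and the commuting relation $s_is_j=s_js_i$ for $|i-j|\geq 2$.

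Next I would split the generators $\bar{h}_1,\dots ,\bar{h}_{2n-1}$ according to the parity of their index. Under the identification $\bar{h}_{2k-1}=(2k-1\ 2k+1)$, the odd-indexed generators $\bar{h}_1,\bar{h}_3,\dots ,\bar{h}_{2n-1}$ are exactly the $n$ Coxeter generators of $S_{n+1}^o\cong S_{n+1}$ acting on $\{ p_1,p_3,\dots ,p_{2n+1}\}$; under $\bar{h}_{2k}=(2k\ 2k+2)$, the even-indexed generators $\bar{h}_2,\bar{h}_4,\dots ,\bar{h}_{2n-2}$ are the $n-1$ Coxeter generators of $S_n^e\cong S_n$ acting on $\{ p_2,p_4,\dots ,p_{2n}\}$. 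This is the same splitting used for $S_{n+1}^o\times S_{n+1}^e$ in the proof of Lemma~\ref{pres_w_2n+2}, now with the index range truncated at $2n-1$.

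The key bookkeeping step is to check that relations (1)--(3) are precisely the union of the two Coxeter presentations together with the relations making every odd-indexed generator commute with every even-indexed one. Here the parity of $j-i$ does all the work: two generators of the same parity have $j-i$ even, and they satisfy a braid relation exactly when $j-i=2$ (relation (3), adjacency inside one factor, using $\bar{h}_i^2=1$ to rewrite it as $\bar{h}_i\bar{h}_{i+2}\bar{h}_i=\bar{h}_{i+2}\bar{h}_i\bar{h}_{i+2}$) and commute exactly when $j-i\geq 4$ (relation (2)); two generators of opposite parity have $j-i$ odd, hence $j-i=1$ or $j-i\geq 3$, and in either case relation (2) makes them commute. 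Thus the relations sort exactly into the $S_{n+1}$-relations on the odd generators, the $S_n$-relations on the even generators, and the full set of cross-commuting relations, with none missing and none extra.

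Finally I would invoke the standard presentation of a direct product---obtainable, for instance, by applying Lemma~\ref{presentation_exact} to the split exact sequence $1\to S_{n+1}^o\to S_{n+1}^o\times S_n^e\to S_n^e\to 1$ with trivial conjugation, so that the relations of type (C) reduce exactly to the cross-commuting relations---to conclude that relations (1)--(3) define $S_{n+1}^o\times S_n^e=W_{2n+2;\ast }$. I do not expect any genuine obstacle here; the only point requiring care is the parity case-analysis of the previous paragraph, which guarantees that the proposed relation set coincides exactly with the direct-product relation set (and the small cases $n=1$, where relations (2) and (3) are vacuous and the presentation reduces to $\langle \bar{h}_1\mid \bar{h}_1^2\rangle\cong S_2$, should be recorded as a sanity check).
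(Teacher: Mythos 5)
Your proposal is correct and follows essentially the same route as the paper: the paper's proof consists precisely of invoking Lemma~\ref{lem_W_ast} to identify $W_{2n+2;\ast }=S_{n+1}^o\times S_n^e\cong S_{n+1}\times S_n$ and then asserting that relations (1)--(3) form the standard presentation of this direct product. The parity case-analysis you spell out (odd generators give the Coxeter presentation of $S_{n+1}$, even generators that of $S_n$, and the condition ``$j-i=1$ or $j-i\geq 3$'' captures exactly the in-factor commutations plus all cross-commutations) is exactly the bookkeeping the paper leaves implicit.
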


Applying Lemma~\ref{presentation_exact} to the exact sequence~(\ref{exact_lmodp}) in Proposition~\ref{prop_exact_lmodp}, we have the following proposition. 

\begin{prop}\label{prop_pres_lmodp} 
The group $\LMp $ admits the presentation with generators $h_i$ for $1\leq i\leq 2n-1$ and $t_{i,j}$ for $1\leq i\leq j\leq 2n+1$, and the following defining relations: 
\begin{enumerate}
\item commutative relations
\begin{enumerate}
\item $h_i \rightleftarrows h_j$ \quad for $j-i\geq 3$,
\item $t_{i,j} \rightleftarrows t_{k,l}$ \quad for $j<k$, $k\leq i<j\leq l$, or $l<i$, 
\item $h_k \rightleftarrows t_{i,j}$ \quad for $k+2<i$, $i\leq k<k+2\leq j$, or $j<k$,
\end{enumerate}
\item pentagonal relations $(P_{i,j,k,l,m})$ \quad for $1\leq i<j<k<l<m\leq 2n+2$,\\ 
i.e. $t_{j,m-1}^{-1}t_{k,m-1}t_{j,l-1}t_{i,k-1}t_{i,l-1}^{-1}=t_{i,l-1}^{-1}t_{i,k-1}t_{j,l-1}t_{k,m-1}t_{j,m-1}^{-1}$,
\item lifts of relations in the symmetric group
\begin{enumerate}
\item $h_i^2=t_{i,i+2}$ \quad for $1\leq i\leq 2n-1$, 
\item $h_ih_{i+1}h_i^{-1}h_{i+1}^{-1}=t_{i,i+1}t_{i+2,i+3}^{-1}$ \quad for $1\leq i\leq 2n-2$,
\item $h_ih_{i+2}h_ih_{i+2}^{-1}h_i^{-1}h_{i+2}^{-1}=t_{i+1,i+2}t_{i+2,i+3}^{-1}$ \quad for $1\leq i\leq 2n-3$,
\end{enumerate}
\item (a) $h_kt_{i,j}h_k^{-1}\\ =\left\{
		\begin{array}{ll}
		t_{i+1,j}t_{i-1,i}t_{i-2,j}t_{i-1,j}^{-1}t_{i-2,i}^{-1} &\text{for }k=i-2\text{ and }3\leq i<j\leq 2n+1,\\
		t_{i+2,j}t_{i-1,j}t_{i-1,i}t_{i+1,j}^{-1}t_{i-1,i+1}^{-1} &\text{for }k=i-1\text{ and }2\leq i<j-1\leq 2n,\\
		t_{i-1,i} &\text{for }k=i-1\text{ and }2\leq i=j-1\leq 2n,\\
		t_{i,j-1}t_{j,j+1}t_{i,j+2}t_{i,j+1}^{-1}t_{j,j+2}^{-1} &\text{for }k=j\text{ and }1\leq i<j\leq 2n-1,\\
		t_{i,j-2}t_{j,j+1}t_{i,j+1}t_{i,j-1}^{-1}t_{j-1,j+1}^{-1} &\text{for }k=j-1\text{ and }2\leq i+1<j\leq 2n,\\
		t_{j,j+1} &\text{for }k=j-1\text{ and }2\leq i+1=j\leq 2n,
\end{array}
		\right.$
\item $t_{i,i}=t_{1,2n+1}=1$ \quad for $1\leq i\leq 2n+1$. 
\end{enumerate}
\end{prop}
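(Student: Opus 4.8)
The plan is to apply Lemma~\ref{presentation_exact} to the short exact sequence~(\ref{exact_lmodp}) in Proposition~\ref{prop_exact_lmodp}, namely
\[
1\longrightarrow \PM \longrightarrow \LMp \stackrel{\Psi }{\longrightarrow }W_{2n+2;\ast }\longrightarrow 1,
\]
taking for the kernel the presentation of $\PM$ obtained from Proposition~\ref{prop_pres_pmod} with the number of marked points set equal to $2n+2$, and for the quotient the presentation of $W_{2n+2;\ast }$ from Lemma~\ref{pres_w_2n+2ast}. Thus the kernel generators are the $t_{i,j}$ for $1\leq i<j\leq 2n+1$ with $(i,j)\neq (1,2n+1)$, subject to the commutative relations and the pentagonal relations $(P_{i,j,k,l,m})$, while the quotient generators are $\bar{h}_i$ for $1\leq i\leq 2n-1$, subject to the relations listed in Lemma~\ref{pres_w_2n+2ast}. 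For the lifts required by Lemma~\ref{presentation_exact}, I would take $h_i\in \LMp$ as a preimage of $\bar{h}_i$, which is legitimate since $\Psi (h_i)=(i\ i+2)=\bar{h}_i$ for $1\leq i\leq 2n-1$ by Section~\ref{section_liftable-element}.

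The output of Lemma~\ref{presentation_exact} then has the three families of relations (A), (B), (C), and the bulk of the proof is to match them with the relations in the statement. The type~(A) relations are exactly the kernel relations, reproducing the commutative relations~(1)(b) and the pentagonal relations~(2). For the type~(B) relations, each defining relation $\bar{s}=1$ of $W_{2n+2;\ast }$ must be rewritten as $\widetilde{s}=v_s$ with $v_s$ a word in the $t_{i,j}$; here Lemma~\ref{lem_lift_W} supplies every value $v_s$. Concretely, $\bar{h}_i^2=1$ lifts to $h_i^2=t_{i,i+2}$ (relation~(3)(a)), the braid-type relation for $j-i=1$ lifts to $h_ih_{i+1}h_i^{-1}h_{i+1}^{-1}=t_{i,i+1}t_{i+2,i+3}^{-1}$ (relation~(3)(b)), the commuting relation for $j-i\geq 3$ lifts to $h_i\rightleftarrows h_j$ (relation~(1)(a)) because the supports are disjoint, and the last relation of $W_{2n+2;\ast }$ lifts to relation~(3)(c).

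The type~(C) relations are the conjugates $h_k t_{i,j} h_k^{-1}=w_{t_{i,j},\bar{h}_k}$, and these split geometrically. When $\gamma_{i,j}$ is disjoint from the support of the half-rotation $h_k$ — precisely the index ranges of relation~(1)(c) — the conjugate is again $t_{i,j}$ and one obtains the commutative relations~(1)(c); the remaining pairs are covered by relation~(4)(a). Finally, relation~(4)(b) records the normalisation $t_{1,2n+1}=1$ (the generator excluded in Proposition~\ref{prop_pres_pmod}) together with the trivial convention $t_{i,i}=1$, a relabelling that lets the generators be indexed over $1\leq i\leq j\leq 2n+1$. I expect the main obstacle to be the type~(C) computation~(4)(a): since $h_k t_{i,j}h_k^{-1}=t_{h_k(\gamma_{i,j})}$, one must identify the isotopy class of $h_k(\gamma_{i,j})$ in each of the six cases and rewrite the resulting Dehn twist as a product of standard twists by lantern-type manipulations, which is where essentially all the geometric content of the conjugation action is concentrated.
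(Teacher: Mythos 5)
Your proposal follows essentially the same route as the paper's own proof: Lemma~\ref{presentation_exact} applied to the exact sequence~(\ref{exact_lmodp}), with $\PM$ presented via Proposition~\ref{prop_pres_pmod}, $W_{2n+2;\ast}$ via Lemma~\ref{pres_w_2n+2ast}, the $h_i$ as lifts of $\bar{h}_i$, the type~(B) relations supplied by Lemmas~\ref{lem_comm_rel} and~\ref{lem_lift_W}, and the type~(C) conjugation relations $h_kt_{i,j}h_k^{-1}=t_{h_k(\gamma_{i,j})}$ identified case by case using lantern relations exactly as in the paper (Figure~\ref{fig_proof_pres_lantern_rel}); the only difference is bookkeeping, since the paper adjoins the generator $t_{1,2n+1}$ with the relation $t_{1,2n+1}=1$ to the kernel presentation before applying the lemma rather than as a final Tietze relabelling, which is immaterial.
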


\begin{proof}
By Proposition~\ref{prop_pres_pmod} and adding the generator $t_{1,2n+1}$ and the relation $t_{1,2n+1}=1$, $\PM $ admits the presentation with generators $t_{i,j}$ $(1\leq i<j\leq 2n+1)$ and the following defining relations: 
\begin{enumerate}
\item $t_{i,j} \rightleftarrows t_{k,l}$ \quad for $j<k$, $k\leq i<j\leq l$, or $l<i$, 
\item pentagonal relations $(P_{i,j,k,l,m})$ \quad for $1\leq i<j<k<l<m\leq 2n+1$, 
\item $t_{1,2n+1}=1$, 
\end{enumerate}
and by Lemma~\ref{pres_w_2n+2ast},  $W_{2n+2;\ast }$ admits the presentation with generators $\bar{h}_i$ $(1\leq i\leq 2n-1)$ and the following defining relations: 
\begin{enumerate}
\item $\bar{h}_i^2=1$ \quad for $1\leq i\leq 2n-1$, 
\item $\bar{h}_i\bar{h}_{j}\bar{h}_i^{-1}\bar{h}_{j}^{-1}=1$ \quad for ``$j-i=1$ or $j-i\geq 3$'' and $1\leq i<j\leq 2n-1$, 
\item $\bar{h}_i\bar{h}_{i+2}\bar{h}_i\bar{h}_{i+2}^{-1}\bar{h}_i^{-1}\bar{h}_{i+2}^{-1}=1$ \quad for $1\leq i\leq 2n-3$. 
\end{enumerate}
Since $\Psi (h_i)=\bar{h}_i$ for $1\leq i\leq 2n-1$ and the relations $h_i^2=t_{i,i+2}$ $(1\leq i\leq 2n-1)$, $h_i \rightleftarrows h_j$ $(j-i>3)$, $h_ih_{i+1}h_i^{-1}h_{i+1}^{-1}=t_{i,i+1}t_{i+2,i+3}^{-1}$ $(1\leq i\leq 2n-2)$, and $h_ih_{i+2}h_ih_{i+2}^{-1}h_i^{-1}h_{i+2}^{-1}=t_{i+1,i+2}t_{i+2,i+3}^{-1}$ $(1\leq i\leq 2n-3)$ hold in $\LMp $ by Lemma~\ref{lem_comm_rel} and~\ref{lem_lift_W}, applying Lemma~\ref{presentation_exact} to the exact sequence~(\ref{exact_lmodp}) in Proposition~\ref{prop_exact_lmodp}, we show that $\LMp $ has the presentation whose generators are $h_i$ $(1\leq i\leq 2n-1)$ and $t_{i,j}$ $(1\leq i\leq j\leq 2n+1)$ and defining relations are as follows:  
\begin{enumerate}
\item[(A)] 
\begin{enumerate}
\item $t_{i,j} \rightleftarrows t_{k,l}$ \quad for $j<k$, $k\leq i<j\leq l$, or $l<i$, 
\item pentagonal relations $(P_{i,j,k,l,m})$ \quad for $1\leq i<j<k<l<m\leq 2n+2$, 
\item $t_{1,2n+1}=1$, 
\end{enumerate}
\item[(B)]
\begin{enumerate}
\item $h_i^2=t_{i,i+2}$ \quad for $1\leq i\leq 2n-1$, 
\item $h_i \rightleftarrows h_j$ \quad for $j-i>3$,
\item $h_ih_{i+1}h_i^{-1}h_{i+1}^{-1}=t_{i,i+1}t_{i+2,i+3}^{-1}$ \quad for $1\leq i\leq 2n-2$,
\item $h_ih_{i+2}h_ih_{i+2}^{-1}h_i^{-1}h_{i+2}^{-1}=t_{i+1,i+2}t_{i+2,i+3}^{-1}$ \quad for $1\leq i\leq 2n-3$,
\end{enumerate}
\item[(C)] $h_kt_{i,j}h_k^{-1} =w_{i,j;k}$ \quad for $1\leq i<j\leq 2n+1$, $(i,j)\not= (1,2n+1)$, and $1\leq k\leq 2n-1$, 
\end{enumerate}
where $w_{i,j;k}$ is a word in $\{ t_{i,j}^{\varepsilon }\mid 1\leq i<j\leq 2n+1,\ (i,j)\not= (1,2n+1),\ \varepsilon \in \{ \pm 1\} \}$. 
The relations~(A) (a) and (B) (b) above coincide with the relations~(1) (b) and (a) in Proposition~\ref{prop_pres_lmodp}, the relation~(A) (b) above coincides with the relation~(2) in Proposition~\ref{prop_pres_lmodp}, and the relations~(B) (a), (c), and (d) above coincide with the relations~(3) (a), (b), and (c) in Proposition~\ref{prop_pres_lmodp}, respectively. 
We add generators $t_{i,i}$ $(1\leq i\leq 2n+1)$ and relations $t_{i,i}=1$ $(1\leq i\leq 2n+1)$ to this finite presentation for $\LMp$ above. 
The relations~(A) (c) above and $t_{i,i}=1$ for $1\leq i\leq 2n+1$ coincide with the relations~(5) in Proposition~\ref{prop_pres_lmodp}. 
It is sufficient for completing the proof of Proposition~\ref{prop_pres_lmodp} to prove that the relations~(C) above coincide with the relations~(1) (c) and (4) (a) in Proposition~\ref{prop_pres_lmodp}. 

In the cases of $k+2<i$, $i\leq k<k+2\leq j$, or $j<k$, the simple closed curve $\gamma _{i,j}$ is isotopic to a simple closed curve on $\Sigma _0-\B $ which is disjoint from the support of $h_k$. 
Hence, in these cases, we have $w_{i,j;k}=t_{i,j}$ and the relation~(C) coincides with the relation~(1) (c) in Proposition~\ref{prop_pres_lmodp}. 
In the case of $k=i-1$ and $i=j-1$, we have $h_{i-1}(\gamma _{i,i+1})=\gamma _{i-1,i}$. 
Hence, by the conjugation relation, we have $w_{i,i+1;i-1}=t_{i-1,i}$ and the relation~(C) above for $k=i-1$ and $i=j-1$ coincides with the relation~(4) (a) for $k=i-1$ and $i=j-1$ in Proposition~\ref{prop_pres_lmodp}. 
Similarly, in the case of $k=j-1$ and $i+1=j$, the relation~(C) above for $k=j-1$ and $i=j-1$ coincides with the relation~(4) (a) for $k=j-1$ and $i=j-1$ in Proposition~\ref{prop_pres_lmodp}. 

In the case of $k=i-2$, $h_k(\gamma _{i,j})$ is a simple closed curve on $\Sigma _0-\B $ as on the upper left-hand side in Figure~\ref{fig_proof_pres_lantern_rel} and we have $h_kt_{i,j}h_k^{-1}=t_{h_k(\gamma _{i,j})}$ by the conjugation relation. 
By the lantern relation (see on the upper left-hand side in Figure~\ref{fig_proof_pres_lantern_rel}), we have
\begin{eqnarray*}
&&h_kt_{i,j}h_k^{-1}\cdot t_{i-2,i}t_{i-1,j}=t_{i+1,j}t_{i-1,i}t_{i-2,j}\\
&\Leftrightarrow &h_kt_{i,j}h_k^{-1}=t_{i+1,j}t_{i-1,i}t_{i-2,j}t_{i-1,j}^{-1}t_{i-2,i}^{-1}. 
\end{eqnarray*}
Thus, in this case, we have $w_{i,j;k}=t_{i+1,j}t_{i-1,i}t_{i-2,j}t_{i-1,j}^{-1}t_{i-2,i}^{-1}$ and the relation~(C) above for $k=i-2$ coincides with the relation~(4) (a) for $k=i-2$ in Proposition~\ref{prop_pres_lmodp}. 

In the case of $k=i-1$ and $i<j-1$, $h_k(\gamma _{i,j})$ is a simple closed curve on $\Sigma _0-\B $ as on the upper right-hand side in Figure~\ref{fig_proof_pres_lantern_rel}. 
By the lantern relation and the conjugation relation (see on the upper right-hand side in Figure~\ref{fig_proof_pres_lantern_rel}), we have
\begin{eqnarray*}
&&h_kt_{i,j}h_k^{-1}\cdot t_{i-1,i+1}t_{i+1,j}=t_{i+2,j}t_{i-1,j}t_{i-1,i}\\
&\Leftrightarrow &h_kt_{i,j}h_k^{-1}=t_{i+2,j}t_{i-1,j}t_{i-1,i}t_{i+1,j}^{-1}t_{i-1,i+1}^{-1}. 
\end{eqnarray*}
Thus, in this case, we have $w_{i,j;k}=t_{i+2,j}t_{i-1,j}t_{i-1,i}t_{i+1,j}^{-1}t_{i-1,i+1}^{-1}$ and the relation~(C) above for $k=i-1$ and $i<j-1$ coincides with the relation~(4) (a) for $k=i-1$ and $i<j-1$ in Proposition~\ref{prop_pres_lmodp}. 

Similarly, by using the lantern relations as on the lower left-hand side for the case $k=j$ and the lower right-hand side for the case $k=j-1$ and $i+1<j$ in Figure~\ref{fig_proof_pres_lantern_rel}, we have $w_{i,j;k}=t_{i,j-1}t_{j,j+1}t_{i,j+2}t_{i,j+1}^{-1}t_{j,j+2}^{-1}$ for $k=j$ and $w_{i,j;k}=t_{i,j-2}t_{j,j+1}t_{i,j+1}t_{i,j-1}^{-1}t_{j-1,j+1}^{-1}$ for $k=j-1$ and $i+1<j$. 
These relations coincide with the relation~(4) (a) for the case $k=j-1$ and the case $k=j-1$ and $i+1<j$ in Proposition~\ref{prop_pres_lmodp}, respectively. 
Therefore $\LMp $ admits the presentation in Proposition~\ref{prop_pres_lmodp} and we have completed the proof of Proposition~\ref{prop_pres_lmodp}. 
\end{proof}

\begin{figure}[h]
\includegraphics[scale=0.76]{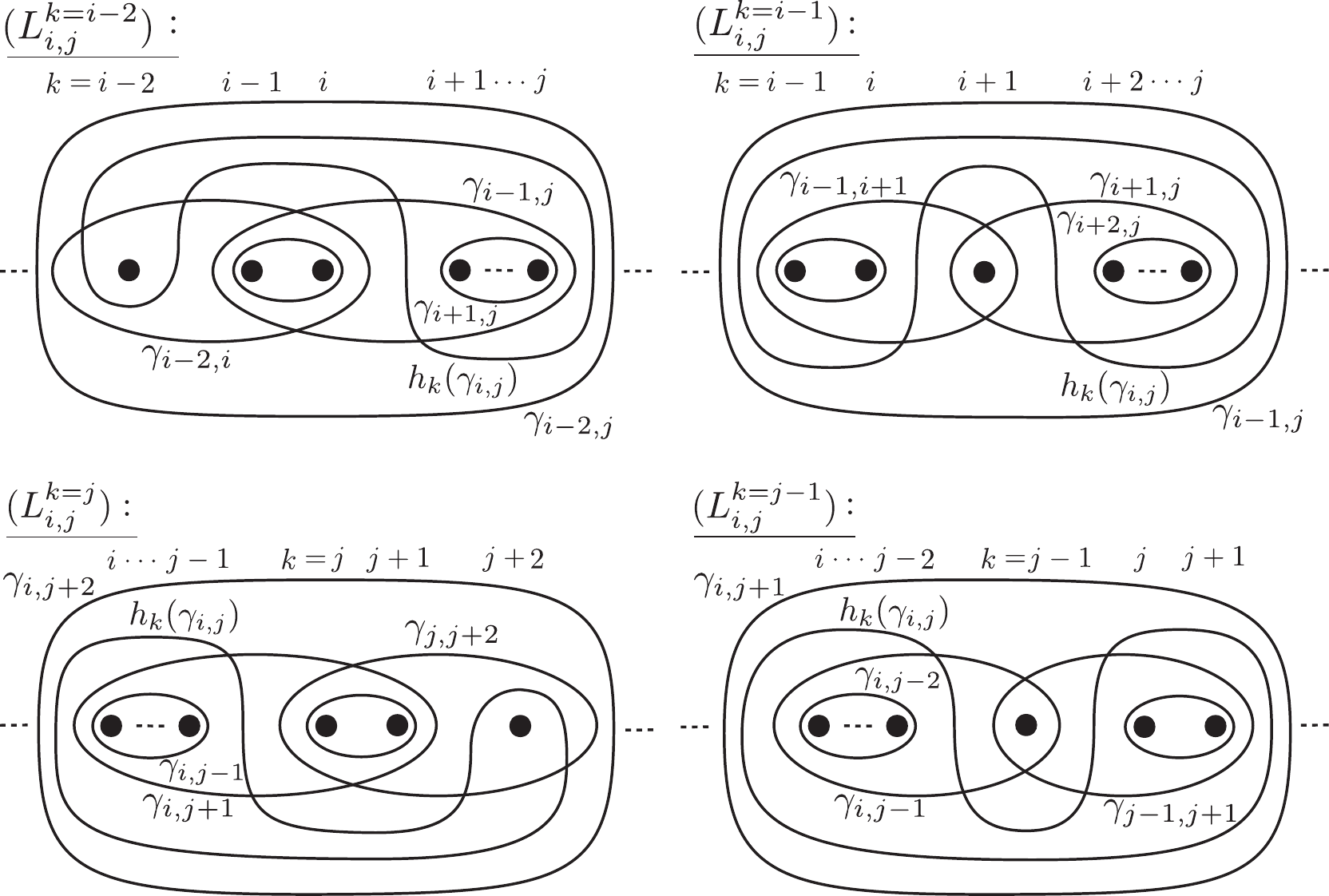}
\caption{Lantern relations~$(L_{i,j}^{k=i-2})$, $(L_{i,j}^{k=i-1})$, $(L_{i,j}^{k=j})$, and $(L_{i,j}^{k=j-1})$ in the proof of Proposition~\ref{prop_pres_lmodp}.}\label{fig_proof_pres_lantern_rel}
\end{figure}

Applying Lemma~\ref{presentation_exact} to the exact sequence~(\ref{exact2}), we have the following proposition. 

\begin{prop}\label{prop_pres_lmod} 
The group $\LM $ admits the presentation which is obtained from the finite presentation for $\LMp $ in Proposition~\ref{prop_pres_lmodp} by adding generators $h_{2n}$, $t_{1,0}$, and $r$, and the following relations:
\begin{enumerate}
\item[(3)] lifts of relations in the symmetric group
\begin{enumerate}
\item[(d)] $r^2=1$,  
\item[(e)] $rh_i=h_{2n-i+1}r$ \quad for $1\leq i\leq 2n$, 
\end{enumerate}
\item[(4)]
\begin{enumerate}
\item[(b)] $h_{2n}t_{i,j}h_{2n}^{-1}\\ =\left\{
		\begin{array}{ll}
		t_{i,2n-1}t_{2n,2n+1}t_{1,i-1}t_{i,2n+1}^{-1}t_{1,2n-1}^{-1}\\ \text{for }j=2n\text{ and }1\leq i\leq 2n-1,\\
		t_{i,2n-1}t_{1,2n}t_{1,i-1}t_{i,2n}^{-1}t_{1,2n-1}^{-1} \quad \text{for }j=2n+1\text{ and }1\leq i\leq 2n-1,\\
		t_{1,2n} \quad \text{for }i+1=j=2n+1,
\end{array}
		\right.$
\item[(c)] $rt_{i,j}r^{-1} =\left\{
		\begin{array}{ll}
		t_{2n-j+3,2n-i+3} &\text{for }2\leq i<j\leq 2n+1,\\
		t_{1,2n-j+2} &\text{for }i=1\text{ and }2\leq j\leq 2n, \\
\end{array}
		\right.$
\end{enumerate}
\item[(5)] $t_{1,0}=1$. 
\end{enumerate}
\end{prop}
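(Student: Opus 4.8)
The plan is to apply Lemma~\ref{presentation_exact} to the short exact sequence~(\ref{exact2}),
$1\longrightarrow \PM \stackrel{}{\longrightarrow }\LM \stackrel{\Psi }{\longrightarrow }W_{2n+2}\longrightarrow 1$,
taking for $H=\PM$ the presentation of Proposition~\ref{prop_pres_pmod} (augmented, by Tietze transformations, with the redundant generators $t_{i,i}$, $t_{1,2n+1}$, and $t_{1,0}$ together with the relations setting each of them equal to $1$) and for $Q=W_{2n+2}$ the presentation of Lemma~\ref{pres_w_2n+2}, whose generators are $\bar h_1,\dots ,\bar h_{2n}$ and $\bar r$. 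As preimages under $\Psi $ of these quotient generators I would choose the explicit liftable elements $h_1,\dots ,h_{2n}$ and $r$ of Section~\ref{section_liftable-element}, which satisfy $\Psi (h_i)=\bar h_i$ and $\Psi (r)=\bar r$. Lemma~\ref{presentation_exact} then produces a presentation of $\LM $ on the generating set $\{ t_{i,j}\} \cup \{ h_1,\dots ,h_{2n},r\} $ with relations of the three types (A), (B), (C), and the remaining work is to match these against the relations in the statement and against the presentation of $\LMp $ in Proposition~\ref{prop_pres_lmodp}.

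The type~(A) relations are exactly the defining relations of $\PM $, namely the commutative relations~(1)(b) and the pentagonal relations~(2), inherited verbatim from Proposition~\ref{prop_pres_lmodp}. The type~(B) relations require, for each defining relation of $W_{2n+2}$, a word in $\PM $ equal to the lift of that relation word; these are supplied by Lemmas~\ref{lem_lift_W}~and~\ref{lem_conj_rel}. Concretely, $\bar h_i^2=1$ lifts to $h_i^2=t_{i,i+2}$ (relation~(3)(a)), the commutation relations among the $\bar h_i$ lift to relations~(1)(a) and (3)(b),(c), the relation $\bar r^2=1$ lifts to $r^2=1$ (relation~(3)(d)), and $\bar r\bar h_i=\bar h_{2n-i+1}\bar r$ lifts to $rh_i=h_{2n-i+1}r$ (relation~(3)(e)), the $\PM $-correction word being trivial in the last two cases.

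The type~(C) relations record each conjugate $\widetilde y\, t_{i,j}\, \widetilde y^{-1}$ of a Dehn-twist generator by each $h_k$ $(1\le k\le 2n)$ and by $r$, re-expressed in the $t_{i,j}$. For $1\le k\le 2n-1$ these conjugation relations coincide with relations~(1)(c) and (4)(a) already present in Proposition~\ref{prop_pres_lmodp}, so no new content arises there. The conjugation by $r$ follows at once from Lemma~\ref{lem_conj_rel}, which gives $rt_{i,j}r^{-1}=t_{2n-j+3,\,2n-i+3}$; in the case $i=1$ the identity $t_{a,2n+2}=t_{1,a-1}$ turns this into $rt_{1,j}r^{-1}=t_{1,2n-j+2}$, so that the two cases of relation~(4)(c) are recovered.

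I expect the conjugation of the $t_{i,j}$ by $h_{2n}$ to be the main obstacle. Because $h_{2n}$ is the half-rotation about $p_{2n},p_{2n+1},p_{2n+2}$ and $p_{2n+2}$ is the distinguished branch point, the curves $h_{2n}(\gamma _{i,j})$ wind around $p_{2n+2}$ and cannot be read off directly; they must be rewritten using both the boundary identifications $t_{i,2n+2}=t_{1,i-1}$ (this is precisely where the bookkeeping generator $t_{1,0}=1$ is needed, to cover the degenerate case $i=1$) and lantern relations entirely analogous to those of Figure~\ref{fig_proof_pres_lantern_rel}, yielding the three cases of relation~(4)(b). Once all the (A), (B), (C) relations have been identified in this way, I would observe that the presentations of $\PM $ feeding into the two exact sequences are the same and that the presentation of $W_{2n+2;\ast }$ used for $\LMp $ differs from that of $W_{2n+2}$ only by the absence of $\bar h_{2n}$ and $\bar r$; consequently every relation occurring in Proposition~\ref{prop_pres_lmodp} reappears here, and the genuinely new data are exactly the generators $h_{2n},t_{1,0},r$ and the relations~(3)(d),(e), (4)(b),(c), and (5). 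This exhibits the presentation of $\LM $ as the one for $\LMp $ augmented by the listed generators and relations, completing the proof.
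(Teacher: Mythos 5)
Your overall strategy coincides with the paper's: apply Lemma~\ref{presentation_exact} to the sequence~(\ref{exact2}) with the presentation of $\PM $ from Proposition~\ref{prop_pres_pmod} and the presentation of $W_{2n+2}$ from Lemma~\ref{pres_w_2n+2}, choose the lifts $h_1,\dots ,h_{2n},r$, and treat the conjugations by $r$ via Lemma~\ref{lem_conj_rel} and by $h_{2n}$ via lantern relations. However, there is a genuine gap: Lemma~\ref{presentation_exact} necessarily outputs relations involving $h_{2n}$ that are \emph{not} in the target presentation, and you never show that they are redundant. Concretely, among the type~(B) relations are the lifts of the $W_{2n+2}$-relations involving $\bar h_{2n}$, namely $h_{2n}^2=t_{1,2n-1}$, the commutations $h_i\rightleftarrows h_{2n}$ for $i\leq 2n-3$, $h_{2n-1}h_{2n}h_{2n-1}^{-1}h_{2n}^{-1}=t_{2n-1,2n}t_{1,2n}^{-1}$, and $h_{2n-2}h_{2n}h_{2n-2}h_{2n}^{-1}h_{2n-2}^{-1}h_{2n}^{-1}=t_{2n-1,2n}t_{2n,2n+1}^{-1}$. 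None of these are instances of the relations~(3)(a), (1)(a), (3)(b), (3)(c) of Proposition~\ref{prop_pres_lmodp}, because the index ranges there stop at $2n-1$ (resp.\ $2n-2$, $2n-3$); your claim that the type~(B) relations lift to relations already present silently ignores the $\bar h_{2n}$ cases. Similarly, the type~(C) relations for $k=2n$ include the commutations $h_{2n}t_{i,j}h_{2n}^{-1}=t_{i,j}$ for $1\leq i<j\leq 2n-1$, which are not among the three cases of relation~(4)(b) and are not instances of relation~(1)(c) of Proposition~\ref{prop_pres_lmodp} (there $k\leq 2n-1$); your discussion of conjugation by $h_{2n}$ treats only the curves that actually move.

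Dropping these extra relations is a Tietze transformation, so one must prove they are consequences of the retained relations; this is exactly what the paper's Lemma~\ref{tech_rel1} and the corresponding passage in its proof do. The mechanism is conjugation by $r$: using the relations~(3)(e) and (4)(c), which you do keep, each $h_{2n}$-relation of type~(B) is carried to the corresponding relation among $h_1$, $h_2$ and low-index twists, which lies in the presentation of $\LMp $; and the commutations $h_{2n}\rightleftarrows t_{i,j}$ for $j\leq 2n-1$ are carried to instances of relation~(1)(c) with $k=1$. Without this step (or a substitute for it), your argument only establishes a presentation of $\LM $ with strictly more relations than the one asserted in the proposition, so the proof as written is incomplete.
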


\begin{proof}
By Lemma~\ref{pres_w_2n+2}, $W_{2n+2}$ admits the presentation with generators $\bar{h}_i$ $(1\leq i\leq 2n)$ and $\bar{r}$, and the following defining relations: 
\begin{enumerate}
\item $\bar{h}_i^2=1$ \quad for $1\leq i\leq 2n$, 
\item $\bar{h}_i\bar{h}_{j}\bar{h}_i^{-1}\bar{h}_{j}^{-1}=1$ \quad for ``$j-i=1$ or $j-i\geq 3$'' and $1\leq i<j\leq 2n$, 
\item $\bar{h}_i\bar{h}_{i+2}\bar{h}_i\bar{h}_{i+2}^{-1}\bar{h}_i^{-1}\bar{h}_{i+2}^{-1}=1$ \quad for $1\leq i\leq 2n-2$, 
\item $\bar{r}^2=1$, 
\item $\bar{r}\bar{h}_i=\bar{h}_{2n-i+1}\bar{r}$ for $1\leq i\leq 2n$.
\end{enumerate}
The relations $r^2=1$, $rh_i=h_{2n-i+1}r$ $(1\leq i\leq 2n)$, $rt_{i,j}r^{-1}=t_{2n-j+3,2n-i+3}$ $(1\leq i<j\leq 2n+1)$, and $t_{i,2n+2}=t_{1,i-1}$ $(3\leq i\leq 2n+1)$ hold in $\LM $ by Lemma~\ref{lem_conj_rel} and~\ref{lem_lift_W}. 
By applying Lemma~\ref{presentation_exact} to the exact sequence~(\ref{exact2}) and an argument in the proof of Proposition~\ref{prop_pres_lmodp}, we show that $\LM $ has the presentation whose generators are $h_i$ $(1\leq i\leq 2n)$, $r$, and $t_{i,j}$ $(1\leq i\leq j\leq 2n+1)$ and defining relations are as follows:  
\begin{enumerate}
\item[(A)] 
\begin{enumerate}
\item $t_{i,j} \rightleftarrows t_{k,l}$ \quad for $j<k$, $k\leq i<j\leq l$, or $l<i$, 
\item pentagonal relations $(P_{i,j,k,l,m})$ \quad for $1\leq i<j<k<l<m\leq 2n+2$, 
\item $t_{1,2n+1}=1$, 
\end{enumerate}
\item[(B)]
\begin{enumerate}
\item $h_i^2=t_{i,i+2}$ \quad for $1\leq i\leq 2n-1$, 
\item $h_{2n}^2=t_{1,2n-1}$, 
\item $h_i \rightleftarrows h_j$ \quad for $j-i>3$,
\item $h_ih_{i+1}h_i^{-1}h_{i+1}^{-1}=t_{i,i+1}t_{i+2,i+3}^{-1}$ \quad for $1\leq i\leq 2n-2$,
\item $h_{2n-1}h_{2n}h_{2n-1}^{-1}h_{2n}^{-1}=t_{2n-1,2n}t_{1,2n}^{-1}$, 
\item $h_ih_{i+2}h_ih_{i+2}^{-1}h_i^{-1}h_{i+2}^{-1}=t_{i+1,i+2}t_{i+2,i+3}^{-1}$ \quad for $1\leq i\leq 2n-2$,
\item $r^2=1$, 
\item $rh_i=h_{2n-i+1}r$ for $1\leq i\leq 2n$, 
\end{enumerate}
\item[(C)] 
\begin{enumerate}
\item $h_kt_{i,j}h_k^{-1} =w_{i,j;k}$ \quad for $1\leq i<j\leq 2n+1$, $(i,j)\not= (1,2n+1)$, and $1\leq k\leq 2n$, 
\item $rt_{i,j}r^{-1} =\left\{
		\begin{array}{ll}
		t_{2n-j+3,2n-i+3} &\text{for }2\leq i<j\leq 2n+1,\\
		t_{1,2n-j+2} &\text{for }i=1\text{ and }2\leq j\leq 2n, \\
\end{array}
		\right.$
\end{enumerate}
\end{enumerate}
where $w_{i,j;k}$ is a word in $\{ t_{i,j}^{\varepsilon }\mid 1\leq i<j\leq 2n+1,\ (i,j)\not= (1,2n+1),\ \varepsilon \in \{ \pm 1\} \}$. 

We add generators $t_{i,i}$ $(1\leq i\leq 2n+1)$ and $t_{1,0}$, and relations $t_{i,i}=t_{1,0}=1$ $(1\leq i\leq 2n+1)$ to this finite presentation for $\LM $ above (these relations are included in the relations~(5) in Proposition~\ref{prop_pres_lmodp} and (5) in Proposition~\ref{prop_pres_lmod}). 
Remark that for $k\leq 2n-1$, $w_{i,j;k}$ is calculated in the proof of Proposition~\ref{prop_pres_lmodp} and coincides with $t_{i,j}$ or a right side of the relations~(4) (a) in Proposition~\ref{prop_pres_lmodp}. 
The relations~(A) (a), (B) (c) for $j\leq 2n-1$, and (C) (a) for $k+2<i$, $i\leq k<k+2\leq j$, or $j<k\leq 2n-1$ above coincide with the relations~(1) (b), (a), and (c) in Proposition~\ref{prop_pres_lmodp}, the relation~(A) (b) above coincides with the relation~(2) in Proposition~\ref{prop_pres_lmodp}, the relations~(B) (a), (d), and (f) for $i\leq 2n-3$ above coincide with the relations~(3) (a), (b), and (c) in Proposition~\ref{prop_pres_lmodp}, the relations~(B) (g), (h), and (C) (b) above coincide with the relations~(3) (d), (e), and (4) (c) in Proposition~\ref{prop_pres_lmod}, the relation~(C) (a) for $k\in \{ i-2,\ i-1,\ j,\ j+1\}$ and $k\leq 2n-1$ coincides with the relation~(4) (a) in Proposition~\ref{prop_pres_lmodp}, and the relations~(A) (c) and $t_{i,i}=t_{1,0}=1$ for $1\leq i\leq 2n+1$ above coincide with the relations~(5) in Proposition~\ref{prop_pres_lmodp}, respectively. 

Since by Lemma~\ref{tech_rel1}, the relations~(B) (b), (c) for $j=2n$, (e), and (f) for $i=2n-2$ above are obtained from the relations~(B) (a), (c) for $j\leq 2n-1$, (d), (f) for $i\leq 2n-3$, (h), and (C) (b) above, we remove these relations from the presentation for $\LM $ above and the resulting presentation also gives a presentation for $\LM $. 
Hence, it is sufficient for completing the proof of Proposition~\ref{prop_pres_lmod} to prove that the relations~(C) (a) for $k=2n$ above are obtained from the relations in Proposition~\ref{prop_pres_lmodp} and \ref{prop_pres_lmod}. 

In the case of $j\leq 2n-1$, we have $h_{2n}t_{i,j}h_{2n}^{-1} =t_{i,j}$ and this relation is equivalent to the relation~(1) (c) for $k=1$ in Proposition~\ref{prop_pres_lmodp} up to the relations~(3) (e) and (4) (c) in Proposition~\ref{prop_pres_lmod}. 
In the case of $i=2n$ and $j=2n+1$, by the conjugation relation, we have $h_{2n}t_{2n,2n+1}h_{2n}^{-1} =t_{1,2n}$ and this relation coincides with the relation~(4) (b) for $i+1=j=2n+1$ in Proposition~\ref{prop_pres_lmod}. 

In the cases $j=2n$ or ``$j=2n+1$ and $i\leq 2n-1$'', by the lantern relations as on the lower side in Figure~\ref{fig_proof_pres_lantern_rel} and an argument as in the proof of Proposition~\ref{prop_pres_lmodp}, we have $h_{2n}t_{i,j}h_{2n}^{-1}=t_{i,2n-1}t_{2n,2n+1}t_{1,i-1}t_{i,2n+1}^{-1}t_{1,2n-1}^{-1}$ for $j=2n$ and $1\leq i\leq 2n-1$, and $h_{2n}t_{i,j}h_{2n}^{-1}=t_{i,2n-1}t_{1,2n}t_{1,i-1}t_{i,2n}^{-1}t_{1,2n-1}^{-1}$ for $j=2n+1$ and $1\leq i\leq 2n-1$. 
These relations coincide with the relations~(4) (b) for ``$j=2n$ and $1\leq i\leq 2n-1$'' or ``$j=2n+1$ and $1\leq i\leq 2n-1$'' in Proposition~\ref{prop_pres_lmod}.  
Therefore we have completed the proof of Proposition~\ref{prop_pres_lmod}. 
\end{proof}

For conveniences, we call the relation~(4) (a) in Proposition~\ref{prop_pres_lmodp} for $k=i-2$ and $3\leq i<j\leq 2n+1$ as on the upper left-hand side in Figure~\ref{fig_proof_pres_lantern_rel} the \textit{relation}~$(L_{i,j}^{k=i-2})$, for $k=i-1$ and $2\leq i<j-1\leq 2n$ as on the upper right-hand side in Figure~\ref{fig_proof_pres_lantern_rel} the \textit{relation}~$(L_{i,j}^{k=i-1})$, for $k=j$ and $1\leq i<j\leq 2n-1$ as on the lower left-hand side in Figure~\ref{fig_proof_pres_lantern_rel} the \textit{relation}~$(L_{i,j}^{k=j})$, and for $k=j-1$ and $2\leq i+1<j\leq 2n$ as on the lower right-hand side in Figure~\ref{fig_proof_pres_lantern_rel} the \textit{relation}~$(L_{i,j}^{k=j-1})$, respectively.  
Remark that ``$k=i-2$'', ``${k=i-1}$'', ``$k=j$'', and ``$k=j-1$''  in $(L_{i,j}^{k=i-2})$, $(L_{i,j}^{k=i-1})$, $(L_{i,j}^{k=j})$, and $(L_{i,j}^{k=j-1})$ are symbols and not variables, respectively. 
Similarly, we also call the relation~(4) (b) in Proposition~\ref{prop_pres_lmod} for $j=2n$ and $1\leq i\leq 2n-1$ the \textit{relation}~$(L_{i,2n}^{k=j})$ and for $j=2n+1$ and $1\leq i\leq 2n-1$ the \textit{relation}~$(L_{i,2n+1}^{k=j-1})$, respectively. 
As corollaries of Propositions~\ref{prop_pres_lmodp} and~\ref{prop_pres_lmod}, we have the following two corollaries. 

\begin{cor}\label{cor_pres_lmodp} 
The group $\LMp $ admits the presentation with generators $h_i$ for $1\leq i\leq 2n-1$ and $t_{i,j}$ for $1\leq i\leq j\leq 2n+1$, and the following defining relations: 
\begin{enumerate}
\item commutative relations
\begin{enumerate}
\item $h_i \rightleftarrows h_j$ \quad for $j-i\geq 3$,
\item $t_{i,j} \rightleftarrows t_{k,l}$ \quad for $j<k$, $k\leq i<j\leq l$, or $l<i$, 
\item $h_k \rightleftarrows t_{i,j}$ \quad for $k+2<i$, $i\leq k<k+2\leq j$, or $j<k$,
\end{enumerate}
\item relations among $h_i$'s and $t_{j,j+1}$'s: 
\begin{enumerate}
\item $h_i ^{\pm 1}t_{i,i+1}=t_{i+1,i+2}h_i^{\pm 1}$ \quad for $1\leq i\leq 2n-1$, 
\item $h_i ^{\varepsilon }h_{i+1}^{\varepsilon }t_{i,i+1}=t_{i+2,i+3}h_i^{\varepsilon }h_{i+1}^{\varepsilon }$ \quad for $1\leq i\leq 2n-2$ and $\varepsilon \in \{ 1, -1\}$,
\item $h_{i}^{\varepsilon }h_{i+1}^{\varepsilon }h_{i+2}^{\varepsilon }h_{i}^{\varepsilon }=h_{i+2}^{\varepsilon }h_{i}^{\varepsilon }h_{i+1}^{\varepsilon }h_{i+2}^{\varepsilon }$ \quad for $1\leq i\leq 2n-3$ and $\varepsilon \in \{ 1, -1\}$,
\item $h_ih_{i+1}t_{i,i+1}=t_{i,i+1}h_{i+1}h_i$ \quad for $1\leq i\leq 2n-2$,
\item $h_ih_{i+2}h_it_{i+1,i+2}^{-1}=t_{i+2,i+3}^{-1}h_{i+2}h_ih_{i+2}$ \quad for $1\leq i\leq 2n-3$,
\end{enumerate}
\item pentagonal relations $(P_{i,j,k,l,m})$ \quad for $1\leq i<j<k<l<m\leq 2n+2$,\\ 
i.e. $t_{j,m-1}^{-1}t_{k,m-1}t_{j,l-1}t_{i,k-1}t_{i,l-1}^{-1}=t_{i,l-1}^{-1}t_{i,k-1}t_{j,l-1}t_{k,m-1}t_{j,m-1}^{-1}$,
\item 
\begin{enumerate} 
\item $h_i^2=t_{i,i+2}$ \quad for $1\leq i\leq 2n-1$, 
\item the relations~$(L_{i,j}^{k=i-2})$\quad for $3\leq i<j\leq 2n+1$, \\ i.e. $h_{i-2}t_{i,j}h_{i-2}^{-1}=t_{i+1,j}t_{i-1,i}t_{i-2,j}t_{i-1,j}^{-1}t_{i-2,i}^{-1}$, 
\item the relations~$(L_{i,j}^{k=i-1})$\quad for $2\leq i<j-1\leq 2n$, \\ i.e. $h_{i-1}t_{i,j}h_{i-1}^{-1}=t_{i+2,j}t_{i-1,j}t_{i-1,i}t_{i+1,j}^{-1}t_{i-1,i+1}^{-1}$, 
\item the relations~$(L_{i,j}^{k=j})$\quad for $1\leq i<j\leq 2n-1$, \\ i.e. $h_{j}t_{i,j}h_{j}^{-1}=t_{i,j-1}t_{j,j+1}t_{i,j+2}t_{i,j+1}^{-1}t_{j,j+2}^{-1}$,
\item the relations~$(L_{i,j}^{k=j-1})$\quad for $2\leq i+1<j\leq 2n$, \\ i.e. $h_{j-1}t_{i,j}h_{j-1}^{-1}=t_{i,j-2}t_{j,j+1}t_{i,j+1}t_{i,j-1}^{-1}t_{j-1,j+1}^{-1}$, 
\item $t_{i,i}=t_{1,2n+1}=1$ \quad for $1\leq i\leq 2n+1$. 
\end{enumerate}
\end{enumerate}
\end{cor}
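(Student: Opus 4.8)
The plan is to derive Corollary~\ref{cor_pres_lmodp} from Proposition~\ref{prop_pres_lmodp} by a Tietze argument. Both presentations have the same generating set $\{h_i\mid 1\le i\le 2n-1\}\cup\{t_{i,j}\mid 1\le i\le j\le 2n+1\}$, so it suffices to show that their two sets of defining relations generate the same normal subgroup of the free group on these generators. First I would observe that the Corollary's relations~(1), (3), (4)(a), and (4)(f) are word-for-word the Proposition's relations~(1), (2), (3)(a), and (5), and that the Proposition's conjugation relations~(4)(a) split, after relabelling $k$, precisely into the two families listed in the Corollary: the degenerate cases $k=i-1,\ i=j-1$ and $k=j-1,\ i+1=j$ give $h_kt_{k+1,k+2}h_k^{-1}=t_{k,k+1}$ and $h_kt_{k,k+1}h_k^{-1}=t_{k+1,k+2}$, which together are exactly relation~(2)(a), while the four nondegenerate cases are exactly the lantern relations~(4)(b)--(e). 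Thus the only genuine discrepancy between the two presentations is that the Proposition carries relations~(3)(b) and~(3)(c) where the Corollary carries~(2)(b)--(e).

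For the inclusion that every relation of the Corollary lies in the normal closure of the Proposition's relations, I would argue uniformly: each Corollary relation holds in $\LMp$ --- the commutative relations by Lemma~\ref{lem_comm_rel}, the relations~(2)(a)--(c) by Lemma~\ref{lem_conj_rel}, the relations~(2)(d),(2)(e) by the equivalences recorded after Lemma~\ref{lem_lift_W}, and the pentagonal, lantern, and trivial-curve relations by the corresponding identities. Since Proposition~\ref{prop_pres_lmodp} already identifies the group presented by its relations with $\LMp$, any word relation valid in $\LMp$ lies in the normal closure of those relations, so this inclusion is immediate and requires no explicit rewriting.

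For the reverse inclusion I must recover~(3)(b) and~(3)(c) from the Corollary's relations, since here I may not invoke that the Corollary presents $\LMp$. Relation~(3)(b) comes cleanly: combining~(2)(b) at $\varepsilon=1$, that is $h_ih_{i+1}t_{i,i+1}=t_{i+2,i+3}h_ih_{i+1}$, with~(2)(d) gives $t_{i+2,i+3}h_ih_{i+1}=t_{i,i+1}h_{i+1}h_i$, hence $h_ih_{i+1}h_i^{-1}h_{i+1}^{-1}=t_{i+2,i+3}^{-1}t_{i,i+1}$, and the commutative relation~(1)(b) rewrites the right-hand side as $t_{i,i+1}t_{i+2,i+3}^{-1}$, which is~(3)(b). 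Relation~(3)(c) is the delicate point and the main obstacle: writing both~(3)(c) and~(2)(e) in the normalized form $h_ih_{i+2}h_i=(\text{word in }t)\,h_{i+2}h_ih_{i+2}$, their equivalence reduces to the single identity $v\,t_{i+1,i+2}\,v^{-1}=t_{i+2,i+3}\,t_{i+1,i+2}\,t_{i+2,i+3}^{-1}$ with $v=h_{i+2}h_ih_{i+2}$, which must be established from the conjugation relations~(2)(a),(2)(b) and the commutative relations (as asserted after Lemma~\ref{lem_lift_W} and after Theorem~\ref{thm_pres_lmodb}). Unlike the $h_i$--$h_{i+1}$ case, here the twists $t_{i+1,i+2}$ and $t_{i+2,i+3}$ do not commute, so this reduction requires genuine bookkeeping rather than a one-line cancellation; carrying it out is the crux of the argument. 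Once~(3)(c) is so derived, the two relation sets have equal normal closure and Corollary~\ref{cor_pres_lmodp} follows.
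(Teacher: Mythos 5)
Your strategy coincides with the paper's: both reduce the corollary to the fact that, modulo the shared relations, Proposition~\ref{prop_pres_lmodp}'s relations (3)(b),(c) and Corollary~\ref{cor_pres_lmodp}'s relations (2)(d),(e) have the same normal closure (the paper phrases this as a Tietze move: first add the corollary's relations~(2), which hold in $\LMp $ by Lemmas~\ref{lem_conj_rel} and~\ref{lem_lift_W}, then delete (3)(b),(c)). Your bookkeeping of the coinciding relations, including the splitting of the proposition's relation~(4)(a) into the degenerate cases (which give relation~(2)(a)) and the four lantern families (4)(b)--(e), is exactly the paper's, and your derivation of (3)(b) from (2)(b), (2)(d), and (1)(b) is correct.

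The gap is the step you yourself flag: you reduce the equivalence of (3)(c) and (2)(e) to the identity $vt_{i+1,i+2}v^{-1}=t_{i+2,i+3}t_{i+1,i+2}t_{i+2,i+3}^{-1}$ with $v=h_{i+2}h_ih_{i+2}$, but never establish it, so the proposal as written does not prove the corollary. The fix is short once one conjugates on the other side: set $u=h_ih_{i+2}h_i$ and show that $u$ commutes with $t_{i+1,i+2}$. Indeed, relation~(2)(a) gives both $h_it_{i+1,i+2}=t_{i,i+1}h_i$ and $h_it_{i,i+1}=t_{i+1,i+2}h_i$, and relation~(1)(c) gives $h_{i+2}\rightleftarrows t_{i,i+1}$, so
\[
ut_{i+1,i+2}=h_ih_{i+2}(h_it_{i+1,i+2})=h_ih_{i+2}t_{i,i+1}h_i=(h_it_{i,i+1})h_{i+2}h_i=t_{i+1,i+2}u.
\]
Now write (2)(e) as $ut_{i+1,i+2}^{-1}=t_{i+2,i+3}^{-1}v$; the commutation turns the left-hand side into $t_{i+1,i+2}^{-1}u$, whence $u=t_{i+1,i+2}t_{i+2,i+3}^{-1}v$, which is exactly (3)(c), and your identity for $v$ then follows as well. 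This three-line computation (it is the content of the remark after Lemma~\ref{lem_lift_W} and of Lemma~\ref{tech_rel0}, which the paper's proof invokes) closes the gap; in particular the ``genuine bookkeeping'' you anticipated, caused by the non-commutativity of $t_{i+1,i+2}$ and $t_{i+2,i+3}$, is unnecessary, and with it the two relation sets have equal normal closure and the corollary follows.
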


Remark that the relations~(1), (2), and (3) in Corollary~\ref{cor_pres_lmodp} coincide with the relations~(1), (2), and (3) in Theorem~\ref{thm_pres_lmodp}. 

\begin{cor}\label{cor_pres_lmod} 
The group $\LM $ admits the presentation which is obtained from the finite presentation for $\LMp $ in Corollary~\ref{cor_pres_lmodp} by adding generators $h_{2n}$, $t_{1,0}$, and $r$, and the following relations:
\begin{enumerate}
\item[(4)]
\begin{enumerate}
\item[(g)] the relations~$(L_{i,2n}^{k=j})$ \quad for $1\leq i\leq 2n-1$, \\ i.e. $h_{2n}t_{i,2n}h_{2n}^{-1}=t_{i,2n-1}t_{2n,2n+1}t_{1,i-1}t_{i,2n+1}^{-1}t_{1,2n-1}^{-1}$, 
\item[(h)] the relations~$(L_{i,2n+1}^{k=j-1})$ \quad for $1\leq i\leq 2n-1$, \\ i.e. $h_{2n}t_{i,2n+1}h_{2n}^{-1}=t_{i,2n-1}t_{1,2n}t_{1,i-1}t_{i,2n}^{-1}t_{1,2n-1}^{-1}$, 
\item[(i)] $h_{2n}t_{2n,2n+1}h_{2n}^{-1}=t_{1,2n}$, 
\item[(j)] $t_{1,0}=1$, 
\end{enumerate}
\item[(5)] relations among $r$ and $h_i$'s or $t_{i,j}$'s: 
\begin{enumerate}
\item $r^2=1$, 
\item $rh_i=h_{2n-i+1}r$ \quad for $1\leq i\leq 2n$, 
\item $rt_{i,j}=t_{2n-j+3,2n-i+3}r$ \quad for $2\leq i<j\leq 2n+1$, 
\item $rt_{1,j}=t_{1,2n-j+2}r$ \quad for $2\leq j\leq 2n$. 
\end{enumerate}
\end{enumerate}
\end{cor}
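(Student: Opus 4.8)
The plan is to deduce the presentation directly from Proposition~\ref{prop_pres_lmod}, using Corollary~\ref{cor_pres_lmodp} to rewrite the $\LMp$-part, so that the whole argument reduces to matching two lists of relations. Both Corollary~\ref{cor_pres_lmod} and Proposition~\ref{prop_pres_lmod} present $\LM$ on the same generating set, namely the generators $h_i$ $(1\leq i\leq 2n-1)$ and $t_{i,j}$ $(1\leq i\leq j\leq 2n+1)$ of the $\LMp$-presentation together with the three extra generators $h_{2n}$, $t_{1,0}$, and $r$. By Corollary~\ref{cor_pres_lmodp} and Proposition~\ref{prop_pres_lmodp}, the two $\LMp$-presentations built on the first block of generators are Tietze-equivalent; since the transformations realizing this equivalence involve only the indices $i\leq 2n-1$ and so never touch $h_{2n}$, $t_{1,0}$, or $r$, they may be carried out verbatim inside the larger presentation. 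Thus it suffices to check that the relations added in Corollary~\ref{cor_pres_lmod} on top of Corollary~\ref{cor_pres_lmodp} have the same normal closure as the relations added in Proposition~\ref{prop_pres_lmod} on top of Proposition~\ref{prop_pres_lmodp}.

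First I would record the one-to-one correspondence of the added relations. The $r$-relations match directly: relation~(5)(a) is relation~(3)(d), and (5)(b) is relation~(3)(e) of Proposition~\ref{prop_pres_lmod}, while the conjugation relations (5)(c) and (5)(d) are precisely the two cases of relation~(4)(c), rewritten in the equivalent form $r t_{i,j}=(\cdots)r$. The $h_{2n}$-conjugation relations (4)(g), (4)(h), and (4)(i) are exactly the three cases ($j=2n$; $j=2n+1$ with $i\leq 2n-1$; and $i+1=j=2n+1$) of relation~(4)(b), expressed through the lantern-relation labels $(L_{i,2n}^{k=j})$ and $(L_{i,2n+1}^{k=j-1})$ introduced after Proposition~\ref{prop_pres_lmod}. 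Finally (4)(j) is relation~(5). Hence the two added relation sets coincide term by term.

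The one genuinely delicate point --- and the step I expect to be the main obstacle --- is that Proposition~\ref{prop_pres_lmod}'s relation set, as used to present $\LM$, does not literally list the $h_{2n}$-analogues of the $\LMp$-relations (such as $h_{2n}^2=t_{1,2n-1}$, the commutative relations $h_{2n}\rightleftarrows h_j$ and $h_{2n}\rightleftarrows t_{i,j}$, and the conjugation relations involving $h_{2n}$), because these were eliminated in the proof of Proposition~\ref{prop_pres_lmod} via Lemma~\ref{tech_rel1}. I would therefore have to confirm that in the presentation of Corollary~\ref{cor_pres_lmod} exactly the same $h_{2n}$-relations are recoverable: Lemma~\ref{tech_rel1} produces each of them as the $r$-conjugate of the corresponding relation on indices $\leq 2n-1$, and every ingredient it uses --- the base relations (1), (2), (4)(a) on indices $\leq 2n-1$ together with the block (5) --- is present in Corollary~\ref{cor_pres_lmod}. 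Checking that this recovery goes through without circularity, and that no relation of either presentation is left unaccounted for, is the bulk of the bookkeeping.

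Assembling these observations, the presentation of Corollary~\ref{cor_pres_lmod} and that of Proposition~\ref{prop_pres_lmod} differ only by Tietze transformations, so they present the same group; since the latter is $\LM$, this completes the argument.
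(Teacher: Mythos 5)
Your proposal is correct and follows essentially the same route as the paper: the paper's proof of Corollary~\ref{cor_pres_lmod} likewise starts from Proposition~\ref{prop_pres_lmod}, adds the relations~(2) of Corollary~\ref{cor_pres_lmodp} (which hold in $\LM $ by Lemmas~\ref{lem_conj_rel} and~\ref{lem_lift_W}), removes the now-redundant relations~(3)~(b) and (c) of Proposition~\ref{prop_pres_lmodp} by the argument in the proof of Corollary~\ref{cor_pres_lmodp}, and matches the remaining relations term by term exactly as you list them. Your ``delicate point'' about re-deriving the $h_{2n}$-analogue relations via Lemma~\ref{tech_rel1} is superfluous: those relations belong to neither presentation, so once the two relation sets are seen to be Tietze-equivalent nothing further needs to be recovered.
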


Remark that the relations (5) (a), (b), and (d) in Corollary~\ref{cor_pres_lmod} coincide with the relations~(5) (a), (b), and (d) in Theorem~\ref{thm_pres_lmod} and the relation (5) (c) in Theorem~\ref{thm_pres_lmod} coincides with the relation~(5) (c) in Corollary~\ref{cor_pres_lmod} for the case $j=i+1$. 

\begin{proof}[Proof of Corollary~\ref{cor_pres_lmodp}]
We consider the finite presentation which is obtained from the finite presentation for $\LMp $ in Proposition~\ref{prop_pres_lmodp} by adding the relation~(2) in Corollary~\ref{cor_pres_lmodp}. 
Since $\LMp $ admits the relations~(2) in Corollary~\ref{cor_pres_lmodp} by Lemma~\ref{lem_conj_rel} and \ref{lem_lift_W}, the group which is obtained from this finite presentation is also isomorphic to $\LMp $. 
The generating set of the presentation in Proposition~\ref{prop_pres_lmodp} coincides with one of  in Corollary~\ref{cor_pres_lmodp} and the relations~(1), (2), (3) (a), and (5) in Proposition~\ref{prop_pres_lmodp} coincide with the relations~(1), (3), (4) (a), and (f) in Corollary~\ref{cor_pres_lmodp}. 
The relations~(3) (b) and (c) in Proposition~\ref{prop_pres_lmodp} are equivqlent to the relations~(2) (d) and (e) in Corollary~\ref{cor_pres_lmodp} up to the relations~(1), (2) (a), and (b) in Corollary~\ref{cor_pres_lmodp}. 
The relation~(4) (a) for the case ``$k=i-1$ and $2\leq i=j-1\leq 2n$'' or ``$k=j-1$ and $2\leq i+1=j\leq 2n$'' in Proposition~\ref{prop_pres_lmodp} coincides with the relation~(2) (a) in Corollary~\ref{cor_pres_lmodp} and ones of other cases coincide with the relations~(4) (b)--(e) in Corollary~\ref{cor_pres_lmodp} (see before Corollary~\ref{cor_pres_lmodp}). 
Thus, by Teitze transformations, $\LMp $ has the presentation which is obtained from the presentation considered in the top of this proof by removing the relations~(3) (b) and (c) in Proposition~\ref{prop_pres_lmodp}, and this presentation coincides with the presentation in Corollary~\ref{cor_pres_lmodp}. 
Therefore, we have completed the proof of Corollary~\ref{cor_pres_lmodp}. 
\end{proof}

\begin{proof}[Proof of Corollary~\ref{cor_pres_lmod}]
We consider the finite presentation which is obtained from the finite presentation for $\LM $ in Proposition~\ref{prop_pres_lmod} by adding the relation~(2) in Corollary~\ref{cor_pres_lmod}. 
The group which is obtained from this finite presentation is isomorphic to $\LM $. 
The generating set of the presentation in Proposition~\ref{prop_pres_lmod} coincides with one of  in Corollary~\ref{cor_pres_lmod} and the relations~(3) (d), (e), (4) (b), (c), and (5) in Proposition~\ref{prop_pres_lmod} coincide with the relations~(5) (a), (b), (4) (g)--(i), (c) and (d), and (4) (j) in Corollary~\ref{cor_pres_lmod}. 
Thus, by Teitze transformations and an argument in the proof of Corollary~\ref{cor_pres_lmodp}, $\LMp $ has the presentation which is obtained from the presentation considered in the top of this proof by removing the relations~(3) (b) and (c) in Proposition~\ref{prop_pres_lmod}, and this presentation coincides with the presentation in Corollary~\ref{cor_pres_lmod}. 
Therefore, we have completed the proof of Corollary~\ref{cor_pres_lmod}. 
\end{proof}

Recall that the relation~$(T_{i,j})$ for $1\leq i<j\leq 2n+2$ is reviewed in Lemma~\ref{lem_t_ij-braid}. 
The relation~(4) (a) in Theorem~\ref{thm_pres_lmodb} is the relation~$(T_{i,j})$ for $1\leq i<j\leq 2n+1$ and the relations~(4) (b) and (c) in Theorem~\ref{thm_pres_lmod} are obtained from the relations~$(T_{1,2n+1})$, $(T_{1,2n+2})$, and $t_{1,2n+1}=t_{1,2n+2}=1$. 
The following four lemmas are proved in Section~\ref{section_proof_t_ij_pres}. 

\begin{lem}\label{lem_t_ij_pres1}
\begin{enumerate}
\item The relation~$(L_{i,j}^{k=j})$ for $1\leq i<j\leq 2n-1$ is equivalent to the relation~$(T_{i,j+2})$ up to the relations~(1) and (2)  in Theorem~\ref{thm_pres_lmodb}, $(T_{i^\prime ,j^\prime })$ for $i\leq i^\prime <j^\prime \leq j+2$ and $j^\prime -i^\prime <j-i+2$, and $t_{l,l}=1$ for $1\leq l\leq 2n+1$. 
\item The relation~$(L_{1,2n}^{k=j})$ is equivalent to the relation
\[
t_{1,2n}^{-n+1}t_{2n-1,2n}^{-n+1}\cdots t_{3,4}^{-n+1}t_{1,2}^{-n+1}(h_{2n}\cdots h_{2}h_{1})^{n+1}=1
\]
up to the relations~(1) and (2) in Theorem~\ref{thm_pres_lmod}, $(T_{i^\prime ,j^\prime })$ for $1\leq i^\prime <j^\prime \leq 2n+1$ and $j^\prime -i^\prime <2n+1$, and $t_{l,l}=t_{1,0}=1$ for $1\leq l\leq 2n-1$. 
\end{enumerate}
\end{lem}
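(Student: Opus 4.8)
The plan is to read $(L_{i,j}^{k=j})$ as a defining equation for the single Dehn twist of largest span and to match the outcome against the word prescribed by $(T_{i,j+2})$. Concretely, I would start from
\[
h_j t_{i,j} h_j^{-1}=t_{i,j-1}t_{j,j+1}t_{i,j+2}t_{i,j+1}^{-1}t_{j,j+2}^{-1}
\]
and solve for $t_{i,j+2}$, getting
\[
t_{i,j+2}=t_{j,j+1}^{-1}t_{i,j-1}^{-1}\,(h_j t_{i,j} h_j^{-1})\,t_{j,j+2}\,t_{i,j+1}.
\]
Every curve occurring on the right — $t_{i,j}$, $t_{i,j-1}$, $t_{i,j+1}$, the generator $t_{j,j+1}$, and $t_{j,j+2}=h_j^2$ — has span strictly less than $j+2-i$, so the relations $(T_{i',j'})$ with $j'-i'<j-i+2$ allow me to replace each by its braid expression in the $h_k$ and $t_{l,l+1}$. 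The objective is then to show, using only the commutative relations (1) and the $h$–$t$ relations (2) of Theorem~\ref{thm_pres_lmodb}, that this substituted right-hand side collapses to exactly the braid word of $(T_{i,j+2})$; since each step is an equality modulo the fixed relation set, reversing the derivation yields the claimed equivalence in both directions.

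The heart of the argument is evaluating the conjugate $h_j t_{i,j}h_j^{-1}$ after substituting $(T_{i,j})$, whose braid word is a $t$-prefactor times a power of $h_{j-2}\cdots h_{i+1}h_i$. I would push $h_j$ inward: by relation (1)(a) it commutes with every $h_k$ having $k\le j-3$, and by relation (1)(c) it commutes with every $t_{l,l+1}$ whose support avoids the indices $j,j+1,j+2$, so the only genuine interactions are with $h_{j-2}$ and the topmost $t$-factors. These I would resolve with the conjugation/braid relations (2)(a)–(e) of Theorem~\ref{thm_pres_lmodb}, the crucial one being the distance-two relation (2)(e) controlling the pair $h_j,h_{j-2}$. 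The effect should be to turn the braid word of $t_{i,j}$ into a word one step longer, shifting the top of the $h$-product from $h_{j-2}$ up to $h_j$ and promoting the $t$-prefactor exponents from those of $(T_{i,j})$ to those of $(T_{i,j+2})$.

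For part (2) the same derivation applies verbatim with $i=1$, $j=2n$, and $j+2=2n+2$, once I invoke the wrap-around identities already recorded in Section~\ref{section_liftable-element}, namely $t_{1,2n+2}=t_{1,2n+1}=1$ and $t_{2n,2n+2}=t_{1,2n-1}$ together with $t_{2n+1,2n+2}=t_{1,2n}$. Under these substitutions $(L_{1,2n}^{k=j})$ becomes the single relation $h_{2n}t_{1,2n}h_{2n}^{-1}=t_{1,2n-1}t_{2n,2n+1}t_{1,2n-1}^{-1}$, and running the part (1) computation (now allowing $h_{2n}$ and the inherited relations (1),(2) of Theorem~\ref{thm_pres_lmod}) produces precisely $(T_{1,2n+2})$ read off under $t_{1,2n+2}=1$, which is the displayed relation
\[
t_{1,2n}^{-n+1}t_{2n-1,2n}^{-n+1}\cdots t_{3,4}^{-n+1}t_{1,2}^{-n+1}(h_{2n}\cdots h_2 h_1)^{n+1}=1.
\]
Thus part (2) is just the \emph{top} instance ($j'-i'=2n+1$) of the same lantern-to-braid mechanism, with boundary twists killed.

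The main obstacle will be the conjugation bookkeeping in the second paragraph: carrying $h_j$ through the long braid word for $t_{i,j}$ and verifying that the exponents of the $t$-prefactor transform exactly as $(T_{i,j})\mapsto(T_{i,j+2})$ without leaving stray Dehn twists behind. This is aggravated by the parity split — the even and odd cases of $(T)$ have different prefactor patterns and, in the even case, the central block $h_{j-2}\cdots h_i h_i\cdots h_{j-2}$ — so the computation branches, and in each branch one must check that the commutators introduced by (2)(d)/(2)(e) cancel against the prefactor rather than surviving. I expect to tame this by treating "up to smaller $(T_{i',j'})$" as an implicit induction, so that the genuinely new content at each span is only the single lantern identity above and not a full re-expansion of the braid word.
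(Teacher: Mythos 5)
Your proposal takes essentially the same route as the paper's proof: solve $(L_{i,j}^{k=j})$ for the top twist $t_{i,j+2}$, substitute the braid words $(T_{i^\prime ,j^\prime })$ for every smaller-span twist on the right-hand side, and collapse the result to the word of $(T_{i,j+2})$ using only the relations~(1) and (2) of Theorem~\ref{thm_pres_lmodb} with smaller spans treated as an implicit induction, specializing at the top span with $t_{1,0}=t_{1,2n+1}=t_{1,2n+2}=1$ and $t_{2n+1,2n+2}=t_{1,2n}$ to obtain part~(2). The conjugation bookkeeping you defer is exactly what the paper's technical machinery in Sections~\ref{section_tech_rel}--\ref{section_proof_t_ij_pres} supplies (e.g. Lemmas~\ref{tech_rel2}, \ref{tech_rel5}, \ref{tech_rel8}, \ref{tech_rel9}--\ref{tech_rel12} and Corollaries~\ref{cor_tech_rel3}, \ref{cor_tech_rel4}), all of which are themselves derived from relations~(1) and (2).
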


We remark that Lemma~\ref{lem_t_ij_pres1} also give Lemma~\ref{lem_t_ij-braid}. 

\begin{lem}\label{lem_L_ij_k=j-1}
\begin{enumerate}
\item The relation~$(L_{i,j}^{k=j-1})$ for $2\leq i+1<j\leq 2n$ is obtained from the relations~(1) and (2) in Theorem~\ref{thm_pres_lmodb}, $(T_{j-1,j+1})$, and $(L_{i,j-1}^{k=j})$. 
\item The relation~$(L_{i,2n+1}^{k=j-1})$ for $1\leq i\leq 2n-1$ is obtained from the relations~(1) and (2) in Theorem~\ref{thm_pres_lmod}, $(T_{2n,2n+2})$, and $(L_{i,2n}^{k=j})$. 
\end{enumerate}
\end{lem}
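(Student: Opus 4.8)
The plan is to exploit the fact that both relations are conjugation relations by a \emph{single} half-rotation: the hypothesis $(L_{i,j-1}^{k=j})$ computes $h_{j-1}t_{i,j-1}h_{j-1}^{-1}$, while the target $(L_{i,j}^{k=j-1})$ computes $h_{j-1}t_{i,j}h_{j-1}^{-1}$, the conjugating element being $h_{j-1}$ in both. So my first move is to regard $(L_{i,j-1}^{k=j})$ not as a formula for $h_{j-1}t_{i,j-1}h_{j-1}^{-1}$ but, after solving it, as a formula for $t_{i,j}$ itself in terms of a conjugate of $t_{i,j-1}^{-1}$ and standard Dehn twists. Concretely, solving
\[
h_{j-1}t_{i,j-1}h_{j-1}^{-1}=t_{i,j-2}t_{j-1,j}t_{i,j+1}t_{i,j}^{-1}t_{j-1,j+1}^{-1}
\]
for $t_{i,j}$ gives
\[
t_{i,j}=t_{j-1,j+1}^{-1}\,h_{j-1}t_{i,j-1}^{-1}h_{j-1}^{-1}\,t_{i,j-2}t_{j-1,j}t_{i,j+1}.
\]

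Next I would conjugate this identity by $h_{j-1}$ and distribute the conjugation over the five factors on the right. Four of the factors are governed by earlier relations in Theorem~\ref{thm_pres_lmodb}: the commutative relations (1)(c) give $h_{j-1}t_{i,j-2}h_{j-1}^{-1}=t_{i,j-2}$ and $h_{j-1}t_{i,j+1}h_{j-1}^{-1}=t_{i,j+1}$, the conjugation relation (2)(a) gives $h_{j-1}t_{j-1,j}h_{j-1}^{-1}=t_{j,j+1}$, and $h_{j-1}$ fixes $t_{j-1,j+1}$ since $t_{j-1,j+1}=h_{j-1}^2$ by $(T_{j-1,j+1})$. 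The single factor that is \emph{not} a commutation is the doubly conjugated term $h_{j-1}^2t_{i,j-1}^{-1}h_{j-1}^{-2}$, which I would rewrite, again by $(T_{j-1,j+1})$, as $t_{j-1,j+1}t_{i,j-1}^{-1}t_{j-1,j+1}^{-1}$. After a cancellation $t_{j-1,j+1}^{-1}t_{j-1,j+1}=1$, the right-hand side collapses to
\[
t_{i,j-1}^{-1}t_{j-1,j+1}^{-1}\,t_{i,j-2}\,t_{j,j+1}\,t_{i,j+1}.
\]

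To conclude part (1) I would commute the leading block $t_{i,j-1}^{-1}t_{j-1,j+1}^{-1}$ to the far right past $t_{i,j-2}t_{j,j+1}t_{i,j+1}$: each of $t_{i,j-1}$ and $t_{j-1,j+1}$ is, relative to each of $t_{i,j-2}$, $t_{j,j+1}$, $t_{i,j+1}$, either nested or disjoint, so the six required commutations are all instances of relation (1)(b), and the result is precisely the right-hand side $t_{i,j-2}t_{j,j+1}t_{i,j+1}t_{i,j-1}^{-1}t_{j-1,j+1}^{-1}$ of $(L_{i,j}^{k=j-1})$. Part (2) is formally identical with $j=2n+1$, so that $h_{j-1}=h_{2n}$ and $(T_{2n,2n+2})$ plays the role of $(T_{j-1,j+1})$, giving $h_{2n}^2=t_{2n,2n+2}=t_{1,2n-1}$; the one difference is that throughout the computation I must carry the boundary identifications $t_{i,2n+2}=t_{1,i-1}$, $t_{2n+1,2n+2}=t_{1,2n}$, $t_{2n,2n+2}=t_{1,2n-1}$, and invoke the $\LM$-versions of the needed relations, namely the primed relations (1)(c$^\prime$) and (2)(a$^\prime$) of Lemma~\ref{tech_rel1} in place of (1)(c) and (2)(a).

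The hard part will be the step where $h_{j-1}^2t_{i,j-1}^{-1}h_{j-1}^{-2}$ must be traded for $t_{j-1,j+1}t_{i,j-1}^{-1}t_{j-1,j+1}^{-1}$ via $(T_{j-1,j+1})$: this is the only place where the square of the half-rotation is essential and is what makes $(T_{j-1,j+1})$ (rather than just the conjugation and commutative relations) indispensable. The accompanying delicacy is the final rearrangement, where one must be sure that $t_{i,j-1}$ and $t_{j-1,j+1}$ commute with all three of the middle twists; although the pair $\gamma_{i,j-1}$, $\gamma_{j-1,j+1}$ is itself linked, each of these two curves meets $\gamma_{i,j-2}$, $\gamma_{j,j+1}$, $\gamma_{i,j+1}$ only in a nested or disjoint configuration, so the commutations are legitimate instances of (1)(b). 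For part (2) the genuine obstacle is purely bookkeeping: checking that after the boundary identifications every conjugation and commutation used above still matches one of the allowed $\LM$-relations.
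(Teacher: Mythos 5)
Your proposal is correct and takes essentially the same approach as the paper: both arguments boil down to computing $h_{j-1}t_{i,j}h_{j-1}^{-1}$ from $(L_{i,j-1}^{k=j})$ by trading $h_{j-1}^{2}$ for $t_{j-1,j+1}$ via $(T_{j-1,j+1})$, using the conjugation relation (2)(a) for $h_{j-1}t_{j-1,j}h_{j-1}^{-1}=t_{j,j+1}$, and the commutations (1)(b), (1)(c), with part (2) handled by the boundary identifications and the primed relations of Lemma~\ref{tech_rel1}. The only difference is organizational: the paper runs the same computation as a chain of equivalences reducing $(L_{i,j}^{k=j-1})$ to the relation (2)(a), while you solve the hypothesis for $t_{i,j}$ and evaluate the conjugation factor by factor.
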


\begin{lem}\label{lem_L_ij_k=i-2}
The relation~$(L_{i,j}^{k=i-2})$ for $3\leq i<j\leq 2n+1$ is equivalent to the relation~$(T_{i-2,j})$ up to the relations~(1) and (2) in Theorem~\ref{thm_pres_lmodb}, $(T_{i^\prime ,j^\prime })$ for $i-2\leq i^\prime <j^\prime \leq j$ and $j^\prime -i^\prime <j-i$, and $t_{l,l}=1$ for $1\leq l\leq 2n+1$. 
\end{lem}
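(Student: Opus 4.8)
The plan is to imitate the proof of Lemma~\ref{lem_t_ij_pres1}, working at the left-hand end of the interval $[i-2,j]$ instead of the right-hand end. I would begin by rewriting the relation $(L_{i,j}^{k=i-2})$, namely
\[
h_{i-2}t_{i,j}h_{i-2}^{-1}=t_{i+1,j}t_{i-1,i}t_{i-2,j}t_{i-1,j}^{-1}t_{i-2,i}^{-1},
\]
as an expression for its longest twist $t_{i-2,j}$:
\[
t_{i-2,j}=t_{i-1,i}^{-1}t_{i+1,j}^{-1}\,h_{i-2}\,t_{i,j}\,h_{i-2}^{-1}\,t_{i-2,i}\,t_{i-1,j}.
\]
Substituting the permitted relation $t_{i-2,i}=h_{i-2}^{2}$ immediately collapses the factor $h_{i-2}^{-1}t_{i-2,i}$ to $h_{i-2}$, so the right-hand side becomes $t_{i-1,i}^{-1}t_{i+1,j}^{-1}\,h_{i-2}\,t_{i,j}\,h_{i-2}\,t_{i-1,j}$. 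The whole statement then reduces to checking that, after replacing the remaining shorter twists by their $(T_{i',j'})$-expressions, this word agrees with the right-hand side of $(T_{i-2,j})$ from Lemma~\ref{lem_t_ij-braid}; reading the same manipulations backwards yields the converse implication and hence the claimed equivalence.

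Next I would analyse the conjugation $h_{i-2}\,t_{i,j}\,h_{i-2}$ letter by letter against the $(T)$-word for $t_{i,j}$. By relation~(1)(a) the class $h_{i-2}$ commutes with every half-rotation $h_\ell$ with $\ell\geq i+1$, and by relation~(1)(c) with every consecutive twist $t_{\ell,\ell+1}$ with $\ell\geq i+1$; the only letters it does not commute past are the lowest consecutive twist $t_{i,i+1}$ and the bottom factor $h_i$ of each descending staircase $h_{j-2}\cdots h_{i+1}h_{i}$. Since $h_{i-2}$ and $h_i$ sit at distance two, their interaction is governed exactly by the braid-type relation~(2)(c) and the relation~(2)(e), while the interaction with $t_{i,i+1}$ is controlled by~(2)(a) and~(2)(b). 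The effect of inserting $h_{i-2}$ at the foot of the staircase is to lengthen it to $h_{j-2}\cdots h_{i-1}h_{i-2}$ and to raise its exponent by one, together with the corresponding extension of the palindromic central block in the even case, which is precisely the change recorded when passing from $(T_{i,j})$ to $(T_{i-2,j})$ (both lie in the same parity branch, since $j-i$ and $j-(i-2)$ have the same parity). I would carry this out in the two cases $j-i$ odd and $j-i$ even separately, tracking how the correcting factors $t_{i-1,i}^{-1}$, $t_{i+1,j}^{-1}$ and $t_{i-1,j}$ combine through the commutations of~(1) and the conjugation relations~(2) to reproduce the exponents $-\tfrac{j-i-3}{2}$, respectively $-\tfrac{j-i-2}{2}$, on the consecutive twists appearing in $(T_{i-2,j})$.

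The main obstacle will be the purely combinatorial bookkeeping of the two staircases: one has to verify that inserting $h_{i-2}$ level by level via~(2)(c) and~(2)(e) makes $(h_{j-2}\cdots h_{i+1}h_{i})^{m}$ telescope exactly into $(h_{j-2}\cdots h_{i-1}h_{i-2})^{m+1}$ with the predicted collection of correcting consecutive twists, and that the long letters $t_{i,j}$ and $t_{i-1,j}$ cancel cleanly against the $t_{i+1,j}^{-1}$ and $t_{i-1,j}$ factors supplied by the lantern relation, leaving no stray generators. To keep this manageable I would organise the argument as an induction on $j-i$: the permitted shorter relations $(T_{i',j'})$ let one peel off the outermost block of the staircase and invoke the inductive form of the identity, mirroring how the proof of Lemma~\ref{lem_t_ij_pres1} strips the outermost $h_j$. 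The smallest instances $j=i+1$ and $j=i+2$, where $(T_{i-2,j})$ is short and $t_{i-2,i}=h_{i-2}^2$ is essentially the only nontrivial input, I would verify directly to serve as the base of the induction.
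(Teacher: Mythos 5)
Your opening normalization is right, and it matches what a direct proof must do: solve $(L_{i,j}^{k=i-2})$ for $t_{i-2,j}$ and use $t_{i-2,i}=h_{i-2}^{2}$ to collapse $h_{i-2}^{-1}t_{i-2,i}$ to $h_{i-2}$. The gap is in the middle step, where your ``letter-by-letter'' mechanism fails as described. The interaction of $h_{i-2}$ with $t_{i,i+1}$ is \emph{not} controlled by relations (2)(a) and (2)(b): at index $i-2$ these read $h_{i-2}^{\pm 1}t_{i-2,i-1}=t_{i-1,i}h_{i-2}^{\pm 1}$ and $h_{i-2}^{\varepsilon }h_{i-1}^{\varepsilon }t_{i-2,i-1}=t_{i,i+1}h_{i-2}^{\varepsilon }h_{i-1}^{\varepsilon }$, so they only reach $t_{i,i+1}$ at the cost of introducing $h_{i-1}$ and $t_{i-2,i-1}$, neither of which you have explained how to handle; geometrically, $h_{i-2}t_{i,i+1}h_{i-2}^{-1}$ is the twist along $h_{i-2}(\gamma _{i,i+1})$, a curve enclosing $p_{i-2}$ and $p_{i+1}$ that is not any $\gamma _{k,l}$, so no single listed relation moves $h_{i-2}$ past $t_{i,i+1}$. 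Likewise (2)(c) at index $i-2$ involves $h_{i-1}$, and (2)(e) alone does not make $(h_{j-2}\cdots h_{i+1}h_{i})^{m}$ telescope into $(h_{j-2}\cdots h_{i-1}h_{i-2})^{m+1}$. What a direct proof actually needs is a left--right mirrored copy of the whole battery of block identities in Sections~\ref{section_tech_rel}--\ref{section_proof_t_ij_pres} (Lemmas~\ref{tech_rel2}--\ref{tech_rel12}, together with the word-reversal Lemmas~\ref{cor_tech_rel8} and \ref{tech_rel19}); these are long derivations from (1) and (2), not bookkeeping, and your proposed induction on $j-i$ does not substitute for them --- note that the proof of Lemma~\ref{lem_t_ij_pres1}, which you propose to imitate, is not an induction on $j-i$ but a direct computation that consumes exactly those lemmas.

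This is also where your route diverges from the paper's, whose entire point is to avoid redoing that computation. The paper conjugates by the half-rotation $h_{i-2,j}$ supported on the disk $D_{i-2,j}$; since $h_{i-2,j}^{\pm 1}h_{l}h_{i-2,j}^{\mp 1}=h_{j+i-l-4}$ and $h_{i-2,j}^{\pm 1}t_{k,l}h_{i-2,j}^{\mp 1}=t_{j+i-2-l,j+i-2-k}$, this conjugation carries $(L_{i,j}^{k=i-2})$ exactly to $(L_{i-2,j-2}^{k=j})$, the case already settled by Lemma~\ref{lem_t_ij_pres1}. The conclusion is then pulled back using two facts: the relations (1) and (2) among generators supported on $D_{i-2,j}$ are closed under this conjugation, and by Lemmas~\ref{cor_tech_rel8} and \ref{tech_rel19} the conjugate of any relation $(T_{i^{\prime },j^{\prime }})$ is again equivalent to a $(T)$-relation modulo (1) and (2). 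If you want to keep a direct computation, the honest repair is to first prove the mirrored technical lemmas (equivalently, the reversal statements just quoted); at that point your argument essentially reconstructs the paper's, so the symmetry trick is both the shorter and the conceptually cleaner route.
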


\begin{lem}\label{lem_L_ij_k=i-1}
The relation~$(L_{i,j}^{k=i-1})$ for $2\leq i<j-1\leq 2n$ is obtained from the relations~(1) and (2) in Theorem~\ref{thm_pres_lmodb}, $(T_{i-1,i+1})$, and $(L_{i+1,j}^{k=i-2})$. 
\end{lem}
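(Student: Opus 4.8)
The plan is to obtain $(L_{i,j}^{k=i-1})$ by conjugating the already-established relation $(L_{i+1,j}^{k=i-2})$ by $h_{i-1}$. Both relations describe a conjugation by $h_{i-1}$: after the index shift $i\mapsto i+1$ the relation $(L_{i+1,j}^{k=i-2})$ reads
\[
h_{i-1}t_{i+1,j}h_{i-1}^{-1}=t_{i+2,j}t_{i,i+1}t_{i-1,j}t_{i,j}^{-1}t_{i-1,i+1}^{-1},
\]
and, crucially, its right-hand side contains the factor $t_{i,j}^{-1}$. So the first step is to solve this relation for $t_{i,j}$, which gives
\[
t_{i,j}=t_{i-1,i+1}^{-1}\,h_{i-1}t_{i+1,j}^{-1}h_{i-1}^{-1}\,t_{i+2,j}t_{i,i+1}t_{i-1,j}.
\]

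Next I would apply $h_{i-1}(-)h_{i-1}^{-1}$ to both sides and evaluate each conjugated factor. The key inputs are: the relation $(T_{i-1,i+1})$, namely $h_{i-1}^{2}=t_{i-1,i+1}$, which both makes $t_{i-1,i+1}$ commute with $h_{i-1}$ (so the conjugate of $t_{i-1,i+1}^{-1}$ is itself) and turns $h_{i-1}^{2}t_{i+1,j}^{-1}h_{i-1}^{-2}$ into $t_{i-1,i+1}t_{i+1,j}^{-1}t_{i-1,i+1}^{-1}$; the relation (2)(a) of Theorem~\ref{thm_pres_lmodb}, which yields $h_{i-1}t_{i,i+1}h_{i-1}^{-1}=t_{i-1,i}$; and the commutative relations (1)(c), which show $h_{i-1}$ commutes with both $t_{i+2,j}$ and $t_{i-1,j}$ (here the hypothesis $j\geq i+2$ coming from $i<j-1$ is used). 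After the cancellation $t_{i-1,i+1}^{-1}\cdot t_{i-1,i+1}=1$, collecting these gives
\[
h_{i-1}t_{i,j}h_{i-1}^{-1}=t_{i+1,j}^{-1}t_{i-1,i+1}^{-1}t_{i+2,j}t_{i-1,i}t_{i-1,j}.
\]

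Finally I would rearrange this word into the target right-hand side $t_{i+2,j}t_{i-1,j}t_{i-1,i}t_{i+1,j}^{-1}t_{i-1,i+1}^{-1}$ of $(L_{i,j}^{k=i-1})$ using only commutative relations (1)(b). Concretely, $t_{i+2,j}$ commutes with $t_{i+1,j}^{-1}$ and $t_{i-1,i+1}^{-1}$ (nested or disjoint intervals), so it can be pulled to the front; the factors $t_{i-1,i}$ and $t_{i-1,j}$ likewise commute with $t_{i+1,j}^{-1}$ and $t_{i-1,i+1}^{-1}$, so the block $t_{i-1,i}t_{i-1,j}$ can be carried to the left of $t_{i+1,j}^{-1}t_{i-1,i+1}^{-1}$, and $t_{i-1,i}$ commutes with $t_{i-1,j}$. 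The one pair that does \emph{not} commute is $t_{i+1,j}$ and $t_{i-1,i+1}$, whose defining curves intersect, but none of the moves above interchanges these two, so the rearrangement goes through and the two words agree.

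The main obstacle is purely the bookkeeping of the five twist factors: one must check each relevant pair against the interval conditions in (1)(b) and confirm that the prescribed sequence of commutations never attempts to swap the single non-commuting pair $t_{i+1,j},t_{i-1,i+1}$. No genuinely new idea beyond conjugating the lantern relation $(L_{i+1,j}^{k=i-2})$ and invoking $(T_{i-1,i+1})$ is required; the entire content lies in matching the two five-letter words after the cancellation.
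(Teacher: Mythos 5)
Your derivation is correct, and it uses only the permitted inputs: you solve $(L_{i+1,j}^{k=i-2})$ for $t_{i,j}$, conjugate by $h_{i-1}$, evaluate the factors via $(T_{i-1,i+1})$ (which gives both $h_{i-1}\rightleftarrows t_{i-1,i+1}$ and $h_{i-1}^{2}t_{i+1,j}^{-1}h_{i-1}^{-2}=t_{i-1,i+1}t_{i+1,j}^{-1}t_{i-1,i+1}^{-1}$), the relation (2)(a) (which gives $h_{i-1}t_{i,i+1}h_{i-1}^{-1}=t_{i-1,i}$), and (1)(c) (which gives $h_{i-1}\rightleftarrows t_{i+2,j}$ and $h_{i-1}\rightleftarrows t_{i-1,j}$, where $j\geq i+2$ is needed and available), and then you match $t_{i+1,j}^{-1}t_{i-1,i+1}^{-1}t_{i+2,j}t_{i-1,i}t_{i-1,j}$ against $t_{i+2,j}t_{i-1,j}t_{i-1,i}t_{i+1,j}^{-1}t_{i-1,i+1}^{-1}$ using only (1)(b); since each of $t_{i+2,j}$, $t_{i-1,i}$, $t_{i-1,j}$ commutes with every other letter in the word and the unique non-commuting pair $t_{i+1,j}$, $t_{i-1,i+1}$ occurs in the same order on both sides, the two words agree. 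However, your route is genuinely different from the paper's. The paper does no word computation here at all: it conjugates the target relation $(L_{i,j}^{k=i-1})$ by the half-rotation $h_{i-1,j}$ of the disk $D_{i-1,j}$, uses $h_{i-1,j}^{\pm 1}h_{l}h_{i-1,j}^{\mp 1}=h_{j+i-l-3}$ and $h_{i-1,j}^{\pm 1}t_{k,l}h_{i-1,j}^{\mp 1}=t_{j+i-1-l,j+i-1-k}$ to identify the conjugated relation with the mirror relation $(L_{i-1,j-1}^{k=j-1})$, invokes Lemma~\ref{lem_L_ij_k=j-1} for that relation, and conjugates back, noting that the inputs $(T_{j-2,j})$ and $(L_{i-1,j-2}^{k=j})$ pull back exactly to $(T_{i-1,i+1})$ and $(L_{i+1,j}^{k=i-2})$, while conjugates of relations (1) and (2) supported on $D_{i-1,j}$ remain consequences of (1) and (2). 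The paper's symmetry trick buys economy — the direct manipulation is carried out only once, in the proof of Lemma~\ref{lem_L_ij_k=j-1} — at the cost of a meta-argument about how $h_{i-1,j}$-conjugation permutes the set of relations; your argument is self-contained and elementary, being in effect the mirror image of that one computation redone from scratch for the $k=i-1$ case.
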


\begin{proof}[Proof of Theorem~\ref{thm_pres_lmodp}]
We consider the finite presentation which is obtained from the finite presentation for $\LMp $ in Corollary~\ref{cor_pres_lmodp} by adding the relations~(4) (a) for $j-i\geq 3$ and (b) in Theorem~\ref{thm_pres_lmodp}. 
Since $\LMp $ admits the relations~(4) (a) for $j-i\geq 3$ and (b) in Theorem~\ref{thm_pres_lmodp}, the group which is obtained from this finite presentation is also isomorphic to $\LMp $. 
The generators of this presentation in Theorem~\ref{thm_pres_lmodp} are obtained from the generators of the presentation in Corollary~\ref{cor_pres_lmodp} by removing $t_{i,i}$ for $1\leq i\leq 2n+1$, and the relations~(1)--(3) in Corollary~\ref{cor_pres_lmodp} coincide with the relations~(1)--(3) in Theorem~\ref{thm_pres_lmodp}. 
The relation~(4) (a) in Corollary~\ref{cor_pres_lmodp} coincides with the relations~(4) (a) in Theorem~\ref{thm_pres_lmodp} for $j-i=2$. 
%Hence we will show that the relations~(4) (b)--(e) in Corollary~\ref{cor_pres_lmodp} are obtained from the relations in Theorem~\ref{thm_pres_lmodp} and $t_{l,l}=1$ for $1\leq l\leq 2n-1$. 

\if0
By Lemma~\ref{lem_t_ij_pres1}~(1), the relation~(4) (d) in Corollary~\ref{cor_pres_lmodp} is equivalent to the relation~$(T_{i,j+2})$ up to the relations~(1) and (2)  in Theorem~\ref{thm_pres_lmodp}, $(T_{i^\prime ,j^\prime })$ for $i\leq i^\prime <j^\prime \leq j+2$ and $j^\prime -i^\prime <j-i+2$, and $t_{l,l}=1$ for $1\leq l\leq 2n-1$. 
Here, since $j^\prime \leq j+2\leq (2n-1)+2=2n+1$, the relations~$(T_{i,j})$ which appear above coincide with the relations~(4) (a) in Theorem~\ref{thm_pres_lmodp}. 
Thus the relation~(4) (d) in Corollary~\ref{cor_pres_lmodp} is obtained from the relations in Theorem~\ref{thm_pres_lmodp} and $t_{l,l}=1$ for $1\leq l\leq 2n-1$.  

By Lemma~\ref{lem_L_ij_k=j-1}~(1), the relation~(4) (e) in Corollary~\ref{cor_pres_lmodp} is obtained from the relations~(1) and (2) in Theorem~\ref{thm_pres_lmodp}, and $(T_{j-1,j+1})$, and $(L_{i,j-1}^{k=j})$. 
The relation~$(T_{j-1,j+1})$ coincides with the relation~(4) (a) in Theorem~\ref{thm_pres_lmodp} since $j+1 \leq 2n+1$, and the relation~$(L_{i,j-1}^{k=j})$ is obtained from the relations~(1)--(4) in Theorem~\ref{thm_pres_lmodp} and $t_{l,l}=1$ for $1\leq l\leq 2n-1$. 
Thus the relation~(4) (e) in Corollary~\ref{cor_pres_lmodp} is obtained from the relations in Theorem~\ref{thm_pres_lmodp} and $t_{l,l}=1$ for $1\leq l\leq 2n-1$.  
 
Similarly, by Lemma~\ref{lem_L_ij_k=i-2} and the condition $j^\prime \leq j\leq 2n+1$, the relation~(4) (b) in Corollary~\ref{cor_pres_lmodp} is obtained from the relations~(1)--(4) in Theorem~\ref{thm_pres_lmodp}, and $t_{l,l}=1$ for $1\leq l\leq 2n-1$. 
By this fact and Lemma~\ref{lem_L_ij_k=i-1}, the relation~(4) (c) in Corollary~\ref{cor_pres_lmodp} is also obtained from the relations in Theorem~\ref{thm_pres_lmodp} and $t_{l,l}=1$ for $1\leq l\leq 2n+1$.    
\fi

By Lemmas~\ref{lem_t_ij_pres1}~(1) and \ref{lem_L_ij_k=i-2}, the relation~(4) (d) and (b) in Corollary~\ref{cor_pres_lmodp} are obtained from the relations in Theorem~\ref{thm_pres_lmodp} and $t_{l,l}=1$ for $1\leq l\leq 2n+1$, respectively. 
By these facts and Lemmas~\ref{lem_L_ij_k=j-1} and \ref{lem_L_ij_k=i-1}, the relation~(4) (e) and (c) in Corollary~\ref{cor_pres_lmodp} are also obtained from the relations in Theorem~\ref{thm_pres_lmodp} and $t_{l,l}=1$ for $1\leq l\leq 2n+1$.    
%By the arguments above, the relation~(4) (b)--(e) in Corollary~\ref{cor_pres_lmodp} are obtained from the relations in Theorem~\ref{thm_pres_lmodp} and the relations (5) in Proposition~\ref{prop_pres_lmodp}. 
Thus, by applying Tietze transformations to the finite presentation for $\LMp $ which was considered in the top of this proof, we have the finite presentation for $\LMp $ which is obtained from the presentation in Theorem~\ref{thm_pres_lmodp} by adding the generators $t_{i,i}$ for $1\leq i\leq 2n+1$ and the relations $t_{i,i}=1$ for $1\leq i\leq 2n+1$. 
Since $t_{i,i}$ for $1\leq i\leq 2n+1$ does not appear in the relations in Theorem~\ref{thm_pres_lmodp}, by applying Tietze transformations again, $\LMp $ admits the presentation which is the result of removing the generators $t_{i,i}$ for $1\leq i\leq 2n+1$ and the relations $t_{i,i}=1$ for $1\leq i\leq 2n+1$ from this presentation for $\LMp $. 
Therefore we have completed the proof of Theorem~\ref{thm_pres_lmodp}.  
\end{proof}

We prepare the following two lemmas which are proved in Sections~\ref{section_tech_rel} and \ref{section_tech_rel19}, respectively. 

\begin{lem}\label{cor_tech_rel8}
%For $1\leq i<j\leq 2n+2$ in the case of $\LM $ and $1\leq i<j\leq 2n+1$ in the case of $\LMb $ such that $j-i\geq 3$ is odd, the relation
For $1\leq i<j\leq 2n+1$ such that $j-i\geq 3$ is odd, the relation 
\begin{align*}
&t_{j-1,j}^{-\frac{j-i-3}{2}}\cdots t_{i+2,i+3}^{-\frac{j-i-3}{2}}t_{i,i+1}^{-\frac{j-i-3}{2}}(h_{j-2}\cdots h_{i+1}h_i)^{\frac{j-i+1}{2}}\\
=&t_{i,i+1}^{-\frac{j-i-3}{2}}t_{i+2,i+3}^{-\frac{j-i-3}{2}}\cdots t_{j-1,j}^{-\frac{j-i-3}{2}}(h_ih_{i+1}\cdots h_{j-2})^{\frac{j-i+1}{2}}
\end{align*}
is obtained from the relations~(1) and (2) in Theorem~\ref{thm_pres_lmodb}. %, $t_{2n+1,2n+2}=t_{1,2n}$, and relations in Corollary~\ref{cor_tech_rel4}, Lemmas~\ref{tech_rel5}, \ref{tech_rel6}, and \ref{tech_rel7}. 
\end{lem}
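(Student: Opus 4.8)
The plan is to split the asserted identity into a trivial rearrangement of the Dehn twists and a genuine identity among the half-rotations $h_k$, the latter being the real content. First I would note that the twists occurring on either side, namely $t_{i,i+1},t_{i+2,i+3},\dots,t_{j-1,j}$, have pairwise separated index intervals: any two of them have the form $t_{a,a+1}$ and $t_{b,b+1}$ with $a+1<b$, so relation~(1)(b) in Theorem~\ref{thm_pres_lmodb} gives $t_{a,a+1}\rightleftarrows t_{b,b+1}$. Hence the descending product $t_{j-1,j}^{-s}\cdots t_{i,i+1}^{-s}$ and the ascending product $t_{i,i+1}^{-s}\cdots t_{j-1,j}^{-s}$ (with $s=\tfrac{j-i-3}{2}$) represent the same element $P\in\LMb$. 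Cancelling this common prefix reduces the statement of Lemma~\ref{cor_tech_rel8} to the palindrome-power identity
\[
(h_{j-2}\cdots h_{i+1}h_i)^{\frac{j-i+1}{2}}=(h_ih_{i+1}\cdots h_{j-2})^{\frac{j-i+1}{2}}
\]
among the half-rotations alone, using only relations~(1) and~(2).

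I would then prove this $h$-identity by induction on the odd integer $d=j-i\geq 3$. In the base case $d=3$ the claim is $(h_{i+1}h_i)^2=(h_ih_{i+1})^2$; relation~(2)(d), rewritten as $h_ih_{i+1}=t_{i,i+1}(h_{i+1}h_i)t_{i,i+1}^{-1}$, shows that the two sides are conjugate by $t_{i,i+1}$, so the base case is equivalent to the assertion that $t_{i,i+1}$ commutes with $(h_{i+1}h_i)^2$. This last commutation I would extract from the twisted commutation relations~(2)(a) and~(2)(b), which govern precisely how $t_{i,i+1}$ is transported across $h_i$ and $h_{i+1}$ and across the pair $h_ih_{i+1}$. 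For the inductive step I would regard $h_i\cdots h_{j-2}$ as the Coxeter-type element of the interleaved braid system carried by the $h_k$ (where $h_k$ and $h_{k+2}$ braid via~(2)(c) while $h_k$ and $h_{k+1}$ commute up to a twist via~(2)(d)), and use~(2)(c) together with the twisted commutations~(2)(a),(b),(e) to conjugate this element so as to rotate the index window and convert the descending order into the ascending one, at the cost of twist corrections that reassemble into~$P$-type factors once the full power $\tfrac{j-i+1}{2}$ is formed.

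Throughout, the organizing principle is the word-reversal operation fixing every generator $h_k$ and $t_{k,l}$: each defining relation of types~(1) and~(2) is stable under reversal, so that if reversal defines an anti-homomorphism of the presented group then the two sides of the reduced identity are exactly reversals of one another, and proving their equality is the same as proving that the common power is reversal-invariant. The only nontrivial stability check is for relation~(2)(c), whose reversal coincides with~(2)(c) after interchanging $h_i$ and $h_{i+2}$ and is recovered from the twisted commutations; I would use this both as a consistency check and to reverse sub-words cleanly during the induction.

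The hard part will be the bookkeeping of the twist corrections in the inductive step: every time an $h_k$ is commuted past a neighbour through~(2)(a),(b),(d) a factor $t_{\cdot,\cdot+1}^{\pm1}$ is produced, and one must verify that over the whole product these factors cancel, leaving no residual twist — this is the combinatorial shadow of the geometric fact that the relevant power is a boundary twist and hence central in the subgroup $\langle h_i,\dots,h_{j-2}\rangle$, but it must be checked directly from relations~(1) and~(2) since the geometric relation $(T_{i,j})$ is not yet available at this stage. Managing this cancellation, rather than the $t$-commutation reduction of the first paragraph, is where essentially all the work lies.
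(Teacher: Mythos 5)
Your reduction and base case are sound, and they follow the paper's own route: the twist prefixes on the two sides represent the same element by relation~(1)(b), since the indices $i,i+2,\dots ,j-1$ are pairwise separated, so the lemma is equivalent to the identity $(h_{j-2}\cdots h_{i+1}h_i)^{\frac{j-i+1}{2}}=(h_ih_{i+1}\cdots h_{j-2})^{\frac{j-i+1}{2}}$, which is exactly the paper's Lemma~\ref{tech_rel8}; and for $j-i=3$ your argument is correct and essentially the paper's: writing $h_ih_{i+1}=t_{i,i+1}(h_{i+1}h_i)t_{i,i+1}^{-1}$ via (2)(d), one checks $(h_{i+1}h_i)t_{i,i+1}(h_{i+1}h_i)^{-1}=t_{i+2,i+3}$ by (2)(a) and $(h_{i+1}h_i)t_{i+2,i+3}(h_{i+1}h_i)^{-1}=t_{i,i+1}$ by (2)(b) with $\varepsilon =-1$, so $t_{i,i+1}$ commutes with $(h_{i+1}h_i)^2$.

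The genuine gap is the inductive step, and you have in effect conceded it yourself. The claim that conjugating the Coxeter-type element ``rotates the index window'' while the twist corrections ``reassemble into $P$-type factors'' is a hope, not an argument: those corrections do not simply cancel, and in the paper this step is the entire content of the proof of Lemma~\ref{tech_rel8}, which requires the auxiliary rearrangement and commutation identities of Lemmas~\ref{tech_rel5}, \ref{tech_rel6}, \ref{tech_rel7} and Corollary~\ref{cor_tech_rel4}, each established by a long explicit computation from relations~(1) and~(2). Moreover the geometric principle you invoke to justify the cancellation is false for $j-i>3$: by relation~$(T_{i,j})$ the power in question equals $t_{i,i+1}^{s}t_{i+2,i+3}^{s}\cdots t_{j-1,j}^{s}\,t_{i,j}$ with $s=\frac{j-i-3}{2}$, and the factors $t_{k,k+1}^{s}$ do not commute with the $h_k$'s, so this element is not central in $\left< h_i,\dots ,h_{j-2}\right> $ (it is a boundary twist, hence central, only when $j-i=3$); thus no centrality argument can absorb the bookkeeping. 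Finally, the word-reversal principle only restates the goal (reversal-invariance of the common power) and its own well-definedness rests on the unproved interchanged form of~(2)(c), so it cannot close the gap either. Until the inductive step is carried out at the level of explicit words in~(1) and~(2), the proof is incomplete.
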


\begin{lem}\label{tech_rel19}
For $1\leq i<j\leq 2n+1$ such that $j-i\geq 4$ is even, the relation
\begin{align*}
&t_{j-2,j-1}^{-\frac{j-i-2}{2}}\cdots t_{i+2,i+3}^{-\frac{j-i-2}{2}}t_{i,i+1}^{-\frac{j-i-2}{2}}h_{j-2}\cdots h_{i+2}h_{i}h_{i}h_{i+2}\cdots h_{j-2}(h_{j-3}\cdots h_{i+1}h_{i})^{\frac{j-i}{2}}\\
=&t_{i+1,i+2}^{-\frac{j-i-2}{2}}t_{i+3,i+4}^{-\frac{j-i-2}{2}}\cdots t_{j-1,j}^{-\frac{j-i-2}{2}}h_{i}h_{i+2}\cdots h_{j-2}h_{j-2}\cdots h_{i+2}h_{i}(h_{i+1}h_{i+2}\cdots h_{j-2})^{\frac{j-i}{2}}
\end{align*}
is obtained from the relations~(1) and (2) in Theorem~\ref{thm_pres_lmodb}.  
\end{lem}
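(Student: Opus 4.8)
The plan is to treat the asserted identity as an equality of two words in the generators $h_k$ and $t_{k,k+1}$ and to prove it by induction on the even number $j-i$. Write $m=\frac{j-i-2}{2}\ge 1$ and abbreviate the two $t$-products $T_1=t_{j-2,j-1}^{-m}\cdots t_{i,i+1}^{-m}$ and $T_2=t_{i+1,i+2}^{-m}\cdots t_{j-1,j}^{-m}$, the even-spaced descending and ascending staircases $E_\downarrow=h_{j-2}\cdots h_{i+2}h_i$ and $E_\uparrow=h_ih_{i+2}\cdots h_{j-2}$, and the consecutive-index staircases $Q_1=h_{j-3}\cdots h_{i+1}h_i$ and $Q_2=h_{i+1}\cdots h_{j-2}$, so that the identity reads $T_1E_\downarrow E_\uparrow Q_1^{m+1}=T_2E_\uparrow E_\downarrow Q_2^{m+1}$. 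The left-hand side is exactly the word assigned to $t_{i,j}$ by the even case of Lemma~\ref{lem_t_ij-braid}, while the right-hand side is its ``mirror'': descending staircases are replaced by ascending ones, $Q_1$ on the index range $\{i,\dots,j-3\}$ is replaced by $Q_2$ on the shifted range $\{i+1,\dots,j-2\}$, and the $t$-factors migrate from the even residues $i,i+2,\dots,j-2$ to the odd residues $i+1,i+3,\dots,j-1$. Thus the lemma asserts that two presentations of the same full twist agree using relations (1) and (2) alone. Before starting I would record the abelianization check: relations (2)(a),(b) force all $t_{k,k+1}$ into one class and relations (2)(c),(e) force $h_k$ to depend only on the parity of $k$, under which both sides carry the same multiset of generators, so the $t$-bookkeeping below can balance.

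The base case is $j-i=4$ (so $m=1$), where the identity reads
\[
t_{i+2,i+3}^{-1}t_{i,i+1}^{-1}\,h_{i+2}h_ih_ih_{i+2}\,(h_{i+1}h_i)^2
= t_{i+1,i+2}^{-1}t_{i+3,i+4}^{-1}\,h_ih_{i+2}h_{i+2}h_i\,(h_{i+1}h_{i+2})^2 .
\]
I would verify this directly, using (2)(a),(b) to conjugate the $t$-factors into position and (2)(c),(d),(e) together with the commutation (1)(a) to reverse the short $h$-blocks $h_{i+2}h_ih_ih_{i+2}$ and $(h_{i+1}h_i)^2$. For the inductive step I would peel off the outermost layer: strip the leading $t_{j-2,j-1}^{-m}$ and the two outer occurrences of $h_{j-2}$ from the palindromic block $E_\downarrow E_\uparrow$, push the stripped $t$-factor inward with (2)(a),(b), and reverse the even-spaced staircases using (2)(e) (which trades $h_lh_{l+2}h_l$ for $h_{l+2}h_lh_{l+2}$ at the cost of a controlled $t_{k,k+1}$) and the far commutations (1)(a). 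After collecting the surviving $h$-subwords the reduced word should match the instance of the lemma with $j-i$ replaced by $j-i-2$ (with suitably shifted indices, e.g.\ for the pair $(i+1,j-1)$), closing the induction.

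The hard part will be the $t$-bookkeeping, namely verifying that the reversals convert $T_1$ into $T_2$ with exactly the right powers and no leftover factors. Each application of (2)(d), (2)(e) or the conjugation relations (2)(a),(b) simultaneously reverses a piece of an $h$-staircase and emits or relocates a single $t_{k,k+1}$, so the difficulty is purely combinatorial: one must show that the total collection of twists emitted while turning $E_\downarrow E_\uparrow\,Q_1^{m+1}$ into $E_\uparrow E_\downarrow\,Q_2^{m+1}$ equals $T_2T_1^{-1}$, up to the commutations permitted by (1)(b). To keep this tractable I would isolate two auxiliary ``reversal'' identities, one reversing the even-spaced palindrome $E_\downarrow E_\uparrow$ and one reversing $Q_1^{m+1}$ into $Q_2^{m+1}$, each modulo an explicit string of $t_{k,k+1}$'s derived solely from (1) and (2), and then assemble them. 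This mirrors the simpler reordering argument available in the odd case of Lemma~\ref{cor_tech_rel8}, where the relevant $t$-factors already commute, so that only the $h$-staircase reversal carries content.
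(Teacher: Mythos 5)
Your setup is sound: the notation, the observation that both sides are words for the same full twist $t_{i,j}$ (the right side being its mirror), and the abelianization sanity check are all correct. But what you have written is a plan, not a proof, and everything that makes the lemma nontrivial is deferred. The two ``auxiliary reversal identities'' you say you would isolate --- reversing the even-spaced palindrome $E_\downarrow E_\uparrow$ and converting $Q_1^{m+1}$ into $Q_2^{m+1}$ at the cost of an explicit string of twists --- are exactly the hard content, and you neither state them precisely nor derive them from relations~(1) and~(2); you explicitly defer the $t$-bookkeeping, which is where all the work lies. For comparison, the paper proves this lemma not by a top-level induction on $j-i$ but by a direct chain of reductions that consumes a substantial toolbox built beforehand: Lemmas~\ref{tech_rel14}--\ref{tech_rel18} (each a long calculation with its own internal induction), together with Lemmas~\ref{tech_rel2}, \ref{tech_rel5}, \ref{tech_rel5-1}, the key odd-case identity $(h_{j-2}\cdots h_i)^{\frac{j-i+1}{2}}=(h_i\cdots h_{j-2})^{\frac{j-i+1}{2}}$ of Lemma~\ref{tech_rel8}, and Corollaries~\ref{cor_tech_rel3}, \ref{cor_tech_rel4}. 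A sketch that postpones all analogues of this machinery has not yet proved anything; the base case $j-i=4$ is likewise asserted rather than computed.

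There is also a concrete structural problem with the inductive step as you describe it. The palindrome $h_{j-2}\cdots h_{i+2}h_ih_ih_{i+2}\cdots h_{j-2}$ lives on the indices $i,i+2,\dots,j-2$, while the instance at the pair $(i+1,j-1)$ that you name as the inductive target has its palindrome on the disjoint index set $i+1,i+3,\dots,j-3$; peeling the outer $h_{j-2}$'s can therefore only ever produce the $(i,j-2)$-shaped palindrome, never the $(i+1,j-1)$-shaped one. And reducing to the $(i,j-2)$-instance requires eliminating every occurrence of $h_{j-3}$ and $h_{j-4}$ from the staircase power $(h_{j-3}\cdots h_{i+1}h_i)^{m+1}$ and lowering its exponent to $m$. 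Since every relation in (1) and (2) preserves the total exponent sum of the $h$-letters (indeed, of the $h$-letters of each parity), these occurrences cannot cancel; they must be commuted out through $m+1$ interleaved copies of the full staircase and absorbed into prefix and suffix words --- precisely the kind of extraction identity (compare Lemma~\ref{tech_rel14}) that your sketch covers with ``should match'' without performing. Until the base case is computed and these extraction identities are stated and proved from (1) and (2) alone, the induction does not close.
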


\begin{proof}[Proof of Theorem~\ref{thm_pres_lmod}]
We consider the finite presentation which is obtained from the finite presentation for $\LM $ in Corollary~\ref{cor_pres_lmod} by adding the relations~(4) (a) for $j-i\geq 3$, (b), and (c) in Theorem~\ref{thm_pres_lmod}. 
The group which is obtained from this finite presentation is isomorphic to $\LM $. 
Remark that the relations~(1)--(3), (4) (a), (5) (a), (b), (c) for $j=i+1$, and (d) in Corollary~\ref{cor_pres_lmod} coincide with the relations~(1)--(3), (4) (a) for $j-i=2$, and (5) (a), (b), (c), and (d) in Theorem~\ref{thm_pres_lmod}, respectively. 
By an argument in the proof of Theorem~\ref{thm_pres_lmodp}, the relation~(4) (b)--(e) in Corollary~\ref{cor_pres_lmod} are obtained from the relations in Theorem~\ref{thm_pres_lmod} and the relations $t_{l,l}=1$ for $1\leq l\leq 2n+1$. 
Thus, we will show that the relations~(4) (g), (h), (i) and (5) (c) for $2\leq i<j-1\leq 2n$ in Corollary~\ref{cor_pres_lmod} are obtained from the relations in Theorem~\ref{thm_pres_lmod} and the relations $t_{l,l}=t_{1,0}=1$ for $1\leq l\leq 2n+1$. 

First, for the relation~(5) (c) for $2\leq i<j-1\leq 2n$ in Corollary~\ref{cor_pres_lmod}, we have
\begin{eqnarray*}
&&rt_{i,j}r^{-1}\\
&\overset{\text{(4)(a)}}{\underset{}{=}}&\left\{
		\begin{array}{ll}
		rh_i^2r^{-1} \quad \text{for }j-i=2,\\
		r(t_{j-1,j}^{-\frac{j-i-3}{2}}\cdots t_{i+2,i+3}^{-\frac{j-i-3}{2}}t_{i,i+1}^{-\frac{j-i-3}{2}}(h_{j-2}\cdots h_{i+1}h_{i})^{\frac{j-i+1}{2}})r^{-1}\\ \text{for }2n-1\geq j-i \geq 3\text{ is odd},\\
		r(t_{j-2,j-1}^{-\frac{j-i-2}{2}}\cdots t_{i+2,i+3}^{-\frac{j-i-2}{2}}t_{i,i+1}^{-\frac{j-i-2}{2}}h_{j-2}\cdots h_{i+2}h_{i}h_{i}h_{i+2}\cdots h_{j-2}\\ \cdot (h_{j-3}\cdots h_{i+1}h_{i})^{\frac{j-i}{2}})r^{-1}\quad \text{for }2n-2\geq j-i \geq 4\text{ is even},\\	\end{array}
		\right. \\
&\overset{\text{(5)(b)}}{\underset{\text{(5)(c)}}{=}}&\left\{
		\begin{array}{ll}
		h_{2n-i+1}^2 \quad \text{for }j-i=2,\\
		t_{2n-j+3,2n-j+4}^{-\frac{j-i-3}{2}}\cdots t_{2n-i,2n-i+1}^{-\frac{j-i-3}{2}}t_{2n-i+2,2n-i+3}^{-\frac{j-i-3}{2}}\\
		\cdot (h_{2n-j+3}\cdots h_{2n-i}h_{2n-i+1})^{\frac{j-i+1}{2}}\quad \text{for }2n-1\geq j-i \geq 3\text{ is odd},\\
		t_{2n-j+4,2n-j+5}^{-\frac{j-i-2}{2}}\cdots t_{2n-i,2n-i+1}^{-\frac{j-i-2}{2}}t_{2n-i+2,2n-i+3}^{-\frac{j-i-2}{2}}\\
		\cdot h_{2n-j+3}\cdots h_{2n-i-1}h_{2n-i+1}h_{2n-i+1}h_{2n-i-1}\cdots h_{2n-j+3}\\ \cdot (h_{2n-j+4}\cdots h_{2n-i}h_{2n-i+1})^{\frac{j-i}{2}}\quad \text{for }2n-2\geq j-i \geq 4\text{ is even},\\	
		\end{array}
		\right. \\
&\overset{\text{Lem.~\ref{cor_tech_rel8}}}{\underset{\text{Lem.~\ref{tech_rel19}}}{=}}&\left\{
		\begin{array}{ll}
		h_{2n-i+1}^2 \quad \text{for }j-i=2,\\
		t_{2n-i+2,2n-i+3}^{-\frac{j-i-3}{2}}t_{2n-i,2n-i+1}^{-\frac{j-i-3}{2}}\cdots t_{2n-j+3,2n-j+4}^{-\frac{j-i-3}{2}}\\
		\cdot (h_{2n-i+1}h_{2n-i}\cdots h_{2n-j+3})^{\frac{j-i+1}{2}}\quad \text{for }2n-1\geq j-i \geq 3\text{ is odd},\\
		t_{2n-i+1,2n-i+2}^{-\frac{j-i-2}{2}}t_{2n-i-1,2n-i}^{-\frac{j-i-2}{2}}\cdots t_{2n-j+3,2n-j+4}^{-\frac{j-i-2}{2}}\\
		\cdot h_{2n-i+1}h_{2n-i-1}\cdots h_{2n-j+3}h_{2n-j+3}\cdots h_{2n-i-1}h_{2n-i+1}\\ \cdot (h_{2n-i}h_{2n-i-1}\cdots h_{2n-j+3})^{\frac{j-i}{2}}\quad \text{for }2n-2\geq j-i \geq 4\text{ is even},\\	
		\end{array}
		\right. \\
&\overset{\text{(4)(a)}}{\underset{}{=}}&t_{2n-j+3,2n-i+3}.
\end{eqnarray*}
Thus the relation~(5) (c) for $2\leq i<j-1\leq 2n$ in Corollary~\ref{cor_pres_lmod} is obtained from the relations in Theorem~\ref{thm_pres_lmod} (in particular, all the relations $rt_{i,j}r^{-1}=t_{2n-j+3,2n-i+3}$ are obtained from the relations in Theorem~\ref{thm_pres_lmod}).

By Lemma~\ref{tech_rel1}, the relation~(4) (i) is obtained from the relations in Theorem~\ref{thm_pres_lmod}. 
By Lemma~\ref{lem_t_ij_pres1}~(2), the relation~(4) (g) for $i=1$ (i.e. the relation~$(L_{1,2n}^{k=j})$) is obtained from the relations in Theorem~\ref{thm_pres_lmod} and $t_{l,l}=t_{1,0}=1$ for $1\leq l\leq 2n+1$. 
Using the relations~(5) in Theorem~\ref{thm_pres_lmod} and all $rt_{i,j}r^{-1}=t_{2n-j+3,2n-i+3}$, the conjugation of the relation~(4) (g) for $2\leq i\leq 2n$ (i.e. the relation~$(L_{i,2n}^{k=j})$) by $r$ coincides with the relation~$(L_{3,2n-i+3}^{k=i-2})$. 
Since $2n-i+3\leq 2n-2+3=2n+1$, by Lemma~\ref{lem_L_ij_k=i-2}, the relation~$(L_{3,2n-i+3}^{k=i-2})$ is also obtained from the relations in Theorem~\ref{thm_pres_lmod} and $t_{l,l}=1$ for $1\leq l\leq 2n+1$. 
Hence the relations~(4) (g) are obtained from the relations in Theorem~\ref{thm_pres_lmod} and $t_{l,l}=1$ for $1\leq l\leq 2n+1$. 
By this fact and Lemma~\ref{lem_L_ij_k=j-1}~(2), the relations~(4) (h) are obtained from the relations in Theorem~\ref{thm_pres_lmod} and $t_{l,l}=1$ for $1\leq l\leq 2n+1$. 

By an argument above, the relations~(4) (g), (h), (i) and (5) (c) for $2\leq i<j-1\leq 2n$ in Corollary~\ref{cor_pres_lmod} are obtained from the relations in Theorem~\ref{thm_pres_lmod} and the relations $t_{l,l}=t_{1,0}=1$ for $1\leq l\leq 2n+1$. 
Thus, by applying Tietze transformations to the finite presentation for $\LM $ which was considered in the top of this proof, we have the finite presentation for $\LM $ which is obtained from the presentation in Theorem~\ref{thm_pres_lmod} by adding the generators $t_{i,i}$ for $1\leq i\leq 2n+1$ and $t_{1,0}$ and the relations~$t_{i,i}=t_{1,0}=1$ for $1\leq i\leq 2n+1$. 
Since the generators $t_{i,i}$ for $1\leq i\leq 2n+1$ and $t_{1,0}$ do not appear in the relations in Theorem~\ref{thm_pres_lmod}, by applying Tietze transformations again, $\LM $ admits the presentation which is the result of removing the generators $t_{i,i}$ for $1\leq i\leq 2n+1$ and $t_{1,0}$ and the relations~$t_{i,i}=t_{1,0}=1$ for $1\leq i\leq 2n+1$ from this presentation for $\LM $. 
Therefore we have completed the proof of Theorem~\ref{thm_pres_lmod}.  
\end{proof}

\begin{proof}[Proof of Theorem~\ref{thm_pres_lmodb}]
We will apply Lemma~\ref{presentation_exact} to the exact sequence~(\ref{exact_lmodb}) in Proposition~\ref{prop_exact_lmodb} and the finite presentation for $\LMp $ in Theorem~\ref{thm_pres_lmodp}. 
The Dehn twist $t_{\partial D}$ coincides with $t_{1,2n+1}$ by the definition, is non-trivial in $\LMb $, and commutes with any element in $\LMb $. 
The generators which appear in the relations~(1)--(3), and (4) (a) in Theorem~\ref{thm_pres_lmodp} are supported on the complement of the disk $D$ in $\Sigma _0$ and these relations hold up to isotopies fixing $D$ pointwise.  
Hence we regard relations~(1)--(3), and (4) (a) in Theorem~\ref{thm_pres_lmodp} as the relations in $\LMb $ and these relations coincide with the relations~(1)--(3), and (4) (a) in Theorem~\ref{thm_pres_lmodb}. 
By applying Lemma~\ref{presentation_exact} to the exact sequence~(\ref{exact_lmodb}) in Proposition~\ref{prop_exact_lmodb} and the finite presentation for $\LMp $ in Theorem~\ref{thm_pres_lmodp}, we have the presentation for $\LMb $ whose generators are $h_{i}$ for $1\leq i\leq 2n-1$, $t_{i,j}$ for $1\leq i<j\leq 2n+1$, and $t_{D}$, and defining relations are the relations~(1)--(3), and (4) (a) in Theorem~\ref{thm_pres_lmodb}, $t_D=t_{1,2n+1}$, $t_{D}\rightleftarrows h_i$ for $1\leq i\leq 2n-1$, $t_{D}\rightleftarrows t_{i,j}$ for $1\leq i<j\leq 2n+1$. 
By Tietze transformations, $\LMb $ admits the presentation which is the result of replacing $t_D$ by $t_{1,2n+1}$ in the commutative relations above and removing the generator $t_D$ and the relation $t_D=t_{1,2n+1}$ from this presentation for $\LM $. 
The relations $t_{1,2n+1}\rightleftarrows h_i$ for $1\leq i\leq 2n-1$ and $t_{1,2n+2}\rightleftarrows t_{i,j}$ for $1\leq i<j\leq 2n+1$ coincide with the relations~(1) (c) and (b) in Theorem~\ref{thm_pres_lmodb}, respectively. 
Therefore we have completed the proof of Theorem~\ref{thm_pres_lmodb}.   
\end{proof}

\subsection{The first homology groups of the liftable mapping class groups}\label{section_abel-lmod}

Recall the the integral first homology group $H_1(G)$ of a group $G$ is isomorphic to the abelianization of $G$. 
In this section, we prove Theorem~\ref{thm_abel_lmod} by using the finite presentations in Theorems~\ref{thm_pres_lmodb}, \ref{thm_pres_lmodp}, and \ref{thm_pres_lmod}. 
For conveniences, we denote the equivalence class in $H_1(\LMb )$ (resp. $H_1(\LMp )$ and $H_1(\LM )$) of an element $h$ in $\LMb$ (resp. $\LMp $ and $\LM $) by $h$. 

\begin{proof}[Proof of Theorem~\ref{thm_abel_lmod} for $\LMb $]
We will calculate $H_1(\LMb )$ by using the finite presentation for $\LMb $ in Theorem~\ref{thm_pres_lmodb}. 
When $n=1$, the presentations for $\LMb $ in Theorem~\ref{thm_pres_lmodb} has the generators $h_1$, $t_{1,2}$, $t_{2,3}$, and $t_{1,3}$ and the relation~(1) (b) $t_{i,i+1}\rightleftarrows t_{1,3}$ $(i=1,2)$, (c) $h_1\rightleftarrows t_{1,3}$, (2) (a) $h_1^{\pm 1}t_{1,2}=t_{2,3}h_1^{\pm 1}$, and (4) (a) $t_{1,3}=h_1^2$. 
Hence, as a presentation for an abelian group (i.e. we omit commutative relations), we have
\begin{eqnarray*}
H_1(\mathrm{LMod}_3^1)
&\cong &\left< h_1, t_{1,2}, t_{2,3}, t_{1,3} \middle| t_{1,2}=t_{2,3}, t_{1,3}=h_1^2\right> \\
&\cong &\left< h_1, t_{1,2} \middle| \right> \\
&\cong &\Z [h_1]\oplus \Z [t_{1,2}].
\end{eqnarray*}
Thus, $H_1(\mathrm{LMod}_3^1)$ is isomorphic to $\Z ^2$. 

Assume that $n\geq 2$. 
The relations~(2) (a) and (b) in Theorem~\ref{thm_pres_lmodb} are equivalent to the relations $t_{i,i+1}=t_{i+1,i+2}$ $(1\leq i\leq 2n-1)$ in $H_1(\LMb )$. 
The relations~(2) (c) and (2) (e) in Theorem~\ref{thm_pres_lmodb} are equivalent to the relations $h_{i}=h_{i+2}$ $(1\leq i\leq 2n-3)$ in $H_1(\LMb )$ up to the relations $t_{i,i+1}=t_{i+1,i+2}$ $(1\leq i\leq 2n-1)$ in $H_1(\LMb )$.  
The relations~(3) and (2)~(d) in Theorem~\ref{thm_pres_lmodb} are equivalent to the trivial relation in $H_1(\LMb )$. 
By the relation~(4) (a) in Theorem~\ref{thm_pres_lmodb}, $t_{i,j}$ for $1\leq i<j\leq 2n+1$ and $j-i\geq 2$ is a product of $h_{i}$ for $1\leq i\leq 2n-1$ and $t_{i,i+1}$ for $1\leq i\leq 2n$. 
Hence, as a presentation for an abelian group, we have
\begin{eqnarray*}
&&H_1(\LMb )\\
&\cong &\left< h_i\ (1\leq i\leq 2n-1), t_{i,j}\ (1\leq i<j\leq 2n+1) \middle| \text{the relations~(2), (4) (a)}\right> \\
&\cong &\left< h_i\ (1\leq i\leq 2n-1), t_{i,i+1}\ (1\leq i\leq 2n) \middle| \text{the relations~(2)}\right> \\
&\cong &\left< h_1, h_2, t_{1,2} \middle| \right> \\
&\cong &\Z [h_1]\oplus \Z [h_2]\oplus \Z [t_{1,2}].
\end{eqnarray*}
Therefore,  $H_1(\LMb )$ for $n\geq 2$ is isomorphic to $\Z ^3$ and we have completed the proof of Theorem~\ref{thm_abel_lmod} for $\LMb $.   
\end{proof}

\begin{proof}[Proof of Theorem~\ref{thm_abel_lmod} for $\LMp $]
We will calculate $H_1(\LMp )$ by using the finite presentations for $\LMp $ in Theorem~\ref{thm_pres_lmodp}. 
By Theorem~\ref{thm_pres_lmodp}, the presentation for $\LMp $ is obtained from the presentation for $\LMb $ in Theorem~\ref{thm_pres_lmodb} by adding the relation $t_{1,2n+1}=1$. 
When $n=1$, as a presentation for an abelian group, we have
\begin{eqnarray*}
H_1(\mathrm{LMod}_{4,\ast })
&\cong &\left< h_1, t_{1,2}, t_{2,3}, t_{1,3} \middle| t_{1,2}=t_{2,3}, t_{1,3}=h_1^2, t_{1,3}=1\right> \\
&\cong &\left< h_1, t_{1,2} \middle| h_1^2=1\right> \\
&\cong &\Z [t_{1,2}]\oplus \Z _2[h_1].
\end{eqnarray*}
Thus, $H_1(\mathrm{LMod}_{4,\ast })$ is isomorphic to $\Z \oplus \Z _2$. 

Assume that $n\geq 2$. 
By an argument similar to the proof of Theorem~\ref{thm_abel_lmod} for $\LMb $, the relations~(1), (2), and (3) in Theorem~\ref{thm_pres_lmodp} are equivalent to the relations $t_{i,i+1}=t_{i+1,i+2}$ for $1\leq i\leq 2n-1$ and $h_{i}=h_{i+2}$ for $1\leq i\leq 2n-1$ in $H_1(\LMp )$. 
By the relation~(4) (a) in Theorem~\ref{thm_pres_lmodp}, we have $t_{1,2n+1}=t_{1,2}^{-n(n-1)}h_1^{n(n+1)}h_2^{n(n-1)}$ in $H_1(\LMp )$. 
Up to these relations, the relation~(4) (b) in Theorem~\ref{thm_pres_lmodp} is equivalent to the relation $t_{1,2}^{-n(n-1)}h_1^{n(n+1)}h_2^{n(n-1)}=1$ in $H_1(\LMp )$. 
Hence, as a presentation for an abelian group, we have
\begin{eqnarray*}
H_1(\LMp )
&\cong &\left< h_1, h_2, t_{1,2} \middle| t_{1,2}^{-n(n-1)}h_1^{n(n+1)}h_2^{n(n-1)}=1 \right> \\
&\cong &\left< h_1, h_2, t_{1,2}, X \middle| (h_1^2X^{n-1})^n=1, X=t_{1,2}^{-1}h_1h_2 \right> \\
&\cong &\left< h_1, h_2, X \middle| (h_1^2X^{n-1})^n=1 \right> .
\end{eqnarray*}
When $n$ is odd, we have
\begin{eqnarray*}
\left< h_1, h_2, X \middle| (h_1^2X^{n-1})^n=1 \right> 
&\cong &\left< h_1, h_2, X, Y \middle| Y^{2n}=1, Y=h_1X^{\frac{n-1}{2}}\right> \\
&\cong &\left< h_2, X, Y \middle| Y^{2n}=1\right> \\
&\cong &\Z [h_2]\oplus \Z [X]\oplus \Z _{2n}[Y], 
\end{eqnarray*}
where $X=t_{1,2}^{-1}h_1h_2$ and $Y=h_1X^{\frac{n-1}{2}}=t_{1,2}^{-\frac{n-1}{2}}h_1^{\frac{n+1}{2}}h_2^{\frac{n-1}{2}}$. 
Thus, $H_1(\LMp )$ for odd $n\geq 3$ is isomorphic to $\Z ^2\oplus \Z _{2n}$. 
When $n$ is even, we have
\begin{eqnarray*}
\left< h_1, h_2, X \middle| (h_1^2X^{n-1})^n=1 \right> 
&\cong &\left< h_1, h_2, X, Y \middle| (Y^2X^{-1})^n=1, Y=h_1X^{\frac{n}{2}} \right> \\
&\cong &\left< h_2, X, Y \middle| (Y^2X^{-1})^n=1 \right> \\
&\cong &\left< h_2, X, Y, Z \middle| Z^n=1, Z=Y^2X^{-1} \right> \\
&\cong &\left< h_2, Y, Z \middle| Z^n=1 \right> \\
&\cong &\Z [h_2]\oplus \Z [Y]\oplus \Z _{n}[Z], 
\end{eqnarray*}
where $Y=h_1X^{\frac{n}{2}}=t_{1,2}^{-\frac{n}{2}}h_1^{\frac{n+2}{2}}h_2^{\frac{n}{2}}$ and $Z=Y^2X^{-1}=t_{1,2}^{-n+1}h_1^{n+1}h_2^{n-1}$. 
Therefore, $H_1(\LMp )$ for even $n$ is isomorphic to $\Z ^2\oplus \Z _{n}$ and we have completed the proof of Theorem~\ref{thm_abel_lmod} for $\LMp $.   
\end{proof}

We give an another proof of Theorem~1.1 in \cite{Ghaswala-Winarski1-1} as follows. 

\begin{proof}[Proof of Theorem~\ref{thm_abel_lmod} for $\LM $]
We will calculate $H_1(\LM )$ by using the finite presentations for $\LM $ in Theorem~\ref{thm_pres_lmod}. 
By an argument similar to the proof of Theorem~\ref{thm_abel_lmod} for $\LMb $, the relations~(1), (2), and (3) in Theorem~\ref{thm_pres_lmod} are equivalent to the relations $t_{i,i+1}=t_{i+1,i+2}$ for $1\leq i\leq 2n-1$ and $h_{i}=h_{i+2}$ for $1\leq i\leq 2n-1$ in $H_1(\LM )$. 
Since the relations~(5) (b) and (c) in Theorem~\ref{thm_pres_lmod} are equivalent to the relations $h_i=h_{2n-i+1}$ for $1\leq i\leq 2n$ and $t_{i,i+1}=t_{2n-i+2,2n-i+3}$ for $2\leq i\leq 2n$, we have $h_i=h_j$ for any $1\leq i<j\leq 2n$ in $H_1(\LM )$. 
By an argument in the proof of Theorem~\ref{thm_abel_lmod} for $\LMp $, the relation~(4) (b) in Theorem~\ref{thm_pres_lmod} is equivalent to the relation
\[
1=t_{1,2}^{-n(n-1)}h_1^{n(n+1)}h_2^{n(n-1)}=t_{1,2}^{-n(n-1)}h_1^{2n^2}=(t_{1,2}^{-n+1}h_{1}^{2n})^n
\]
in $H_1(\LM )$.
Since we have $t_{1,2n}=t_{2n+1,2n+2}$ and $t_{2n+1,2n}=h_{2n}\cdots h_2h_1t_{1,2}h_1^{-1}h_2^{-1}\cdots h_{2n}^{-1}$ in $\LM $, the relation $t_{1,2n}=t_{1,2}$ holds in $H_1(\LM )$. 
Thus, the relation~(4) (c) in Theorem~\ref{thm_pres_lmod} is equivalent to the relation 
\[
1=t_{1,2}^{-(n-1)(n+1)}h_1^{2n(n+1)}=(t_{1,2}^{-n+1}h_{1}^{2n})^{n+1}
\]
in $H_1(\LM )$. 
By these two relations in $H_1(\LM )$, we obtain the relation $t_{1,2}^{-n+1}h_{1}^{2n}=1$ in $H_1(\LM )$. 
We can see that these two relations are obtained from the relation $t_{1,2}^{-n+1}h_{1}^{2n}=1$ in $H_1(\LM )$.  
We remark that when $n=1$, the relation $t_{1,2}^{-n+1}h_{1}^{2n}=1$ in $H_1(\LM )$ is equivalent to the relation $h_1^2=1$. 

The relation~(5) (d) in Theorem~\ref{thm_pres_lmod} is equivalent to the relation $t_{1,j}=t_{1,2n-j+2}$ for $2\leq j\leq 2n$ in $H_1(\LM )$. %, we have the relation $t_{1,2n}=t_{1,2}$ in $H_1(\LM )$. 
We remark that when $n=1$, this relation is equivalent to the trivial relation in $H_{1}(\LM )$.  
By the relation~(4) (a), we have 
\[
t_{1,j}=\left\{
		\begin{array}{ll}
		t_{2i-1,2i}^{-(i-2)}\cdots t_{3,4}^{-(i-2)}t_{1,2}^{-(i-2)}(h_{2i-2}\cdots h_{2}h_{1})^{i}\quad \text{for }j=2i\geq 4,\\
		t_{2i-1,2i}^{-(i-1)}\cdots t_{3,4}^{-(i-1)}t_{1,2}^{-(i-1)}h_{2i-1}\cdots h_{3}h_{1}h_{1}h_{3}\cdots h_{2i-1}(h_{2i-2}\cdots h_{2}h_{1})^{i}\\ \text{for }j=2i+1\geq 5		
		\end{array}
		\right.\\
\]
in $\LM $, and the relations $t_{1,j}=t_{1,2n-j+2}$ for $2\leq j\leq 2n$ in $H_1(\LM )$ are equivalent to the following relations:  
\begin{itemize}
\item[(A)] $t_{1,2}=t_{1,2}^{-n(n-2)}h_{1}^{2n(n-1)}\Leftrightarrow 1=t_{1,2}^{-(n-1)^2}h_{1}^{2n(n-1)}=(t_{1,2}^{-n+1}h_{1}^{2n})^{n-1}$\\ for $j=2$, 
\item[(B)] $h_1^2=t_{1,2}^{-(n-1)(n-2)}h_1^{2(n-1)^2}\Leftrightarrow 1=t_{1,2}^{-(n-1)(n-2)}h_1^{2n(n-2)}=(t_{1,2}^{-n+1}h_{1}^{2n})^{n-2}$\\ for $j=3$, 
\item[(C)] $t_{1,2}^{-i(i-2)}h_{1}^{2i(i-1)}=t_{1,2}^{-(n-i-1)(n-i+1)}h_{1}^{2(n-i)(n-i+1)}$\\
$\Leftrightarrow 1=t_{1,2}^{-n^2+2in-2i+1}h_1^{2(n^2-(2i-1)n)}=(t_{1,2}^{-n+1}h_{1}^{2n})^{n-2i+1}$\\ for $j=2i$ and $2\leq i\leq n$, and 
\item[(D)] $t_{1,2}^{-i(i-1)}h_{1}^{2i^2}=t_{1,2}^{-(n-i)(n-i-1)}h_{1}^{2(n-i)^2}\\ 
\Leftrightarrow 1=t_{1,2}^{-n^2+(2i+1)n-2i}h_1^{2n(n-2i)}=(t_{1,2}^{-n+1}h_{1}^{2n})^{n-2i}$\\ for $j=2i+1$ and $2\leq i\leq n-1$
\end{itemize}
 in $H_1(\LM )$. 
Hence the relations~(A)--(D) in $H_1(\LM )$ above are obtained from the relation $t_{1,2}^{-n+1}h_{1}^{2n}=1$ in $H_1(\LM )$. 

When $n=1$, by an argument above, we have 
\begin{eqnarray*}
H_1(\mathrm{LMod}_4 )
&\cong & \left< h_1, t_{1,2}, r \middle| r^2=1, h_{1}^{2}=1 \right> \\
&\cong & \Z [t_{1,2}]\oplus \Z _2[h_1]\oplus \Z _2[r].
\end{eqnarray*}
as a presentation for an abelian group. 
Thus, $H_1(\mathrm{LMod}_4 )$ is isomorphic to $\Z \oplus \Z _2^2$. 

Assume $n\geq 2$. 
By an argument above, as a presentation for an abelian group, we have 
\begin{eqnarray*}
H_1(\LM )
&\cong &\left< h_1, t_{1,2}, r \middle| r^2=1, t_{1,2}^{-n+1}h_{1}^{2n}=1 \right> \\
&\cong &\left< h_1, t_{1,2}, r, X \middle| r^2=1, t_{1,2}X^n=1, X=t_{1,2}^{-1}h_1^2 \right> \\
&\cong &\left< h_1,  r, X \middle| r^2=1, X^{n-1}h_{1}^2=1 \right> .
\end{eqnarray*}
When $n$ is odd, we have
\begin{eqnarray*}
&&\left< h_1,  r, X \middle| r^2=1, X^{n-1}h_{1}^2=1 \right> \\
&\cong &\left< h_1,  r,  X, Y \middle| r^2=1, Y^2=1, Y=X^{\frac{n-1}{2}}h_1 \right> \\
&\cong &\left< r,  X, Y \middle| r^2=1, Y^2=1 \right> \\
&\cong & \Z [X]\oplus \Z _2[r]\oplus \Z _2[Y],
\end{eqnarray*}
where $X=t_{1,2}^{-1}h_1^2$ and $Y=X^{\frac{n-1}{2}}h_1=t_{1,2}^{-\frac{n-1}{2}}h_1^{n}$. 
Thus $H_1(\LM )$ for odd $n\geq 3$ is isomorphic to $\Z \oplus \Z _2^2$.  
When $n$ is even, we have
\begin{eqnarray*}
&&\left< h_1,  r, X \middle| r^2=1, X^{n-1}h_{1}^2=1 \right> \\
&\cong &\left< h_1,  r,  X, Y \middle| r^2=1, X^{-1}Y^2=1, Y=X^{\frac{n}{2}}h_1 \right> \\
&\cong &\left< r,  X, Y \middle| r^2=1, X^{-1}Y^2=1 \right> \\
&\cong &\left< r,  Y \middle| r^2=1 \right> \\
&\cong & \Z [Y]\oplus \Z _2[r],
\end{eqnarray*}
where $Y=X^{\frac{n}{2}}h_1=t_{1,2}^{-\frac{n}{2}}h_1^{n+1}$. 
Therefore, $H_1(\LM )$ for even $n$ is isomorphic to $\Z \oplus \Z _2$ and we have completed the proof of Theorem~\ref{thm_abel_lmod} for $\LM $.   
\end{proof}

\subsection{Proof of Lemma~\ref{cor_tech_rel8}}\label{section_tech_rel}

In this section, we prove Lemma~\ref{cor_tech_rel8} and introduce some useful relations in liftable mapping class groups which are obtained from relations in Theorems~\ref{thm_pres_lmodb}, \ref{thm_pres_lmodp}, and \ref{thm_pres_lmod} and are also used in Sections~\ref{section_tech_rel19} and \ref{section_proof_t_ij_pres}.  
After this, throughout Section~\ref{section_lmod}, a label of a relation indicates one of relations in Theorems~\ref{thm_pres_lmodb}, \ref{thm_pres_lmodp}, and \ref{thm_pres_lmod}. 
Remark that $t_{2n+1,2n+2}=t_{1,2n}$. 
The next lemma is immediately obtained by using the relations~(1), (2) (a), (b), and $t_{2n+1,2n+2}=t_{1,2n}$. 

\begin{lem}\label{tech_rel0}
\begin{enumerate}
\item[(A)] Up to the relations~(1), (2) (a), (b), and $t_{2n+1,2n+2}=t_{1,2n}$, the relation~(2)~(d), i.e. $h_ih_{i+1}t_{i,i+1}=t_{i,i+1}h_{i+1}h_i$ for $1\leq i\leq 2n-1$, is equivalent to the following relations:
\begin{align*}
&h_ih_{i+1}t_{i,i+1}=h_{i+1}h_it_{i+2,i+3}, \quad t_{i+2,i+3}h_ih_{i+1}=t_{i,i+1}h_{i+1}h_i, \\
&h_ih_{i+1}=h_{i+1}h_it_{i,i+1}^{-1}t_{i+2,i+3}, \quad h_ih_{i+1}= t_{i,i+1}t_{i+2,i+3}^{-1}h_{i+1}h_i, \\
&h_{i+1}h_i=h_ih_{i+1}t_{i,i+1}t_{i+2,i+3}^{-1}, \quad  h_{i+1}h_i=t_{i,i+1}^{-1}t_{i+2,i+3}h_ih_{i+1}. 
\end{align*}
\item[(B)] Up to the relations~(1), (2) (a), (b), and $t_{2n+1,2n+2}=t_{1,2n}$, the relation~(2) (e), i.e. $h_ih_{i+2}h_it_{i+1,i+2}^{-1}=t_{i+2,i+3}^{-1}h_{i+2}h_ih_{i+2}$ for $1\leq i\leq 2n-2$, is equivalent to the following relations:
\begin{align*}
&t_{i+1,i+2}^{-1}h_ih_{i+2}h_i=t_{i+2,i+3}^{-1}h_{i+2}h_ih_{i+2}, \quad h_ih_{i+2}h_it_{i+1,i+2}^{-1}=h_{i+2}h_ih_{i+2}t_{i+2,i+3}^{-1},\\
&t_{i+1,i+2}^{-1}h_ih_{i+2}h_i=h_{i+2}h_ih_{i+2}t_{i+2,i+3}^{-1}, \quad h_ih_{i+2}h_i=t_{i+2,i+3}^{-1}h_{i+2}h_ih_{i+2}t_{i+1,i+2},\\
&h_ih_{i+2}h_i=t_{i+1,i+2}h_{i+2}h_ih_{i+2}t_{i+2,i+3}^{-1}, \quad h_ih_{i+2}h_i=t_{i+1,i+2}t_{i+2,i+3}^{-1}h_{i+2}h_ih_{i+2},\\
&h_ih_{i+2}h_i=h_{i+2}h_ih_{i+2}t_{i+2,i+3}^{-1}t_{i+1,i+2}. 
\end{align*} 
\end{enumerate}
\end{lem}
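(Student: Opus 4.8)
The plan is to treat this as a purely formal consequence of the listed auxiliary relations, exploiting that each of relations~(2)~(a) and~(2)~(b) in Theorem~\ref{thm_pres_lmodb}, read for $\varepsilon \in \{1,-1\}$, encodes a pair of conjugation identities. Explicitly, relation~(2)~(a) gives both $h_i t_{i,i+1} h_i^{-1} = t_{i+1,i+2}$ and $h_i t_{i+1,i+2} h_i^{-1} = t_{i,i+1}$, while relation~(2)~(b) gives $h_i h_{i+1} t_{i,i+1} (h_i h_{i+1})^{-1} = t_{i+2,i+3}$ together with its $\varepsilon = -1$ companion $h_i^{-1} h_{i+1}^{-1} t_{i,i+1} h_{i+1} h_i = t_{i+2,i+3}$. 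All manipulations below are reversible (multiplying an equation on either side by a fixed element, taking inverses, and substituting one established identity into another), so in each case it suffices to transform relation~(2)~(d) (resp.~(2)~(e)) into each listed variant; equivalence in both directions then follows automatically.

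For part~(A), I would first rewrite the right-hand side $t_{i,i+1} h_{i+1} h_i$ of relation~(2)~(d) using the $\varepsilon = -1$ reading of~(2)~(b), which yields $t_{i,i+1} h_{i+1} h_i = h_{i+1} h_i t_{i+2,i+3}$; substituting this into~(2)~(d) produces the first listed relation. Dually, rewriting the left-hand side $h_i h_{i+1} t_{i,i+1}$ by the $\varepsilon = +1$ reading of~(2)~(b) as $t_{i+2,i+3} h_i h_{i+1}$ gives the second. The remaining four variants are obtained from these two by cancelling $t_{i,i+1}$ or $t_{i+2,i+3}$ across the equality and commuting the two disjoint Dehn twists $t_{i,i+1}$ and $t_{i+2,i+3}$ past each other, which is legitimate by the commutative relation~(1)~(b).

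For part~(B), the key preliminary observation is that $t_{i+1,i+2}$ commutes with the word $h_i h_{i+2} h_i$: using $h_i t_{i+1,i+2} = t_{i,i+1} h_i$ from~(2)~(a), the commutativity $h_{i+2} \rightleftarrows t_{i,i+1}$ from~(1)~(c), and $h_i t_{i,i+1} = t_{i+1,i+2} h_i$ again from~(2)~(a), one computes $h_i h_{i+2} h_i\, t_{i+1,i+2} = t_{i+1,i+2}\, h_i h_{i+2} h_i$. Symmetrically, $t_{i+2,i+3}$ commutes with $h_{i+2} h_i h_{i+2}$. Feeding these two commutations into relation~(2)~(e) lets me slide $t_{i+1,i+2}^{\pm 1}$ and $t_{i+2,i+3}^{\pm 1}$ to either side, and the seven listed forms then follow by the same elementary cancellation-and-commute moves as in part~(A).

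The content of the lemma is entirely bookkeeping rather than any single hard step; the only points requiring care are (i) keeping track of both conjugation readings of~(2)~(a) and~(2)~(b) so that twists are moved in the correct direction, (ii) invoking the commutative relation~(1) at each reordering of disjoint twists, and (iii) the boundary index $i = 2n-1$ in part~(A) (resp.\ $i = 2n-2$ in part~(B)), where the symbol $t_{i+2,i+3}$ becomes $t_{2n+1,2n+2}$ and must be read as $t_{1,2n}$ via the identity $t_{2n+1,2n+2} = t_{1,2n}$; this is exactly why that identity appears among the hypotheses.
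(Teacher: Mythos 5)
Your proposal is correct and matches the paper's intent exactly: the paper gives no explicit proof, stating only that the lemma ``is immediately obtained by using the relations~(1), (2) (a), (b), and $t_{2n+1,2n+2}=t_{1,2n}$,'' and your manipulations (reading (2)(a), (b) as conjugation identities, commuting disjoint twists via (1), and invoking $t_{2n+1,2n+2}=t_{1,2n}$ at the boundary indices) are precisely the elementary steps being left to the reader. In particular, your observation that $t_{i+1,i+2}$ commutes with $h_ih_{i+2}h_i$ and $t_{i+2,i+3}$ with $h_{i+2}h_ih_{i+2}$ is the right key fact for part~(B), and all your moves are reversible, so the claimed equivalences hold.
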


We also call the relations in~(A) and (B) of Lemma~\ref{tech_rel0} the relations~(2) (d) and (2) (e), respectively. 

\begin{lem}\label{tech_rel2}
For $1\leq i<j\leq 2n+2$ in the case of $\LM $, $1\leq i<j\leq 2n+1$ in the case of $\LMb $, and $1\leq l\leq 2n+1$ such that $j-i\geq 3$ is odd and $l-i$ is even, the relation
\[
(h_{j-2}\cdots h_{i+1}h_i)^{\frac{j-i+1}{2}}\rightleftarrows t_{l,l+1}
\]
is obtained from the relations~(1), (2), and $t_{2n+1,2n+2}=t_{1,2n}$. 
\end{lem}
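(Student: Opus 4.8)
The plan is to prove the stronger statement that conjugation by $c\coloneqq h_{j-2}\cdots h_{i+1}h_i$ carries the adjacent twist $t_{l,l+1}$ (for $l\equiv i\pmod 2$) to another adjacent twist by an explicit index permutation $\phi$, and that $\phi$ restricts to a single cycle of length exactly $m\coloneqq\frac{j-i+1}{2}$ on the twists actually moved. Since $c^m\rightleftarrows t_{l,l+1}$ is equivalent to $c^m t_{l,l+1}c^{-m}=t_{l,l+1}$, the claim then drops out immediately. Throughout I would only ever conjugate a single $t_{l,l+1}$, and I would only use relations~(1)(c), (2)(a), (2)(b) together with the identification $t_{2n+1,2n+2}=t_{1,2n}$, which is exactly the allowed toolkit; in particular I never need to conjugate a non-adjacent twist by a single $h_k$, even though such twists appear transiently.

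First I would split on the position of $l$ relative to the interval $[i,j-1]$. Because $l\equiv i\pmod 2$ while $j-i$ is odd, we have $l\neq i-1$ and $l\neq j$, so either $l\le i-2$, or $l\ge j+1$, or $i\le l\le j-1$. In the first two ranges relation~(1)(c) gives $h_k\rightleftarrows t_{l,l+1}$ for every factor $h_k$ of $c$ (the non-commuting indices are exactly $k\in\{l-2,l-1,l,l+1\}$, none of which occur among $i\le k\le j-2$ when $l\le i-2$ or $l\ge j+1$); hence $c$, and a fortiori $c^m$, commutes with $t_{l,l+1}$.

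The substantive range is $i\le l\le j-1$, where the moved twists are precisely $t_{l,l+1}$ with $l\in\{i,i+2,\dots,j-1\}$, a set of $m$ twists. For $l=i$ each factor acts as a clean transposition by relation~(2)(a), $h_k t_{k,k+1}h_k^{-1}=t_{k+1,k+2}$, so the twist is carried up the whole chain and $c\,t_{i,i+1}\,c^{-1}=t_{j-1,j}$; thus $\phi(i)=j-1$. For $i<l\le j-1$ the factors $h_i,\dots,h_{l-3}$ commute with $t_{l,l+1}$, the pair $h_{l-1}h_{l-2}$ sends $t_{l,l+1}$ to $t_{l-2,l-1}$ by relation~(2)(b), and the remaining factors $h_l,\dots,h_{j-2}$ commute with $t_{l-2,l-1}$; hence $c\,t_{l,l+1}\,c^{-1}=t_{l-2,l-1}$, i.e. $\phi(l)=l-2$. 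Therefore $\phi$ is the single $m$-cycle $(i\ \ j-1\ \ j-3\ \cdots\ i+2)$ on these $m$ twists, so $c^m$ fixes each of them, which is the desired commutation.

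The main obstacle is the boundary behaviour in $\LM$ when $j=2n+2$: there the top of the chain involves $h_{2n}$ and the twist $t_{2n+1,2n+2}$, which is not adjacent and must be read as $t_{1,2n}$. In this situation the clean-swap computation of $\phi(i)=j-1$ and the turnaround $\phi(l)=l-2$ near the top have to be run through the $\LM$-versions of relations~(2)(a),(b) (the relations of Lemma~\ref{tech_rel1}, which the paper already counts among the relations~(2)) and through the identification $t_{2n+1,2n+2}=t_{1,2n}$, so that the orbit still closes into an $m$-cycle on $\{i,i+2,\dots,2n+1\}$ rather than escaping to the long twist $t_{1,2n}$. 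For $\LMb$, and for $\LM$ with $j\le 2n+1$, no wraparound occurs and the computation of the previous paragraph applies verbatim. Checking that these boundary conjugations reproduce exactly the index map $\phi$ is the one delicate point, and it is precisely what the hypothesis $t_{2n+1,2n+2}=t_{1,2n}$ is there to supply.
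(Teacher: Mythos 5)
Your proposal is correct and follows essentially the same route as the paper: the paper likewise computes that conjugation by $h_{j-2}\cdots h_{i+1}h_i$ sends $t_{i,i+1}$ to $t_{j-1,j}$ (via the chain of relations~(2)~(a)) and $t_{l,l+1}$ to $t_{l-2,l-1}$ for $i+2\leq l\leq j-1$ (via relation~(2)~(b) with $\varepsilon =-1$ and the commutations~(1)~(c)), handling the $j=2n+2$ wraparound in $\LM $ with the relations of Lemma~\ref{tech_rel1} and $t_{2n+1,2n+2}=t_{1,2n}$, and then concludes from the fact that this permutation of the $\frac{j-i+1}{2}$ twists is a single cycle of that length. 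Your identification of the index cycle $\phi $ and of exactly which relations are needed at the boundary matches the paper's argument.
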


\begin{proof}
%Recall that  the relations~(1) (c$^\prime $), (2) (a$^\prime $), and (b$^\prime $) in Lemma~\ref{tech_rel1} are obtained from the relations~(1) (c), (2) (a), (b), and (d)--(f). 
In the cases of $l\leq i-2$ or $j+1\leq l$, the relation $(h_{j-2}\cdots h_{i+1}h_i)^{\frac{j-i+1}{2}}\rightleftarrows t_{l,l+1}$ is obtained from the relations~(1) (c). % in Theorem~\ref{thm_pres_lmodb} and (1) (c$^\prime $) in Lemma~\ref{tech_rel1}. 
Since $j-i$ is odd and $l-i$ is even, $j-l$ is odd and we have $l\not \in \{ i+1,\ j\}$. 
Hence we suppose that $i\leq l\leq j-1$. 
By the relations~(2) (a), (b), % in Theorem~\ref{thm_pres_lmodb} and the relations~(2) (a$^\prime $), (b$^\prime $)  in Lemma~\ref{tech_rel1}, 
we have the relations $h_lt_{l,l+1}=t_{l+1,l+2}h_l$, $h_{l-1}h_{l-2}t_{l,l+1}=t_{l-2,l-1}h_{l-1}h_{l-2}$, $h_{2n}t_{2n,2n+1}=t_{1,2n}h_{2n}$, and $h_{2n}h_{2n-1}t_{1,2n}=t_{2n-1,2n}h_{2n}h_{2n-1}$. 
Thus, by also using the relations~(1) (c), % in Theorem~\ref{thm_pres_lmodb} and the relations~(1) (c$^\prime $) in Lemma~\ref{tech_rel1},
for $j\leq 2n+1$ and $i\leq l\leq j-1$ with even $l-i$, we have
\[
(h_{j-2}\cdots h_{i+1}h_i)t_{l,l+1}=\left\{
		\begin{array}{ll}
		t_{j-1,j}(h_{j-2}\cdots h_{i+1}h_i) & \text{for }l=i,\\
		t_{l-2,l-1}(h_{j-2}\cdots h_{i+1}h_i) &  \text{for }i+2\leq l \leq j-1,
		\end{array}
		\right. 
\]
and when $j=2n+2$, for $i\leq l\leq 2n+1$ with even $l-i$, we have
\[
(h_{2n}\cdots h_{i+1}h_i)t_{l,l^\prime }=\left\{
		\begin{array}{ll}
		t_{1,2n}(h_{2n}\cdots h_{i+1}h_i) & \text{for }(l,l^\prime )=(i,i+1),\\
		t_{l-2,l-1}(h_{2n}\cdots h_{i+1}h_i) &  \text{for }i+2\leq l\leq 2n-1\\
		 &\text{and }l^\prime =l+1,\\
		t_{2n-1,2n}(h_{2n}\cdots h_{i+1}h_i) &  \text{for }(l,l^\prime )=(1,2n).\\
		\end{array}
		\right. 
\]
We can see that the subgroup of $\LM $ or $\LMp $ generated by $h_{j-2}\cdots h_{i+1}h_i$ acts on the $\frac{j-i+1}{2}$ points set $\{ t_{i,i+1},\ t_{i+2,i+3},\ \dots ,\ t_{j-1,j-2}\}$ by conjugations and the action of $h_{j-2}\cdots h_{i+1}h_i$ has order $\frac{j-i+1}{2}$. 
Therefore the relation $(h_{j-2}\cdots h_{i+1}h_i)^{\frac{j-i+1}{2}}t_{l,l+1}=t_{l,l+1}(h_{j-2}\cdots h_{i+1}h_i)^{\frac{j-i+1}{2}}$ holds in $\LMb $ for $j\leq 2n+1$ and $\LM $ and is obtained from the relations~(1), (2), and $t_{2n+1,2n+2}=t_{1,2n}$. 
We have completed the proof of Lemma~\ref{tech_rel2}. 
\end{proof}

\begin{lem}\label{tech_rel3}
For $1\leq i<j\leq 2n+2$ in the case of $\LM $, $1\leq i<j\leq 2n+1$ in the case of $\LMb $, $1\leq l\leq 2n+1$, and a product $A$ of generators for $\LM $ (resp. for $\LMb $) in Theorem~\ref{thm_pres_lmod} (resp. in Theorem~\ref{thm_pres_lmod}) commuting with $t_{l+1,l+2}$, we assume that $j-i\geq 4$ and $l-i$ are even and $l\not= j$. 
Then, the relation
\[
h_{j-2}\cdots h_{i+2}h_iAh_ih_{i+2}\cdots h_{j-2}\rightleftarrows t_{l,l+1}
\]
is obtained from the relations~(1), (2), and $t_{2n+1,2n+2}=t_{1,2n}$. 
\end{lem}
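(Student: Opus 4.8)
The plan is to prove the relation by computing the conjugate $Wt_{l,l+1}W^{-1}$, where $W=h_{j-2}\cdots h_{i+2}h_i\,A\,h_ih_{i+2}\cdots h_{j-2}$, and showing that it collapses to $t_{l,l+1}$ using only relations~(1), (2), and the wraparound $t_{2n+1,2n+2}=t_{1,2n}$. Writing $P=h_ih_{i+2}\cdots h_{j-2}$ for the product of the even--spaced half--rotations, the word $W$ has the palindromic shape $W=\bar P\,A\,P$, where $\bar P=h_{j-2}\cdots h_{i+2}h_i$ is the reverse of $P$. Hence $Wt_{l,l+1}W^{-1}=\bar P\,A\,(Pt_{l,l+1}P^{-1})\,A^{-1}\bar P^{-1}$, and I would evaluate the three nested conjugations from the inside out: $P$ should shift the subscript up by one step, $A$ should fix the shifted twist, and $\bar P$ should shift it back down.

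The decisive case is $i\le l\le j-2$. Here the parity hypothesis ``$l-i$ even'' guarantees that, among the generators $h_i,h_{i+2},\dots,h_{j-2}$ occurring in $P$, the only factors of the correct parity that could fail to commute with $t_{l,l+1}$ by relation~(1)(c) are $h_{l-2}$ and $h_l$. When one peels $Pt_{l,l+1}P^{-1}$ from the factor adjacent to $t_{l,l+1}$, one conjugates first by $h_{j-2}$, then decreasingly down to $h_i$: every factor $h_m$ with $m>l$ commutes with $t_{l,l+1}$ by~(1)(c); then $h_lt_{l,l+1}h_l^{-1}=t_{l+1,l+2}$ by~(2)(a) (or, in the endpoint case $l=2n$ within $\LM$, by the wraparound together with the $\LM$--relation $h_{2n}^{\pm1}t_{2n,2n+1}=t_{1,2n}h_{2n}^{\pm1}$ and $t_{2n+1,2n+2}=t_{1,2n}$); and finally each remaining factor $h_m$ with $m\le l-2$, in particular $h_{l-2}$, commutes with the \emph{already shifted} twist $t_{l+1,l+2}$ again by~(1)(c). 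Thus $Pt_{l,l+1}P^{-1}=t_{l+1,l+2}$. The hypothesis that $A$ commutes with $t_{l+1,l+2}$ is then exactly what is needed at the next layer, giving $A\,t_{l+1,l+2}\,A^{-1}=t_{l+1,l+2}$. The outermost conjugation by $\bar P$ is the mirror computation: peeling from the factor adjacent to $t_{l+1,l+2}$ conjugates first by $h_i$, then increasingly up to $h_{j-2}$; the factors $h_m$ with $m\le l-2$ commute with $t_{l+1,l+2}$, then $h_lt_{l+1,l+2}h_l^{-1}=t_{l,l+1}$ (using the $h_l^{-1}$ form of~(2)(a)), and the factors $h_m$ with $m\ge l+2$ commute with $t_{l,l+1}$. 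Composing the three layers yields $Wt_{l,l+1}W^{-1}=t_{l,l+1}$, with every step invoking only relations~(1), (2), and the wraparound.

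For the remaining values of $l$ (that is, $l<i$ or $l>j-2$, recalling $l\ne j$), the parity hypothesis forces every generator $h_m$ appearing in $W$ to satisfy the index inequalities of~(1)(c) against $t_{l,l+1}$, so each $h_m$ commutes with $t_{l,l+1}$ directly; the conjugate then reduces to $A\,t_{l,l+1}\,A^{-1}$, and the relation reduces to the commutation of $A$ with $t_{l,l+1}$, which holds in the situations where the lemma is applied (where $A$ is supported away from $p_l,p_{l+1}$). I would also record that the condition $l\ne j$ is essential: for $l=j$ the top factor $h_{j-2}$ meets $t_{j,j+1}$ along $p_j$, so the conjugation no longer returns a single Dehn twist and the clean subscript shift breaks down.

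The main obstacle I anticipate is the index--and--parity bookkeeping rather than any conceptual difficulty: one must check in each subcase that the conjugating factors are applied in the order that lets~(1)(c) clear them both before and after the single genuine application of~(2)(a), and that the endpoint case $l=2n$ is correctly absorbed by the identification $t_{2n+1,2n+2}=t_{1,2n}$ and the $\LM$--specific form of~(2)(a). Phrasing the argument throughout as ``$P$ raises the subscript by one step, $A$ fixes it, and $\bar P$ lowers it back'' is what keeps this bookkeeping under control.
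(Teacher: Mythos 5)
Your proof is correct and is essentially the paper's own argument: the paper likewise passes $t_{l,l+1}$ through the word $h_{j-2}\cdots h_{i+2}h_iAh_ih_{i+2}\cdots h_{j-2}$, using (1)(c) for every factor whose index differs from $l$, applying (2)(a) at the two occurrences of $h_l$ (raising $t_{l,l+1}$ to $t_{l+1,l+2}$ and then lowering it back), and invoking the hypothesis on $A$ exactly once for the middle passage, so your nested-conjugation bookkeeping $\bar{P}A(Pt_{l,l+1}P^{-1})A^{-1}\bar{P}^{-1}$ is the same computation read inside-out. Your treatment of the peripheral cases ($l\leq i-2$ or $l\geq j+1$) and of the endpoint situation via $t_{2n+1,2n+2}=t_{1,2n}$ and the $\LM$-form of (2)(a) also matches the paper, which in those peripheral cases likewise clears the $h$-factors by (1)(c) (and, like you, leaves the commutation of $A$ itself to the contexts in which the lemma is applied).
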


\begin{proof}
In the cases of $l\leq i-2$ or $j+1\leq l$, the relation $h_{j-2}\cdots h_{i+2}h_iAh_ih_{i+2}\cdots h_{j-2}\rightleftarrows t_{l,l+1}$ is obtained from the relations~(1) (c) and $t_{2n+1,2n+2}=t_{1,2n}$. 
Since $j-i$ and $l-i$ are even, $j-l$ is also even and we have $l\not \in \{ i-1,\ j-1\}$. 
Hence we suppose that $i\leq l\leq j-2$. 
Then, up to the relation $t_{2n+1,2n+2}=t_{1,2n}$, we have
\begin{eqnarray*}
&&h_{j-2}\cdots h_{i+2}h_iAh_ih_{i+2}\cdots h_{j-2}\cdot \underset{\leftarrow }{\underline{t_{l,l+1}}}\\
&\overset{\text{(1)(c)}}{\underset{}{=}}&h_{j-2}\cdots h_{i+2}h_ih_iAh_{i+2}\cdots h_{l-2}\underline{h_{l}\cdot t_{l,l+1}}\cdot h_{l+2}\cdots h_{j-2}\\
&\overset{\text{(2)(a)}}{\underset{}{=}}&h_{j-2}\cdots h_{i+2}h_iAh_ih_{i+2}\cdots h_{l-2}\cdot \underset{\leftarrow }{\underline{t_{l+1,l+2}}}\cdot h_{l}h_{l+2}\cdots h_{j-2}\\
&\overset{\text{(1)}}{\underset{}{=}}&h_{j-2}\cdots h_{l+2}\underline{h_{l}\cdot t_{l+1,l+2}}\cdot h_{l-2}\cdots h_{i+2}h_iAh_ih_{i+2}\cdots h_{j-2}\\
&\overset{\text{(2)(a)}}{\underset{}{=}}&h_{j-2}\cdots h_{l+2}\cdot \underset{\leftarrow }{\underline{t_{l,l+1}}}\cdot h_{l}h_{l-2}\cdots h_{i+2}h_iAh_ih_{i+2}\cdots h_{j-2}\\
&\overset{\text{(1)(c)}}{\underset{}{=}}&t_{l,l+1}\cdot h_{j-2}\cdots h_{i+2}h_iAh_ih_{i+2}\cdots h_{j-2}.
\end{eqnarray*}
Therefore, the relation $h_{j-2}\cdots h_{i+2}h_iAh_ih_{i+2}\cdots h_{j-2}\rightleftarrows t_{l,l+1}$ is obtained from the relations~(1) (c), (2) (a), and $t_{2n+1,2n+2}=t_{1,2n}$, and we have completed the proof of Lemma~\ref{tech_rel3}. 
\end{proof}

As a corollary of Lemma~\ref{tech_rel3}, we have the following corollary. 

\begin{cor}\label{cor_tech_rel3}
For $1\leq i<j\leq 2n+2$ in the case of $\LM $, $1\leq i<j\leq 2n+1$ in the case of $\LMb $, and $1\leq l\leq 2n+1$ such that $j-i\geq 4$ and $l-i$ are even and $l\not= j$, the relation
\[
h_{j-2}\cdots h_{i+2}h_ih_ih_{i+2}\cdots h_{j-2}\rightleftarrows t_{l,l+1}
\]
is obtained from the relations~(1), (2), and $t_{2n+1,2n+2}=t_{1,2n}$. 
\end{cor}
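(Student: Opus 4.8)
The plan is to derive this statement as the special case $A=1$ of Lemma~\ref{tech_rel3}, where $A$ is taken to be the empty word (equivalently, the identity element). No new computation is required: the entire content of the corollary is already contained in the more general lemma, and the point is simply to check that the empty word is an admissible choice for the auxiliary factor $A$.

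Concretely, I would argue as follows. The identity element is a (trivial) product of generators, and it commutes with $t_{l+1,l+2}$ for every $l$, so the hypothesis on $A$ in Lemma~\ref{tech_rel3} is vacuously satisfied. The remaining constraints on $i$, $j$, and $l$ — that $j-i\geq 4$ and $l-i$ are even, that $l\neq j$, and the ranges $1\leq i<j\leq 2n+2$ for $\LM$ or $1\leq i<j\leq 2n+1$ for $\LMb$ with $1\leq l\leq 2n+1$ — are exactly the hypotheses of the corollary. Substituting $A=1$ into the conclusion of Lemma~\ref{tech_rel3} turns the word $h_{j-2}\cdots h_{i+2}h_i A h_i h_{i+2}\cdots h_{j-2}$ into $h_{j-2}\cdots h_{i+2}h_i h_i h_{i+2}\cdots h_{j-2}$, which is precisely the word appearing in the corollary. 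Hence the commutative relation
\[
h_{j-2}\cdots h_{i+2}h_ih_ih_{i+2}\cdots h_{j-2}\rightleftarrows t_{l,l+1}
\]
follows immediately.

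Finally, I would note that this incurs no enlargement of the permitted relation set: the derivation inside the proof of Lemma~\ref{tech_rel3} uses only the relations~(1), (2) (in particular (1)(c) and (2)(a)) together with $t_{2n+1,2n+2}=t_{1,2n}$, so the same relations suffice to obtain the specialized statement. There is no genuine obstacle here; the only thing worth verifying explicitly is the admissibility of $A=1$, which is immediate since the trivial word commutes with everything, in particular with $t_{l+1,l+2}$. Thus the corollary is an immediate consequence of Lemma~\ref{tech_rel3}.
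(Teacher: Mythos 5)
Your proposal is correct and coincides with the paper's own treatment: the corollary is stated there without a separate proof precisely because it is the specialization $A=1$ of Lemma~\ref{tech_rel3}, the empty word being an admissible choice since it trivially commutes with $t_{l+1,l+2}$. Nothing further is needed.
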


\begin{lem}\label{tech_rel4}
For $1\leq i<j\leq 2n+2$ in the case of $\LM $, $1\leq i<j\leq 2n+1$ in the case of $\LMb $, $1\leq l\leq 2n-1$, and a product $A$ of generators for $\LM $ (resp. for $\LMb $) in Theorem~\ref{thm_pres_lmod} (resp. in Theorem~\ref{thm_pres_lmod}), we assume that $j-i\geq 4$ and $l-i$ are even and $l\not \in \{ i-2,\ j\} $, and each generator which appears in $A$ commutes with $h_{l+1}t_{l+2,l+3}^{-1}h_{l+2}t_{l+1,l+2}$. 
Then, the relation
\[
h_{j-2}\cdots h_{i+2}h_iAh_ih_{i+2}\cdots h_{j-2}\rightleftarrows h_{l+1}h_l
\]
is obtained from the relations~(1), (2), and $t_{2n+1,2n+2}=t_{1,2n}$. 
\end{lem}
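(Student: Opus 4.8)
The plan is to show that the palindromic word $W=h_{j-2}\cdots h_{i+2}h_iAh_ih_{i+2}\cdots h_{j-2}$ commutes with $h_{l+1}h_l$ by transporting the block $h_{l+1}h_l$ from the right end of $W$ to its left end, across the two mirror halves $\bar{P}=h_ih_{i+2}\cdots h_{j-2}$ and $P=h_{j-2}\cdots h_{i+2}h_i$, absorbing the central factor $A$ via the hypothesis. Writing $Q=h_{l+1}t_{l+2,l+3}^{-1}h_{l+2}t_{l+1,l+2}$, the heart of the argument is the pair of \emph{conjugation identities}
\[
\bar{P}\,(h_{l+1}h_l)=Q\,\bar{P},\qquad (h_{l+1}h_l)\,P=P\,Q ,
\]
which say that crossing either half turns $h_{l+1}h_l$ into $Q$; consistently, $\Psi(Q)=(l+1\ l+3)(l+2\ l+4)$ is the image of $\Psi(h_{l+1}h_l)=(l+1\ l+3)(l\ l+2)$ conjugated by the cyclic permutation $\Psi(\bar{P})=(i\ i+2\ \cdots\ j)$. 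Granting the two identities, since each generator occurring in $A$ commutes with $Q$ by hypothesis the whole word $A$ commutes with $Q$, and we conclude
\[
W(h_{l+1}h_l)=PA\bar{P}(h_{l+1}h_l)=PAQ\bar{P}=PQA\bar{P}=(h_{l+1}h_l)PA\bar{P}=(h_{l+1}h_l)W ,
\]
which is the desired relation.

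First I would reduce each conjugation identity to a single local computation. Because $l-i$ is even, every $h$-index appearing in $\bar{P}$ has the same parity as $l$; in the generic position $i\le l$ and $l+2\le j-2$ I write $\bar{P}=B_1\,h_lh_{l+2}\,B_2$ with $B_1=h_i\cdots h_{l-2}$ and $B_2=h_{l+4}\cdots h_{j-2}$. The relation~(1)(a) shows that $h_{l+1}h_l$ commutes with $B_2$ (every index there exceeds $l+1$ by at least $3$), while the relations~(1)(a) and~(1)(c) show that $Q$ commutes with $B_1$ (every index there is at most $l-2$, hence at distance at least $3$ from $l+1,l+2$ and disjoint from the curves $\gamma_{l+1,l+2},\gamma_{l+2,l+3}$). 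Thus $\bar{P}(h_{l+1}h_l)=Q\bar{P}$ collapses to the local identity $h_lh_{l+2}(h_{l+1}h_l)=Q(h_lh_{l+2})$, and, writing $P=C_2\,h_{l+2}h_l\,C_1$ with $C_2=h_{j-2}\cdots h_{l+4}$ and $C_1=h_{l-2}\cdots h_i$, the second identity collapses in the same way to $(h_{l+1}h_l)(h_{l+2}h_l)=(h_{l+2}h_l)\,Q$.

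The main obstacle is establishing these two local identities from the relations~(1), (2), and $t_{2n+1,2n+2}=t_{1,2n}$; this is precisely where the shape of $Q$ is forced. For the first I would begin with $h_lh_{l+2}h_{l+1}h_l$, rewrite $h_{l+2}h_{l+1}$ by the form $h_{l+2}h_{l+1}=h_{l+1}h_{l+2}t_{l+1,l+2}t_{l+3,l+4}^{-1}$ of relation~(2)(d), shuttle the twists rightward using $t_{l+3,l+4}^{-1}\rightleftarrows h_l$ (relation~(1)(c)) and $t_{l+1,l+2}h_l=h_lt_{l,l+1}$ (relation~(2)(a)), apply the braid‑type relation~(2)(c) in the form $h_lh_{l+1}h_{l+2}h_l=h_{l+2}h_lh_{l+1}h_{l+2}$, and finally reconcile the surviving twist factors with the definition of $Q$ by means of relations~(2)(a) and~(2)(e) (the identity $h_lh_{l+2}h_lt_{l+1,l+2}^{-1}=t_{l+2,l+3}^{-1}h_{l+2}h_lh_{l+2}$); the second local identity is obtained by an entirely analogous manipulation. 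It remains to dispose of the degenerate positions of $l$: the case $l=i$ is covered verbatim with $B_1$ and $C_1$ empty, the far positions $l\le i-4$ and $l\ge j+2$ reduce to direct commutation since then $h_{l+1}h_l$ meets neither half, and the boundary case $l=j-2$, where $h_{l+2}$ is absent from $W$, requires the correspondingly truncated local computation. The excluded values $l\in\{i-2,j\}$ are exactly those for which the critical block straddles the end of $W$, so that the transported element would fail to close up to the clean form $Q$; their exclusion is what makes the uniform statement valid.
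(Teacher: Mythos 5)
Your proposal is correct and takes essentially the same approach as the paper: the paper's proof is exactly your transport argument, moving $h_{l+1}h_l$ leftward through the word, converting it at the block $h_lh_{l+2}$ of the right half into $Q=h_{l+1}t_{l+2,l+3}^{-1}h_{l+2}t_{l+1,l+2}$ (which crosses $A$ by hypothesis), and reconverting it at the block $h_{l+2}h_l$ of the left half, so your two local conjugation identities are precisely the two halves of the paper's single inline computation. The only differences are cosmetic --- the paper derives the local identities via (2)(d), (2)(a), (1)(c), (2)(e) rather than through the braid-type relation (2)(c), and, like your write-up, its computation silently assumes $l\le j-4$ so that $h_{l+2}$ actually occurs in the word.
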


\begin{proof}
In the cases of $l+1\leq i-3$ or $j+1\leq l$, the relation $h_{j-2}\cdots h_{i+2}h_iAh_ih_{i+2}\cdots h_{j-2}\rightleftarrows h_{l+1}h_l$ is obtained from the relation~(1). 
%in Theorem~\ref{thm_pres_lmodb} and (1) (a$^\prime $) in Lemma~\ref{tech_rel1}. 
Since $j-i$ and $l-i$ are even, $j-l$ is also even and we have $l\not \in \{ i-3,\ i-1,\ j-1\}$. 
Hence we suppose that $i\leq l\leq j-2$. 
By Lemma~\ref{tech_rel0}, up to the relations~(2) (a), (b), and $t_{2n+1,2n+2}=t_{1,2n}$, the relation~(2) (d) is equivalent to the relations $h_lh_{l+1}=h_{l+1}h_lt_{l,l+1}^{-1}t_{l+2,l+3}$ and $h_{l+1}h_l=h_lh_{l+1}t_{l+2,l+3}^{-1}t_{l,l+1}$, and the relation~(2) (e) is equivalent to the relation $h_lh_{l+2}h_l=t_{l+2,l+3}^{-1}h_{l+2}h_{l}h_{l+2}t_{l+1,l+2}$. 
Then, up to the relation $t_{2n+1,2n+2}=t_{1,2n}$, we have
\begin{eqnarray*}
&&h_{j-2}\cdots h_{i+2}h_iAh_ih_{i+2}\cdots h_{j-2}\cdot \underset{\leftarrow }{\underline{h_{l+1}h_l}}\\
&\overset{\text{(1)(a)}}{\underset{}{=}}&h_{j-2}\cdots h_{i+2}h_iAh_ih_{i+2}\cdots h_{l}\underline{h_{l+2}\cdot h_{l+1}}h_l\cdot h_{l+4}\cdots h_{j-2}\\
&\overset{\text{(2)(d)}}{\underset{}{=}}&h_{j-2}\cdots h_{i+2}h_iAh_ih_{i+2}\cdots h_{l-2}\underline{h_{l}\cdot h_{l+1}}h_{l+2}t_{l+3,l+4}^{-1}t_{l+1,l+2}\cdot h_l\cdot h_{l+4}\cdots h_{j-2}\\
&\overset{\text{(2)(d)}}{\underset{}{=}}&h_{j-2}\cdots h_{i+2}h_iAh_ih_{i+2}\cdots h_{l-2}\cdot h_{l+1}h_l\underline{t_{l,l+1}^{-1}t_{l+2,l+3}\cdot h_{l+2}}t_{l+3,l+4}^{-1}t_{l+1,l+2}h_l\\ 
&&\cdot h_{l+4}\cdots h_{j-2}\\
&\overset{\text{(1)(c)}}{\underset{\text{(2)(a)}}{=}}&h_{j-2}\cdots h_{i+2}h_iAh_ih_{i+2}\cdots h_{l-2}\cdot h_{l+1}h_lh_{l+2}\underline{t_{l,l+1}^{-1}t_{l+1,l+2}h_l}\cdot h_{l+4}\cdots h_{j-2}\\
&\overset{\text{(2)(a)}}{\underset{}{=}}&h_{j-2}\cdots h_{i+2}h_iAh_ih_{i+2}\cdots h_{l-2}\cdot h_{l+1}\underline{h_lh_{l+2}h_lt_{l+1,l+2}^{-1}}t_{l,l+1}\cdot h_{l+4}\cdots h_{j-2}\\
&\overset{\text{(2)(e)}}{\underset{}{=}}&h_{j-2}\cdots h_{i+2}h_iAh_ih_{i+2}\cdots h_{l-2}\cdot h_{l+1}t_{l+2,l+3}^{-1}h_{l+2}\underline{h_lh_{l+2}t_{l,l+1}}\cdot h_{l+4}\cdots h_{j-2}\\
&\overset{\text{(1)(c)}}{\underset{\text{(2)(a)}}{=}}&h_{j-2}\cdots h_{i+2}h_iAh_ih_{i+2}\cdots h_{l-2}\cdot \underset{\leftarrow }{\underline{h_{l+1}t_{l+2,l+3}^{-1}h_{l+2}t_{l+1,l+2}}}h_lh_{l+2}\cdot h_{l+4}\cdots h_{j-2}\\
&\overset{\text{(1)}}{\underset{}{=}}&h_{j-2}\cdots h_{l+2}\underline{h_{l}\cdot h_{l+1}t_{l+2,l+3}^{-1}}h_{l+2}t_{l+1,l+2}\cdot h_{l-2}\cdots h_{i+2}h_iAh_ih_{i+2}\cdots h_{j-2}\\
&\overset{\text{(2)(d)}}{\underset{}{=}}&h_{j-2}\cdots h_{l+2}\cdot h_{l+1}\underline{h_{l}t_{l,l+1}^{-1}}h_{l+2}t_{l+1,l+2}\cdot h_{l-2}\cdots h_{i+2}h_iAh_ih_{i+2}\cdots h_{j-2}\\
&\overset{\text{(2)(a)}}{\underset{}{=}}&h_{j-2}\cdots h_{l+4}\underline{h_{l+2}\cdot h_{l+1}t_{l+1,l+2}^{-1}}h_{l}h_{l+2}t_{l+1,l+2}\cdot h_{l-2}\cdots h_{i+2}h_iAh_ih_{i+2}\cdots h_{j-2}\\
&\overset{\text{(2)(d)}}{\underset{}{=}}&h_{j-2}\cdots h_{l+4}\cdot h_{l+1}\underline{h_{l+2}t_{l+3,l+4}^{-1}}h_{l}h_{l+2}t_{l+1,l+2}\cdot h_{l-2}\cdots h_{i+2}h_iAh_ih_{i+2}\cdots h_{j-2}\\
&\overset{\text{(2)(a)}}{\underset{}{=}}&h_{j-2}\cdots h_{l+4}\cdot h_{l+1}\underline{t_{l+2,l+3}^{-1}h_{l+2}h_{l}h_{l+2}t_{l+1,l+2}}\cdot h_{l-2}\cdots h_{i+2}h_iAh_ih_{i+2}\cdots h_{j-2}\\
&\overset{\text{(2)(e)}}{\underset{}{=}}&h_{j-2}\cdots h_{l+4}\cdot \underset{\leftarrow }{\underline{h_{l+1}h_l}}h_{l+2}h_l\cdot h_{l-2}\cdots h_{i+2}h_iAh_ih_{i+2}\cdots h_{j-2}\\
&\overset{\text{(1)(a)}}{\underset{}{=}}&h_{l+1}h_l\cdot h_{j-2}\cdots h_{i+2}h_iAh_ih_{i+2}\cdots h_{j-2}.
\end{eqnarray*}
Therefore, the relation $h_{j-2}\cdots h_{i+2}h_iAh_ih_{i+2}\cdots h_{j-2}\rightleftarrows h_{l+1}h_l$ is obtained from the relations~(1), (2), and $t_{2n+1,2n+2}=t_{1,2n}$, and we have completed the proof of Lemma~\ref{tech_rel4}. 
\end{proof}

As a corollary of Lemma~\ref{tech_rel4}, we have the following. 

\begin{cor}\label{cor_tech_rel4}
For $1\leq i<j\leq 2n+2$ in the case of $\LM $ and $1\leq i<j\leq 2n+1$ in the case of $\LMb $ such that $j-i\geq 4$ is even, the relation
\[
h_{j-2}\cdots h_{i+2}h_ih_ih_{i+2}\cdots h_{j-2}\rightleftarrows (h_{j-3}\cdots h_{i+1}h_i)^{\frac{j-i}{2}}
\]
is obtained from the relations~(1), (2), and $t_{2n+1,2n+2}=t_{1,2n}$. 
\end{cor}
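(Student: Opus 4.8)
The plan is to deduce this directly from Lemma~\ref{tech_rel4} by decomposing the power $(h_{j-3}\cdots h_{i+1}h_i)^{\frac{j-i}{2}}$ into elementary two-letter blocks of the form $h_{l+1}h_l$, each of which Lemma~\ref{tech_rel4} already commutes past the relevant central word. Write $M=h_{j-2}\cdots h_{i+2}h_ih_ih_{i+2}\cdots h_{j-2}$ and $w=h_{j-3}\cdots h_{i+1}h_i$, so that the target relation is $M\rightleftarrows w^{\frac{j-i}{2}}$.

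First I would observe that, since $j-i$ is even, the descending word $w$ has exactly $j-i-2$ letters, an even number, so it factors into consecutive blocks
\[
w=(h_{j-3}h_{j-4})(h_{j-5}h_{j-6})\cdots (h_{i+1}h_i),
\]
each block being of the shape $h_{l+1}h_l$ with $l$ running through $i,\ i+2,\ \dots ,\ j-4$. Next I would check that every such $l$ meets the hypotheses of Lemma~\ref{tech_rel4} taken with the empty word $A=1$ (so that the central block appearing there is $h_ih_i$, exactly matching $M$). Indeed $l-i$ is even and $i\le l\le j-4$, whence $l\notin\{i-2,\ j\}$ and $1\le l\le 2n-1$ (using $j\le 2n+2$ in the $\LM $ case and $j\le 2n+1$ in the $\LMb $ case); the hypothesis on $A$ is then vacuous.

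Applying Lemma~\ref{tech_rel4} once for each block yields, using only the relations~(1), (2), and $t_{2n+1,2n+2}=t_{1,2n}$, that $M\rightleftarrows h_{l+1}h_l$ for every block. Since commutation of $M$ with each factor of a product forces commutation of $M$ with the product, this gives $M\rightleftarrows w$, and then $M\rightleftarrows w^{\frac{j-i}{2}}$ follows by an immediate induction on the exponent, as $Mw^{k}=wMw^{k-1}=\cdots =w^{k}M$. All of this uses only the relations~(1), (2), and $t_{2n+1,2n+2}=t_{1,2n}$, as required.

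The argument is short precisely because the substantive computation has already been carried out in Lemma~\ref{tech_rel4}; the only genuine care needed is the combinatorial bookkeeping. The main point to get right is that the pairing of $w$ into blocks $h_{l+1}h_l$ is exhaustive and that each resulting index $l$ simultaneously satisfies the parity constraint $l-i$ even, the exclusion $l\notin\{i-2,\ j\}$, and the range bound $1\le l\le 2n-1$ in both the $\LM $ and $\LMb $ settings. This is where the evenness of $j-i$ is used decisively: an odd $j-i$ would leave an unpaired letter and break the reduction to Lemma~\ref{tech_rel4}.
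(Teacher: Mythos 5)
Your proof is correct and is precisely the argument the paper intends: the paper presents this statement as an immediate corollary of Lemma~\ref{tech_rel4}, and your factorization of $(h_{j-3}\cdots h_{i+1}h_i)^{\frac{j-i}{2}}$ into blocks $h_{l+1}h_l$ with $l=i,\ i+2,\ \dots,\ j-4$, each of which commutes with $h_{j-2}\cdots h_{i+2}h_ih_ih_{i+2}\cdots h_{j-2}$ by Lemma~\ref{tech_rel4} applied with $A$ the empty word, is exactly the intended (and there unspoken) bookkeeping. Your verification of the parity, exclusion, and range hypotheses for each $l$ is accurate in both the $\LM$ and $\LMb$ cases, so nothing is missing.
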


\begin{lem}\label{tech_rel5}
For $1\leq i<j\leq 2n$ in the case of $\LM $ and $1\leq i<j\leq 2n-1$ in the case of $\LMb $ such that $j-i\geq 3$ is odd, the relations
\begin{enumerate}
\item $\left\{
\begin{array}{ll}
	h_ih_{i+2}\cdots h_{j-1}(h_{j-2}\cdots h_{i+1}h_i)^{\frac{j-i+1}{2}}=(h_{j-1}\cdots h_{i+1}h_i)^{\frac{j-i+1}{2}},\\
	(h_ih_{i+1}\cdots h_{j-2})^{\frac{j-i+1}{2}}h_{j-1}\cdots h_{i+2}h_i=(h_ih_{i+1}\cdots h_{j-1})^{\frac{j-i+1}{2}},\\
\end{array}
		\right.$
\end{enumerate}
\begin{enumerate}
\item[(2)] $\left\{
\begin{array}{ll}
	h_{i+1}h_{i+3}\cdots h_{j}(h_{j-1}\cdots h_{i+1}h_i)^{\frac{j-i+1}{2}}=(h_{j}\cdots h_{i+1}h_i)^{\frac{j-i+1}{2}},\\
	(h_ih_{i+1}\cdots h_{j-1})^{\frac{j-i+1}{2}}h_{j}\cdots h_{i+3}h_{i+1}=(h_ih_{i+1}\cdots h_{j})^{\frac{j-i+1}{2}}\\
\end{array}
		\right.$
\end{enumerate}
are obtained from the relations~(1) and (2). 
\end{lem}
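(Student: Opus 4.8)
The four identities involve only the $h_i$'s, and under $\Psi$ they become identities among the transpositions $\bar h_i=(i\ i+2)$. These transpositions generate the Coxeter group $W_{2n+2;\ast}$ (resp. $W_{2n+2}$) with the presentation of Lemma~\ref{pres_w_2n+2ast}, whose diagram is a disjoint union of two type-$A$ chains $h_1\!-\!h_3\!-\!h_5\!-\cdots$ and $h_2\!-\!h_4\!-\cdots$; each of the relations in Lemma~\ref{tech_rel5} is a standard ``comb times Coxeter-power'' identity valid in that Coxeter group (one checks, e.g., the base case $j-i=3$ directly in a symmetric group). The plan is therefore to \emph{promote} these Coxeter identities from $W$ to $\LMb$ and $\LM$, using the lifted relations that carry $t$-corrections: the braid relation~(2)(c), the pair relation~(2)(d), and the relation~(2)(e) (together with their reformulations in Lemma~\ref{tech_rel0}), while arranging that all transient $t$-factors cancel.

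First I would cut the count of relations down. The second equation in each of (1) and (2) is exactly the \emph{word-reversal} of the first: reversing $C_{\mathrm{ev}}\,D(j-2,i)^{M}=D(j-1,i)^{M}$ (with $M=\tfrac{j-i+1}{2}$ and $D(a,i)=h_ah_{a-1}\cdots h_i$) turns the ascending comb into a descending one and reverses each descending product into an ascending one, giving precisely the second equation of (1), and similarly for (2). Since the defining relation set is closed under word-reversal — (1)(a) and (2)(d) are reversal-symmetric, the reversal of (2)(c) is the inverse of its $\varepsilon=-1$ form, and the reversal of (2)(e) is one of the equivalent forms listed in Lemma~\ref{tech_rel0} — any identity derived from (1) and (2) has a reversal that is again so derivable. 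Hence it suffices to prove the \emph{first} equations of (1) and (2), which I would treat by a single induction, using translation-invariance of the relations (away from the boundary identifications such as $t_{2n+1,2n+2}=t_{1,2n}$) to align the even-comb form (1) with the odd-comb form (2).

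The induction runs on the odd gap $d=j-i\ge 3$. For the base case $d=3$ I would carry out the explicit word computation, applying (2)(c) and (2)(e) twice and checking that the $t$-factors produced cancel with the help of Lemma~\ref{tech_rel2}. For the inductive step I would peel the top generator off the comb, writing $C_{\mathrm{ev}}(i,j)=C_{\mathrm{ev}}(i,j-2)\,h_{j-1}$, and peel the two new top generators off the descending product, $D(j-1,i)=h_{j-1}h_{j-2}\,D(j-3,i)$; then I march these generators through the power $D(j-2,i)^{M}$. The key engine is the shift form of (2)(c), namely $(h_kh_{k+1}h_{k+2})\,h_k=h_{k+2}\,(h_kh_{k+1}h_{k+2})$, which advances an index by two, combined with the commutations (1)(a) and the palindrome-commutation facts of Corollaries~\ref{cor_tech_rel3} and~\ref{cor_tech_rel4}, reducing the range $(i,j)$ to $(i,j-2)$ where the inductive hypothesis applies.

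The conceptual content is transparent, so the main obstacle is entirely in the inductive step: moving the two new top generators through a high power $D(j-2,i)^{M}$ forces a long, carefully ordered sequence of braid and commutation moves at successive indices, and at several of these the $t$-corrections carried by (2)(c)--(2)(e) appear and must be shown either to cancel in pairs or to commute out of the way — which is exactly the purpose of invoking Lemmas~\ref{tech_rel2}, \ref{tech_rel3}, and~\ref{tech_rel4} and their corollaries. Keeping the even-comb identity (1) and the odd-comb identity (2) synchronized through the translation and reversal reductions, so that the boundary cases near index $2n$ are handled without breaking the marching argument, is the other delicate point I expect to absorb most of the work.
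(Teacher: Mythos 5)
Your proposal is correct and is essentially the paper's own argument: the paper likewise reduces everything to the two index-shifting relations $h_l(h_{j-1}h_{j-2}\cdots h_{i+1}h_i)=(h_{j-1}h_{j-2}\cdots h_{i+1}h_i)h_{l+2}$ and $(h_ih_{i+1}\cdots h_{j-2}h_{j-1})h_l=h_{l+2}(h_ih_{i+1}\cdots h_{j-2}h_{j-1})$ for $i\leq l\leq j-3$, both consequences of the relations~(1)~(a) and (2)~(c) alone, and then absorbs the comb generators one at a time into powers of the descending (resp.\ ascending) product --- exactly your ``marching'' step, carried out as a direct iteration rather than a formal induction on $j-i$.

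One correction, though: the difficulty you anticipate with $t$-corrections is illusory. The relation~(2)~(c) is a pure $h$-relation (no Dehn twist factors), and the shift relations above need nothing else besides the commutations~(1)~(a); so no $t$'s ever enter the computation, and your invocations of (2)~(d), (2)~(e), Lemma~\ref{tech_rel0}, Lemmas~\ref{tech_rel2}--\ref{tech_rel4}, and Corollaries~\ref{cor_tech_rel3} and \ref{cor_tech_rel4} are unnecessary. Citing Lemma~\ref{tech_rel2} would in fact be formally problematic, since as stated it is derived using the additional relation $t_{2n+1,2n+2}=t_{1,2n}$, which the statement of Lemma~\ref{tech_rel5} does not allow. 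In particular, your base case $j-i=3$ needs no (2)~(e) and no $t$-cancellation: it is a single application of the inverse of the $\varepsilon=-1$ form of (2)~(c), namely $h_ih_{i+2}h_{i+1}h_i=h_{i+2}h_{i+1}h_ih_{i+2}$.
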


\begin{proof}
For $i\leq l\leq j-3$, we have the relations $h_l(h_{j-1}h_{j-2}\cdots h_{i+1}h_i)=(h_{j-1}h_{j-2}\cdots h_{i+1}h_i)h_{l+2}$ and $(h_ih_{i+1}\cdots h_{j-2}h_{j-1})h_l=h_{l+2}(h_ih_{i+1}\cdots h_{j-2}h_{j-1})$ by the relations~(1) (a) and (2) (c). 
Then we have
\begin{eqnarray*}
&&h_ih_{i+2}\cdots h_{j-1}(h_{j-2}\cdots h_{i+1}h_i)^{\frac{j-i+1}{2}}\\
&=&h_ih_{i+2}\cdots h_{j-5}\underline{h_{j-3}(h_{j-1}h_{j-2}\cdots h_{i+1}h_i)}(h_{j-2}\cdots h_{i+1}h_i)^{\frac{j-i-1}{2}}\\
&\overset{\text{(1)(a)}}{\underset{\text{(2)(c)}}{=}}&h_ih_{i+2}\cdots h_{j-5}(h_{j-1}h_{j-2}\cdots h_{i+1}h_i)h_{j-1}(h_{j-2}\cdots h_{i+1}h_i)^{\frac{j-i-1}{2}}\\
&=&h_ih_{i+2}\cdots h_{j-7}\underline{h_{j-5}(h_{j-1}h_{j-2}\cdots h_{i+1}h_i)^2}(h_{j-2}\cdots h_{i+1}h_i)^{\frac{j-i-3}{2}}\\
&\overset{\text{(1)(a)}}{\underset{\text{(2)(c)}}{=}}&h_ih_{i+2}\cdots h_{j-7}(h_{j-1}h_{j-2}\cdots h_{i+1}h_i)^3(h_{j-2}\cdots h_{i+1}h_i)^{\frac{j-i-5}{2}}\\
&\vdots &\\
&\overset{\text{(1)(a)}}{\underset{\text{(2)(c)}}{=}}&h_i(h_{j-1}h_{j-2}\cdots h_{i+1}h_i)^{\frac{j-i-1}{2}}(h_{j-2}\cdots h_{i+1}h_i)\\
&\overset{\text{(1)(a)}}{\underset{\text{(2)(c)}}{=}}&(h_{j-1}h_{j-2}\cdots h_{i+1}h_i)^{\frac{j-i+1}{2}}\\
\end{eqnarray*}
and
\begin{eqnarray*}
&&(h_ih_{i+1}\cdots h_{j-2})^{\frac{j-i+1}{2}}h_{j-1}\cdots h_{i+2}h_i\\
&=&(h_ih_{i+1}\cdots h_{j-2})^{\frac{j-i-1}{2}}\underline{(h_ih_{i+1}\cdots h_{j-2}h_{j-1})h_{j-3}}h_{j-5}\cdots h_{i+2}h_i\\
&\overset{\text{(1)(a)}}{\underset{\text{(2)(c)}}{=}}&(h_ih_{i+1}\cdots h_{j-2})^{\frac{j-i-1}{2}}h_{j-1}(h_ih_{i+1}\cdots h_{j-2}h_{j-1})h_{j-5}\cdots h_{i+2}h_i\\
&=&(h_ih_{i+1}\cdots h_{j-2})^{\frac{j-i-3}{2}}\underline{(h_ih_{i+1}\cdots h_{j-2}h_{j-1})^2h_{j-5}}h_{j-7}\cdots h_{i+2}h_i\\
&\overset{\text{(1)(a)}}{\underset{\text{(2)(c)}}{=}}&(h_ih_{i+1}\cdots h_{j-2})^{\frac{j-i-5}{2}}(h_ih_{i+1}\cdots h_{j-2}h_{j-1})^3h_{j-7}\cdots h_{i+2}h_i\\
&\vdots &\\
&\overset{\text{(1)(a)}}{\underset{\text{(2)(c)}}{=}}&(h_ih_{i+1}\cdots h_{j-2}h_{j-1})^{\frac{j-i+1}{2}}.\\
\end{eqnarray*}
Thus the relations $h_ih_{i+2}\cdots h_{j-1}(h_{j-2}\cdots h_{i+1}h_i)^{\frac{j-i+1}{2}}=(h_{j-1}\cdots h_{i+1}h_i)^{\frac{j-i+1}{2}}$ and $(h_ih_{i+1}\cdots h_{j-2})^{\frac{j-i+1}{2}}h_{j-1}\cdots h_{i+2}h_i=(h_ih_{i+1}\cdots h_{j-1})^{\frac{j-i+1}{2}}$ are obtained from the relations~(1) (a) and (2) (c). 

Since the relations $h_l(h_{j}h_{j-1}\cdots h_{i+1}h_i)=(h_{j}h_{j-1}\cdots h_{i+1}h_i)h_{l+2}$ and $(h_ih_{i+1}\cdots h_{j-2}h_{j-1})h_l=h_{l+2}(h_ih_{i+1}\cdots h_{j-2}h_{j-1})$ for $i\leq l\leq j-2$ are obtained from the relations~(1) (a) and (2) (c), by a similar argument above, the relations $h_{i+1}h_{i+3}\cdots h_{j}(h_{j-1}\cdots h_{i+1}h_i)^{\frac{j-i+1}{2}}=(h_{j}\cdots h_{i+1}h_i)^{\frac{j-i+1}{2}}$ and $(h_ih_{i+1}\cdots h_{j-1})^{\frac{j-i+1}{2}}h_{j}\cdots h_{i+3}h_{i+1}=(h_ih_{i+1}\cdots h_{j})^{\frac{j-i+1}{2}}$ are also obtained from the relations~(1) (a) and (2) (c). 
Therefore we have completed the proof of Lemma~\ref{tech_rel5}. 
\end{proof}

By an argument similar to the proof of Lemma~\ref{tech_rel5}, we also have the following lemma. 

\begin{lem}\label{tech_rel5-1}
For $1\leq i<j\leq 2n$ in the case of $\LM $ and $1\leq i<j\leq 2n-1$ in the case of $\LMb $ such that $j-i\geq 3$ is odd, the relations
\begin{enumerate}
\item $\left\{
\begin{array}{ll}
	h_{j-1}\cdots h_{i+2}h_{i}(h_{i+1}h_{i+2}\cdots h_{j-1})^{\frac{j-i+1}{2}}=(h_{i}h_{i+1}\cdots h_{j-1})^{\frac{j-i+1}{2}},\\
	(h_{j-1}\cdots h_{i+2}h_{i+1})^{\frac{j-i+1}{2}}h_{i}h_{i+2}\cdots h_{j-1}=(h_{j-1}\cdots h_{i+1}h_{i})^{\frac{j-i+1}{2}},\\
\end{array}
		\right.$
\end{enumerate}
\begin{enumerate}
\item[(2)] $\left\{
\begin{array}{ll}
	h_{j-1}\cdots h_{i+2}h_{i}(h_{i+1}h_{i+2}\cdots h_{j})^{\frac{j-i+1}{2}}=(h_{i}h_{i+1}\cdots h_{j})^{\frac{j-i+1}{2}},\\
	(h_{j}\cdots h_{i+2}h_{i+1})^{\frac{j-i+1}{2}}h_{i}h_{i+2}\cdots h_{j-1}=(h_{j}\cdots h_{i+1}h_{i})^{\frac{j-i+1}{2}}\\
\end{array}
		\right.$
\end{enumerate}
are obtained from the relations~(1) and (2). 
\end{lem}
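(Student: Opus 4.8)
The plan is to mirror the telescoping computation used for Lemma~\ref{tech_rel5}, interchanging the roles of the ascending and descending products. The engine of that proof is a pair of index-shift relations: for $i\le l\le j-3$ one has
\[
h_l(h_{j-1}h_{j-2}\cdots h_{i+1}h_i)=(h_{j-1}\cdots h_{i+1}h_i)h_{l+2},
\qquad
(h_ih_{i+1}\cdots h_{j-1})h_l=h_{l+2}(h_ih_{i+1}\cdots h_{j-1}),
\]
together with the analogous relations for the full products running up to $h_j$ (valid for $i\le l\le j-2$). These are precisely the relations derived in the proof of Lemma~\ref{tech_rel5} from the commutative relations~(1)~(a) and the relations~(2)~(c), for exactly the ranges I will need, so I would reuse them without change; no relation beyond~(1) and~(2) enters, and since the identities here are words in the $h_i$ only, the argument is identical in $\LM$ and in $\LMb$.

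For the first identity in part~(1), writing $V=h_ih_{i+1}\cdots h_{j-1}$ so that the target is $V^{\frac{j-i+1}{2}}$, I would rewrite the second shift relation as $h_mV=Vh_{m-2}$ (valid for $i+2\le m\le j-1$) and telescope the word $h_{j-1}h_{j-3}\cdots h_{i+2}h_i\,(h_{i+1}\cdots h_{j-1})^{\frac{j-i+1}{2}}$. The trailing factor $h_i$ of the every-other product merges with the first block $h_{i+1}\cdots h_{j-1}$ to form one copy of $V$; each remaining every-other factor $h_{i+2},h_{i+4},\dots$ sits immediately to the left of the accumulated powers of $V$ and is pushed rightward through them by $h_mV=Vh_{m-2}$, emerging as an $h_i$ that is absorbed by the next inner block to create a further copy of $V$. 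This is the exact mirror image of the displayed calculation for Lemma~\ref{tech_rel5}~(1), and it collapses the word to $V^{\frac{j-i+1}{2}}$. The second identity in part~(1) is the word-reversal of the first and telescopes the same way, now using the descending shift relation; and part~(2) is identical once the full products $h_j\cdots h_i$ and $h_i\cdots h_j$ replace $h_{j-1}\cdots h_i$ and $h_i\cdots h_{j-1}$, invoking the $h_j$-versions of the shift relations.

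The routine but delicate point — the only place where real care is required — is the index bookkeeping in the mirrored direction. One must verify that every generator transported through a power of $V$ (resp.\ of the descending product) stays in the range $i\le l\le j-3$ (resp.\ $\le j-2$ in part~(2)) for which the shift relation holds, and that the running exponents on $V$ and on the inner block decrease and increase in lockstep, so that the telescope terminates precisely at the $\frac{j-i+1}{2}$-th power. I expect this to be the main obstacle, though it is conceptually no harder than Lemma~\ref{tech_rel5}: it is purely a matter of tracking indices through the cascade.

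Finally, I would note that one could instead deduce the statement abstractly from Lemma~\ref{tech_rel5} via the index-reversal substitution $h_k\mapsto h_{i+j-1-k}$, which carries each product appearing in Lemma~\ref{tech_rel5} to the corresponding product in the present lemma. This route, however, requires separately checking that the substitution sends the relation~(2)~(c) to a consequence of~(1) and~(2) (its image is the word-reversal of~(2)~(c)), so the direct mirror computation above avoids that verification and is the route I would actually carry out.
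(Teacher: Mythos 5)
Your proposal is correct and takes essentially the same approach as the paper: the paper's entire proof of this lemma is the remark that it follows ``by an argument similar to the proof of Lemma~\ref{tech_rel5}'', which is exactly the mirrored telescoping computation you carry out, reusing the shift relations $h_l(h_{j-1}\cdots h_i)=(h_{j-1}\cdots h_i)h_{l+2}$ and $(h_i\cdots h_{j-1})h_l=h_{l+2}(h_i\cdots h_{j-1})$ obtained from the relations~(1)~(a) and (2)~(c). Your index bookkeeping (every generator pushed through a power of the full product stays in the admissible range) and your side remark that the index-reversal route would additionally require checking that the reversal of relation~(2)~(c) is a consequence of~(1) and~(2) are both accurate.
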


\begin{lem}\label{tech_rel6}
For $1\leq i<j\leq 2n+2$ in the case of $\LM $ and $1\leq i<j\leq 2n+1$ in the case of $\LMb $ such that $j-i\geq 3$ is odd, the relation
%\begin{equation}
\begin{align*}
 & h_{j-2}\cdots h_{i+1}h_i(h_{i+1}h_{i+3}\cdots h_{j-2})\\
=&t_{j-1,j}^{\frac{j-i-3}{2}}t_{i+1,i+2}^{-1}t_{i+3,i+4}^{-1}\cdots t_{j-2,j-1}^{-1}h_{j-2}\cdots h_{i+3}h_{i+1}t_{i,i+1}(h_{i+1}h_{i+3}\cdots h_{j-2})\\
&\cdot h_{j-3}\cdots h_{i+2}h_{i}
\end{align*}
%\end{equation}
is obtained from the relations~(1), (2), and $t_{2n+1,2n+2}=t_{1,2n}$. 
\end{lem}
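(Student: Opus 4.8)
The plan is to prove the identity by induction on the odd integer $j-i\geq 3$, peeling off the top indices at each step while tracking the Dehn–twist correction terms. It is convenient to abbreviate the two products on the left-hand side as $D=h_{j-2}h_{j-3}\cdots h_{i+1}h_i$ and $A=h_{i+1}h_{i+3}\cdots h_{j-2}$, so that the claim reads
\[
DA=t_{j-1,j}^{\frac{j-i-3}{2}}\Bigl(\textstyle\prod_{s}t_{s,s+1}^{-1}\Bigr)D'\,t_{i,i+1}\,A\,D'',
\]
where $D'=h_{j-2}h_{j-4}\cdots h_{i+1}$, $D''=h_{j-3}\cdots h_{i+2}h_i$, and the product of inverse twists runs over $s=i+1,i+3,\dots,j-2$. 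Note that relation~(4) is deliberately \emph{not} among the permitted ingredients; this is the constraint that shapes the whole argument.

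For the base case $j-i=3$ the asserted relation collapses to $h_{i+1}h_ih_{i+1}=t_{i+1,i+2}^{-1}h_{i+1}t_{i,i+1}h_{i+1}h_i$. I would verify this using only the relations of Theorem~\ref{thm_pres_lmodb}: first rewrite $h_ih_{i+1}=t_{i,i+1}t_{i+2,i+3}^{-1}h_{i+1}h_i$ via the form of relation~(2)~(d) recorded in Lemma~\ref{tech_rel0}, then move $h_{i+1}$ past $t_{i+1,i+2}$ by relation~(2)~(a); after this the two sides differ only by the commutation $t_{i,i+1}\rightleftarrows t_{i+2,i+3}$, which is an instance of~(1)~(b). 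The crucial feature—which persists through the whole proof—is that the ``non-standard'' conjugation $h_{i+1}t_{i,i+1}$ is never resolved in isolation: it appears identically on both sides and cancels, so relation~(4) is indeed avoided.

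For the inductive step I would pass from the instance $(i,j)$ to the instance $(i,j-2)$. Writing $D=h_{j-2}h_{j-3}D_0$ and $A=A_0h_{j-2}$ with $D_0=h_{j-4}\cdots h_i$ and $A_0=h_{i+1}h_{i+3}\cdots h_{j-4}$, one applies the inductive hypothesis to $D_0A_0$ and then transports the outer pair $h_{j-2}h_{j-3}$ rightward and the trailing $h_{j-2}$ leftward through the resulting word. Since $h_{j-2}$ commutes with every $h_\ell$ for $\ell\le j-5$ by~(1)~(a), only the interactions with $h_{j-4},h_{j-3}$ and with the collected twists are nontrivial; these are handled by the braid relation~(2)~(c) and its twisted companions~(2)~(d),~(2)~(e) in the equivalent forms of Lemma~\ref{tech_rel0}, together with the index-shifting relations~(2)~(a),~(b). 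The commutations needed to slide the accumulated twist factors and the odd-index $h$-products past one another are precisely those supplied by Lemmas~\ref{tech_rel2}, \ref{tech_rel3}, \ref{tech_rel4}, \ref{tech_rel5}, and \ref{tech_rel5-1}, so no relations beyond~(1),~(2), and $t_{2n+1,2n+2}=t_{1,2n}$ ever enter.

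The main obstacle will be the bookkeeping of the twist corrections. Unlike a naive ``one new factor per step'', the power term changes its twist index from $t_{j-3,j-2}^{\,(j-i-5)/2}$ to $t_{j-1,j}^{\,(j-i-3)/2}$ as $h_{j-2},h_{j-3}$ sweep through, because relations~(2)~(a),~(b) relabel the twists they pass; verifying that everything reorganizes into exactly the prescribed correction word for $(i,j)$, with no residual twists, is the delicate part. A secondary point is the boundary index: in the $\LM$ case with $j=2n+2$ the relation $t_{2n+1,2n+2}=t_{1,2n}$ must be invoked, and the top-index relations involving $h_{2n}$ should be read in their $\LM$-specific forms via Lemma~\ref{tech_rel1}, so the final inductive step needs separate attention in that case.
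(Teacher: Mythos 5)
Your base case is correct (and is exactly the paper's: for $j-i=3$ the identity is relation~(2)~(d) up to (2)~(a) and a commutation), and your outer induction on $j-i$ is a genuinely different organization from the paper's proof, which fixes $(i,j)$, applies (2)~(d) once to the innermost pair $h_ih_{i+1}$, and then runs an \emph{inner} induction sweeping pairs $h_{i+l+1}h_{i+l}$ upward through the word with explicit twist bookkeeping at every stage. The problem is that your inductive step is asserted rather than proved, and it is precisely where the entire content of the lemma sits. After substituting the inductive hypothesis for $h_{j-4}\cdots h_i(h_{i+1}\cdots h_{j-4})$, absorbing the trailing $h_{j-2}$ (which does commute into place by (1)~(a)), cancelling the common suffix $h_{j-5}\cdots h_{i+2}h_i$, and cancelling the leading $h_{j-2}$ (after pulling the target's $h_{j-2}$ to the front by (2)~(a) and (1)), what remains to be shown is
\[
h_{j-3}\,t_{j-3,j-2}^{m}\,T^{-}\,h_{j-4}h_{j-6}\cdots h_{i+1}\,t_{i,i+1}\,A
\;=\;
t_{j-2,j-1}^{m+1}\,T^{-}\,t_{j-1,j}^{-1}\,h_{j-4}h_{j-6}\cdots h_{i+1}\,t_{i,i+1}\,A\,h_{j-3},
\]
where $m=\frac{j-i-5}{2}$, $T^{-}=t_{i+1,i+2}^{-1}t_{i+3,i+4}^{-1}\cdots t_{j-4,j-3}^{-1}$, and $A=h_{i+1}h_{i+3}\cdots h_{j-4}h_{j-2}$. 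Your proposal never derives this; it only claims the transport ``is handled by'' relations (2)~(a)--(e) and Lemmas~\ref{tech_rel2}--\ref{tech_rel5-1}, and you yourself flag the reorganization of the twists as ``the delicate part'' without resolving it.

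Moreover, the mechanism you sketch would not go through as described. A single $h_{j-3}$ (or $h_{j-2}$) cannot be moved past the single twists $t_{j-4,j-3}^{\pm1}$ or $t_{j-3,j-2}^{\pm1}$ by any permitted relation: (2)~(a) only couples $h_k$ with $t_{k,k+1}$ and $t_{k+1,k+2}$, (1)~(c) does not apply to these index pairs, and (2)~(b) requires a consecutive pair of $h$'s. The transport therefore has to be orchestrated by migrating the twists themselves rightward through the $h$'s and by swaps of the form $h_{j-3}h_{j-4}=h_{j-4}h_{j-3}t_{j-4,j-3}t_{j-2,j-1}^{-1}$ (Lemma~\ref{tech_rel0}), each of which deposits twist factors in the middle of the word that in turn fail to commute with neighbors such as $h_{j-6}$; and the block-commutation Lemmas~\ref{tech_rel2}--\ref{tech_rel5-1} you invoke commute whole powers of products with twists --- they cannot produce the required net effect, namely the simultaneous index shift and exponent increase $t_{j-3,j-2}^{m}\mapsto t_{j-2,j-1}^{m+1}$ together with the creation of the new factor $t_{j-1,j}^{-1}$. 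That net effect is exactly what the paper's displayed family of relations (for odd $l\leq j-i-2$) accomplishes, at the cost of a page of careful manipulation. Your strategy is salvageable --- the displayed identity is in fact derivable from (1) and (2) alone (I carried out the case $j-i=5$, where the nontrivial interactions are confined to indices $j-4,\dots,j$), but as written the step carrying all the difficulty of the lemma is missing, so this is a genuine gap rather than a complete alternative proof.
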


\begin{proof}
In the case of $j-i=3$, the relation in Lemma~\ref{tech_rel6} is equivalent to the relation~(2) (d) up to the relation~(2) (a). 
We assume that $j-i\geq 5$ is odd. 
%Recall that the relations $h_lh_{l+1}=t_{l+2,l+3}^{-1}t_{l,l+1}h_{l+1}h_l$ and $h_{l+1}h_l=t_{l,l+1}^{-1}t_{l+2,l+3}h_lh_{l+1}$ for $1\leq l\leq 2n-1$ are equivalent to the relations~(2) (d) up to the relations~(2) (a), (b), and $t_{2n+1,2n+2}=t_{1,2n}$. 
Then we have
\begin{eqnarray*}
&&h_{j-2}\cdots h_{i+1}\underline{h_i(h_{i+1}}h_{i+3}\cdots h_{j-2})\\
&\overset{\text{(2)(d)}}{\underset{}{=}}&h_{j-2}\cdots h_{i+2}h_{i+1}\cdot t_{i+2,i+3}^{-1}t_{i,i+1}h_{i+1}\underset{\rightarrow }{\underline{h_i}}\cdot h_{i+3}h_{i+5}\cdots h_{j-2}\\
&\overset{\text{(1)(a)}}{\underset{}{=}}&h_{j-2}\cdots h_{i+2}h_{i+1}\cdot t_{i+2,i+3}^{-1}t_{i,i+1}(h_{i+1}h_{i+3}\cdots h_{j-2})\cdot h_i.
\end{eqnarray*}
We will show that the relation
\begin{eqnarray*}
&&h_{j-2}\cdots h_{i+2}h_{i+1}\cdot t_{i+2,i+3}^{-1}t_{i,i+1}(h_{i+1}h_{i+3}\cdots h_{j-2}) h_i\\
&=&h_{j-2}\cdots h_{i+4}h_{i+3}\cdot t_{i+3,i+4}\cdot t_{i+1,i+2}^{-1}t_{i+4,i+5}^{-1}h_{i+1}t_{i,i+1}(h_{i+1}h_{i+3}\cdots h_{j-2})h_{i+2}h_i
\end{eqnarray*}
is obtained from the relations~(1), (2), and $t_{2n+1,2n+2}=t_{1,2n}$ as follows: 
\begin{eqnarray*}
&&h_{j-2}\cdots h_{i+3}\underline{h_{i+2}h_{i+1}}\cdot t_{i+2,i+3}^{-1}t_{i,i+1}(h_{i+1}h_{i+3}\cdots h_{j-2}) h_i\\
&\overset{\text{(2)(d)}}{\underset{}{=}}&h_{j-2}\cdots h_{i+4}h_{i+3}\cdot t_{i+1,i+2}^{-1}t_{i+3,i+4}h_{i+1}\underline{h_{i+2}\cdot t_{i+2,i+3}^{-1}t_{i,i+1}}(h_{i+1}h_{i+3}\cdots h_{j-2})\\
&&\cdot h_i\\
&\overset{\text{(2)(a)}}{\underset{\text{(1)(c)}}{=}}&h_{j-2}\cdots h_{i+4}h_{i+3}\cdot t_{i+1,i+2}^{-1}t_{i+3,i+4}h_{i+1}t_{i+3,i+4}^{-1}t_{i,i+1}\underline{h_{i+2}(h_{i+1}}h_{i+3}\cdots h_{j-2})\\
&&\cdot h_i\\
&\overset{\text{(2)(d)}}{\underset{}{=}}&h_{j-2}\cdots h_{i+4}h_{i+3}\cdot t_{i+1,i+2}^{-1}t_{i+3,i+4}h_{i+1}\underset{\rightarrow }{\underline{t_{i+3,i+4}^{-1}}}t_{i,i+1}\cdot t_{i+1,i+2}^{-1}t_{i+3,i+4}h_{i+1}\underline{h_{i+2}}\\
&&\underline{\cdot h_{i+3}}h_{i+5}\cdots h_{j-2}\cdot h_i\\
&\overset{\text{(2)(d)}}{\underset{\text{(1)(b)}}{=}}&h_{j-2}\cdots h_{i+4}h_{i+3}\cdot \underline{t_{i+1,i+2}^{-1}t_{i+3,i+4}}h_{i+1}t_{i,i+1}\underline{t_{i+1,i+2}^{-1}h_{i+1}}\\
&&\cdot t_{i+2,i+3}t_{i+4,i+5}^{-1}h_{i+3}\underset{\rightarrow }{\underline{h_{i+2}}}\cdot h_{i+5}h_{i+7}\cdots h_{j-2}\cdot h_i\\
&\overset{\text{(2)(d)}}{\underset{\text{(1)(b)}}{=}}&h_{j-2}\cdots h_{i+4}h_{i+3}\cdot t_{i+3,i+4}t_{i+1,i+2}^{-1}h_{i+1}t_{i,i+1}h_{i+1}\\
&&\cdot \underset{\leftarrow }{\underline{t_{i+4,i+5}^{-1}}}h_{i+3}\cdot h_{i+5}h_{i+7}\cdots h_{j-2}\cdot h_{i+2}h_i\\
&\overset{\text{(1)(c)}}{\underset{\text{(1)(b)}}{=}}&h_{j-2}\cdots h_{i+4}h_{i+3}\cdot t_{i+3,i+4}\cdot t_{i+1,i+2}^{-1}t_{i+4,i+5}^{-1}h_{i+1}t_{i,i+1}(h_{i+1}h_{i+3}\cdots h_{j-2})h_{i+2}h_i.
\end{eqnarray*}
When $j-i=5$, the last expression coincides with
\[
h_{i+3}\cdot t_{i+3,i+4}\cdot t_{i+1,i+2}^{-1}t_{i+4,i+5}^{-1}h_{i+1}t_{i,i+1}(h_{i+1}h_{i+3}\cdots h_{j-2})h_{i+2}h_i.
\]

For the case that $j-i\geq 7$ is odd, we will prove that for odd $1\leq l\leq j-i-2$, the relation
\begin{align*}
&h_{i+l+1}h_{i+l}\cdot t_{i+l,i+l+1}^{\frac{l+1}{2}}t_{i+1,i+2}^{-1}t_{i+3,i+4}^{-1}\cdots t_{i+l-2,i+l-1}^{-1}\cdot t_{i+l+1,i+l+2}^{-1}\\
&\cdot h_{i+l-2}\cdots h_{i+3}h_{i+1}t_{i,i+1}(h_{i+1}h_{i+3}\cdots h_{j-2})\\
=&t_{i+l+2,i+l+3}^{\frac{l+3}{2}}t_{i+1,i+2}^{-1}t_{i+3,i+4}^{-1}\cdots t_{i+l,i+l+1}^{-1}\cdot t_{i+l+3,i+l+4}^{-1}\\
&\cdot h_{i+l}\cdots h_{i+3}h_{i+1}t_{i,i+1}(h_{i+1}h_{i+3}\cdots h_{j-2})h_{i+l+1}
\end{align*}
is obtained from the relations~(1), (2), and $t_{2n+1,2n+2}=t_{1,2n}$ as follows: 
\begin{eqnarray*}
&&\underline{h_{i+l+1}h_{i+l}}\cdot t_{i+l,i+l+1}^{\frac{l+1}{2}}t_{i+1,i+2}^{-1}t_{i+3,i+4}^{-1}\cdots t_{i+l-2,i+l-1}^{-1}\cdot t_{i+l+1,i+l+2}^{-1}\\
&&\cdot h_{i+l-2}\cdots h_{i+3}h_{i+1}t_{i,i+1}(h_{i+1}h_{i+3}\cdots h_{j-2})\\
&\overset{\text{(2)(d)}}{\underset{}{=}}&t_{i+l,i+l+1}^{-1}t_{i+l+2,i+l+3}\underline{h_{i+l}h_{i+l+1}\cdot t_{i+l,i+l+1}^{\frac{l+1}{2}}t_{i+1,i+2}^{-1}t_{i+3,i+4}^{-1}\cdots t_{i+l-2,i+l-1}^{-1}}\\
&&\cdot t_{i+l+1,i+l+2}^{-1}h_{i+l-2}\cdots h_{i+3}h_{i+1}t_{i,i+1}(h_{i+1}h_{i+3}\cdots h_{j-2})\\
&\overset{\text{(2)(b)}}{\underset{\text{(1)(c)}}{=}}&t_{i+l,i+l+1}^{-1}\cdot t_{i+l+2,i+l+3}^{\frac{l+3}{2}}t_{i+1,i+2}^{-1}t_{i+3,i+4}^{-1}\cdots t_{i+l-2,i+l-1}^{-1}h_{i+l}\underline{h_{i+l+1}}\\
&&\underline{\cdot t_{i+l+1,i+l+2}^{-1}}h_{i+l-2}\cdots h_{i+3}h_{i+1}t_{i,i+1}(h_{i+1}h_{i+3}\cdots h_{j-2})\\
&\overset{\text{(2)(a)}}{\underset{\text{(1)}}{=}}&t_{i+l,i+l+1}^{-1}\cdot t_{i+l+2,i+l+3}^{\frac{l+3}{2}}t_{i+1,i+2}^{-1}t_{i+3,i+4}^{-1}\cdots t_{i+l-2,i+l-1}^{-1}h_{i+l}t_{i+l+2,i+l+3}^{-1}\\
&&\cdot h_{i+l-2}\cdots h_{i+3}h_{i+1}t_{i,i+1}(h_{i+1}h_{i+3}\cdots h_{i+l-2}\cdot \underline{h_{i+l+1}\cdot h_{i+l}}h_{i+l+2}\cdots h_{j-2})\\
&\overset{\text{(2)(d)}}{\underset{\text{(1)}}{=}}&\underset{\rightarrow }{\underline{t_{i+l,i+l+1}^{-1}}}\cdot t_{i+l+2,i+l+3}^{\frac{l+3}{2}}t_{i+1,i+2}^{-1}t_{i+3,i+4}^{-1}\cdots t_{i+l-2,i+l-1}^{-1}h_{i+l}t_{i+l+2,i+l+3}^{-1}\\
&&\cdot h_{i+l-2}\cdots h_{i+3}h_{i+1}t_{i,i+1}(h_{i+1}h_{i+3}\cdots h_{i+l-2}\\
&&\cdot t_{i+l,i+l+1}^{-1}t_{i+l+2,i+l+3}h_{i+l}\underline{h_{i+l+1}\cdot h_{i+l+2}}h_{i+l+4}\cdots h_{j-2})\\
&\overset{\text{(2)(d)}}{\underset{}{=}}&t_{i+l+2,i+l+3}^{\frac{l+3}{2}}t_{i+1,i+2}^{-1}t_{i+3,i+4}^{-1}\cdots t_{i+l,i+l+1}^{-1}h_{i+l}t_{i+l+2,i+l+3}^{-1}\\
&&\cdot h_{i+l-2}\cdots h_{i+3}h_{i+1}t_{i,i+1}(h_{i+1}h_{i+3}\cdots h_{i+l-2}\cdot t_{i+l,i+l+1}^{-1}t_{i+l+2,i+l+3}\\
&&\cdot \underline{h_{i+l}\cdot t_{i+l+1,i+l+2}t_{i+l+3,i+l+4}^{-1}}h_{i+l+2}\underset{\rightarrow }{\underline{h_{i+l+1}}}\cdot h_{i+l+4}\cdots h_{j-2})\\
&\overset{\text{(2)(d)}}{\underset{}{=}}&t_{i+l+2,i+l+3}^{\frac{l+3}{2}}t_{i+1,i+2}^{-1}t_{i+3,i+4}^{-1}\cdots t_{i+l,i+l+1}^{-1}h_{i+l}t_{i+l+2,i+l+3}^{-1}\\
&&\cdot h_{i+l-2}\cdots h_{i+3}h_{i+1}t_{i,i+1}(h_{i+1}h_{i+3}\cdots h_{i+l-2}\cdot \underset{\leftarrow }{\underline{t_{i+l+2,i+l+3}t_{i+l+3,i+l+4}^{-1}}}\\
&&\cdot h_{i+l}h_{i+l+2}\cdot h_{i+l+4}\cdots h_{j-2})h_{i+l+1}\\
&\overset{\text{(1)(b)}}{\underset{\text{(1)(c)}}{=}}&t_{i+l+2,i+l+3}^{\frac{l+3}{2}}t_{i+1,i+2}^{-1}t_{i+3,i+4}^{-1}\cdots t_{i+l,i+l+1}^{-1}\cdot t_{i+l+3,i+l+4}^{-1}\\
&&\cdot h_{i+l}\cdots h_{i+3}h_{i+1}t_{i,i+1}(h_{i+1}h_{i+3}\cdots h_{j-2})h_{i+l+1}.
\end{eqnarray*}
Hence, by using this relation inductively, for odd $j-i\geq 7$, we have
\begin{eqnarray*}
&&h_{j-2}\cdots h_{i+4}h_{i+3}\cdot t_{i+3,i+4}\cdot t_{i+1,i+2}^{-1}t_{i+4,i+5}^{-1}h_{i+1}t_{i,i+1}(h_{i+1}h_{i+3}\cdots h_{j-2})h_{i+2}h_i\\
&=&h_{j-2}\cdots h_{i+6}h_{i+5}\cdot t_{i+5,i+6}^2t_{i+1,i+2}^{-1}t_{i+3,i+4}^{-1}t_{i+6,i+7}^{-1}h_{i+3}h_{i+1}t_{i,i+1}\\
&&\cdot (h_{i+1}h_{i+3}\cdots h_{j-2})h_{i+4}h_{i+2}h_i\\
&=&h_{j-2}\cdots h_{i+8}h_{i+7}\cdot t_{i+7,i+8}^3t_{i+1,i+2}^{-1}t_{i+3,i+4}^{-1}t_{i+5,i+6}^{-1}t_{i+8,i+9}^{-1}h_{i+5}h_{i+3}h_{i+1}t_{i,i+1}\\
&&\cdot (h_{i+1}h_{i+3}\cdots h_{j-2})h_{i+6}h_{i+4}h_{i+2}h_i\\
&\vdots &\\
&=&h_{j-2}\cdot t_{j-2,j-1}^{\frac{j-i-3}{2}}\cdot t_{i+1,i+2}^{-1}t_{i+3,i+4}^{-1}\cdots t_{j-4,j-3}^{-1}\cdot t_{j-1,j}^{-1}\\
&&\cdot h_{j-4}\cdots h_{i+3}h_{i+1}t_{i,i+1}(h_{i+1}h_{i+3}\cdots h_{j-2})h_{j-3}\cdots h_{i+2}h_i. 
\end{eqnarray*}
Thus, for odd $j-i\geq 5$, up to the relations~(1), (2), and $t_{2n+1,2n+2}=t_{1,2n}$, we have
\begin{eqnarray*}
&&h_{j-2}\cdots h_{i+1}h_i(h_{i+1}h_{i+3}\cdots h_{j-2})\\
&=&\underline{h_{j-2}\cdot t_{j-2,j-1}^{\frac{j-i-3}{2}}\cdot t_{i+1,i+2}^{-1}t_{i+3,i+4}^{-1}\cdots t_{j-4,j-3}^{-1}\cdot t_{j-1,j}^{-1}}\\
&&\cdot h_{j-4}\cdots h_{i+3}h_{i+1}t_{i,i+1}(h_{i+1}h_{i+3}\cdots h_{j-2})h_{j-3}\cdots h_{i+2}h_i\\
&\overset{\text{(2)(a)}}{\underset{\text{(1)(c)}}{=}}&t_{j-1,j}^{\frac{j-i-3}{2}}\cdot t_{i+1,i+2}^{-1}t_{i+3,i+4}^{-1}\cdots t_{j-4,j-3}^{-1}\cdot t_{j-2,j-1}^{-1}\\
&&\cdot h_{j-2}\cdots h_{i+3}h_{i+1}t_{i,i+1}h_{i+1}h_{i+3}\cdots h_{j-2}\cdot h_{j-3}\cdots h_{i+2}h_i. 
\end{eqnarray*}
Therefore, we have completed the proof of Lemma~\ref{tech_rel6}. 
\end{proof}

\begin{lem}\label{tech_rel7}
For $1\leq i<j\leq 2n+2$ in the case of $\LM $ and $1\leq i<j\leq 2n+1$ in the case of $\LMb $ such that $j-i\geq 3$ is odd, the relation
\begin{align*}
&(h_{j-4}\cdots h_{i+1}h_i)^{\frac{j-i-1}{2}}\\
\rightleftarrows &t_{i+1,i+2}^{-1}t_{i+3,i+4}^{-1}\cdots t_{j-2,j-1}^{-1}h_{j-2}\cdots h_{i+3}h_{i+1}t_{i,i+1}h_{i+1}h_{i+3}\cdots h_{j-2}
\end{align*}
is obtained from the relations~(1), (2), and $t_{2n+1,2n+2}=t_{1,2n}$. 
\end{lem}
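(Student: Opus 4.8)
The plan is to strip the Dehn-twist factors off the right-hand element by means of Lemma~\ref{tech_rel6}, and thereby reduce the whole statement to a commutation between products of the half-rotations $h_k$ alone. Write $P=(h_{j-4}\cdots h_{i+1}h_i)^{\frac{j-i-1}{2}}$ for the left-hand element and
$Q=t_{i+1,i+2}^{-1}\cdots t_{j-2,j-1}^{-1}h_{j-2}\cdots h_{i+3}h_{i+1}t_{i,i+1}h_{i+1}h_{i+3}\cdots h_{j-2}$ for the right-hand element, and abbreviate $M=h_{j-2}\cdots h_{i+1}h_i$, $\widetilde B=h_{i+1}h_{i+3}\cdots h_{j-2}$, and $C=h_{j-3}\cdots h_{i+2}h_i$. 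Reading Lemma~\ref{tech_rel6} as the identity $M\widetilde B=t_{j-1,j}^{\frac{j-i-3}{2}}\,Q\,C$ (its right-hand side is precisely $t_{j-1,j}^{\frac{j-i-3}{2}}$ times $Q$ times $C$), I would solve for $Q$ to obtain $Q=t_{j-1,j}^{-\frac{j-i-3}{2}}M\widetilde B C^{-1}$. Since every lemma in this section is itself derived from relations~(1), (2) and $t_{2n+1,2n+2}=t_{1,2n}$, these rewritings are all admissible.

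Next I would observe that $P$ commutes with $t_{j-1,j}$: the element $P$ is exactly the left-hand side of Lemma~\ref{tech_rel2} with $j$ replaced by $j-2$, and since $j-i$ is odd the index $j-1$ satisfies $(j-1)-i\equiv 0 \pmod 2$, so Lemma~\ref{tech_rel2} applies with $l=j-1$ (which lies in range, being $\le 2n+1$ in the $\LM$ case and $\le 2n$ in the $\LMb$ case). Hence $P$ commutes with $t_{j-1,j}^{-\frac{j-i-3}{2}}$, and the desired relation $PQ=QP$ becomes, modulo relations~(1) and~(2), the purely half-rotation commutation $P\rightleftarrows M\widetilde B C^{-1}$. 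The point of routing through Lemma~\ref{tech_rel6} is that the \emph{odd-offset} twists $t_{i+1,i+2},\dots,t_{j-2,j-1}$ inside $Q$ are absorbed into the $h$-words; this is essential, because—unlike the even-offset $t_{j-1,j}$—those twists do \emph{not} commute with $P$ individually, so no term-by-term argument on the original form of $Q$ can succeed.

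To prove $P\rightleftarrows M\widetilde B C^{-1}$ I would induct on the odd integer $j-i\ge 3$. The base case $j-i=3$ is immediate, since then $j-4<i$, so $P$ is the empty product $1$. For the inductive step I would rewrite $M\widetilde B$ and $C$ by the braid-type identities of Lemmas~\ref{tech_rel5} and~\ref{tech_rel5-1}, which convert monotone runs and their every-other-index subwords into powers of monotone runs; this places $P$ (itself such a power, by the $j\mapsto j-2$ reading used above) adjacent to blocks of the form $h_{l+1}h_l$ and twists $t_{l,l+1}$, with which the relevant conjugates commute by Lemmas~\ref{tech_rel3} and~\ref{tech_rel4}. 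Peeling off the extremal factors $h_{j-2}$ and $h_{j-3}$ and moving them to the correct side via relations~(1) and~(2) should leave a commutation of the same shape for the parameters $(i,j-2)$, to which the inductive hypothesis applies.

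The main obstacle is the bookkeeping in that last step. Because the half-rotations satisfy $h_k^2=t_{k,k+2}$ rather than $h_k^2=1$, the words $\widetilde B$ and $C$ are not inverse to their reverses, so pushing $P$ across $M\widetilde B C^{-1}$ generates a cascade of correction twists whose exponents must be tracked and shown to cancel using only relations~(1) and~(2)—this is where the bulk of the explicit word manipulation lives, in the style of the proofs of Lemmas~\ref{tech_rel4} and~\ref{tech_rel6}. A secondary but genuine difficulty is the wrap-around case of $\LM$, where an index reaches $2n+2$: there one must invoke $t_{2n+1,2n+2}=t_{1,2n}$ at the right moments to keep the twist indices consistent, exactly as in Lemmas~\ref{tech_rel2}--\ref{tech_rel6}.
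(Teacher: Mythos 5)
Your two opening moves are individually correct but, taken together, they achieve essentially nothing: by Lemma~\ref{tech_rel6} one has $M\widetilde{B}C^{-1}=t_{j-1,j}^{\frac{j-i-3}{2}}Q$, and since $P$ commutes with $t_{j-1,j}$ (every letter $h_k$ of $P$ has $k\leq j-4$, so this is already the commutative relation~(1)~(c); Lemma~\ref{tech_rel2} is not needed), the reformulated statement ``$P\rightleftarrows M\widetilde{B}C^{-1}$'' is \emph{equivalent} to the lemma, not a simplification of it. All of the content therefore sits in your proposed induction on $j-i$, and that induction is never carried out; in particular the inductive step is not shown to close. Concretely, $P_{i,j}=(h_{j-4}\cdots h_i)^{\frac{j-i-1}{2}}$ and $P_{i,j-2}=(h_{j-6}\cdots h_i)^{\frac{j-i-3}{2}}$ differ in both alphabet and exponent; Lemma~\ref{tech_rel5} gives, say, $P_{i,j}=(h_{j-4}\cdots h_i)\,h_{i+1}h_{i+3}\cdots h_{j-4}\,h_ih_{i+2}\cdots h_{j-5}\,P_{i,j-2}$, but then one must push the commutant across this long prefix, and because $h_k^2=t_{k,k+2}$ and the conjugation relations~(2)~(a)--(e) reintroduce twists at every exchange, the ``cascade of correction twists'' you mention reappears immediately --- the repackaging into a pure $h$-word does not remove it. Verifying that this cascade cancels and that what remains is the $(i,j-2)$ configuration is exactly the content of the lemma; ``should leave a commutation of the same shape'' is an assertion, not an argument, and you yourself flag it as the unfinished bulk of the work. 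As written, this is a plan, not a proof.

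For comparison, the paper's proof is direct, with no induction and no use of Lemma~\ref{tech_rel6}. Writing $A=h_{j-2}\cdots h_{i+3}h_{i+1}t_{i,i+1}h_{i+1}h_{i+3}\cdots h_{j-2}$ and $T=t_{i+1,i+2}t_{i+3,i+4}\cdots t_{j-2,j-1}$ (so that $Q=T^{-1}A$, the twists commuting among themselves by~(1)~(b)), it first establishes the one-step conjugation identity $A\,(h_{j-4}\cdots h_i)=\bigl(t_{j-4,j-3}t_{j-2,j-1}\,h_{j-4}\cdots h_i\,t_{j-2,j-1}^{-1}t_{i+1,i+2}^{-1}\bigr)A$ by an explicit computation using relations~(1),~(2) and Lemma~\ref{tech_rel4}; iterating gives $AP=W^{\frac{j-i-1}{2}}A$ with $W$ the bracketed element. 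A second explicit computation shows $W^{\frac{j-i-1}{2}}=TPT^{-1}$, whence $AP=TPT^{-1}A$ and so $QP=T^{-1}AP=PT^{-1}A=PQ$. Nothing in your sketch substitutes for these two computations; if you want to salvage your route, the realistic repair is to prove your pure-$h$-word commutation by exactly such a one-step conjugation formula raised to the power $\frac{j-i-1}{2}$, rather than by descent on $j-i$.
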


\begin{proof}
In the case of $j-i=3$, the relation in Lemma~\ref{tech_rel7} coincides with the trivial relation. 
We assume that $j-i\geq 5$ is odd. 
%Remark that the relation $h_lh_{l+2}h_l=t_{l+1,l+2}t_{l+2,l+3}^{-1}h_{l+2}h_{l}h_{l+2}$ for $1\leq l\leq 2n-2$ is equivalent to the relation~(2) (e) up to the relations~(1) (c), (2) (a), and $t_{2n+1,2n+2}=t_{1,2n}$. 
Then we have
\begin{eqnarray*}
&&h_{j-2}h_{j-4}\cdots h_{i+1}t_{i,i+1}h_{i+1}\cdots \underline{h_{j-4}h_{j-2}\cdot (h_{j-4}}h_{j-5}\cdots h_i)\\
&\overset{\text{(2)(e)}}{\underset{}{=}}&h_{j-2}h_{j-4}\cdots h_{i+1}t_{i,i+1}h_{i+1}\cdots h_{j-8}h_{j-6}\cdot \underset{\leftarrow }{\underline{t_{j-3,j-2}t_{j-2,j-1}^{-1}h_{j-2}}}h_{j-4}h_{j-2}\\
&&\cdot h_{j-5}h_{j-6}\cdots h_i\\
&\overset{\text{(1)}}{\underset{}{=}}&\underline{h_{j-2}h_{j-4}\cdot t_{j-3,j-2}}\ \underline{t_{j-2,j-1}^{-1}h_{j-2}}\cdot h_{j-6}h_{j-8}\cdots h_{i+1}t_{i,i+1}h_{i+1}\cdots h_{j-4}h_{j-2}\\
&&\cdot h_{j-5}h_{j-6}\cdots h_i\\
&\overset{\text{(2)(a)}}{\underset{\text{(1)(c)}}{=}}&t_{j-4,j-3}\underline{h_{j-2}h_{j-4}h_{j-2}}t_{j-1,j}^{-1}\cdot h_{j-6}h_{j-8}\cdots h_{i+1}t_{i,i+1}h_{i+1}\cdots h_{j-4}h_{j-2}\\
&&\cdot h_{j-5}h_{j-6}\cdots h_i\\
&\overset{\text{(2)(e)}}{\underset{}{=}}&t_{j-4,j-3}t_{j-2,j-1}\underline{t_{j-3,j-2}^{-1}h_{j-4}}\ \underline{h_{j-2}h_{j-4}t_{j-1,j}^{-1}}\cdot h_{j-6}h_{j-8}\cdots h_{i+1}\\
&&\cdot t_{i,i+1}h_{i+1}\cdots h_{j-4}h_{j-2}\cdot h_{j-5}h_{j-6}\cdots h_i\\
&\overset{\text{(2)(a)}}{\underset{\text{(1)(c)}}{=}}&t_{j-4,j-3}t_{j-2,j-1}h_{j-4}t_{j-4,j-3}^{-1}t_{j-2,j-1}^{-1}\cdot h_{j-2}h_{j-4}\cdots h_{i+1}\\
&&\cdot t_{i,i+1}h_{i+1}\cdots h_{j-4}h_{j-2}\cdot \underset{\leftarrow }{\underline{h_{j-5}h_{j-6}\cdots h_{i+1}}}h_i\\
&\overset{\text{Lem.\ref{tech_rel4}}}{\underset{}{=}}&t_{j-4,j-3}t_{j-2,j-1}h_{j-4}t_{j-4,j-3}^{-1}t_{j-2,j-1}^{-1}\cdot h_{j-5}h_{j-6}\cdots h_{i+1}\\
&&\cdot h_{j-2}h_{j-4}\cdots h_{i+1}t_{i,i+1}h_{i+1}\cdots h_{j-4}h_{j-2}\cdot \underset{\leftarrow }{\underline{h_i}}\\
&\overset{\text{(1)(a)}}{\underset{}{=}}&t_{j-4,j-3}t_{j-2,j-1}h_{j-4}t_{j-4,j-3}^{-1}t_{j-2,j-1}^{-1}\cdot h_{j-5}h_{j-6}\cdots h_{i+1}\\
&&\cdot h_{j-2}h_{j-4}\cdots h_{i+1}\underline{t_{i,i+1}h_{i+1}h_i}\cdot h_{i+3}\cdots h_{j-4}h_{j-2}\\
&\overset{\text{(2)(d)}}{\underset{\text{(2)(b)}}{=}}&t_{j-4,j-3}t_{j-2,j-1}h_{j-4}t_{j-4,j-3}^{-1}t_{j-2,j-1}^{-1}\cdot h_{j-5}h_{j-6}\cdots h_{i+1}\\
&&\cdot h_{j-2}h_{j-4}\cdots \underline{h_{i+1}t_{i+2,i+3}}h_ih_{i+1}\cdot h_{i+3}\cdots h_{j-4}h_{j-2}\\
&\overset{\text{(2)(a)}}{\underset{}{=}}&t_{j-4,j-3}t_{j-2,j-1}h_{j-4}t_{j-4,j-3}^{-1}t_{j-2,j-1}^{-1}\cdot h_{j-5}h_{j-6}\cdots h_{i+1}\cdot h_{j-2}h_{j-4}\cdots h_{i+3}\\
&&\cdot t_{i+1,i+2}\underline{h_{i+1}h_i}h_{i+1}\cdot h_{i+3}\cdots h_{j-4}h_{j-2}\\
&\overset{\text{(2)(d)}}{\underset{}{=}}&t_{j-4,j-3}t_{j-2,j-1}h_{j-4}t_{j-4,j-3}^{-1}t_{j-2,j-1}^{-1}\cdot h_{j-5}h_{j-6}\cdots h_{i+1}\cdot h_{j-2}h_{j-4}\cdots h_{i+3}\\
&&\cdot t_{i+1,i+2}t_{i,i+1}^{-1}\underline{t_{i+2,i+3}h_ih_{i+1}}h_{i+1}\cdot h_{i+3}\cdots h_{j-4}h_{j-2}\\
&\overset{\text{(2)(b)}}{\underset{}{=}}&t_{j-4,j-3}t_{j-2,j-1}h_{j-4}t_{j-4,j-3}^{-1}t_{j-2,j-1}^{-1}\cdot h_{j-5}h_{j-6}\cdots h_{i+1}\cdot h_{j-2}h_{j-4}\cdots h_{i+3}\\
&&\cdot \underset{\leftarrow }{\underline{t_{i+1,i+2}t_{i,i+1}^{-1}h_i}}h_{i+1}t_{i,i+1}h_{i+1}\cdot h_{i+3}\cdots h_{j-4}h_{j-2}\\
&\overset{\text{(1)(a)}}{\underset{\text{(1)(c)}}{=}}&t_{j-4,j-3}t_{j-2,j-1}h_{j-4}\underline{t_{j-4,j-3}^{-1}t_{j-2,j-1}^{-1}}\cdot \underline{h_{j-5}h_{j-6}\cdots h_{i+1}\cdot t_{i+1,i+2}}\ \underline{t_{i,i+1}^{-1}h_i}\\
&&\cdot h_{j-2}h_{j-4}\cdots h_{i+1}t_{i,i+1}h_{i+1}\cdots h_{j-4}h_{j-2}\\
&\overset{\text{(1)(b)}}{\underset{\text{(2)(a)}}{=}}&t_{j-4,j-3}t_{j-2,j-1}h_{j-4}\underset{\rightarrow }{\underline{t_{j-2,j-1}^{-1}}}\cdot h_{j-5}h_{j-6}\cdots h_{i+1}h_it_{i+1,i+2}^{-1}\\
&&\cdot h_{j-2}h_{j-4}\cdots h_{i+1}t_{i,i+1}h_{i+1}\cdots h_{j-4}h_{j-2}\\
&\overset{\text{(1)(c)}}{\underset{}{=}}&t_{j-4,j-3}t_{j-2,j-1}h_{j-4}h_{j-5}\cdots h_it_{j-2,j-1}^{-1}t_{i+1,i+2}^{-1}\\
&&\cdot h_{j-2}h_{j-4}\cdots h_{i+1}t_{i,i+1}h_{i+1}\cdots h_{j-4}h_{j-2}.
\end{eqnarray*}
Thus we have
\begin{eqnarray*}
& &h_{j-2}h_{j-4}\cdots h_{i+1}t_{i,i+1}h_{i+1}\cdots h_{j-4}h_{j-2}(h_{j-4}h_{j-5}\cdots h_i)^{\frac{j-i-1}{2}}\\
&\overset{}{\underset{}{=}}&(t_{j-4,j-3}t_{j-2,j-1}h_{j-4}h_{j-5}\cdots h_it_{j-2,j-1}^{-1}t_{i+1,i+2}^{-1})^{\frac{j-i-1}{2}}\\
&&\cdot h_{j-2}h_{j-4}\cdots h_{i+1}t_{i,i+1}h_{i+1}\cdots h_{j-4}h_{j-2}.
\end{eqnarray*}
Hence it is enough for completing the proof of Lemma~\ref{tech_rel7} to prove that 
\begin{align*}
&(t_{j-4,j-3}t_{j-2,j-1}h_{j-4}h_{j-5}\cdots h_it_{j-2,j-1}^{-1}t_{i+1,i+2}^{-1})^{\frac{j-i-1}{2}}\\
=&t_{j-2,j-1}\cdots t_{i+3,i+4}t_{i+1,i+2}(h_{j-4}\cdots h_{i+1}h_i)^{\frac{j-i-1}{2}}t_{i+1,i+2}^{-1}t_{i+3,i+4}^{-1}\cdots t_{j-2,j-1}^{-1}.
\end{align*}
We have 
\begin{eqnarray*}
&&\underline{(t_{j-4,j-3}t_{j-2,j-1}h_{j-4}h_{j-5}\cdots h_it_{j-2,j-1}^{-1}t_{i+1,i+2}^{-1})^{\frac{j-i-1}{2}}}\\
&\overset{\text{(1)(b)}}{\underset{}{=}}&t_{j-2,j-1}\underline{(t_{j-4,j-3}h_{j-4}h_{j-5}\cdots h_it_{i+1,i+2}^{-1})^{\frac{j-i-1}{2}}}t_{j-2,j-1}^{-1}\\
&\overset{}{\underset{}{=}}&t_{j-2,j-1}t_{j-4,j-3}h_{j-4}h_{j-5}\cdots h_i\underline{t_{i+1,i+2}^{-1}(t_{j-4,j-3}h_{j-4}h_{j-5}\cdots h_it_{i+1,i+2}^{-1})^{\frac{j-i-5}{2}}}\\
&&\underline{t_{j-4,j-3}}h_{j-4}h_{j-5}\cdots h_it_{i+1,i+2}^{-1}t_{j-2,j-1}^{-1}\\
&\overset{\text{(1)(b)}}{\underset{}{=}}&t_{j-2,j-1}t_{j-4,j-3}\underline{h_{j-4}h_{j-5}\cdots h_it_{j-4,j-3}}(t_{i+1,i+2}^{-1}h_{j-4}h_{j-5}\cdots h_it_{j-4,j-3})^{\frac{j-i-5}{2}}\\
&&\cdot \underline{t_{i+1,i+2}^{-1}h_{j-4}h_{j-5}\cdots h_i}t_{i+1,i+2}^{-1}t_{j-2,j-1}^{-1}\\
&\overset{\text{(1)}}{\underset{\text{(2)(b)}}{=}}&t_{j-2,j-1}t_{j-4,j-3}t_{j-6,j-5}\cdot h_{j-4}h_{j-5}\cdots h_i(\underline{t_{i+1,i+2}^{-1}h_{j-4}h_{j-5}\cdots h_it_{j-4,j-3}})^{\frac{j-i-5}{2}}\\
&&\cdot h_{j-4}h_{j-5}\cdots h_i\cdot t_{i+1,i+2}^{-1}t_{i+3,i+4}^{-1}t_{j-2,j-1}^{-1}\\
&\overset{\text{(1)}}{\underset{\text{(2)(b)}}{=}}&t_{j-2,j-1}t_{j-4,j-3}t_{j-6,j-5}\cdot h_{j-4}h_{j-5}\cdots h_i\underline{(t_{j-6,j-5}h_{j-4}h_{j-5}\cdots h_it_{i+3,i+4}^{-1})^{\frac{j-i-5}{2}}}\\
&&\cdot h_{j-4}h_{j-5}\cdots h_i\cdot t_{i+1,i+2}^{-1}t_{i+3,i+4}^{-1}t_{j-2,j-1}^{-1}\\
&\overset{}{\underset{}{=}}&t_{j-2,j-1}t_{j-4,j-3}t_{j-6,j-5}\cdot \underline{h_{j-4}h_{j-5}\cdots h_i\cdot t_{j-6,j-5}}h_{j-4}h_{j-5}\cdots h_it_{i+3,i+4}^{-1}\\
&&(t_{j-6,j-5}h_{j-4}h_{j-5}\cdots h_it_{i+3,i+4}^{-1})^{\frac{j-i-9}{2}}\\
&&\cdot t_{j-6,j-5}h_{j-4}h_{j-5}\cdots h_i\underline{t_{i+3,i+4}^{-1}\cdot h_{j-4}h_{j-5}\cdots h_i}\cdot t_{i+1,i+2}^{-1}t_{i+3,i+4}^{-1}t_{j-2,j-1}^{-1}\\
&\overset{\text{(1)}}{\underset{\text{(2)(b)}}{=}}&t_{j-2,j-1}t_{j-4,j-3}t_{j-6,j-5}t_{j-8,j-7}(h_{j-4}h_{j-5}\cdots h_i)^2\underline{t_{i+3,i+4}^{-1}}\\
&&\underline{(t_{j-6,j-5}h_{j-4}h_{j-5}\cdots h_it_{i+3,i+4}^{-1})^{\frac{j-i-9}{2}} t_{j-6,j-5}}(h_{j-4}h_{j-5}\cdots h_i)^2\\
&&\cdot t_{i+1,i+2}^{-1}t_{i+3,i+4}^{-1}t_{i+5,i+6}^{-1}t_{j-2,j-1}^{-1}\\
&\overset{}{\underset{}{=}}&t_{j-2,j-1}t_{j-4,j-3}t_{j-6,j-5}t_{j-8,j-7}\underline{(h_{j-4}h_{j-5}\cdots h_i)^2t_{j-6,j-5}}\\
&&(t_{i+3,i+4}^{-1}h_{j-4}h_{j-5}\cdots h_it_{j-6,j-5})^{\frac{j-i-9}{2}} \underline{t_{i+3,i+4}^{-1}(h_{j-4}h_{j-5}\cdots h_i)^2}\\
&&\cdot t_{i+1,i+2}^{-1}t_{i+3,i+4}^{-1}t_{i+5,i+6}^{-1}t_{j-2,j-1}^{-1}\\
&\overset{\text{(1)}}{\underset{\text{(2)(b)}}{=}}&t_{j-2,j-1}t_{j-4,j-3}t_{j-6,j-5}t_{j-8,j-7}t_{j-10,j-9}(h_{j-4}h_{j-5}\cdots h_i)^2\\
&&(\underline{t_{i+3,i+4}^{-1}h_{j-4}h_{j-5}\cdots h_it_{j-6,j-5}})^{\frac{j-i-9}{2}} (h_{j-4}h_{j-5}\cdots h_i)^2\\
&&\cdot t_{i+1,i+2}^{-1}t_{i+3,i+4}^{-1}t_{i+5,i+6}^{-1}t_{i+7,i+8}^{-1}t_{j-2,j-1}^{-1}\\
&\overset{\text{(1)}}{\underset{\text{(2)(b)}}{=}}&t_{j-2,j-1}t_{j-4,j-3}t_{j-6,j-5}t_{j-8,j-7}t_{j-10,j-9}(h_{j-4}h_{j-5}\cdots h_i)^2\\
&&(t_{j-8,j-7}h_{j-4}h_{j-5}\cdots h_it_{i+5,i+6}^{-1})^{\frac{j-i-9}{2}} (h_{j-4}h_{j-5}\cdots h_i)^2\\
&&\cdot t_{i+1,i+2}^{-1}t_{i+3,i+4}^{-1}t_{i+5,i+6}^{-1}t_{i+7,i+8}^{-1}t_{j-2,j-1}^{-1}\\
&\vdots &\\
&\overset{\text{(1)}}{\underset{\text{(2)(b)}}{=}}&\left\{
\begin{array}{ll}
t_{j-2,j-1}\cdots t_{i+3,i+4}t_{i+1,i+2}(h_{j-4}h_{j-5}\cdots h_i)^{\frac{j-i-3}{4}}\\
\cdot \underline{t_{\frac{j+i-5}{2},\frac{j+i-3}{2}}h_{j-4}h_{j-5}\cdots h_i}t_{\frac{j+i-1}{2},\frac{j+i+1}{2}}^{-1} (h_{j-4}h_{j-5}\cdots h_i)^{\frac{j-i-3}{4}}\\
\cdot t_{i+1,i+2}^{-1}t_{i+3,i+4}^{-1}\cdots t_{j-2,j-1}^{-1}\quad \text{for }\frac{j-i-1}{2}\text{ is odd},\vspace{0.2cm}
\\
t_{j-2,j-1}\cdots t_{i+5,i+6}t_{i+3,i+4}(h_{j-4}h_{j-5}\cdots h_i)^{\frac{j-i-5}{4}}\\
\cdot \underline{(t_{\frac{j+i-3}{2},\frac{j+i-1}{2}}h_{j-4}h_{j-5}\cdots h_it_{\frac{j+i-3}{2},\frac{j+i-1}{2}}^{-1})^2} (h_{j-4}h_{j-5}\cdots h_i)^{\frac{j-i-5}{4}}\\
\cdot t_{i+1,i+2}^{-1}t_{i+3,i+4}^{-1}\cdots t_{j-6,j-5}^{-1}\cdot t_{j-2,j-1}^{-1}\\
=t_{j-2,j-1}\cdots t_{i+5,i+6}t_{i+3,i+4}\underline{(h_{j-4}h_{j-5}\cdots h_i)^{\frac{j-i-5}{4}}t_{\frac{j+i-3}{2},\frac{j+i-1}{2}}}\\
\cdot (h_{j-4}h_{j-5}\cdots h_i)^2\underline{t_{\frac{j+i-3}{2},\frac{j+i-1}{2}}^{-1} (h_{j-4}h_{j-5}\cdots h_i)^{\frac{j-i-5}{4}}}\\
\cdot t_{i+1,i+2}^{-1}t_{i+3,i+4}^{-1}\cdots t_{j-6,j-5}^{-1}\cdot t_{j-2,j-1}^{-1}\quad \text{for }\frac{j-i-1}{2}\text{ is even},\\
\end{array}
\right. \\
&\overset{\text{(1)}}{\underset{\text{(2)(b)}}{=}}&t_{j-2,j-1}\cdots t_{i+3,i+4}t_{i+1,i+2}(h_{j-4}\cdots h_{i+1}h_i)^{\frac{j-i-1}{2}}t_{i+1,i+2}^{-1}t_{i+3,i+4}^{-1}\cdots t_{j-2,j-1}^{-1}.
\end{eqnarray*}
Therefore we have completed the proof of Lemma~\ref{tech_rel7}. 
\end{proof}

\begin{lem}\label{tech_rel8}
For $1\leq i<j\leq 2n+2$ in the case of $\LM $ and $1\leq i<j\leq 2n+1$ in the case of $\LMb $ such that $j-i\geq 3$ is odd, the relation
\[
(h_{j-2}\cdots h_{i+1}h_i)^{\frac{j-i+1}{2}}=(h_ih_{i+1}\cdots h_{j-2})^{\frac{j-i+1}{2}}
\]
is obtained from the relations~(1), (2), $t_{2n+1,2n+2}=t_{1,2n}$, and relations in Corollary~\ref{cor_tech_rel4}, Lemmas~\ref{tech_rel5}, \ref{tech_rel6}, and \ref{tech_rel7}. 
\end{lem}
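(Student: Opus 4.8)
The plan is to establish the symmetric identity $A^{m}=B^{m}$, where I write $A=h_{j-2}\cdots h_{i+1}h_{i}$ for the decreasing product, $B=h_{i}h_{i+1}\cdots h_{j-2}$ for the increasing product (so $B$ is $A$ written in reverse), and $m=\frac{j-i+1}{2}$. I would argue by induction on the odd integer $j-i\geq 3$, with the interval $[i,j-2]$ (again of odd length) supplying the inductive hypothesis. For the base case $j-i=3$ the claim reads $(h_{i+2}h_{i+1}h_{i})^{2}=(h_{i}h_{i+1}h_{i+2})^{2}$; here Lemmas~\ref{tech_rel6} and~\ref{tech_rel7} degenerate (to the relation~(2)(d) and to a trivial relation, respectively), so I would verify this case directly by sliding the central block $h_{i}h_{i+2}$ using the commutative relations~(1) together with the several equivalent forms of the relations~(2)(d) and~(2)(e) collected in Lemma~\ref{tech_rel0}.

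For the inductive step I would use Lemma~\ref{tech_rel6} as the engine. Abbreviating the odd-indexed tail by $P=h_{i+1}h_{i+3}\cdots h_{j-2}$ and the central word by $C=t_{i+1,i+2}^{-1}t_{i+3,i+4}^{-1}\cdots t_{j-2,j-1}^{-1}h_{j-2}\cdots h_{i+3}h_{i+1}t_{i,i+1}h_{i+1}h_{i+3}\cdots h_{j-2}$, Lemma~\ref{tech_rel6} rewrites $A\cdot P$ as $t_{j-1,j}^{\frac{j-i-3}{2}}\,C\,(h_{j-3}\cdots h_{i+2}h_{i})$, isolating the shorter decreasing product $h_{j-4}\cdots h_{i}$ one level down. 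I would substitute this into $A^{m}$ and then exploit the two commutation statements tailored to exactly this situation: Lemma~\ref{tech_rel7}, which commutes the power $(h_{j-4}\cdots h_{i})^{m-1}$ past $C$, and Corollary~\ref{cor_tech_rel4}, which commutes the palindromic block past the corresponding power-product over the even-length interval $[i,j-1]$. These are what cause the shorter power-product $(h_{j-4}\cdots h_{i})^{m-1}$ to appear unobstructed, so that the inductive hypothesis may replace it by $(h_{i}\cdots h_{j-4})^{m-1}$.

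With the order of the shortened core reversed, I would reassemble $B^{m}$ by running the mirror image of the previous step: the increasing-order analogues recorded in Lemmas~\ref{tech_rel5} and~\ref{tech_rel5-1} convert between the decreasing and increasing normal forms and move the odd product $P$ from one side to the other, while the accompanying $t$-factors are recombined into the form required by $B^{m}$ using the commutative relations~(1) and the conjugation relations~(2)(a) and~(2)(b). Whenever an index reaches $2n+1$ I would invoke $t_{2n+1,2n+2}=t_{1,2n}$, which keeps the argument uniform across the ranges $j\leq 2n+2$ for $\LM$ and $j\leq 2n+1$ for $\LMb$.

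I expect the main obstacle to be the bookkeeping of the correction factors $t_{l,l+1}^{\pm 1}$. Lemma~\ref{tech_rel6} introduces a whole cluster of them, headed by the power $t_{j-1,j}^{\frac{j-i-3}{2}}$, and closing the induction requires checking that, after the commutations of Lemma~\ref{tech_rel7} and Corollary~\ref{cor_tech_rel4} and the mirror rewriting, all of these factors cancel or migrate---via the relations~(1), (2)(a), and~(2)(b)---into precisely the factors demanded by the symmetric target. Keeping the parities of the indices aligned, so that the odd product $P$, the palindromic blocks, and the even-length shortened interval all match up, is the delicate part; once this factor-tracking is organized the identity follows by the telescoping just described.
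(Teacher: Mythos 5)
Your inductive framework---extract the shorter decreasing power with Lemma~\ref{tech_rel5}, rewrite with Lemma~\ref{tech_rel6}, commute the obstructing blocks with Lemma~\ref{tech_rel7} and Corollary~\ref{cor_tech_rel4}, invoke the inductive hypothesis for the interval $[i,j-2]$, then mirror the rewriting to reassemble the increasing product---is exactly the paper's proof. However, your base case is wrong, and not merely as a typo. For $j-i=3$ the top index of the product is $j-2=i+1$, so the claim reads $(h_{i+1}h_i)^2=(h_ih_{i+1})^2$ (two generators), not $(h_{i+2}h_{i+1}h_{i})^{2}=(h_{i}h_{i+1}h_{i+2})^{2}$. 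The three-generator identity you propose to verify is in fact false in $\LM$: applying $\Psi$ (with the paper's right-to-left composition), the left side maps to $\bigl[(i+2\ i+4)(i+1\ i+3)(i\ i+2)\bigr]^2=(i\ i+2\ i+4)$, while the right side maps to $\bigl[(i\ i+2)(i+1\ i+3)(i+2\ i+4)\bigr]^2=(i\ i+4\ i+2)$, and these 3-cycles are distinct. So the verification you outline (sliding a ``central block $h_ih_{i+2}$'' using the relation~(2)~(e) and Lemma~\ref{tech_rel0}) cannot succeed, and your induction has no valid ground case as written.

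The repair is immediate and is what the paper does: the genuine base case $(h_{i+1}h_i)^2=(h_ih_{i+1})^2$ follows from the relation~(2)~(d) together with (2)~(a) and (2)~(b)---write $h_{i+1}h_i=h_ih_{i+1}t_{i,i+1}t_{i+2,i+3}^{-1}$ and absorb the twist factors through the second copy of $h_ih_{i+1}$; no use of (2)~(e) is needed. With that correction, your inductive step is the paper's argument essentially verbatim, including the role of the head factor $t_{j-1,j}^{\frac{j-i-3}{2}}$ and the cancellation bookkeeping you describe. One further small point: Lemma~\ref{tech_rel5-1} is not on the statement's list of permitted inputs, but since the paper shows it is itself a consequence of the relations~(1) and (2), invoking it in the mirror step does not affect the conclusion.
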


\begin{proof}
We proceed by induction on $\frac{j-i+1}{2}\geq 2$. 
In the case of $\frac{j-i+1}{2}=2$, i.e. $j-2=i+1$, we have
\begin{eqnarray*}
\underline{h_{i+1}h_i}\cdot \underline{h_{i+1}h_i}\overset{\text{(2)(d)}}{\underset{}{=}}h_ih_{i+1}\underline{t_{i,i+1}t_{i+2,i+3}^{-1}\cdot h_ih_{i+1}}t_{i,i+1}t_{i+2,i+3}^{-1}\overset{\text{(2)(a)}}{\underset{\text{(2)(b)}}{=}}h_ih_{i+1}h_ih_{i+1}.
\end{eqnarray*}
Thus Lemma~\ref{tech_rel8} is true for $\frac{j-i+1}{2}=2$. 

Assume $\frac{j-i+1}{2}\geq 3$. 
By the inductive hypothesis, the relation 
\[
(h_{j-4}\cdots h_{i+1}h_i)^{\frac{j-i-1}{2}}=(h_ih_{i+1}\cdots h_{j-4})^{\frac{j-i-1}{2}}
\]
holds in $\LMb $ for $j\leq 2n+1$ and $\LM $. 
Then we have
\begin{eqnarray*}
&&(h_{j-2}\cdots h_{i+1}h_i)^{\frac{j-i+1}{2}}=h_{j-2}\cdots h_{i+1}h_i\underline{(h_{j-2}\cdots h_{i+1}h_i)^{\frac{j-i-1}{2}}}\\
&\overset{\text{Lem.\ref{tech_rel5}}}{\underset{}{=}}&h_{j-2}\cdots h_{i+1}h_i\cdot h_{i+1}h_{i+3}\cdots h_{j-2}\underline{(h_{j-3}\cdots h_{i+1}h_i)^{\frac{j-i-1}{2}}}\\
&\overset{\text{Lem.\ref{tech_rel5}}}{\underset{}{=}}&\underline{h_{j-2}\cdots h_{i+1}h_i\cdot h_{i+1}h_{i+3}\cdots h_{j-2}}\cdot h_ih_{i+2}\cdots h_{j-3}(h_{j-4}\cdots h_{i+1}h_i)^{\frac{j-i-1}{2}}\\
&\overset{\text{Lem.\ref{tech_rel6}}}{\underset{}{=}}&t_{j-1,j}^{\frac{j-i-3}{2}}t_{i+1,i+2}^{-1}t_{i+3,i+4}^{-1}\cdots t_{j-2,j-1}^{-1}h_{j-2}\cdots h_{i+3}h_{i+1}t_{i,i+1}h_{i+1}h_{i+3}\cdots h_{j-2}\\
&&\underline{\cdot h_{j-3}\cdots h_{i+2}h_{i}\cdot h_ih_{i+2}\cdots h_{j-3}(h_{j-4}\cdots h_{i+1}h_i)^{\frac{j-i-1}{2}}}\\
&\overset{\text{Cor.\ref{cor_tech_rel4}}}{\underset{}{=}}&t_{j-1,j}^{\frac{j-i-3}{2}}\underline{t_{i+1,i+2}^{-1}t_{i+3,i+4}^{-1}\cdots t_{j-2,j-1}^{-1}h_{j-2}\cdots h_{i+3}h_{i+1}t_{i,i+1}h_{i+1}h_{i+3}\cdots h_{j-2}}\\
&&\underline{\cdot (h_{j-4}\cdots h_{i+1}h_i)^{\frac{j-i-1}{2}}}h_{j-3}\cdots h_{i+2}h_{i}\cdot h_ih_{i+2}\cdots h_{j-3}\\
&\overset{\text{Lem.\ref{tech_rel7}}}{\underset{}{=}}&\underline{t_{j-1,j}^{\frac{j-i-3}{2}}(h_{j-4}\cdots h_{i+1}h_i)^{\frac{j-i-1}{2}}}t_{i+1,i+2}^{-1}t_{i+3,i+4}^{-1}\cdots t_{j-2,j-1}^{-1}\\
&&\cdot h_{j-2}\cdots h_{i+3}h_{i+1}t_{i,i+1}h_{i+1}h_{i+3}\cdots h_{j-2}\\
&&\cdot h_{j-3}\cdots h_{i+2}h_{i}h_ih_{i+2}\cdots h_{j-3}\\
&\overset{\text{(1)(c)}}{\underset{}{=}}&(h_{j-4}\cdots h_{i+1}h_i)^{\frac{j-i-1}{2}}\underline{t_{j-1,j}^{\frac{j-i-3}{2}}t_{i+1,i+2}^{-1}t_{i+3,i+4}^{-1}\cdots t_{j-2,j-1}^{-1}}\\
&&\underline{\cdot h_{j-2}\cdots h_{i+3}h_{i+1}t_{i,i+1}h_{i+1}h_{i+3}\cdots h_{j-2}\cdot h_{j-3}\cdots h_{i+2}h_{i}}h_ih_{i+2}\cdots h_{j-3}\\
&\overset{\text{Lem.\ref{tech_rel6}}}{\underset{}{=}}&\underline{(h_{j-4}\cdots h_{i+1}h_i)^{\frac{j-i-1}{2}}}h_{j-2}\cdots h_{i+1}h_i\cdot h_{i+1}h_{i+3}\cdots h_{j-2}\cdot h_ih_{i+2}\cdots h_{j-3}\\
&=&(h_ih_{i+1}\cdots h_{j-4})^{\frac{j-i-1}{2}}h_{j-2}\cdots h_{i+1}h_i\cdot \underset{\rightarrow }{\underline{h_{i+1}h_{i+3}\cdots h_{j-2}}}\cdot h_ih_{i+2}\cdots h_{j-3}\\
&\overset{\text{(1)(a)}}{\underset{}{=}}&(h_ih_{i+1}\cdots h_{j-4})^{\frac{j-i-1}{2}}\underline{h_{j-2}\cdots h_{i+1}h_i}\cdot \underline{h_{i+1}h_i\cdot h_{i+3}h_{i+2}\cdots h_{j-2}h_{j-3}}\\
&\overset{\text{(2)(d)}}{\underset{}{=}}&(h_ih_{i+1}\cdots h_{j-4})^{\frac{j-i-1}{2}}\\
&&\cdot h_{j-3}h_{j-2}t_{j-3,j-2}\underset{\rightarrow }{\underline{t_{j-1,j}^{-1}}}\cdots h_{i+2}h_{i+3}t_{i+2,i+3}\underset{\rightarrow }{\underline{t_{i+4,i+5}^{-1}}}\cdot h_ih_{i+1}t_{i,i+1}t_{i+2,i+3}^{-1}\\
&&\cdot t_{i+2,i+3}t_{i,i+1}^{-1}h_ih_{i+1}\cdot \underset{\leftarrow }{\underline{t_{i+4,i+5}}}t_{i+2,i+3}^{-1}h_{i+2}h_{i+3}\cdots \underset{\leftarrow }{\underline{t_{j-1,j}}}t_{j-3,j-2}^{-1}h_{j-3}h_{j-2}\\
&\overset{\text{(1)(c)}}{\underset{}{=}}&(h_ih_{i+1}\cdots h_{j-4})^{\frac{j-i-1}{2}}\\
&&h_{j-3}h_{j-2}\underline{t_{j-3,j-2}h_{j-5}h_{j-4}}\cdots \underline{t_{i+4,i+5}h_{i+2}h_{i+3}}\ \underline{t_{i+2,i+3}\cdot h_ih_{i+1}}\\
&&\cdot \underline{h_ih_{i+1}\cdot t_{i+2,i+3}^{-1}}\ \underline{h_{i+2}h_{i+3}t_{i+4,i+5}^{-1}}\cdots  \underline{h_{j-5}h_{j-4}t_{j-3,j-2}^{-1}}h_{j-3}h_{j-2}\\
&\overset{\text{(2)(b)}}{\underset{}{=}}&(h_ih_{i+1}\cdots h_{j-4})^{\frac{j-i-1}{2}}\\
&&h_{j-3}h_{j-2}h_{j-5}h_{j-4}\underline{t_{j-5,j-4}h_{j-7}h_{j-6}} \cdots \underline{t_{i+4,i+5}h_{i+2}h_{i+3}}\ \underline{t_{i+2,i+3}h_ih_{i+1}}\\
&&\cdot \underline{h_ih_{i+1}t_{i+2,i+3}^{-1}}\ \underline{h_{i+2}h_{i+3}t_{i+4,i+5}^{-1}}\cdots \underline{h_{j-7}h_{j-6}t_{j-5,j-4}^{-1}}h_{j-5}h_{j-4}h_{j-3}h_{j-2}\\
&\overset{\text{(2)(b)}}{\underset{}{=}}&(h_ih_{i+1}\cdots h_{j-4})^{\frac{j-i-1}{2}}\\
&&h_{j-3}h_{j-2}\cdots h_{j-7}h_{j-6}\underline{t_{j-7,j-6}h_{j-9}h_{j-8}} \cdots \underline{t_{i+2,i+3}h_ih_{i+1}}\\
&&\cdot \underline{h_ih_{i+1}\cdot t_{i+2,i+3}^{-1}} \cdots \underline{h_{j-9}h_{j-8}t_{j-7,j-6}^{-1}}h_{j-7}h_{j-6}\cdots h_{j-3}h_{j-2}\\
&\vdots &\\
&\overset{\text{(2)(b)}}{\underset{}{=}}&(h_ih_{i+1}\cdots h_{j-4})^{\frac{j-i-1}{2}}h_{j-3}h_{j-2}\cdot \underset{\leftarrow }{\underline{h_{j-5}}}h_{j-4} \cdots \underset{\leftarrow }{\underline{h_{i+2}}}h_{i+3}\cdot \underset{\leftarrow }{\underline{h_i}}h_{i+1}\\
&&\cdot h_ih_{i+1}\cdot h_{i+2}h_{i+3}\cdots h_{j-5}h_{j-4}\cdot h_{j-3}h_{j-2}\\
&\overset{\text{(1)(a)}}{\underset{}{=}}&\underline{(h_ih_{i+1}\cdots h_{j-4})^{\frac{j-i-1}{2}}h_{j-3}h_{j-5}\cdots h_{i+2}h_i\cdot h_{j-2}h_{j-4} \cdots h_{i+3}h_{i+1}}\\
&&\cdot h_ih_{i+1}\cdots h_{j-2}\\
&\overset{\text{Lem.\ref{tech_rel5}}}{\underset{}{=}}&(h_ih_{i+1}\cdots h_{j-2})^{\frac{j-i+1}{2}}.
\end{eqnarray*}
Therefore, we have completed the proof of Lemma~\ref{tech_rel8}. 
\end{proof}

Lemma~\ref{tech_rel8} give the proof of Lemma~\ref{cor_tech_rel8} as follows. 

\begin{proof}[Proof of Lemma~\ref{cor_tech_rel8}]
Assume that $j-i\geq 3$ is odd for $1\leq i<j\leq 2n+1$. 
Then we have 
\begin{eqnarray*}
&&t_{j-1,j}^{-\frac{j-i-3}{2}}\cdots t_{i+2,i+3}^{-\frac{j-i-3}{2}}t_{i,i+1}^{-\frac{j-i-3}{2}}\underline{(h_{j-2}\cdots h_{i+1}h_i)^{\frac{j-i+1}{2}}}\\
&\overset{\text{Lem.\ref{cor_tech_rel8}}}{\underset{}{=}}&\underline{t_{j-1,j}^{-\frac{j-i-3}{2}}\cdots t_{i+2,i+3}^{-\frac{j-i-3}{2}}t_{i,i+1}^{-\frac{j-i-3}{2}}}(h_ih_{i+1}\cdots h_{j-2})^{\frac{j-i+1}{2}}\\
&\overset{\text{(1)(b)}}{\underset{}{=}}&t_{i,i+1}^{-\frac{j-i-3}{2}}t_{i+2,i+3}^{-\frac{j-i-3}{2}}\cdots t_{j-1,j}^{-\frac{j-i-3}{2}}(h_ih_{i+1}\cdots h_{j-2})^{\frac{j-i+1}{2}}
\end{eqnarray*}
Since $j\leq 2n+1$, $t_{2n+1,2n+2}$ does not appear in the relation above. 
Therefore the relation in~Lemma~\ref{tech_rel8} which is used above is obtained from the relations~(1) and (2), and we have completed the proof of Lemma~\ref{cor_tech_rel8}. 
\end{proof}

\subsection{Proof of Lemma~\ref{tech_rel19}}\label{section_tech_rel19}

In this section, we prove Lemma~\ref{tech_rel19}. 
To prove this lemma, we prepare the following six technical lemmas.

\begin{lem}\label{tech_rel14}
For $1\leq i<j\leq 2n$ such that $j-i\geq 4$ is even, the relation
\begin{align*}
&h_{j-2}(h_{j-3}\cdots h_{i+1}h_{i})\\
=&(h_{j-3}\cdots h_{i+1}h_{i})t_{i,i+1}^{-\frac{j-i-4}{2}}h_{i}^{-1}h_{i+2}^{-1}\cdots h_{j-4}^{-1}h_{j-2}h_{j-4}\cdots h_{i+2}h_{i}t_{j-1,j}^{-1}t_{i,i+1}^{\frac{j-i-2}{2}}
\end{align*}
is obtained from the relations~(1) and (2). 
\end{lem}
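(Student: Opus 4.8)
The plan is to read this identity as a conjugation formula. Writing $D=h_{j-3}h_{j-4}\cdots h_{i+1}h_i$ for the descending product and $P=h_{j-4}h_{j-6}\cdots h_{i+2}h_i$ for the even-offset subproduct, so that $P^{-1}=h_i^{-1}h_{i+2}^{-1}\cdots h_{j-4}^{-1}$, the asserted relation is exactly the statement that $D^{-1}h_{j-2}D=t_{i,i+1}^{-\frac{j-i-4}{2}}\,(P^{-1}h_{j-2}P)\,t_{j-1,j}^{-1}\,t_{i,i+1}^{\frac{j-i-2}{2}}$. Thus the whole content is to migrate the single half-rotation $h_{j-2}$ from the far left of $D$ to the far right, recording the twist factors it sheds along the way. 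Since $h_{j-2}\rightleftarrows h_k$ for $k\le j-5$ by relation~(1)~(a), the only genuine interactions are with the top two factors $h_{j-3}h_{j-4}$ of $D$ and, through the twist relations, with the boundary twists $t_{j-3,j-2}$ and $t_{j-1,j}$.

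I would argue by induction on $m=\frac{j-i}{2}\ge 2$. For the base case $j-i=4$ the claim reduces to the single equation $h_{i+2}h_{i+1}h_i=h_{i+1}h_{i+2}h_i\,t_{i+3,i+4}^{-1}t_{i,i+1}$, which I would verify directly: cancelling $h_i$ and transporting the two twists past it by relation~(2)~(a) (which sends $t_{i,i+1}\mapsto t_{i+1,i+2}$ and fixes $t_{i+3,i+4}$ by~(1)~(c)) turns the right-hand side into $h_{i+1}h_{i+2}\,t_{i+3,i+4}^{-1}t_{i+1,i+2}$. This matches $h_{i+2}h_{i+1}$ by the equivalent form $h_{i+2}h_{i+1}=h_{i+1}h_{i+2}\,t_{i+1,i+2}t_{i+3,i+4}^{-1}$ of relation~(2)~(d) from Lemma~\ref{tech_rel0}, together with the commutation $t_{i+1,i+2}\rightleftarrows t_{i+3,i+4}$ of relation~(1)~(b).

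For the inductive step I would split $D=h_{j-3}h_{j-4}\,D_{i,j-2}$, where $D_{i,j-2}=h_{j-5}\cdots h_i$ is the descending product attached to the pair $(i,j-2)$, and use the braid relation~(2)~(c), in the form $h_{j-4}h_{j-3}h_{j-2}h_{j-4}=h_{j-2}h_{j-4}h_{j-3}h_{j-2}$, to commute $h_{j-2}$ through the block $h_{j-3}h_{j-4}$. Since $h_{j-2}$ commutes with $D_{i,j-2}$ while the factor produced by the braid move does not, the remaining work is to feed the shorter product into the inductive hypothesis and then reconcile the accumulated twists. Every twist that appears carries indices among $t_{i,i+1}$, $t_{j-3,j-2}$, and $t_{j-1,j}$, and I would repeatedly apply relation~(2)~(a) to transport each such twist down to the base index $i$, using the equivalent forms in Lemma~\ref{tech_rel0} of relations~(2)~(d) and~(2)~(e) to interchange adjacent and next-to-adjacent half-rotations at the cost of a single $t_{i,i+1}^{\pm1}$ per interchange.

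The main obstacle is precisely this twist bookkeeping. The half-rotations $h_{j-3}$ and $h_{j-4}$ do \emph{not} commute with the boundary twists $t_{j-3,j-2}$ and $t_{j-1,j}$ — they share an endpoint, so these twists cannot merely be pushed aside but must be carried through the half-rotations by relation~(2)~(a), which shifts each index by two at a time. The delicate point is to check that, after all of these transports, the powers of $t_{i,i+1}$ arising from the base case, from the interchanges in the inductive step, and from conjugating the surviving $t_{j-1,j}^{-1}$, combine to give exactly the exponents $-\frac{j-i-4}{2}$ on the left and $\frac{j-i-2}{2}$ on the right, with the lone $t_{j-1,j}^{-1}$ left standing in the middle. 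Confirming this cancellation is the bulk of the labour, but it is entirely mechanical once the inductive scheme and the role of relation~(2)~(a) are fixed.
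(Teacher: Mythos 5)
Your proposal is correct in outline and takes a genuinely different route from the paper. The paper keeps $(i,j)$ fixed and does one long sweep: it exchanges $h_{j-2}$ with $h_{j-3}$ by relation~(2)~(d), then pushes the resulting twists and the growing wedge $h_{j-l-3}^{-1}\cdots h_{j-4}^{-1}h_{j-2}h_{j-4}\cdots h_{j-l-3}$ through the remaining factors one at a time, with an internal induction on how far $h_{j-2}$ has been pushed; all of the exponent bookkeeping lives inside that single computation. You instead induct on $j-i$ and reuse the lemma itself for the pair $(i,j-2)$, and your base case $j-i=4$ is verified correctly. This organization really is more modular: after the single exchange $h_{j-2}h_{j-3}=h_{j-3}h_{j-2}t_{j-3,j-2}t_{j-1,j}^{-1}$ (the Lemma~\ref{tech_rel0} form of (2)~(d)) and transporting the two twists through $h_{j-4}$ and $D'=h_{j-5}\cdots h_i$ by (2)~(a) and the commutativity relations~(1), two applications of the inductive hypothesis to $h_{j-4}D'$ (one on the expression just obtained, one to expand the prefix $h_{j-3}h_{j-4}D'$ of the asserted right-hand side) reduce the entire inductive step to the local identity
\[
h_{j-2}h_{j-4}t_{j-3,j-2}^{-1}=\bigl(h_{j-4}t_{j-3,j-2}^{-1}h_{j-4}^{-1}\bigr)h_{j-2}h_{j-4},
\]
which holds because $h_{j-4}t_{j-3,j-2}^{-1}h_{j-4}^{-1}=t_{j-4,j-3}^{-1}$ by (2)~(a) and $h_{j-2}\rightleftarrows t_{j-4,j-3}$ by (1)~(c). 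So your scheme closes using exactly the tools you name, and what it buys over the paper's proof is that the delicate cancellation is isolated in one short identity instead of being spread across a page-long computation; what the paper's version provides instead is an explicit description of every intermediate stage of the sweep. Two details of your sketch should be corrected, though neither is fatal. First, the braid relation~(2)~(c) is not the right engine for the inductive step: it passes $h_{j-2}$ through the block $h_{j-4}h_{j-3}$, not $h_{j-3}h_{j-4}$, and after the preliminary swap by (2)~(d) the stray $h_{j-2}^{-1}$ it creates collides with $t_{j-3,j-2}$, with which it does not commute and for which the presentation offers no direct exchange rule; the step should be run with (2)~(d) and (2)~(a) as above. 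Second, a (2)~(d)-interchange costs two twist factors, of the form $t_{m,m+1}t_{m+2,m+3}^{-1}$, not a single $t_{i,i+1}^{\pm1}$; the single-twist count only appears after the far twist has been transported to the end of the word.
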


\begin{proof}
We have
\begin{eqnarray*}
&&\underline{h_{j-2}\cdot h_{j-3}}h_{j-4}\cdots h_{i}\\
&\overset{\text{(2)(d)}}{\underset{}{=}}&h_{j-3}h_{j-2}\underset{\rightarrow }{\underline{t_{j-1,j}^{-1}}}\ \underline{t_{j-3,j-2}\cdot h_{j-4}}h_{j-5}\cdots h_{i}\\
&\overset{\text{(2)(a)}}{\underset{\text{(1)}}{=}}&(h_{j-3}h_{j-4})h_{j-4}^{-1}h_{j-2}h_{j-4}\underline{t_{j-1,j}^{-1}t_{j-4,j-3}\cdot h_{j-5}}h_{j-6}\cdots h_{i}\\
&\overset{\text{(2)(a)}}{\underset{\text{(1)(c)}}{=}}&h_{j-3}h_{j-4}\cdot h_{j-4}^{-1}h_{j-2}\underline{h_{j-4}h_{j-5}}t_{j-1,j}^{-1}t_{j-5,j-4}\cdot h_{j-6}h_{j-7}\cdots h_{i}\\
&\overset{\text{(2)(d)}}{\underset{\text{(1)(c)}}{=}}&h_{j-3}h_{j-4}\cdot h_{j-4}^{-1}h_{j-2}\underset{\leftarrow }{\underline{h_{j-5}}}h_{j-4}t_{j-3,j-2}^{-1}\underline{t_{j-1,j}^{-1}t_{j-5,j-4}^2\cdot h_{j-6}}h_{j-7}\cdots h_{i}\\
&\overset{\text{(1)}}{\underset{\text{(2)(a)}}{=}}&h_{j-3}h_{j-4}\cdot \underline{h_{j-4}^{-1}h_{j-5}}h_{j-2}h_{j-4}t_{j-3,j-2}^{-1}h_{j-6}t_{j-1,j}^{-1}t_{j-6,j-5}^2\cdot h_{j-7}h_{j-8}\cdots h_{i}\\
&\overset{\text{(2)(d)}}{\underset{\text{(2)(a)}}{=}}&h_{j-3}h_{j-4}h_{j-5}\cdot t_{j-5,j-4}^{-1}h_{j-4}^{-1}\underline{t_{j-4,j-3}h_{j-2}h_{j-4}}t_{j-3,j-2}^{-1}h_{j-6}t_{j-1,j}^{-1}t_{j-6,j-5}^2\\
&&\cdot h_{j-7}h_{j-8}\cdots h_{i}\\
&\overset{\text{(1)(c)}}{\underset{\text{(2)(a)}}{=}}&h_{j-3}h_{j-4}h_{j-5}h_{j-6}\cdot \underline{h_{j-6}^{-1}t_{j-5,j-4}^{-1}}h_{j-4}^{-1}h_{j-2}h_{j-4}h_{j-6}t_{j-1,j}^{-1}t_{j-6,j-5}^2\\
&&\cdot h_{j-7}h_{j-8}\cdots h_{i}\\
&\overset{\text{(1)(c)}}{\underset{\text{(2)(a)}}{=}}&(h_{j-3}h_{j-4}h_{j-5}h_{j-6})t_{j-6,j-5}^{-1}h_{j-6}^{-1}h_{j-4}^{-1}h_{j-2}h_{j-4}h_{j-6}t_{j-1,j}^{-1}t_{j-6,j-5}^2\\
&&\cdot h_{j-7}h_{j-8}\cdots h_{i}.
\end{eqnarray*}
By an inductive argument, for a positive odd $3\leq l\leq j-i-7$, the relation 
\begin{align*}
&(h_{j-3}h_{j-4}\cdots h_{j-l-3})t_{j-l-3,j-l-2}^{-\frac{l-1}{2}}h_{j-l-3}^{-1}\cdots h_{j-6}^{-1}h_{j-4}^{-1}h_{j-2}h_{j-4}h_{j-6}\cdots h_{j-l-3}t_{j-1,j}^{-1}\\
&\cdot t_{j-l-3,j-l-2}^{\frac{l+1}{2}}h_{j-l-4}\cdots h_{i+1}h_i=(h_{j-3}h_{j-4}\cdots h_{j-l-5})t_{j-l-5,j-l-4}^{-\frac{l+1}{2}}h_{j-l-5}^{-1}\cdots h_{j-6}^{-1}h_{j-4}^{-1}\\
&\cdot h_{j-2}h_{j-4}h_{j-6}\cdots h_{j-l-5}t_{j-1,j}^{-1}\cdot t_{j-l-5,j-l-4}^{\frac{l+3}{2}}h_{j-l-6}\cdots h_{i+1}h_i
\end{align*}
is obtained from the relations~(1) and (2). 
Thus, up to the relations~(1) and (2), we have 
\begin{eqnarray*}
&&h_{j-2}(h_{j-3}h_{j-4}\cdots h_{i})\\
&=&(h_{j-3}h_{j-4}\cdots h_{i+2})t_{i+2,i+3}^{-\frac{j-i-6}{2}}h_{i+2}^{-1}\cdots h_{j-6}^{-1}h_{j-4}^{-1}h_{j-2}h_{j-4}h_{j-6}\cdots h_{i+2}\\
&&\cdot \underline{t_{j-1,j}^{-1}t_{i+2,i+3}^{\frac{j-i-4}{2}}h_{i+1}h_i}\\
&\overset{\text{(2)(a)}}{\underset{\text{(1)(c)}}{=}}&h_{j-3}h_{j-4}\cdots h_{i+2}\cdot t_{i+2,i+3}^{-\frac{j-i-6}{2}}h_{i+2}^{-1}\cdots h_{j-6}^{-1}h_{j-4}^{-1}h_{j-2}h_{j-4}h_{j-6}\cdots \underline{h_{i+2}\cdot h_{i+1}}h_i\\
&&\cdot t_{j-1,j}^{-1}t_{i,i+1}^{\frac{j-i-4}{2}}\\
&\overset{\text{(2)(d)}}{\underset{}{=}}&h_{j-3}h_{j-4}\cdots h_{i+2}\cdot t_{i+2,i+3}^{-\frac{j-i-6}{2}}h_{i+2}^{-1}\cdots h_{j-6}^{-1}h_{j-4}^{-1}h_{j-2}h_{j-4}h_{j-6}\cdots h_{i+4}\\
&&\cdot \underset{\leftarrow }{\underline{h_{i+1}}}h_{i+2}t_{i+3,i+4}^{-1}\underline{t_{i+1,i+2}h_i}t_{j-1,j}^{-1}t_{i,i+1}^{\frac{j-i-4}{2}}\\
&\overset{\text{(1)}}{\underset{\text{(2)(a)}}{=}}&h_{j-3}h_{j-4}\cdots h_{i+2}\cdot t_{i+2,i+3}^{-\frac{j-i-6}{2}}\underline{h_{i+2}^{-1}h_{i+1}}\\
&&\cdot h_{i+4}^{-1}\cdots h_{j-6}^{-1}h_{j-4}^{-1}h_{j-2}h_{j-4}h_{j-6}\cdots h_{i+4}h_{i+2}t_{i+3,i+4}^{-1}h_it_{j-1,j}^{-1}t_{i,i+1}^{\frac{j-i-2}{2}}\\
&\overset{\text{(2)(d)}}{\underset{}{=}}&h_{j-3}h_{j-4}\cdots h_{i+2}\cdot \underline{t_{i+2,i+3}^{-\frac{j-i-6}{2}}h_{i+1}}t_{i+1,i+2}^{-1}\underline{t_{i+3,i+4}h_{i+2}^{-1}}\\
&&\cdot h_{i+4}^{-1}\cdots h_{j-6}^{-1}h_{j-4}^{-1}h_{j-2}h_{j-4}h_{j-6}\cdots h_{i+4}h_{i+2}t_{i+3,i+4}^{-1}h_it_{j-1,j}^{-1}t_{i,i+1}^{\frac{j-i-2}{2}}\\
&\overset{\text{(2)(a)}}{\underset{}{=}}&h_{j-3}\cdots h_{i+2}h_{i+1}\cdot t_{i+1,i+2}^{-\frac{j-i-4}{2}}h_{i+2}^{-1}\underset{\rightarrow }{\underline{t_{i+2,i+3}}}\\
&&\cdot h_{i+4}^{-1}\cdots h_{j-6}^{-1}h_{j-4}^{-1}h_{j-2}h_{j-4}h_{j-6}\cdots h_{i+4}h_{i+2}t_{i+3,i+4}^{-1}h_it_{j-1,j}^{-1}t_{i,i+1}^{\frac{j-i-2}{2}}\\
&\overset{\text{(1)(a)}}{\underset{}{=}}&h_{j-3}\cdots h_{i+1}h_{i}\cdot \underline{h_{i}^{-1}t_{i+1,i+2}^{-\frac{j-i-4}{2}}}h_{i+2}^{-1}\\
&&\cdot h_{i+4}^{-1}\cdots h_{j-6}^{-1}h_{j-4}^{-1}h_{j-2}h_{j-4}h_{j-6}\cdots h_{i+4}\underline{t_{i+2,i+3}h_{i+2}}t_{i+3,i+4}^{-1}h_it_{j-1,j}^{-1}t_{i,i+1}^{\frac{j-i-2}{2}}\\
&\overset{\text{(2)(a)}}{\underset{}{=}}&(h_{j-3}\cdots h_{i+1}h_{i})t_{i,i+1}^{-\frac{j-i-4}{2}}h_{i}^{-1}h_{i+2}^{-1}\\
&&\cdot h_{i+4}^{-1}\cdots h_{j-6}^{-1}h_{j-4}^{-1}h_{j-2}h_{j-4}h_{j-6}\cdots h_{i+4}h_{i+2}h_it_{j-1,j}^{-1}t_{i,i+1}^{\frac{j-i-2}{2}}\\
\end{eqnarray*}
Therefore, we have completed the proof of Lemma~\ref{tech_rel14}. 
\end{proof}

\begin{lem}\label{tech_rel15}
For $1\leq i<j\leq 2n$ such that $j-i\geq 4$ is even, the relation
\begin{align*}
&t_{i,i+1}^{-\frac{j-i-4}{2}}h_{i}^{-1}h_{i+2}^{-1}\cdots h_{j-4}^{-1}h_{j-2}h_{j-4}\cdots h_{i+2}h_{i}t_{j-1,j}^{-1}t_{i,i+1}^{\frac{j-i-2}{2}}(h_{j-2}\cdots h_{i+1}h_{i})\\
=&(h_{j-2}\cdots h_{i+1}h_{i})t_{i+2,i+3}^{-\frac{j-i-4}{2}}t_{i,i+1}^{-\frac{j-i-2}{2}}h_{i}t_{i+2,i+3}^{\frac{j-i-2}{2}}t_{i,i+1}^{\frac{j-i-4}{2}}
\end{align*}
is obtained from the relations~(1) and (2). 
\end{lem}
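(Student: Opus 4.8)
The plan is to prove the identity as a direct computation inside the relations (1) and (2) of Theorem~\ref{thm_pres_lmodb}, using Lemma~\ref{tech_rel14} as the main input and proceeding by induction on $m=\frac{j-i}{2}\geq 2$. Write $R=h_{j-3}\cdots h_{i+1}h_i$ and $Q=h_{j-2}\cdots h_{i+1}h_i=h_{j-2}R$, and denote by $W$ the prefix appearing on the left-hand side and by $V=t_{i+2,i+3}^{-\frac{j-i-4}{2}}t_{i,i+1}^{-\frac{j-i-2}{2}}h_{i}t_{i+2,i+3}^{\frac{j-i-2}{2}}t_{i,i+1}^{\frac{j-i-4}{2}}$ the suffix on the right-hand side, so that the claim reads $WQ=QV$. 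By Lemma~\ref{tech_rel14}, $W$ is precisely the word obtained by transporting $h_{j-2}$ through the descending product $R$, that is $h_{j-2}R=RW$ modulo (1) and (2); this is the identity I will exploit repeatedly. Note that, although $W$ and $Q$ are words in the $h_k^{\pm1}$, the mixed relations (2)(d) and (2)(e) genuinely interchange $h$-blocks with $t$-factors, so the two sides can agree without either being in pure-$h$ form.

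First I would settle the base case $m=2$ (that is $j=i+4$), where $W=h_i^{-1}h_{i+2}h_i\,t_{i+3,i+4}^{-1}t_{i,i+1}$ and $Q=h_{i+2}h_{i+1}h_i$. Here I transport the tail $t_{i+3,i+4}^{-1}t_{i,i+1}$ rightward through $Q$ using the commutation relation (1)(c) and the shifting relation (2)(a); each time a $t$ crosses an $h$ it shifts its index by two, and after the passage the $t$'s assemble into a conjugating pair $t_{i,i+1}(\cdots)t_{i,i+1}^{-1}$. The resulting block $t_{i,i+1}h_{i+1}h_i t_{i,i+1}^{-1}$ collapses to $h_ih_{i+1}$ by relation (2)(d), and a final application of the mixed relation (2)(e) together with (2)(a) rearranges the remaining $h$'s into $QV$.

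For the inductive step I would peel off the two outermost letters $h_{j-2}$ and $h_{j-3}$. Applying Lemma~\ref{tech_rel14} to rewrite $h_{j-2}R$, together with the expanded forms of (2)(d) and (2)(e) collected in Lemma~\ref{tech_rel0}, lets me lower the top index of the conjugate $h_{j-2}$ inside $W$ and of the leading $h_{j-2}$ of $Q$ from $j-2$ to $j-4$, at the cost of producing the boundary $t$-factors $t_{j-3,j-2}$ and $t_{j-1,j}$. After moving these $t$-factors past the inert $h$'s by (1) and (2)(a), the residual expression is exactly the instance of the present lemma for the pair $(i,j-2)$, to which the inductive hypothesis applies; reinstating the peeled letters then yields $QV$ for the index $j$.

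The main obstacle is the bookkeeping of the two families of exponents. Because the top $h$ must be transported all the way down to index $i$, the powers of $t_{i,i+1}$ and $t_{i+2,i+3}$ accumulate an extra unit of shift relative to a naive symmetric conjugation; this is the source of the asymmetry in $V$, where $t_{i+2,i+3}$ occurs with exponents $-\frac{j-i-4}{2}$ and $+\frac{j-i-2}{2}$ (differing by one) and $t_{i,i+1}$ with $-\frac{j-i-2}{2}$ and $+\frac{j-i-4}{2}$. Keeping these counts exact through each crossing, and verifying that the only non-commuting interactions are those governed by (2)(a), (2)(d) and (2)(e) while everything else is routine transport by the commutation relations (1), is the delicate part of the argument.
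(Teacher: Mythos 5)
Your structural reading is right and matches how the paper itself uses Lemma~\ref{tech_rel14}: with $R=h_{j-3}\cdots h_{i+1}h_i$ and $Q=h_{j-2}R$, the prefix on the left-hand side is exactly the word $W$ satisfying $h_{j-2}R=RW$, and the claim is $WQ=QV$; peeling top-index letters off $Q$ is also what the paper's computation does. The genuine gap is in your inductive step, namely the assertion that after peeling $h_{j-2}$ and $h_{j-3}$ the residual is ``exactly the instance of the present lemma for the pair $(i,j-2)$''. Carrying out that peeling (it is the first block of the paper's proof) produces
\[
(h_{j-2}h_{j-3})\,t_{i,i+1}^{-\frac{j-i-4}{2}}t_{j-3,j-2}^{-1}\,C\,Q'\,t_{i+2,i+3}^{\frac{j-i-2}{2}},
\qquad C=h_i^{-1}h_{i+2}^{-1}\cdots h_{j-6}^{-1}h_{j-4}h_{j-6}\cdots h_i,\quad Q'=h_{j-4}\cdots h_i,
\]
whereas the $(i,j-2)$ instance reads $t_{i,i+1}^{-\frac{j-i-6}{2}}\,C\,t_{j-3,j-2}^{-1}t_{i,i+1}^{\frac{j-i-4}{2}}\,Q'=Q'V_{i,j-2}$. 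Both $t_{i,i+1}$-exponents are off by one, and the stray factor $t_{j-3,j-2}^{-1}$ sits on the wrong side of the core $C$: it commutes past $h_i^{-1},\dots,h_{j-6}^{-1}$ and crosses $h_{j-4}$ by (2)(a), turning into $t_{j-4,j-3}^{-1}$, but then no relation in (1) or (2)(a) moves $t_{j-4,j-3}$ past $h_{j-6}$, so it cannot be brought to the position required by the smaller instance by ``inert'' transport. The off-by-one discrepancy is intrinsic: the peeling moves create and absorb $t$-factors only at the top indices ($t_{j-1,j}$, $t_{j-3,j-2}$, $t_{j-2,j-1}$), never at the bottom ones carried by $W$. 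Hence the inductive hypothesis does not apply to the residual and the induction, as formulated, does not close.

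The repair is to strengthen the inductive statement so that it covers these off-by-one, stage-dependent configurations — which is exactly what the paper does: it does not induct on instances of the lemma at all, but proves, by an inner induction on the number $l$ of peeled letters, a family of identities whose exponents (for example $t_{j-l,j-l+1}^{-\frac{l-1}{2}}$ and $t_{i,i+1}^{\frac{l-3}{2}}$) depend on both $j-i$ and $l$, and only reassembles $QV$ at the final stage using (2)(b), (2)(d), (2)(e) and Lemma~\ref{tech_rel0}. Two smaller inaccuracies in your sketch feed the same bookkeeping problem: a $t$-generator crossing a single $h$ via (2)(a) shifts its index by one, not two; and powers of $t_{i,i+1}$ do not slide through the tail $h_{i+1}h_i$ of $Q$ unchanged — relation (2)(d) flips the tail to $h_ih_{i+1}$ and splits the power into mixed $t_{i,i+1}$ and $t_{i+2,i+3}$ factors — so even the base case $j=i+4$ requires the (2)(b)/(2)(e)-type moves rather than the clean ``conjugating pair'' collapse you describe (though the base case, being short, can certainly be pushed through by hand).
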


\begin{proof}
We have
\begin{eqnarray*}
&&t_{i,i+1}^{-\frac{j-i-4}{2}}h_{i}^{-1}\cdots h_{j-6}^{-1}h_{j-4}^{-1}h_{j-2}h_{j-4}h_{j-6}\cdots h_{i}\underline{t_{j-1,j}^{-1}t_{i,i+1}^{\frac{j-i-2}{2}}(h_{j-2}h_{j-3}\cdots h_{i})}\\
&\overset{\text{(1)}}{\underset{\text{(2)}}{=}}&t_{i,i+1}^{-\frac{j-i-4}{2}}h_{i}^{-1}\cdots h_{j-6}^{-1}h_{j-4}^{-1}h_{j-2}h_{j-4}h_{j-6}\cdots h_{i}(\underset{\leftarrow }{\underline{h_{j-2}}}h_{j-3}\cdots h_{i})t_{i,i+1}^{-1}t_{i+2,i+3}^{\frac{j-i-2}{2}}\\
&\overset{\text{(1)(a)}}{\underset{}{=}}&t_{i,i+1}^{-\frac{j-i-4}{2}}h_{i}^{-1}\cdots h_{j-6}^{-1}h_{j-4}^{-1}\underline{h_{j-2}h_{j-4}h_{j-2}}\cdot h_{j-6}h_{j-8}\cdots h_{i}\cdot h_{j-3}h_{j-4}\cdots h_{i}\\
&&\cdot t_{i,i+1}^{-1}t_{i+2,i+3}^{\frac{j-i-2}{2}}\\
&\overset{\text{(2)(e)}}{\underset{}{=}}&t_{i,i+1}^{-\frac{j-i-4}{2}}h_{i}^{-1}\cdots h_{j-8}^{-1}h_{j-6}^{-1}\cdot \underset{\leftarrow }{\underline{h_{j-2}}}h_{j-4}\underset{\rightarrow }{\underline{t_{j-3,j-2}^{-1}t_{j-2,j-1}}}\cdot h_{j-6}h_{j-8}\cdots h_{i}\\
&&\cdot h_{j-3}h_{j-4}\cdots h_{i}t_{i,i+1}^{-1}t_{i+2,i+3}^{\frac{j-i-2}{2}}\\
&\overset{\text{(1)(a)}}{\underset{\text{(1)(c)}}{=}}&h_{j-2}t_{i,i+1}^{-\frac{j-i-4}{2}}h_{i}^{-1}\cdots h_{j-8}^{-1}h_{j-6}^{-1}\cdot h_{j-4}\cdot h_{j-6}h_{j-8}\cdots h_{i}\\
&&\cdot t_{j-3,j-2}^{-1}\underline{t_{j-2,j-1}h_{j-3}h_{j-4}\cdots h_{i}}t_{i,i+1}^{-1}t_{i+2,i+3}^{\frac{j-i-2}{2}}\\
&\overset{\text{(2)(e)}}{\underset{}{=}}&h_{j-2}t_{i,i+1}^{-\frac{j-i-4}{2}}h_{i}^{-1}\cdots h_{j-8}^{-1}h_{j-6}^{-1}\cdot h_{j-4}\cdot h_{j-6}h_{j-8}\cdots h_{i}\\
&&\cdot \underline{t_{j-3,j-2}^{-1}h_{j-3}}h_{j-4}\cdots h_{i}t_{i+2,i+3}^{\frac{j-i-2}{2}}\\
&\overset{\text{(2)(a)}}{\underset{}{=}}&h_{j-2}t_{i,i+1}^{-\frac{j-i-4}{2}}h_{i}^{-1}\cdots h_{j-8}^{-1}h_{j-6}^{-1}\cdot h_{j-4}\cdot h_{j-6}h_{j-8}\cdots h_{i}\\
&&\cdot \underset{\leftarrow }{\underline{h_{j-3}}}t_{j-2,j-1}^{-1}h_{j-4}h_{j-5}\cdots h_{i}t_{i+2,i+3}^{\frac{j-i-2}{2}}\\
&\overset{\text{(1)(a)}}{\underset{}{=}}&h_{j-2}t_{i,i+1}^{-\frac{j-i-4}{2}}h_{i}^{-1}\cdots h_{j-8}^{-1}h_{j-6}^{-1}\cdot \underline{h_{j-4}h_{j-3}}\cdot h_{j-6}h_{j-8}\cdots h_{i}\\
&&\cdot t_{j-2,j-1}^{-1}h_{j-4}h_{j-5}\cdots h_{i}t_{i+2,i+3}^{\frac{j-i-2}{2}}\\
&\overset{\text{(2)(d)}}{\underset{\text{(1),(2)}}{=}}&h_{j-2}t_{i,i+1}^{-\frac{j-i-4}{2}}h_{i}^{-1}\cdots h_{j-8}^{-1}h_{j-6}^{-1}\cdot \underset{\leftarrow }{\underline{h_{j-3}t_{j-3,j-2}^{-1}}}h_{j-4}\cdot h_{j-6}h_{j-8}\cdots h_{i}\\
&&\cdot h_{j-4}h_{j-5}\cdots h_{i}t_{i+2,i+3}^{\frac{j-i-2}{2}}\\
&\overset{\text{(2)(d)}}{\underset{\text{(1)(c)}}{=}}&(h_{j-2}h_{j-3})t_{i,i+1}^{-\frac{j-i-4}{2}}t_{j-3,j-2}^{-1}h_{i}^{-1}\cdots h_{j-8}^{-1}h_{j-6}^{-1}h_{j-4}h_{j-6}h_{j-8}\cdots h_{i}\\
&&\cdot (h_{j-4}h_{j-5}\cdots h_{i})t_{i+2,i+3}^{\frac{j-i-2}{2}}
\end{eqnarray*}
By an inductive argument, for a positive odd $3\leq l\leq j-i-5$, the relation 
\begin{align*}
&(h_{j-2}h_{j-3}\cdots h_{j-l})t_{i,i+1}^{-\frac{j-i-4}{2}}t_{j-l,j-l+1}^{-\frac{l-1}{2}}h_{i}^{-1}\cdots h_{j-l-5}^{-1}h_{j-l-3}^{-1}h_{j-l-1}h_{j-l-3}h_{j-l-5}\cdots h_{i}\\
&\cdot (h_{j-l-1}h_{j-l-2}\cdots h_{i})t_{i,i+1}^{\frac{l-3}{2}}t_{i+2,i+3}^{\frac{j-i-2}{2}}=(h_{j-2}h_{j-3}\cdots h_{j-l-2})t_{i,i+1}^{-\frac{j-i-4}{2}}t_{j-l-2,j-l-1}^{-\frac{l+1}{2}}\\
&\cdot h_{i}^{-1}\cdots h_{j-l-7}^{-1}h_{j-l-5}^{-1}h_{j-l-3}h_{j-l-5}h_{j-l-7}\cdots h_{i}(h_{j-l-3}h_{j-l-4}\cdots h_{i})t_{i,i+1}^{\frac{l-1}{2}}t_{i+2,i+3}^{\frac{j-i-2}{2}}
\end{align*}
is obtained from the relations~(1) and (2). 
Thus, up to the relations~(1) and (2), we have 
\begin{eqnarray*}
&&t_{i,i+1}^{-\frac{j-i-4}{2}}h_{i}^{-1}\cdots h_{j-6}^{-1}h_{j-4}^{-1}h_{j-2}h_{j-4}h_{j-6}\cdots h_{i}t_{j-1,j}^{-1}t_{i,i+1}^{\frac{j-i-2}{2}}\cdot h_{j-2}h_{j-3}\cdots h_{i}\\
&\overset{}{\underset{}{=}}&h_{j-2}h_{j-3}\cdots h_{i+3}t_{i,i+1}^{-\frac{j-i-4}{2}}t_{i+3,i+4}^{-\frac{j-i-4}{2}}h_{i}^{-1}\underline{h_{i+2}h_{i}\cdot h_{i+2}}h_{i+1}h_{i}t_{i,i+1}^{\frac{j-i-6}{2}}t_{i+2,i+3}^{\frac{j-i-2}{2}}\\
&\overset{\text{(2)(e)}}{\underset{\text{(2)(a)}}{=}}&h_{j-2}h_{j-3}\cdots h_{i+3}t_{i,i+1}^{-\frac{j-i-4}{2}}\underline{t_{i+3,i+4}^{-\frac{j-i-4}{2}}h_{i+2}t_{i,i+1}^{-1}}h_{i}\underline{t_{i+2,i+3}h_{i+1}h_{i}}t_{i,i+1}^{\frac{j-i-6}{2}}t_{i+2,i+3}^{\frac{j-i-2}{2}}\\
&\overset{\text{(2)(a)}}{\underset{\text{(1)}}{=}}&h_{j-2}h_{j-3}\cdots h_{i+3}t_{i,i+1}^{-\frac{j-i-4}{2}}\underset{\leftarrow }{\underline{h_{i+2}}}t_{i,i+1}^{-1}t_{i+2,i+3}^{-\frac{j-i-4}{2}}\underline{h_{i}h_{i+1}}h_{i}t_{i,i+1}^{\frac{j-i-4}{2}}t_{i+2,i+3}^{\frac{j-i-2}{2}}\\
&\overset{\text{(2)(d)}}{\underset{\text{(1)}}{=}}&h_{j-2}h_{j-3}\cdots h_{i+2}\underline{t_{i,i+1}^{-\frac{j-i-2}{2}}t_{i+2,i+3}^{-\frac{j-i-4}{2}}h_{i+1}h_{i}}t_{i+2,i+3}t_{i,i+1}^{-1}h_{i}t_{i,i+1}^{\frac{j-i-4}{2}}t_{i+2,i+3}^{\frac{j-i-2}{2}}\\
&\overset{\text{(2)(a)}}{\underset{\text{(2)(b)}}{=}}&h_{j-2}h_{j-3}\cdots h_{i}\underline{t_{i+2,i+3}^{-\frac{j-i-2}{2}}t_{i,i+1}^{-\frac{j-i-4}{2}}t_{i+2,i+3}t_{i,i+1}^{-1}}h_{i}\underline{t_{i,i+1}^{\frac{j-i-4}{2}}t_{i+2,i+3}^{\frac{j-i-2}{2}}}\\
&\overset{\text{(1)(b)}}{\underset{}{=}}&h_{j-2}h_{j-3}\cdots h_{i}t_{i,i+1}^{-\frac{j-i-2}{2}}t_{i+2,i+3}^{-\frac{j-i-4}{2}}h_{i}t_{i+2,i+3}^{\frac{j-i-2}{2}}t_{i,i+1}^{\frac{j-i-4}{2}}
\end{eqnarray*}
Therefore, we have completed the proof of Lemma~\ref{tech_rel15}. 
\end{proof}

\begin{lem}\label{tech_rel16}
For $1\leq i<j\leq 2n$ such that $j-i\geq 4$ is even, the relation
\begin{align*}
&(t_{i+2,i+3}^{-\frac{j-i-4}{2}}t_{i,i+1}^{-\frac{j-i-2}{2}}h_{i}t_{i+2,i+3}^{\frac{j-i-2}{2}}t_{i,i+1}^{\frac{j-i-4}{2}}h_{j-2}\cdots h_{i+1}h_{i})^{\frac{j-i-4}{2}}=(h_{j-2}\cdots h_{i+1}h_{i})^{\frac{j-i-4}{2}}\\
&\cdot t_{j-2,j-1}^{-\frac{j-i-4}{2}}t_{j-4,j-3}^{-\frac{j-i-2}{2}}\cdots t_{i+4,i+5}^{-\frac{j-i-2}{2}}t_{i+2,i+3}^{-\frac{j-i-2}{2}}h_{j-4}\cdots h_{i+4}h_{i+2}t_{i+2,i+3}^{\frac{j-i-4}{2}}t_{i+4,i+5}^{\frac{j-i-2}{2}}t_{i+6,i+7}^{\frac{j-i-2}{2}}\cdots t_{j-2,j-1}^{\frac{j-i-2}{2}}
\end{align*}
is obtained from the relations~(1) and (2). 
\end{lem}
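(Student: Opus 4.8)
The plan is to prove the displayed equality by a bookkeeping induction, working throughout in the group generated by the $h_i$ and $t_{i,j}$ modulo only the relations~(1) and (2) of Theorem~\ref{thm_pres_lmodb}. Abbreviate the descending product by $H=h_{j-2}\cdots h_{i+1}h_i$ and write
\[
X=t_{i+2,i+3}^{-\frac{j-i-4}{2}}t_{i,i+1}^{-\frac{j-i-2}{2}}h_{i}\,t_{i+2,i+3}^{\frac{j-i-2}{2}}t_{i,i+1}^{\frac{j-i-4}{2}},
\]
so that with $p=\frac{j-i-4}{2}\geq 1$ the assertion reads $(XH)^{p}=H^{p}Y$, where $Y$ is the explicit word on the right-hand side. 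The two tools already in hand are Lemma~\ref{tech_rel14}, which rewrites $h_{j-2}(h_{j-3}\cdots h_i)$ as $(h_{j-3}\cdots h_i)Z$ for a specific correction word $Z$, and Lemma~\ref{tech_rel15}, which is precisely the one-step transport $ZH=HX$, equivalently $X=H^{-1}ZH$. Combining these is exactly what lets a single factor $X$ be pushed across one copy of $H$, and Lemma~\ref{tech_rel16} is the outcome of iterating this transport $p$ times.

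First I would record the purely group-theoretic telescoping identity
\[
(XH)^{p}=H^{p}\,(H^{-p}XH^{p})(H^{-(p-1)}XH^{p-1})\cdots (H^{-1}XH),
\]
which reduces the problem to evaluating the ordered product of conjugates $\prod_{k=p}^{1}H^{-k}XH^{k}$ and matching it with $Y$. Since conjugation by $H$ sweeps each index once through the range $[i,j-2]$, the commutative relations~(1)(a),(c) and the conjugation relations~(2)(a),(b) make conjugation by $H$ act essentially as a shift of indices by two; this is the mechanism that moves the central half-rotation outward from $h_i$ toward the block $h_{j-4}\cdots h_{i+4}h_{i+2}$ appearing in $Y$. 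Rather than evaluate each conjugate in closed form, I would run the induction internally on the partial power $q=1,2,\dots,p$ with $i,j$ (hence $p$) fixed, proving a running formula $(XH)^{q}=H^{q}R_{q}$ in which $R_{q}$ has the same prefix--block--suffix shape as $Y$ and $R_{p}=Y$; the passage $R_{q}\rightsquigarrow R_{q+1}$ is carried out by peeling one leading $H$ off the new factor $XH$ and applying Lemma~\ref{tech_rel15} together with the braid-type relations~(2)(c),(d),(e) to reorganize the $h$-part. The base case $q=1$ (that is, $j-i=6$) is a short direct computation from Lemma~\ref{tech_rel15}, using that $Y$ is then, up to the commutative relations~(1)(b) among disjoint Dehn twists, nothing but the index-shift-by-two of $X$.

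The main obstacle is the exponent bookkeeping on the $t$-factors, not the $h$-part: I expect the half-rotations to collapse cleanly into the descending block $h_{j-4}\cdots h_{i+4}h_{i+2}$ by~(2)(c),(d),(e), but verifying that the prefix acquires exponent $-\frac{j-i-2}{2}$ on every factor except the outermost $t_{j-2,j-1}$, which carries $-\frac{j-i-4}{2}$, and symmetrically for the suffix, is delicate. The key point to check at each step of the internal induction is that one application of Lemma~\ref{tech_rel15} lowers exactly one exponent, on the correct outermost $t$-factor, from $\frac{j-i-2}{2}$ to $\frac{j-i-4}{2}$, while all the interior equal-exponent powers telescope to the claimed uniform value and commute freely past one another by~(1)(b). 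I would organize this exactly as the embedded inductions in Lemmas~\ref{tech_rel14} and~\ref{tech_rel15} do: isolate, at each stage, the single $t$-power whose exponent changes, perform the local move, and then restore normal form by commuting the remaining $t$-powers. Once the running formula reaches $q=p$, comparison with the stated $Y$ completes the proof, and since $t_{2n+1,2n+2}$ never enters (because $j\leq 2n$), every relation used lies among~(1) and~(2).
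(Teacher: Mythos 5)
Your overall architecture matches the paper's proof: write $X$ for the decorated word, $H=h_{j-2}\cdots h_{i+1}h_i$, telescope $(XH)^{p}=H^{p}(H^{-p}XH^{p})\cdots(H^{-1}XH)$, observe that conjugation by $H$ acts as an index shift by two, and collect the shifted factors into the stated normal form using the disjointness commutations (1)(b),(c). However, there is a genuine gap at the central step. You assert that Lemma~\ref{tech_rel15} is ``precisely the one-step transport'' that pushes $X$ across $H$, and you invoke it both for the base case and for each inductive step ($R_q\rightsquigarrow R_{q+1}$, ``one application of Lemma~\ref{tech_rel15} lowers exactly one exponent''). But Lemma~\ref{tech_rel15} states $ZH=HX$, where $Z$ is the correction word produced by Lemma~\ref{tech_rel14}; it transports $Z$ across $H$ and \emph{produces} $X$. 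It does not transport $X$ itself. What your induction actually needs is the identity
\begin{align*}
&t_{i+2,i+3}^{-\frac{j-i-4}{2}}t_{i,i+1}^{-\frac{j-i-2}{2}}h_{i}t_{i+2,i+3}^{\frac{j-i-2}{2}}t_{i,i+1}^{\frac{j-i-4}{2}}\cdot H\\
=\ &H\cdot t_{i+4,i+5}^{-\frac{j-i-4}{2}}t_{i+2,i+3}^{-\frac{j-i-2}{2}}h_{i+2}t_{i+4,i+5}^{\frac{j-i-2}{2}}t_{i+2,i+3}^{\frac{j-i-4}{2}},
\end{align*}
i.e.\ $XH=H\sigma(X)$ with $\sigma$ the shift by two. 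This is not an instance of Lemma~\ref{tech_rel15}, nor does it follow by combining Lemmas~\ref{tech_rel14} and \ref{tech_rel15} (that combination only yields $XH=H^{-1}(h_{j-3}\cdots h_i)^{-1}H^{3}$, which leaves you with the same conjugation problem). In the paper this shift identity is established from scratch — it is the bulk of the proof — by moving $h_{j-2}\cdots h_{i+4}$ left with (1)(a),(c), applying the conjugation relations (2)(b) to the pairs $h_{i+3}h_{i+2}$ and $h_{i+1}h_i$, and using the braid-type relation (2)(c) in the form $h_ih_{i+2}h_{i+1}h_i=h_{i+2}h_{i+1}h_ih_{i+2}$.

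A second, smaller inaccuracy: the exponent pattern in $Y$ (outermost $t$-powers with exponent $\frac{j-i-4}{2}$, inner ones with $\frac{j-i-2}{2}$) does not arise from repeated ``lowering'' of exponents by Lemma~\ref{tech_rel15}; it arises because in the collected product $\sigma^{p}(X)\sigma^{p-1}(X)\cdots\sigma(X)$ the trailing factor $t_{i+2k+2,i+2k+3}^{\frac{j-i-4}{2}}$ of $\sigma^{k+1}(X)$ cancels exactly against the leading factor $t_{i+2k+2,i+2k+3}^{-\frac{j-i-4}{2}}$ of $\sigma^{k}(X)$, after which everything is rearranged by (1)(b),(c) alone. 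Once you replace the appeal to Lemma~\ref{tech_rel15} by an explicit proof of the shift identity $XH=H\sigma(X)$ (and its index-shifted versions), your telescoping scheme and this cancellation bookkeeping do close the argument, and it then coincides with the paper's proof.
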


\begin{proof}
We have
\begin{eqnarray*}
&&(t_{i+2,i+3}^{-\frac{j-i-4}{2}}t_{i,i+1}^{-\frac{j-i-2}{2}}h_{i}t_{i+2,i+3}^{\frac{j-i-2}{2}}t_{i,i+1}^{\frac{j-i-4}{2}}\underset{\leftarrow }{\underline{h_{j-2}\cdots h_{i+2}}}h_{i+1}h_{i})^{\frac{j-i-4}{2}}\\
&\overset{\text{(1)(a)}}{\underset{\text{(1)(c)}}{=}}&(h_{j-2}\cdots h_{i+5}h_{i+4}t_{i+2,i+3}^{-\frac{j-i-4}{2}}t_{i,i+1}^{-\frac{j-i-2}{2}}h_{i}\underline{t_{i+2,i+3}^{\frac{j-i-2}{2}}h_{i+3}h_{i+2}}\ \underline{t_{i,i+1}^{\frac{j-i-4}{2}}h_{i+1}h_{i}})^{\frac{j-i-4}{2}}\\
&\overset{\text{(2)(b)}}{\underset{}{=}}&(h_{j-2}\cdots h_{i+5}h_{i+4}t_{i+2,i+3}^{-\frac{j-i-4}{2}}t_{i,i+1}^{-\frac{j-i-2}{2}}h_{i}\underset{\leftarrow }{\underline{h_{i+3}}}h_{i+2}t_{i+4,i+5}^{\frac{j-i-2}{2}}\underset{\leftarrow }{\underline{h_{i+1}h_{i}}}t_{i+2,i+3}^{\frac{j-i-4}{2}})^{\frac{j-i-4}{2}}\\
&\overset{\text{(1)(a)}}{\underset{\text{(1)(c)}}{=}}&(h_{j-2}\cdots h_{i+5}h_{i+4}t_{i+2,i+3}^{-\frac{j-i-4}{2}}t_{i,i+1}^{-\frac{j-i-2}{2}}h_{i+3}\underline{h_{i}h_{i+2}h_{i+1}h_{i}}t_{i+4,i+5}^{\frac{j-i-2}{2}}t_{i+2,i+3}^{\frac{j-i-4}{2}})^{\frac{j-i-4}{2}}\\
&\overset{\text{(2)(c)}}{\underset{}{=}}&(h_{j-2}\cdots h_{i+5}h_{i+4}t_{i+2,i+3}^{-\frac{j-i-4}{2}}t_{i,i+1}^{-\frac{j-i-2}{2}}\underset{\leftarrow }{\underline{h_{i+3}h_{i+2}}}h_{i+1}h_{i}h_{i+2}t_{i+4,i+5}^{\frac{j-i-2}{2}}t_{i+2,i+3}^{\frac{j-i-4}{2}})^{\frac{j-i-4}{2}}\\
&\overset{\text{(1)(c)}}{\underset{}{=}}&(h_{j-2}\cdots h_{i+5}h_{i+4}\underline{t_{i+2,i+3}^{-\frac{j-i-4}{2}}h_{i+3}h_{i+2}}\ \underline{t_{i,i+1}^{-\frac{j-i-2}{2}}h_{i+1}h_{i}}h_{i+2}t_{i+4,i+5}^{\frac{j-i-2}{2}}t_{i+2,i+3}^{\frac{j-i-4}{2}})^{\frac{j-i-4}{2}}\\
&\overset{\text{(2)(b)}}{\underset{}{=}}&(h_{j-2}\cdots h_{i+3}h_{i+2}t_{i+4,i+5}^{-\frac{j-i-4}{2}}\underset{\leftarrow }{\underline{h_{i+1}h_{i}}}t_{i+2,i+3}^{-\frac{j-i-2}{2}}h_{i+2}t_{i+4,i+5}^{\frac{j-i-2}{2}}t_{i+2,i+3}^{\frac{j-i-4}{2}})^{\frac{j-i-4}{2}}\\
&\overset{\text{(1)(c)}}{\underset{}{=}}&(h_{j-2}\cdots h_{i+1}h_{i}t_{i+4,i+5}^{-\frac{j-i-4}{2}}t_{i+2,i+3}^{-\frac{j-i-2}{2}}h_{i+2}t_{i+4,i+5}^{\frac{j-i-2}{2}}t_{i+2,i+3}^{\frac{j-i-4}{2}})^{\frac{j-i-4}{2}}\\
&\overset{}{\underset{}{=}}&h_{j-2}\cdots h_{i+1}h_{i}(t_{i+4,i+5}^{-\frac{j-i-4}{2}}t_{i+2,i+3}^{-\frac{j-i-2}{2}}h_{i+2}t_{i+4,i+5}^{\frac{j-i-2}{2}}t_{i+2,i+3}^{\frac{j-i-4}{2}}\underset{\leftarrow }{\underline{h_{j-2}\cdots h_{i+4}}}h_{i+3}h_{i+2}\\
&&\cdot h_{i+1}h_{i})^{\frac{j-i-6}{2}}t_{i+4,i+5}^{-\frac{j-i-4}{2}}t_{i+2,i+3}^{-\frac{j-i-2}{2}}h_{i+2}t_{i+4,i+5}^{\frac{j-i-2}{2}}t_{i+2,i+3}^{\frac{j-i-4}{2}}\\
&\overset{\text{(1)(a)}}{\underset{\text{(1)(c)}}{=}}&h_{j-2}\cdots h_{i+1}h_{i}(h_{j-2}\cdots h_{i+7}h_{i+6}t_{i+4,i+5}^{-\frac{j-i-4}{2}}t_{i+2,i+3}^{-\frac{j-i-2}{2}}h_{i+2}\underline{t_{i+4,i+5}^{\frac{j-i-2}{2}}h_{i+5}h_{i+4}}\\
&&\cdot \underline{t_{i+2,i+3}^{\frac{j-i-4}{2}}h_{i+3}h_{i+2}}h_{i+1}h_{i})^{\frac{j-i-6}{2}}t_{i+4,i+5}^{-\frac{j-i-4}{2}}t_{i+2,i+3}^{-\frac{j-i-2}{2}}h_{i+2}t_{i+4,i+5}^{\frac{j-i-2}{2}}t_{i+2,i+3}^{\frac{j-i-4}{2}}\\
&\overset{\text{(2)(b)}}{\underset{}{=}}&h_{j-2}\cdots h_{i+1}h_{i}(h_{j-2}\cdots h_{i+7}h_{i+6}t_{i+4,i+5}^{-\frac{j-i-4}{2}}t_{i+2,i+3}^{-\frac{j-i-2}{2}}h_{i+2}h_{i+5}h_{i+4}t_{i+6,i+7}^{\frac{j-i-2}{2}}\\
&&\cdot \underset{\leftarrow }{\underline{h_{i+3}h_{i+2}}}t_{i+4,i+5}^{\frac{j-i-4}{2}}\underset{\leftarrow }{\underline{h_{i+1}h_{i}}})^{\frac{j-i-6}{2}}t_{i+4,i+5}^{-\frac{j-i-4}{2}}t_{i+2,i+3}^{-\frac{j-i-2}{2}}h_{i+2}t_{i+4,i+5}^{\frac{j-i-2}{2}}t_{i+2,i+3}^{\frac{j-i-4}{2}}\\
&\overset{\text{(1)(c)}}{\underset{}{=}}&h_{j-2}\cdots h_{i+1}h_{i}(h_{j-2}\cdots h_{i+7}h_{i+6}t_{i+4,i+5}^{-\frac{j-i-4}{2}}t_{i+2,i+3}^{-\frac{j-i-2}{2}}h_{i+2}\underset{\leftarrow }{\underline{h_{i+5}}}h_{i+4}h_{i+3}h_{i+2}\\
&&\cdot h_{i+1}h_{i}t_{i+6,i+7}^{\frac{j-i-2}{2}}t_{i+4,i+5}^{\frac{j-i-4}{2}})^{\frac{j-i-6}{2}}t_{i+4,i+5}^{-\frac{j-i-4}{2}}t_{i+2,i+3}^{-\frac{j-i-2}{2}}h_{i+2}t_{i+4,i+5}^{\frac{j-i-2}{2}}t_{i+2,i+3}^{\frac{j-i-4}{2}}\\
&\overset{\text{(1)(a)}}{\underset{}{=}}&h_{j-2}\cdots h_{i+1}h_{i}(h_{j-2}\cdots h_{i+7}h_{i+6}t_{i+4,i+5}^{-\frac{j-i-4}{2}}t_{i+2,i+3}^{-\frac{j-i-2}{2}}h_{i+5}\underline{h_{i+2}h_{i+4}h_{i+3}h_{i+2}}\\
&&\cdot h_{i+1}h_{i}t_{i+6,i+7}^{\frac{j-i-2}{2}}t_{i+4,i+5}^{\frac{j-i-4}{2}})^{\frac{j-i-6}{2}}t_{i+4,i+5}^{-\frac{j-i-4}{2}}t_{i+2,i+3}^{-\frac{j-i-2}{2}}h_{i+2}t_{i+4,i+5}^{\frac{j-i-2}{2}}t_{i+2,i+3}^{\frac{j-i-4}{2}}\\
&\overset{\text{(2)(c)}}{\underset{}{=}}&h_{j-2}\cdots h_{i+1}h_{i}(h_{j-2}\cdots h_{i+7}h_{i+6}t_{i+4,i+5}^{-\frac{j-i-4}{2}}t_{i+2,i+3}^{-\frac{j-i-2}{2}}\underset{\leftarrow }{\underline{h_{i+5}h_{i+4}}}h_{i+3}h_{i+2}h_{i+4}\\
&&\cdot \underset{\leftarrow }{\underline{h_{i+1}h_{i}}}t_{i+6,i+7}^{\frac{j-i-2}{2}}t_{i+4,i+5}^{\frac{j-i-4}{2}})^{\frac{j-i-6}{2}}t_{i+4,i+5}^{-\frac{j-i-4}{2}}t_{i+2,i+3}^{-\frac{j-i-2}{2}}h_{i+2}t_{i+4,i+5}^{\frac{j-i-2}{2}}t_{i+2,i+3}^{\frac{j-i-4}{2}}\\
&\overset{\text{(1)(a)}}{\underset{\text{(1)(c)}}{=}}&h_{j-2}\cdots h_{i+1}h_{i}(h_{j-2}\cdots h_{i+7}h_{i+6}\underline{t_{i+4,i+5}^{-\frac{j-i-4}{2}}h_{i+5}h_{i+4}}\ \underline{t_{i+2,i+3}^{-\frac{j-i-2}{2}}h_{i+3}h_{i+2}}h_{i+1}h_{i}\\
&&\cdot h_{i+4}t_{i+6,i+7}^{\frac{j-i-2}{2}}t_{i+4,i+5}^{\frac{j-i-4}{2}})^{\frac{j-i-6}{2}}t_{i+4,i+5}^{-\frac{j-i-4}{2}}t_{i+2,i+3}^{-\frac{j-i-2}{2}}h_{i+2}t_{i+4,i+5}^{\frac{j-i-2}{2}}t_{i+2,i+3}^{\frac{j-i-4}{2}}\\
&\overset{\text{(2)(b)}}{\underset{}{=}}&h_{j-2}\cdots h_{i+1}h_{i}(h_{j-2}\cdots h_{i+5}h_{i+4}t_{i+6,i+7}^{-\frac{j-i-4}{2}}\underset{\leftarrow }{\underline{h_{i+3}h_{i+2}}}t_{i+4,i+5}^{-\frac{j-i-2}{2}}\underset{\leftarrow }{\underline{h_{i+1}h_{i}}}\\
&&\cdot h_{i+4}t_{i+6,i+7}^{\frac{j-i-2}{2}}t_{i+4,i+5}^{\frac{j-i-4}{2}})^{\frac{j-i-6}{2}}t_{i+4,i+5}^{-\frac{j-i-4}{2}}t_{i+2,i+3}^{-\frac{j-i-2}{2}}h_{i+2}t_{i+4,i+5}^{\frac{j-i-2}{2}}t_{i+2,i+3}^{\frac{j-i-4}{2}}\\
&\overset{\text{(1)(c)}}{\underset{}{=}}&h_{j-2}\cdots h_{i+1}h_{i}(h_{j-2}\cdots h_{i+1}h_{i}t_{i+6,i+7}^{-\frac{j-i-4}{2}}t_{i+4,i+5}^{-\frac{j-i-2}{2}}h_{i+4}t_{i+6,i+7}^{\frac{j-i-2}{2}}t_{i+4,i+5}^{\frac{j-i-4}{2}})^{\frac{j-i-6}{2}}\\
&&\cdot t_{i+4,i+5}^{-\frac{j-i-4}{2}}t_{i+2,i+3}^{-\frac{j-i-2}{2}}h_{i+2}t_{i+4,i+5}^{\frac{j-i-2}{2}}t_{i+2,i+3}^{\frac{j-i-4}{2}}\\
&\overset{\text{(1)(c)}}{\underset{}{=}}&(h_{j-2}\cdots h_{i+1}h_{i})^2(t_{i+6,i+7}^{-\frac{j-i-4}{2}}t_{i+4,i+5}^{-\frac{j-i-2}{2}}h_{i+4}t_{i+6,i+7}^{\frac{j-i-2}{2}}t_{i+4,i+5}^{\frac{j-i-4}{2}}h_{j-2}\cdots h_{i+1}h_{i})^{\frac{j-i-8}{2}}\\
&&\cdot t_{i+6,i+7}^{-\frac{j-i-4}{2}}t_{i+4,i+5}^{-\frac{j-i-2}{2}}h_{i+4}t_{i+6,i+7}^{\frac{j-i-2}{2}}t_{i+2,i+3}^{-\frac{j-i-2}{2}}h_{i+2}t_{i+4,i+5}^{\frac{j-i-2}{2}}t_{i+2,i+3}^{\frac{j-i-4}{2}}\\
&\vdots &\\
&=&(h_{j-2}\cdots h_{i+1}h_{i})^{\frac{j-i-4}{2}}t_{j-2,j-1}^{-\frac{j-i-4}{2}}t_{j-4,j-3}^{-\frac{j-i-2}{2}}h_{j-4}\underset{\rightarrow }{\underline{t_{j-2,j-1}^{\frac{j-i-2}{2}}}}\ \underset{\leftarrow }{\underline{t_{j-6,j-5}^{-\frac{j-i-2}{2}}}}h_{j-6}\\
&&\cdots \underset{\rightarrow }{\underline{t_{i+8,i+9}^{\frac{j-i-4}{2}}}}\ \underset{\leftarrow }{\underline{t_{i+4,i+5}^{-\frac{j-i-2}{2}}}}h_{i+4}\underset{\rightarrow }{\underline{t_{i+6,i+7}^{\frac{j-i-2}{2}}}}\ \underset{\leftarrow }{\underline{t_{i+2,i+3}^{-\frac{j-i-2}{2}}}}h_{i+2}t_{i+4,i+5}^{\frac{j-i-2}{2}}\underset{\leftarrow }{\underline{t_{i+2,i+3}^{\frac{j-i-4}{2}}}}\\
&\overset{\text{(1)(b)}}{\underset{\text{(1)(c)}}{=}}&(h_{j-2}\cdots h_{i+1}h_{i})^{\frac{j-i-4}{2}}t_{j-2,j-1}^{-\frac{j-i-4}{2}}t_{j-4,j-3}^{-\frac{j-i-2}{2}}\cdots t_{i+4,i+5}^{-\frac{j-i-2}{2}}t_{i+2,i+3}^{-\frac{j-i-2}{2}}\\
&&\cdot h_{j-4}\cdots h_{i+4}h_{i+2}t_{i+2,i+3}^{\frac{j-i-4}{2}}t_{i+4,i+5}^{\frac{j-i-2}{2}}t_{i+6,i+7}^{\frac{j-i-2}{2}}\cdots t_{j-2,j-1}^{\frac{j-i-2}{2}}.
\end{eqnarray*}
Therefore, we have completed the proof of Lemma~\ref{tech_rel16}. 
\end{proof}

\begin{lem}\label{tech_rel17}
For $1\leq i<j\leq 2n$ such that $j-i\geq 4$ is even, the relation
\begin{align*}
&h_{i}^{-1}h_{i+2}^{-1}\cdots h_{j-4}^{-1}h_{j-2}h_{j-4}\cdots h_{i+2}h_{i}\\
=&h_{j-2}h_{j-4}\cdots h_{i+2}h_{i}t_{i+1,i+2}^{-1}h_{i+2}^{-1}\cdots h_{j-4}^{-1}h_{j-2}^{-1}t_{j-1,j}
\end{align*}
is obtained from the relations~(1) and (2). 
\end{lem}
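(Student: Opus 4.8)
The plan is to prove the identity by induction on $m=\frac{j-i}{2}\ge 2$, writing $\mathrm{LHS}(i,j)$ and $\mathrm{RHS}(i,j)$ for the two sides. The guiding observation is that the left-hand side is nested,
\[
\mathrm{LHS}(i,j)=h_i^{-1}\bigl(h_{i+2}^{-1}\cdots h_{j-4}^{-1}h_{j-2}h_{j-4}\cdots h_{i+2}\bigr)h_i=h_i^{-1}\,\mathrm{LHS}(i+2,j)\,h_i,
\]
so the assertion for $(i,j)$ is a conjugate by $h_i$ of the assertion for $(i+2,j)$.

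For the base case $m=2$ (that is, $j=i+4$) both products collapse and the claim reduces to $h_i^{-1}h_{i+2}h_i=h_{i+2}h_i t_{i+1,i+2}^{-1}h_{i+2}^{-1}t_{i+3,i+4}$. First I would take relation~(2)(e) in the form $h_ih_{i+2}h_i=h_{i+2}h_ih_{i+2}t_{i+2,i+3}^{-1}t_{i+1,i+2}$, which is available by Lemma~\ref{tech_rel0}, peel off the leading $h_i$ by left multiplication with $h_i^{-1}$, and isolate the front factor $h_i^{-1}h_{i+2}h_i$ to get $h_i^{-1}h_{i+2}h_i=h_{i+2}h_i t_{i+1,i+2}^{-1}t_{i+2,i+3}h_{i+2}^{-1}$. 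Then relation~(2)(a) with index $i+2$, namely $t_{i+2,i+3}h_{i+2}^{-1}=h_{i+2}^{-1}t_{i+3,i+4}$, rewrites this into the desired form. Only relations~(1) and~(2) enter.

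For the inductive step ($m\ge 3$, i.e.\ $j-i\ge 6$) I would substitute the inductive hypothesis, which reads $\mathrm{LHS}(i+2,j)=F'h_{i+2}t_{i+3,i+4}^{-1}(F')^{-1}t_{j-1,j}$ with $F'=h_{j-2}h_{j-4}\cdots h_{i+4}$, and then commute the outer $h_i^{\pm 1}$ inward. Every index appearing in $F'$ differs from $i$ by at least $4$, so relation~(1)(a) lets $h_i^{\pm 1}$ pass through $F'$ and $(F')^{-1}$; relation~(1)(c) lets $h_i$ pass through $t_{i+3,i+4}$ (since $i+2<i+3$) and through $t_{j-1,j}$ (since $i+2<j-1$). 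Sliding the trailing $h_i$ all the way left to the central $h_{i+2}$ produces the block $h_i^{-1}h_{i+2}h_i$, to which I apply the base-case identity. The freshly created $t_{i+3,i+4}$ cancels the standing $t_{i+3,i+4}^{-1}$, and finally the reassembly $F'h_{i+2}=h_{j-2}\cdots h_{i+2}=:F$ together with $h_{i+2}^{-1}(F')^{-1}=F^{-1}$ turns the word into $F h_i t_{i+1,i+2}^{-1}F^{-1}t_{j-1,j}=\mathrm{RHS}(i,j)$.

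The computation itself is short, so the main concern is discipline rather than difficulty: the identity must be derived without ever invoking $h_k^2=t_{k,k+2}$ (relations~(3) and~(4)(a)), which is not permitted here. The argument is arranged precisely so that no isolated conjugate $h_k^{-1}h_{k+2}h_k$ is expanded through an $h_k^{-2}$; it is always absorbed in one stroke by the base-case identity, whose derivation uses only relation~(2)(e) and relation~(2)(a). The one place that requires attention is bookkeeping of the accumulated twist indices, so that the $t_{i+3,i+4}$ factors cancel and the products $F',\ (F')^{-1}$ reassemble correctly into $F,\ F^{-1}$.
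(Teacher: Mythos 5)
Your proposal is correct and is essentially the paper's own argument: both proofs rest on the single identity $h_k^{-1}h_{k+2}h_k = h_{k+2}h_k t_{k+1,k+2}^{-1}h_{k+2}^{-1}t_{k+3,k+4}$ (obtained from the relations~(2)~(e) and (2)~(a) via Lemma~\ref{tech_rel0}), applied successively at the indices $k=j-4,\ j-6,\ \dots,\ i$ together with the commutation relations~(1)~(a) and (1)~(c). The paper writes this as one inside-out chain of equalities with telescoping cancellation of the twists, while you package the identical moves as an induction on $\frac{j-i}{2}$ peeling off the outermost conjugation; once unrolled, the two computations coincide.
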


\begin{proof}
We have
\begin{eqnarray*}
&&h_{i}^{-1}\cdots h_{j-6}^{-1}\underline{h_{j-4}^{-1}h_{j-2}h_{j-4}}h_{j-6}\cdots h_{i}\\
&\overset{\text{(2)(e)}}{\underset{\text{(2)(a)}}{=}}&h_{i}^{-1}\cdots h_{j-8}^{-1}h_{j-6}^{-1}\cdot \underset{\leftarrow }{\underline{h_{j-2}}}h_{j-4}\underset{\rightarrow }{\underline{t_{j-3,j-2}^{-1}h_{j-2}^{-1}t_{j-1,j}}}\cdot h_{j-6}h_{j-8}\cdots h_{i}\\
&\overset{\text{(1)(a)}}{\underset{\text{(1)(c)}}{=}}&h_{j-2}\cdot h_{i}^{-1}\cdots h_{j-8}^{-1}\underline{h_{j-6}^{-1}\cdot h_{j-4}\cdot h_{j-6}}h_{j-8}\cdots h_{i}\cdot t_{j-3,j-2}^{-1}h_{j-2}^{-1}t_{j-1,j}\\
&\overset{\text{(2)(e)}}{\underset{\text{(2)(a)}}{=}}&h_{j-2}\cdot h_{i}^{-1}\cdots h_{j-10}^{-1}h_{j-8}^{-1}\cdot \underset{\leftarrow }{\underline{h_{j-4}}}h_{j-6}\underset{\rightarrow }{\underline{t_{j-5,j-4}^{-1}h_{j-4}^{-1}t_{j-3,j-2}}}\cdot h_{j-8}h_{j-10}\cdots h_{i}\\
&&\cdot t_{j-3,j-2}^{-1}h_{j-2}^{-1}t_{j-1,j}\\
&\overset{\text{(1)(a)}}{\underset{\text{(1)(c)}}{=}}&h_{j-2}h_{j-4}\cdot h_{i}^{-1}\cdots h_{j-10}^{-1}h_{j-8}^{-1}\cdot h_{j-6}\cdot h_{j-8}h_{j-10}\cdots h_{i}\cdot t_{j-5,j-4}^{-1}h_{j-4}^{-1}h_{j-2}^{-1}t_{j-1,j}\\
&\vdots &\\
&\overset{\text{(1)(a)}}{\underset{\text{(1)(c)}}{=}}&h_{j-2}h_{j-4}\cdots h_{i+2}\cdot h_{i}\cdot t_{i+1,i+2}^{-1}h_{i+2}^{-1}\cdots h_{j-4}^{-1}h_{j-2}^{-1}t_{j-1,j}
\end{eqnarray*}
Therefore, we have completed the proof of Lemma~\ref{tech_rel17}. 
\end{proof}

\begin{lem}\label{tech_rel18}
For $1\leq i<j\leq 2n$ such that $j-i\geq 4$ is even, the relation
\begin{align*}
&(h_{j-2}h_{j-2}h_{j-3}\cdots h_{i+1}h_{i})^{\frac{j-i}{2}}\\
=&(h_{j-2}h_{j-3}\cdots h_{i+1}h_{i})^{\frac{j-i}{2}}t_{j-2,j-1}^{-\frac{j-i-4}{2}}t_{j-4,j-3}^{-\frac{j-i-2}{2}}\cdots t_{i+2,i+3}^{-\frac{j-i-2}{2}}t_{i,i+1}^{-\frac{j-i-2}{2}}h_{j-4}\cdots h_{i+2}h_{i}\\
&\cdot h_{j-2}\cdots h_{i+4}h_{i+2}h_it_{i+1,i+2}^{-1}h_{i+2}^{-1}h_{i+4}^{-1}\cdots h_{j-2}^{-1}t_{i,i+1}^{\frac{j-i-2}{2}}t_{i+2,i+3}^{\frac{j-i-2}{2}}\cdots t_{j-2,j-1}^{\frac{j-i-2}{2}}
\end{align*}
is obtained from the relations~(1) and (2). 
\end{lem}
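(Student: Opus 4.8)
The plan is to assemble the four preceding technical lemmas (Lemmas~\ref{tech_rel14}, \ref{tech_rel15}, \ref{tech_rel16}, and~\ref{tech_rel17}), which are precisely engineered for this purpose. Write $m=j-i$ (even, $m\geq 4$) and abbreviate $H=h_{j-2}h_{j-3}\cdots h_{i+1}h_i$, so the left-hand side is $(h_{j-2}H)^{m/2}$. Let $B_0$, $C$, $D$ denote the correction factors on the right-hand sides of Lemmas~\ref{tech_rel14}, \ref{tech_rel15}, and~\ref{tech_rel16}. With $K=h_{j-3}\cdots h_i$, Lemma~\ref{tech_rel14} reads $h_{j-2}K=KB_0$, Lemma~\ref{tech_rel15} reads $B_0H=HC$, and Lemma~\ref{tech_rel16} reads $(CH)^{(m-4)/2}=H^{(m-4)/2}D$. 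Since every one of these lemmas is itself obtained from relations~(1) and~(2) of Theorem~\ref{thm_pres_lmodb}, a derivation that only invokes them together with further applications of~(1) and~(2) will establish the statement.

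First I would rewrite the base factor: by Lemma~\ref{tech_rel14}, $HB_0=(h_{j-2}K)B_0=h_{j-2}(KB_0)=h_{j-2}(h_{j-2}K)=h_{j-2}H$, so the left-hand side becomes $(HB_0)^{m/2}$. This already reduces the whole problem to moving the $H$-factors to the left. Next, Lemma~\ref{tech_rel15} gives $B_0=HCH^{-1}$, hence $HB_0=H^2CH^{-1}$, and the elementary (relation-free) identity $(H^2CH^{-1})^{m/2}=H^2(CH)^{m/2-1}CH^{-1}$ applies. Splitting off one factor, $(CH)^{m/2-1}=(CH)^{(m-4)/2}\,CH$, and collapsing the first piece by Lemma~\ref{tech_rel16}, I obtain
\[
(h_{j-2}H)^{m/2}=H^2\,H^{(m-4)/2}D\,CH\,CH^{-1}=H^{m/2}\,D\,C\,H\,C\,H^{-1}.
\]

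It then remains to identify $DCHCH^{-1}$ with the explicit tail $E$ appearing after $H^{m/2}$ on the right-hand side of the statement. For this I would expand $C$, $D$, and $H$, push the powers of $t_{l,l+1}$ (carrying exponents $\pm\frac{m-2}{2}$ and $\pm\frac{m-4}{2}$) past the $h$-products by the commutative relations~(1) and the relations~(2), and—this is the decisive move—recognize the $h$-sandwich $h_i^{-1}h_{i+2}^{-1}\cdots h_{j-4}^{-1}h_{j-2}h_{j-4}\cdots h_{i+2}h_i$ that surfaces, converting it by Lemma~\ref{tech_rel17} into the pattern $h_{j-2}\cdots h_{i+4}h_{i+2}h_i\,t_{i+1,i+2}^{-1}h_{i+2}^{-1}h_{i+4}^{-1}\cdots h_{j-2}^{-1}$ that occurs in $E$. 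The leftover $t$-powers are matched by final applications of~(1) and~(2).

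The hard part will be this last step: the assembly in the first two paragraphs is a clean, essentially formal manipulation, but reconciling $DCHCH^{-1}$ with $E$ is a long index-bookkeeping computation, threading each $t_{l,l+1}^{\pm}$ correctly through the descending and ascending $h$-products and invoking Lemma~\ref{tech_rel17} at exactly the right juncture. As a sanity check I would first verify the degenerate case $m=4$, where $D=1$ and the asserted identity collapses to $H^2CHCH^{-1}=H^2E$, confirming both the indexing conventions in the (partly vacuous) products and the role of Lemma~\ref{tech_rel17} before treating general even $m$.
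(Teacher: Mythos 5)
Your plan coincides with the paper's own proof essentially step for step: both first use Lemma~\ref{tech_rel14} to replace the base by $HB_0$, then Lemma~\ref{tech_rel15} (which you just repackage as $B_0=HCH^{-1}$, so your power identity is the paper's unfolding $(HB_0)^{\frac{j-i}{2}}=H(B_0H)^{\frac{j-i-2}{2}}B_0$) and Lemma~\ref{tech_rel16} to arrive at $H^{\frac{j-i}{2}}D\,CHCH^{-1}$, which is exactly the paper's $H^{\frac{j-i}{2}}D\,CB_0$ since $CB_0=CHCH^{-1}$. The remaining reconciliation of $DCHCH^{-1}$ with the stated tail is carried out in the paper precisely as you describe it --- pushing the $t$-powers through with relations~(1) and~(2) and applying Lemma~\ref{tech_rel17} to the sandwich $h_i^{-1}h_{i+2}^{-1}\cdots h_{j-4}^{-1}h_{j-2}h_{j-4}\cdots h_{i+2}h_i$ --- so your proposal is correct and takes the same route.
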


\begin{proof}
Recall that the relations in Lemmas~\ref{tech_rel14}, \ref{tech_rel15}, \ref{tech_rel16}, and \ref{tech_rel17} are obtained from the relations~(1) and (2). 
Then we have
\begin{eqnarray*}
&&(h_{j-2}\underline{h_{j-2}h_{j-3}\cdots h_{i+1}h_{i}})^{\frac{j-i}{2}}\\
&\overset{\text{Lem.\ref{tech_rel14}}}{\underset{}{=}}&(h_{j-2}h_{j-3}\cdots h_{i+1}h_{i}\cdot t_{i,i+1}^{-\frac{j-i-4}{2}}h_{i}^{-1}h_{i+2}^{-1}\cdots h_{j-4}^{-1}h_{j-2}h_{j-4}\cdots h_{i+2}h_{i}t_{j-1,j}^{-1}t_{i,i+1}^{\frac{j-i-2}{2}})^{\frac{j-i}{2}}\\
&\overset{}{\underset{}{=}}&h_{j-2}\cdots h_{i+1}h_{i}\\
&&\cdot (\underline{t_{i,i+1}^{-\frac{j-i-4}{2}}h_{i}^{-1}h_{i+2}^{-1}\cdots h_{j-4}^{-1}h_{j-2}h_{j-4}\cdots h_{i+2}h_{i}t_{j-1,j}^{-1}t_{i,i+1}^{\frac{j-i-2}{2}}h_{j-2}\cdots h_{i+1}h_{i}})^{\frac{j-i-2}{2}}\\
&&\cdot t_{i,i+1}^{-\frac{j-i-4}{2}}h_{i}^{-1}h_{i+2}^{-1}\cdots h_{j-4}^{-1}h_{j-2}h_{j-4}\cdots h_{i+2}h_{i}t_{j-1,j}^{-1}t_{i,i+1}^{\frac{j-i-2}{2}}\\
&\overset{\text{Lem.\ref{tech_rel15}}}{\underset{}{=}}&h_{j-2}\cdots h_{i+1}h_{i}(h_{j-2}\cdots h_{i+1}h_{i}\cdot t_{i+2,i+3}^{-\frac{j-i-4}{2}}t_{i,i+1}^{-\frac{j-i-2}{2}}h_{i}t_{i+2,i+3}^{\frac{j-i-2}{2}}t_{i,i+1}^{\frac{j-i-4}{2}})^{\frac{j-i-2}{2}}\\
&&\cdot t_{i,i+1}^{-\frac{j-i-4}{2}}h_{i}^{-1}h_{i+2}^{-1}\cdots h_{j-4}^{-1}h_{j-2}h_{j-4}\cdots h_{i+2}h_{i}t_{j-1,j}^{-1}t_{i,i+1}^{\frac{j-i-2}{2}}\\
&\overset{}{\underset{}{=}}&(h_{j-2}\cdots h_{i+1}h_{i})^2\underline{(t_{i+2,i+3}^{-\frac{j-i-4}{2}}t_{i,i+1}^{-\frac{j-i-2}{2}}h_{i}t_{i+2,i+3}^{\frac{j-i-2}{2}}t_{i,i+1}^{\frac{j-i-4}{2}}h_{j-2}\cdots h_{i+1}h_{i})^{\frac{j-i-4}{2}}}\\
&&\cdot t_{i,i+1}^{-\frac{j-i-2}{2}}t_{i+2,i+3}^{-\frac{j-i-4}{2}}h_{i}t_{i+2,i+3}^{\frac{j-i-2}{2}}h_{i}^{-1}h_{i+2}^{-1}\cdots h_{j-4}^{-1}h_{j-2}h_{j-4}\cdots h_{i+2}h_{i}t_{j-1,j}^{-1}t_{i,i+1}^{\frac{j-i-2}{2}}\\
&\overset{\text{Lem.\ref{tech_rel16}}}{\underset{}{=}}&(h_{j-2}\cdots h_{i+1}h_{i})^{\frac{j-i}{2}}t_{j-2,j-1}^{-\frac{j-i-4}{2}}t_{j-4,j-3}^{-\frac{j-i-2}{2}}\cdots t_{i+4,i+5}^{-\frac{j-i-2}{2}}t_{i+2,i+3}^{-\frac{j-i-2}{2}}h_{j-4}\cdots h_{i+4}h_{i+2}\\
&&\cdot t_{i+2,i+3}^{\frac{j-i-4}{2}}t_{i+4,i+5}^{\frac{j-i-2}{2}}t_{i+6,i+7}^{\frac{j-i-2}{2}}\cdots t_{j-2,j-1}^{\frac{j-i-2}{2}}\\
&&\cdot \underset{\leftarrow }{\underline{t_{i,i+1}^{-\frac{j-i-2}{2}}t_{i+2,i+3}^{-\frac{j-i-4}{2}}h_{i}t_{i+2,i+3}^{\frac{j-i-2}{2}}}}h_{i}^{-1}h_{i+2}^{-1}\cdots h_{j-4}^{-1}h_{j-2}h_{j-4}\cdots h_{i+2}h_{i}t_{j-1,j}^{-1}t_{i,i+1}^{\frac{j-i-2}{2}}\\
&\overset{\text{(1)(b)}}{\underset{\text{(1)(c)}}{=}}&(h_{j-2}\cdots h_{i+1}h_{i})^{\frac{j-i}{2}}t_{j-2,j-1}^{-\frac{j-i-4}{2}}t_{j-4,j-3}^{-\frac{j-i-2}{2}}\cdots t_{i+2,i+3}^{-\frac{j-i-2}{2}}t_{i,i+1}^{-\frac{j-i-2}{2}}h_{j-4}\cdots h_{i+2}h_{i}\\
&&\cdot t_{i+2,i+3}^{\frac{j-i-2}{2}}t_{i+4,i+5}^{\frac{j-i-2}{2}}\cdots t_{j-2,j-1}^{\frac{j-i-2}{2}}\underline{h_{i}^{-1}h_{i+2}^{-1}\cdots h_{j-4}^{-1}h_{j-2}h_{j-4}\cdots h_{i+2}h_{i}}t_{j-1,j}^{-1}t_{i,i+1}^{\frac{j-i-2}{2}}\\
&\overset{\text{Lem.\ref{tech_rel17}}}{\underset{}{=}}&(h_{j-2}\cdots h_{i+1}h_{i})^{\frac{j-i}{2}}t_{j-2,j-1}^{-\frac{j-i-4}{2}}t_{j-4,j-3}^{-\frac{j-i-2}{2}}\cdots t_{i+2,i+3}^{-\frac{j-i-2}{2}}t_{i,i+1}^{-\frac{j-i-2}{2}}h_{j-4}\cdots h_{i+2}h_{i}\\
&&\cdot \underline{t_{i+2,i+3}^{\frac{j-i-2}{2}}t_{i+4,i+5}^{\frac{j-i-2}{2}}\cdots t_{j-2,j-1}^{\frac{j-i-2}{2}}h_{j-2}h_{j-4}\cdots h_{i+2}h_{i}t_{i+1,i+2}^{-1}h_{i+2}^{-1}\cdots h_{j-4}^{-1}h_{j-2}^{-1}}t_{i,i+1}^{\frac{j-i-2}{2}}\\
&\overset{\text{(1)}}{\underset{\text{(2)}}{=}}&(h_{j-2}\cdots h_{i+1}h_{i})^{\frac{j-i}{2}}t_{j-2,j-1}^{-\frac{j-i-4}{2}}t_{j-4,j-3}^{-\frac{j-i-2}{2}}\cdots t_{i+2,i+3}^{-\frac{j-i-2}{2}}t_{i,i+1}^{-\frac{j-i-2}{2}}h_{j-4}\cdots h_{i+2}h_{i}\\
&&\cdot h_{j-2}h_{j-4}\cdots h_{i+2}h_{i}t_{i+1,i+2}^{-1}h_{i+2}^{-1}\cdots h_{j-4}^{-1}h_{j-2}^{-1}t_{i,i+1}^{\frac{j-i-2}{2}}t_{i+2,i+3}^{\frac{j-i-2}{2}}\cdots t_{j-2,j-1}^{\frac{j-i-2}{2}}\\
\end{eqnarray*}
Therefore, we have completed the proof of Lemma~\ref{tech_rel18}. 
\end{proof}

The next lemma follows from the relations~(1)~(a) and (2)~(c).  

\begin{lem}\label{tech_rel13}
For $1\leq i<j\leq 2n$ such that $j-i\geq 4$ is even, the relation
\begin{align*}
h_{j-2}\cdots h_{i+2}h_{i}(h_{j-2}\cdots h_{i+1}h_{i})^{\frac{j-i}{2}}=(h_{j-2}^2h_{j-3}\cdots h_{i+1}h_{i})^{\frac{j-i}{2}}
\end{align*}
is obtained from the relations~(1) and (2). 
\end{lem}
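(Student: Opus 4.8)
The plan is to recast the identity in a form where it becomes a short induction governed by a single conjugation relation. Write
\[
w=h_{j-2}h_{j-3}\cdots h_{i+1}h_i,\qquad u=h_{j-2}h_{j-4}\cdots h_{i+2}h_i,\qquad m=\tfrac{j-i}{2}.
\]
Then the right-hand side of the statement is exactly $(h_{j-2}^2h_{j-3}\cdots h_{i+1}h_i)^m=(h_{j-2}w)^m$, while the left-hand side is $u\,w^m$. Thus it suffices to prove $(h_{j-2}w)^m=u\,w^m$ using only the relations~(1)~(a) and (2)~(c) in Theorem~\ref{thm_pres_lmodb}.

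The key tool I would establish first is the shift relation
\[
w\,h_m=h_{m-2}\,w\qquad\text{for }i+2\le m\le j-2,
\]
proved exactly in the spirit of the conjugation relations in the proof of Lemma~\ref{tech_rel5}. Writing $w=(h_{j-2}\cdots h_{m+1})\,h_m\,(h_{m-1}\cdots h_i)$, one commutes the appended $h_m$ leftward past $h_{m-3},\dots,h_i$ by (1)~(a), which isolates the block $h_mh_{m-1}h_{m-2}h_m$; the local identity $h_mh_{m-1}h_{m-2}h_m=h_{m-2}h_mh_{m-1}h_{m-2}$ (which is the inverse of relation~(2)~(c) in its $\varepsilon=-1$ form) converts this to $h_{m-2}h_mh_{m-1}h_{m-2}$, and finally $h_{m-2}$ commutes back out to the front past $h_{j-2},\dots,h_{m+1}$ by (1)~(a). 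Both sides then reassemble into $h_{m-2}w$.

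Granting the shift relation, I would prove by induction on $k$ that
\[
(h_{j-2}w)^{k}=(h_{j-2}h_{j-4}\cdots h_{j-2k})\,w^{k}\qquad(1\le k\le m).
\]
The case $k=1$ is immediate. For the inductive step I iterate the shift relation to obtain $w^{k}h_{j-2}=h_{j-2-2k}\,w^{k}$, applying $w\,h_M=h_{M-2}w$ successively to $M=j-2,\,j-4,\,\dots,\,j-2k$; each such $M$ lies in the allowed range $[i+2,\,j-2]$ precisely when $k\le m-1$, which is exactly the range in which the step is used. Hence
\[
(h_{j-2}w)^{k+1}=(h_{j-2}\cdots h_{j-2k})\,w^{k}h_{j-2}\,w=(h_{j-2}\cdots h_{j-2k})\,h_{j-2-2k}\,w^{k+1},
\]
which is the asserted formula for $k+1$. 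Taking $k=m$ gives $j-2m=i$, so the prefix becomes $h_{j-2}h_{j-4}\cdots h_{i+2}h_i=u$, yielding $(h_{j-2}w)^m=u\,w^m$, as required.

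The substantive content is the shift relation, and the only delicate point in the induction is the index bookkeeping: I must verify that every invocation of $w\,h_M=h_{M-2}w$ is applied to an index $M$ with $i+2\le M\le j-2$, so that it is a genuine consequence of (1)~(a) and (2)~(c); the inequality $j-2k\ge i+2$ available at each step guarantees this. I expect the derivation of the shift relation to be the main obstacle, since it is the only place where the braid-type relation~(2)~(c) is essential, whereas everything else is a mechanical rewriting already modeled by Lemma~\ref{tech_rel5}.
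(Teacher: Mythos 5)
Your proof is correct and matches the paper's intent: the paper offers no written proof, only the remark that the lemma ``follows from the relations~(1)~(a) and (2)~(c),'' and your argument uses exactly those relations via the same shift relations $w h_M = h_{M-2} w$ that the paper itself derives (from (1)~(a) and (2)~(c)) at the start of the proof of Lemma~\ref{tech_rel5}. The only blemish is notational: you use $m$ both for $\tfrac{j-i}{2}$ and as the shift index before switching to $M$, which should be cleaned up.
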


We prove Lemma~\ref{tech_rel19} as follows. 

\begin{proof}[Proof of Lemma~\ref{tech_rel19}]
Recall that the relations in Lemmas~\ref{tech_rel2}, \ref{tech_rel5}, \ref{tech_rel13}, and \ref{tech_rel18}, and Corollary~\ref{cor_tech_rel3} are obtained from the relations~(1) and (2). 
Then we have
\begin{eqnarray*}
&&\underset{\rightarrow }{\underline{t_{j-2,j-1}^{-\frac{j-i-2}{2}}\cdots t_{i+2,i+3}^{-\frac{j-i-2}{2}}t_{i,i+1}^{-\frac{j-i-2}{2}}}}h_{j-2}\cdots h_{i+2}h_{i}h_{i}h_{i+2}\cdots h_{j-2}(h_{j-3}\cdots h_{i+1}h_{i})^{\frac{j-i}{2}}\\
&\overset{\text{Cor.\ref{cor_tech_rel3}}}{\underset{\text{Lem.\ref{tech_rel2}}}{=}}&h_{j-2}\cdots h_{i+2}h_{i}\underline{h_{i}h_{i+2}\cdots h_{j-2}(h_{j-3}\cdots h_{i+1}h_{i})^{\frac{j-i}{2}}}t_{j-2,j-1}^{-\frac{j-i-2}{2}}\cdots t_{i+2,i+3}^{-\frac{j-i-2}{2}}t_{i,i+1}^{-\frac{j-i-2}{2}}\\
&\overset{\text{Lem.\ref{tech_rel5}}}{\underset{}{=}}&\underline{h_{j-2}\cdots h_{i+2}h_{i}(h_{j-2}\cdots h_{i+1}h_{i})^{\frac{j-i}{2}}}t_{j-2,j-1}^{-\frac{j-i-2}{2}}\cdots t_{i+2,i+3}^{-\frac{j-i-2}{2}}t_{i,i+1}^{-\frac{j-i-2}{2}}\\
&\overset{\text{Lem.\ref{tech_rel13}}}{\underset{}{=}}&\underline{(h_{j-2}h_{j-2}h_{j-3}\cdots h_{i+1}h_{i})^{\frac{j-i}{2}}}t_{j-2,j-1}^{-\frac{j-i-2}{2}}\cdots t_{i+2,i+3}^{-\frac{j-i-2}{2}}t_{i,i+1}^{-\frac{j-i-2}{2}}\\
&\overset{\text{Lem.\ref{tech_rel18}}}{\underset{}{=}}&(h_{j-2}h_{j-3}\cdots h_{i+1}h_{i})^{\frac{j-i}{2}}t_{j-2,j-1}^{-\frac{j-i-4}{2}}t_{j-4,j-3}^{-\frac{j-i-2}{2}}\cdots t_{i+2,i+3}^{-\frac{j-i-2}{2}}t_{i,i+1}^{-\frac{j-i-2}{2}}\\
&&\cdot h_{j-4}\cdots h_{i+2}h_{i}\cdot \underset{\leftarrow }{\underline{h_{j-2}\cdots h_{i+4}}}h_{i+2}h_it_{i+1,i+2}^{-1}\cdot h_{i+2}^{-1}h_{i+4}^{-1}\cdots h_{j-2}^{-1}\\
&\overset{\text{(1)(a)}}{\underset{}{=}}&(h_{j-2}h_{j-3}\cdots h_{i+1}h_{i})^{\frac{j-i}{2}}t_{j-2,j-1}^{-\frac{j-i-4}{2}}t_{j-4,j-3}^{-\frac{j-i-2}{2}}\cdots t_{i+2,i+3}^{-\frac{j-i-2}{2}}t_{i,i+1}^{-\frac{j-i-2}{2}}\\
&&\cdot h_{j-4}h_{j-2}\cdots h_{i+4}h_{i+6}h_{i+2}h_{i+4}\cdot \underline{h_{i}h_{i+2}\cdot h_it_{i+1,i+2}^{-1}}\\
&&\cdot h_{i+2}^{-1}h_{i+4}^{-1}\cdots h_{j-2}^{-1}\\
&\overset{\text{(2)(e)}}{\underset{}{=}}&(h_{j-2}h_{j-3}\cdots h_{i+1}h_{i})^{\frac{j-i}{2}}t_{j-2,j-1}^{-\frac{j-i-4}{2}}t_{j-4,j-3}^{-\frac{j-i-2}{2}}\cdots t_{i+2,i+3}^{-\frac{j-i-2}{2}}t_{i,i+1}^{-\frac{j-i-2}{2}}\\
&&\cdot h_{j-4}h_{j-2}\cdots h_{i+4}h_{i+6}\underline{h_{i+2}h_{i+4}\cdot h_{i+2}}h_{i}h_{i+2}t_{i+2,i+3}^{-1}\\
&&\cdot h_{i+2}^{-1}h_{i+4}^{-1}\cdots h_{j-2}^{-1}\\
&\overset{\text{(2)(e)}}{\underset{}{=}}&(h_{j-2}h_{j-3}\cdots h_{i+1}h_{i})^{\frac{j-i}{2}}t_{j-2,j-1}^{-\frac{j-i-4}{2}}t_{j-4,j-3}^{-\frac{j-i-2}{2}}\cdots t_{i+2,i+3}^{-\frac{j-i-2}{2}}t_{i,i+1}^{-\frac{j-i-2}{2}}\\
&&\cdot h_{j-4}h_{j-2}\cdots \underline{h_{i+4}h_{i+6}\cdot h_{i+4}}h_{i+2}h_{i+4}t_{i+4,i+5}^{-1}t_{i+3,i+4}\cdot h_{i}h_{i+2}t_{i+2,i+3}^{-1}\\
&&\cdot h_{i+2}^{-1}h_{i+4}^{-1}\cdots h_{j-2}^{-1}\\
&\overset{\text{(2)(e)}}{\underset{}{=}}&(h_{j-2}h_{j-3}\cdots h_{i+1}h_{i})^{\frac{j-i}{2}}t_{j-2,j-1}^{-\frac{j-i-4}{2}}t_{j-4,j-3}^{-\frac{j-i-2}{2}}\cdots t_{i+2,i+3}^{-\frac{j-i-2}{2}}t_{i,i+1}^{-\frac{j-i-2}{2}}\\
&&\cdot h_{j-4}h_{j-2}\cdots \underline{h_{i+6}h_{i+8}\cdot h_{i+6}}h_{i+4}h_{i+6}t_{i+6,i+7}^{-1}t_{i+5,i+6}\\
&&\cdot h_{i+2}h_{i+4}t_{i+4,i+5}^{-1}t_{i+3,i+4}\cdot h_{i}h_{i+2}t_{i+2,i+3}^{-1}\\
&&\cdot h_{i+2}^{-1}h_{i+4}^{-1}\cdots h_{j-2}^{-1}\\
&\overset{\text{(2)(e)}}{\underset{}{=}}&(h_{j-2}h_{j-3}\cdots h_{i+1}h_{i})^{\frac{j-i}{2}}t_{j-2,j-1}^{-\frac{j-i-4}{2}}t_{j-4,j-3}^{-\frac{j-i-2}{2}}\cdots t_{i+2,i+3}^{-\frac{j-i-2}{2}}t_{i,i+1}^{-\frac{j-i-2}{2}}\\
&&\cdot h_{j-4}h_{j-2}\cdots h_{i+8}h_{i+10}\cdot h_{i+8}h_{i+6}h_{i+8}t_{i+8,i+9}^{-1}t_{i+7,i+8}\\
&&\cdot h_{i+4}h_{i+6}t_{i+6,i+7}^{-1}t_{i+5,i+6}\cdot h_{i+2}h_{i+4}t_{i+4,i+5}^{-1}t_{i+3,i+4}\cdot h_{i}h_{i+2}t_{i+2,i+3}^{-1}\\
&&\cdot h_{i+2}^{-1}h_{i+4}^{-1}\cdots h_{j-2}^{-1}\\
&\vdots &\\
&\overset{\text{(2)(e)}}{\underset{}{=}}&(h_{j-2}h_{j-3}\cdots h_{i+1}h_{i})^{\frac{j-i}{2}}t_{j-2,j-1}^{-\frac{j-i-4}{2}}t_{j-4,j-3}^{-\frac{j-i-2}{2}}\cdots t_{i+2,i+3}^{-\frac{j-i-2}{2}}t_{i,i+1}^{-\frac{j-i-2}{2}}\\
&&\cdot h_{j-2}\cdot h_{j-4}h_{j-2}t_{j-2,j-1}^{-1}t_{j-3,j-2}\cdot \underset{\leftarrow }{\underline{h_{j-6}}}h_{j-4}t_{j-4,j-3}^{-1}t_{j-5,j-4}\cdots \\
&&\cdot \underset{\leftarrow }{\underline{h_{i+2}}}h_{i+4}t_{i+4,i+5}^{-1}t_{i+3,i+4}\cdot \underset{\leftarrow }{\underline{h_{i}}}h_{i+2}t_{i+2,i+3}^{-1}\cdot h_{i+2}^{-1}h_{i+4}^{-1}\cdots h_{j-2}^{-1}\\
&\overset{\text{(1)(a)}}{\underset{\text{(1)(c)}}{=}}&(h_{j-2}h_{j-3}\cdots h_{i+1}h_{i})^{\frac{j-i}{2}}\\
&&\underline{t_{j-2,j-1}^{-\frac{j-i-4}{2}}t_{j-4,j-3}^{-\frac{j-i-2}{2}}\cdots t_{i+2,i+3}^{-\frac{j-i-2}{2}}t_{i,i+1}^{-\frac{j-i-2}{2}}h_{j-2}\cdots h_{i+2}h_{i}}\\
&&\cdot \underline{h_{j-2}t_{j-2,j-1}^{-1}}t_{j-3,j-2}\cdot \underline{h_{j-4}t_{j-4,j-3}^{-1}}t_{j-5,j-4}\cdots \underline{h_{i+4}t_{i+4,i+5}^{-1}}t_{i+3,i+4}\\
&&\cdot \underline{h_{i+2}t_{i+2,i+3}^{-1}}\cdot h_{i+2}^{-1}h_{i+4}^{-1}\cdots h_{j-2}^{-1}\\
&\overset{\text{(1)(b)}}{\underset{}{=}}&\underline{(h_{j-2}h_{j-3}\cdots h_{i+1}h_{i})^{\frac{j-i}{2}}}h_{j-2}\cdots h_{i+2}h_{i}t_{i+1,i+2}^{-\frac{j-i-2}{2}}t_{i+3,i+4}^{-\frac{j-i-2}{2}}\cdots t_{j-1,j}^{-\frac{j-i-2}{2}}\\
&\overset{\text{Lem.\ref{tech_rel5-1}}}{\underset{}{=}}&\underline{(h_{j-2}h_{j-3}\cdots h_{i+1})^{\frac{j-i}{2}}}h_{i}h_{i+2}\cdots h_{j-2}h_{j-2}\cdots h_{i+2}h_{i}\\
&&\cdot t_{i+1,i+2}^{-\frac{j-i-2}{2}}t_{i+3,i+4}^{-\frac{j-i-2}{2}}\cdots t_{j-1,j}^{-\frac{j-i-2}{2}}\\
&\overset{\text{Lem.\ref{tech_rel8}}}{\underset{}{=}}&\underset{\rightarrow }{\underline{(h_{i+1}h_{i+2}\cdots h_{j-2})^{\frac{j-i}{2}}}}\ \underset{\rightarrow }{\underline{h_{i}h_{i+2}\cdots h_{j-2}h_{j-2}\cdots h_{i+2}h_{i}}}\\
&&\cdot t_{i+1,i+2}^{-\frac{j-i-2}{2}}t_{i+3,i+4}^{-\frac{j-i-2}{2}}\cdots t_{j-1,j}^{-\frac{j-i-2}{2}}\\
&\overset{}{\underset{}{=}}&t_{i+1,i+2}^{-\frac{j-i-2}{2}}t_{i+3,i+4}^{-\frac{j-i-2}{2}}\cdots t_{j-1,j}^{-\frac{j-i-2}{2}}h_{i}h_{i+2}\cdots h_{j-2}h_{j-2}\cdots h_{i+2}h_{i}\\
&&\cdot (h_{i+1}h_{i+2}\cdots h_{j-2})^{\frac{j-i}{2}}.
\end{eqnarray*}
The last commutative relations are analogies of the relations in Corollaries~\ref{cor_tech_rel3}, \ref{cor_tech_rel4}, and Lemma~\ref{tech_rel3}, and also obtained from the relations~(1) and (2). 
Therefore, we have completed the proof of Lemma~\ref{tech_rel19}. 
\end{proof}

\subsection{Proofs of Lemmas~\ref{lem_t_ij_pres1}, \ref{lem_L_ij_k=j-1}, \ref{lem_L_ij_k=i-2}, and \ref{lem_L_ij_k=i-1}}\label{section_proof_t_ij_pres}

In this section, we prove Lemmas~\ref{lem_t_ij_pres1}, \ref{lem_L_ij_k=j-1}, \ref{lem_L_ij_k=i-2}, and \ref{lem_L_ij_k=i-1}. 
To prove these lemmas, we prepare the following four technical lemmas. 

\begin{lem}\label{tech_rel9}
For $1\leq i<j\leq 2n+1$ in the case of $\LM $ and $1\leq i<j\leq 2n$ in the case of $\LMb $ such that $j-i\geq 3$ is odd, the relation
\begin{align*}
&h_{i}h_{i+1}\cdots h_{j-2}h_{j-1}\\
=&t_{i,i+1}^{\frac{j-i+1}{2}}(h_ih_{i+2}\cdots h_{j-1})t_{i+2,i+3}^{-1}t_{i+4,i+5}^{-1}\cdots t_{j-1,j}^{-1}(h_{i+1}h_{i+3}\cdots h_{j-2}).
\end{align*}
is obtained from the relations~(1), (2), and $t_{2n+1,2n+2}=t_{1,2n}$. 
\end{lem}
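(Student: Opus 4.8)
The plan is to prove the identity by induction on the length of the interval, say on $m=\frac{j-i+1}{2}\ge 2$, transforming the left-hand word $h_ih_{i+1}\cdots h_{j-1}$ into the de-interleaved right-hand form using only the relations~(1) and~(2) of Theorem~\ref{thm_pres_lmodb} together with $t_{2n+1,2n+2}=t_{1,2n}$, in the same style as Lemmas~\ref{tech_rel6} and~\ref{tech_rel7}. Conceptually the point is that $h_ih_{i+1}\cdots h_{j-1}$ lists the generators in fully interleaved order, whereas the target lists first the even-offset block $h_ih_{i+2}\cdots h_{j-1}$ and then the odd-offset block $h_{i+1}h_{i+3}\cdots h_{j-2}$; the relation therefore records the cost, in half-twist factors, of sorting one word into the other. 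For the base case $m=2$, i.e.\ $j=i+3$, I would check the stated identity directly: starting from $h_ih_{i+1}h_{i+2}$ and repeatedly applying the braid-type relation~(2)~(d) (in the several equivalent shapes gathered in Lemma~\ref{tech_rel0}) to move $h_{i+1}$ to the right of $h_i$ and $h_{i+2}$, and then using the conjugation relations~(2)~(a) and the commutative relations~(1) to carry the resulting twists into the prescribed positions.

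For the inductive step I would split off the two top generators, writing $h_ih_{i+1}\cdots h_{j-1}=(h_ih_{i+1}\cdots h_{j-3})h_{j-2}h_{j-1}$, apply the inductive hypothesis to the shorter interval $[i,j-3]$ (again of odd length), and then reduce to a single \emph{insertion identity}: the tail $h_{j-2}h_{j-1}$ must be transported leftward through the odd-offset block $h_{i+1}h_{i+3}\cdots h_{j-4}$ and the accumulated twist block $t_{i+2,i+3}^{-1}\cdots t_{j-3,j-2}^{-1}$ so that $h_{j-1}$ lands at the top of the even-offset block and $h_{j-2}$ at the top of the odd-offset block. During this transport $h_{j-1}$ commutes with every generator whose index differs from it by at least three by relation~(1)~(a), while passing it across its index-two neighbour $h_{j-3}$ forces the use of relation~(2)~(e), which is precisely where the correcting twists $t_{j-2,j-1}^{-1}$ and $t_{j-1,j}^{-1}$ are produced; the freshly created $t_{l,l+1}^{\pm1}$ are then slid stepwise toward index $i$ by relations~(2)~(a),~(b) and reorganized with~(1)~(b),~(c). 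I expect to establish this insertion identity by a secondary induction on the number of factors in the odd-offset block, exactly as the nested inductions are run in Lemmas~\ref{tech_rel6} and~\ref{tech_rel7}.

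The main obstacle will be the bookkeeping of these half-twist factors: every non-trivial swap leaves behind twists $t_{l,l+1}^{\pm1}$ whose indices shift as they travel through the two $h$-blocks, and the verification really amounts to checking that all of them telescope and cancel in pairs via relation~(1)~(b), save for the factors dictated by the target word. Indices that reach the top of the range are converted by the identification $t_{2n+1,2n+2}=t_{1,2n}$, just as in the earlier technical lemmas. Since every manipulation invokes only the relations~(1),~(2) and that identification, the derived identity will be of the required provenance, and the statement for $\LMb$ follows by running the identical computation on the restricted range $1\le i<j\le 2n$.
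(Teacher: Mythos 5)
Your overall plan (induction on $m=\frac{j-i+1}{2}$, pivoting on a swap of the top adjacent pair) is close in spirit to the paper's argument, which is exactly this process run as one explicit right-to-left chain rather than a formal induction. But your inductive step, as ordered, does not go through. You apply the induction hypothesis \emph{first} and then try to transport the tail $h_{j-2}h_{j-1}$ leftward through the odd-offset block and the twist block. Since the target word has $h_{j-1}$ to the \emph{left} of $h_{j-2}$, the swap of the adjacent pair is unavoidable; it is performed by relation (2)(d) (not (2)(e) --- relation (2)(e) requires a repeated-generator pattern $h_kh_{k+2}h_k$, which never occurs here), and it creates the twist $t_{j-2,j-1}$, which must eventually travel all the way left to become the extra factor $t_{i,i+1}$. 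After the IH rewrite this twist is stranded: it does not commute with $h_{j-4}$, the last letter of the odd-offset block (all three cases of (1)(c) fail), and the only letters a single $t_{j-2,j-1}$ can cross using (2)(a), (2)(b), (2)(d) are $h_{j-3}$, $h_{j-2}$, or the consecutive pairs $h_{j-4}h_{j-3}$ and $h_{j-2}h_{j-1}$ --- none of which is adjacent to it once the prefix has been dissolved into the two blocks (geometrically, $h_{j-4}^{\pm 1}(\gamma_{j-2,j-1})$ encloses the non-consecutive pair $p_{j-4},p_{j-1}$ and is not a generator curve, so no relation in (1),(2) performs that crossing). The paper's proof is structured precisely to avoid this trap: working from the right, every positive twist that is created is immediately passed across the still-intact consecutive pair $h_{l-1}h_{l}$ of the unprocessed prefix via (2)(a)/(2)(b), so it marches down to position $(i+1,i+2)$ and finally past $h_i$, while the negative twists and the swapped generators reach their final positions by pure commutations. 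Your induction is repaired by the same reordering: in the inductive step, first swap $h_{j-2}h_{j-1}$ and carry the resulting positive twist through the untouched prefix $h_ih_{i+1}\cdots h_{j-3}$ down to $t_{i,i+1}$, \emph{then} apply the induction hypothesis to that prefix; the remaining moves (sliding $h_{j-1}$ into the even block, turning $t_{j,j+1}^{-1}$ into $t_{j-1,j}^{-1}$ past $h_{j-1}$) are commutations and (2)(a). Also note that $h_{j-1}$ never needs to pass $h_{j-3}$ at all: it stops immediately to its right, so your appeal to (2)(e) is a misdiagnosis of where the correcting twists come from.

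Separately, if you carry out your base case $j=i+3$ honestly you will get $h_ih_{i+1}h_{i+2}=t_{i,i+1}\,h_ih_{i+2}\,t_{i+2,i+3}^{-1}\,h_{i+1}$, i.e.\ exponent $1=\frac{j-i-1}{2}$, not $\frac{j-i+1}{2}=2$. This is not your error: the exponent of $t_{i,i+1}$ in the statement is off by one, and the last two displayed lines of the paper's own proof carry the same slip. The correct exponent is forced to be $\frac{j-i-1}{2}$ by abelianization: in $H_1(\LMb )\cong \Z ^3$ (Theorem~\ref{thm_abel_lmod}) all $t_{l,l+1}$ map to the same infinite-order class $[t_{1,2}]$, the $h$-content of the two sides agrees, and the right-hand side contains exactly $\frac{j-i-1}{2}$ negative factors $t_{i+2,i+3}^{-1}\cdots t_{j-1,j}^{-1}$, so the positive exponent must equal $\frac{j-i-1}{2}$; with the printed exponent the identity is not even true, hence certainly not derivable from relations (1) and (2). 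So you should prove the version with $t_{i,i+1}^{\frac{j-i-1}{2}}$; with that correction and the reordered inductive step, your strategy succeeds and essentially reproduces the paper's computation.
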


\begin{proof}
We have
\begin{eqnarray*}
&&h_{i}h_{i+1}\cdots h_{j-3}\underline{h_{j-2}h_{j-1}}\\
&\overset{\text{(2)(d)}}{\underset{}{=}}&h_{i}h_{i+1}\cdots h_{j-5}\underline{h_{j-4}h_{j-3}\cdot t_{j-2,j-1}}t_{j,j+1}^{-1}h_{j-1}h_{j-2}\\
&\overset{\text{(2)(a)}}{\underset{}{=}}&h_{i}h_{i+1}\cdots h_{j-5}\cdot t_{j-4,j-3}\underline{h_{j-4}h_{j-3}}\cdot t_{j,j+1}^{-1}h_{j-1}h_{j-2}\\
&\overset{\text{(2)(d)}}{\underset{}{=}}&h_{i}h_{i+1}\cdots h_{j-5}\cdot t_{j-4,j-3}^{2}t_{j-2,j-1}^{-1}\underset{\rightarrow }{\underline{h_{j-3}h_{j-4}}}t_{j,j+1}^{-1}h_{j-1}h_{j-2}\\
&\overset{\text{(1)(c)}}{\underset{\text{(1)(a)}}{=}}&h_{i}h_{i+1}\cdots h_{j-7}\underline{h_{j-6}h_{j-5}\cdot t_{j-4,j-3}^{2}}t_{j-2,j-1}^{-1}t_{j,j+1}^{-1}\cdot h_{j-3}h_{j-1}\cdot h_{j-4}h_{j-2}\\
&\overset{\text{(2)(a)}}{\underset{}{=}}&h_{i}h_{i+1}\cdots h_{j-7}\cdot t_{j-6,j-5}^{2}\underline{h_{j-6}h_{j-5}}\cdot t_{j-2,j-1}^{-1}t_{j,j+1}^{-1}\cdot h_{j-3}h_{j-1}\cdot h_{j-4}h_{j-2}\\
&\overset{\text{(2)(d)}}{\underset{}{=}}&h_{i}h_{i+1}\cdots h_{j-7}\cdot t_{j-6,j-5}^{3}t_{j-4,j-3}^{-1}\underset{\rightarrow }{\underline{h_{j-5}h_{j-6}}}\cdot t_{j-2,j-1}^{-1}t_{j,j+1}^{-1}\cdot h_{j-3}h_{j-1}\\
&&\cdot h_{j-4}h_{j-2}\\
&\overset{\text{(1)(c)}}{\underset{\text{(1)(a)}}{=}}&h_{i}h_{i+1}\cdots h_{j-7}\cdot t_{j-6,j-5}^{3}t_{j-4,j-3}^{-1}t_{j-2,j-1}^{-1}t_{j,j+1}^{-1}\cdot h_{j-5}h_{j-3}h_{j-1}\\
&&\cdot h_{j-6}h_{j-4}h_{j-2}\\
&\vdots&\\
&\overset{}{\underset{}{=}}&h_it_{i+1,i+2}^{\frac{j-i+1}{2}}\underline{t_{i+3,i+4}^{-1}\cdots t_{j-2,j-1}^{-1}t_{j,j+1}^{-1}\cdot h_{i+2}\cdots h_{j-3}h_{j-1}}\cdot h_{i+1}\cdots h_{j-4}h_{j-2}\\
&\overset{\text{(2)(a)}}{\underset{}{=}}&t_{i,i+1}^{\frac{j-i+1}{2}}(h_{i}\cdots h_{j-3}h_{j-1})t_{i+2,i+3}^{-1}\cdots t_{j-3,j-2}^{-1}t_{j-1,j}^{-1}(h_{i+1}\cdots h_{j-4}h_{j-2}).
\end{eqnarray*}
Therefore, we have completed the proof of Lemma~\ref{tech_rel9}. 
\end{proof}

\begin{lem}\label{tech_rel10}
For $1\leq i<j\leq 2n+2$ in the case of $\LM $ and $1\leq i<j\leq 2n+1$ in the case of $\LMb $ such that $j-i\geq 3$ is odd, the relation
\begin{align*}
&(h_{j-1}\cdots h_{i+2}h_ih_{i+2}\cdots h_{j-1})t_{i+2,i+3}^{-1}t_{i+4,i+5}^{-1}\cdots t_{j-1,j}^{-1}\\
=&(h_{i}\cdots h_{j-3}h_{j-1}h_{j-3}\cdots h_{i})t_{i+1,i+2}^{-1}t_{i+3,i+4}^{-1}\cdots t_{j-2,j-1}^{-1}
\end{align*}
is obtained from the relations~(1), (2), and $t_{2n+1,2n+2}=t_{1,2n}$. 
\end{lem}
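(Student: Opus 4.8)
The plan is to prove the identity by induction on $m=\tfrac{j-i-1}{2}$, the number of even-offset generators lying strictly above $h_i$, after first rewriting both sides as words in the even-offset generators $h_i,h_{i+2},\dots ,h_{j-1}$ alone. Set
\[
\Lambda _{i,j}=(h_{j-1}\cdots h_{i+2})\,h_i\,(h_{i+2}\cdots h_{j-1}),\qquad P_{i,j}=(h_i\cdots h_{j-3})\,h_{j-1}\,(h_{j-3}\cdots h_i),
\]
and $T^e_{i,j}=t_{i+2,i+3}^{-1}\cdots t_{j-1,j}^{-1}$, $T^o_{i,j}=t_{i+1,i+2}^{-1}\cdots t_{j-2,j-1}^{-1}$; the assertion is $\Lambda _{i,j}T^e_{i,j}=P_{i,j}T^o_{i,j}$. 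First I would record the base case $j-i=3$, where $\Lambda _{i,j}T^e_{i,j}=h_{i+2}h_ih_{i+2}t_{i+2,i+3}^{-1}$ and $P_{i,j}T^o_{i,j}=h_ih_{i+2}h_it_{i+1,i+2}^{-1}$; these are equated directly by the form $h_ih_{i+2}h_it_{i+1,i+2}^{-1}=h_{i+2}h_ih_{i+2}t_{i+2,i+3}^{-1}$ of relation~(2)~(e) collected in Lemma~\ref{tech_rel0}~(B).

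For the inductive step I would exploit the nested structure $\Lambda _{i,j}=h_{j-1}\,\Lambda _{i,j-2}\,h_{j-1}$ together with $T^e_{i,j}=T^e_{i,j-2}\,t_{j-1,j}^{-1}$ (the even-offset twists commute pairwise by relation~(1)~(b), so this factorization is unambiguous). The inner $h_{j-1}$ commutes rightward past every factor of $T^e_{i,j-2}$ by relation~(1)~(c), and the crossing $h_{j-1}t_{j-1,j}^{-1}=t_{j,j+1}^{-1}h_{j-1}$ is resolved by relation~(2)~(a); this isolates the smaller palindrome $\Lambda _{i,j-2}T^e_{i,j-2}$, to which the inductive hypothesis applies, replacing it by $P_{i,j-2}T^o_{i,j-2}$. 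It then remains to transform $h_{j-1}P_{i,j-2}h_{j-1}$ into $P_{i,j}$. Here the two outer copies of $h_{j-1}$ are at offset $\ge 4$ from the sub-run $h_i\cdots h_{j-5}$, so by relation~(1)~(a) they commute inward until they flank the central $h_{j-3}$, producing $(h_i\cdots h_{j-5})\,h_{j-1}h_{j-3}h_{j-1}\,(h_{j-5}\cdots h_i)$; the twisted braid relation~(2)~(e) at index $j-3$, namely $h_{j-1}h_{j-3}h_{j-1}=t_{j-1,j}\,h_{j-3}h_{j-1}h_{j-3}\,t_{j-2,j-1}^{-1}$, then converts this into $P_{i,j}$ at the cost of the correction twists $t_{j-1,j}^{\pm 1}$ and $t_{j-2,j-1}^{-1}$. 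This sequence of moves parallels the manipulations already performed in Lemmas~\ref{tech_rel6} and~\ref{tech_rel7}.

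The main obstacle is the Dehn-twist bookkeeping in this endgame: after the substitutions above one is left with flanking factors $t_{j-1,j}$ and $t_{j-1,j}^{-1}$ together with $t_{j,j+1}^{-1}$, $t_{j-2,j-1}^{-1}$ and the inherited product $T^o_{i,j-2}$, and the entire content of the lemma is that these collapse to exactly $T^o_{i,j}=T^o_{i,j-2}\,t_{j-2,j-1}^{-1}$. Absorbing the flanking $t_{j-1,j}^{\pm 1}$ through the palindrome $P_{i,j}$ (which meets the non-commuting generator $h_{j-1}$) requires the commutation results of the type established in Corollary~\ref{cor_tech_rel3}, after which every emitted twist index shifts from its even offset to the neighbouring odd offset and the product telescopes, exactly as in the computations of Lemmas~\ref{tech_rel6}--\ref{tech_rel18}. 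Finally, the boundary case $j=2n+2$ of $\LM$ is the only place where the run reaches the top of the surface: there the factors $t_{j-1,j}=t_{2n+1,2n+2}$ (and $t_{j,j+1}$ when $j=2n+1$) must be rewritten via the identification $t_{2n+1,2n+2}=t_{1,2n}$, which is precisely why that relation is listed among the allowed ones; no relation beyond~(1),~(2), and this identification enters the argument, and for $\LMb$ the range $j\le 2n+1$ avoids the boundary case altogether.
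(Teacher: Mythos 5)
Your proof is correct, but it is the mirror image of the paper's argument rather than the same computation. The paper works from the inside out: it applies relation~(2)~(e) to the innermost triple $h_{i+2}h_ih_{i+2}$, so that the emitted twist $t_{i+2,i+3}$ cancels at once against the leading factor of the trailing product while $t_{i+1,i+2}^{-1}$ survives as the first odd-offset twist, and the two freed copies of $h_i$ commute to the outside; iterating, the center of the palindrome climbs by two per step, the identity $h_iP_{i+2,j}h_i=P_{i,j}$ is definitional, and no twist index beyond $j-1$ ever appears. You peel at the top instead, via $\Lambda _{i,j}=h_{j-1}\Lambda _{i,j-2}h_{j-1}$, apply the inductive hypothesis first, and use (2)~(e) once, at index $j-3$, in the endgame. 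This makes the recursion itself definitional but costs two things the paper's order avoids: (i) the crossing $h_{j-1}t_{j-1,j}^{-1}=t_{j,j+1}^{-1}h_{j-1}$ transiently creates the twist $t_{j,j+1}$, so when $j-1=2n$ in $\LM $ you must already invoke the primed relation~(2)~(a) of Lemma~\ref{tech_rel1} and the identification $t_{2n+1,2n+2}=t_{1,2n}$ inside the induction, not only in a final boundary remark; and (ii) with the form of (2)~(e) you quote, $h_{j-1}h_{j-3}h_{j-1}=t_{j-1,j}h_{j-3}h_{j-1}h_{j-3}t_{j-2,j-1}^{-1}$, the step reduces to the commutation $t_{j-1,j}\rightleftarrows P_{i,j}$. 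That commutation is true and does follow from the relations~(1) and (2): each arm generator $h_l$ with $l\leq j-5$ commutes with $t_{j-1,j}$ by (1)~(c), and the central block $h_{j-3}h_{j-1}h_{j-3}$ commutes with $t_{j-1,j}$ by combining (2)~(e) in the form of Lemma~\ref{tech_rel0}~(B) with two applications of (2)~(a); but it is not literally Corollary~\ref{cor_tech_rel3}, which concerns doubled-center palindromes and excludes the twist index $l=j$, so this small verification must be supplied. You can sidestep (ii) entirely by taking instead the form $h_{j-1}h_{j-3}h_{j-1}=h_{j-3}h_{j-1}h_{j-3}t_{j-2,j-1}^{-1}t_{j-1,j}$ from Lemma~\ref{tech_rel0}~(B): both correction twists then sit on the right, pass the arm $h_{j-5}\cdots h_i$ by (1)~(c), and telescope against $T^o_{i,j-2}t_{j-1,j}^{-1}$ using only (1)~(b).
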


\begin{proof}
When $j-i=3$, the relation in Lemma~\ref{tech_rel10} is equivalent to the relation~(2) (e). 
Assume $j-i\geq 5$. 
We have
\begin{eqnarray*}
&&(h_{j-1}\cdots h_{i+4}\underline{h_{i+2}h_ih_{i+2}}h_{i+4}\cdots h_{j-1})t_{i+2,i+3}^{-1}t_{i+4,i+5}^{-1}\cdots t_{j-1,j}^{-1}\\
&\overset{\text{(2)(e)}}{\underset{}{=}}&h_{j-1}\cdots h_{i+6}h_{i+4}\cdot \underset{\leftarrow }{\underline{h_i}}h_{i+2}\underset{\rightarrow }{\underline{h_{i}t_{i+1,i+2}^{-1}t_{i+2,i+3}}}\cdot h_{i+4}h_{i+6}\cdots h_{j-1}\\
&&\cdot t_{i+2,i+3}^{-1}t_{i+4,i+5}^{-1}\cdots t_{j-1,j}^{-1}\\
&\overset{\text{(1)(a)}}{\underset{\text{(1)(c)}}{=}}&h_i(h_{j-1}\cdots h_{i+6}\underline{h_{i+4}h_{i+2}h_{i+4}}h_{i+6}\cdots h_{j-1})h_{i}t_{i+1,i+2}^{-1}\\
&&\cdot t_{i+4,i+5}^{-1}t_{i+6,i+7}^{-1}\cdots t_{j-1,j}^{-1}\\
&\overset{\text{(2)(e)}}{\underset{}{=}}&h_i\cdot h_{j-1}\cdots h_{i+8}h_{i+6}\cdot \underset{\leftarrow }{\underline{h_{i+2}}}h_{i+4}\underset{\rightarrow }{\underline{h_{i+2}t_{i+3,i+4}^{-1}t_{i+4,i+5}}}\cdot h_{i+6}h_{i+8}\cdots h_{j-1}\\
&&\cdot h_{i}t_{i+1,i+2}^{-1}\cdot t_{i+4,i+5}^{-1}t_{i+6,i+7}^{-1}\cdots t_{j-1,j}^{-1}\\
&\overset{\text{(1)}}{\underset{}{=}}&h_ih_{i+2}(h_{j-1}\cdots h_{i+8}h_{i+6}h_{i+4}h_{i+6}h_{i+8}\cdots h_{j-1})h_{i+2}h_{i}t_{i+1,i+2}^{-1}t_{i+3,i+4}^{-1}\\
&&\cdot t_{i+6,i+7}^{-1}t_{i+8,i+9}^{-1}\cdots t_{j-1,j}^{-1}\\
&\vdots &\\
&=&(h_{i}\cdots h_{j-3}h_{j-1}h_{j-3}\cdots h_{i})t_{i+1,i+2}^{-1}t_{i+3,i+4}^{-1}\cdots t_{j-2,j-1}^{-1}.
\end{eqnarray*}
Therefore, we have completed the proof of Lemma~\ref{tech_rel10}. 
\end{proof}

\begin{lem}\label{tech_rel11}
For $1\leq i<j\leq 2n+2$ in the case of $\LM $ and $1\leq i<j\leq 2n+1$ in the case of $\LMb $ such that $j-i\geq 3$ is odd, the relation
\[
h_{j-3}^{-1}\cdots h_{i+2}^{-1}h_{i}^{-1}(h_{j-2}\cdots h_{i+3}h_{i+1})h_ih_{i+2}\cdots h_{j-3}=(h_{j-2}\cdots h_{i+3}h_{i+1})t_{i,i+1}t_{j-1,j}^{-1} 
\]
is obtained from the relations~(1), (2), and $t_{2n+1,2n+2}=t_{1,2n}$. 
\end{lem}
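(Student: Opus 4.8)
The plan is to recognize the left-hand side as a conjugation and to argue by induction on the odd integer $j-i\ge 3$. First I would observe that the prefix $h_{j-3}^{-1}\cdots h_{i+2}^{-1}h_i^{-1}$ is exactly the inverse of the suffix $h_ih_{i+2}\cdots h_{j-3}$. Writing $P=h_ih_{i+2}\cdots h_{j-3}$ and $Q=h_{j-2}\cdots h_{i+3}h_{i+1}$, the assertion becomes $P^{-1}QP=Q\,t_{i,i+1}t_{j-1,j}^{-1}$, equivalently $QP=PQ\,t_{i,i+1}t_{j-1,j}^{-1}$; that is, the two interleaving ``combs'' $P$ (even offsets from $i$) and $Q$ (odd offsets) commute up to the two boundary twists. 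In the case of $\LM$ with $j=2n+2$ the factor $t_{j-1,j}=t_{2n+1,2n+2}$ must be rewritten as $t_{1,2n}$, which is where the relation $t_{2n+1,2n+2}=t_{1,2n}$ enters; for $\LMb$ one has $j\le 2n+1$ and this rewriting is not needed.

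The base case $j-i=3$ has $P=h_i$ and $Q=h_{i+1}$, so the claim reads $h_i^{-1}h_{i+1}h_i=h_{i+1}t_{i,i+1}t_{i+2,i+3}^{-1}$. This follows directly from relation (2)(d) in the form $h_ih_{i+1}=h_{i+1}h_it_{i,i+1}^{-1}t_{i+2,i+3}$ recorded in Lemma~\ref{tech_rel0}, together with the commutative relation (1)(b) for the disjoint twists $t_{i,i+1}$ and $t_{i+2,i+3}$.

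For the inductive step I would peel off the outermost factors, writing $P=h_iP'$ with $P'=h_{i+2}\cdots h_{j-3}$ and $Q=Q'h_{i+1}$ with $Q'=h_{j-2}\cdots h_{i+3}$, so that
\begin{align*}
P^{-1}QP=(P')^{-1}h_i^{-1}Q'h_{i+1}h_iP'.
\end{align*}
Since every generator appearing in $Q'$ has index at least $i+3$, relation (1)(a) lets $h_i^{-1}$ pass through $Q'$, and the base-case identity replaces $h_i^{-1}h_{i+1}h_i$ by $h_{i+1}t_{i,i+1}t_{i+2,i+3}^{-1}$. The twist $t_{i,i+1}$ then commutes with all of $P'$ by (1)(c) and can be transported to the far right. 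What remains is to reorganize the word $(P')^{-1}Q'h_{i+1}t_{i+2,i+3}^{-1}P'$ so that the inner part collapses by the inductive hypothesis $(P')^{-1}Q'P'=Q'\,t_{i+2,i+3}t_{j-1,j}^{-1}$ applied to the interval $[i+2,j]$.

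The hard part will be the twist-exponent bookkeeping in this reorganization. The residual generator $h_{i+1}$ does not commute with the trailing factor $h_{i+3}$ of $Q'$ nor with the leading factor $h_{i+2}$ of $P'$ (these satisfy the distance-two braid relations (2)(c)/(2)(e) rather than commuting), and the twist $t_{i+2,i+3}^{-1}$ fails to commute with $h_{i+2}$ by relation (2)(a). I would therefore carry $h_{i+1}$ and the twists through $P'$ and $Q'$ one generator at a time, using (2)(a), (2)(d), (2)(e) and (1), and verify that every correction twist produced telescopes, so that the $t_{i+2,i+3}^{-1}$ contributed by the base step cancels against the $t_{i+2,i+3}$ emerging from the inductive hypothesis, leaving precisely $t_{i,i+1}t_{j-1,j}^{-1}$. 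This cancellation is the crux; it is the same telescoping mechanism already exploited in Lemmas~\ref{tech_rel6}, \ref{tech_rel7}, and \ref{tech_rel14}--\ref{tech_rel18}, and once it is checked the induction closes.
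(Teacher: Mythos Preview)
Your induction on $j-i$ is sound and is actually cleaner than what the paper does. The paper fixes $j-i$, expands the left-hand side from the right end using (2)(d) and (2)(a), and then proves by induction on an internal position parameter $l$ a step-relation that pushes the resulting twists across the word two indices at a time; a final block of direct manipulations finishes. Your peeling-off-the-boundary induction packages that same telescoping more transparently.

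The gap is that you stop at exactly the point that matters. After your steps one has
\[
(P')^{-1}Q'\,h_{i+1}t_{i+2,i+3}^{-1}\,P'\cdot t_{i,i+1},
\]
and you must identify the bracketed piece with $Q'h_{i+1}t_{j-1,j}^{-1}$. This is not hard, and in particular you never need to move $h_{i+1}$ past any factor of $Q'$; the worry about $h_{i+3}$ is a red herring. Write $P'=h_{i+2}P''$ with $P''=h_{i+4}\cdots h_{j-3}$. Using (2)(a) in the form $t_{i+2,i+3}^{-1}h_{i+2}=h_{i+2}t_{i+3,i+4}^{-1}$ and then (2)(d) in the form $h_{i+1}h_{i+2}=h_{i+2}h_{i+1}t_{i+1,i+2}^{-1}t_{i+3,i+4}$ gives
\[
h_{i+1}t_{i+2,i+3}^{-1}h_{i+2}=h_{i+1}h_{i+2}t_{i+3,i+4}^{-1}=h_{i+2}h_{i+1}t_{i+1,i+2}^{-1}.
\]
Since $h_{i+1}$ and $t_{i+1,i+2}$ commute with every factor of $P''$ by (1)(a) and (1)(c), this yields the clean identity
\[
h_{i+1}t_{i+2,i+3}^{-1}P'=P'\,h_{i+1}t_{i+1,i+2}^{-1}.
\]
Now apply the inductive hypothesis $(P')^{-1}Q'P'=Q'\,t_{i+2,i+3}t_{j-1,j}^{-1}$ directly, move $t_{j-1,j}^{-1}$ past $h_{i+1}t_{i+1,i+2}^{-1}$ by (1), and use (2)(a) in the form $t_{i+2,i+3}h_{i+1}=h_{i+1}t_{i+1,i+2}$ to collapse $t_{i+2,i+3}h_{i+1}t_{i+1,i+2}^{-1}$ to $h_{i+1}$. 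This gives exactly $Q'h_{i+1}t_{j-1,j}^{-1}$, and multiplying back the $t_{i,i+1}$ on the right (which commutes with $t_{j-1,j}$ by (1)(b)) completes the induction. So the ``crux'' is a three-line computation, not an open-ended chase through both combs. Also, a small slip: $h_{i+1}$ and $h_{i+2}$ differ by $1$, so the relevant relation is (2)(d), not (2)(c)/(2)(e).
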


\begin{proof}
When $j-i=3$, the relation in Lemma~\ref{tech_rel11} is equivalent to the relation~(2)~(d). 
Assume $j-i\geq 5$. 
We have
\begin{eqnarray*}
&&h_{j-3}^{-1}\cdots h_{i+2}^{-1}h_{i}^{-1}(h_{j-2}\cdots h_{i+3}\underline{h_{i+1})h_i}h_{i+2}\cdots h_{j-3}\\
&\overset{\text{(2)(d)}}{\underset{}{=}}&h_{j-3}^{-1}\cdots h_{i+2}^{-1}h_{i}^{-1}\cdot h_{j-2}\cdots h_{i+5}h_{i+3}\cdot \underset{\leftarrow }{\underline{h_i}}h_{i+1}t_{i,i+1}t_{i+2,i+3}^{-1}\cdot h_{i+2}h_{i+4}\cdots h_{j-3}\\
&\overset{\text{(1)(a)}}{\underset{}{=}}&h_{j-3}^{-1}\cdots h_{i+4}^{-1}h_{i+2}^{-1} (h_{j-2}\cdots h_{i+5}h_{i+3}\cdot h_{i+1})\underline{t_{i,i+1}t_{i+2,i+3}^{-1}\cdot h_{i+2}}h_{i+4}\cdots h_{j-3}\\
&\overset{\text{(2)(a)}}{\underset{\text{(1)(c)}}{=}}&h_{j-3}^{-1}\cdots h_{i+4}^{-1}h_{i+2}^{-1}\cdot h_{j-2}\cdots h_{i+5}h_{i+3}\cdot \underline{h_{i+1}h_{i+2}}t_{i,i+1}t_{i+3,i+4}^{-1}\\
&&\cdot h_{i+4}h_{i+6}\cdots h_{j-3}\\
&\overset{\text{(2)(d)}}{\underset{\text{(1)(b)}}{=}}&h_{j-3}^{-1}\cdots h_{i+4}^{-1}h_{i+2}^{-1}\cdot h_{j-2}\cdots h_{i+5}\underline{h_{i+3}\cdot h_{i+2}}h_{i+1}t_{i+1,i+2}^{-1}t_{i,i+1}\\
&&\cdot h_{i+4}h_{i+6}\cdots h_{j-3}\\
&\overset{\text{(2)(d)}}{\underset{\text{(1)(b)}}{=}}&h_{j-3}^{-1}\cdots h_{i+4}^{-1}h_{i+2}^{-1}\cdot h_{j-2}\cdots h_{i+7}h_{i+5}\cdot \underset{\leftarrow }{\underline{h_{i+2}}}h_{i+3}t_{i+2,i+3}t_{i+4,i+5}^{-1}\\
&&\cdot h_{i+1}t_{i+1,i+2}^{-1}t_{i,i+1}\cdot h_{i+4}h_{i+6}\cdots h_{j-3}\\
&\overset{\text{(1)(a)}}{\underset{}{=}}&h_{j-3}^{-1}\cdots h_{i+6}^{-1}h_{i+4}^{-1}\cdot h_{j-2}\cdots h_{i+7}h_{i+5}\cdot h_{i+3}\underline{t_{i+2,i+3}t_{i+4,i+5}^{-1}h_{i+1}}t_{i+1,i+2}^{-1}t_{i,i+1}\\
&&\cdot h_{i+4}h_{i+6}\cdots h_{j-3}\\
&\overset{\text{(1)(c)}}{\underset{\text{(2)(a)}}{=}}&h_{j-3}^{-1}\cdots h_{i+6}^{-1}h_{i+4}^{-1}\cdot h_{j-2}\cdots h_{i+7}h_{i+5}\cdot h_{i+3}h_{i+1}\underset{\rightarrow }{\underline{t_{i+1,i+2}}}t_{i+4,i+5}^{-1}t_{i+1,i+2}^{-1}t_{i,i+1}\\
&&\cdot h_{i+4}h_{i+6}\cdots h_{j-3}\\
&\overset{\text{(1)(b)}}{\underset{}{=}}&h_{j-3}^{-1}\cdots h_{i+6}^{-1}h_{i+4}^{-1}(h_{j-2}\cdots h_{i+7}h_{i+5}\cdot h_{i+3}h_{i+1})t_{i,i+1}t_{i+4,i+5}^{-1}\\
&&\cdot h_{i+4}h_{i+6}\cdots h_{j-3}.
\end{eqnarray*}
By an inductive argument, for a positive even $2\leq l\leq j-i-11$, the relation 
\begin{align*}
&h_{j-3}^{-1}\cdots h_{i+l+6}^{-1}h_{i+l+4}^{-1}(h_{j-2}\cdots h_{i+3}h_{i+1})t_{i,i+1}t_{i+l+4,i+l+5}^{-1}\cdot h_{i+l+4}h_{i+l+6}\cdots h_{j-3}\\
&=h_{j-3}^{-1}\cdots h_{i+l+8}^{-1}h_{i+l+6}^{-1}(h_{j-2}\cdots h_{i+3}h_{i+1})t_{i,i+1}t_{i+l+6,i+l+7}^{-1}\cdot h_{i+l+6}h_{i+l+8}\cdots h_{j-3}
\end{align*}
is obtained from the relations~(1), (2), and $t_{2n+1,2n+2}=t_{1,2n}$. 
Thus, up to the relations~(1), (2), and $t_{2n+1,2n+2}=t_{1,2n}$, we have 
\begin{eqnarray*}
&&h_{j-3}^{-1}\cdots h_{i+2}^{-1}h_{i}^{-1}(h_{j-2}\cdots h_{i+3}h_{i+1})h_ih_{i+2}\cdots h_{j-3}\\
&=&h_{j-3}^{-1}(h_{j-2}\cdots h_{i+3}h_{i+1})t_{i,i+1}\underline{t_{j-3,j-2}^{-1}\cdot h_{j-3}}\\
&\overset{\text{(2)(a)}}{\underset{}{=}}&h_{j-3}^{-1}(h_{j-2}\cdots h_{i+3}h_{i+1})t_{i,i+1}\underset{\leftarrow }{\underline{h_{j-3}}}t_{j-2,j-1}^{-1}\\
&\overset{\text{(1)(c)}}{\underset{\text{(1)(a)}}{=}}&h_{j-3}^{-1}\cdot h_{j-2}\underline{h_{j-4}\cdot h_{j-3}}\cdot h_{j-6}\cdots h_{i+3}h_{i+1}t_{i,i+1}t_{j-2,j-1}^{-1}\\
&\overset{\text{(2)(d)}}{\underset{\text{(1)(c)}}{=}}&h_{j-3}^{-1}\cdot \underline{h_{j-2}h_{j-3}}h_{j-4}t_{j-4,j-3}^{-1}\underset{\rightarrow }{\underline{t_{j-2,j-1}}}\cdot h_{j-6}\cdots h_{i+3}h_{i+1}t_{i,i+1}t_{j-2,j-1}^{-1}\\
&\overset{\text{(2)(d)}}{\underset{}{=}}&h_{j-2}\underline{t_{j-3,j-2}t_{j-1,j}^{-1}h_{j-4}}t_{j-4,j-3}^{-1}\cdot h_{j-6}\cdots h_{i+3}h_{i+1}t_{i,i+1}\\
&\overset{\text{(1)(c)}}{\underset{\text{(2)(a)}}{=}}&h_{j-2}h_{j-4}\underset{\rightarrow }{\underline{t_{j-4,j-3}t_{j-1,j}^{-1}}}t_{j-4,j-3}^{-1}\cdot h_{j-6}\cdots h_{i+3}h_{i+1}t_{i,i+1}\\
&\overset{\text{(1)(a)}}{\underset{\text{(1)(c)}}{=}}&(h_{j-2}h_{j-4}h_{j-6}\cdots h_{i+3}h_{i+1})t_{i,i+1}t_{j-1,j}^{-1}.
\end{eqnarray*}
Therefore, we have completed the proof of Lemma~\ref{tech_rel11}. 
\end{proof}

\begin{lem}\label{tech_rel12}
For $1\leq i<j\leq 2n$ in the case of $\LM $ and $1\leq i<j\leq 2n-1$ in the case of $\LMb $ such that $j-i\geq 3$ is odd, the relation
\[
h_{j}\cdots h_{i+3}h_{i+1}\cdot h_{j-1}\cdots h_{i+2}h_i=t_{j+1,j+2}^{-\frac{j-i-1}{2}}(h_{j}\cdots h_{i+1}h_i)t_{i+3,i+4}t_{i+5,i+6}\cdots t_{j,j+1} 
\]
is obtained from the relations~(1), (2), and $t_{2n+1,2n+2}=t_{1,2n}$. 
\end{lem}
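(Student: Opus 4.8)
The plan is to prove Lemma~\ref{tech_rel12} by induction on the odd integer $j-i\geq 3$, following exactly the mechanism used for Lemmas~\ref{tech_rel9}--\ref{tech_rel11}: the only tools available are the commutative relations~(1), the relations~(2) among the $h_i$'s and $t_{l,l+1}$'s (in particular the braid-type relations~(2)(d) and (2)(e) and the conjugation relation~(2)(a), together with their equivalent forms collected in Lemma~\ref{tech_rel0}), and the identification $t_{2n+1,2n+2}=t_{1,2n}$, which is needed only in the boundary case $j=2n$ of $\LM$.

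For the base case $j-i=3$ the claimed relation reads
\[
h_{i+3}h_{i+1}h_{i+2}h_i=t_{i+4,i+5}^{-1}h_{i+3}h_{i+2}h_{i+1}h_i\,t_{i+3,i+4},
\]
which I would verify by a short direct computation: rewriting the central product $h_{i+1}h_{i+2}$ via Lemma~\ref{tech_rel0}(A) as $t_{i+1,i+2}t_{i+3,i+4}^{-1}h_{i+2}h_{i+1}$, transporting $h_{i+3}$ past $t_{i+1,i+2}$ by~(1)(c) and past $t_{i+3,i+4}^{-1}$ by~(2)(a), and then moving the trailing twist factor to the right end using~(2)(a), (2)(d) and~(1), reduces both sides to the same word.

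For the inductive step ($j-i\geq 5$) the hypothesis is the instance with bottom index $i+2$,
\[
(h_{j}\cdots h_{i+5}h_{i+3})(h_{j-1}\cdots h_{i+4}h_{i+2})=t_{j+1,j+2}^{-\frac{j-i-3}{2}}(h_{j}\cdots h_{i+3}h_{i+2})\,t_{i+5,i+6}\cdots t_{j,j+1}.
\]
I would peel the two trailing factors $h_{i+1}$ and $h_i$ off the left-hand side and commute $h_{i+1}$ leftward through the interleaved chain $h_{j-1}\cdots h_{i+2}$, using the commutations~(1)(a) at the distant adjacencies and the braid relations~(2)(d)/(2)(e) at each non-commuting adjacency (each such passage spawning a twist factor that is then normalized by~(2)(a)), so as to expose a subword to which the inductive hypothesis applies. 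After substituting the hypothesis I would reassemble the full descending chain $h_j\cdots h_i$ and collect the newly created twists into one additional factor $t_{j+1,j+2}^{-1}$ at the front and one additional factor $t_{i+3,i+4}$ at the back, matching the two products in the statement.

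The main obstacle is the index bookkeeping of the accumulated twist factors: every application of~(2)(d) or~(2)(e) spawns terms $t_{l,l+1}^{\pm1}$, and one must check that after pushing them to the ends via~(1) and~(2)(a) they telescope \emph{exactly} into the power $t_{j+1,j+2}^{-\frac{j-i-1}{2}}$ on the left and the product $t_{i+3,i+4}t_{i+5,i+6}\cdots t_{j,j+1}$ on the right, with no leftover terms; note that the number of trailing factors $(j-i-1)/2$ equals the magnitude of the front exponent, which is the signature that the recurrence is clean. The one subtlety beyond this combinatorics is the case $j=2n$ in $\LM$, where $t_{j+1,j+2}=t_{2n+1,2n+2}$ must be read as $t_{1,2n}$; since this identification is permitted, every manipulation remains valid, and restricting the same argument to $j\leq 2n-1$ yields the statement in $\LMb$.
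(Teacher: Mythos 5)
Your proposal is correct and follows essentially the same route as the paper: the paper's proof is exactly your inductive step unrolled as an iterated computation, sliding each $h_{i+1+2m}$ next to $h_{i+2+2m}$ by (1)(a), swapping the pair via (2)(d) and (2)(a) at the cost of conjugation by $t_{i+3+2m,i+4+2m}$, and transporting the spawned twists to the two ends by (1)(c) and (2)(a) until they telescope into $t_{j+1,j+2}^{-\frac{j-i-1}{2}}$ and $t_{i+3,i+4}t_{i+5,i+6}\cdots t_{j,j+1}$. Packaging this as a formal induction on $j-i$ (your version) rather than as an unrolled iteration ending with the single conjugation $h_j t_{j,j+1}^{-\frac{j-i-1}{2}}=t_{j+1,j+2}^{-\frac{j-i-1}{2}}h_j$ (the paper's version) is purely presentational, and your treatment of the boundary case $j=2n$ in $\LM$ via $t_{2n+1,2n+2}=t_{1,2n}$ matches the paper's.
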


\begin{proof}
We have
\begin{eqnarray*}
&&h_{j}\cdots h_{i+3}\underset{\rightarrow }{\underline{h_{i+1}}}\cdot h_{j-1}\cdots h_{i+2}h_i\\
&\overset{\text{(1)(a)}}{\underset{}{=}}&h_{j}\cdots h_{i+5}h_{i+3}\cdot h_{j-1}\cdots h_{i+6}h_{i+4}\cdot \underline{h_{i+1}h_{i+2}}h_i\\
&\overset{\text{(2)(d)}}{\underset{\text{(2)(a)}}{=}}&h_{j}\cdots h_{i+5}h_{i+3}\cdot h_{j-1}\cdots h_{i+6}h_{i+4}\cdot t_{i+3,i+4}^{-1}h_{i+2}h_{i+1}\underset{\rightarrow }{\underline{t_{i+3,i+4}}}h_i\\
&\overset{\text{(1)(c)}}{\underset{}{=}}&h_{j}\cdots h_{i+5}\underset{\rightarrow }{\underline{h_{i+3}}}\cdot h_{j-1}\cdots h_{i+6}h_{i+4}\cdot t_{i+3,i+4}^{-1}(h_{i+2}h_{i+1}h_i)t_{i+3,i+4}\\
&\overset{\text{(1)(a)}}{\underset{}{=}}&h_{j}\cdots h_{i+7}h_{i+5}\cdot h_{j-1}\cdots h_{i+8}h_{i+6}\cdot \underline{h_{i+3}h_{i+4}}\cdot t_{i+3,i+4}^{-1}h_{i+2}h_{i+1}h_it_{i+3,i+4}\\
&\overset{\text{(2)(d)}}{\underset{\text{(2)(a)}}{=}}&h_{j}\cdots h_{i+7}h_{i+5}\cdot h_{j-1}\cdots h_{i+8}h_{i+6}\cdot t_{i+5,i+6}^{-1}h_{i+4}h_{i+3}\underset{\rightarrow }{\underline{t_{i+5,i+6}}}\\
&&\cdot t_{i+3,i+4}^{-1}h_{i+2}h_{i+1}h_it_{i+3,i+4}\\
&\overset{\text{(1)(a)}}{\underset{\text{(1)(c)}}{=}}&h_{j}\cdots h_{i+7}h_{i+5}\cdot h_{j-1}\cdots h_{i+8}h_{i+6}\\
&&\cdot t_{i+5,i+6}^{-1}\underline{h_{i+4}h_{i+3}\cdot t_{i+3,i+4}^{-1}}h_{i+2}h_{i+1}h_it_{i+3,i+4}t_{i+5,i+6}\\
&\overset{\text{(2)(a)}}{\underset{\text{(1)(c)}}{=}}&h_{j}\cdots h_{i+7}h_{i+5}\cdot h_{j-1}\cdots h_{i+8}h_{i+6}\\
&&\cdot t_{i+5,i+6}^{-2}(h_{i+4}h_{i+3}h_{i+2}h_{i+1}h_i)t_{i+3,i+4}t_{i+5,i+6}\\
&\vdots &\\
&=&\underline{h_j\cdot t_{j,j+1}^{-\frac{j-i-1}{2}}}(h_{j-1}\cdots h_{i+1}h_i)\cdot t_{i+3,i+4}t_{i+5,i+6}\cdots t_{j,j+1}.\\
&\overset{\text{(2)(a)}}{\underset{}{=}}&t_{j+1,j+2}^{-\frac{j-i-1}{2}}(h_jh_{j-1}\cdots h_{i+1}h_i)t_{i+3,i+4}t_{i+5,i+6}\cdots t_{j,j+1}.\\
\end{eqnarray*}
Therefore, we have completed the proof of Lemma~\ref{tech_rel12}. 
\end{proof}

\begin{proof}[Proof of Lemma~\ref{lem_t_ij_pres1}]
%We proceed by induction on $j-i\geq 1$. 
%When
First, we remark that the relation~$(L_{i,j}^{k=j})$ is equivalent to the following relation:
\begin{eqnarray*}
&&h_jt_{i,j}h_j^{-1}=\underline{t_{i,j-1}t_{j,j+1}}t_{i,j+2}\underline{t_{i,j+1}^{-1}t_{j,j+2}^{-1}}\\
&\overset{\text{CONJ}}{\underset{}{\Longleftrightarrow}}&t_{i,j+2}=t_{j,j+1}^{-1}t_{i,j-1}^{-1}h_jt_{i,j}h_j^{-1}t_{j,j+2}t_{i,j+1}.
\end{eqnarray*}
In the case of $j-i=1$, by the relation $t_{i,i}=1$, we have %the relation~$(L_{i,i+1}^{k=j})$ is equivalent to the following relation:
\begin{eqnarray*}
t_{i,i+3}&=&t_{i+1,i+2}^{-1}h_{i+1}t_{i,i+1}h_{i+1}^{-1}\underline{t_{i+1,i+3}}\ \underline{t_{i,i+2}}\\
&\overset{(T_{i,i+2})}{\underset{(T_{i+1,i+3})}{=}}&t_{i+1,i+2}^{-1}h_{i+1}\underline{t_{i,i+1}h_{i+1}h_i}h_i
\overset{\text{(2)(d)}}{\underset{\text{(2)(a)}}{=}}t_{i+1,i+2}^{-1}\underline{h_{i+1}t_{i+2,i+3}}h_ih_{i+1}h_i\\
&\overset{\text{(2)(a)}}{\underset{}{=}}&h_{i+1}h_ih_{i+1}h_i.
\end{eqnarray*}
Thus the relation~$(L_{i,i+1}^{k=j})$ is equivalent to the relation~$(T_{i,i+3})$ up to the relations~(2)~(a), (2) (d), $(T_{i,i+2})$, and $(T_{i+1,i+3})$. 
% Lemma~\ref{lem_t_ij_pres1} is true for $j-i=1$. 
In the case of $j-i=2$, the relation~$(L_{i,i+2}^{k=j})$ is equivalent to the following relation:
\begin{eqnarray*}
t_{i,i+4}&=&t_{i+2,i+3}^{-1}t_{i,i+1}^{-1}h_{i+2}t_{i,i+2}h_{i+2}^{-1}t_{i+2,i+4}\underline{t_{i,i+3}}\\
&\overset{(T_{i,i+3})}{\underset{}{=}}&t_{i+2,i+3}^{-1}t_{i,i+1}^{-1}h_{i+2}\underline{t_{i,i+2}}h_{i+2}^{-1}\underline{t_{i+2,i+4}}(h_{i+1}h_i)^2\\
&\overset{(T_{i,i+2})}{\underset{(T_{i+2,i+4})}{=}}&t_{i+2,i+3}^{-1}t_{i,i+1}^{-1}h_{i+2}h_i^2\underline{h_{i+2}^{-1}h_{i+2}^2}(h_{i+1}h_i)^2\\
&\overset{}{\underset{}{=}}&t_{i+2,i+3}^{-1}t_{i,i+1}^{-1}h_{i+2}h_ih_ih_{i+2}(h_{i+1}h_i)^2.
\end{eqnarray*}
Thus the relation~$(L_{i,i+2}^{k=j})$ is equivalent to the relation~$(T_{i,i+4})$ up to the relations~$(T_{i^\prime ,j^\prime })$ for $i\leq i^\prime <j^\prime \leq j+2$ and $2>j^\prime -i^\prime $. 

Assume that $j-i\geq 6$ is even. 
Then the relation~$(L_{i,j}^{k=j})$ is equivalent to the following relation:
\begin{eqnarray*}
&&t_{i,j+2}\\
&=&t_{j,j+1}^{-1}\underline{t_{i,j-1}^{-1}}h_jt_{i,j}h_j^{-1}t_{j,j+2}\underline{t_{i,j+1}}\\
&\overset{(T_{i,j-1})}{\underset{(T_{i,j+1})}{=}}&t_{j,j+1}^{-1}\cdot (h_{j-3}\cdots h_{i+1}h_{i})^{-\frac{j-i}{2}}\underset{\leftarrow }{\underline{
t_{i,i+1}^{\frac{j-i-4}{2}}t_{i+2,i+3}^{\frac{j-i-4}{2}}\cdots t_{j-2,j-1}^{\frac{j-i-4}{2}}}}\cdot h_jt_{i,j}h_j^{-1}t_{j,j+2}\\
&&\cdot \underset{\rightarrow }{\underline{t_{j,j+1}^{-\frac{j-i-2}{2}}\cdots 
t_{i+2,i+3}^{-\frac{j-i-2}{2}}t_{i,i+1}^{-\frac{j-i-2}{2}}}}(h_{j-1}\cdots h_{i+1}h_{i})^{\frac{j-i+2}{2}}\\
&\overset{\text{Lem.\ref{tech_rel2}}}{\underset{\text{(1)(b)}}{=}}&t_{j,j+1}^{-1}t_{j-2,j-1}^{\frac{j-i-4}{2}}\cdots t_{i+2,i+3}^{\frac{j-i-4}{2}}t_{i,i+1}^{\frac{j-i-4}{2}}(h_{j-3}\cdots h_{i+1}h_{i})^{-\frac{j-i}{2}}\cdot h_j\underline{t_{i,j}}h_j^{-1}\underline{t_{j,j+2}}\\
&&\cdot (h_{j-1}\cdots h_{i+1}h_{i})^{\frac{j-i+2}{2}}t_{j,j+1}^{-\frac{j-i-2}{2}}\cdots t_{i+2,i+3}^{-\frac{j-i-2}{2}}t_{i,i+1}^{-\frac{j-i-2}{2}}\\
&\overset{(T_{i,j})}{\underset{(T_{j,j+2})}{=}}&t_{j,j+1}^{-1}t_{j-2,j-1}^{\frac{j-i-4}{2}}\cdots t_{i+2,i+3}^{\frac{j-i-4}{2}}t_{i,i+1}^{\frac{j-i-4}{2}}(h_{j-3}\cdots h_{i+1}h_{i})^{-\frac{j-i}{2}}\underset{\leftarrow }{\underline{h_j}}\\
&&\cdot \underset{\leftarrow }{\underline{t_{j-2,j-1}^{-\frac{j-i-2}{2}}\cdots t_{i+2,i+3}^{-\frac{j-i-2}{2}}t_{i,i+1}^{-\frac{j-i-2}{2}}}}h_{j-2}\cdots h_{i+2}h_{i}h_{i}h_{i+2}\cdots h_{j-2}\\ 
&&\cdot (h_{j-3}\cdots h_{i+1}h_{i})^{\frac{j-i}{2}}h_j(h_{j-1}\cdots h_{i+1}h_{i})^{\frac{j-i+2}{2}}t_{j,j+1}^{-\frac{j-i-2}{2}}\cdots t_{i+2,i+3}^{-\frac{j-i-2}{2}}t_{i,i+1}^{-\frac{j-i-2}{2}}\\
&\overset{\text{(1)(c)}}{\underset{\text{Lem.\ref{tech_rel2}}}{=}}&t_{j,j+1}^{-1}t_{j-2,j-1}^{\frac{j-i-4}{2}}\cdots t_{i+2,i+3}^{\frac{j-i-4}{2}}t_{i,i+1}^{\frac{j-i-4}{2}}h_j\underset{\leftarrow }{\underline{t_{j-2,j-1}^{-\frac{j-i-2}{2}}\cdots t_{i+2,i+3}^{-\frac{j-i-2}{2}}t_{i,i+1}^{-\frac{j-i-2}{2}}}}\\
&&\cdot (h_{j-3}\cdots h_{i+1}h_{i})^{-\frac{j-i}{2}}\underset{\leftarrow }{\underline{h_{j-2}\cdots h_{i+2}h_{i}h_{i}h_{i+2}\cdots h_{j-2}}}\\ 
&&\cdot (h_{j-3}\cdots h_{i+1}h_{i})^{\frac{j-i}{2}}h_j(h_{j-1}\cdots h_{i+1}h_{i})^{\frac{j-i+2}{2}}t_{j,j+1}^{-\frac{j-i-2}{2}}\cdots t_{i+2,i+3}^{-\frac{j-i-2}{2}}t_{i,i+1}^{-\frac{j-i-2}{2}}\\
&\overset{\text{(1)(c)}}{\underset{\text{Cor.\ref{cor_tech_rel4}}}{=}}&t_{j,j+1}^{-1}t_{j-2,j-1}^{-1}\cdots t_{i+2,i+3}^{-1}t_{i,i+1}^{-1}h_jh_{j-2}\cdots h_{i+2}h_{i}h_{i}h_{i+2}\cdots h_{j-2}h_j\\
&&\cdot (h_{j-1}\cdots h_{i+1}h_{i})^{\frac{j-i+2}{2}}\underset{\leftarrow }{\underline{t_{j,j+1}^{-\frac{j-i-2}{2}}\cdots t_{i+2,i+3}^{-\frac{j-i-2}{2}}t_{i,i+1}^{-\frac{j-i-2}{2}}}}\\
&\overset{\text{Lem.\ref{tech_rel2}}}{\underset{\text{Cor.\ref{cor_tech_rel3}}}{=}}&t_{j,j+1}^{-\frac{j-i}{2}}\cdots t_{i+2,i+3}^{-\frac{j-i}{2}}t_{i,i+1}^{-\frac{j-i}{2}}h_j\cdots h_{i+2}h_{i}h_{i}h_{i+2}\cdots h_j(h_{j-1}\cdots h_{i+1}h_{i})^{\frac{j-i+2}{2}}
\end{eqnarray*}
Recall that the relations in Lemma~\ref{tech_rel2}, Corollaries~\ref{cor_tech_rel3}, and \ref{cor_tech_rel4} for $j\leq 2n+1$ are obtained from the relations~(1) and (2). 
Thus the relation~$(L_{i,j}^{k=j})$ for even $j-i\geq 6$ is equivalent to the relation~$(T_{i,j+2})$ up to the relations~(1), (2), and $(T_{i^\prime ,j^\prime })$ for $i\leq i^\prime <j^\prime \leq j+2$ and $j-i+2>j^\prime -i^\prime $. 

Assume that $j-i\geq 5$ is odd. 
Then the relation~$(L_{i,j}^{k=j})$ is equivalent to the following relation:
\begin{eqnarray*}
&&t_{i,j+2}\\
&=&t_{j,j+1}^{-1}\underline{t_{i,j-1}^{-1}}h_jt_{i,j}h_j^{-1}t_{j,j+2}\underline{t_{i,j+1}}\\
&\overset{(T_{i,j-1})}{\underset{(T_{i,j+1})}{=}}&t_{j,j+1}^{-1}\cdot (h_{j-4}\cdots h_{i+1}h_{i})^{-\frac{j-i-1}{2}}h_{j-3}^{-1}\cdots h_{i+2}^{-1}h_{i}^{-1}h_{i}^{-1}h_{i+2}^{-1}\cdots h_{j-3}^{-1}\\
&&\cdot \underset{\leftarrow }{\underline{t_{i,i+1}^{\frac{j-i-3}{2}}t_{i+2,i+3}^{\frac{j-i-3}{2}}\cdots t_{j-3,j-2}^{\frac{j-i-3}{2}}}}\cdot h_jt_{i,j}h_j^{-1}t_{j,j+2}\cdot \underset{\rightarrow }{\underline{t_{j-1,j}^{-\frac{j-i-1}{2}}\cdots t_{i+2,i+3}^{-\frac{j-i-1}{2}}t_{i,i+1}^{-\frac{j-i-1}{2}}}}\\
&&\cdot h_{j-1}\cdots h_{i+2}h_{i}h_{i}h_{i+2}\cdots h_{j-1}(h_{j-2}\cdots h_{i+1}h_{i})^{\frac{j-i+1}{2}}\\
&\overset{\text{Lem.\ref{tech_rel2}}}{\underset{\text{Cor.\ref{cor_tech_rel3}}}{=}}&\underset{\rightarrow }{\underline{t_{j,j+1}^{-1}}}t_{i,i+1}^{\frac{j-i-3}{2}}t_{i+2,i+3}^{\frac{j-i-3}{2}}\cdots t_{j-3,j-2}^{\frac{j-i-3}{2}}\cdot (h_{j-4}\cdots h_{i+1}h_{i})^{-\frac{j-i-1}{2}}\\
&&\cdot h_{j-3}^{-1}\cdots h_{i+2}^{-1}h_{i}^{-1}h_{i}^{-1}h_{i+2}^{-1}\cdots h_{j-3}^{-1}\cdot h_jt_{i,j}h_j^{-1}t_{j,j+2}\\
&&\cdot h_{j-1}\cdots h_{i+2}h_{i}h_{i}h_{i+2}\cdots h_{j-1}(h_{j-2}\cdots h_{i+1}h_{i})^{\frac{j-i+1}{2}}\underset{\rightarrow }{\underline{t_{j-1,j}^{-\frac{j-i-1}{2}}\cdots t_{i+2,i+3}^{-\frac{j-i-1}{2}}t_{i,i+1}^{-\frac{j-i-1}{2}}}}\\
&\overset{\text{(1)}}{\underset{\text{CONJ}}{=}}&t_{i,i+1}^{-1}t_{i+2,i+3}^{-1}\cdots t_{j-3,j-2}^{-1}t_{j-1,j}^{-\frac{j-i-1}{2}}\cdot (h_{j-4}\cdots h_{i+1}h_{i})^{-\frac{j-i-1}{2}}\\
&&\cdot h_{j-3}^{-1}\cdots h_{i+2}^{-1}h_{i}^{-1}h_{i}^{-1}h_{i+2}^{-1}\cdots h_{j-3}^{-1}\cdot t_{j,j+1}^{-1}\cdot h_j\underline{t_{i,j}}h_j^{-1}t_{j,j+2}\\
&&\cdot h_{j-1}\cdots h_{i+2}h_{i}\underline{h_{i}h_{i+2}\cdots h_{j-1}(h_{j-2}\cdots h_{i+1}h_{i})^{\frac{j-i+1}{2}}}\\
&\overset{(T_{i,j})}{\underset{\text{Lem.\ref{tech_rel5}}}{=}}&t_{i,i+1}^{-1}t_{i+2,i+3}^{-1}\cdots t_{j-3,j-2}^{-1}t_{j-1,j}^{-\frac{j-i-1}{2}}\cdot (h_{j-4}\cdots h_{i+1}h_{i})^{-\frac{j-i-1}{2}}\\
&&\cdot h_{j-3}^{-1}\cdots h_{i+2}^{-1}h_{i}^{-1}h_{i}^{-1}h_{i+2}^{-1}\cdots h_{j-3}^{-1}\cdot \underline{t_{j,j+1}^{-1}\cdot h_j}t_{j-1,j}^{-\frac{j-i-3}{2}}\cdots t_{i+2,i+3}^{-\frac{j-i-3}{2}}t_{i,i+1}^{-\frac{j-i-3}{2}}\\
&&\cdot (h_{j-2}\cdots h_{i+1}h_{i})^{\frac{j-i+1}{2}}h_j^{-1}\underline{t_{j,j+2}}\cdot h_{j-1}\cdots h_{i+2}h_{i}(h_{j-1}\cdots h_{i+1}h_{i})^{\frac{j-i+1}{2}}\\
&\overset{(T_{j,j+2})}{\underset{\text{(2)(a)}}{=}}&t_{i,i+1}^{-1}t_{i+2,i+3}^{-1}\cdots t_{j-3,j-2}^{-1}t_{j-1,j}^{-\frac{j-i-1}{2}}\cdot \underline{(h_{j-4}\cdots h_{i+1}h_{i})^{-\frac{j-i-1}{2}}}\\
&&\underline{\cdot h_{j-3}^{-1}\cdots h_{i+2}^{-1}h_{i}^{-1}h_{i}^{-1}h_{i+2}^{-1}\cdots h_{j-3}^{-1}}\cdot h_jt_{j+1,j+2}^{-1}t_{j-1,j}^{-\frac{j-i-3}{2}}\cdots t_{i+2,i+3}^{-\frac{j-i-3}{2}}t_{i,i+1}^{-\frac{j-i-3}{2}}\\
&&\cdot (h_{j-2}\cdots h_{i+1}h_{i})^{\frac{j-i+1}{2}}\underline{h_j\cdot h_{j-1}}\cdots h_{i+2}h_{i}(h_{j-1}\cdots h_{i+1}h_{i})^{\frac{j-i+1}{2}}\\
&\overset{\text{Cor.\ref{cor_tech_rel4}}}{\underset{\text{(2)(d)}}{=}}&t_{i,i+1}^{-1}t_{i+2,i+3}^{-1}\cdots t_{j-3,j-2}^{-1}t_{j-1,j}^{-\frac{j-i-1}{2}}h_{j-3}^{-1}\cdots h_{i+2}^{-1}h_{i}^{-1}h_{i}^{-1}h_{i+2}^{-1}\cdots h_{j-3}^{-1}\\
&&\cdot (h_{j-4}\cdots h_{i+1}h_{i})^{-\frac{j-i-1}{2}}\underset{\leftarrow }{\underline{h_j}}t_{j+1,j+2}^{-1}t_{j-1,j}^{-\frac{j-i-3}{2}}\underset{\leftarrow }{\underline{t_{j-3,j-2}^{-\frac{j-i-3}{2}}\cdots t_{i+2,i+3}^{-\frac{j-i-3}{2}}t_{i,i+1}^{-\frac{j-i-3}{2}}}}\\
&&\cdot (h_{j-2}\cdots h_{i+1}h_{i})^{\frac{j-i+1}{2}}\underset{\leftarrow }{\underline{t_{j-1,j}^{-1}t_{j+1,j+2}}}h_{j-1}\underset{\rightarrow }{\underline{h_j}}\cdot h_{j-3}\cdots h_{i+2}h_{i}(h_{j-1}\cdots h_{i+1}h_{i})^{\frac{j-i+1}{2}}\\
&\overset{\text{(1)}}{\underset{\text{Lem.\ref{tech_rel2}}}{=}}&t_{i,i+1}^{-1}t_{i+2,i+3}^{-1}\cdots t_{j-3,j-2}^{-1}t_{j-1,j}^{-\frac{j-i-1}{2}}h_j\cdot h_{j-3}^{-1}\cdots h_{i+2}^{-1}h_{i}^{-1}h_{i}^{-1}h_{i+2}^{-1}\cdots h_{j-3}^{-1}\\
&&\cdot \underset{\leftarrow }{\underline{t_{j-3,j-2}^{-\frac{j-i-3}{2}}\cdots t_{i+2,i+3}^{-\frac{j-i-3}{2}}t_{i,i+1}^{-\frac{j-i-3}{2}}}}(h_{j-4}\cdots h_{i+1}h_{i})^{-\frac{j-i-1}{2}}t_{j-1,j}^{-\frac{j-i-1}{2}}\\
&&\cdot (h_{j-2}\cdots h_{i+1}h_{i})^{\frac{j-i+1}{2}}h_{j-1}h_{j-3}\cdots h_{i+2}h_{i}\cdot h_j(h_{j-1}\cdots h_{i+1}h_{i})^{\frac{j-i+1}{2}}\\
&\overset{\text{Cor.\ref{cor_tech_rel3}}}{\underset{\text{(1)}}{=}}&\underline{t_{i,i+1}^{-\frac{j-i-1}{2}}t_{i+2,i+3}^{-\frac{j-i-1}{2}}\cdots t_{j-3,j-2}^{-\frac{j-i-1}{2}}t_{j-1,j}^{-\frac{j-i-1}{2}}}h_j\cdot h_{j-3}^{-1}\cdots h_{i+2}^{-1}h_{i}^{-1}h_{i}^{-1}h_{i+2}^{-1}\cdots h_{j-3}^{-1}\\
&&\underline{\cdot (h_{j-4}\cdots h_{i+1}h_{i})^{-\frac{j-i-1}{2}}}t_{j-1,j}^{-\frac{j-i-1}{2}}\underline{(h_{j-2}\cdots h_{i+1}h_{i})^{\frac{j-i+1}{2}}}\\
&&\cdot h_{j-1}h_{j-3}\cdots h_{i+2}h_{i}\cdot h_j(h_{j-1}\cdots h_{i+1}h_{i})^{\frac{j-i+1}{2}}\\
&\overset{\text{Lem.\ref{tech_rel8}}}{\underset{\text{(1)(b)}}{=}}&t_{j-1,j}^{-\frac{j-i-1}{2}}\cdots t_{i+2,i+3}^{-\frac{j-i-1}{2}}t_{i,i+1}^{-\frac{j-i-1}{2}}h_j\cdot h_{j-3}^{-1}\cdots h_{i+2}^{-1}h_{i}^{-1}\underline{h_{i}^{-1}h_{i+2}^{-1}\cdots h_{j-3}^{-1}}\\
&&\underline{\cdot (h_{i}h_{i+1}\cdots h_{j-4})^{-\frac{j-i-1}{2}}}t_{j-1,j}^{-\frac{j-i-1}{2}}(h_{i}h_{i+1}\cdots h_{j-2})^{\frac{j-i-1}{2}}\\
&&\cdot \underline{h_{i}h_{i+1}\cdots h_{j-2}\cdot h_{j-1}h_{j-3}\cdots h_{i+2}h_{i}}\cdot h_j(h_{j-1}\cdots h_{i+1}h_{i})^{\frac{j-i+1}{2}}\\
&\overset{\text{Lem.\ref{tech_rel5}}}{\underset{\text{(2)(c)}}{=}}&t_{j-1,j}^{-\frac{j-i-1}{2}}\cdots t_{i+2,i+3}^{-\frac{j-i-1}{2}}t_{i,i+1}^{-\frac{j-i-1}{2}}h_j\cdot h_{j-3}^{-1}\cdots h_{i+2}^{-1}h_{i}^{-1}(h_{i}h_{i+1}\cdots h_{j-3})^{-\frac{j-i-1}{2}}\\
&&\cdot t_{j-1,j}^{-\frac{j-i-1}{2}}(h_{i}h_{i+1}\cdots h_{j-2})^{\frac{j-i-1}{2}}h_{j-1}\cdots h_{i+4}h_{i+2}\cdot \underline{h_{i}h_{i+1}\cdots h_{j-2}h_{j-1}}\\
&&\cdot h_j(h_{j-1}\cdots h_{i+1}h_{i})^{\frac{j-i+1}{2}}\\
&\overset{\text{Lem.\ref{tech_rel9}}}{\underset{}{=}}&t_{j-1,j}^{-\frac{j-i-1}{2}}\cdots t_{i+2,i+3}^{-\frac{j-i-1}{2}}t_{i,i+1}^{-\frac{j-i-1}{2}}h_j\cdot h_{j-3}^{-1}\cdots h_{i+2}^{-1}h_{i}^{-1}(h_{i}h_{i+1}\cdots h_{j-3})^{-\frac{j-i-1}{2}}\\
&&\cdot t_{j-1,j}^{-\frac{j-i-1}{2}}(h_{i}h_{i+1}\cdots h_{j-2})^{\frac{j-i-1}{2}}h_{j-1}\cdots h_{i+4}h_{i+2}\cdot \underset{\leftarrow }{\underline{t_{i,i+1}^{\frac{j-i+1}{2}}}}h_ih_{i+2}\cdots h_{j-1}\\
&&\cdot t_{i+2,i+3}^{-1}t_{i+4,i+5}^{-1}\cdots t_{j-1,j}^{-1}\underline{h_{i+1}h_{i+3}\cdots h_{j-2}\cdot h_j(h_{j-1}\cdots h_{i+1}h_{i})^{\frac{j-i+1}{2}}}\\
&\overset{\text{(1)(c)}}{\underset{\text{Lem.\ref{tech_rel5}}}{=}}&t_{j-1,j}^{-\frac{j-i-1}{2}}\cdots t_{i+2,i+3}^{-\frac{j-i-1}{2}}t_{i,i+1}^{-\frac{j-i-1}{2}}h_j\cdot h_{j-3}^{-1}\cdots h_{i+2}^{-1}h_{i}^{-1}(h_{i}h_{i+1}\cdots h_{j-3})^{-\frac{j-i-1}{2}}\\
&&\cdot t_{j-1,j}^{-\frac{j-i-1}{2}}\underline{(h_{i}h_{i+1}\cdots h_{j-2})^{\frac{j-i-1}{2}}t_{i,i+1}^{\frac{j-i+1}{2}}}h_{j-1}\cdots h_{i+4}h_{i+2}\cdot h_ih_{i+2}\cdots h_{j-1}\\
&&\cdot t_{i+2,i+3}^{-1}t_{i+4,i+5}^{-1}\cdots t_{j-1,j}^{-1}(h_{j}\cdots h_{i+1}h_{i})^{\frac{j-i+1}{2}}\\
&\overset{\text{(2)(b)}}{\underset{}{=}}&t_{j-1,j}^{-\frac{j-i-1}{2}}\cdots t_{i+2,i+3}^{-\frac{j-i-1}{2}}t_{i,i+1}^{-\frac{j-i-1}{2}}h_j\cdot h_{j-3}^{-1}\cdots h_{i+2}^{-1}h_{i}^{-1}(h_{i}h_{i+1}\cdots h_{j-3})^{-\frac{j-i-1}{2}}\\
&&\cdot \underline{(h_{i}h_{i+1}\cdots h_{j-2})^{\frac{j-i-1}{2}}}h_{j-1}\cdots h_{i+4}h_{i+2}\cdot h_ih_{i+2}\cdots h_{j-1}\\
&&\cdot t_{i+2,i+3}^{-1}t_{i+4,i+5}^{-1}\cdots t_{j-1,j}^{-1}(h_{j}\cdots h_{i+1}h_{i})^{\frac{j-i+1}{2}}\\
&\overset{\text{Lem.\ref{tech_rel5}}}{\underset{}{=}}&t_{j-1,j}^{-\frac{j-i-1}{2}}\cdots t_{i+2,i+3}^{-\frac{j-i-1}{2}}t_{i,i+1}^{-\frac{j-i-1}{2}}h_j\cdot h_{j-3}^{-1}\cdots h_{i+2}^{-1}h_{i}^{-1}\cdot h_{j-2}\cdots h_{i+3}h_{i+1}\\
&&\cdot \underline{h_{j-1}\cdots h_{i+4}h_{i+2}\cdot h_ih_{i+2}\cdots h_{j-1}\cdot t_{i+2,i+3}^{-1}t_{i+4,i+5}^{-1}\cdots t_{j-1,j}^{-1}}\\
&&\cdot (h_{j}\cdots h_{i+1}h_{i})^{\frac{j-i+1}{2}}\\
&\overset{\text{Lem.\ref{tech_rel10}}}{\underset{}{=}}&t_{j-1,j}^{-\frac{j-i-1}{2}}\cdots t_{i+2,i+3}^{-\frac{j-i-1}{2}}t_{i,i+1}^{-\frac{j-i-1}{2}}h_j\cdot \underline{h_{j-3}^{-1}\cdots h_{i+2}^{-1}h_{i}^{-1}\cdot h_{j-2}\cdots h_{i+3}h_{i+1}}\\
&&\underline{\cdot h_{i}\cdots h_{j-3}}h_{j-1}h_{j-3}\cdots h_{i}t_{i+1,i+2}^{-1}t_{i+3,i+4}^{-1}\cdots t_{j-2,j-1}^{-1}\\
&&\cdot (h_{j}\cdots h_{i+1}h_{i})^{\frac{j-i+1}{2}}\\
&\overset{\text{Lem.\ref{tech_rel11}}}{\underset{}{=}}&t_{j-1,j}^{-\frac{j-i-1}{2}}\cdots t_{i+2,i+3}^{-\frac{j-i-1}{2}}t_{i,i+1}^{-\frac{j-i-1}{2}}h_j\cdot h_{j-2}\cdots h_{i+3}h_{i+1}\underline{t_{i,i+1}t_{j-1,j}^{-1}}\\
&&\underline{\cdot h_{j-1}h_{j-3}\cdots h_{i}}t_{i+1,i+2}^{-1}t_{i+3,i+4}^{-1}\cdots t_{j-2,j-1}^{-1}(h_{j}\cdots h_{i+1}h_{i})^{\frac{j-i+1}{2}}\\
&\overset{\text{(1)}}{\underset{\text{(2)(a)}}{=}}&t_{j-1,j}^{-\frac{j-i-1}{2}}\cdots t_{i+2,i+3}^{-\frac{j-i-1}{2}}t_{i,i+1}^{-\frac{j-i-1}{2}}\underline{h_j\cdot h_{j-2}\cdots h_{i+3}h_{i+1}\cdot h_{j-1}h_{j-3}\cdots h_{i}}\\
&&\cdot t_{i+3,i+4}^{-1}\cdots t_{j-2,j-1}^{-1}t_{j,j+1}^{-1}(h_{j}\cdots h_{i+1}h_{i})^{\frac{j-i+1}{2}}\\
&\overset{\text{Lem.\ref{tech_rel12}}}{\underset{}{=}}&t_{j-1,j}^{-\frac{j-i-1}{2}}\cdots t_{i+2,i+3}^{-\frac{j-i-1}{2}}t_{i,i+1}^{-\frac{j-i-1}{2}}\cdot \underset{\leftarrow }{\underline{t_{j+1,j+2}^{-\frac{j-i-1}{2}}}}h_{j}\cdots h_{i+1}h_i\underline{t_{i+3,i+4}t_{i+5,i+6}\cdots t_{j,j+1}}\\
&&\underline{\cdot t_{i+3,i+4}^{-1}\cdots t_{j-2,j-1}^{-1}t_{j,j+1}^{-1}}(h_{j}\cdots h_{i+1}h_{i})^{\frac{j-i+1}{2}}\\
&\overset{\text{(1)(a)}}{\underset{}{=}}&t_{j+1,j+2}^{-\frac{j-i-1}{2}}\cdots t_{i+2,i+3}^{-\frac{j-i-1}{2}}t_{i,i+1}^{-\frac{j-i-1}{2}}(h_{j}\cdots h_{i+1}h_{i})^{\frac{j-i+3}{2}}.
\end{eqnarray*}
Recall that the relations in Lemma~\ref{tech_rel5}, \ref{tech_rel10}, \ref{tech_rel11}, and \ref{tech_rel12} for $j\leq 2n+1$ are also obtained from the relations~(1) and (2). 
Therefore the relation~$(L_{i,j}^{k=j})$ for even $j-i\geq 6$ is equivalent to the relation~$(T_{i,j+2})$ up to the relations~(1), (2), and $(T_{i^\prime ,j^\prime })$ for $i\leq i^\prime <j^\prime \leq j+2$ and $j-i+2>j^\prime -i^\prime $, and we have completed the proof of Lemma~\ref{lem_t_ij_pres1}. 
\end{proof}

By an argument similar to the proof of Lemma~\ref{lem_t_ij_pres1} of the odd $j-i$ case and Lemma~\ref{tech_rel1}, we have the following lemma: 

\begin{proof}[Proof of Lemma~\ref{lem_L_ij_k=j-1}]
First, we remark that the relation~$(L_{i,j-1}^{k=j})$ is equivalent to the following relation:
\begin{eqnarray*}
&&h_{j-1}t_{i,j-1}h_{j-1}^{-1}=\underline{t_{i,j-2}}t_{j-1,j}\underline{t_{i,j+1}t_{i,j}^{-1}t_{j-1,j+1}^{-1}}\\
&\overset{\text{CONJ}}{\underset{}{\Longleftrightarrow}}&t_{i,j-2}^{-1}h_{j-1}t_{i,j-1}h_{j-1}^{-1}\underline{t_{j-1,j+1}}t_{i,j}t_{i,j+1}^{-1}=t_{j-1,j}\\
&\overset{(T_{j-1,j+1})}{\underset{}{\Longleftrightarrow}}&t_{i,j-2}^{-1}h_{j-1}t_{i,j-1}h_{j-1}t_{i,j}t_{i,j+1}^{-1}=t_{j-1,j}.
\end{eqnarray*}
Then we have
\begin{eqnarray*}
(L_{i,j}^{k=j-1})&\Longleftrightarrow&h_{j-1}t_{i,j}h_{j-1}^{-1}=t_{i,j-2}t_{j,j+1}\underset{\leftarrow }{\underline{t_{i,j+1}t_{i,j-1}^{-1}}}t_{j-1,j+1}^{-1}\\
&\overset{\text{(1)(b)}}{\underset{}{\Longleftrightarrow}}&h_{j-1}t_{i,j}h_{j-1}^{-1}=\underline{t_{i,j-2}t_{i,j+1}t_{i,j-1}^{-1}}t_{j,j+1}\underline{t_{j-1,j+1}^{-1}}\\
&\overset{\text{CONJ}}{\underset{}{\Longleftrightarrow}}&\underset{\rightarrow }{\underline{t_{i,j-1}}}\ \underset{\rightarrow }{\underline{t_{i,j+1}^{-1}}}t_{i,j-2}^{-1}h_{j-1}t_{i,j}h_{j-1}^{-1}\underline{t_{j-1,j+1}}=t_{j,j+1}\\
&\overset{\text{(1)}}{\underset{(T_{j-1,j+1})}{\Longleftrightarrow}}&t_{i,j-2}^{-1}t_{i,j-1}h_{j-1}t_{i,j}t_{i,j+1}^{-1}h_{j-1}=t_{j,j+1}\\
&\overset{}{\underset{}{\Longleftrightarrow}}&h_{j-1}^{-1}\underset{\rightarrow }{\underline{h_{j-1}}}t_{i,j-2}^{-1}t_{i,j-1}h_{j-1}t_{i,j}t_{i,j+1}^{-1}h_{j-1}=t_{j,j+1}\\
&\overset{\text{(1)(c)}}{\underset{}{\Longleftrightarrow}}&h_{j-1}^{-1}\underline{t_{i,j-2}^{-1}h_{j-1}t_{i,j-1}h_{j-1}t_{i,j}t_{i,j+1}^{-1}}h_{j-1}=t_{j,j+1}\\
&\overset{(L_{i,j-1}^{k=j})}{\underset{}{\Longleftrightarrow}}&h_{j-1}^{-1}t_{j-1,j}h_{j-1}=t_{j,j+1}\\
&\overset{}{\underset{}{\Longleftrightarrow}}&\text{(2) (a)}.
\end{eqnarray*}
Therefore we have completed the proof of Lemma~\ref{lem_L_ij_k=j-1}. 
\end{proof}

By an argument similar to the proof of Lemma~\ref{lem_L_ij_k=j-1} and Lemma~\ref{tech_rel1}, we have the following lemma: 

Let $D_{i,j}$ for $1\leq i<j\leq 2n+2$ be the disk in $\Sigma _0$ with boundary $\gamma _{i,j}$ which contains the points $p_{i},\ p_{i+1},\ \dots ,\ p_{j}$ and $h_{i,j}$ a self-homeomorphism on $(\Sigma _0, \B )$ with support $D_{i,j}$ which is described as the result of half-rotation of $l_i\cup l_{i+1}\cup \cdots \cup l_{j-1}$ as in Figure~\ref{fig_h_ij}. 
We remark that $h_{i,j}$ is liftable for even $j-i$ and $h_{1,2n+2}$ is isotopic to $r$. 
Since we have the relations $h_{i,j}^{\pm 1}h_lh_{i,j}^{\mp 1}=h_{j+i-l-2}$ for $i\leq l\leq j-1$ and $h_{i,j}^{\pm 1}t_{k,l}h_{i,j}^{\mp 1}=t_{j+i-l,j+i-k}$ for $i\leq k<l\leq j$, by a similar argument in the proof of Lemma~\ref{tech_rel1}, the conjugations by $h_{i,j}^{\pm 1}$ of the relations~(1) and (2) among generators supported on $D_{i,j}$ are obtained from the relations~(1) and (2) among generators supported on $D_{i,j}$. 
Then we have the following lemma.

\begin{figure}[h]
\includegraphics[scale=0.95]{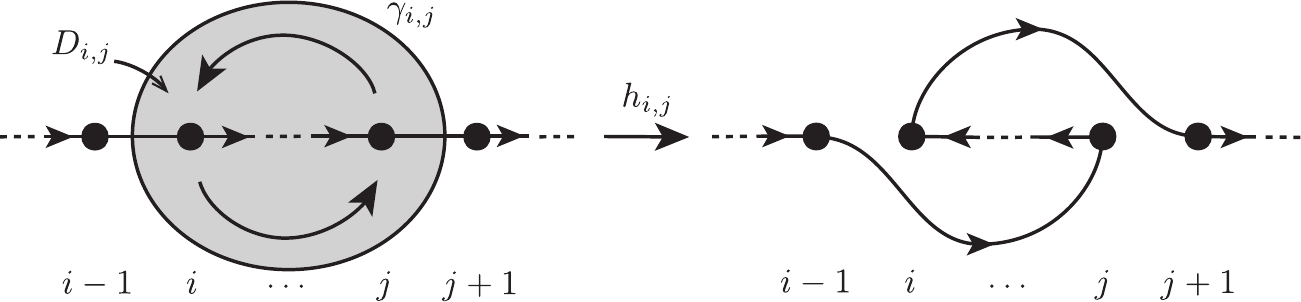}
\caption{The mapping class $h_{i,j}$.}\label{fig_h_ij}
\end{figure}

\begin{proof}[Proof of Lemma~\ref{lem_L_ij_k=i-2}]
By the relations $h_{i,j}^{\pm 1}h_lh_{i,j}^{\mp 1}=h_{j+i-l-2}$ for $i\leq l\leq j-1$ and $h_{i,j}^{\pm 1}t_{k,l}h_{i,j}^{\mp 1}=t_{j+i-l,j+i-k}$ for $i\leq k<l\leq j$, the conjugation of the relation~$(L_{i,j}^{k=i-2})$ by $h_{i-2,j}$ coincides with the relation~$(L_{i-2,j-2}^{k=j})$.  
By Lemma~\ref{lem_t_ij_pres1}, the relation~$(L_{i-2,j-2}^{k=j})$ is equivalent to the relation~$(T_{i-2,j})$ up to the relations~(1), (2), $(T_{i^\prime ,j^\prime })$ for $i\leq i^\prime <j^\prime \leq j$ and $j-i>j^\prime -i^\prime $, and $t_{i,i}=1$ for $1\leq i\leq 2n-1$. 
By an argument in the proof of Lemma~\ref{lem_t_ij_pres1}, these using relations~(1), (2), $(T_{i^\prime ,j^\prime })$, and $t_{i,i}=1$ are supported on $D_{i-2,j}$. 
Thus the conjugations of these relations~(1) and (2) by $h_{i-2,j}^{-1}$ are obtained from the relations~(1) and (2). 

By Lemma~\ref{cor_tech_rel8} and Lemma~\ref{tech_rel19}, the conjugation of the relation~$(T_{i-2,j})$ by $h_{i-2,j}^{-1}$ is equivalent to the relation~$(T_{i-2,j})$ up to the relations~(1) and (2), and the conjugation of the relation $(T_{i^{\prime },j^{\prime }})$ for $i\leq i^{\prime }<j^{\prime }\leq j$ by $h_{i-2,j}^{-1}$ is also equivalent to a relation~$(T_{i^{\prime \prime },j^{\prime \prime }})$ for some $i\leq i^{\prime \prime }<j^{\prime \prime }\leq j$ up to the relations~(1) and (2). 
Thus, the relation~$(L_{i,j}^{k=i-2})$ is equivalent to the relation~$(T_{i-2,j})$ up to the relations~(1) and (2) in Theorem, $(T_{i^\prime ,j^\prime })$ for $i-2\leq i^\prime <j^\prime \leq j$ and $j-i>j^\prime -i^\prime $, and $t_{i,i}=1$ for $1\leq i\leq 2n-1$. 
Therefore, we have completed the proof of Lemma~\ref{lem_L_ij_k=i-2}. 
\end{proof}

\begin{proof}[Proof of Lemma~\ref{lem_L_ij_k=i-1}]
By the relations $h_{i,j}^{\pm 1}h_lh_{i,j}^{\mp 1}=h_{j+i-l-2}$ for $i\leq l\leq j-1$ and $h_{i,j}^{\pm 1}t_{k,l}h_{i,j}^{\mp 1}=t_{j+i-l,j+i-k}$ for $i\leq k<l\leq j$, the conjugation of the relation~$(L_{i,j}^{k=i-1})$ by $h_{i-1,j}$ coincides with the relation~$(L_{i-1,j-1}^{k=j-1})$.  
By Lemma~\ref{lem_L_ij_k=j-1} and its proof, the relation~$(L_{i-1,j-1}^{k=j-1})$ is obtained from the relations~(1) and (2) among generators supported on $D_{i-1,j}$, and the relations $(T_{j-2,j})$ and $(L_{i-1,j-2}^{k=j})$. 
The conjugations by $h_{i-1,j}^{-1}$ of these relations~(1) and (2) are obtained from the relations~(1) and (2), and the conjugations by $h_{i-1,j}^{-1}$ of the relations~$(T_{j-2,j})$ and $(L_{i-1,j-2}^{k=j})$ coincide with the relations~$(T_{i-1,i+1})$ and $(L_{i+1,j}^{k=i-2})$, respectively. 
Therefore, the relation~$(L_{i,j}^{k=i-1})$ is obtained from the relations~(1), (2), $(T_{i-1,i+1})$, and $(L_{i+1,j}^{k=i-2})$ and we have completed the proof of Lemma~\ref{lem_L_ij_k=i-1}. 
\end{proof}

\section{Presentations for the balanced superelliptic mapping class groups}\label{section_smod}

Throughout this section, we assume that $g=n(k-1)$ for $n\geq 1$ and $k\geq 3$.  

\subsection{Explicit lifts of generators for the liftable mapping class groups}\label{section_lifts}
In this section, we give explicit lifts of generators for the liftable mapping class groups in Theorems~\ref{thm_pres_lmodb}, \ref{thm_pres_lmodp}, and \ref{thm_pres_lmod} with respect to the balanced superelliptic covering map $p=p_{g,k}\colon \Sigma _g\to \Sigma _0$. %homomorphism $\theta \colon \SM \to \LM $ or $\theta ^1\colon \SMb \to \LMb $ (the definitions  of $\theta $ and $\theta ^1$ are reviewed in Section~\ref{section_exact-seq_smod}). 
First, we consider lifts of the half-twists $a_i$ and $b_i$ for $1\leq i\leq n$ which are reviewed in Section~\ref{section_liftable-element}. 

Let $a$ be a simple arc on $\Sigma _0$ whose boundary lies in either $\B _o=\{ p_1, p_3, \dots , p_{2n+1}\}$ or $\B _e=\{ p_2, p_4, \dots ,p_{2n+2}\}$ and interior does not intersect with $\B $. 
We denote $\partial a=\{ p_i, p_j\}$ for $i<j$ (i.e. $j-i$ is even), $\bar{l}_m=l_m$ for $m=i, j$ if $i$ and $j$ are odd, and $\bar{l}_m=l_{m-1}$ for $m=i, j$ if $i$ and $j$ are even (see Figure~\ref{fig_proof2_lift_half-twist}). 
Let $\mathcal{N}$ be a regular neighborhood of $a$ in $(\Sigma _0-\B )\cup \{ p_i, p_j\}$ whose intersection with $\bar{l}_m$ for each $m=i, j$ is an arc which connects $\partial \mathcal{N}$ and $p_m$ as in Figure~\ref{fig_proof2_lift_half-twist}. 
When the point $p_{2n+2}$ does not lies in $\partial a$, suppose that $\mathcal{N}$ is disjoint from the disk $D$.  

\begin{figure}[h]
\includegraphics[scale=0.92]{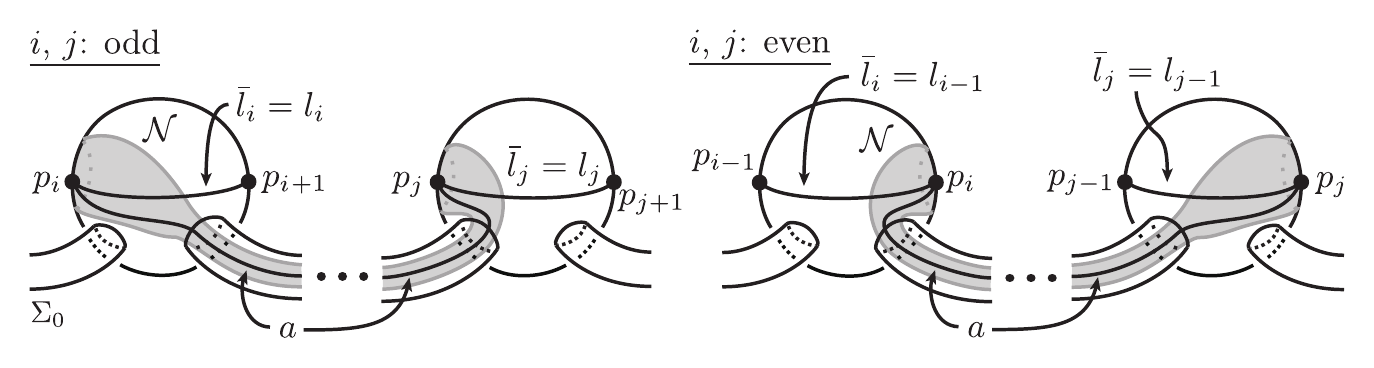}
\caption{A regular neighborhood $\mathcal{N}$ of a simple arc $a$ on $\Sigma _0$ with endpoints $p_i$ and $p_j$ for even $j-i$, $\bar{l}_m=l_m$ for $m=i, j$ if $i$ and $j$ are odd, and $\bar{l}_m=l_{m-1}$ for $m=i, j$ if $i$ and $j$ are even.}\label{fig_proof2_lift_half-twist}
\end{figure}

The neighborhood $\mathcal{N}$ is constructed by connecting small disk neighborhoods of $p_i$ and $p_j$ by a thin band in $\Sigma _0$ whose core is a subset of $a$ as on the lower side in Figure~\ref{fig_lift_half-twist1}. 
Let $c$ be an oriented simple proper arc in $\mathcal{N}$ which transversely intersects with $a$ at one point as on the lower left-hand side in Figure~\ref{fig_lift_half-twist1}. 
Remark that the image $\sigma [a](c)$ is a simple arc as on the lower right-hand side in Figure~\ref{fig_lift_half-twist1} and the isotopy class of $\sigma [a]$ relative to $\partial \mathcal{N}$ is determined by the isotopy class of $\sigma [a](c)$ relative to $\partial \mathcal{N}$. 

The total space $\Sigma _g$ of the balanced superelliptic covering map $p\colon \Sigma _g \to \Sigma _0$ is constructed by cutting $\Sigma _0$ along $l_1\cup l_3\cup \cdots l_{2n+1}$ and pasting its $k$ copies along the cut off $2k$ copies of $l_1\cup l_3\cup \cdots l_{2n+1}$. 
Remark that when $i$ and $j$ are odd (resp. even), the balanced superelliptic rotation $\zeta =\zeta _{g,k}$ acts on the small disk neighborhoods of $\widetilde{p}_{i}$ and $\widetilde{p}_{j}$ by the counter‐clockwise (resp. clockwise) $\frac{2\pi }{k}$-rotation. 
Since $\partial l_{2s-1}=\{ p_{2s-1}, p_{2s}\}$ for $1\leq s\leq n+1$, for $m=i, j$, $\bar{l}_m=l_m$ lies in $l_1\cup l_3\cup \cdots l_{2n+1}$ if $i$ and $j$ are odd, and $\bar{l}_m=l_{m-1}$ lies in $l_1\cup l_3\cup \cdots l_{2n+1}$ if $i$ and $j$ are even (see Figure~\ref{fig_proof2_lift_half-twist}). 
Thus the preimage $\widetilde{\mathcal{N}}=p^{-1}(\mathcal{N})\subset \Sigma _g$ is topologically constructed by cutting $\mathcal{N}$ along $\bar{l}_{i}\cap \mathcal{N}$ and $\bar{l}_{j}\cap \mathcal{N}$ and pasting these $k$ copies along the cut off $2k$ copies of $\bar{l}_{i}\cap \mathcal{N}$ and $\bar{l}_{j}\cap \mathcal{N}$ as on the upper and middle left-hand side in Figure~\ref{fig_lift_half-twist1}. 

Let $\widetilde{a}^l$ for $1\leq l\leq k$ be a lift of $a$ with respect to $p$ such that $\zeta ^l(\widetilde{a}^1)=\widetilde{a}^{l+1}$ for $1\leq l\leq k-1$. 
Then we denote by $\gamma ^1$ a simple closed curve on $\widetilde{\mathcal{N}}$ which is isotopic to $\widetilde{a}^1\cup \widetilde{a}^{2}$ and $\gamma ^l=\zeta ^{l-1}(l^1)$ for $2\leq l\leq k$ (see the upper and middle left-hand side in Figure~\ref{fig_lift_half-twist1}). 
Remark that $\gamma ^l$ for $2\leq l\leq k-1$ (resp. $\gamma ^k$) is also isotopic to $\widetilde{a}^{l}\cup \widetilde{a}^{l+1}$ (resp. $\widetilde{a}^{k}\cup \widetilde{a}^{1}$). 
Under the situation above, we have the following lemma that is a specialized version of Proposition~5.3 in~\cite{Ghaswala-McLeay}. 

\begin{figure}[h]
\includegraphics[scale=0.83]{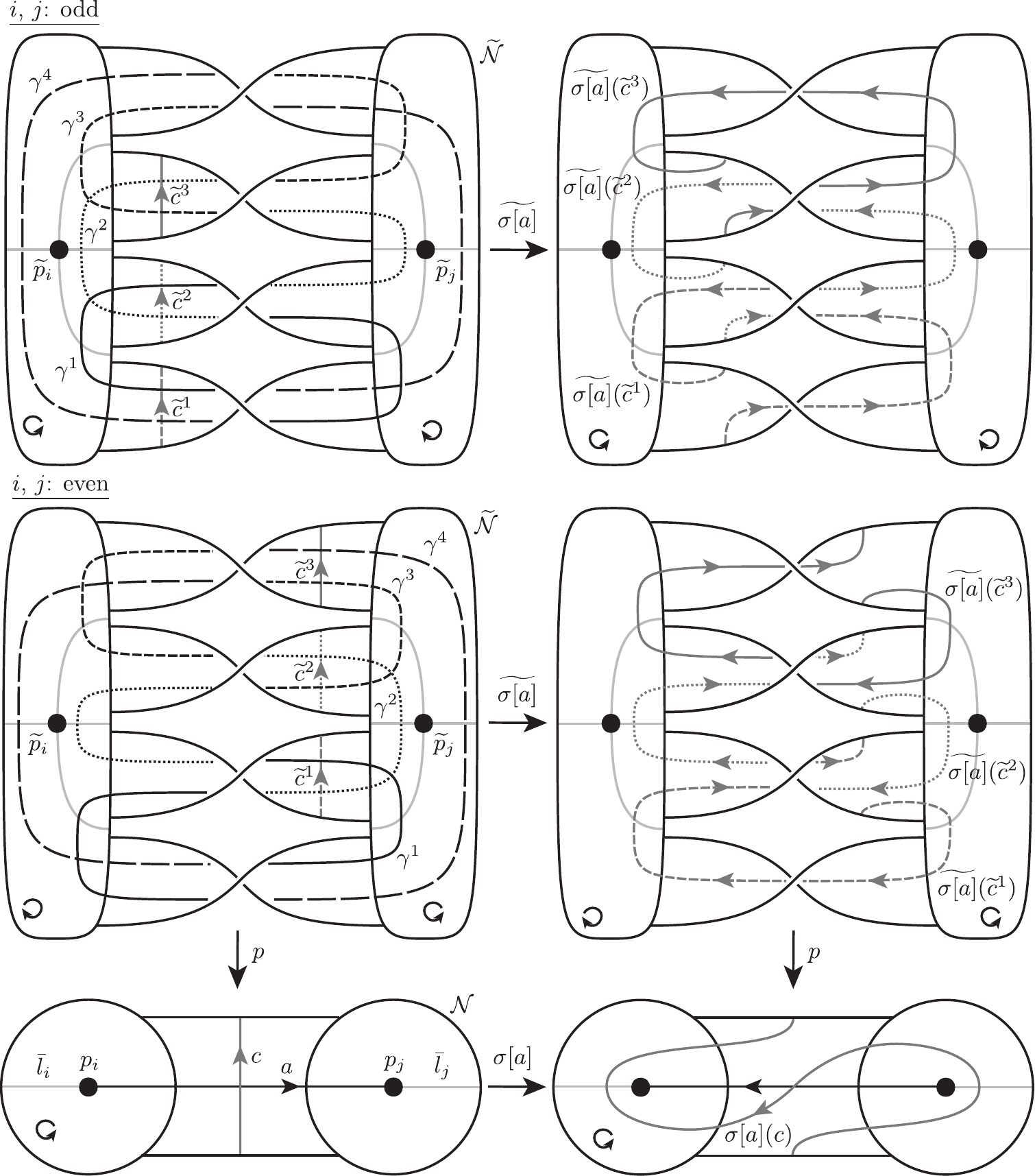}
\caption{A construction of the preimage $\widetilde{\mathcal{N}}$ of $N$ with respect to $p_{g,k}$ when $k=4$ and the images of $\widetilde{c}^i$ by $\widetilde{\sigma [a]}$ for $i=1,2,3$.}\label{fig_lift_half-twist1}
\end{figure}

\begin{lem}\label{lift_half-twist}
The relations
\[
\widetilde{\sigma [a]}=\left\{
		\begin{array}{ll}
		t_{\gamma ^1}t_{\gamma ^2}\cdots t_{\gamma ^{k-1}} & \text{for }i \text{ and }j\text{ are odd},\\
		t_{\gamma ^{k-1}}t_{\gamma ^{k-2}}\cdots t_{\gamma ^1} & \text{for }i \text{ and }j\text{ are even}\\
		\end{array}
		\right.\\
\]
hold relative to $\partial \widetilde{\mathcal{N}}$ for some lift $\widetilde{\sigma [a]}$ of $\sigma [a]$ with respect to $p$. 
\end{lem}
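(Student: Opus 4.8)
The plan is to reduce the identity of mapping classes to a comparison of their actions on a spanning collection of arcs, following the device of the arc $c$ already introduced in the excerpt. Recall that the isotopy class of $\sigma [a]$ relative to $\partial \mathcal{N}\cup \partial a$ is determined by the isotopy class of $\sigma [a](c)$, and that $c$ separates the disk $\mathcal{N}$ into two half-disks, one containing $p_i$ and the other containing $p_j$. Passing to the cover, the $k$ lifts $\widetilde{c}^1,\dots ,\widetilde{c}^k$ of $c$ cut $\widetilde{\mathcal{N}}$ into the preimages of these two half-disks; since $\widetilde{p}_i$ and $\widetilde{p}_j$ are the unique, totally ramified, preimages of $p_i$ and $p_j$, the preimage of each half-disk is again a disk. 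Hence $\widetilde{\mathcal{N}}$ cut along $\widetilde{c}^1\cup \cdots \cup \widetilde{c}^k$ is a union of two disks, so any self-homeomorphism of $\widetilde{\mathcal{N}}$ that is the identity on $\partial \widetilde{\mathcal{N}}$ is determined up to isotopy by the isotopy classes of the images of $\widetilde{c}^1,\dots ,\widetilde{c}^k$. This turns the lemma into an arc computation.

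Next I would pin down the relevant lift. Since $\sigma [a]$ is parity-preserving it is liftable by Lemma~\ref{lem_GW}, so there is a lift $\widetilde{\sigma [a]}$ with $p\circ \widetilde{\sigma [a]}=\sigma [a]\circ p$; consequently $p\bigl(\widetilde{\sigma [a]}(\widetilde{c}^l)\bigr)=\sigma [a](c)$ for every $l$, so each $\widetilde{\sigma [a]}(\widetilde{c}^l)$ is one of the $k$ lifts of the arc $\sigma [a](c)$ of Figure~\ref{fig_lift_half-twist1}, the precise lift being forced by the requirement that $\widetilde{\sigma [a]}$ be an orientation-preserving homeomorphism equal to the identity on $\partial \widetilde{\mathcal{N}}$. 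I would then compute the action of the candidate product $t_{\gamma ^1}t_{\gamma ^2}\cdots t_{\gamma ^{k-1}}$ directly on each $\widetilde{c}^l$. Because $\widetilde{c}^l$ meets the lifts of $a$ in a single point (as $c$ crosses $a$ once) and $\gamma ^m\simeq \widetilde{a}^m\cup \widetilde{a}^{m+1}$, each $\widetilde{c}^l$ meets exactly the two consecutive curves $\gamma ^{l-1}$ and $\gamma ^l$ and is disjoint from the rest; applying the twists $t_{\gamma ^1},\dots ,t_{\gamma ^{k-1}}$ in this order drags $\widetilde{c}^l$ across the ``necklace'' of curves so that its projection becomes $\sigma [a](c)$, as in the right-hand side of Figure~\ref{fig_lift_half-twist1}. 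Checking that the resulting arcs are the lifts forced in the previous step, for all $l$ simultaneously, shows that $t_{\gamma ^1}\cdots t_{\gamma ^{k-1}}$ agrees with $\widetilde{\sigma [a]}$ on a spanning arc system, hence equals it relative to $\partial \widetilde{\mathcal{N}}$. As this is exactly the specialization of Proposition~5.3 in~\cite{Ghaswala-McLeay} to the present two-branch-point configuration, I would also invoke that result to bypass the routine portions of the calculation.

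The two cases in the statement reflect the two possible local rotation directions of $\zeta =\zeta _{g,k}$: near $\widetilde{p}_i,\widetilde{p}_j$ the action is the counterclockwise $\tfrac{2\pi }{k}$-rotation when $i$ and $j$ are odd and the clockwise one when they are even, and this reverses the cyclic order in which the lifted arcs sweep across the curves $\gamma ^m$, producing the reversed product $t_{\gamma ^{k-1}}t_{\gamma ^{k-2}}\cdots t_{\gamma ^1}$ in the even case. The main obstacle I anticipate is the bookkeeping in the middle step: keeping the intersection pattern of the $k$ lifted arcs $\widetilde{c}^l$ with the $k$ curves $\gamma ^m$ consistent across all sheets while performing the successive Dehn twists, and matching the twist directions with the two rotation conventions so that the projected image is exactly $\sigma [a](c)$ rather than a deck-transformed or oppositely twisted variant. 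Once the computation on a single sheet is carried out carefully and transported to the remaining sheets by the $\zeta $-equivariance of the whole picture, the outstanding verifications become routine.
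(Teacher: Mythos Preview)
Your plan is essentially the paper's proof: reduce to the action on the lifted arc system $\widetilde{c}^1,\dots ,\widetilde{c}^{k}$ (the paper uses $k-1$ of them, cutting $\widetilde{\mathcal{N}}$ into a single disk, but either choice works), then verify that the candidate product of Dehn twists sends each $\widetilde{c}^l$ to the same arc as the lift does.

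One point to correct in your bookkeeping: your claim that ``each $\widetilde{c}^l$ meets exactly the two consecutive curves $\gamma ^{l-1}$ and $\gamma ^l$'' does not follow from $\gamma ^m\simeq \widetilde{a}^m\cup \widetilde{a}^{m+1}$, since isotoping the closed curve can eliminate intersections with the arc. The paper's (and the operative) observation is asymmetric: in the odd case $\widetilde{c}^l$ meets $\gamma ^l$ in one point and is disjoint from $\gamma ^s$ for $s>l$, and the crucial fact is that the \emph{single} twist $t_{\gamma ^l}$ already carries $\widetilde{c}^l$ to $\widetilde{\sigma [a]}(\widetilde{c}^l)$, which is then disjoint from $\gamma ^s$ for $s<l$. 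This is exactly what makes the product $t_{\gamma ^1}\cdots t_{\gamma ^{k-1}}$ telescope cleanly, and it is precisely the step you flagged as the main obstacle. With that correction your outline matches the paper.
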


\begin{proof}
Let $\widetilde{c}^l$ for $1\leq l\leq k$ be the lift of $c$ with respect to $p$ such that $\widetilde{c}^l$ transversely intersects with $\widetilde{a}^l$ (resp. $\widetilde{a}^{l+1}$) at one point when $i$ and $j$ are odd (resp. even).  
Remark that $\zeta ^{l}(\widetilde{c}^1)=\widetilde{c}^{l+1}$ for $1\leq l\leq k-1$. 
Since the surface which is obtained from $\widetilde{\mathcal{N}}$ by cutting along $\widetilde{c}^1\sqcup \widetilde{c}^2\sqcup \cdots \sqcup \widetilde{c}^{k-1}$ is a disk, the isotopy class of $\widetilde{\sigma [a]}$ relative to $\partial \widetilde{\mathcal{N}}$ is determined by the isotopy class of the image $\widetilde{\sigma [a]}(\widetilde{c}^1\sqcup \widetilde{c}^2\sqcup \cdots \sqcup \widetilde{c}^{k-1})$ relative to $\partial \widetilde{\mathcal{N}}$. 

The image $\widetilde{\sigma [a]}(\widetilde{c}^1\sqcup \widetilde{c}^2\sqcup \cdots \sqcup \widetilde{c}^{k-1})$ in $\widetilde{\sigma [a]}(\widetilde{\mathcal{N}})=\widetilde{\mathcal{N}}$ for some lift $\widetilde{\sigma [a]}$ is obtained by cutting and pasting of $k$ copies of $\sigma [a](\mathcal{N})$ as in the lower right-hand side in Figure~\ref{fig_lift_half-twist1} as the same way to construct $\widetilde{\mathcal{N}}$ from $\mathcal{N}$ (see the upper and middle right-hand side in Figure~\ref{fig_lift_half-twist1}). 
By the construction, when $i$ and $j$ are odd (resp. even), $\widetilde{c}^l$ for $1\leq l\leq k-1$ is disjoint from $\gamma ^s$ for $l+1\leq s\leq k-1$ (resp. $1\leq s\leq l-1$) and transversely intersects with $\gamma ^l$ at one point, and $\widetilde{\sigma [a]}(\widetilde{c}^{l})$ is disjoint from $\gamma ^s$ for $1\leq s\leq l-1$ (resp. $l+1\leq s\leq k-1$). 
Since we can check that $t_{\gamma ^l}(\widetilde{c}^{l})=\widetilde{\sigma [a]}(\widetilde{c}^{l})$ for $1\leq l\leq k-1$, when $i$ and $j$ are odd, we have 
\begin{eqnarray*}
&&t_{\gamma ^1}\cdots t_{\gamma ^{k-2}}t_{\gamma ^{k-1}}(\widetilde{c}^1\sqcup \cdots \sqcup \widetilde{c}^{k-2}\sqcup \widetilde{c}^{k-1})\\
&=&t_{\gamma ^1}\cdots t_{\gamma ^{k-3}}t_{\gamma ^{k-2}}(t_{\gamma ^{k-1}}(\widetilde{c}^1)\sqcup \cdots \sqcup t_{\gamma ^{k-1}}(\widetilde{c}^{k-2})\sqcup t_{\gamma ^{k-1}}(\widetilde{c}^{k-1}))\\
&=&t_{\gamma ^1}\cdots t_{\gamma ^{k-3}}t_{\gamma ^{k-2}}(\widetilde{c}^1\sqcup \cdots \sqcup \widetilde{c}^{k-2}\sqcup \widetilde{\sigma [a]}(\widetilde{c}^{k-1}))\\
&=&t_{\gamma ^1}\cdots t_{\gamma ^{k-3}}(t_{\gamma ^{k-2}}(\widetilde{c}^1)\sqcup \cdots \sqcup t_{\gamma ^{k-2}}(\widetilde{c}^{k-3})\sqcup t_{\gamma ^{k-2}}(\widetilde{c}^{k-2})\sqcup t_{\gamma ^{k-2}}(\widetilde{\sigma [a]}(\widetilde{c}^{k-1})))\\
&=&t_{\gamma ^1}\cdots t_{\gamma ^{k-3}}(\widetilde{c}^1\sqcup \cdots \sqcup \widetilde{c}^{k-3}\sqcup \widetilde{\sigma [a]}(\widetilde{c}^{k-2})\sqcup \widetilde{\sigma [a]}(\widetilde{c}^{k-1}))\\
&\vdots &\\
&=&\widetilde{\sigma [a]}(\widetilde{c}^1)\sqcup \cdots \sqcup \widetilde{\sigma [a]}(\widetilde{c}^{k-2})\sqcup \widetilde{\sigma [a]}(\widetilde{c}^{k-1})\\
&=&\widetilde{\sigma [a]}(\widetilde{c}^1\sqcup \cdots \sqcup \widetilde{c}^{k-2}\sqcup \widetilde{c}^{k-1}). 
\end{eqnarray*}
Similarly, we also have $t_{\gamma ^{k-1}}t_{\gamma ^{k-2}}\cdots t_{\gamma ^1}(\widetilde{c}^1\sqcup \cdots \sqcup \widetilde{c}^{k-2}\sqcup \widetilde{c}^{k-1})=\widetilde{\sigma [a]}(\widetilde{c}^1\sqcup \cdots \sqcup \widetilde{c}^{k-2}\sqcup \widetilde{c}^{k-1})$ when $i$ and $j$ are even. 
Therefore, we have $t_{\gamma ^1}t_{\gamma ^{2}}\cdots t_{\gamma ^{k-1}}=\widetilde{\sigma [a]}$ when $i$ and $j$ are odd and $t_{\gamma ^{k-1}}t_{\gamma ^{k-2}}\cdots t_{\gamma ^1}=\widetilde{\sigma [a]}$ when $i$ and $j$ are even, and we have completed the proof of Lemma~\ref{lift_half-twist}. 
\end{proof} 

Under the situation above and for odd $i$ and $j$, by conjugation relations, we have
\begin{eqnarray*}
\zeta t_{\gamma ^1}t_{\gamma ^2}\cdots t_{\gamma ^{k-1}}\zeta ^{-1}&=&t_{\gamma ^2}t_{\gamma ^3}\cdots t_{\gamma ^{k}}\\
&=&(t_{\gamma ^2}t_{\gamma ^3}\cdots t_{\gamma ^{k-1}})t_{\gamma ^{k}}(t_{\gamma ^2}t_{\gamma ^3}\cdots t_{\gamma ^{k-1}})^{-1}\cdot t_{\gamma ^2}t_{\gamma ^3}\cdots t_{\gamma ^{k-1}}\\
&=&t_{\gamma ^1}t_{\gamma ^2}\cdots t_{\gamma ^{k-1}}
\end{eqnarray*}
relative to $\partial \widetilde{\mathcal{N}}$. 
Similarly, for even $i$ and $j$, we also have  
\[
\zeta t_{\gamma ^{k-1}}t_{\gamma ^{k-2}}\cdots t_{\gamma ^1}\zeta ^{-1}=t_{\gamma ^{k}}t_{\gamma ^{k-1}}\cdots t_{\gamma ^2}=t_{\gamma ^{k-1}}t_{\gamma ^{k-2}}\cdots t_{\gamma ^1}
\]
relative to $\partial \widetilde{\mathcal{N}}$. 
Hence $\widetilde{\sigma [a]}$ commutes with $\zeta $ relative to $\partial \widetilde{\mathcal{N}}$. 

When $i<j\leq 2n-1$, by an assumption, since $\widetilde{\mathcal{N}}$ does not intersect with the disk $\widetilde{D}$ (i.e. $\widetilde{\mathcal{N}}$ does not include the point $\widetilde{p}_{2n+2}$), the relation in Lemma~\ref{lift_half-twist} is also holds relative to $\partial \widetilde{\mathcal{N}}\cup \widetilde{D}$, namely, $\widetilde{\sigma [a]}$ lifts to an element of $\SMp $ and $\SMb $ and the relation in Lemma~\ref{lift_half-twist} is also holds in $\SMp $ and $\SMb $. 
However, when $j=2n+2$, the relation $t_{\gamma ^{k}}t_{\gamma ^{k-1}}\cdots t_{\gamma ^2}=t_{\gamma ^{k-1}}t_{\gamma ^{k-2}}\cdots t_{\gamma ^1}$ does not hold in $\SMp $ and $\SMb $ and $t_{\gamma ^1}t_{\gamma ^2}\cdots t_{\gamma ^{k-1}}$ does not commute with $\zeta $. 
Recall that $\zeta ^\prime $ is a self-homeomorphism on $\Sigma _g$ which is described as a result of a $(-\frac{2\pi }{k})$-rotation of $\Sigma _g^1\subset \Sigma _g$ fixing the disk $\widetilde{D}$ pointwise (see Figure~\ref{fig_lift_t_partial-d}). 
As a corollary of Lemma~\ref{lift_half-twist}, we have the following corollary.  

\begin{cor}\label{cor_lift_half-twist}
Let $a$ be a simple arc on $\Sigma _0$ such that $\partial a=\{ p_i, p_j\} \subset \B$ with $j-i>0$ is even and the interior of the arc $a$ does not intersect with $\B $. 
Then, for $j\leq 2n+1$, a lift $\widetilde{\sigma [a]}$ of $\sigma [a]$ with respect to $p$ commutes with $\zeta $ (resp. $\zeta ^\prime $) in $\SMp $ (resp. $\SMb $), and for $j=2n+2$, $\widetilde{\sigma [a]}$ commutes with $\zeta $ in $\SM $.  
\end{cor}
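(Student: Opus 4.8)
The plan is to derive the corollary directly from Lemma~\ref{lift_half-twist} together with the conjugation computation carried out immediately before the statement, the only genuine work being to track in which of the groups $\SM$, $\SMp$, $\SMb$ the commutation relation survives. First I would note that since $j-i$ is even, the two points $p_i$ and $p_j$ have the same parity, so $a$ falls into exactly one of the two cases of Lemma~\ref{lift_half-twist}; in either case $\widetilde{\sigma [a]}$ has a representative supported in $\widetilde{\mathcal{N}}=p^{-1}(\mathcal{N})$ and equals a product of right-handed Dehn twists $t_{\gamma ^1}t_{\gamma ^2}\cdots t_{\gamma ^{k-1}}$ (read in one order or the other). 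The computation preceding the statement shows $\zeta \widetilde{\sigma [a]}\zeta ^{-1}=\widetilde{\sigma [a]}$ relative to $\partial \widetilde{\mathcal{N}}$, using $\zeta (\gamma ^l)=\gamma ^{l+1}$ (indices modulo $k$) and the conjugation identity for the cyclic product. Thus the whole issue is the position of $\widetilde{\mathcal{N}}$ relative to the disk $\widetilde{D}$.

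For $j\leq 2n+1$, neither endpoint of $a$ is $p_{2n+2}$, so by the standing assumption on $\mathcal{N}$ recorded in the setup I may take $\mathcal{N}$ disjoint from $D$; hence $\widetilde{\mathcal{N}}$ is disjoint from $\widetilde{D}$ and does not contain $\widetilde{p}_{2n+2}$. Consequently $\widetilde{\sigma [a]}$ is supported in $\Sigma _g^1$ away from $\widetilde{D}$, so it represents elements of both $\Mgp$ and $\Mgb$, and the relation $\zeta \widetilde{\sigma [a]}\zeta ^{-1}=\widetilde{\sigma [a]}$ holds relative to $\partial \widetilde{\mathcal{N}}\cup \widetilde{D}$, i.e. in $\SMp$. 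For the statement in $\SMb$ I would replace $\zeta$ by $\zeta ^\prime$ (the rotation fixing $\widetilde{D}$ pointwise, Figure~\ref{fig_lift_t_partial-d}): since $\zeta ^\prime$ acts as the balanced superelliptic rotation on the region $\Sigma _g^1\setminus \widetilde{D}\supset \widetilde{\mathcal{N}}$, one again has $\zeta ^\prime (\gamma ^l)=\gamma ^{l+1}$, so the identical computation yields $\zeta ^\prime \widetilde{\sigma [a]}(\zeta ^\prime )^{-1}=\widetilde{\sigma [a]}$ relative to $\partial \widetilde{\mathcal{N}}\cup \widetilde{D}$, hence in $\SMb$.

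For $j=2n+2$ the point $p_{2n+2}$ is an endpoint of $a$, so $\widetilde{\mathcal{N}}$ necessarily contains $\widetilde{D}$ and the fixed point $\widetilde{p}_{2n+2}$. The relation $\zeta \widetilde{\sigma [a]}\zeta ^{-1}=\widetilde{\sigma [a]}$ still holds relative to $\partial \widetilde{\mathcal{N}}$, and because $\zeta$ is a genuine self-homeomorphism of the closed surface $\Sigma _g$ with no boundary or marked-point constraint, this is already an equality in $\Mg$, hence in $\SM$. It does not descend to $\SMp$ or $\SMb$, precisely because $\widetilde{\sigma [a]}$ is no longer supported off $\widetilde{D}$; this is the content of the remark before the statement that $t_{\gamma ^1}t_{\gamma ^2}\cdots t_{\gamma ^{k-1}}$ fails to commute with $\zeta$ once $\widetilde{D}$ is kept fixed.

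The substantive input is entirely contained in Lemma~\ref{lift_half-twist} and the cyclic-conjugation identity, both already available, so I expect no serious obstacle. The only point requiring care is the substitution of $\zeta ^\prime$ for $\zeta$ in the $\SMb$ case: one must check that $\zeta ^\prime$ and $\zeta$ induce the same permutation $\gamma ^l\mapsto \gamma ^{l+1}$ of the twisting curves inside $\widetilde{\mathcal{N}}$, which holds because $\widetilde{\mathcal{N}}$ lies in the region where $\zeta ^\prime$ coincides, up to isotopy rel $\widetilde{D}$, with the rotation $\zeta$.
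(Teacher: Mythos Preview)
Your proposal is correct and follows essentially the same approach as the paper: the corollary is deduced from Lemma~\ref{lift_half-twist} together with the cyclic-conjugation computation displayed just before the statement, and the only remaining point is to track whether $\widetilde{\mathcal{N}}$ meets $\widetilde{D}$. If anything, you are slightly more explicit than the paper about why $\zeta$ may be replaced by $\zeta'$ in the $\SMb$ case (the paper leaves this implicit), and your observation that $\zeta'$ agrees with $\zeta$ on $\widetilde{\mathcal{N}}\subset \Sigma_g^1$ is exactly the justification needed. One very minor inaccuracy: for $j=2n+2$ you write that $\widetilde{\mathcal{N}}$ ``necessarily contains $\widetilde{D}$''; what is true and sufficient is that $\widetilde{\mathcal{N}}$ contains $\widetilde{p}_{2n+2}$, so the isotopy cannot be taken rel $\widetilde{D}$.
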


Let $\widetilde{l}_i^l$ for $1\leq i\leq 2n+1$ and $1\leq l\leq k$ be a lift of $l_i$ with respect to $p$ such that $\zeta (\widetilde{l}_i^l)=\widetilde{l}_i^{l+1}$ for  $1\leq l\leq k-1$ and $\zeta (\widetilde{l}_i^k)=\widetilde{l}_i^{1}$. 
We consider a homeomorphism of $\Sigma _g$ as in Figure~\ref{fig_isotopy_surface_3-handles} and identify $\Sigma _g$ with the surface as on the lower side in Figure~\ref{fig_isotopy_surface_3-handles}. 
Let $\alpha _{i}^{l}$ and $\beta _{i}^l$ for $1\leq i\leq n$ and $1\leq l\leq k-1$ be simple closed curves on $\Sigma _g$ as in Figures~\ref{fig_scc_a_il} and \ref{fig_scc_b_il}. 
Remark that $\alpha _{i}^{l}$ (resp. $\beta _{i}^{l}$) intersects with $\alpha _{i}^{l\pm 1}$, $\alpha _{i\pm 1}^{l}$, and $\alpha _{i\pm 1}^{l\mp 1}$ (resp. $\beta _{i}^{l\pm 1}$, $\beta _{i\pm 1}^{l}$, and $\beta _{i\pm 1}^{l\pm 1}$) at one point. %for $\beta _{i}^l$ for $1\leq i\leq n-1$ and $1\leq l\leq k-2$. 
%We define the right-handed Dehn twists $a_{i;l}=t_{\alpha _i^l}$ and $$ 
By applying an argument before Lemma~\ref{lift_half-twist} to the arc $a=\alpha _i$ or $a=\beta _i$ on $\Sigma _0$ and Lemma~\ref{lift_half-twist} (see also Figures~\ref{fig_scc_a_il} and \ref{fig_scc_b_il}), we have the following lemma.

\begin{figure}[h]
\includegraphics[scale=0.9]{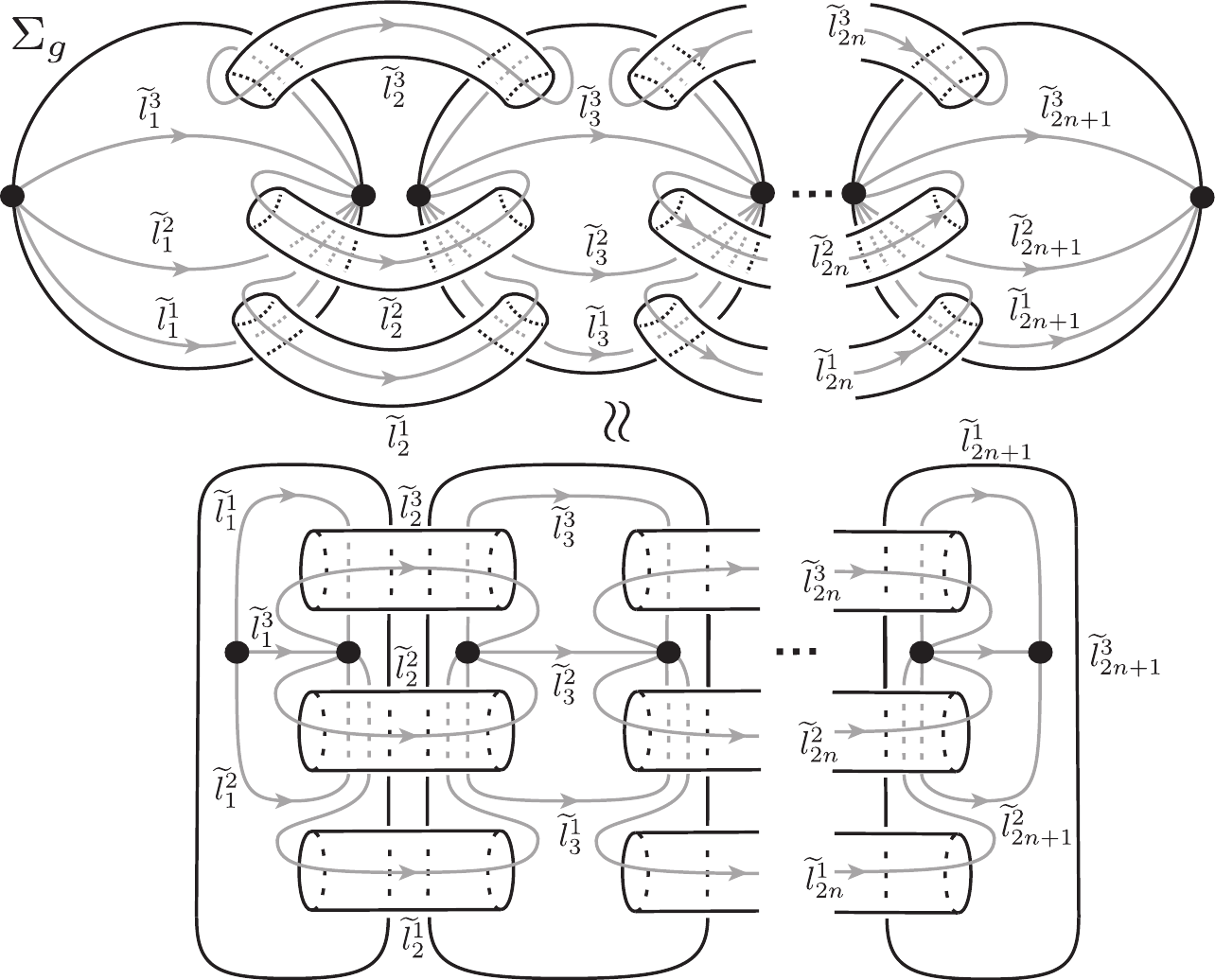}
\caption{A natural homeomorphism of $\Sigma _g$ when $k=3$.}\label{fig_isotopy_surface_3-handles}
\end{figure}

\begin{figure}[h]
\includegraphics[scale=0.90]{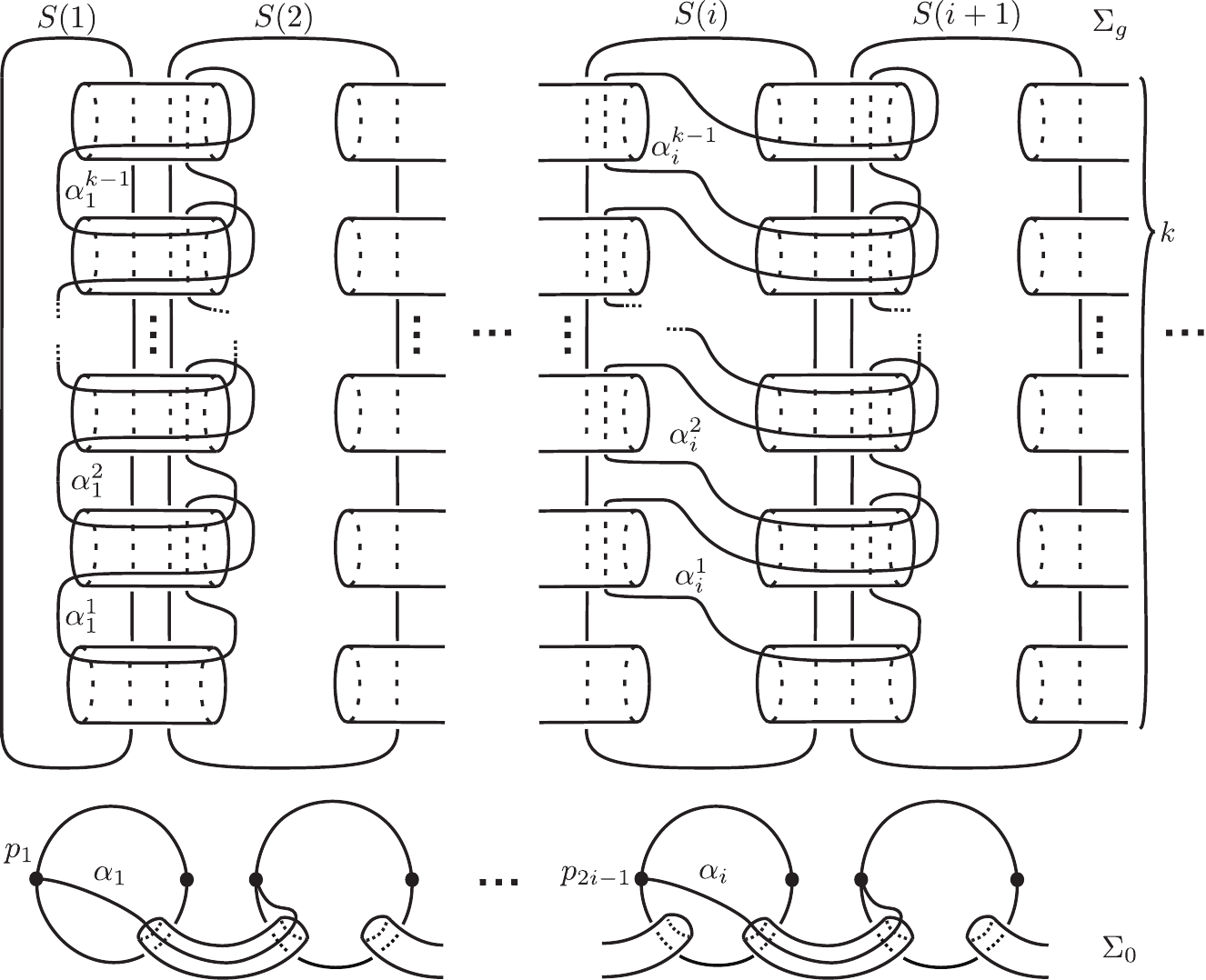}
\caption{Simple closed curves $\alpha _i^l$ on $\Sigma _g$ for $1\leq i\leq n$ and $1\leq l\leq k-1$.}\label{fig_scc_a_il}
\end{figure}

\begin{figure}[h]
\includegraphics[scale=0.90]{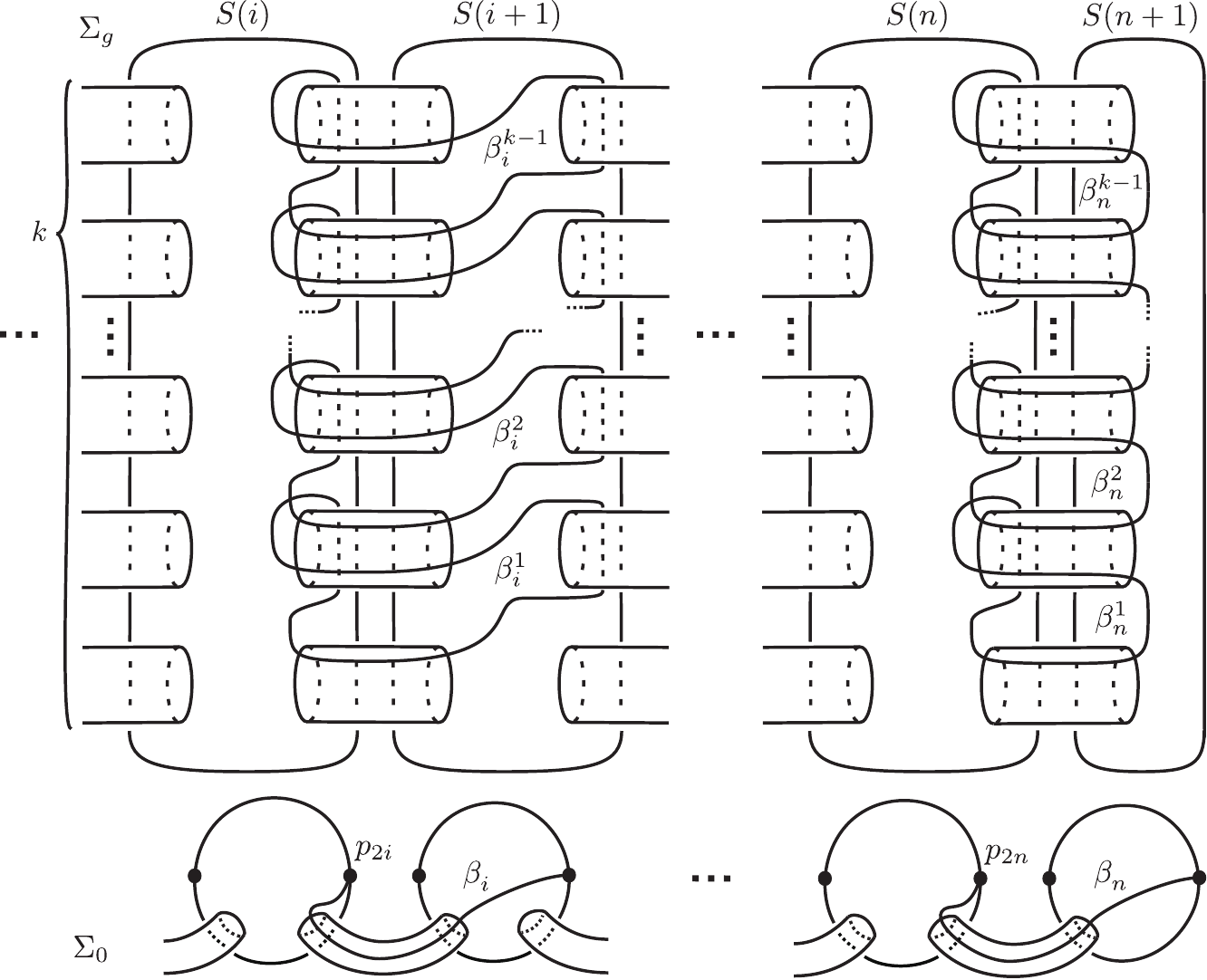}
\caption{Simple closed curves $\beta _i^l$ on $\Sigma _g$ for $1\leq i\leq n$ and $1\leq l\leq k-1$.}\label{fig_scc_b_il}
\end{figure}

\begin{lem}\label{lift-a_i}
The relations 
\begin{enumerate}
\item $\widetilde{a}_i=t_{\alpha _i^1}t_{\alpha _i^2}\cdots t_{\alpha _i^{k-1}}$\quad for $1\leq i\leq n$,
\item $\widetilde{b}_i=t_{\beta _i^{k-1}}t_{\beta _i^{k-2}}\cdots t_{\beta _i^1}$ \quad for $1\leq i\leq n-1$
\end{enumerate}
hold relative to $\partial \widetilde{\mathcal{N}}\cup \widetilde{D}$ for some lifts $\widetilde{a}_i$ and $\widetilde{b}_i$ of $a_i$ and $b_i$ with respect to $p$, respectively, and the relation
\[
\widetilde{b}_n=t_{\beta _n^{k-1}}t_{\beta _n^{k-2}}\cdots t_{\beta _n^1}
\] 
holds relative to $\partial \widetilde{\mathcal{N}}$ for some lift $\widetilde{b}_n$ of $b_n$ with respect to $p$.
\end{lem}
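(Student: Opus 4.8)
The plan is to derive this lemma as a direct application of Lemma~\ref{lift_half-twist} to the two specific arcs $\alpha_i$ and $\beta_i$, keeping careful track of the parity of the endpoints and of whether the relevant neighborhood meets the disk $\widetilde{D}$. Recall from Section~\ref{section_liftable-element} that $a_i=\sigma[\alpha_i]$, where $\alpha_i$ joins $p_{2i-1}$ and $p_{2i+1}$, and $b_i=\sigma[\beta_i]$, where $\beta_i$ joins $p_{2i}$ and $p_{2i+2}$. In the notation of Lemma~\ref{lift_half-twist}, for $a=\alpha_i$ we have $\partial a=\{p_{2i-1},p_{2i+1}\}$ with both indices \emph{odd} and $j-i=2$ even, while for $a=\beta_i$ we have $\partial a=\{p_{2i},p_{2i+2}\}$ with both indices \emph{even} and $j-i=2$ even. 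Thus each arc falls into exactly one of the two parity cases of Lemma~\ref{lift_half-twist}.

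First I would run the construction preceding Lemma~\ref{lift_half-twist} for $a=\alpha_i$: form the regular neighborhood $\mathcal{N}$ of $\alpha_i$, its preimage $\widetilde{\mathcal{N}}=p^{-1}(\mathcal{N})$, the lifts $\widetilde{a}^1,\dots,\widetilde{a}^k$ with $\zeta^l(\widetilde{a}^1)=\widetilde{a}^{l+1}$, and the curves $\gamma^l$ isotopic to $\widetilde{a}^l\cup\widetilde{a}^{l+1}$. The key identification step is to check, under the homeomorphism of $\Sigma_g$ fixed in Figure~\ref{fig_isotopy_surface_3-handles}, that these curves $\gamma^l$ coincide (up to isotopy) with the simple closed curves $\alpha_i^l$ drawn in Figure~\ref{fig_scc_a_il}. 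Since $\alpha_i$ has odd endpoints, the odd case of Lemma~\ref{lift_half-twist} then gives $\widetilde{a}_i=t_{\gamma^1}t_{\gamma^2}\cdots t_{\gamma^{k-1}}=t_{\alpha_i^1}t_{\alpha_i^2}\cdots t_{\alpha_i^{k-1}}$ relative to $\partial\widetilde{\mathcal{N}}$, which is the claimed increasing-order product. Running the identical argument for $a=\beta_i$, identifying $\gamma^l$ with $\beta_i^l$ via Figure~\ref{fig_scc_b_il}, the even case of Lemma~\ref{lift_half-twist} produces the reversed product $\widetilde{b}_i=t_{\gamma^{k-1}}\cdots t_{\gamma^1}=t_{\beta_i^{k-1}}t_{\beta_i^{k-2}}\cdots t_{\beta_i^1}$.

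It then remains to upgrade ``relative to $\partial\widetilde{\mathcal{N}}$'' to ``relative to $\partial\widetilde{\mathcal{N}}\cup\widetilde{D}$'' in the appropriate ranges, and here the index bookkeeping does all the work. For $a_i$ with $1\le i\le n$ the larger endpoint index is $2i+1\le 2n+1$, and for $b_i$ with $1\le i\le n-1$ it is $2i+2\le 2n$; in both cases $j\le 2n+1$, so by the discussion following Corollary~\ref{cor_lift_half-twist} the neighborhood $\widetilde{\mathcal{N}}$ is disjoint from $\widetilde{D}$ (it does not contain $\widetilde{p}_{2n+2}$), and the relations therefore hold relative to $\partial\widetilde{\mathcal{N}}\cup\widetilde{D}$. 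For $b_n$ the endpoints are $p_{2n}$ and $p_{2n+2}$, so $j=2n+2$; now $\widetilde{\mathcal{N}}$ contains $\widetilde{p}_{2n+2}$ and meets $\widetilde{D}$, so the relation is only valid relative to $\partial\widetilde{\mathcal{N}}$, exactly as stated.

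The only genuinely non-formal step, and thus the main obstacle, is the curve identification $\gamma^l=\alpha_i^l$ and $\gamma^l=\beta_i^l$: one must confirm that the abstractly constructed lift curves of the cut-and-paste description of $\widetilde{\mathcal{N}}$ match the explicitly drawn curves of Figures~\ref{fig_scc_a_il} and~\ref{fig_scc_b_il} under the chosen model of $\Sigma_g$ in Figure~\ref{fig_isotopy_surface_3-handles}. This is a picture-chasing verification—tracking how the bands over $\alpha_i$ and $\beta_i$ sit among the $k$ sheets and checking the cyclic $\zeta$-action $\zeta(\alpha_i^l)=\alpha_i^{l+1}$ (and likewise for $\beta_i^l$)—rather than a computation, so I would present it by direct reference to the figures and leave the routine isotopy checking implicit.
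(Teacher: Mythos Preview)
Your proposal is correct and matches the paper's approach exactly: the paper derives Lemma~\ref{lift-a_i} simply by applying Lemma~\ref{lift_half-twist} to the arcs $a=\alpha_i$ (odd endpoints) and $a=\beta_i$ (even endpoints) and invoking Figures~\ref{fig_scc_a_il} and~\ref{fig_scc_b_il} for the identification $\gamma^l=\alpha_i^l$, $\gamma^l=\beta_i^l$, with the $\widetilde{D}$-disjointness handled by the assumption (stated just before Lemma~\ref{lift_half-twist}) that $\mathcal{N}$ avoids $D$ whenever $p_{2n+2}\notin\partial a$. Your write-up is in fact more detailed than the paper's one-line justification.
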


By Lemma~\ref{lift-a_i}, we regard the relations $\widetilde{a}_i=t_{\alpha _i^1}t_{\alpha _i^2}\cdots t_{\alpha _i^{k-1}}$ for $1\leq i\leq n$ and $\widetilde{b}_i=t_{\beta _i^{k-1}}t_{\beta _i^{k-2}}\cdots t_{\beta _i^1}$ for $1\leq i\leq n-1$ as relations in $\SMp $ and $\SMb $, and also regard the relation $\widetilde{b}_n=t_{\beta _n^{k-1}}t_{\beta _n^{k-2}}\cdots t_{\beta _n^1}$ as a relation in $\SM $. 

A simple closed curve $\gamma $ on $\Sigma _0-\B $ \textit{lifts} with respect to $p$ if there exists a simple closed curve $\widetilde{\gamma }$ on $\Sigma _{g}-p^{-1}(\B )$ such that the restriction $p|_{\widetilde{\gamma }}\colon \widetilde{\gamma } \to \gamma $ is bijective. 
By Lemma~3.3 in \cite{Ghaswala-Winarski1}, a simple closed curve $\gamma $ on $\Sigma _0-\B $ lifts with respect to $p=p_{g,k}$ if and only if the algebraic intersection number of $\gamma $ and $l_1\cup l_3\cup \cdots \cup l_{2n+1}$ is zero mod $k$. 
Hence the simple closed curve $\gamma _{i,j}$ on $\Sigma _0$ (see Figure~\ref{fig_path_l}) for odd $j-i$ lifts with respect to $p$. 
For instance, $\gamma _{i,i+1}$ for $2\leq i\leq 2n$ transversely intersects with $l_{i-1}$ and $l_{i+1}$ at one point, respectively, is disjoint from other $l_j$ $(j\not= i-1, i+1)$, and the algebraic intersection number of $\gamma _{i,i+1}$ and $l_{i-1}\cup l_{i+1}$ is zero. 
Thus, a lift of $\gamma _{i,i+1}$ with respect to $p$ is the simple closed curve $\gamma _{i}^l$ on $\Sigma _g$ for $1\leq l\leq k$ as in Figure~\ref{fig_scc_c_il}. 
Similarly, since the simple closed curves $\gamma _{1,2}$ and $\gamma _{2n+1,2n+2}$ intersect with $l_2$ and $l_{2n}$, respectively, lifts of $\gamma _{1,2}$ and $\gamma _{2n+1,2n+2}$ with respect to $p$ are the simple closed curves $\gamma _{1}^l$ and $\gamma _{2n+1}^l$ on $\Sigma _g$ for $1\leq l\leq k$ as in Figure~\ref{fig_scc_c_il}. 
Remark that $\gamma _i^l$ for odd (resp. even) $i$ intersects with $\gamma _{i\pm 1}^{l+\varepsilon }$ for $\varepsilon =-1,\ 0$ (resp. $\varepsilon =0,\ 1$) at one point. 
We can take a lift of the Dehn twist along a liftable simple closed curve on $\Sigma _0$ by the product of the Dehn twists along all lifts of the simple closed curve. 
Thus, we have the following lemma. 

\begin{figure}[h]
\includegraphics[scale=0.90]{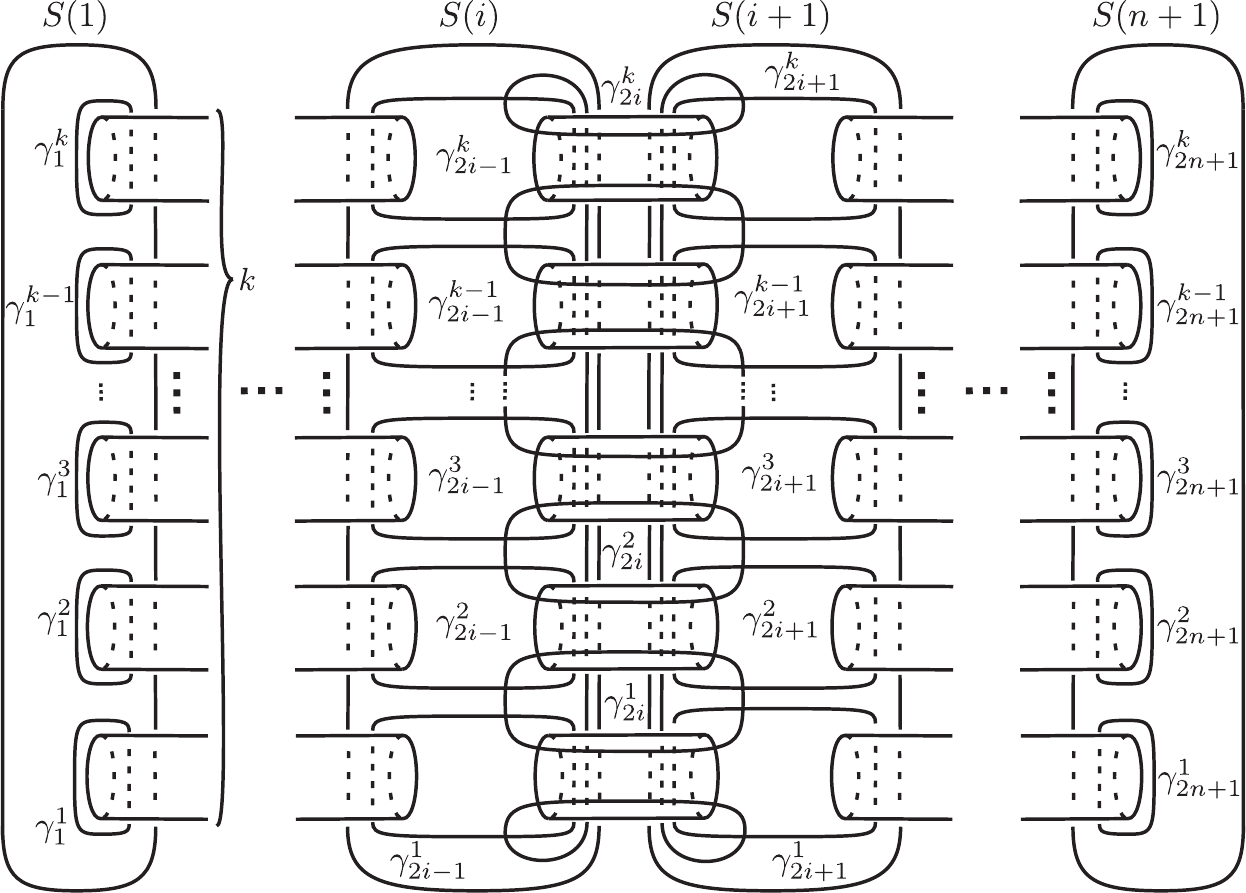}
\caption{Simple closed curves $\gamma _i^l$ on $\Sigma _g$ for $1\leq i\leq 2n+1$ and $1\leq l\leq k$.}\label{fig_scc_c_il}
\end{figure}

\begin{lem}\label{lift-t_{i,i+1}}
For $1\leq i\leq 2n+1$, the relation
\[
\widetilde{t}_{i,i+1}=t_{\gamma _i^1}t_{\gamma _i^2}\cdots t_{\gamma _i^{k}}
\] 
holds relative to $\widetilde{D}$ for some lift $\widetilde{t}_{i,i+1}$ of $t_{i,i+1}$ with respect to $p$.
\end{lem}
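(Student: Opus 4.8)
The plan is to mimic the argument for Lemma~\ref{lift_half-twist}, but the situation here is genuinely simpler: the curve $\gamma _{i,i+1}$ is a simple closed curve disjoint from $\B $, so a neighborhood supporting $t_{i,i+1}$ avoids the branch locus entirely and the relevant covering is an honest (unbranched) one. First I would record that $\gamma _{i,i+1}$ lifts with respect to $p=p_{g,k}$: this is exactly the odd case $j-i=1$ of the discussion preceding the lemma, which invokes the intersection-number criterion (Lemma~3.3 of~\cite{Ghaswala-Winarski1}), since $\gamma _{i,i+1}$ meets $l_1\cup l_3\cup \cdots \cup l_{2n+1}$ algebraically zero times. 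Consequently the preimage $p^{-1}(\gamma _{i,i+1})$ is the $\left< \zeta \right>$-orbit of a single homeomorphic lift, and hence consists of exactly the $k$ disjoint simple closed curves $\gamma _i^1,\ \gamma _i^2,\ \dots ,\ \gamma _i^{k}$ drawn in Figure~\ref{fig_scc_c_il}, each mapped homeomorphically onto $\gamma _{i,i+1}$ by $p$.

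Next I would build the lift explicitly. Choose an annular neighborhood $A$ of $\gamma _{i,i+1}$ in $\Sigma _0-(\B \cup D)$ on which $t_{i,i+1}$ is supported. Since $\gamma _{i,i+1}$ lifts and $A$ deformation retracts onto it, the restriction of $p$ to $p^{-1}(A)$ is a trivial $k$-fold covering onto $A$, so $p^{-1}(A)=A^1\sqcup A^2\sqcup \cdots \sqcup A^{k}$ is a disjoint union of annuli with $\gamma _i^l\subset A^l$ and $p|_{A^l}\colon A^l\to A$ a homeomorphism. Lifting the twisting isotopy that defines $t_{i,i+1}$ to an isotopy supported in $p^{-1}(A)$, its restriction to each $A^l$ is a Dehn twist along $\gamma _i^l$; because $\zeta $ is orientation-preserving each $p|_{A^l}$ preserves orientation, so handedness is preserved and the restriction is the right-handed twist $t_{\gamma _i^l}$. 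As the supports $A^1,\dots ,A^{k}$ are mutually disjoint, the twists commute and the resulting self-homeomorphism of $\Sigma _g$ is $t_{\gamma _i^1}t_{\gamma _i^2}\cdots t_{\gamma _i^{k}}$, which is by construction a lift $\widetilde{t}_{i,i+1}$ of $t_{i,i+1}$ with respect to $p$.

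Finally, since $A$ was chosen disjoint from $D$, the preimage $p^{-1}(A)$ is disjoint from $\widetilde{D}=p^{-1}(D)$; thus $\widetilde{t}_{i,i+1}$ is the identity on $\widetilde{D}$ and the asserted identity holds relative to $\widetilde{D}$. I expect no serious obstacle here: the one point that needs care is the triviality of the covering $p^{-1}(A)\to A$ together with the identification of its components with the specific curves $\gamma _i^1,\dots ,\gamma _i^{k}$ of Figure~\ref{fig_scc_c_il}, which is precisely where the liftability criterion and the explicit picture of the branched cover (as $k$ sheets glued along $l_1\cup l_3\cup \cdots \cup l_{2n+1}$) are used. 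Everything else is the standard fact that a Dehn twist lifts to the product of the Dehn twists along the components of the preimage curve across an unbranched covering.
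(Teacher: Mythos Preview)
Your proposal is correct and follows exactly the approach the paper takes: the paper states just before the lemma that ``we can take a lift of the Dehn twist along a liftable simple closed curve on $\Sigma _0$ by the product of the Dehn twists along all lifts of the simple closed curve,'' and then asserts the lemma without further proof. You have simply filled in the standard details (trivial covering over an annular neighborhood, preservation of handedness, disjointness from $\widetilde{D}$) behind that sentence.
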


By Lemma~\ref{lift-t_{i,i+1}}, we regard the relation $\widetilde{t}_{i,i+1}=t_{\gamma _i^1}t_{\gamma _i^2}\cdots t_{\gamma _i^{k}}$ for $1\leq i\leq 2n+1$ as a relation in $\SMp \subset \SM $ and $\SMb $. 
Since the homeomorphisms $\zeta $ and $\zeta ^\prime $ preserve the set $\gamma _i^1\cup \gamma _i^2\cup \cdots \cup \gamma _i^k$, we have the following corollary. 

\begin{cor}\label{cor_lift-t_{i,i+1}}
For $1\leq i\leq 2n+1$, $\widetilde{t}_{i,i+1}$ commutes with $\zeta $ (resp. $\zeta ^\prime $) in $\SMp $ (resp. $\SMb $). 
\end{cor}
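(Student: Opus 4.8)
The plan is to read off the corollary directly from Lemma~\ref{lift-t_{i,i+1}}, combined with the conjugation relation $\psi t_\gamma \psi^{-1}=t_{\psi(\gamma)}$ for a self-homeomorphism $\psi$ (recalled in Section~\ref{section_relations_liftable-elements}). First I would invoke Lemma~\ref{lift-t_{i,i+1}} to write $\widetilde{t}_{i,i+1}=t_{\gamma_i^1}t_{\gamma_i^2}\cdots t_{\gamma_i^{k}}$ relative to $\widetilde{D}$, regarding this product as an element of $\SMp\subset\SM$ and of $\SMb$.

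The key observation is that the curves $\gamma_i^1,\dots,\gamma_i^{k}$ are pairwise disjoint, being the $k$ distinct lifts of the single simple closed curve $\gamma_{i,i+1}$ with respect to $p$; hence the Dehn twists $t_{\gamma_i^l}$ commute with one another, and the product $\widetilde{t}_{i,i+1}$ coincides with the multitwist along the union $\gamma_i^1\cup\cdots\cup\gamma_i^{k}$, independent of the ordering of its factors. I would then apply the conjugation relation factor by factor to obtain $\zeta\,\widetilde{t}_{i,i+1}\,\zeta^{-1}=t_{\zeta(\gamma_i^1)}t_{\zeta(\gamma_i^2)}\cdots t_{\zeta(\gamma_i^{k})}$, and similarly with $\zeta^\prime$. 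Since $\zeta$ (resp. $\zeta^\prime$) preserves the set $\{\gamma_i^1,\dots,\gamma_i^{k}\}$ setwise, as already noted just before the corollary from the fact that it preserves their union, conjugation merely permutes the commuting factors and leaves the product unchanged. This yields $\zeta\,\widetilde{t}_{i,i+1}=\widetilde{t}_{i,i+1}\,\zeta$ in $\SMp$ and $\zeta^\prime\,\widetilde{t}_{i,i+1}=\widetilde{t}_{i,i+1}\,\zeta^\prime$ in $\SMb$, which is exactly the assertion.

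The only points requiring care, and the closest thing to an obstacle here, are purely topological: confirming that distinct lifts of $\gamma_{i,i+1}$ are genuinely disjoint (so that the twists commute and the product is a well-defined multitwist) and that $\zeta,\zeta^\prime$ permute them, including the boundary cases $i\in\{1,2n+1\}$ where $\gamma_1^l$ and $\gamma_{2n+1}^l$ lie near the disk $\widetilde{D}$. Both facts are displayed in Figure~\ref{fig_scc_c_il} and are recorded in the sentence preceding the corollary, so no genuinely new work is needed beyond citing them; the argument is then a one-line permutation-invariance computation.
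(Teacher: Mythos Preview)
Your proof is correct and is exactly the approach the paper takes: the single sentence preceding the corollary observes that $\zeta$ and $\zeta^\prime$ preserve the set $\gamma_i^1\cup\cdots\cup\gamma_i^k$, and your argument spells out why this (together with the disjointness of the lifts and the conjugation relation) gives the commutativity. No additional ideas are needed.
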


By using Lemma~\ref{lift-a_i} and \ref{lift-t_{i,i+1}}, we have the following lemma. 

\begin{lem}\label{lift-h_i}
The relations 
\begin{enumerate}
\item $\widetilde{h}_i=t_{\gamma _i^1}t_{\gamma _{i+1}^1}t_{\gamma _i^2}t_{\gamma _{i+1}^2}\cdots t_{\gamma _i^{k-1}}t_{\gamma _{i+1}^{k-1}}t_{\gamma _i^{k}}$\quad for odd $1\leq i\leq 2n-1$,
\item $\widetilde{h}_i=t_{\gamma _i^{k}}t_{\gamma _{i+1}^{k}}t_{\gamma _i^{k-1}}t_{\gamma _{i+1}^{k-1}}\cdots t_{\gamma _i^{2}}t_{\gamma _{i+1}^{2}}t_{\gamma _i^{1}}$\quad for even $2\leq i\leq 2n-2$
\end{enumerate}
hold relative to $\widetilde{D}$ for some lift $\widetilde{h}_i$ of $h_i$ with respect to $p$, and the relation
\[
\widetilde{h}_{2n}=t_{\gamma _{2n}^{k}}t_{\gamma _{2n+1}^{k}}t_{\gamma _{2n}^{k-1}}t_{\gamma _{2n+1}^{k-1}}\cdots t_{\gamma _{2n}^{2}}t_{\gamma _{2n+1}^{2}}t_{\gamma _{2n}^{1}}
\] 
holds for some lift $\widetilde{h}_{2n}$ of $h_{2n}$ with respect to $p$.
\end{lem}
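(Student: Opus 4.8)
The plan is to reduce the computation of a lift $\widetilde{h}_i$ to the already-established lifts of $a_i$, $b_i$, and $t_{j,j+1}$ from Lemmas~\ref{lift-a_i} and~\ref{lift-t_{i,i+1}}, using the factorization $h_i = \sigma_i\sigma_{i+1}\sigma_i$ recorded in Section~\ref{section_liftable-element}. The key observation is that $h_i$ admits expressions as products of half-twists along arcs with endpoints in a single parity class of $\B$, so that each factor has a known lift. Concretely, I would first rewrite $h_i$ in terms of the generators whose lifts are controlled: recall $a_i=\sigma[\alpha_i]$, $b_i=\sigma[\beta_i]$, and that $t_{i,i+1}$ lifts as a product of Dehn twists along the curves $\gamma_i^l$. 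Since $\Psi(h_i)=(i\ i+2)$ and $h_i$ is a half-rotation along $l_i\cup l_{i+1}$ (Figure~\ref{fig_h_i}), $h_i$ is itself a half-twist along an arc $a$ with $\partial a=\{p_i,p_{i+2}\}$, i.e. with endpoints of the same parity (both odd when $i$ is odd, both even when $i$ is even). This places $h_i$ exactly in the setting of Lemma~\ref{lift_half-twist}.

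The main steps, in order, are as follows. First I would identify the arc $a=a_i'$ realizing $h_i$ as $\sigma[a_i']$, where $\partial a_i'=\{p_i,p_{i+2}\}$, and locate the regular neighborhood $\mathcal{N}$ together with its preimage $\widetilde{\mathcal{N}}=p^{-1}(\mathcal{N})$ as in the construction preceding Lemma~\ref{lift_half-twist}. Second, I would determine the lifts $\widetilde{a_i'}^{\,l}$ of $a_i'$ and the associated simple closed curves $\gamma^l$ on $\widetilde{\mathcal{N}}$, and then apply Lemma~\ref{lift_half-twist} directly. The parity of $i$ dictates whether $\zeta$ acts by counterclockwise or clockwise rotation on the relevant disk neighborhoods, which is precisely what determines the \emph{ordering} of the Dehn twist factors in the two cases of Lemma~\ref{lift_half-twist}. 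Third—and this is where the explicit curves $\gamma_i^l$ and $\gamma_{i+1}^l$ of Figure~\ref{fig_scc_c_il} enter—I would verify that the simple closed curves $\gamma^l$ arising on $\widetilde{\mathcal{N}}$ for the arc $a_i'$ are isotopic in $\Sigma_g$ to the curves appearing in the claimed formula, i.e. that the lift of $\sigma[a_i']$ is the interleaved product $t_{\gamma_i^1}t_{\gamma_{i+1}^1}\cdots$ (for odd $i$) or its reverse-ordered analogue (for even $i$). The interleaving reflects that $h_i$ drags $l_i\cup l_{i+1}$ past both branch points $p_i$ and $p_{i+2}$, so the lift picks up Dehn twists along both families $\{\gamma_i^l\}$ and $\{\gamma_{i+1}^l\}$ in alternation.

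I expect the main obstacle to be the bookkeeping in this third step: matching the abstract curves $\gamma^l$ produced by the cut-and-paste construction of $\widetilde{\mathcal{N}}$ against the concretely-drawn $\gamma_i^l$, $\gamma_{i+1}^l$, and getting both the \emph{labels} $l$ and the \emph{order} of the twists exactly right, including the boundary cases $i=2n-1$ (odd, treated relative to $\widetilde{D}$) versus $i=2n$ (where the formula holds only in $\SM$, not relative to $\widetilde{D}$, since $\gamma_{2n+1}$ touches the region near $\widetilde{p}_{2n+2}$). The distinction between "relative to $\widetilde{D}$" and "in $\SM$" parallels exactly the distinction in Corollary~\ref{cor_lift_half-twist} between $j\le 2n+1$ and $j=2n+2$, and I would handle it by noting that for $i\le 2n-1$ the arc $a_i'$ has both endpoints $\le 2n+1$ so $\widetilde{\mathcal{N}}$ avoids $\widetilde{D}$, whereas for $i=2n$ one endpoint is $p_{2n+2}$ and the relation only survives after forgetting the boundary. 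The remainder is a direct citation of Lemmas~\ref{lift-a_i} and~\ref{lift-t_{i,i+1}} to confirm the twist curves coincide with those already named, so no genuinely new intersection-theoretic input is required beyond a careful reading of Figure~\ref{fig_scc_c_il}.
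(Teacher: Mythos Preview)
Your central claim---that $h_i$ is a half-twist $\sigma[a_i']$ along a simple arc $a_i'$ with $\partial a_i'=\{p_i,p_{i+2}\}$ and interior disjoint from $\B$---is false, and this is what makes the plan break down. The element $h_i$ is defined as the half-rotation of a disk containing \emph{three} marked points $p_i,p_{i+1},p_{i+2}$; in braid terms it is $\sigma_i\sigma_{i+1}\sigma_i$, not a conjugate of a single $\sigma_j$. A half-twist along an arc from $p_i$ to $p_{i+2}$ avoiding $p_{i+1}$ is precisely one of the elements $a_{(i+1)/2}$ or $b_{i/2}$ already introduced (e.g.\ $a_i=\sigma_{2i}\sigma_{2i-1}\sigma_{2i}^{-1}$), and these are \emph{not} equal to $h_{2i-1}$ or $h_{2i}$. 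A quick count confirms this: Lemma~\ref{lift_half-twist} produces exactly $k-1$ Dehn twist factors, whereas the formula you are trying to prove has $2k-1$ factors. Your step~3, which proposes to identify the $k-1$ curves $\gamma^l$ with the $2k-1$ curves $\gamma_i^l,\gamma_{i+1}^l$ appearing in the interleaved product, therefore cannot succeed.

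The paper's proof supplies the missing ingredient: one first checks (via Figure~\ref{fig_proof_lift-h_i}) the factorization $h_{2i-1}=t_{2i-1,2i}\,a_i$ and $h_{2i}=t_{2i,2i+1}\,b_i$ in $\LMb$ (with the $i=n$ case of the latter only in $\LM$). This writes $h_i$ as a product of a Dehn twist and a genuine half-twist along an arc with interior in $\Sigma_0-\B$, so Lemmas~\ref{lift-a_i} and~\ref{lift-t_{i,i+1}} give explicit lifts of the two factors ($k$ twists for $\widetilde{t}_{i,i+1}$, $k-1$ twists for $\widetilde{a}_i$ or $\widetilde{b}_i$). The interleaved formula then falls out of a short computation using the commutation $t_{\gamma_{2i-1}^{l'}}\rightleftarrows t_{\alpha_i^l}$ for $l'\geq l+2$ and the conjugation $t_{\gamma_{2i-1}^{l+1}}t_{\alpha_i^l}=t_{\gamma_{2i}^l}t_{\gamma_{2i-1}^{l+1}}$. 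So the bookkeeping you anticipated in step~3 is real, but it has to be done on the product $\widetilde{t}_{i,i+1}\widetilde{a}_{(i+1)/2}$ (resp.\ $\widetilde{t}_{i,i+1}\widetilde{b}_{i/2}$), not on a single application of Lemma~\ref{lift_half-twist}.
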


\begin{proof}
Since the isotopy class of a homeomorphism on $\Sigma _0$ relative to $\B $ (resp. $\B \cup D$) is determined by the isotopy class of the image of $L$ relative to $\B $ (resp. $\B \cup D$), we have $h_{2i-1}=t_{2i-1, 2i}a_i$ for $1\leq i\leq n$ and $h_{2i}=t_{2i, 2i+1}b_i$ for $1\leq i\leq n-1$ in $\LMp $ and $\LMb $, and $h_{2n}=t_{2n, 2n+1}b_n$ in $\LM $ by Figure~\ref{fig_proof_lift-h_i}. 
Thus the product $\widetilde{t}_{2i-1, 2i}\widetilde{a}_i$ (resp. $\widetilde{t}_{2i, 2i+1}\widetilde{b}_i$) is a lift of $h_{2i-1}$ (resp. $h_{2i}$) with respect to $p$. 
Since the products $t_{\alpha _i^1}t_{\alpha _i^2}\cdots t_{\alpha _i^{k-1}}$, $t_{\beta _i^{k-1}}t_{\beta _i^{k-2}}\cdots t_{\beta _i^1}$ for $1\leq i\leq n$ for $1\leq i\leq n$, and $t_{\gamma _j^1}t_{\gamma _j^2}\cdots t_{\gamma _j^{k}}$ for $1\leq j\leq 2n+1$ are lifts of $a_i$, $b_i$, and $t_{j,j+1}$ by Lemmas~\ref{lift-a_i} and \ref{lift-t_{i,i+1}}, respectively, we denote $\widetilde{a}_i=t_{\alpha _i^1}t_{\alpha _i^2}\cdots t_{\alpha _i^{k-1}}$, $\widetilde{b}_i=t_{\beta _i^{k-1}}t_{\beta _i^{k-2}}\cdots t_{\beta _i^1}$ for $1\leq i\leq n$, and $\widetilde{t}_{i,i+1}=t_{\gamma _i^1}t_{\gamma _i^2}\cdots t_{\gamma _i^{k}}$ for $1\leq i\leq 2n+1$. 
For $1\leq i\leq n$, $t_{\alpha _{i}^l}$ commutes with $t_{\gamma _{2i-1}^{l^\prime }}$ for $1\leq l\leq k-2$ and $l+2\leq l^\prime \leq k$, and the relation $t_{\gamma _{2i-1}^{l+1}}t_{\alpha _{i}^l}=t_{\gamma _{2i}^{l}}t_{\gamma _{2i-1}^{l+1}}$ holds for $1\leq l\leq k-1$. 
Hence, for $1\leq i\leq n$, we have
\begin{eqnarray*}
\widetilde{h}_{2i-1}&=&\widetilde{t}_{2i-1, 2i}\widetilde{a}_i\\
&=&t_{\gamma _{2i-1}^1}t_{\gamma _{2i-1}^2}\cdots t_{\gamma _{2i-1}^{k}}\cdot \underset{\leftarrow }{\underline{t_{\alpha _i^1}}}\cdots \underset{\leftarrow }{\underline{t_{\alpha _i^{k-2}}}}t_{\alpha _i^{k-1}}\\
&\overset{\text{COMM}}{\underset{}{=}}&t_{\gamma _{2i-1}^1}\underline{t_{\gamma _{2i-1}^2}t_{\alpha _i^1}}\ \underline{t_{\gamma _{2i-1}^3}t_{\alpha _i^2}}\cdots \underline{t_{\gamma _{2i-1}^{k}}t_{\alpha _i^{k-1}}}\\
&\overset{\text{CONJ}}{\underset{}{=}}&t_{\gamma _{2i-1}^1}t_{\gamma _{2i}^{1}}t_{\gamma _{2i-1}^2}t_{\gamma _{2i}^{2}}t_{\gamma _{2i-1}^3}\cdots t_{\gamma _{2i}^{k-1}}t_{\gamma _{2i-1}^{k}}.
\end{eqnarray*}
Thus we have completed the proof of Lemma~\ref{lift-h_i} for odd $1\leq i\leq 2n-1$. 

Similarly, for $1\leq i\leq n$, $t_{\beta _{i}^l}$ commutes with $t_{\gamma _{2i}^{l^\prime }}$ for $2\leq l\leq k-1$ and $1\leq l^\prime \leq l-1$, and the relation $t_{\gamma _{2i}^{l}}t_{\beta _{i}^l}=t_{\gamma _{2i+1}^{l+1}}t_{\gamma _{2i}^{l}}$ holds for $1\leq l\leq k-1$. 
Hence, for $1\leq i\leq n$, we have
\begin{eqnarray*}
\widetilde{h}_{2i}&=&\widetilde{t}_{2i, 2i+1}\widetilde{b}_i\\
&=&t_{\gamma _{2i}^{k}}t_{\gamma _{2i}^{k-1}}\cdots t_{\gamma _{2i}^1}\cdot \underset{\leftarrow }{\underline{t_{\beta _i^{k-1}}}}\cdots \underset{\leftarrow }{\underline{t_{\beta _i^2}}}t_{\beta _i^1}\\
&\overset{\text{COMM}}{\underset{}{=}}&t_{\gamma _{2i}^{k}}\underline{t_{\gamma _{2i}^{k-1}}t_{\beta _i^{k-1}}}\cdots \underline{t_{\gamma _{2i}^2}t_{\beta _i^2}}\ \underline{t_{\gamma _{2i}^1}t_{\beta _i^1}}\\
&\overset{\text{CONJ}}{\underset{}{=}}&t_{\gamma _{2i}^{k}}t_{\gamma _{2i+1}^{k}}t_{\gamma _{2i}^{k-1}}\cdots t_{\gamma _{2i+1}^3}t_{\gamma _{2i}^2}t_{\gamma _{2i+1}^2}t_{\gamma _{2i}^1}. 
\end{eqnarray*}
Therefore we have completed the proof of Lemma~\ref{lift-h_i}. 
\end{proof}

\begin{figure}[h]
\includegraphics[scale=0.95]{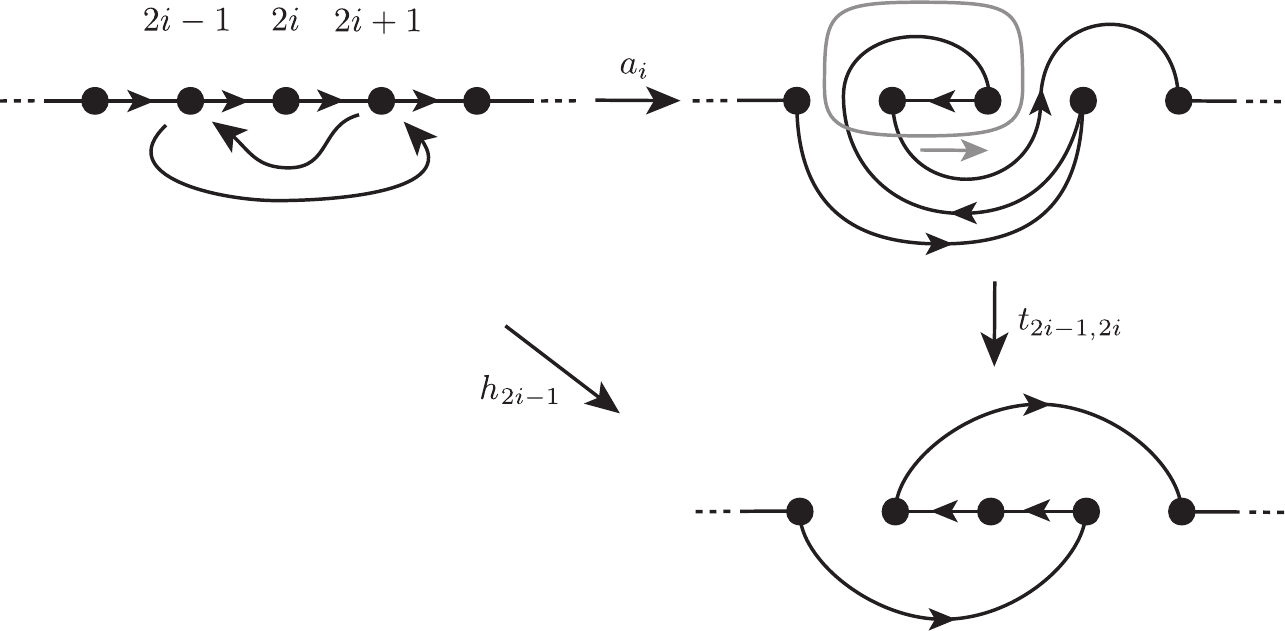}
\caption{The relation $h_{2i-1}=t_{2i-1, 2i}a_i$ for $1\leq i\leq n$ in $\LM $. Similarly, we have the relation $h_{2i}=t_{2i, 2i+1}b_i$ for $1\leq i\leq n$ in $\LM $. }\label{fig_proof_lift-h_i}
\end{figure}

\if0
Since the relations
\[
t_{\gamma _i^1}t_{\gamma _{i+1}^1}t_{\gamma _i^2}t_{\gamma _{i+1}^2}\cdots t_{\gamma _i^{k-1}}t_{\gamma _{i+1}^{k-1}}t_{\gamma _i^{k}}=t_{\gamma _i^{2}}t_{\gamma _{i+1}^{2}}t_{\gamma _i^3}t_{\gamma _{i+1}^3}\cdots t_{\gamma _i^{k}}t_{\gamma _{i+1}^{k}}t_{\gamma _i^{1}}
\] 
for odd $1\leq i\leq 2n-1$ and 
\[
t_{\gamma _i^{k}}t_{\gamma _{i+1}^{k}}t_{\gamma _i^{k-1}}t_{\gamma _{i+1}^{k-1}}\cdots t_{\gamma _i^{2}}t_{\gamma _{i+1}^{2}}t_{\gamma _i^{1}}=t_{\gamma _i^{1}}t_{\gamma _{i+1}^{1}}t_{\gamma _i^{k}}t_{\gamma _{i+1}^{k}}\cdots t_{\gamma _i^{3}}t_{\gamma _{i+1}^{3}}t_{\gamma _i^{2}}
\]
for even $2\leq i\leq 2n-2$ hold in $\SMp $ and $\SMb $, and the relation $t_{\gamma _{2n}^{k}}t_{\gamma _{2n+1}^{k}}t_{\gamma _{2n}^{k-1}}t_{\gamma _{2n+1}^{k-1}}\cdots t_{\gamma _{2n}^{2}}t_{\gamma _{2n+1}^{2}}t_{\gamma _{2n}^{1}}=t_{\gamma _{2n}^{1}}t_{\gamma _{2n+1}^{1}}t_{\gamma _{2n}^{k}}t_{\gamma _{2n+1}^{k}}\cdots t_{\gamma _{2n}^{3}}t_{\gamma _{2n+1}^{3}}t_{\gamma _{2n}^{2}}
$ holds in $\SM $ by the conjugation relations, 
\fi

By Corollaries~\ref{cor_lift_half-twist} and \ref{cor_lift-t_{i,i+1}}, and the fact that $\widetilde{h}_{2i-1}=\widetilde{t}_{2i-1, 2i}\widetilde{a}_i$ and $\widetilde{h}_{2i}=\widetilde{t}_{2i, 2i+1}\widetilde{b}_i$ for $1\leq i\leq n$ in the proof of Lemma~\ref{lift-h_i}, we have the following corollary. 

\begin{cor}\label{cor_lift-h_i}
For $1\leq i\leq 2n-1$, $\widetilde{h}_{i}$ commutes with $\zeta $ (resp. $\zeta ^\prime $) in $\SMp $ (resp. $\SMb $). 
$\widetilde{h}_{2n}$ commutes with $\zeta $ in $\SM $. 
\end{cor}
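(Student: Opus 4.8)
The plan is to exploit the factorizations of the chosen lifts $\widetilde{h}_i$ recorded in the proof of Lemma~\ref{lift-h_i}, namely $\widetilde{h}_{2j-1}=\widetilde{t}_{2j-1,2j}\widetilde{a}_j$ and $\widetilde{h}_{2j}=\widetilde{t}_{2j,2j+1}\widetilde{b}_j$ for $1\leq j\leq n$, and to show that each factor already commutes with $\zeta$ (resp. $\zeta^\prime$) in the relevant group. The elementary observation underlying the whole argument is that if two elements $x$ and $y$ each commute with $\zeta$, then so does their product, since $\zeta xy=x\zeta y=xy\zeta$; the same holds verbatim with $\zeta^\prime$ in place of $\zeta$. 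Thus it suffices to treat the two factors separately.

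First I would handle the indices $1\leq i\leq 2n-1$ in $\SMp$ and $\SMb$. For an odd index $i=2j-1$ with $1\leq j\leq n$, the factor $\widetilde{t}_{2j-1,2j}$ commutes with $\zeta$ (resp. $\zeta^\prime$) in $\SMp$ (resp. $\SMb$) by Corollary~\ref{cor_lift-t_{i,i+1}}, while $\widetilde{a}_j$ is the lift of the half-twist along the arc $\alpha_j$ joining the odd-labeled points $p_{2j-1}$ and $p_{2j+1}$; since $2j+1\leq 2n+1$, Corollary~\ref{cor_lift_half-twist} applies and $\widetilde{a}_j$ commutes with $\zeta$ (resp. $\zeta^\prime$) in $\SMp$ (resp. $\SMb$). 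For an even index $i=2j$ with $1\leq j\leq n-1$, the factor $\widetilde{t}_{2j,2j+1}$ is treated again by Corollary~\ref{cor_lift-t_{i,i+1}}, and $\widetilde{b}_j$ is the lift of the half-twist along $\beta_j$ joining the even-labeled points $p_{2j}$ and $p_{2j+2}$ with $2j+2\leq 2n$, so Corollary~\ref{cor_lift_half-twist} again gives the desired commutativity. Combining the two factors through the elementary observation yields that $\widetilde{h}_i$ commutes with $\zeta$ (resp. $\zeta^\prime$) in $\SMp$ (resp. $\SMb$) for all $1\leq i\leq 2n-1$.

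It remains to treat $\widetilde{h}_{2n}=\widetilde{t}_{2n,2n+1}\widetilde{b}_n$ in $\SM$. Here $\widetilde{t}_{2n,2n+1}$ lies in $\SMp\subset\SM$ and commutes with $\zeta$ there, hence in $\SM$, by Corollary~\ref{cor_lift-t_{i,i+1}}, whereas $\widetilde{b}_n$ is the lift of the half-twist along $\beta_n$, whose endpoints are $p_{2n}$ and $p_{2n+2}$; this is exactly the boundary case $j=2n+2$ of Corollary~\ref{cor_lift_half-twist}, which guarantees commutation with $\zeta$ only in $\SM$. Applying the elementary observation once more gives that $\widetilde{h}_{2n}$ commutes with $\zeta$ in $\SM$.

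The main obstacle---really a matter of care rather than depth---will be to keep straight in which of the three groups $\SMp$, $\SMb$, $\SM$ each factor commutes with the rotation, and in particular why the index $i=2n$ must be segregated: the presence of $p_{2n+2}$ in $\partial\beta_n$ forces $\widetilde{b}_n$, and therefore $\widetilde{h}_{2n}$, to be defined only relative to $\partial\widetilde{\mathcal{N}}$ rather than relative to $\widetilde{D}$, so that its commutation with $\zeta$ holds in $\SM$ but not in $\SMp$ or $\SMb$. Once the index constraints $2j+1\leq 2n+1$ and $2j+2\leq 2n$ are verified so that Corollary~\ref{cor_lift_half-twist} applies in its non-boundary form, the conclusion follows immediately from the product rule for commuting elements.
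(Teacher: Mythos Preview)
Your proof is correct and follows essentially the same approach as the paper: both use the factorizations $\widetilde{h}_{2j-1}=\widetilde{t}_{2j-1,2j}\widetilde{a}_j$ and $\widetilde{h}_{2j}=\widetilde{t}_{2j,2j+1}\widetilde{b}_j$ from the proof of Lemma~\ref{lift-h_i}, together with Corollaries~\ref{cor_lift_half-twist} and~\ref{cor_lift-t_{i,i+1}}, to conclude that each factor commutes with $\zeta$ (resp.\ $\zeta^\prime$). Your careful separation of the index $i=2n$ and the explanation of why the endpoint $p_{2n+2}$ forces the weaker conclusion in $\SM$ matches the paper's reasoning exactly.
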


Let $\widetilde{r}$ be a homeomorphism on $\Sigma _g$ which satisfies $\widetilde{r}(\widetilde{l}_{2i-1}^l)=(\widetilde{l}_{2n-2i+3}^{k-l+2})^{-1}$ for $1\leq i\leq n+1$ and $2\leq l\leq k$ and $\widetilde{r}(\widetilde{l}_{2i}^{l})=(\widetilde{l}_{2n-2i+2}^{k-l+1})^{-1}$ for $1\leq i\leq n$ and $1\leq l\leq k$. 
Remark that $\widetilde{r}$ is uniquely determined up to isotopy and is described as the result of the $\pi $-rotation of $\Sigma _g$ as in Figure~\ref{fig_lift_r}. 
The rotation axis of $\widetilde{r}$ intersects with $\Sigma _g$ at two points in $S(\frac{n+1}{2})$ for odd $n$, at two points in the handle intersecting with $\widetilde{l}_{n+1}^{\frac{k+1}{2}}$ for even $n$ and odd $k$, and at no points for even $n$ and $k$. 
In actuality, the arc $\widetilde{l}_{n+1}^{\frac{k+1}{2}}$ in the case of even $n$ and odd $k$ must intersect with the rotation axis of $\widetilde{r}$, however, we describe $\widetilde{l}_{n+1}^{\frac{k+1}{2}}$ in Figure~\ref{fig_lift_r} as a parallel arc of $\widetilde{l}_{n+1}^{\frac{k+1}{2}}$ to see well. 
Since $r(l_i)=l_{2n-i+2}^{-1}$ for $1\leq i\leq 2n+1$, $\widetilde{r}$ is a lift of $r$ with respect to $p$. 
Since we have 
\begin{align*}
\widetilde{r}\zeta \widetilde{r}^{-1}(\widetilde{l}_{2i-1}^{l})&=\widetilde{r}\zeta \bigl( (\widetilde{l}_{2n-2i+3}^{k-l+1})^{-1}\bigr) =\widetilde{r}((\widetilde{l}_{2n-2i+3}^{k-l+2})^{-1})=\widetilde{l}_{2i-1}^{l-1}
\end{align*}
for $1\leq i\leq k$ mod $k$ and $1\leq i\leq 2n+1$, the following lemma holds. 

\begin{figure}[h]
\includegraphics[scale=1.1]{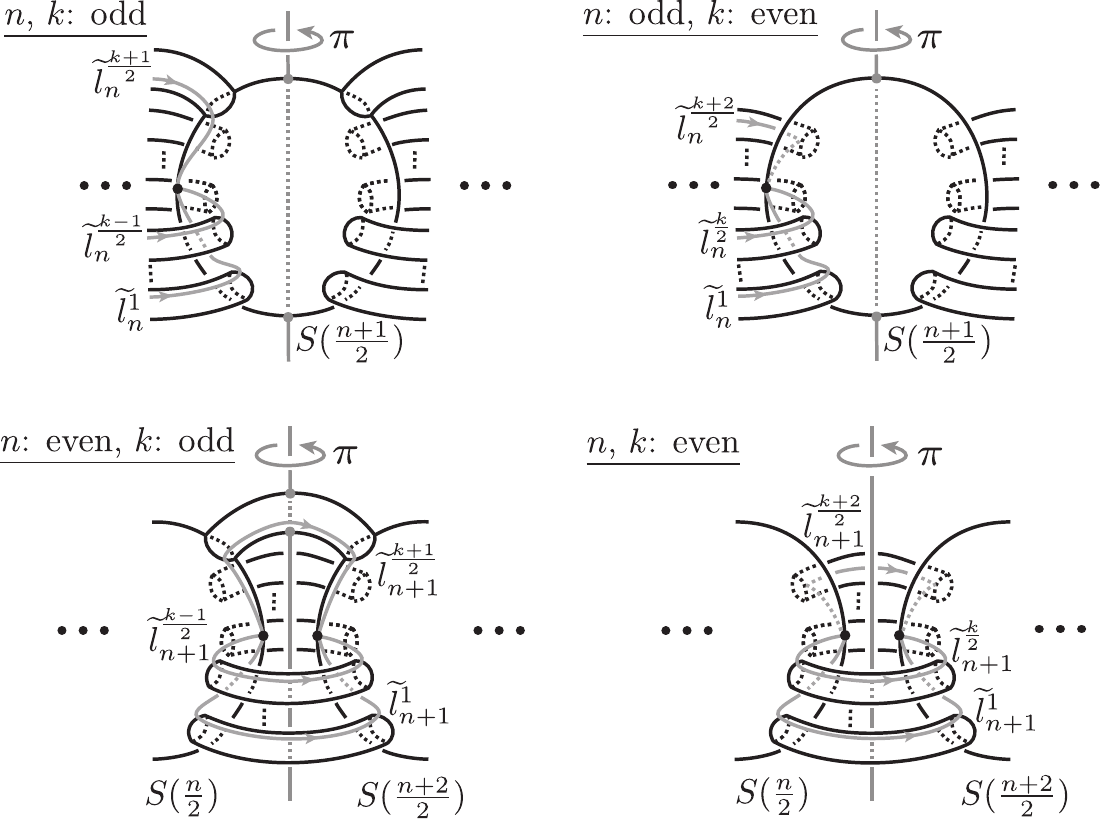}
\caption{A lift $\widetilde{r}$ of $r$ with respect to $p$.}\label{fig_lift_r}
\end{figure}

\begin{lem}\label{lift-r}
The relation $\widetilde{r}\zeta \widetilde{r}^{-1}=\zeta ^{-1}$ holds in $\SM $. 
\end{lem}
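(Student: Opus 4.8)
The plan is to verify the relation $\widetilde{r}\zeta\widetilde{r}^{-1}=\zeta^{-1}$ by comparing the two sides on the preimage arc system $\widetilde{L}=p^{-1}(L)$. As recalled in the proof of Proposition~\ref{prop_exact_lmodb}, the isotopy class of a self-homeomorphism of $\Sigma_g$ relative to $\widetilde{D}$ is determined by the isotopy class of the image of $\widetilde{L}$; since both $\widetilde{r}\zeta\widetilde{r}^{-1}$ and $\zeta^{-1}$ are self-homeomorphisms of $\Sigma_g$, it therefore suffices to show that they send each oriented arc $\widetilde{l}_i^l$ to isotopic oriented arcs. I would take as inputs the defining action $\zeta(\widetilde{l}_i^l)=\widetilde{l}_i^{l+1}$ (superscripts read modulo $k$), so that $\zeta^{-1}(\widetilde{l}_i^l)=\widetilde{l}_i^{l-1}$, together with the two formulas defining $\widetilde{r}$ on the odd- and even-indexed arcs just stated before the lemma.

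First I would record the action of $\widetilde{r}^{-1}$ on the arcs, obtained by inverting the defining formulas for $\widetilde{r}$: since $\widetilde{r}$ reverses the orientation of each arc and the index substitution $a\mapsto 2n-a+2$ is an involution on $\{1,\dots,2n+1\}$, the inverse acts by the analogous index substitution together with a shift on superscripts. With these formulas in hand, the odd-indexed case is exactly the displayed computation preceding the statement: applying $\widetilde{r}^{-1}$, then $\zeta$, then $\widetilde{r}$ to $\widetilde{l}_{2i-1}^l$ returns $\widetilde{l}_{2i-1}^{l-1}$, which equals $\zeta^{-1}(\widetilde{l}_{2i-1}^l)$. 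Thus the two homeomorphisms agree on every odd-indexed arc of $\widetilde{L}$.

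Second I would treat the even-indexed arcs $\widetilde{l}_{2i}^l$ in the same way, now using the formula $\widetilde{r}(\widetilde{l}_{2i}^l)=(\widetilde{l}_{2n-2i+2}^{k-l+1})^{-1}$. The computation follows the identical three-step pattern
\begin{align*}
\widetilde{r}\zeta\widetilde{r}^{-1}(\widetilde{l}_{2i}^l)
=\widetilde{r}\zeta\bigl((\widetilde{l}_{2n-2i+2}^{s})^{-1}\bigr)
=\widetilde{r}\bigl((\widetilde{l}_{2n-2i+2}^{s+1})^{-1}\bigr)
=\widetilde{l}_{2i}^{l-1},
\end{align*}
where $s$ is the superscript supplied by the $\widetilde{r}^{-1}$-formula and the middle equality uses $\zeta\bigl((\widetilde{l}_i^{s})^{-1}\bigr)=(\widetilde{l}_i^{s+1})^{-1}$. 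Since this equals $\zeta^{-1}(\widetilde{l}_{2i}^l)$, the two homeomorphisms agree on every arc of $\widetilde{L}$, hence are isotopic, and therefore $\widetilde{r}\zeta\widetilde{r}^{-1}=\zeta^{-1}$ in $\SM$.

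The main obstacle is purely bookkeeping: one must keep the superscript arithmetic consistent modulo $k$ and correctly compose the index reflection $a\mapsto 2n-a+2$ with the orientation reversals carried by $\widetilde{r}$ and $\widetilde{r}^{-1}$, so that both the odd and the even chains close up to a single decrement of the superscript. A secondary point to state cleanly is the reduction itself, namely that agreement of the images of $\widetilde{L}$ upgrades from the arc level to an equality of mapping classes; here the geometric description of $\widetilde{r}$ as the $\pi$-rotation of $\Sigma_g$ in Figure~\ref{fig_lift_r} guarantees that $\widetilde{r}$ is a symmetric homeomorphism whose class genuinely lies in $\SM$, so that the computation takes place in the intended group.
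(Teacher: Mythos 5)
Your proposal is correct and follows essentially the same route as the paper: the paper's proof is precisely the Alexander-method verification on the lifted arc system $\widetilde{L}$, carried out in the displayed computation for the odd-indexed arcs $\widetilde{l}_{2i-1}^l$ immediately before the lemma. Your version is in fact slightly more complete, since you also write out the even-indexed arcs $\widetilde{l}_{2i}^l$ and make explicit both the reduction to $\widetilde{L}$ and the use of $\widetilde{r}^{-1}=\widetilde{r}$ (which the paper leaves implicit, and which fixes a small superscript inconsistency in the paper's displayed intermediate steps).
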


Let $\delta _i^l$ for $1\leq i\leq n$ and $1\leq l\leq k$ be a non-separating simple closed curve on $\Sigma _g$ which transversely intersects with $\widetilde{l}_{2i}^l$ at one point and does not intersect with other $\widetilde{l}_{j}^{l^\prime }$, and let $\delta _i$ for $2\leq i\leq n+1$ be a separating simple closed curve on $\Sigma _g$ which transversely intersects with $\widetilde{l}_{2i-1}^l$ for each $1\leq l\leq k$ at one point and does not intersect with other $\widetilde{l}_{j}^{l^\prime }$ as in Figure~\ref{fig_lift_t_1j}. 
Remark that $\delta _1^l=\gamma _1^l$ and $\delta _n^l=\gamma _{2n+1}^l$ for $1\leq l\leq k$, and $\delta _{n+1}=\partial \widetilde{D}$. 
Denote by $\zeta _i=\zeta _{g,k;i}$ for $2\leq i\leq n+1$ the self-homeomorphism on $\Sigma _g$ which is described as the result of $-(\frac{2\pi }{k})$-rotation of $\Sigma _g$ fixing the subsurface that is cut off by $\delta _{i}$ and includes $\widetilde{D}$ as in Figure~\ref{fig_lift_t_1j}. 
We also remark that $\zeta _{n+1}=\zeta ^\prime $ in $\SMb $ and $\zeta _{n+1}=\zeta $ in $\SM $. 
Since $\delta _i^l$ for $1\leq i\leq n$ and $1\leq l\leq k$ is a lift of $\gamma _{1,2i}$ with respect to $p$ and the isotopy class of a homeomorphism on $\Sigma _g $ is determined by the isotopy class of the image of $\widetilde{L}$, we have the following lemma. 

\begin{figure}[h]
\includegraphics[scale=1.28]{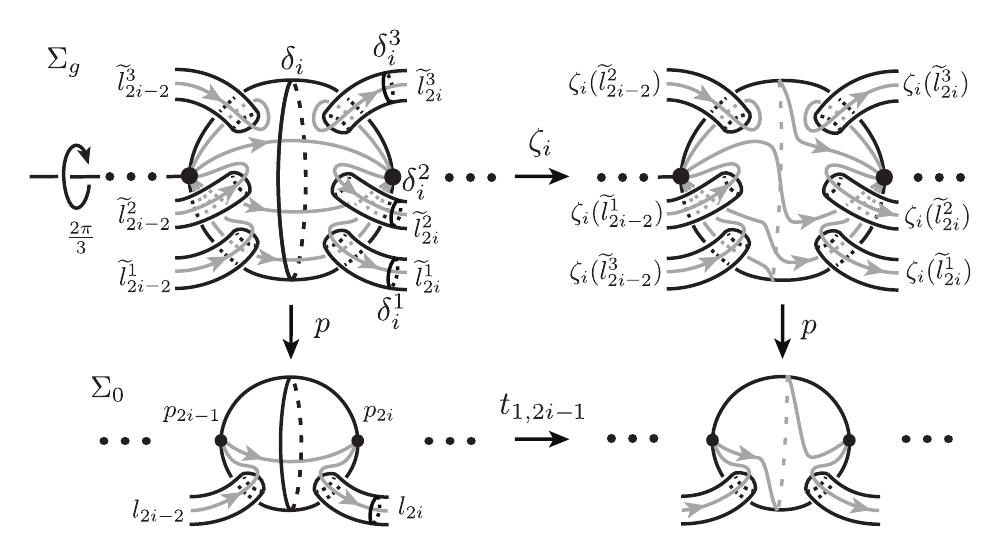}
\caption{Simple closed curves $\delta _i^l$ for $1\leq i\leq n$ and $1\leq l\leq k$ and $\delta _i$ for $2\leq i\leq n+1$ on $\Sigma _g$, and the homeomorphism $\zeta _i$ for $2\leq i\leq n+1$ on $\Sigma _g$ when $k=3$.}\label{fig_lift_t_1j}
\end{figure}

\begin{lem}\label{lem_lift_t{1,j}}
The relations 
\begin{enumerate}
\item $\widetilde{t}_{1,2i}=t_{\delta _i^{1}}t_{\delta _i^{2}}\cdots t_{\delta _i^{k}}$\quad for $1\leq i\leq n$, 
\item $\widetilde{t}_{1,2i-1}=\zeta _i$\quad for $2\leq i\leq n+1$
\end{enumerate}
hold relative to $\widetilde{D}$ for some lift $\widetilde{t}_{1,j}$ of $t_{1,j}$ $(1\leq j\leq 2n+1)$ with respect to $p$. 
\end{lem}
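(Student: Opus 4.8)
The plan is to treat the even and odd cases separately, since $\gamma_{1,2i}$ lifts with respect to $p$ while $\gamma_{1,2i-1}$ does not, and to exploit the two general principles recalled just before Lemma~\ref{lift-t_{i,i+1}}: that a simple closed curve on $\Sigma_0-\B$ lifts with respect to $p$ if and only if its algebraic intersection number with $l_1\cup l_3\cup\cdots\cup l_{2n+1}$ is zero mod $k$, and that a lift of a Dehn twist along a liftable curve can be taken to be the product of the Dehn twists along all of its lifts.

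For part~(1), I would first observe that $\gamma_{1,2i}$ encloses $p_1,\ldots,p_{2i}$, so among the arcs $l_1,l_3,\ldots,l_{2n+1}$ exactly $l_1,l_3,\ldots,l_{2i-1}$ lie inside $\gamma_{1,2i}$ and the remaining ones lie outside; consequently $\gamma_{1,2i}$ meets $l_1\cup l_3\cup\cdots\cup l_{2n+1}$ algebraically zero times and therefore lifts. As $\gamma_{1,2i}$ lifts, the monodromy of $p$ along it is trivial and its preimage $p^{-1}(\gamma_{1,2i})$ splits into $k$ disjoint simple closed curves; comparing the intersection with the arcs $\widetilde{l}_{2i}^{l}$ identifies these components with $\delta_i^1,\ldots,\delta_i^k$ of Figure~\ref{fig_lift_t_1j}. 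Since $2i\le 2n$, these curves are disjoint from $\widetilde{D}$, so the product $t_{\delta_i^1}t_{\delta_i^2}\cdots t_{\delta_i^k}$, which is well defined independently of the order because the curves are disjoint, is a lift of $t_{1,2i}$ relative to $\widetilde{D}$, as claimed.

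For part~(2), I would first check the complementary fact that $\gamma_{1,2i-1}$ does not lift: it encloses the odd number of points $p_1,\ldots,p_{2i-1}$, so the arc $l_{2i-1}$ joins an interior point $p_{2i-1}$ to an exterior point $p_{2i}$ and is met once, giving algebraic intersection number $\pm 1\not\equiv 0 \pmod k$. Hence the monodromy along $\gamma_{1,2i-1}$ generates the deck group, the preimage $p^{-1}(\gamma_{1,2i-1})$ is connected, and it is the single curve $\delta_i$ of Figure~\ref{fig_lift_t_1j}, which covers $\gamma_{1,2i-1}$ $k$-to-one. It then remains to show that $\zeta_i$ is a lift of $t_{1,2i-1}$. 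Following the model of the proof of Proposition~\ref{prop_exact_lmodb} (the case $i=n+1$, where $\zeta_{n+1}=\zeta^\prime$, $\delta_{n+1}=\partial\widetilde{D}$, and $t_{1,2n+1}=t_{\partial D}$), I would use that the isotopy class of a self-homeomorphism of $\Sigma_g$ relative to $\widetilde{D}$ is determined by the isotopy class of the image of $\widetilde{L}$, and verify that $p(\zeta_i(\widetilde{L}))$ is isotopic to $t_{1,2i-1}(L)$ relative to $\B\cup D$. The geometric content is local near $\delta_i$: away from a collar of $\delta_i$ the map $\zeta_i$ equals the deck transformation $\zeta$ on the rotated subsurface and the identity on the complementary subsurface containing $\widetilde{D}$, so it projects to the identity there, while across the collar of $\delta_i$ the interpolation of the $-\frac{2\pi}{k}$-rotation down to the identity projects, via the $k$-to-one cover $\delta_i\to\gamma_{1,2i-1}$, to exactly one full twist about $\gamma_{1,2i-1}$.

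The main obstacle I expect is bookkeeping the orientation so that this projected twist is the right-handed twist $t_{1,2i-1}$ rather than $t_{1,2i-1}^{-1}$: the sign must be read off consistently from the convention that $\zeta$ is the $(-\frac{2\pi}{k})$-rotation and that $t_\gamma$ denotes the right-handed Dehn twist, and checked against the already established identity $\widetilde{t}_{1,2n+1}=\zeta_{n+1}=\zeta^\prime$ coming from Proposition~\ref{prop_exact_lmodb}. The remaining verifications, namely that the drawn curves $\delta_i^l$ and $\delta_i$ really are the stated preimages and that the collar computation on $\widetilde{L}$ is faithful, are routine once the figures are fixed.
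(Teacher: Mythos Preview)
Your proposal is correct and follows essentially the same approach as the paper. The paper's own justification is very brief---a single sentence before the lemma noting that $\delta_i^l$ is a lift of $\gamma_{1,2i}$ and that the isotopy class of a homeomorphism on $\Sigma_g$ is determined by the image of $\widetilde{L}$---and your argument simply fleshes out exactly these two points, together with the appeal to the $\zeta^\prime$ case already handled in Proposition~\ref{prop_exact_lmodb}.
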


\subsection{Main theorems for the balanced superelliptic mapping class groups}\label{section_main_thm_smod}
We define $\widetilde{h}_i=t_{\gamma _i^1}t_{\gamma _{i+1}^1}t_{\gamma _i^2}t_{\gamma _{i+1}^2}\cdots t_{\gamma _i^{k-1}}t_{\gamma _{i+1}^{k-1}}t_{\gamma _i^{k}}$\quad for odd $1\leq i\leq 2n-1$, $\widetilde{h}_i=t_{\gamma _i^{k}}t_{\gamma _{i+1}^{k}}t_{\gamma _i^{k-1}}t_{\gamma _{i+1}^{k-1}}\cdots t_{\gamma _i^{2}}t_{\gamma _{i+1}^{2}}t_{\gamma _i^{1}}$\quad for even $2\leq i\leq 2n-2$, and $\widetilde{t}_{i,i+1}=t_{\gamma _i^1}t_{\gamma _i^2}\cdots t_{\gamma _i^{k}}$ for $1\leq i\leq 2n+1$ (see also Figure~\ref{fig_scc_c_il}). 
%By Lemmas~\ref{lift-t_{i,i+1}} and \ref{lift-h_i}, $\widetilde{h}_i$ for $1\leq i\leq 2n-1$ and $\widetilde{t}_{i,i+1}$ for $1\leq i\leq 2n+1$ are preimages of $h_i$ and $t_{i,i+1}$ to $\SMb $ and $\SMp $ with respect to $\theta ^1\colon \SMb \to \LMb $ and $\theta \colon \SM \to \LM $, respectively (recall that we regard $\SMp $ as a subgroup of $\SM $ by Proposition~\ref{forget_smod_inj} and~\ref{prop_smod_char}). 
Main theorems for the balanced superelliptic mapping class groups are as follows. 

\begin{thm}\label{thm_pres_smodb}
For $n\geq 1$ and $k\geq 3$ with $g=n(k-1)$, $\SMb $ admits the presentation with generators $\widetilde{h}_i$ for $1\leq i\leq 2n-1$ and $\widetilde{t}_{i,j}$ for $1\leq i<j\leq 2n+1$, and the following defining relations: 
\begin{enumerate}
\item commutative relations
\begin{enumerate}
\item $\widetilde{h}_i \rightleftarrows \widetilde{h}_j$ \quad for $j-i>3$,
\item $\widetilde{t}_{i,j} \rightleftarrows \widetilde{t}_{k,l}$ \quad for $j<k$, $k\leq i<j\leq l$, or $l<i$, 
\item $\widetilde{h}_k \rightleftarrows \widetilde{t}_{i,j}$ \quad for  $k+2<i$, $i\leq k<k+2\leq j$, or $j<k$,
\end{enumerate}
\item relations among $\widetilde{h}_i$'s and $\widetilde{t}_{j,j+1}$'s: 
\begin{enumerate}
\item $\widetilde{h}_i ^{\pm 1}\widetilde{t}_{i,i+1}=\widetilde{t}_{i+1,i+2}\widetilde{h}_i^{\pm 1}$ \quad for $1\leq i\leq 2n-2$, 
\item $\widetilde{h}_i ^{\varepsilon }\widetilde{h}_{i+1}^{\varepsilon }\widetilde{t}_{i,i+1}=\widetilde{t}_{i+2,i+3}\widetilde{h}_i^{\varepsilon }\widetilde{h}_{i+1}^{\varepsilon }$ \quad for $1\leq i\leq 2n-2$ and $\varepsilon \in \{ 1, -1\}$,
\item $\widetilde{h}_{i}^{\varepsilon }\widetilde{h}_{i+1}^{\varepsilon }\widetilde{h}_{i+2}^{\varepsilon }\widetilde{h}_{i}^{\varepsilon }=\widetilde{h}_{i+2}^{\varepsilon }\widetilde{h}_{i}^{\varepsilon }\widetilde{h}_{i+1}^{\varepsilon }\widetilde{h}_{i+2}^{\varepsilon }$ \quad for $1\leq i\leq 2n-2$ and $\varepsilon \in \{ 1, -1\}$,
\item $\widetilde{h}_i\widetilde{h}_{i+1}\widetilde{t}_{i,i+1}=\widetilde{t}_{i,i+1}\widetilde{h}_{i+1}\widetilde{h}_i$ \quad for $1\leq i\leq 2n-2$,
\item $\widetilde{h}_i\widetilde{h}_{i+2}\widetilde{h}_i\widetilde{t}_{i+1,i+2}^{-1}=\widetilde{t}_{i+2,i+3}^{-1}\widetilde{h}_{i+2}\widetilde{h}_i\widetilde{h}_{i+2}$ \quad for $1\leq i\leq 2n-3$,
\end{enumerate}
\item lifts of the pentagonal relations $(P_{i,j,k,l,m})$ for $1\leq i<j<k<l<m\leq 2n+2$,\\
i.e. $\widetilde{t}_{j,m-1}^{-1}\widetilde{t}_{k,m-1}\widetilde{t}_{j,l-1}\widetilde{t}_{i,k-1}\widetilde{t}_{i,l-1}^{-1}=\widetilde{t}_{i,l-1}^{-1}\widetilde{t}_{i,k-1}\widetilde{t}_{j,l-1}\widetilde{t}_{k,m-1}\widetilde{t}_{j,m-1}^{-1}$, 
\item 
\begin{enumerate}
\item lifts of the relations $(T_{i,j})$, i.e.\\
$\widetilde{t}_{i,j}=\left\{
		\begin{array}{ll}
		\widetilde{h}_i^2 \quad \text{for }j-i=2,\\
		\widetilde{t}_{j-1,j}^{-\frac{j-i-3}{2}}\cdots \widetilde{t}_{i+2,i+3}^{-\frac{j-i-3}{2}}\widetilde{t}_{i,i+1}^{-\frac{j-i-3}{2}}(\widetilde{h}_{j-2}\cdots \widetilde{h}_{i+1}\widetilde{h}_{i})^{\frac{j-i+1}{2}}\\ \text{for }2n-1\geq j-i \geq 3\text{ is odd},\\
		\widetilde{t}_{j-2,j-1}^{-\frac{j-i-2}{2}}\cdots \widetilde{t}_{i+2,i+3}^{-\frac{j-i-2}{2}}\widetilde{t}_{i,i+1}^{-\frac{j-i-2}{2}}\widetilde{h}_{j-2}\cdots \widetilde{h}_{i+2}\widetilde{h}_{i}\widetilde{h}_{i}\widetilde{h}_{i+2}\cdots \widetilde{h}_{j-2}\\ \cdot (\widetilde{h}_{j-3}\cdots \widetilde{h}_{i+1}\widetilde{h}_{i})^{\frac{j-i}{2}}\quad \text{for }2n\geq j-i \geq 4\text{ is even}.	
		\end{array}
		\right.$
\end{enumerate}
\end{enumerate}
\end{thm}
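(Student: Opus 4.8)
The plan is to derive the presentation for $\SMb$ from the already-established presentation for $\LMb$ in Theorem~\ref{thm_pres_lmodb}, exploiting the Birman--Hilden isomorphism $\theta ^1\colon \SMb \to \LMb$ from \cite{Birman-Hilden2} together with the explicit lifts constructed in Section~\ref{section_lifts}. Since $\theta ^1$ is an isomorphism, it suffices to show that the generators $\widetilde{h}_i$ and $\widetilde{t}_{i,j}$ of $\SMb$ map under $\theta ^1$ to the generators $h_i$ and $t_{i,j}$ of $\LMb$, and that the listed defining relations (1)--(4) among the tilded generators are exactly the $\theta ^1$-preimages of the defining relations in Theorem~\ref{thm_pres_lmodb}. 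Because an isomorphism pulls a presentation back to a presentation verbatim, the core work is to verify that each $\widetilde{h}_i$ (resp.\ $\widetilde{t}_{i,j}$) as defined by the Dehn-twist products in Lemmas~\ref{lift-h_i} and \ref{lift-t_{i,i+1}} is genuinely a lift of $h_i$ (resp.\ $t_{i,j}$), i.e.\ $\theta ^1(\widetilde{h}_i)=h_i$ and $\theta ^1(\widetilde{t}_{i,j})=t_{i,j}$.

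First I would recall that $\widetilde{h}_i$ for $1\leq i\leq 2n-1$ and $\widetilde{t}_{i,i+1}$ for $1\leq i\leq 2n+1$ are defined as the specific products of Dehn twists along the curves $\gamma _i^l$ appearing in the statement (and repeated before Theorem~\ref{thm_pres_smodb}), and that Lemmas~\ref{lift-t_{i,i+1}} and \ref{lift-h_i} identify these products as lifts of $t_{i,i+1}$ and $h_i$ relative to $\widetilde{D}$. The remaining generators $\widetilde{t}_{i,j}$ for $j-i\geq 3$ (and the ones with $j-i=2$) are then defined through the relation~(4) exactly as $t_{i,j}$ is defined in $\LMb$ via the relation~$(T_{i,j})$; so relation~(4) holds in $\SMb$ essentially by definition of these auxiliary generators, and $\theta^1(\widetilde{t}_{i,j})=t_{i,j}$ follows by applying $\theta^1$ to the defining products. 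The point is that $\theta ^1$ sends the chosen generating set of $\SMb$ bijectively onto the generating set of $\LMb$ used in Theorem~\ref{thm_pres_lmodb}.

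Next I would invoke the general mechanism: given the isomorphism $\theta ^1$, a complete set of defining relations for $\LMb$ pulls back to a complete set of defining relations for $\SMb$ on the corresponding generators. Thus the relations (1), (2), (3), (4) in Theorem~\ref{thm_pres_smodb} must be shown to be precisely the images under $(\theta ^1)^{-1}$ of the relations (1), (2), (3), (4) in Theorem~\ref{thm_pres_lmodb}. Relations (1)(a)--(c), (2)(a)--(e), and (3) are word-for-word the tilded analogues of the corresponding relations in Theorem~\ref{thm_pres_lmodb}, and each holds in $\SMb$ because $\theta ^1$ is a homomorphism carrying $\widetilde{h}_i\mapsto h_i$, $\widetilde{t}_{i,j}\mapsto t_{i,j}$; conversely any relation among the $\widetilde{h}_i,\widetilde{t}_{i,j}$ maps to a relation among the $h_i,t_{i,j}$, so no relations are lost or gained. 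I would present this as a direct application of the fact that an isomorphism of groups transports a presentation to a presentation.

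The main obstacle will be the careful verification that the Dehn-twist products defining $\widetilde{h}_i$ and $\widetilde{t}_{i,i+1}$ really are lifts of $h_i$ and $t_{i,i+1}$ under $\theta^1$ (not merely under the unbased Birman--Hilden correspondence for $\SM$), and in particular that these lifts respect the boundary $\partial \Sigma _g^1$ so that they land in $\SMb$ rather than only in $\SMp$ or $\SM$; this is exactly the content of Lemmas~\ref{lift-a_i}, \ref{lift-t_{i,i+1}}, and \ref{lift-h_i} together with Corollaries~\ref{cor_lift-t_{i,i+1}} and \ref{cor_lift-h_i}, which guarantee the relevant lifts commute with $\zeta ^\prime$ and hence descend correctly through $\theta ^1$. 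A subtler bookkeeping issue is that the relation-index ranges in Theorem~\ref{thm_pres_smodb} differ slightly from those in Theorem~\ref{thm_pres_lmodb} (for instance relation~(2)(a) runs to $2n-2$ rather than $2n-1$, and (2)(c) to $2n-2$ rather than $2n-3$); I would need to check that these shifted ranges still yield an equivalent relation set under $\theta^1$, using the boundary-relation $\widetilde{t}_{\partial \widetilde D}=\widetilde{t}_{1,2n+1}$ being central and the interplay of $t_{2n+1,2n+2}=t_{1,2n}$, so that the apparent discrepancies are absorbed into the commutativity of the boundary twist.
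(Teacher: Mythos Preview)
Your approach is exactly the paper's: immediately after stating the theorem the authors simply note that $\theta^1\colon \SMb\to\LMb$ is an isomorphism with $\theta^1(\widetilde{h}_i)=h_i$ and $\theta^1(\widetilde{t}_{i,i+1})=t_{i,i+1}$ (by Lemmas~\ref{lift-t_{i,i+1}} and \ref{lift-h_i}), so Theorem~\ref{thm_pres_smodb} follows at once from Theorem~\ref{thm_pres_lmodb}.

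One comment on the ``subtler bookkeeping issue'' you raise: the index-range discrepancies you spotted (e.g.\ $j-i>3$ versus $j-i\geq 3$ in (1)(a), $i\leq 2n-2$ versus $i\leq 2n-1$ in (2)(a), $i\leq 2n-2$ versus $i\leq 2n-3$ in (2)(c)) are not something to be absorbed by centrality of the boundary twist---they are almost certainly typographical slips in the statement of Theorem~\ref{thm_pres_smodb}, since the paper's entire argument is a verbatim transport through $\theta^1$ and the (2)(c) range $i\leq 2n-2$ would even require a nonexistent generator $\widetilde{h}_{2n}$. So do not try to justify the shifted ranges; the intended relation set is the literal $\theta^1$-preimage of the relations in Theorem~\ref{thm_pres_lmodb}.
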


Since $\theta ^1\colon \SMb \to \LMb $ is an isomorphism (the definitions is reviewed in Section~\ref{section_exact-seq_smod}), $\theta ^1(\widetilde{h}_i)=h_i$ for $1\leq i\leq 2n-1$ by Lemma~\ref{lift-t_{i,i+1}}, and $\theta ^1(\widetilde{t}_{i,i+1})=t_{i,i+1}$ for $1\leq i\leq 2n+1$ by Lemma~\ref{lift-h_i}, Theorem~\ref{thm_pres_smodb} is immediately obtained from Theorem~\ref{thm_pres_lmodb}. 

\begin{thm}\label{thm_pres_smodp}
For $n\geq 1$ and $k\geq 3$ with $g=n(k-1)$, $\SMp $ admits the presentation which is obtained from the finite presentation for $\SMb $ in Theorem~\ref{thm_pres_smodb} by adding the relation 
\begin{enumerate}
\item[(4)\ (b)] $\widetilde{t}_{1,2n+1}^k=1$. %$(\widetilde{t}_{2n-1,2n}^{-n+1}\cdots \widetilde{t}_{3,4}^{-n+1}\widetilde{t}_{1,2}^{-n+1}\widetilde{h}_{2n-1}\cdots \widetilde{h}_3\widetilde{h}_1\widetilde{h}_1\widetilde{h}_3\cdots \widetilde{h}_{2n-1}(\widetilde{h}_{2n-2}\cdots \widetilde{h}_{2}\widetilde{h}_{1})^{n})^k=1$.
\end{enumerate}
\end{thm}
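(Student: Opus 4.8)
The plan is to mirror, at the symmetric level, the way Theorem~\ref{thm_pres_lmodp} is derived from Theorem~\ref{thm_pres_lmodb}, using the capping exact sequence for the balanced superelliptic mapping class groups. First I would invoke Proposition~\ref{prop_exact_smodb}, which (since $g=n(k-1)\geq 2$ always holds here) gives the short exact sequence
\[
1\longrightarrow \left< t_{\partial \widetilde{D}}\right> \longrightarrow \SMb \stackrel{\widetilde{\iota }_\ast }{\longrightarrow }\SMp \longrightarrow 1.
\]
The boundary Dehn twist $t_{\partial \widetilde{D}}$ lies in the center of $\SMb $, because any mapping class of $\Sigma _g^1$ fixing $\partial \Sigma _g^1$ pointwise commutes with the twist along a curve isotopic to the boundary. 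Hence the kernel $\left< t_{\partial \widetilde{D}}\right> $ equals its own normal closure, so $\SMp \cong \SMb /\left< t_{\partial \widetilde{D}}\right> $ admits the presentation obtained from the finite presentation for $\SMb $ in Theorem~\ref{thm_pres_smodb} by adjoining the single relation $t_{\partial \widetilde{D}}=1$, provided $t_{\partial \widetilde{D}}$ is first written as a word in the generators $\widetilde{h}_i$, $\widetilde{t}_{i,j}$.

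The main step is therefore to identify $t_{\partial \widetilde{D}}$ with a power of the generator $\widetilde{t}_{1,2n+1}$. By Lemma~\ref{lem_lift_t{1,j}}~(2) together with the identification $\zeta _{n+1}=\zeta ^\prime $ in $\SMb $, the generator $\widetilde{t}_{1,2n+1}$ equals $\zeta ^\prime $, the $(-\frac{2\pi }{k})$-rotation of $\Sigma _g^1$ fixing $\widetilde{D}$ pointwise introduced in the proof of Proposition~\ref{prop_exact_lmodb}. Since $\widetilde{p}_{2n+2}$ lies on the rotation axis, the local action of $\zeta $ near it is a primitive rotation by $-\frac{2\pi }{k}$; consequently $(\zeta ^\prime )^k$ is a full rotation concentrated in a collar of $\partial \widetilde{D}$ and equals $t_{\partial \widetilde{D}}^{\pm 1}$ relative to $\partial \Sigma _g^1$. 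Thus $t_{\partial \widetilde{D}}=\widetilde{t}_{1,2n+1}^{\pm k}$ in $\SMb $, exhibiting $t_{\partial \widetilde{D}}$ as a word in the generators.

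Combining the two steps, the relation $t_{\partial \widetilde{D}}=1$ is equivalent to $\widetilde{t}_{1,2n+1}^{k}=1$, and a Tietze transformation turns the presentation of $\SMb /\left< t_{\partial \widetilde{D}}\right> $ into the presentation of Theorem~\ref{thm_pres_smodb} with the extra relation $\widetilde{t}_{1,2n+1}^{k}=1$, i.e. relation (4)(b), as asserted. I expect the genuinely delicate point to be the geometric identity $(\zeta ^\prime )^k=t_{\partial \widetilde{D}}^{\pm 1}$: one must verify that the local rotation number at $\widetilde{p}_{2n+2}$ is primitive, so that the power of the boundary twist is exactly $\pm 1$ rather than a nontrivial multiple, and (if the precise exponent is wanted) track the collar orientation. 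A route that sidesteps the collar computation is to use the Birman--Hilden isomorphism $\theta ^1\colon \SMb \to \LMb $, which sends $\widetilde{t}_{1,2n+1}$ to $t_{1,2n+1}$ and carries $\left< t_{\partial \widetilde{D}}\right> $ into $\ker \iota _\ast =\left< t_{1,2n+1}\right> $; determining $\theta ^1(t_{\partial \widetilde{D}})=t_{1,2n+1}^{\pm k}$ then gives $t_{\partial \widetilde{D}}=\widetilde{t}_{1,2n+1}^{\pm k}$ by injectivity of $\theta ^1$, yielding the same conclusion.
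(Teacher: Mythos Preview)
Your proposal is correct and follows essentially the same route as the paper: invoke the capping exact sequence of Proposition~\ref{prop_exact_smodb}, identify $\widetilde{t}_{1,2n+1}$ with $\zeta'$ via Lemma~\ref{lem_lift_t{1,j}}~(2), use $(\zeta')^k=t_{\partial\widetilde{D}}$ (which the paper states just before its proof, citing the proof of Proposition~\ref{prop_exact_lmodb}) to rewrite the kernel as $\langle\widetilde{t}_{1,2n+1}^{\,k}\rangle$, and then read off the presentation. Your care about the sign and the primitivity of the rotation at $\widetilde{p}_{2n+2}$ is a reasonable elaboration of a point the paper simply asserts; since $\langle\widetilde{t}_{1,2n+1}^{\,k}\rangle=\langle\widetilde{t}_{1,2n+1}^{\,-k}\rangle$, the sign ambiguity is harmless for the statement.
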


We remark that $\zeta =\widetilde{t}_{1,2n+1}$ by an argument in the proof of Proposition~\ref{prop_exact_lmodb} (see also the before the proof of Theorem~\ref{thm_pres_smodp}). 
We define $\widetilde{h}_{2n}=t_{\gamma _{2n}^{k}}t_{\gamma _{2n+1}^{k}}t_{\gamma _{2n}^{k-1}}t_{\gamma _{2n+1}^{k-1}}\cdots t_{\gamma _{2n}^{2}}t_{\gamma _{2n+1}^{2}}t_{\gamma _{2n}^{1}}$. 
By Lemma~\ref{lift-h_i} and an argument before Lemma~\ref{lift-r}, $\widetilde{h}_{2n}$ and $\widetilde{r}$ are preimeges of $h_{2n}$ and $r$ with respect to $\theta \colon \SM \to \LM$, respectively. 

\begin{thm}\label{thm_pres_smod}
For $n\geq 1$ and $k\geq 3$ with $g=n(k-1)$, $\SM $ admits the presentation which is obtained from the finite presentation for $\SMp $ in Theorem~\ref{thm_pres_smodp} by adding generators $\widetilde{h}_{2n}$ and $\widetilde{r}$, and the following relations:
\begin{enumerate}
\item[(1)] commutative relation
\begin{enumerate}
\item[(d)] $\widetilde{h}_{2n} \rightleftarrows \widetilde{t}_{1,2n+1}$, 
\end{enumerate}
\item[(4)]
\begin{enumerate}
\item[(c)] $\widetilde{t}_{1,2n}^{-n+1}\widetilde{t}_{2n-1,2n}^{-n+1}\cdots \widetilde{t}_{3,4}^{-n+1}\widetilde{t}_{1,2}^{-n+1}(\widetilde{h}_{2n}\cdots \widetilde{h}_{2}\widetilde{h}_{1})^{n+1}=1$, 
\end{enumerate}
\item[(5)] relations among $\widetilde{r}$ and $\widetilde{h}_i$'s or $\widetilde{t}_{j,j+1}$'s: 
\begin{enumerate}
\item $\widetilde{r}^2=1$, 
\item $\widetilde{r}\widetilde{h}_i=\widetilde{h}_{2n-i+1}\widetilde{r}$ \quad for $1\leq i\leq 2n$, 
\item $\widetilde{r}\widetilde{t}_{i,i+1}=\widetilde{t}_{2n-i+2,2n-i+3}\widetilde{r}$ \quad for $2\leq i\leq 2n$, 
\item $\widetilde{r}\widetilde{t}_{1,j}=\widetilde{t}_{1,2n-j+2}\widetilde{r}$ \quad for even $2\leq j\leq 2n$, 
\item $\widetilde{r}\widetilde{t}_{1,j}\widetilde{r}^{-1}\widetilde{t}_{1,2n-j+2}^{-1}=\widetilde{t}_{1,2n+1}^{-1}$ \quad for odd $3\leq j\leq 2n-1$,  
\item $\widetilde{r}\widetilde{t}_{1,2n+1}=\widetilde{t}_{1,2n+1}^{-1}\widetilde{r}$. 
\end{enumerate}
\end{enumerate}
\end{thm}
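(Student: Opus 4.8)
The plan is to apply Lemma~\ref{presentation_exact} to the exact sequence~(\ref{exact_smodb}) in Proposition~\ref{prop_exact_smodb}, namely
\[
1\longrightarrow \left< t_{\partial \widetilde{D}}\right> \longrightarrow \SMb \stackrel{\widetilde{\iota }_\ast }{\longrightarrow }\SMp \longrightarrow 1,
\]
exactly as Theorem~\ref{thm_pres_lmodb} was derived from~(\ref{exact_lmodb}). First I would identify the kernel generator $t_{\partial \widetilde{D}}=t_{\delta _{n+1}}$, which is central in $\SMb $, and check that under the presentation for $\SMp $ in Theorem~\ref{thm_pres_smodp} it corresponds to $\widetilde{t}_{1,2n+1}^k$. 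Indeed, by Lemma~\ref{lem_lift_t{1,j}}~(2) we have $\widetilde{t}_{1,2n+1}=\zeta _{n+1}=\zeta $ in $\SMp $, and $\zeta ^k=1$, but in $\SMb $ the lift $\zeta ^\prime $ satisfies $(\zeta ^\prime )^k=t_{\partial \widetilde{D}}$ rather than being trivial (compare the role of $\zeta ^\prime $ in the proofs of Propositions~\ref{prop_exact_lmodb} and~\ref{prop_exact_smodb}). Thus capping off via $\widetilde{\iota }_\ast $ sends $t_{\partial \widetilde{D}}=\widetilde{t}_{1,2n+1}^k$ to $1$, which is precisely the added relation~(4)(b) in Theorem~\ref{thm_pres_smodp}; this is the mechanism behind the Birman–Hilden-type picture. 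The generators $\widetilde{h}_i$ and $\widetilde{t}_{i,j}$ of $\SMb $ map to the corresponding generators of $\SMp $ under $\widetilde{\iota }_\ast $, so the machinery of Lemma~\ref{presentation_exact} directly produces the desired presentation.

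The core of the argument is that $\SM $ is an extension of $\LM $ by $\left< \zeta \right> $ via the Birman--Hilden homomorphism $\theta \colon \SM \to \LM $ with kernel $\left< \zeta \right> $, and that $\SMp $ is a subgroup of $\SM $ by Lemma~\ref{forget_smod_inj} and Proposition~\ref{prop_smod_char}. Rather than reconstruct the whole presentation from scratch, I would mirror the passage from Theorem~\ref{thm_pres_lmodp} to Theorem~\ref{thm_pres_lmod}: start from the presentation for $\SMp $, adjoin the two new generators $\widetilde{h}_{2n}$ and $\widetilde{r}$ (lifts of $h_{2n}$ and $r$ with respect to $\theta $, which exist by Lemma~\ref{lift-h_i} and the construction before Lemma~\ref{lift-r}), and add relations of types~(A), (B), (C) dictated by Lemma~\ref{presentation_exact} applied to $1\to \left< \zeta \right> \to \SM \to \LM \to 1$ together with the known presentation of $\LM $ from Theorem~\ref{thm_pres_lmod}. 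The type~(A) relations come from the presentation for $\left< \zeta \right> \cong \Z _k$, the type~(B) relations lift each defining relation of $\LM $, and the type~(C) relations record the conjugation action of $\widetilde{h}_{2n}$ and $\widetilde{r}$ on $\zeta $.

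The main obstacle is the conjugation behaviour of $\widetilde{r}$. For all the half-twist-type generators $\widetilde{h}_i$ and $\widetilde{t}_{i,i+1}$ with $j\le 2n+1$, Corollaries~\ref{cor_lift-h_i} and~\ref{cor_lift-t_{i,i+1}} guarantee commutation with $\zeta $, so the relevant lifted relations carry over verbatim; the genuinely new phenomenon is Lemma~\ref{lift-r}, which states $\widetilde{r}\zeta \widetilde{r}^{-1}=\zeta ^{-1}$. This sign reversal is exactly what produces the exotic relations~(5)(e) and~(5)(f) in Theorem~\ref{thm_pres_smod}: whereas in $\LM $ one has the clean conjugation relations~(5)(c)/(5)(d) of Theorem~\ref{thm_pres_lmod}, each lift to $\SM $ of a relation involving $r$ and an odd-index $t_{1,j}=\zeta _{i}$ (which is a power of $\zeta $ by Lemma~\ref{lem_lift_t{1,j}}~(2)) picks up a correction by a power of $\widetilde{t}_{1,2n+1}=\zeta $ because $\widetilde{r}$ inverts $\zeta $. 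I would therefore carefully split relation~(5)(d) of Theorem~\ref{thm_pres_lmod} into its even-$j$ part (where $t_{1,j}=t_{\delta _i^1}\cdots t_{\delta _i^k}$ is a genuine product of Dehn twists and the relation lifts cleanly, giving~(5)(d)) and its odd-$j$ part (where $t_{1,j}$ is a rotation and the sign-reversal inserts the factor $\widetilde{t}_{1,2n+1}^{-1}$, giving~(5)(e)), and separately record the action on the kernel itself as~(5)(f). Relation~(1)(d) likewise encodes $\widetilde{h}_{2n}\rightleftarrows \zeta =\widetilde{t}_{1,2n+1}$, which follows from Corollary~\ref{cor_lift-h_i}. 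After verifying these lifted relations and eliminating redundancies by Tietze transformations exactly as in the proof of Theorem~\ref{thm_pres_lmod}, the presentation of Theorem~\ref{thm_pres_smod} follows.
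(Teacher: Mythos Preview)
Your second and third paragraphs contain the correct strategy and match the paper's proof: apply Lemma~\ref{presentation_exact} to the Birman--Hilden exact sequence $1\to\langle\zeta\rangle\to\SM\stackrel{\theta}{\to}\LM\to 1$ using the presentation for $\LM$ in Theorem~\ref{thm_pres_lmod}, verify that each lifted relation of $\LM$ holds in $\SM$ up to a specific power of $\zeta=\widetilde{t}_{1,2n+1}$, and record the conjugation action of the new generators on $\zeta$. The paper does exactly this, citing Lemmas~\ref{lem_conj_r-h_smod}, \ref{lem_conj_r-t_smod}, \ref{lem_conj_t{1,j}-r}, and~\ref{lem_lift_t{1,2n+2}} for relations~(5)(b)--(e) and~(4)(c), and Corollary~\ref{cor_lift-h_i} and Lemma~\ref{lift-r} for~(1)(d) and~(5)(f).

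Two points to correct. First, your opening paragraph is misdirected: the exact sequence~(\ref{exact_smodb}) relates $\SMb$ and $\SMp$, not $\SM$, and what you describe there is the already-completed proof of Theorem~\ref{thm_pres_smodp}; it plays no role in passing from $\SMp$ to $\SM$ and should be dropped. Second, your explanation of relation~(5)(e) contains an error: for odd $j=2i-1$ with $i<n+1$ the element $\widetilde{t}_{1,j}=\zeta_i$ is \emph{not} a power of $\zeta$ (it is a partial rotation supported on a proper subsurface), so the correction factor cannot be read off from Lemma~\ref{lift-r} alone. The actual source of the $\widetilde{t}_{1,2n+1}^{-1}$ term is Lemma~\ref{lem_conj_t{1,j}-r}(2), which computes $\widetilde{r}\zeta_i\widetilde{r}^{-1}$ geometrically and gives $\zeta_{n-i+2}\,\widetilde{r}\zeta_i^{-1}\widetilde{r}^{-1}=\zeta$; rewriting this with $\widetilde{t}_{1,2i-1}=\zeta_i$ yields exactly~(5)(e). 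You should cite that lemma rather than attributing the twist solely to $\widetilde{r}\zeta\widetilde{r}^{-1}=\zeta^{-1}$.
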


As a corollary of Theorems~\ref{thm_pres_smodb}, \ref{thm_pres_smodp}, and \ref{thm_pres_smod}, we have the following corollary. 

\begin{cor}\label{cor_gen_smod}
For $n\geq 1$ and $k\geq 2$ with $g=n(k-1)$,  
\begin{enumerate}
\item $\SMb $ and $\SMp$ are generated by $\widetilde{h}_1,\ \widetilde{h}_2,\ \dots ,\ \widetilde{h}_{2n-1}$, and $\widetilde{t}_{1,2}$, 
\item $\SM $ is generated by $\widetilde{h}_1,\ \widetilde{h}_3,\ \dots ,\ \widetilde{h}_{2n-1},\ \widetilde{t}_{1,2}$, and $\widetilde{r}$. 
\end{enumerate}
\end{cor}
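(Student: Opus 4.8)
The plan is to deduce Corollary~\ref{cor_gen_smod} directly from the finite presentations in Theorems~\ref{thm_pres_smodb}, \ref{thm_pres_smodp}, and \ref{thm_pres_smod}, mirroring the argument already used for the liftable groups in the proof of Corollary~\ref{cor_gen_lmod}. The key observation is that the isomorphisms $\theta ^1\colon \SMb \to \LMb $ and $\theta \colon \SMp \to \LMp $, $\theta \colon \SM \to \LM $ from the Birman--Hilden correspondence send the symmetric generators $\widetilde{h}_i$, $\widetilde{t}_{i,j}$, and $\widetilde{r}$ to the corresponding generators $h_i$, $t_{i,j}$, and $r$ of the liftable groups (by Lemmas~\ref{lift-t_{i,i+1}}, \ref{lift-h_i}, and the discussion before Lemma~\ref{lift-r}). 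Consequently the reductions among generators are literally the images under $\theta $ (or $\theta ^1$) of the reductions proved in Corollary~\ref{cor_gen_lmod}, so the same computations apply verbatim with tildes.

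Concretely, for part~(1) I would first invoke Theorem~\ref{thm_pres_smodb}, which gives $\SMb $ with generators $\widetilde{h}_i$ for $1\leq i\leq 2n-1$ and $\widetilde{t}_{i,j}$ for $1\leq i<j\leq 2n+1$. Using the relation~(4)~(a) in Theorem~\ref{thm_pres_smodb} (the lifts of $(T_{i,j})$), every $\widetilde{t}_{i,j}$ with $j-i\geq 2$ is expressed as a product of the $\widetilde{h}_i$ $(1\leq i\leq 2n-1)$ and the $\widetilde{t}_{i,i+1}$ $(1\leq i\leq 2n)$. Then the relation~(2)~(a) in Theorem~\ref{thm_pres_smodb}, namely $\widetilde{h}_i^{\pm 1}\widetilde{t}_{i,i+1}=\widetilde{t}_{i+1,i+2}\widetilde{h}_i^{\pm 1}$, lets me rewrite each $\widetilde{t}_{i,i+1}$ as a product of $\widetilde{h}_1,\dots ,\widetilde{h}_{2n-1}$ and $\widetilde{t}_{1,2}$, exactly as in the liftable case. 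For $\SMp $, I would note that Theorem~\ref{thm_pres_smodp} only adds the relation $\widetilde{t}_{1,2n+1}^k=1$ and no new generators, so the same generating set works; alternatively this follows from $\widetilde{\iota }_\ast (\SMb )=\SMp $ in Lemma~\ref{surj_capping_smod}.

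For part~(2) I would start from Theorem~\ref{thm_pres_smod}, whose generators are the $\widetilde{h}_i$ $(1\leq i\leq 2n)$, the $\widetilde{t}_{i,j}$, and $\widetilde{r}$. As before, every $\widetilde{t}_{i,j}$ is a product of the $\widetilde{h}_i$ and $\widetilde{t}_{1,2}$ via the relations~(4) in Theorem~\ref{thm_pres_smod}. Then the relation~(2)~(d) $\widetilde{h}_i\widetilde{h}_{i+1}\widetilde{t}_{i,i+1}=\widetilde{t}_{i,i+1}\widetilde{h}_{i+1}\widetilde{h}_i$ shows that the even-indexed $\widetilde{h}_2,\widetilde{h}_4,\dots ,\widetilde{h}_{2n}$ are products of $\widetilde{h}_1,\widetilde{h}_3,\dots ,\widetilde{h}_{2n-1}$, $\widetilde{t}_{1,2}$, and (through the relations~(5) involving $\widetilde{r}$) $\widetilde{r}$, reducing the generating set to $\widetilde{h}_1,\widetilde{h}_3,\dots ,\widetilde{h}_{2n-1},\widetilde{t}_{1,2},\widetilde{r}$. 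I expect the only genuinely delicate point to be the case $k=2$ in the statement: the theorems are proved for $k\geq 3$, so for $k=2$ (the hyperelliptic case) I would need to invoke the classical Birman--Hilden results and the presentation of Birman--Hilden~\cite{Birman-Hilden1} cited in the introduction, checking that the same reduction of generators goes through; apart from that, the argument is a routine transport of Corollary~\ref{cor_gen_lmod} along the Birman--Hilden isomorphisms.
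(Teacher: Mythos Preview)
Your proposal is correct and follows essentially the same approach as the paper's proof: reduce the $\widetilde{t}_{i,j}$ via relation~(4)(a) and then relation~(2)(a) for part~(1) (using Lemma~\ref{surj_capping_smod} for $\SMp$), and additionally invoke relation~(2)(d) together with the relations~(5) involving $\widetilde{r}$ for part~(2), exactly mirroring the proof of Corollary~\ref{cor_gen_lmod}. Your remark about the $k=2$ case is well taken: the paper's standing assumption in this section is $k\geq 3$, and its own proof does not address $k=2$ either, so the ``$k\geq 2$'' in the statement is most likely a typo.
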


\begin{proof}
We proceed by a similar argument in the proof of Corollary~\ref{cor_gen_lmod}. 
By Theorem~\ref{thm_pres_smodb}, $\LMb$ is generated by $\widetilde{h}_i$ for $1\leq i\leq 2n-1$ and $\widetilde{t}_{i,j}$ for $1\leq i<j\leq 2n+1$. 
By the relation~(4)~(a) in Theorem~\ref{thm_pres_smodb}, $\widetilde{t}_{i,j}$ for $1\leq i<j\leq 2n+1$ is a product of $\widetilde{h}_i$ $(1\leq i\leq 2n-1)$ and $\widetilde{t}_{i,i+1}$ $(1\leq i\leq 2n)$. 
By the relation~(2)~(a) in Theorem~\ref{thm_pres_smodb}, we show that $\widetilde{t}_{i,i+1}$ is a product of $\widetilde{h}_1,\ \widetilde{h}_2,\ \dots ,\ \widetilde{h}_{2n-1}$, and $\widetilde{t}_{1,2}$. 
By Lemma~\ref{surj_capping_smod}, we have $\widetilde{\iota }_\ast (\SMb )=\SMp $. 
Therefore, $\SMb $ and $\SMp $ are generated by $\widetilde{h}_1,\ \widetilde{h}_2,\ \dots ,\ \widetilde{h}_{2n-1}$, and $\widetilde{t}_{1,2}$. 

By Theorem~\ref{thm_pres_smod}, $\SM $ is generated by $\widetilde{h}_i$ for $1\leq i\leq 2n$, $\widetilde{t}_{i,j}$ for $1\leq i<j\leq 2n+1$, and $\widetilde{r}$. 
As a similar argument in the case~(1), every $\widetilde{t}_{i,j}$ is a product of $\widetilde{h}_i$ $(1\leq i\leq 2n-1)$ and $\widetilde{t}_{1,2}$. 
By the relation~(2)~(d) in Theorem~\ref{thm_pres_smod}, we show that $\widetilde{h}_2,\ \widetilde{h}_4,\ \dots ,\ \widetilde{h}_{2n}$ are products of $\widetilde{h}_1,\ \widetilde{h}_3,\ \dots ,\ \widetilde{h}_{2n-1},\ \widetilde{t}_{1,2}$, and $\widetilde{r}$. 
Therefore, $\SM $ is generated by $\widetilde{h}_1,\ \widetilde{h}_3,\ \dots ,\ \widetilde{h}_{2n-1},\ \widetilde{t}_{1,2}$, and $\widetilde{r}$, and we have completed Corollary~\ref{cor_gen_smod}. 
\end{proof}

Recall that $g=n(k-1)$ for $n\geq 1$ and $k\geq 3$. 
We remark that $t_{1,2n+1}=t_{\partial D}$ in $\LMb $, $\zeta ^\prime $ is the lift of $t_{1,2n+1}$ with respect to $p\colon \Sigma _g^1\to \Sigma _0^1$ (i.e. $\widetilde{t}_{1,2n+1}=\zeta ^\prime $), and ${(\zeta ^\prime )}^k=t_{\partial \widetilde{D}}$ in $\SMb $ (see the proof of Proposition~\ref{prop_exact_lmodb}).  

\begin{proof}[Proof of Theorem~\ref{thm_pres_smodp}]
By the remark above, the exact sequence~(\ref{exact_smodb}) in Proposition~\ref{prop_exact_smodb} induces the following exact sequence: 
\begin{eqnarray*}%\label{exact_smodb}
1\longrightarrow \left< \widetilde{t}_{1,2n+1}^k\right>  \longrightarrow \SMb \stackrel{\widetilde{\iota }_\ast }{\longrightarrow }\SMp \longrightarrow 1. 
\end{eqnarray*}
Clearly, we have $\widetilde{\iota }_\ast (\widetilde{h}_i)=\widetilde{h}_i\in \SMp $ for $1\leq i\leq 2n-1$ and $\widetilde{\iota }_\ast (\widetilde{t}_{i,i+1})=\widetilde{t}_{i,i+1}\in \SMp $ for $1\leq i\leq 2n$. 
Put $\widetilde{t}_{i,j}=\widetilde{\iota }_\ast (\widetilde{t}_{i,j})\in \SMp $ for $2n\geq j-i\geq 2$. 
Then $\widetilde{\iota }_\ast $ maps the relations~(1)--(3) and (4) (a) in Theorem~\ref{thm_pres_smodb} to the relations~(1)--(3) and (4) (a) in Theorem~\ref{thm_pres_smodp}, respectively. 
Thus, a presentation for $\SMp $ is obtained from the presentation for $\SMb $ in Theorem~\ref{thm_pres_smodb} by adding the relation $\widetilde{t}_{1,2n+1}^k=1$, and this presentation coincides with the presentation in Theorem~\ref{thm_pres_smodp}. 
Therefore, we have completed the proof of Theorem~\ref{thm_pres_smodp}. 
\end{proof}

To prove Theorem~\ref{thm_pres_smod}, we prepare some lemmas. 
Since $\widetilde{r}(\gamma _{i}^l)=\gamma _{2n-i+2}^{k-l+1}$ for $1\leq l\leq k$ and odd $1\leq i\leq 2n+1$, $\widetilde{r}(\gamma _{i}^l)=\gamma _{2n-i+2}^{k-l}$ for $1\leq l\leq k-1$ and even $2\leq i\leq 2n$, and $\widetilde{r}(\gamma _{i}^k)=\gamma _{2n-i+2}^{k}$ for even $2\leq i\leq 2n$, by using conjugation relations, we have the following lemma.  

\begin{lem}\label{lem_conj_r-h_smod}
For $1\leq i\leq 2n$, the relation $\widetilde{r}\widetilde{h}_i\widetilde{r}^{-1}=\widetilde{h}_{2n-i+1}$ holds in $\SM $. 
\end{lem}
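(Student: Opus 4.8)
The plan is to compute $\widetilde{r}\widetilde{h}_i\widetilde{r}^{-1}$ directly by conjugating the explicit product of Dehn twists defining $\widetilde{h}_i$, using that $\widetilde{r}$ is orientation-preserving so that $\widetilde{r}t_\gamma\widetilde{r}^{-1}=t_{\widetilde{r}(\gamma)}$ for every simple closed curve $\gamma$. First I would treat the two parities of $i$ separately, since conjugation by $\widetilde{r}$ interchanges them: for odd $i$ (with $1\le i\le 2n-1$) the target index $2n-i+1$ is even, while for even $i$ (with $2\le i\le 2n$, including the special case $i=2n$) it is odd. Writing out $\widetilde{h}_i$ factor by factor and substituting the three curve-action formulas recorded before the statement — namely $\widetilde{r}(\gamma_i^l)=\gamma_{2n-i+2}^{k-l+1}$ for odd $i$ and $\widetilde{r}(\gamma_i^l)=\gamma_{2n-i+2}^{k-l}$ for even $i$ (with superscripts read modulo $k$, so that $\widetilde{r}(\gamma_i^k)=\gamma_{2n-i+2}^{k}$ in the even case) — turns $\widetilde{r}\widetilde{h}_i\widetilde{r}^{-1}$ into an explicit word $W$ in the Dehn twists along the curves $\gamma_{2n-i+1}^l$ and $\gamma_{2n-i+2}^l$.

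Next I would organize the relevant curves into their natural cyclic chain. Setting $j=2n-i+1$, the $2k$ curves $\gamma_j^l,\gamma_{j+1}^l$ $(1\le l\le k)$ form a cyclic chain of length $2k$ in which consecutive curves meet once; this is exactly the intersection pattern recorded for the $\gamma_\ast^l$ (each $\gamma_i^l$ meeting $\gamma_{i\pm 1}^{l+\varepsilon}$). In terms of a cyclic labelling $c_1,\dots,c_{2k}$ of this chain, a short bookkeeping check shows that the standard defining word of $\widetilde{h}_j$ is the product of twists along a run of $2k-1$ consecutive curves of the chain, whereas the conjugated word $W$ produced in the first step is the product along the run shifted by exactly one position. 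Thus the lemma reduces to the single claim that these two $(2k-1)$-fold products of twists along consecutive curves of the cyclic chain represent the same element of $\SM$.

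To prove that shift identity I would mirror the argument given in the discussion following Lemma~\ref{lift_half-twist}, where the invariance of a lifted half-twist under $\zeta$-conjugation was deduced from a chain relation of the form $t_{(t_{c_2}\cdots t_{c_{m-1}})(c_m)}=t_{c_1}$, i.e. from the curve identity expressing that a run of twists pushes the curve at one end of the run to the curve at the other end. Applying the same pushing-along-the-chain identity to our chain of length $2k$ converts $W$ into the standard word for $\widetilde{h}_j$. I expect this to be the main obstacle: unlike invariance under $\zeta$ (which shifts the chain index by two and is already available from Lemma~\ref{lift_half-twist} and Corollary~\ref{cor_lift-h_i}), the required shift is by a single position, so it cannot be obtained from $\zeta$-conjugation alone and must be established as a genuine chain relation — either by an explicit isotopy of the configuration, or, since both $W$ and $\widetilde{h}_j$ are lifts under $\theta$ of $h_j=rh_ir^{-1}$ (using $rh_i=h_{2n-i+1}r$ from Lemma~\ref{lem_conj_rel}), by verifying the relevant curve image on the surface and thereby ruling out the a priori $\langle\zeta\rangle$-ambiguity. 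Finally I would note that the boundary cases $i=1$ (giving $\widetilde{h}_{2n}$) and $i=2n$ require no separate treatment, since $\widetilde{h}_{2n}$ is defined by the same word-form as the even $\widetilde{h}_i$ and the formulas for $\widetilde{r}(\gamma_1^l)$ and $\widetilde{r}(\gamma_{2n+1}^l)$ are covered by the odd-index case.
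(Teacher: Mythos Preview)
Your proposal is correct and follows exactly the route the paper indicates: the paper's entire ``proof'' is the sentence preceding the lemma, which records the three curve formulas $\widetilde{r}(\gamma_i^l)=\gamma_{2n-i+2}^{k-l+1}$ (odd $i$), $\widetilde{r}(\gamma_i^l)=\gamma_{2n-i+2}^{k-l}$ (even $i$, $l\le k-1$), $\widetilde{r}(\gamma_i^k)=\gamma_{2n-i+2}^{k}$ (even $i$), and then says ``by using conjugation relations, we have the following lemma.'' You carry out that conjugation explicitly and observe something the paper glosses over: the resulting word is not literally the defining word for $\widetilde{h}_{2n-i+1}$ but a one-step cyclic shift of it in the $2k$-chain of curves $\gamma_j^l,\gamma_{j+1}^l$. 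The shift identity you isolate is exactly of the type established just after Lemma~\ref{lift_half-twist} (where $t_{\gamma^2}\cdots t_{\gamma^{k}}=t_{\gamma^1}\cdots t_{\gamma^{k-1}}$ is proved via $t_{(t_{\gamma^2}\cdots t_{\gamma^{k-1}})(\gamma^k)}=t_{\gamma^1}$), and your plan to deduce it the same way---or, alternatively, to note that both words are lifts of $h_{2n-i+1}=rh_ir^{-1}$ and pin down the $\langle\zeta\rangle$-ambiguity---is sound. So your argument is the paper's argument, only with the implicit step made explicit.
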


Since $\widetilde{r}(\gamma _{i}^1\cup \gamma _{i}^2\cup \cdots \cup \gamma _{i}^k)=\gamma _{2n-i+2}^1\cup \gamma _{2n-i+2}^2\cup \cdots \cup \gamma _{2n-i+2}^k$, we have the following lemma.  

\begin{lem}\label{lem_conj_r-t_smod}
For $1\leq i\leq 2n+1$, the relation $\widetilde{r}\widetilde{t}_{i,i+1}\widetilde{r}^{-1}=\widetilde{t}_{2n-i+2,2n-i+3}$ holds in $\SM $. 
\end{lem}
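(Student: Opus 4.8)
The plan is to prove Lemma~\ref{lem_conj_r-t_smod} directly from the explicit geometric descriptions of the lift $\widetilde{r}$ and the lifted Dehn twists $\widetilde{t}_{i,i+1}$ established in Section~\ref{section_lifts}. The key input is the conjugation relation for Dehn twists in a mapping class group: for any homeomorphism $f$ and any simple closed curve $\gamma$, one has $f t_\gamma f^{-1} = t_{f(\gamma)}$. Since $\widetilde{t}_{i,i+1} = t_{\gamma_i^1} t_{\gamma_i^2} \cdots t_{\gamma_i^k}$ by Lemma~\ref{lift-t_{i,i+1}}, conjugating the whole product by $\widetilde{r}$ reduces to understanding how $\widetilde{r}$ permutes the lifts $\gamma_i^1, \gamma_i^2, \dots, \gamma_i^k$ of $\gamma_{i,i+1}$.

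First I would determine the action of $\widetilde{r}$ on the simple closed curves $\gamma_i^l$. The homeomorphism $\widetilde{r}$ is the $\pi$-rotation described just before Lemma~\ref{lift-r}, characterized by its action on the arcs $\widetilde{l}_m^l$. As already recorded in Section~\ref{section_lifts} (in the paragraph preceding Lemma~\ref{lem_conj_r-h_smod}), it satisfies $\widetilde{r}(\gamma_i^l)=\gamma_{2n-i+2}^{k-l+1}$ for odd $i$ and $1\leq l\leq k$, and $\widetilde{r}(\gamma_i^l)=\gamma_{2n-i+2}^{k-l}$ (with $\widetilde{r}(\gamma_i^k)=\gamma_{2n-i+2}^k$) for even $i$. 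The essential observation is that in all cases $\widetilde{r}$ maps the unordered set $\{\gamma_i^1,\dots,\gamma_i^k\}$ bijectively onto the unordered set $\{\gamma_{2n-i+2}^1,\dots,\gamma_{2n-i+2}^k\}$; this is exactly the statement that $\widetilde{r}$ sends the full preimage $p^{-1}(\gamma_{i,i+1})$ to $p^{-1}(\gamma_{2n-i+2,2n-i+3})$, which is geometrically transparent since $r(\gamma_{i,i+1})=\gamma_{2n-i+2,2n-i+3}$ on $\Sigma_0$ and $\widetilde{r}$ is a lift of $r$.

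Then I would assemble the computation. Since the curves $\gamma_i^1,\dots,\gamma_i^k$ are mutually disjoint (they are distinct lifts of a single curve), their Dehn twists commute, so the product $\widetilde{t}_{i,i+1}$ does not depend on the order of the factors. Hence
\begin{align*}
\widetilde{r}\,\widetilde{t}_{i,i+1}\,\widetilde{r}^{-1}
&=\widetilde{r}\,(t_{\gamma_i^1}t_{\gamma_i^2}\cdots t_{\gamma_i^k})\,\widetilde{r}^{-1}
=t_{\widetilde{r}(\gamma_i^1)}t_{\widetilde{r}(\gamma_i^2)}\cdots t_{\widetilde{r}(\gamma_i^k)}\\
&=t_{\gamma_{2n-i+2}^1}t_{\gamma_{2n-i+2}^2}\cdots t_{\gamma_{2n-i+2}^k}
=\widetilde{t}_{2n-i+2,2n-i+3},
\end{align*}
where the third equality uses that $\{\widetilde{r}(\gamma_i^l)\}_l = \{\gamma_{2n-i+2}^l\}_l$ as sets together with commutativity to reorder, and the last equality is again Lemma~\ref{lift-t_{i,i+1}} applied to the index $2n-i+2$. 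This establishes the relation in $\SM$ for all $1\leq i\leq 2n+1$.

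I do not anticipate a genuine obstacle here; this is a short verification analogous to the proof sketched for Lemma~\ref{lem_conj_r-h_smod}. The only point requiring mild care is bookkeeping the two parity cases (odd versus even $i$) for the action of $\widetilde{r}$ on the individual $\gamma_i^l$, and confirming that the special endpoint cases $i=1$ and $i=2n+1$ are covered, using $\delta_1^l=\gamma_1^l$ and $\delta_n^l=\gamma_{2n+1}^l$ and the symmetry $2n-i+2$. Since commutativity of the factors makes the precise permutation of superscripts irrelevant to the final product, the parity distinction never actually enters the conclusion, so the argument is uniform in $i$.
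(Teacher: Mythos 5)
Your proposal is correct and follows essentially the same route as the paper: the paper's proof consists precisely of the observation that $\widetilde{r}(\gamma _{i}^1\cup \cdots \cup \gamma _{i}^k)=\gamma _{2n-i+2}^1\cup \cdots \cup \gamma _{2n-i+2}^k$ combined with the conjugation relation $ft_\gamma f^{-1}=t_{f(\gamma )}$ and Lemma~\ref{lift-t_{i,i+1}}, exactly as you argue. Your write-up merely makes explicit the commutativity of the twists along the disjoint lifts (which justifies reordering the factors), a point the paper leaves implicit.
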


The conjugation $\widetilde{r}\zeta _i\widetilde{r}^{-1}$ for $2\leq i\leq n$ is the self-homeomorphism on $\Sigma _g$ which is described as the result of $\frac{2\pi }{k}$-rotation of $\Sigma _g$ fixing the subsurface that is cut off by $\delta _{n-i+2}$ and does not include $\widetilde{D}$. 
Thus the product $\zeta _{n-i+2}\widetilde{r}\zeta _i^{-1}\widetilde{r}^{-1}$ coincides with $\zeta $ in $\SM $. 
Since we can also see that $\widetilde{r}(\delta _{i}^1\cup \delta _{i}^2\cup \cdots \cup \delta _{i}^k)=\delta _{n-i+1}^1\cup \delta _{n-i+1}^2\cup \cdots \cup \delta _{n-i+1}^k$, we have the following lemma. 

\begin{lem}\label{lem_conj_t{1,j}-r}
The relations 
\begin{enumerate}
\item $\widetilde{r}\widetilde{t}_{1,2i}\widetilde{r}^{-1}=\widetilde{t}_{1,2n-2i+2}$\quad for $1\leq i\leq n$, 
\item $\zeta _{n-i+2}\widetilde{r}\zeta _i^{-1}\widetilde{r}^{-1}=\zeta $\quad for $2\leq i\leq n$
\end{enumerate}
hold in $\SM $. 
\end{lem}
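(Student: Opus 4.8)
The plan is to verify both relations geometrically from the explicit models of $\widetilde{r}$, $\zeta_i$ and the lifts $\widetilde{t}_{1,2i}$, in the spirit of the paragraph preceding the statement. For part~(1) I would start from the expression $\widetilde{t}_{1,2i}=t_{\delta_i^1}t_{\delta_i^2}\cdots t_{\delta_i^k}$ of Lemma~\ref{lem_lift_t{1,j}}. The curves $\delta_i^1,\dots,\delta_i^k$ are the $k$ distinct lifts of the single embedded curve $\gamma_{1,2i}$, hence pairwise disjoint, so the Dehn twists $t_{\delta_i^l}$ commute pairwise. Applying the conjugation rule $\widetilde{r}\,t_c\,\widetilde{r}^{-1}=t_{\widetilde{r}(c)}$ to each factor gives $\widetilde{r}\,\widetilde{t}_{1,2i}\,\widetilde{r}^{-1}=t_{\widetilde{r}(\delta_i^1)}\cdots t_{\widetilde{r}(\delta_i^k)}$. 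From $\widetilde{r}(\widetilde{l}_{2i}^{l})=(\widetilde{l}_{2n-2i+2}^{k-l+1})^{-1}$ and the fact that $\delta_i^l$ is characterized by meeting $\widetilde{l}_{2i}^l$ once, I would deduce the set identity $\widetilde{r}(\delta_i^1\cup\cdots\cup\delta_i^k)=\delta_{n-i+1}^1\cup\cdots\cup\delta_{n-i+1}^k$; since the twists commute, the right-hand product rearranges to $t_{\delta_{n-i+1}^1}\cdots t_{\delta_{n-i+1}^k}=\widetilde{t}_{1,2(n-i+1)}=\widetilde{t}_{1,2n-2i+2}$, which is part~(1).

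For part~(2) I would first compute $\widetilde{r}\zeta_i\widetilde{r}^{-1}$. Since $\delta_i$ meets each $\widetilde{l}_{2i-1}^{l}$ once and $\widetilde{r}(\widetilde{l}_{2i-1}^{l})=(\widetilde{l}_{2n-2i+3}^{k-l+2})^{-1}$ with $2n-2i+3=2(n-i+2)-1$, the curve $\widetilde{r}(\delta_i)$ is isotopic to $\delta_{n-i+2}$, so $\widetilde{r}\zeta_i\widetilde{r}^{-1}$ is a rotation supported on the subsurface cut off by $\delta_{n-i+2}$. Because $\widetilde{r}\zeta\widetilde{r}^{-1}=\zeta^{-1}$ by Lemma~\ref{lift-r} and $\widetilde{r}$ is the $\pi$-rotation interchanging the two ends of $\Sigma_g$ (so it carries the side of $\delta_{n-i+2}$ containing $\widetilde{D}$ to the side not containing it), the conjugate $\widetilde{r}\zeta_i\widetilde{r}^{-1}$ is the $(+\tfrac{2\pi}{k})$-rotation fixing the subsurface cut off by $\delta_{n-i+2}$ that does not contain $\widetilde{D}$ and rotating the one that does; equivalently, $(\widetilde{r}\zeta_i\widetilde{r}^{-1})^{-1}$ is the $(-\tfrac{2\pi}{k})$-rotation supported on the $\widetilde{D}$-side of $\delta_{n-i+2}$.

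Finally I would rewrite $\zeta_{n-i+2}\,\widetilde{r}\,\zeta_i^{-1}\,\widetilde{r}^{-1}=\zeta_{n-i+2}\,(\widetilde{r}\zeta_i\widetilde{r}^{-1})^{-1}$ and note that $\zeta_{n-i+2}$ is the $(-\tfrac{2\pi}{k})$-rotation supported on the side of $\delta_{n-i+2}$ not containing $\widetilde{D}$, while $(\widetilde{r}\zeta_i\widetilde{r}^{-1})^{-1}$ is the $(-\tfrac{2\pi}{k})$-rotation supported on the side containing $\widetilde{D}$. Their supports meet only along $\delta_{n-i+2}$, so they commute, and since $\delta_{n-i+2}$ is $\zeta$-invariant the global rotation $\zeta$ restricts to each side; extending each restriction by the identity recovers exactly these two partial rotations, whose product is therefore $\zeta$, giving part~(2). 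The main obstacle will be this last gluing step: one must check that composing the two half-supported rotations yields the genuine global rotation $\zeta$ and not $\zeta$ altered by a Dehn twist along $\delta_{n-i+2}$. I expect to settle this by confirming that each partial rotation coincides with the restriction of $\zeta$ to its side of $\delta_{n-i+2}$ (both being $(-\tfrac{2\pi}{k})$-rotations that preserve $\delta_{n-i+2}$ setwise and match the handle/disk data across it, so no boundary twist appears), making the argument precise by tracking the image of $\widetilde{L}$ under each map exactly as is done for the other lifts in this section.
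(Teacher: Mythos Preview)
Your proposal is correct and follows essentially the same approach as the paper: for (1) the paper simply records the set identity $\widetilde{r}(\delta_i^1\cup\cdots\cup\delta_i^k)=\delta_{n-i+1}^1\cup\cdots\cup\delta_{n-i+1}^k$, and for (2) it identifies $\widetilde{r}\zeta_i\widetilde{r}^{-1}$ as the $\tfrac{2\pi}{k}$-rotation fixing the side of $\delta_{n-i+2}$ not containing $\widetilde{D}$, then observes that composing with $\zeta_{n-i+2}$ gives $\zeta$. Your write-up is in fact more careful than the paper's, which leaves the gluing step implicit; your plan to resolve the potential boundary-twist ambiguity by tracking $\widetilde{L}$ is exactly the standard way to make this precise.
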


By Lemma~\ref{lem_t_ij-braid}, we have $t_{1,2n}^{-n+1}t_{2n-1,2n}^{-n+1}\cdots t_{3,4}^{-n+1}t_{1,2}^{-n+1}(h_{2n}\cdots h_{2}h_{1})^{n+1}=t_{1,2n+2}$ in $\LM $. 
Since $\gamma _{1,2n+2}$ does not intersect with $L=l_1\cup l_2\cup \cdots \cup l_{2n+1}$, $\gamma _{1,2n+2}$ lifts a simple closed curve on $\Sigma _g$ which bounds a disk in $\Sigma _g$. 
Thus we have the following lemma. 

\begin{lem}\label{lem_lift_t{1,2n+2}}
The relation 
\[
\widetilde{t}_{1,2n}^{-n+1}\widetilde{t}_{2n-1,2n}^{-n+1}\cdots \widetilde{t}_{3,4}^{-n+1}\widetilde{t}_{1,2}^{-n+1}(\widetilde{h}_{2n}\cdots \widetilde{h}_{2}\widetilde{h}_{1})^{n+1}=1
\]
holds in $\SM $. 
\end{lem}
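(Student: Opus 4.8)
The plan is to descend to the already-proved relation $(T_{1,2n+2})$ in $\LM$ via the Birman--Hilden homomorphism $\theta \colon \SM \to \LM$ and then to kill the resulting ambiguity by a power of $\zeta$. Write $W=\widetilde{t}_{1,2n}^{-n+1}\widetilde{t}_{2n-1,2n}^{-n+1}\cdots \widetilde{t}_{3,4}^{-n+1}\widetilde{t}_{1,2}^{-n+1}(\widetilde{h}_{2n}\cdots \widetilde{h}_{2}\widetilde{h}_{1})^{n+1}$ for the left-hand side, viewed as an element of $\SM$. Since $\theta(\widetilde{h}_i)=h_i$ and $\theta(\widetilde{t}_{i,j})=t_{i,j}$ by Lemmas~\ref{lift-t_{i,i+1}} and~\ref{lift-h_i}, the image $\theta(W)$ is exactly the left-hand side of $(T_{1,2n+2})$; by Lemma~\ref{lem_t_ij-braid} this equals $t_{1,2n+2}$, which is trivial in $\LM$ because $\gamma_{1,2n+2}$ bounds a disk on $\Sigma_0$ disjoint from $\B$. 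Hence $\theta(W)=1$, so $W$ lies in $\ker\theta=\left< \zeta \right>$ and $W=\zeta^m$ for some $0\le m\le k-1$; the whole problem reduces to showing $m=0$.

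To pin down $m$ I would identify $W$ with the canonical symmetric lift of $t_{1,2n+2}$ assembled from the preimage curves, exactly as the lifts in Lemma~\ref{lift-t_{i,i+1}} are built. As $\gamma_{1,2n+2}$ is disjoint from $l_1\cup\cdots\cup l_{2n+1}$ it lifts with respect to $p$, and its preimage consists of $k$ simple closed curves $\widetilde{\gamma}^1,\dots,\widetilde{\gamma}^k$, namely the boundaries of the $k$ disk components of $p^{-1}(D')$, where $D'$ denotes the disk cut off by $\gamma_{1,2n+2}$ that contains no branch point. Setting $\widetilde{t}_{1,2n+2}=t_{\widetilde{\gamma}^1}\cdots t_{\widetilde{\gamma}^k}$, each factor is a Dehn twist along a curve bounding a disk in $\Sigma_g$, so $\widetilde{t}_{1,2n+2}=1$. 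Thus it suffices to check that $W$ equals this particular lift and not $\zeta^m\widetilde{t}_{1,2n+2}$ for $m\not\equiv 0 \pmod k$.

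The hard part will be precisely this last identification, i.e. verifying that the lifted relation carries no hidden rotation. I would do this by lifting the curve/braid isotopy underlying the proof of $(T_{1,2n+2})$ in Lemma~\ref{lem_t_ij-braid} symmetrically to $\Sigma_g$: every half-twist and Dehn twist occurring there is supported in a subsurface whose explicit symmetric lift is recorded in Section~\ref{section_lifts} (Lemmas~\ref{lift-a_i}, \ref{lift-t_{i,i+1}}, \ref{lift-h_i}), and these lifts commute with $\zeta$ by Corollaries~\ref{cor_lift_half-twist}, \ref{cor_lift-t_{i,i+1}}, and~\ref{cor_lift-h_i}, so tracing the lifted isotopy should yield $W=\widetilde{t}_{1,2n+2}$ on the nose. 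Alternatively, since a symmetric mapping class is determined by the image of $\widetilde{L}=p^{-1}(L)$, one may instead confirm directly that $W$ fixes $\widetilde{L}$ up to isotopy while $\zeta^m$ moves $\widetilde{L}$ for $m\not\equiv 0\pmod k$, again forcing $m=0$. Once $W=\widetilde{t}_{1,2n+2}=1$ is established the lemma follows.
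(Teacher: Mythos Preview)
Your plan matches the paper's: its entire proof is the two sentences before the lemma, observing that by $(T_{1,2n+2})$ the projected word is $t_{1,2n+2}$ and that $\gamma_{1,2n+2}$, being disjoint from $L$, lifts to disk-bounding curves on $\Sigma_g$. You are right that the paper leaves the exclusion of a nontrivial $\zeta^m$ implicit, and your outline for filling this in is sound. One small caution: the fact that the chosen lifts commute with $\zeta$ does not by itself force $m=0$, since every $\zeta^m$ also commutes with $\zeta$; what actually does the work (and is implicit in your ``supported in a subsurface'' remark) is that each factor of $W$ has a representative supported in $p^{-1}(\Sigma_0'')$, where $\Sigma_0''$ is the disk bounded by $\gamma_{1,2n+2}$ containing $\B$, that $(T_{1,2n+2})$ already holds in $\mathrm{Mod}(\Sigma_0'',\B,\partial)\cong B_{2n+2}$, and that the lift fixing $\partial\, p^{-1}(\Sigma_0'')$ pointwise is unique because nontrivial deck transformations permute its $k$ boundary components. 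Hence in $\mathrm{Mod}(p^{-1}(\Sigma_0''),\partial)$ the lifted relation reads $W=\prod_l t_{\widetilde\gamma^l}$, which becomes trivial after capping.
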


\begin{proof}[Proof of Theorem~\ref{thm_pres_smod}]
From the Birman-Hilden correspondence~\cite{Birman-Hilden2}, we have the exact sequence
\begin{eqnarray*}%\label{exact_birman-hilden}
1\longrightarrow \left< \zeta \right>  \longrightarrow \SM \stackrel{\theta }{\longrightarrow }\LM \longrightarrow 1. 
\end{eqnarray*}
We will apply Lemma~\ref{presentation_exact} to the exact sequence above and the presentation for $\LM $ in Theorem~\ref{thm_pres_lmod}. 

First, by Lemmas~\ref{lift-t_{i,i+1}} and \ref{lift-h_i}, we have $\theta (\widetilde{h}_i)=h_i$ for $1\leq i\leq 2n$, $\theta (\widetilde{t}_{i,i+1})=t_{i,i+1}$ for $1\leq i\leq 2n+1$, and $\theta (\widetilde{r})=r$. 
Since the forgetful homomorphism $\mathcal{F}\colon \SMp \hookrightarrow \SM $ is injective by Lemma~\ref{forget_smod_inj}, we regard $\SMp $ as a subgroup of $\SM $ and denote $\widetilde{t}_{i,j}=\mathcal{F}(\widetilde{t}_{i,j})\in \SM $ for $2n\geq j-i\geq 2$. 
Since $\widetilde{t}_{1,2i}=t_{\delta _i^{1}}t_{\delta _i^{2}}\cdots t_{\delta _i^{k}}$ for $1\leq i\leq n$ and $\widetilde{t}_{1,2i-1}=\zeta _i$ for $2\leq i\leq n+1$ in $\SMb $ by Lemma~\ref{lem_lift_t{1,j}} and the injectivity of $\theta ^1\colon \SMb \to \LMb $, we also have $\widetilde{t}_{1,2i}=t_{\delta _i^{1}}t_{\delta _i^{2}}\cdots t_{\delta _i^{k}}$ for $1\leq i\leq n$ and $\widetilde{t}_{1,2i-1}=\zeta _i$ for $2\leq i\leq n+1$ in $\SM $ (in particular,  $\widetilde{t}_{1,2n+1}=\zeta $ in $\SM $). 
%$t_{\delta _i^{1}}t_{\delta _i^{2}}\cdots t_{\delta _i^{k}}$ for $1\leq i\leq n$ and $\zeta _i$ for $2\leq i\leq n+1$ are fixes $\widetilde{D}$ pointwise, by the injectivity of $\theta ^1\colon \SMb \to \LMb $ and Lemma~\ref{lem_lift_t{1,j}}, we have $\widetilde{t}_{1,2i}=t_{\delta _i^{1}}t_{\delta _i^{2}}\cdots t_{\delta _i^{k}}$ for $1\leq i\leq n$ and $\widetilde{t}_{1,2i-1}=\zeta _i$ for $2\leq i\leq n+1$ in $\SM $ (in particular,  $\widetilde{t}_{1,2n+1}=\zeta $ in $\SM $). 

By Theorem~\ref{thm_pres_smodp} and using $\mathcal{F}$, the relations~(1) (a)--(c), (2) (a)--(e), (3), (4) (a), and (b) in Theorem~\ref{thm_pres_smod} are hold in $\SM $. 
By Corollary~\ref{cor_lift-h_i} and Lemmas~\ref{lem_lift_t{1,2n+2}}, \ref{lem_conj_r-h_smod}, \ref{lem_conj_r-t_smod}, and \ref{lift-r}, the relations~(1) (d), (4) (c), (5) (b), (c), and (f) in Theorem~\ref{thm_pres_smod} hold in $\SM $. 
Since $\widetilde{r}$ is defined as a $\pi $-rotation of $\Sigma _g$, the relation~(5) (a) holds in $\SM $. 
By Lemma~\ref{lem_conj_t{1,j}-r}, the relations~(5) (d) and (5) (e) hold in $\SM $. 
By the argument above and applying Lemma~\ref{presentation_exact} to the exact sequence above and the presentation for $\LM $ in Theorem~\ref{thm_pres_lmod}, we have the presentation for $\SM $ whose generators are $\widetilde{h}_i$ for $1\leq i\leq 2n$, $\widetilde{t}_{i,j}$ for $1\leq i<j\leq 2n+1$, and $\widetilde{r}$, and the following defining relations:
\begin{enumerate}
\item[(A)] $\widetilde{t}_{1,2n+1}^k=1$, 
\item[(B)] the relations~(1) (a)--(c), (2) (a)--(e), (3), (4) (a)--(c), (5) (a)--(e)  in Theorem~\ref{thm_pres_smod}, 
\item[(C)]
\begin{enumerate}
\item $\widetilde{h}_{i}\widetilde{t}_{1,2n+1}\widetilde{h}_{i}^{-1}=\widetilde{t}_{1,2n+1}$ for $1\leq i\leq 2n$, 
\item $\widetilde{t}_{i,j}\widetilde{t}_{1,2n+1}\widetilde{t}_{i,j}^{-1}=\widetilde{t}_{1,2n+1}$ for $1\leq i<j\leq 2n+1$, 
\item $\widetilde{r}\widetilde{t}_{1,2n+1}\widetilde{r}^{-1}=\widetilde{t}_{1,2n+1}^{-1}$.
\end{enumerate}
\end{enumerate}
The generators of the presentation for $\SM $ above coincide with the generators of the presentation for $\SM $ in Theorem~\ref{thm_pres_smod}. 
The relation~(A) above coincides with the relation~(2) (e) in Theorem~\ref{thm_pres_smod}. 
The relations~(C) (a) and (b) above coincide with the relations~(1) (c) and (b) in Theorem~\ref{thm_pres_smod}, respectively. 
The relation~(C) (c) above coincides with the relation~(5) (f) in Theorem~\ref{thm_pres_smod}. 
Therefore, the presentation for $\SM $ above is equivalent to the presentation in Theorem~\ref{thm_pres_smod} and we have completed the proof of Theorem~\ref{thm_pres_smod}. 
\end{proof}

\subsection{The first homology groups of the balanced superelliptic mapping class groups}\label{section_abel-smod}

In this section, we will prove Theorem~\ref{thm_abel_smod}.  
For conveniences, we denote the equivalence class in $H_1(\SMb )$ (resp. $H_1(\SMp )$ and $H_1(\SM )$) of an element $h$ in $\SMb$ (resp. $\SMp $ and $\SM $) by $h$. 
Recall that $g=n(k-1)$ for $n\geq 1$ and $k\geq 3$. 
By an isomorphism $\theta ^1\colon \SMb \to \LMb $ and Theorem~\ref{thm_abel_lmod} for $\LMb $, we have 
\[
H_1(\SMb )\cong \left\{ \begin{array}{ll}
 \Z ^2&\text{if }  n=1,   \\
 \Z ^3&\text{if }  n\geq 2.
 \end{array} \right.
\]

\begin{proof}[Proof of Theorem~\ref{thm_abel_smod} for $\SMp $]
The exact sequence~(\ref{exact_smodb}) in Proposition~\ref{prop_exact_smodb} induces the exact sequence: 
\begin{eqnarray*}%\label{exact_smodb}
1\longrightarrow \left< \widetilde{t}_{1,2n+1}^k\right>  \longrightarrow H_1(\SMb ) \stackrel{\widetilde{\iota }_\ast }{\longrightarrow }H_1(\SMp )\longrightarrow 1. 
\end{eqnarray*}
By Theorem~\ref{thm_abel_lmod} and its proof for $\LMp $, we have $H_1(\mathrm{LMod}_3^1)\cong \Z [\widetilde{h}_1]\oplus \Z [\widetilde{t}_{1,2}]$ and $H_1(\SMb )\cong \Z [\widetilde{h}_1]\oplus \Z [\widetilde{h}_2]\oplus \Z [\widetilde{t}_{1,2}]$ for $n\geq 2$. 
Since we have $\widetilde{t}_{1,3}=\widetilde{h}_1^2$ by the relation~(4) (a) in Theorem~\ref{thm_pres_smodb}, when $n=1$, as a presentation for an abelian group, we have
\[
H_1(\mathrm{SMod}_{k-1,\ast;k})\cong \left< \widetilde{h}_1, \widetilde{t}_{1,2}\middle| (\widetilde{h}_1^2)^k=1\right> \cong \Z [\widetilde{t}_{1,2}]\oplus \Z _{2k}[\widetilde{h}_1]. 
\]

Since we have 
\[
\widetilde{t}_{1,2n+1}=\widetilde{t}_{2n-1,2n}^{-n+1}\cdots \widetilde{t}_{3,4}^{-n+1}\widetilde{t}_{1,2}^{-n+1}\widetilde{h}_{2n-1}\cdots \widetilde{h}_3\widetilde{h}_1\widetilde{h}_1\widetilde{h}_3\cdots \widetilde{h}_{2n-1}(\widetilde{h}_{2n-2}\cdots \widetilde{h}_{2}\widetilde{h}_{1})^{n}
\]
by the relation~(4) (a) in Theorem~\ref{thm_pres_smodb}, the relation 
\[
\widetilde{t}_{1,2n+1}^k=(\widetilde{t}_{1,2}^{-n(n-1)}\widetilde{h}_1^{n(n+1)}\widetilde{h}_2^{n(n-1)})^k=(\widetilde{t}_{1,2}^{-n+1}\widetilde{h}_1^{n+1}\widetilde{h}_2^{n-1})^{kn}
\]
holds in $H_1(\LMp )$. 
Thus, when $n\geq 2$, as a presentation for an abelian group, we have
\begin{eqnarray*}
H_1(\SMp )
&\cong &\left< \widetilde{h}_1, \widetilde{h}_2, \widetilde{t}_{1,2} \middle| (\widetilde{t}_{1,2}^{-n+1}\widetilde{h}_1^{n+1}\widetilde{h}_2^{n-1})^{kn}=1 \right> \\
&\cong &\left< \widetilde{h}_1, \widetilde{h}_2, \widetilde{t}_{1,2}, X \middle| (X^{n-1}\widetilde{h}_1^{2})^{kn}=1, X=\widetilde{t}_{1,2}^{-1}\widetilde{h}_1\widetilde{h}_2 \right> \\
&\cong &\left< \widetilde{h}_1, \widetilde{h}_2, X \middle| (\widetilde{h}_1^{2}X^{n-1})^{kn}=1 \right> .
\end{eqnarray*}
By an argument similar to the proof of Theorem~\ref{thm_abel_lmod} for $\LMp $, we have
\[
H_1(\SMp )\cong \left\{ \begin{array}{ll}
 \Z [\widetilde{h}_2]\oplus \Z [X]\oplus \Z _{2kn}[Y_1]&\text{if }  n\geq 3\text{ is odd},   \\
 \Z [\widetilde{h}_2]\oplus \Z [Y_2]\oplus \Z _{kn}[Z]&\text{if }  n\geq 2\text{ is even},
 \end{array} \right.
\]
where $X=\widetilde{t}_{1,2}^{-1}\widetilde{h}_1\widetilde{h}_2$, $Y_1=\widetilde{t}_{1,2}^{-\frac{n-1}{2}}\widetilde{h}_1^{\frac{n+1}{2}}\widetilde{h}_2^{\frac{n-1}{2}}$, $Y_2=\widetilde{t}_{1,2}^{-\frac{n}{2}}\widetilde{h}_1^{\frac{n+2}{2}}\widetilde{h}_2^{\frac{n}{2}}$, and $Z=\widetilde{t}_{1,2}^{-n+1}\widetilde{h}_1^{n+1}\widetilde{h}_2^{n-1}$. 
Therefore, we have completed the proof of Theorem~\ref{thm_abel_smod} for $\SMp $.   
\end{proof}

\begin{proof}[Proof of Theorem~\ref{thm_abel_smod} for $\SM $]
We will calculate $H_1(\SM )$ by using the finite presentations for $\SM $ in Theorem~\ref{thm_pres_smod}. 
We proceed by an argument similar to the proof of Theorem~\ref{thm_abel_lmod} for $\LM $. 
By an argument similar to the proof of Theorem~\ref{thm_abel_lmod} for $\LM $, the relations~(1), (2) (a)--(e), (3), (5) (b), (c), and (f) in Theorem~\ref{thm_pres_smod} are equivalent to the relations $\widetilde{t}_{i,i+1}=\widetilde{t}_{i+1,i+2}$ for $1\leq i\leq 2n-1$ and $\widetilde{h}_{i}=\widetilde{h}_{i+1}$ for $1\leq i\leq 2n-1$ in $H_1(\LMb )$. 
By an argument similar to the proof of Theorem~\ref{thm_abel_lmod} for $\LM $, in $H_1(\SM )$, the relations~(5) (d), (4) (b), (c), and (5) (g) in Theorem~\ref{thm_pres_smod} are equivalent to the following relations: 
\begin{itemize}
\item[(A)] $(\widetilde{t}_{1,2}^{-n+1}\widetilde{h}_{1}^{2n})^{kn}=1$,  
\item[(B)] $(\widetilde{t}_{1,2}^{-n+1}\widetilde{h}_{1}^{2n})^{n+1}=1$, 
\item[(C)] $(\widetilde{t}_{1,2}^{-n+1}\widetilde{h}_{1}^{2n})^{n-2i+1}=1$ for $1\leq i\leq n$, and
\item[(D)] $(\widetilde{t}_{1,2}^{-n+1}\widetilde{h}_{1}^{2n})^{n-2i}=(\widetilde{t}_{1,2}^{-n+1}\widetilde{h}_{1}^{2n})^{n}\Leftrightarrow (\widetilde{t}_{1,2}^{-n+1}\widetilde{h}_{1}^{2n})^{2i}=1$ for $1\leq i\leq n-1$ 
\end{itemize}
Up to the relation~(B) above, the relation (C) for $1\leq i\leq n-1$ is equivalent to the relation~(D) for $1\leq i\leq n-1$. 
The relation (C) for $i=n$ coincides with the relation~(B). 
Up to the relation~(B), the relation (A) is equivalent to the relation $(\widetilde{t}_{1,2}^{-n+1}\widetilde{h}_{1}^{2n})^{k}=1$. 
The relations~(D) for $2\leq i\leq n-1$ are obtained from the relation~(D) for $i=1$. 
Thus the relations~(A)--(D) are equivalent to the relations
\begin{itemize}
\item[(A$^\prime $)] $(\widetilde{t}_{1,2}^{-n+1}\widetilde{h}_{1}^{2n})^{k}=1$, 
\item[(B$^\prime $)] $(\widetilde{t}_{1,2}^{-n+1}\widetilde{h}_{1}^{2n})^{n+1}=1$, and
\item[(C$^\prime $)] $(\widetilde{t}_{1,2}^{-n+1}\widetilde{h}_{1}^{2n})^{2}=1$.  
\end{itemize}
When $n$ is even or $k$ is odd, by the relation~(C$^\prime $), we have the relation $\widetilde{t}_{1,2}^{-n+1}\widetilde{h}_{1}^{2n}=1$ in $H_1(\SM )$. 
When $n$ is odd and $k$ is even, the relations~(A$^\prime $) and (B$^\prime $) are obtained from the relation~(C$^\prime $). 
Thus the relations~(A$^\prime $)--(C$^\prime $) above are equivalent to the relation $\widetilde{t}_{1,2}^{-n+1}\widetilde{h}_{1}^{2n}=1$ when $n$ is even or $k$ is odd, and $(\widetilde{t}_{1,2}^{-n+1}\widetilde{h}_{1}^{2n})^{2}=1$ when $n$ is odd and $k$ is even. 

When $n$ is even or $k$ is odd, by arguments above and in the proof of Theorem~\ref{thm_abel_lmod} for $\LM $, we have  
\begin{eqnarray*}
H_1(\SM )
&\cong &\left< \widetilde{h}_1, \widetilde{t}_{1,2}, \widetilde{r}\ \middle| \ \widetilde{r}^2=1, \widetilde{t}_{1,2}^{-n+1}\widetilde{h}_{1}^{2n}=1 \right> \\
&\cong &\left\{ \begin{array}{ll}
 \Z [\widetilde{t}_{1,2}]\oplus \Z _2[\widetilde{h}_1]\oplus \Z _2[\widetilde{r}]&\text{if }  n=1,   \\
 \Z [X]\oplus \Z _2[\widetilde{r}]\oplus \Z _2[Y_1]&\text{if }  n\geq 3\text{ is odd},   \\
 \Z [Y_2]\oplus \Z _2[\widetilde{r}] &\text{if }  n\geq 2\text{ is even},
 \end{array} \right.
\end{eqnarray*}
where $X=\widetilde{t}_{1,2}^{-1}\widetilde{h}_1^2$, $Y_1=\widetilde{t}_{1,2}^{-\frac{n-1}{2}}\widetilde{h}_1^{n}$, and $Y_2=\widetilde{t}_{1,2}^{-\frac{n}{2}}\widetilde{h}_1^{n+1}$. 

When $n$ is odd and $k$ is even, by an argument above, we have  
\begin{eqnarray*}
H_1(\SM )
&\cong &\left< \widetilde{h}_1, \widetilde{t}_{1,2}, \widetilde{r}\ \middle| \ \widetilde{r}^2=1, (\widetilde{t}_{1,2}^{-n+1}\widetilde{h}_{1}^{2n})^2=1 \right> \\
&\cong &\left< \widetilde{h}_1, \widetilde{t}_{1,2}, \widetilde{r}, X\ \middle| \ \widetilde{r}^2=1, (X^{n-1}\widetilde{h}_{1}^{2})^2=1, X=\widetilde{t}_{1,2}^{-1}\widetilde{h}_{1}^{2} \right> \\
&\cong &\left< \widetilde{h}_1, \widetilde{r}, X\ \middle| \ \widetilde{r}^2=1, (X^{n-1}\widetilde{h}_{1}^{2})^2=1 \right> \\
&\cong &\left< \widetilde{h}_1, \widetilde{r}, X, Y\ \middle| \ \widetilde{r}^2=1, Y^4=1, Y=X^{\frac{n-1}{2}}\widetilde{h}_{1} \right> \\
&\cong &\left< \widetilde{r}, X, Y\ \middle| \ \widetilde{r}^2=1, Y^4=1 \right> \\
&\cong &\Z [X]\oplus \Z _2[\widetilde{r}]\oplus \Z _4[Y],
\end{eqnarray*}
where $X=\widetilde{t}_{1,2}^{-1}\widetilde{h}_1^2$ and $Y=X^{\frac{n-1}{2}}\widetilde{h}_{1}=\widetilde{t}_{1,2}^{-\frac{n-1}{2}}\widetilde{h}_1^{n}$. 
Therefore we have completed the proof of Theorem~\ref{thm_abel_smod} for $\SM $
\end{proof}

\appendix

\par
{\bf Acknowledgement:} The authors would like to express their gratitude to Tyrone Ghaswala for helpful advices and telling them Proposition~5.3 in \cite{Ghaswala-McLeay} that is a generalized version of Lemma~\ref{lift_half-twist}. 
The first author was supported by JSPS KAKENHI Grant Numbers JP16K05156 and JP20K03618.
The second author was supported by JSPS KAKENHI Grant Numbers JP19K23409 and 21K13794.
%The authors also wish to thank Susumu Hirose for his comments and helpful advices.
%JST CREST Grant Number JPMJCR17J4, Japan. 
%Grant-in-Aid for Research Activity Start-up

\end{document}